\newtheorem{theorem}{Theorem}[chapter]
\newtheorem{lemma}[theorem]{Lemma}
\newtheorem{corollary}[theorem]{Corollary}
\theoremstyle{definition}
\newtheorem{definition}[theorem]{Definition}
\newtheorem{example}[theorem]{Example}
\theoremstyle{problem} 
\newtheorem{problem}[theorem]{Problem}   
\theoremstyle{proposition} 
\newtheorem{proposition}[theorem]{Proposition} 
\theoremstyle{conjecture}
\newtheorem{conjecture}[theorem]{Conjecture}
\theoremstyle{remark}
\theoremstyle{question}
\newtheorem{question}[theorem]{Question}
\numberwithin{section}{chapter}
\numberwithin{equation}{chapter}
\def\a{{\alpha}}
\def\e{{\epsilon}}
\def\H{{\mathbb H}}
\def\Q{{\mathbb Q}}
\def\R{{\mathbb R}}
\def\Z{{\mathbb Z}}
\def\C{{\mathbb C}}
\def\square{{\vcenter{\hrule height.4pt
       \hbox{\vrule width.4pt height5pt \hskip5pt
            \vrule width.4pt}
       \hrule height.4pt}}}
\def\qed{\hfill$\square$\bigskip}
\def\pf{{\noindent{\bf Proof: \hspace{2mm}}}}
\begin{document}

\frontmatter

\title{Ordered Groups and Topology}

\author{Adam CLAY}

\address{Department of Mathematics; University of Manitoba; Winnipeg MB, R3T 2N2, Canada}

\email{adam.clay@umanitoba.ca}

\urladdr{server.math.umanitoba.ca/\!\hbox{$\sim$}claya}

\author{Dale ROLFSEN}

\address{Mathematics Department; University of
British Columbia; Vancouver BC, V6T 1Z2, Canada}

\email{rolfsen@math.ubc.ca}

\urladdr{www.math.ubc.ca/\!\hbox{$\sim$}rolfsen}

\subjclass[2010]{Primary  20-02, 57-02; Secondary 20F60, 57M07}

\keywords{Ordered groups, topology, 3-manifolds, knots, braid groups, foliations, space of orderings}

\date{}


\maketitle


\setcounter{page}{5}

\tableofcontents

%

\chapter*{Preface}
%
%

The inspiration for this book is the remarkable interplay, expecially in the past few decades, between topology and the theory of orderable groups.  Applications go in both directions.  For example, orderability of the fundamental group of a 3-manifold is related to the existence of certain foliations.  On the other hand, one can apply topology to study the space of all orderings of a given group, providing strong algebraic applications.   Many groups of special topological interest are now known to have invariant orderings, for example braid groups, knot groups, fundamental groups of (almost all) surfaces and many interesting manifolds in higher dimensions.

There are several excellent books on orderable groups, and even more so for topology.  The current book emphasizes the {\em connections} between these subjects, leaving out some details that are available elsewhere, although we have tried to include enough to make the presentation reasonably self-contained.  Regrettably we could not include all interesting recent developments, such as Mineyev's \cite{Mineyev12} use of left-orderable group theory to prove the Hanna Neumann conjecture.

This book may be used as a graduate-level text; there are quite a few problems assigned to the reader.   It may also be of interest to experts in topology, dynamics and/or group theory as a reference.  A modest familiarity with group theory and with basic topology is assumed of the reader.   

We gratefully acknowledge the help of the following people in the preparation of this book: Maxime Bergeron, Steve Boyer, 
Patrick Dehornoy, Colin Desmarais, Andrew Glass, Cameron Gordon, Herman Goulet-Ouellet, Tetsuya Ito, Darrick Lee, Andr\'es Navas, Akbar Rhemtulla, Crist\'obal Rivas, Daniel Sheinbaum, Bernardo Villareal-Herrera, Bert Wiest.

\aufm{Adam Clay and Dale Rolfsen}

\vfill

{\em In these days the angel of topology and the devil of abstract algebra fight for the soul of each individual mathematical domain. 
}
\bigskip

\aufm{Hermann Weyl, 1939}


\mainmatter


\chapter{Orderable groups and their algebraic properties}

In this chapter we will discuss some of the special algebraic properties enjoyed by orderable groups, which come in two basic flavors: left-orderable and the more special bi-orderable groups.
As we'll soon see, a group is right-orderable if and only if it is left-orderable.  The literature is more or less evenly divided between considering right- and left-invariant orderings.   Some authors (including those of this book) have flip-flopped on the issue of right vs. left.  Of course results from the ``left'' school have dual statements in the right-invariant world, but as with driving, one must be consistent.

There are several useful reference books on ordered groups, such as {\em Fully ordered groups} by Kokorin and Kopytov \cite{KK74}, {\em Orderable groups} by Mura and Rhemtulla \cite{MR77},  {\em Right-ordered groups} by Kopytov and Medvedev \cite{KM96} and A. M. W. Glass' {\em Partially ordered groups} \cite{Glass99}.  Many interesting results and examples on orderability of groups which won't be discussed here can be found in these books.  We will focus mostly on groups of special topological interest and results relevant to topological applications.  On the other hand, we try to include enough material to provide context and to make the core development of ideas in this book reasonably self-contained.  

 \section{Invariant orderings} 
 
 By a {\em strict ordering} \index{strict ordering} of a set $X$ we mean a binary relation $<$ which is transitive ($x < y$ and $y < z$ imply $x < z$) and such that $x < y$ and $y < x$ cannot both hold.  It is a strict {\em total} ordering  \index{strict total ordering} if for every $x, y \in X$ exactly one of $x < y$, $y < x$ or $x = y$ holds.

A group $G$ is called {\it left-orderable}  \index{left-orderable}  if its elements can
be given a strict total ordering $<$ which is left invariant, meaning that
$g < h$ implies $ fg < fh$ for all $f,g, h \in G$.  We will say that $G$ is
{\it bi-orderable}  \index{bi-orderable} if it admits a total ordering which is simultaneously
left and right invariant (historically, this has been called simply ``orderable'').
We refer to the pair $(G,<)$ as the ordered group. \index{ordered group}   We shall usually use the symbol $1$ to denote the identity element of a group $G$.  However, for abelian groups in which the group operation is denoted by addition, the identity element may be denoted by $0$.   In an ordered group the symbols $\le$ and $>$ have the obvious meaning: $g \le h$ means  
$g < h$ or $g = h$; $g > h$ means $h<g$.  Note that the opposite ordering can also be considered an ordering, also invariant. 

\begin{problem}
Show that
\begin{enumerate} 
\item In a left-ordered group one has $1 < g$ if and only if $g^{-1} < 1$.
\item In a left-ordered group, if $1<g$ and $1<h$, then $1<gh$. 
\item A left-ordering is a bi-ordering if and only if the ordering is invariant under conjugation.
\end{enumerate}
\end{problem}

As already mentioned, the class of right-orderable groups is the same as the class of left-orderable groups.  In fact, a concrete correspondence can be given as follows.

\begin{problem}
\label{right order}
If $<$ is a left-invariant ordering of the group $G$, show the recipe 
$$g \prec h \iff h^{-1} < g^{-1} $$ 
defines a right-invariant ordering $ \prec $ which has the same ``positive cone'' -- that is: 
$1 \prec g \iff 1 < g$. 
\end{problem}

The following shows that left-orderable groups are infinite, with the exception of the trivial group, consisting of the identity alone.

\begin{proposition} \label{torsionfree}
A left-orderable group has no elements of finite order. In other words, it is torsion-free. \index{torsion-free}
\end{proposition}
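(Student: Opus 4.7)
The plan is to show that if $g \in G$ is a non-identity element of a left-ordered group, then no positive power of $g$ can equal the identity. The key tools are already available from the problem stated earlier in the section: namely that $1 < g \iff g^{-1} < 1$, and that the set of positive elements is closed under the group operation.

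First I would invoke totality of the ordering to split into two cases: either $1 < g$ or $g < 1$ (the case $g = 1$ being excluded). In the first case, I would argue by induction on $n \geq 1$ that $1 < g^n$. The base case is the hypothesis. For the inductive step, assuming $1 < g^{n-1}$, I use part (2) of the earlier problem (closure of positives under multiplication) applied to $1 < g$ and $1 < g^{n-1}$ to conclude $1 < g \cdot g^{n-1} = g^n$. In particular $g^n \neq 1$ for every $n \geq 1$.

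In the second case, where $g < 1$, I would apply part (1) of the earlier problem to deduce $1 < g^{-1}$, and then repeat the argument above with $g^{-1}$ in place of $g$ to conclude $1 < g^{-n}$ for all $n \geq 1$. This again gives $g^n \neq 1$, since otherwise $g^{-n} = 1$ as well, contradicting $1 < g^{-n}$.

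There is no real obstacle here; the proposition is essentially a direct consequence of the two items established in the preceding problem, together with the asymmetry axiom of a strict ordering (which prevents $1 < 1$). The only thing worth being careful about is to handle both the $1 < g$ and $g < 1$ cases symmetrically, rather than assuming without loss of generality that $g$ is positive — although invoking Problem 1.1.1(1) makes the reduction to the positive case completely transparent.
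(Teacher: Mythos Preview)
Your proof is correct and essentially the same as the paper's. The only cosmetic difference is that the paper builds the chain $1 < g < g^2 < \cdots$ directly via left-invariance and transitivity, whereas you package that step as an appeal to Problem~1.1(2); these are the same argument.
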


\pf   If $g$ is an element of the left-ordered group $G$ and $1 < g$, then $g < g^2$, $g^2 < g^3$ and so on, and by transitivity we conclude that $ 1 < g^n$ for all positive integers $n$.  The case $g<1$ is similar. \qed

\begin{problem}\label{problem between}  Show that if $f$ and $g$ are elements of a left-ordered group and $f \ne 1$ then $g$ is strictly between $fg$ and $f^{-1}g$ and also strictly between $gf$ and $gf^{-1}$.
\end{problem}

\section{Examples}
\begin{example}
The additive reals $(\R, +)$, rationals $(\Q, +)$ and integers $(\Z, +)$ are bi-ordered groups, under their usual ordering.  On the other hand, the multiplicative group of nonzero reals, $(\R \setminus \{ 0 \}, \cdot )$, cannot be bi-ordered.  The element $-1$ has order two; by Proposition \ref{torsionfree} this is impossible in a left-orderable group.\end{example}

\begin{example}
Both left- and bi-orderability are clearly preserved under taking subgroups.
If $G$ and $H$ are left- or bi-ordered groups, then so is their direct product 
$G \times H$ using lexicographic ordering  \index{lexicographic ordering}, which declares that 
$(g, h) < (g', h')$   if and only if  $g <_G g'$ or else $g=g'$ and  
$h <_H h'.$
\end{example}

\begin{example}\label{orderZ2}
Consider the additive group $\Z^2$.  It can be ordered lexicographically as just described, taking $G = H = \Z$.  Another way to order  $\Z^2$ is to think 
of it sitting in 
the plane $\R^2$ in the usual way, and then choose a vector $\vec{v} \in \R^2$ which has irrational slope.
We can order $\vec{m} = (m_1, m_2), \vec{n} = (n_1, n_2) \in \Z^2$ according to their dot product with $\vec{v}$, that is
$$\vec{m} < \vec{n} \iff  m_1v_1 + m_2v_2 < n_1v_1 + n_2v_2$$
We leave the reader to check that this is an invariant strict total ordering, and that one obtains uncountably many different orderings of  $\Z^2$ in this way.  If 
$\vec{v}$ has rational slope, then one may also compare as above, but using lexicographically the dot product with $v$ and then with some pre-chosen vector orthogonal to $\vec{v}$.  Higher dimensional spaces can be invariantly ordered in a similar manner.
\end{example}

\begin{problem}\label{extension}
Suppose $G$ is a group with normal subgroup $K$ and quotient group $H \cong G/K$.  In other words, suppose there is an exact sequence
$$1 \rightarrow K \hookrightarrow G \xrightarrow{p} H \rightarrow 1. $$
Further suppose $(H, <_H)$ and $(K, <_K)$ are left-ordered groups.  Verify that we can then give $G$ a left-ordering defined in a sort of lexicographic way: declare that $g < g'$ if and only if either $p(g) <_H p(g')$ or else $p(g) = p(g')$ 
(so $g^{-1}g' \in K$) and $1 <_K g^{-1}g'$. 
\end{problem}

\begin{figure}%
\label{Klein}
\setlength{\unitlength}{6cm}
\begin{picture}(1,0.94439153)%
    \put(0,0){\includegraphics[height=\unitlength]{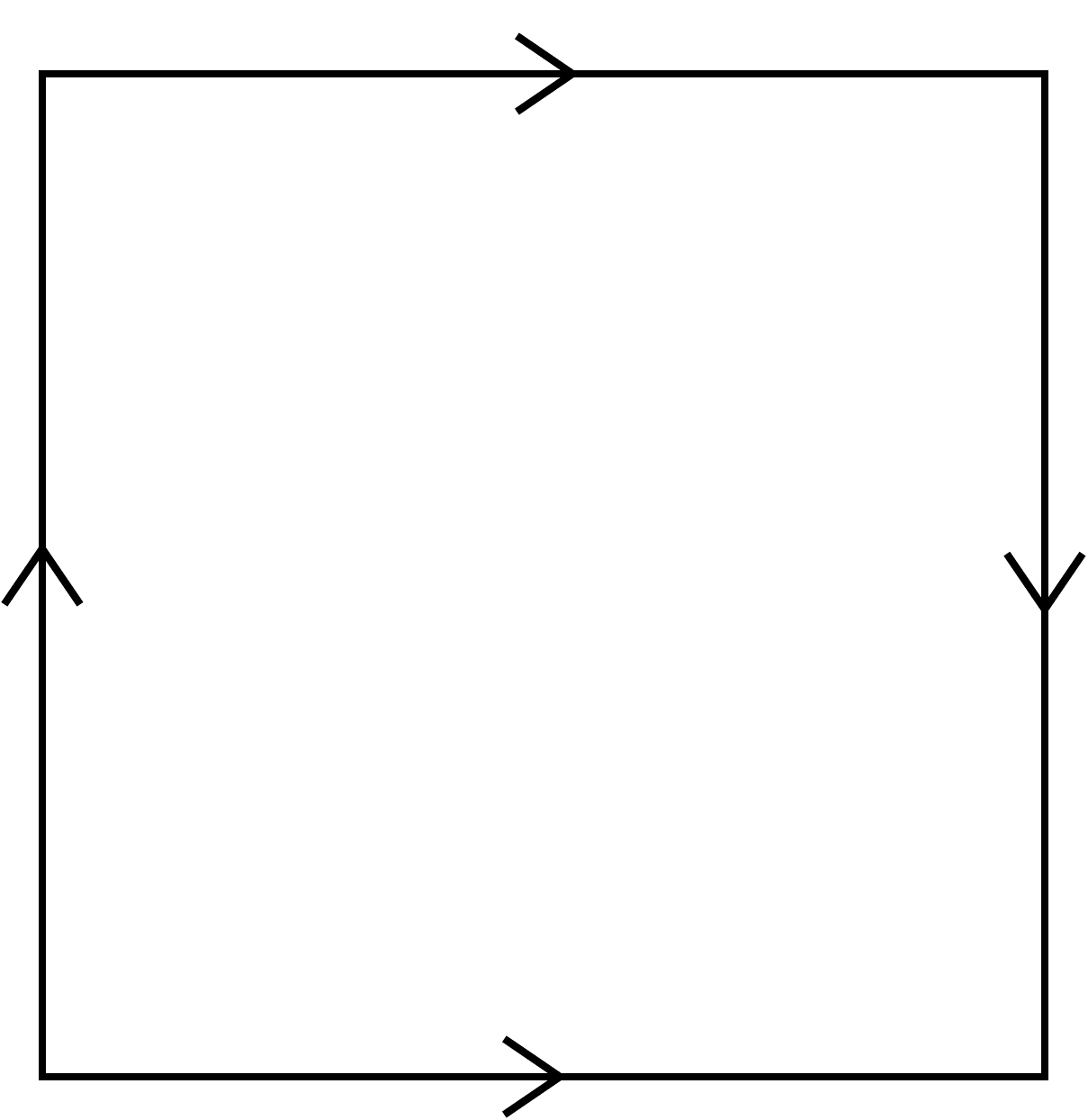}}%
    \put(0.46276399,0.85614612){$x$}%
    \put(0.45149788,0.10546392){$x$}%
    \put(-0.05290441,0.47864313){$y$}%
    \put(1.03,0.47864313){$y$}%
  \end{picture}%
\caption{The Klein bottle as a square with opposite sides identified as shown.}
\end{figure}

\begin{example}
The Klein bottle \index{Klein bottle} is a nonorientable surface, \index{nonorientable surface} which can be considered as a square with opposite sides identified with each other in the directions indicated in Figure \ref{Klein}.  We see that its fundamental group has the presentation with two generators $x$ and $y$ and the relation $yxyx^{-1} = 1$.  In other words,
$$K = \pi_1(Klein \; Bottle) \cong \langle x, y \mid xyx^{-1} = y^{-1} \rangle$$
\end{example}

\begin{problem}\label{klein}
Show that the subgroup $\langle y \rangle$ of the Klein bottle group $K$ which is generated by $y$ is a normal subgroup isomorphic to $\Z$ and that the quotient subgroup $K / \langle y \rangle$ is also isomorphic with $\Z$.  Use this to show that $K$ is left-orderable.  Finally, conclude that $K$ cannot be given a bi-invariant ordering, by using the defining relation to derive a contradiction.
\end{problem}

\begin{example}\label{homeo+}
Let $\mathrm{Homeo}_+(\R)$ \index{H@$\mathrm{Homeo}_+(\R)$ } denote the group of all order-preserving homeomorphisms of the real line 
\index{homeomorphisms of the real line} -- that is, continuous functions with continuous inverses and which preserve the usual order of the reals.  This is a group under composition.  It can be left-ordered in the following way.  Let 
$x_1, x_2, \dots$ be a countable dense set of real numbers.  
For two functions $f, g \in \mathrm{Homeo}_+(\R)$, compare them by choosing $m = m(f,g)$ to be the minimum $i$ for which $f(x_i) \ne g(x_i)$ and then declare that 
$f \prec g$ if and only if $ f(x_m) < g(x_m)$ (in the usual ordering of $\R$). 
\end{example}

\begin{problem}\label{homeo2}
Verify that $\prec$ is a left-ordering of $\mathrm{Homeo}_+(\R)$.  Hint: to show that 
$f \prec g, g \prec h \implies f \prec h$, consider the cases $m(f,g) = m(g,h)$ and 
$m(f,g) \ne m(g,h)$ separately.
\end{problem}

We will see later that $\mathrm{Homeo}_+(\R)$ is universal for countable left-orderable groups, in the sense that any countable left-orderable group embeds in $\mathrm{Homeo}_+(\R)$.

\begin{problem}
\label{covering group problem}
Suppose that $G$ is a path connected topological group, which as a space has universal cover $\widetilde{G}$.  Show that there is a multiplication on $\widetilde{G}$ that is compatible with the multiplication on $G$, meaning that the covering map $p : \widetilde{G} \rightarrow G$ becomes a group homomorphism.
\end{problem}

Recall that a (left) {\em action} of a group $G$  \index{group action} on a set $X$ is a binary operation $G \times X \to X$ which satisfies $1x = x$ and
$(gh)x = g(hx)$ for all $g, h \in G, x \in X$.  

\begin{problem}
\label{covering group actions}
Suppose that $G$ and $\widetilde{G}$ are as above and $G$ acts on a space $X$.  Show that if $\widetilde{X}$ is the universal cover of $X$, then $\widetilde{G}$ acts on $\widetilde{X}$.
\end{problem}

\begin{example} \label{special linear groups}\index{S@$ \mathrm{SL}(2, \R)$}
The group 
\[ \mathrm{SL}(2, \R) = \left\{ 
\left( \begin{array}{cc}
a & b  \\
c & d  \end{array} \right) : a,b,c,d \in \R, ad-bc=1 \right\}
\]
is naturally a subgroup of $\mathrm{SL}(2,\C)$, and it is conjugate to the subgroup
\[
\mathrm{SU}(1, 1) = \left\{ 
\left( \begin{array}{cc}
\alpha & \beta  \\
\bar{\beta} & \bar{\alpha}  \end{array} \right) : \alpha, \beta \in \C, |\alpha|^2 - |\beta|^2 =1 \right\}
\]
The conjugacy is given by sending each matrix $A \in \mathrm{SL}(2, \R)$ to the matrix $JAJ^{-1} \in \mathrm{SU}(1, 1) $, where $J = \left( \begin{array}{cc}
1 & -i  \\
1 & i  \end{array} \right)$.
Now thinking of the group in this way, we can observe a faithful action of $\mathrm{PSL}(2, \R) = \mathrm{SL}(2, \R)/ \{ \pm I \}$ on the unit circle $S^1 \subset \C$ by homeomorphisms.  An element of $\mathrm{PSL}(2, \R)$ acts on $z \in S^1$ by first choosing a representative $A \in \mathrm{SL}(2, \R)$, converting $A$ to an element of $\mathrm{SU}(1,1)$ and then applying the associated M{\"o}bius transformation.  In other words if $A = \left( \begin{array}{cc}
a & b  \\
c & d  \end{array} \right)$ then $JAJ^{-1} = \left( \begin{array}{cc}
\alpha & \beta  \\
\bar{\beta} & \bar{\alpha}  \end{array} \right)$ for some $\alpha, \beta \in \C$ with  $|\alpha|^2 - |\beta|^2 =1$, and then we can define
\[
A(z) =\cfrac{\alpha z + \beta}{\bar{\beta} z +\bar{\alpha}}
\]

By considering $\mathrm{SL}(2, \R) $ as a subspace of $\R^4$, we can think of it as a $3$-manifold and its quotient $\mathrm{PSL}(2, \R) $ is also a manifold.  Thus it admits a universal covering space $p: \widetilde{\mathrm{PSL}}(2,\R)\rightarrow \mathrm{PSL}(2,\R)$, and the universal covering space has a group structure that is lifted from the base space, as in Problem \ref{covering group problem}.  The action of $\mathrm{PSL}(2,\R)$ on the circle lifts to an action of $\widetilde{\mathrm{PSL}}(2,\R)$ on $\R$ by orientation-preserving homeomorphisms by Problem \ref{covering group actions}, so we can think of $\widetilde{\mathrm{PSL}}(2,\R)$ as a subgroup of $\mathrm{Homeo}_+(\R)$ (see \cite{khoi03} for details).  Since $\mathrm{Homeo}_+(\R)$ is left-orderable, so is $\widetilde{\mathrm{PSL}}(2,\R)$.

\begin{problem}
Check that the definition in the previous example yields an action of $\mathrm{PSL}(2, \R) $ on $S^1$, by checking that $A \mapsto JAJ^{-1}$ defines an isomorphism of $\mathrm{SL}(2, \R)$ with $\mathrm{SU}(1, 1)$, and that $\left| \cfrac{\alpha z + \beta}{\bar{\beta} z +\bar{\alpha}} \right| =1$ whenever $|z|=1$.
\end{problem}

\begin{problem}
Show that, as a subspace of $\R^4$, $\mathrm{SL}(2, \R)$  is homeomorphic with an open solid torus:
$\mathrm{SL}(2, \R) \cong S^1 \times \C$.  Moreover show that the action on $\mathrm{SL}(2, \R) $ given by $M \to -M$ is fixed-point free, and so $\mathrm{PSL}(2, \R)$ is a manifold, in fact also an open solid torus, and the projection map $\mathrm{SL}(2, \R) \to \mathrm{PSL}(2, \R) $ is a covering space.
\end{problem}

\begin{problem}
Conclude that $\widetilde{\mathrm{PSL}}(2,\R)$ is homeomorphic with $\R^3$.
\end{problem}

\end{example}

\section{Bi-orderable groups} \label{genord}

We summarize a few algebraic facts about bi-orderable groups, which do not hold in general for left-orderable groups, and leave their proofs to the reader.  
For example, inequalities multiply:

\begin{problem} \label{multineq}
In a bi-ordered group $g_1<h_1$ and $g_2<h_2$ imply $g_1g_2 < h_1h_2$.
\end{problem}

\begin{problem}\label{uniqueroots} 
Bi-orderable groups have unique roots, \index{unique roots} that is, if $g^n = h^n$ for some $n > 0$ then $g=h$.
\end{problem}

The following was observed by B. H. Neumann \cite{Neumann49b}.

\begin{problem} \label{central}
In a bi-orderable group $G$, $g^n$ commutes with $h$ if and only if $g$ commutes with $h$.  Hint: For the nontrivial direction, assume $g$ and $h$ do not commute, say $g < h^{-1}gh$, and multiply this inequality by itself several times to conclude $g^n$ cannot commute with $h$.  Show more generally that if $g^n$ and $h^m$ commute for some nonzero integers $m$ and $n$, then $g$ and $h$ must commute.
\end{problem}

\begin{problem}
Bi-orderable groups do not have generalized torsion: \index{generalized torsion} any product of conjugates of a nontrivial element must be nontrivial.  In particular, $x^{-1}yx = y^{-1}$ implies $y=1$.
\end{problem}

On the down side, bi-orderable groups do not behave as nicely under extension as left-orderable groups do.  As seen in Problem \ref{klein} we have a group $K$ which is flanked by bi-orderable groups in a short exact sequence (and is left-orderable for that reason) but it is not bi-orderable.

\begin{problem}\label{extendO}
Consider groups $K$, $G$ and $H = G/K$ as in Problem \ref{extension}, with $$1 \rightarrow K \hookrightarrow G \xrightarrow{p} H \rightarrow 1$$
exact.  Suppose $K$ and $H$ are bi-ordered.  Then the recipe of  Problem \ref{extension} defines a bi-ordering of $G$ if and only if the conjugation action of $G$ upon $K$ preserves the given ordering of $K$.  
\end{problem}

\section{Positive cone}

\begin{theorem}
A group $G$ is left-orderable if and only if there exists a subset $P \subset G$ such that 

(1) $P\cdot P \subset P$  and 

(2) for every $g \in G$, exactly one of $g = 1$,  $g \in P$ or $g^{-1} \in P$ holds.  
\end{theorem}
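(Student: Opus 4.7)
The proof is a standard back-and-forth equivalence, and the plan is to pass between an ordering $<$ and the associated set of ``positive'' elements $P = \{g \in G : 1 < g\}$. I would present the two directions as complementary constructions, each essentially a direct verification from definitions, and flag at the outset that the critical subtlety is encoding left-invariance in purely subset-theoretic terms.

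For the forward direction, assume $(G,<)$ is left-ordered and set $P = \{g \in G : 1 < g\}$. Closure under multiplication $P \cdot P \subset P$ is exactly Problem 1(2). The trichotomy (2) comes from the fact that $<$ is a strict \emph{total} ordering: for any $g \in G$ exactly one of $g = 1$, $1 < g$, or $g < 1$ holds; by Problem 1(1), the last case is equivalent to $g^{-1} \in P$. So this direction reduces to invoking facts already established in Problem 1.

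For the converse, given $P$ satisfying (1) and (2), I define $g < h \iff g^{-1}h \in P$. The work is then to check four things: transitivity (follows from $(g^{-1}h)(h^{-1}k) = g^{-1}k$ together with $P\cdot P \subset P$); asymmetry (if both $g^{-1}h$ and its inverse $h^{-1}g$ lay in $P$, this would violate the exclusivity in (2)); totality (apply (2) to the single element $g^{-1}h$, and read the three cases as $g=h$, $g<h$, $h<g$ respectively); and finally left-invariance (for any $f$, the product $(fg)^{-1}(fh)$ telescopes to $g^{-1}h$, so membership of this element in $P$ is unchanged).

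The one point I would highlight as the main (if modest) obstacle is precisely that last computation: the whole construction works because $(fg)^{-1}(fh) = g^{-1}h$, i.e.\ the expression $g^{-1}h$ that defines $<$ is manifestly left-translation invariant. This is what forces us to use $g^{-1}h \in P$ rather than, say, $hg^{-1} \in P$ — the latter would instead yield a right-invariant ordering, consistent with Problem \ref{right order}. Once this choice is made, everything else is a one-line check, and the two constructions are visibly mutual inverses, since starting from $<$, forming $P$, and then forming the associated ordering recovers $g < h \iff g^{-1}h \in P \iff 1 < g^{-1}h \iff g < h$ by left-invariance.
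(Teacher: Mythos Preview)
Your proof is correct and follows exactly the same approach as the paper: define $P = \{g \in G : 1 < g\}$ in one direction and $g < h \iff g^{-1}h \in P$ in the other. The paper's own proof in fact just states these two recipes and defers all the verifications you carried out to the reader as a problem, so your write-up is strictly more detailed than what appears there.
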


\pf
Given such a $P$, 
the recipe $g<h$ if and only $g^{-1}h \in P$ is easily seen to define a
left-invariant strict total order, and conversely such an ordering defines 
the set  $P = \{ g \in G : 1 < g \}$, called the {\it positive cone}. \index{positive cone}

\begin{problem}
Verify the details of this proof.  
\end{problem}

\begin{problem}
Show that $G$ is bi-orderable if and only if it admits a subset $P$ satisfying 
(1), (2) above, and in addition 

(3) $gPg^{-1} \subset P$ for all $g \in G$.
\end{problem}

\begin{example}
The positive cone for the ordering of $\Z^2$ described in Problem \ref{orderZ2} is the set of all points in the plane which lie to one side of the line through the origin which is orthogonal to $\vec{v}$, if $\vec{v}$ has irrational slope.  If the slope is rational, one must also include points of $\Z^2$ on one half of that orthogonal line to lie in the positive cone.
\end{example}

\begin{example}
In Problem \ref{extension}, the positive cone for the ordering described for $G$ is the union of the positive cone of (the ordering of) $K$ and the pullback $p^{-1}(P_H)$ of the positive cone of $H$.  That is: $P_G = P_K \cup p^{-1}(P_H)$.
\end{example}

\begin{problem}
Let $(G,<)$ be a left-ordered group.  Then the following are equivalent:

(1) The ordering $<$ is also right-invariant.

(2)  For every $g, h \in G$, if $g < h$ then $h^{-1} < g^{-1}$.

(3)   For every $g, h \in G$, if $g < gh$ then $g < hg$.

(4)  If $g_1 < h_1$ and $g_2 < h_2$ then $g_1g_2 < h_1h_2$.

\end{problem}

\begin{problem} \index{Klein bottle group}
Show that the Klein bottle group discussed above is isomorphic with the group  
$\langle a, b\ ; \  a^2 = b^2 \rangle$.  Define an explicit function
$h: \langle a, b\ ; \  a^2 = b^2 \rangle \to  \langle x, y : xyx^{-1} = y^{-1} \rangle$
by assigning $h(a)$ and $h(b)$ expressions as words in $x$ and $y$ and show that the relation $a^2 = b^2$ in the domain implies $xyx^{-1} = y^{-1}$ in the range, so that $h$ is a homomorphism.  Similarly define a homomorphism in the other direction and verify that it is inverse to $h$. 
\end{problem}

Another way of seeing this isomorphism is to observe that the Klein bottle is the union of two M\"obius bands, glued along their boundaries, and apply the theorem of Seifert and Van Kampen.

\begin{problem}
Show that the Klein bottle group does {\em not} have unique roots.  Indeed, we have $a \ne b$ (why?) but $a^2 = b^2$.  This gives another proof that it is not bi-orderable.
\end{problem}

\section{Topology and the spaces of orderings}

It is time for topology to enter the picture.  We recall that a topological space \index{topological space} is a set $X$ and a collection of subsets of $X$, called open sets, for which finite intersections and arbitrary unions of open sets are open.  The space $X$ itself and the empty set $\emptyset$ are always considered open.  A subset is closed if its complement is open.   Any subset $A$ of $X$ inherits a topology from a topology on $X$ by taking sets of the form 
$A \cap U$, where $U$ is an open subset of $X$, to be open in $A$.  The discrete topology on a set is the one in which {\em every} subset is open.

An open covering of a space is a collection of open sets whose union is the whole space.  A space is {\em compact} if every open covering has a finite subcollection whose union is the space.   A {\em basis} for a topology on $X$ is a collection $\mathcal{B}$ of subsets of $X$ such that the open sets are exactly all unions of sets in $\mathcal{B}$.  

%
%
%
 
\subsection{Topology on the power set}
\label{topology on power set}

For any set $X$, one may consider the collection of all its subsets---that is, its power set---often denoted $\mathcal{P}(X)$ or $2^X$.  This latter notation indicates that the power set may be identified with the set of all functions $X \to \{0, 1\}$ (using von Neumann's  definition $2 :=  \{0, 1\}$), via the characteristic function $\chi_A : X \to  \{0, 1\}$ associated to a subset $A \subset X$ defined by 
$$\chi_A(x) =
  \begin{cases}
     1 \text{ if } x \in A, \\
    0 \text{ if } x \notin A.
   \end{cases}
$$

The set $2^X$ is a special case of a product space: one gives $\{0, 1\}$ the discrete topology, and $2^X$ is considered the product of copies of $\{0, 1\}$ indexed by 
the set $X$.  The product topology is the the smallest topology on the set $2^X$ such that for each $x \in X$ the sets $ \{f \in 2^X : f(x) = 0\}$ and $ \{f \in 2^X : f(x) = 1\}$ are open.  In other notation, the subsets of  $\mathcal{P}(X)$ of the form
$$U_x = \{A \subset X : x \in A\} \quad \text{ and } \quad U_x^c = \{A \subset X : x \notin A\}$$
are open in the ``Tychonoff'' topology on the power set.  Note that the sets $U_x$ and $U_x^c$ are also closed, as they are each other's complement.   A basis for the topology can be gotten by taking finite intersections of various $U_x$ and $U_x^c$.
A famous theorem of Tychonoff asserts that an arbitrary product of compact spaces is again compact.  Since the space $\{ 0, 1 \}$ is compact, we conclude:

\begin{theorem}
The power set $\mathcal{P}(X)$ of any set $X$, with the Tychonoff topology, is compact. \end{theorem}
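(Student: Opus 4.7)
The plan is simply to apply Tychonoff's theorem to the identification of the power set as a product already set up in the excerpt. Concretely, I would first recall the bijection $\mathcal{P}(X) \to 2^X$ sending each subset $A$ to its characteristic function $\chi_A$; under this bijection, the Tychonoff topology on $\mathcal{P}(X)$ described above is by definition the product topology on $\prod_{x \in X} \{0,1\}$, where each factor $\{0,1\}$ carries the discrete topology. (I would verify briefly that the sub-basic open sets $U_x$ and $U_x^c$ correspond to the sub-basic open sets $\{f : f(x) = 1\}$ and $\{f : f(x) = 0\}$ of the product topology, so the two descriptions agree.)

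Next I would observe that the two-point space $\{0,1\}$ with the discrete topology is compact: any open cover already contains $\{0\}$ and $\{1\}$ among its members (or larger sets covering them), and the subcover $\{\{0\},\{1\}\}$ or any two members covering both points is finite. Then I would invoke the Tychonoff theorem, quoted just before the statement, to conclude that the product $\prod_{x \in X} \{0,1\}$ is compact, and hence so is $\mathcal{P}(X)$ via the homeomorphism $A \mapsto \chi_A$.

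The only genuine obstacle in this proof is Tychonoff's theorem itself, which is taken as a black box here (its proof requires the axiom of choice for uncountable $X$); everything else is bookkeeping to verify that the Tychonoff topology on $\mathcal{P}(X)$ really is the product topology in disguise. So my proposed proof is essentially one sentence: $\mathcal{P}(X) \cong \prod_{x \in X} \{0,1\}$ as topological spaces, each factor is compact, and a product of compact spaces is compact.
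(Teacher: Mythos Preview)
Your proposal is correct and matches the paper's argument exactly: the paper simply notes that $\{0,1\}$ is compact and invokes Tychonoff's theorem on the product $2^X$, which is precisely what you do.
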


\begin{problem}  A space is said to be {\em totally disconnected} \index{totally disconnected} if for each pair of points, there is a set which is both closed and open and which contains one of the points and not the other.  Show that $\mathcal{P}(X)$, with the Tychonoff topology, is totally disconnected.
\end{problem}

If $X$ is finite, then so is $2^X$ and the Tychonoff topology is just the discrete topology.  If $X$ is countably infinite, then  $2^X$ is homeomorphic to the Cantor space obtained by deleting middle thirds successively of the interval $[0, 1]$.   In particular,
the Tychonoff topology on $\mathcal{P}(X)$ is metrizable when $X$ is countable.
A useful characterization of the Cantor space is that any nonempty compact metric space which is totally disconnected will be homeomorphic with the Cantor space if and only if it has no isolated points.  A point is {\em isolated} if it has an open neighborhood disjoint from the rest of the space.  See \cite[Corollary 2.98]{HY61} for details.

\begin{problem}  If $A \subset X$ is a fixed subset, there is a natural inclusion 
$\mathcal{P}(A) \subset \mathcal{P}(X)$.  Show that $\mathcal{P}(A)$ is a closed subset.
\end{problem}

\begin{problem}  
Consider the complementation function $C: \mathcal{P}(X) \to \mathcal{P}(X)$ on the power set of the set $X$ defined by  $C(Y) = X \setminus Y$.  Show that $C$ is a fixed-point free involution---that is, $C$ is a homeomorphism of $\mathcal{P}(X)$ with $C^2$ the identity map and $C(Y) \ne Y$ for all $Y \in \mathcal{P}(X)$.
\end{problem}

\begin{example}\label{semigroupsclosed}
Let $G$ be a group and define $\mathcal{S}(G)$ to be the collection of all sub-semigroups of $G$.  That is, $\mathcal{S}(G) = \{ S \subset G : g, h \in S \implies gh \in S\}$.  Note that 
$\mathcal{S}(G) \subset \mathcal{P}(G)$.  We will argue that 
$\mathcal{S}(G)$ is in fact a {\em closed} subset of $\mathcal{P}(G)$.  Consider the complement $\mathcal{P}(G) \setminus \mathcal{S}(G)$.  A subset $Y$ of $X$ belongs
to $\mathcal{P}(G) \setminus \mathcal{S}(G)$ if and only if there exist $g, h \in Y$ with 
$gh \notin Y$.  Therefore
$$\mathcal{P}(G) \setminus \mathcal{S}(G) = 
\bigcup_{g, h \in G} (U_g \cap U_h \cap U_{gh}^c ).$$
Each term in the parentheses is an open set, by definition, and therefore so is the intersection of the three, and so $\mathcal{P}(G) \setminus \mathcal{S}(G)$ is a union of open sets.  It follows that  $\mathcal{S}(G)$ is closed.
\end{example}

\subsection{The spaces of orderings}
\label{space of orderings section}
In this section we will show how to topologize the set of all orderings of a group, so as to make a compact space of orderings.
\begin{definition}  The space of left-orderings of a group $G$, denoted \index{LO@$LO(G)$} $LO(G)$,  is the collection of 
all subsets $P \subset G$ such that (1) $P$ is a sub-semigroup, (2) $P \cap P^{-1} = \emptyset$ 
and (3) $P \cup P^{-1} = G \setminus \{1\}.$ 
\end{definition}

\begin{problem}  Show that $LO(G)$ is a closed subset of $\mathcal{P}(G\setminus \{1\})$ 
and of $\mathcal{P}(G)$, and is therefore a compact and totally disconnected space (with the subspace topology).
\end{problem}

\begin{problem} Suppose $<$ is a left-invariant ordering of the group $G$, and suppose we have a finite string of inequalities $g_1 < g_2 < \cdots < g_n$ which hold.  Show that the set of all left-orderings in which all these inequalities hold forms an open neighborhood of 
$<$ in $LO(G)$.  The set of all such neighborhoods is a basis for the topology of $LO(G)$.
Equivalently, a basic open set in $LO(G)$ consists of all orderings in which some specified finite set of elements of $G$ are all positive.
\end{problem}

In particular, an ordering of $G$ is {\em isolated} \index{isolated ordering} in $LO(G)$ if it is the only ordering satisfying some finite set of inequalities.  This property is also known as ``finitely determined'' in the literature.   Some groups $G$ have isolated points in $LO(G)$, while others do not, as we will see in Chapter \ref{space of orderings chapter}.

Similarly, we can define the set $O(G)$ of bi-invariant orderings on the group $G$ to be the collection of subsets $P \subset G$ satisfying (1), (2) and (3) above; and also 
$g^{-1}Pg \subset P.$

\begin{problem}  Show that $O(G)$ is a closed subset of $LO(G)$, so it is also a compact totally disconnected space. 
\end{problem}


To our knowledge, this definition of $LO(G)$ first appeared in \cite{Sikora04}.  We will discuss the structure of $LO(G)$, some of Sikora's results and other applications in greater detail in Chapter \ref{space of orderings chapter}.

\begin{problem}\label{Tychonoff topology}
\index{Tychonoff topology}
Suppose a     countable left-orderable group $G$ has its non-identity elements enumerated, so $G \setminus \{ 1\} = \{ g_1, g_2, \dots \}$.  If $<$ and $<'$ are two left-orderings of $G$, define
$$d( <, <' ) = 2^{-n},$$
where $n$ is the first index at which $<$ and $<'$ differ on $g_n$ (i.e. either $1<g_n$ and 
$g_n<'1$ or else $1<'g_n$ and $g_n<1$)  In other words, $g_n$ is in the symmetric difference of their respective positive cones.  Show that this really is a metric (the triangle inequality is the only nontrivial part).  Moreover, verify that the topology generated by this metric is the Tychonoff topology.
\end{problem}

\section{Testing for orderability}

Suppose we wish to determine if a given group $G$ is left-orderable.  Consider a set $S$ of generators of $G$, which may be infinite. 
That is, each $g \in G$ may be written as a finite product of elements of $S$ and their inverses.  The length $l(g)$ of a group element (relative to the choice of generators) is the smallest integer $k$ such that
$$g = g_{1}^{\e_1}\cdots g_{k}^{\e_k}$$ 
where each $g_i \in S$ and $\e_i = \pm 1$.  Let $G_k$ denote the set of all elements of $G$ of length at most $k$.  If $S$ is finite, $G_k$ is also a finite set, which includes the identity (length zero) and also is invariant under taking inverses.
It can be regarded as the $k$-ball of the Cayley graph of $G$, relative to the given generators.

Now let us define a subset $Q$ of $G_k$ to be a {\em proper $k$-partition} if 
(1) whenever $g, h \in Q$ and $gh \in G_k$ then $gh \in Q$, (2) $Q \cap Q^{-1} = \emptyset$ 
and (3) $Q \cup Q^{-1} = G_k \setminus \{1\}.$   

Notice that if $P$ is a positive cone 
(of a left-ordering) of $G$, then $P \cap G_k$ is a proper $k$-partition.
So the following is clear:

\begin{proposition}
Suppose $G$ is a group with generating set $S$, with respect to which there is no proper $k$-partition of $G_k$ for some positive integer $k$.  Then $G$ is not left-orderable.
\end{proposition}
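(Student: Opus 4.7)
The plan is to prove the contrapositive: assuming $G$ is left-orderable, I will exhibit a proper $k$-partition of $G_k$ for every positive integer $k$, so in particular for the $k$ given in the hypothesis.

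Suppose $G$ is left-orderable, and by the theorem on positive cones, fix a subset $P \subset G$ satisfying $P \cdot P \subset P$, $P \cap P^{-1} = \emptyset$, and $P \cup P^{-1} = G \setminus \{1\}$. For a given $k \geq 1$, set $Q := P \cap G_k$; I claim $Q$ is a proper $k$-partition in the sense defined just before the proposition. This is really just a matter of restricting each defining property of $P$ to the finite set $G_k$.

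For condition (1), if $g, h \in Q$ and $gh \in G_k$, then $g, h \in P$ so $gh \in P$ by closure under multiplication, and combined with $gh \in G_k$ this places $gh$ in $Q$. For condition (2), $Q \cap Q^{-1} \subset P \cap P^{-1} = \emptyset$. For condition (3), note that $G_k$ is closed under inversion (since lengths are preserved by inverting the defining word), so $Q \cup Q^{-1} = (P \cap G_k) \cup (P^{-1} \cap G_k) = (P \cup P^{-1}) \cap G_k = (G \setminus \{1\}) \cap G_k = G_k \setminus \{1\}$.

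There is no real obstacle here: the proposition is essentially a definitional unpacking, and the content is captured by the remark preceding it observing that $P \cap G_k$ restricts a positive cone to a proper $k$-partition. The contrapositive then yields the stated proposition immediately.
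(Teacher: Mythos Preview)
Your proof is correct and follows exactly the approach indicated in the paper: the proposition is declared ``clear'' immediately after the observation that $P \cap G_k$ is a proper $k$-partition whenever $P$ is a positive cone, and you have simply spelled out the verification of the three conditions for this restriction.
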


Perhaps surprisingly, there is a converse.

\begin{theorem}\label{partition}
Suppose $G$ is generated by $S \subset G$ with respect to which, for all $k \ge 1$, there is a proper $k$-partition of $G_k$.  Then $G$ is left-orderable.
\end{theorem}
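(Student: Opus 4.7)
The plan is to use a compactness argument in the power set $\mathcal{P}(G)$ with its Tychonoff topology, which is compact by the theorem cited in Section \ref{topology on power set}. Specifically, I will build a single positive cone $P \subset G$ by taking the intersection of a nested family of closed sets, each encoding the existence of a proper $k$-partition.

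For each $k \geq 1$, define $\mathcal{C}_k \subset \mathcal{P}(G)$ to be the collection of subsets $P \subset G$ such that $P \cap G_k$ is a proper $k$-partition of $G_k$. First I would verify $\mathcal{C}_k$ is closed: each of the three defining conditions is a conjunction of clauses of the form ``$g \in P$ and $h \in P$ imply $gh \in P$'' or ``not both of $g, g^{-1}$ lie in $P$'' or ``at least one of $g, g^{-1}$ lies in $P$,'' each of which, as in Example \ref{semigroupsclosed}, defines a closed condition using the subbasic clopen sets $U_g$ and $U_g^c$. Taking the intersection over all (at most countably many, or in general arbitrarily many) relevant triples and pairs in the $k$-ball keeps the set closed. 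Each $\mathcal{C}_k$ is nonempty by hypothesis, since a proper $k$-partition $Q$ of $G_k$, viewed as a subset of $G$, satisfies $Q \cap G_k = Q$.

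Next I would observe the nesting $\mathcal{C}_{k+1} \subset \mathcal{C}_k$. This amounts to checking that if $Q$ is a proper $(k+1)$-partition of $G_{k+1}$, then $Q \cap G_k$ is a proper $k$-partition of $G_k$. Conditions (2) and (3) pass to subsets trivially; for (1), if $g, h \in Q \cap G_k$ and $gh \in G_k \subset G_{k+1}$, then the $(k+1)$-partition property forces $gh \in Q$, hence $gh \in Q \cap G_k$. Because the family $\{\mathcal{C}_k\}$ is a nested sequence of nonempty closed subsets of the compact space $\mathcal{P}(G)$, their intersection is nonempty; pick any $P \in \bigcap_k \mathcal{C}_k$.

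Finally I would verify that $P$ is a positive cone in the sense of Section 1.4, which will finish the proof via the theorem characterizing left-orderable groups by positive cones. Given any $g, h \in P$, choose $k$ large enough that $g, h, gh \in G_k$; since $P \cap G_k$ is a proper $k$-partition this yields $gh \in P$, establishing $P \cdot P \subset P$. For trichotomy, given any nonidentity $g \in G$, take $k$ with $g, g^{-1} \in G_k$ (note $G_k$ is closed under inverses); the partition properties of $P \cap G_k$ give exactly one of $g \in P$, $g^{-1} \in P$, and also force $1 \notin P$. The main conceptual obstacle is really the bookkeeping of the nested restrictions, together with making the compactness framework do the work of ``gluing'' the locally consistent $k$-partitions into a globally consistent cone; the rest is a direct unpacking of definitions.
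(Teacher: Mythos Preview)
Your proof is correct and follows essentially the same approach as the paper: defining the nested closed sets $\mathcal{C}_k$ (the paper calls them $\mathcal{P}_k$), invoking compactness of $\mathcal{P}(G)$ to get a common element $P$, and checking that $P$ is a positive cone. You supply more detail than the paper does on closedness, nesting, and the final verification, but the argument is the same.
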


\begin{proof}
We will prove this using compactness of $\mathcal{P}(G)$.  
Consider the set $\mathcal{P}_k$ of all subsets of $G$ whose intersection with $G_k$ is a proper $k$-partition.  One argues as usual that $\mathcal{P}_k$ is a closed subset of 
$\mathcal{P}(G)$, and by hypothesis $\mathcal{P}_k$ is nonempty.  Note also that 
for all $k$ we have $\mathcal{P}_{k+1} \subset \mathcal{P}_k$.  Thus the $\mathcal{P}_k$
form a nested descending sequence of nonempty compact subsets of $\mathcal{P}(X)$.
We conclude that 
$$\bigcap_{k=1}^\infty \mathcal{P}_k \ne \emptyset.$$

Also observing that if 
$g, h$ belong to $G_k$ then $gh$ is in $G_{2k}$, we see that if 
$P \in \cap_{k=1}^\infty \mathcal{P}_k$ then $P \in LO(G)$ and we conclude that in fact 
$$LO(G) = \bigcap_{k=1}^\infty \mathcal{P}_k \ne \emptyset,$$
completing the proof.
\end{proof}

In the case of a finitely generated group, it is a finite task to check whether or not there exists a proper $k$-partition of $G_k$ for a particular fixed $k$.  If one can decide the word problem algorithmically for $G$ (with given generators), then there is an algorithm to decide whether a proper $k$-partition exists.
This means that if a finitely-generated group is not left-orderable, then the algorithm will discover that fact in finite time (although one does not know when!)  Moreover, one can design the algorithm to supply a proof of non-left-orderability if it finds a $G_k$ having no proper partition.  On the other hand, if the group under scrutiny {\em is} left-orderable, the algorithm will never end.  An example of such an algorithm, due to Nathan Dunfield, is described in \cite{CD03} and is available from his website.  In \cite{CD03} this algorithm was used to discover Example \ref{Weeks}, showing a certain torsion-free group (the fundamental group of the Weeks manifold) is not left-orderable.

\begin{theorem}\label{fgLO}
A group is left-orderable if and only if each of its finitely-generated subgroups is left-orderable.
\end{theorem}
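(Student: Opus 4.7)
The plan is to handle the two directions separately, with all of the content in the converse. The ``only if'' direction is immediate: if $<$ is a left-ordering of $G$ with positive cone $P$, then for any subgroup $H \leq G$ (finitely generated or not), the set $P \cap H$ inherits the three positive-cone axioms, so $H$ is left-ordered by the restriction of $<$.

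For the converse, I would run a compactness argument in $\mathcal{P}(G)$ with its Tychonoff topology, in the same spirit as the proof of Theorem \ref{partition}. For each finitely generated subgroup $H \leq G$, set
\[
C_H = \{\, P \subset G : P \cap H \text{ is a positive cone of a left-ordering of } H\,\}.
\]
First I would check that each $C_H$ is closed in $\mathcal{P}(G)$: for fixed $g,h \in H$, each of the semigroup, disjointness, and covering axioms for $P \cap H$ cuts out a clopen set expressible as a finite Boolean combination of the basic sets $U_g$ and $U_g^c$, and intersecting over all $g, h \in H$ leaves $C_H$ closed. Next I would verify the finite intersection property. Given any $H_1, \dots, H_n$, the subgroup $H = \langle H_1 \cup \cdots \cup H_n\rangle$ is itself finitely generated, so by hypothesis carries a positive cone $P_H \subseteq H \subseteq G$; since $H_i \subseteq H$, the restriction $P_H \cap H_i$ is the induced positive cone of $H_i$, giving $P_H \in \bigcap_{i=1}^n C_{H_i}$.

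Compactness of $\mathcal{P}(G)$ then forces $\bigcap_H C_H \neq \emptyset$, with the intersection ranging over all finitely generated subgroups of $G$. To finish I would pick any $P$ in this intersection and verify the positive-cone axioms pointwise: for any $g, h \in G \setminus \{1\}$, membership of $P$ in $C_{\langle g, h\rangle}$ yields trichotomy for $g$ and for $h$, as well as closure $gh \in P$ whenever $g, h \in P$. Hence $P \in LO(G)$. The main obstacle is really just careful bookkeeping---making sure every $C_H$ is defined as a subset of the \emph{same} ambient compact space $\mathcal{P}(G)$, so that the finite intersection property feeds into a single application of compactness rather than getting scattered across the various $\mathcal{P}(H)$.
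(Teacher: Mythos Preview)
Your proposal is correct and essentially identical to the paper's own argument. The paper indexes its closed sets by finite subsets $F \subset G$ rather than by finitely generated subgroups, defining $\mathcal{Q}(F) = \{Q \subset G : Q \cap \langle F \rangle \text{ is a positive cone for } \langle F \rangle\}$, but this is the same collection of closed sets as your $C_H$, and the finite intersection property is established in exactly the same way via $\mathcal{Q}(F_1 \cup \cdots \cup F_n) \subset \mathcal{Q}(F_1) \cap \cdots \cap \mathcal{Q}(F_n)$.
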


   The ``only if'' part is trivial.  The proof in the other direction will use the following version of compactness.  A collection of sets is said to have the {\em finite intersection property} \index{finite intersection property} if every finite subcollection of the sets has a nonempty intersection.

\begin{problem}
A topological space is compact if and only if every collection of closed subsets with the finite intersection property has a nonempty total intersection.
\end{problem}

To prove the nontrivial part of Theorem \ref{fgLO}, consider any finite subset $F$ of the given group $G$ and let
$\langle F \rangle$ denote the subgroup of $G$ generated by $F$.  Define
$$\mathcal{Q}(F) = \{ Q \subset G : Q \cap \langle F \rangle \text{ is a positive cone for }\langle F \rangle \}$$
For each finite $F \subset G$, $\mathcal{Q}(F)$ is a closed subset of $\mathcal{P}(G)$.  The family of all $\mathcal{Q}(F)$, for finite $F \subset G$, is a collection of closed sets which has the finite intersection property, because 
$$ \mathcal{Q}(F_1 \cup F_2 \cup \cdots \cup F_n) \subset \mathcal{Q}(F_1) \cap \mathcal{Q}(F_2) \cap \cdots \cap \mathcal{Q}(F_n).$$

By compactness, $\bigcap_{F \subset G \hspace{1mm} \rm{finite}} \mathcal{Q}(F) \ne \emptyset$.

\begin{problem}  Verify that any element of $\bigcap_{F \subset G \hspace{1mm} \rm{finite}} \mathcal{Q}(F)$
is a left-ordering of $G$, completing the proof.  In fact 
$$\bigcap_{F \subset G \hspace{1mm} \rm{finite}} \mathcal{Q}(F) = LO(G).$$
\end{problem}

\begin{theorem}
\label{torsion free abelian}
An abelian group $G$ is bi-orderable if and only if it is torsion-free.
\end{theorem}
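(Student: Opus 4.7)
The ``only if'' direction is immediate: a bi-orderable group is in particular left-orderable, and Proposition \ref{torsionfree} shows that left-orderable groups are torsion-free. All the work lies in the converse, so I will focus on producing a bi-ordering on an arbitrary torsion-free abelian group $G$.

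The first observation is that the distinction between left-orderability and bi-orderability vanishes in the abelian setting: since conjugation is trivial, part (3) of the first Problem in the chapter tells us that any left-ordering of $G$ is automatically a bi-ordering. So it suffices to produce a left-ordering, and I can invoke Theorem \ref{fgLO} to reduce the task to showing that every finitely generated subgroup of $G$ is left-orderable. Each such subgroup is itself torsion-free and abelian, so the problem reduces to the finitely generated case.

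A finitely generated torsion-free abelian group is, by the classification of finitely generated abelian groups, isomorphic to $\Z^n$ for some $n \ge 0$. I would then order $\Z^n$ lexicographically, iterating the construction of Example \ref{orderZ2}: either appeal directly to the product/lexicographic construction for two bi-ordered groups given earlier in the Examples section, or build it up inductively from the short exact sequence
\[
0 \to \Z^{n-1} \to \Z^n \to \Z \to 0
\]
using Problem \ref{extendO} (the quotient action is trivial, so the extension recipe yields a bi-ordering).

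The only real obstacle is having the structure theorem for finitely generated abelian groups on hand; once that is available the proof is a straightforward assembly of earlier results (Proposition \ref{torsionfree}, Problem on conjugation-invariance, Theorem \ref{fgLO}, and the lexicographic ordering of $\Z^n$). If one preferred to avoid the structure theorem, an alternative is to embed $G$ into the $\Q$-vector space $G \otimes_\Z \Q$, choose a Hamel basis, well-order it, and transfer a lexicographic ordering back to $G$; but the reduction through Theorem \ref{fgLO} to $\Z^n$ is cleaner and more in keeping with the tone of the chapter.
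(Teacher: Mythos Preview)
Your proof is correct and follows essentially the same route as the paper: reduce to the finitely generated case via Theorem \ref{fgLO}, invoke the structure theorem to identify each finitely generated subgroup with $\Z^n$, and use the orderings of $\Z^n$ already constructed (Example \ref{orderZ2}). The paper's proof is the same argument, just more tersely stated.
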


\proof  We need only show that torsion-free abelian groups are left-orderable (which in this case is equivalent to bi-orderable).  But any finitely generated subgroup is isomorphic to 
$\Z^n$ for some $n$, which we have already seen to be bi-orderable (Example \ref{orderZ2}).  The result follows from Theorem \ref{fgLO}. \qed

\section{Characterization of left-orderable groups}  

Following \cite{Conrad59}, we have a number of characterizations of left-orderability of a group $G$.  If $X \subset G$, we let $S(X)$ denote the semigroup generated by $X$, that is all elements of $G$ expressible as (nonempty) products of elements of $X$ (no inverses allowed).

\begin{theorem}\label{loequiv}
A group $G$ can be left-ordered if and only if for every finite subset  $ \{x_1, \dots , x_n \}$ of $G$ which does not contain the identity, there exist $\e_i = \pm 1$ such that 
$1 \not\in S(\{x_1^{\e_1}, \dots , x_n^{\e_n}\})$.
\end{theorem}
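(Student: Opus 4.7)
The forward direction is immediate: given a left-ordering of $G$ with positive cone $P$, for any finite $\{x_1, \ldots, x_n\} \subset G \setminus \{1\}$ set $\epsilon_i = +1$ when $x_i \in P$ and $\epsilon_i = -1$ otherwise, so that each $x_i^{\epsilon_i}$ lies in the semigroup $P$. Hence $S(\{x_1^{\epsilon_1}, \ldots, x_n^{\epsilon_n}\}) \subseteq P$, which does not contain $1$.

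For the converse, my plan is to use compactness of $\mathcal{P}(G)$ to prove directly that $LO(G) \neq \emptyset$. Observe that $LO(G)$ is the intersection, over $g \in G \setminus \{1\}$, of the closed subsets $\{P : 1 \notin P\}$, $\{P : g \in P \text{ or } g^{-1} \in P\}$, and $\{P : \text{not both of } g, g^{-1} \text{ belong to } P\}$, together with $\{P : g \in P \text{ and } h \in P \text{ imply } gh \in P\}$ taken over pairs $g,h \in G$. By the finite intersection property for the compact space $\mathcal{P}(G)$, it suffices to produce, for every finite subcollection of these conditions, some $P \subset G$ satisfying all of them.

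So fix such a finite subcollection, and let $F \subset G \setminus \{1\}$ be the finite set obtained by collecting all group elements explicitly mentioned in the chosen conditions and then adjoining the inverse of every element together with every product $gh$ arising from a chosen semigroup-closure condition. Apply the hypothesis to $F$ to obtain signs $\epsilon_x \in \{\pm 1\}$ with $1 \notin S(\{x^{\epsilon_x} : x \in F\})$. From this single assertion I would extract two compatibility facts: first, whenever both $x$ and $x^{-1}$ lie in $F$ one must have $\epsilon_{x^{-1}} = -\epsilon_x$, else the product $x^{\epsilon_x}(x^{-1})^{\epsilon_{x^{-1}}}$ equals $1$ and lies in $S$; second, whenever $g, h, gh \in F$ with $\epsilon_g = \epsilon_h = +1$ one must have $\epsilon_{gh} = +1$, else $g\cdot h\cdot(gh)^{-1}=1$ lies in $S$. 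Taking $P = \{x \in F : \epsilon_x = +1\}$ then realises every condition in the chosen finite subcollection, since the first fact supplies the trichotomy clauses for each $g \in F$ and the second supplies the semigroup-closure clauses for each relevant triple.

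I do not expect a deep obstacle; the whole argument is a pairing of compactness with a careful bookkeeping choice of $F$. The only real care is to enlarge $F$ so that it is closed under inverses and contains the products $gh$ featuring in the chosen semigroup conditions, ensuring that observations (i) and (ii) actually reach every constraint being checked.
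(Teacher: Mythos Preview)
Your argument is correct. Both your approach and the paper's rest on the compactness of $\mathcal{P}(G)$, but you deploy it differently. The paper routes through two previously established compactness results---Theorem~\ref{fgLO} (reduction to finitely generated subgroups) and Theorem~\ref{partition} (existence of proper $k$-partitions)---and then applies the hypothesis to the finite set $G_k \setminus \{1\}$ to produce a proper $k$-partition. You instead write $LO(G)$ directly as an intersection of closed conditions and invoke the finite intersection property once, building the witness $P$ from the signs $\epsilon_x$ on an ad hoc finite set $F$. Your route is more self-contained, since it does not appeal to Theorems~\ref{fgLO} or~\ref{partition}; the paper's route has the advantage of reusing machinery already in place and of isolating the combinatorial content in the notion of a $k$-partition. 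One small point worth making explicit in your write-up: when a chosen semigroup condition involves a pair $g,h$ with $gh=1$, you should not add $gh$ to $F$, and the condition is then handled by your observation~(i) rather than~(ii).
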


One direction is clear, for if $<$ is a left-ordering of $G$, just choose $\e_i$ so that $x_i^{\e_1}$ is greater than the identity.  For the converse,
by Theorem \ref{fgLO} we may assume that $G$ is finitely generated, and by Theorem \ref{partition} we need only show that each $k$-ball $G_k$, with respect to a fixed finite generating set, has a proper $k$-partition.  To do this, let $ \{x_1, \dots , x_n \}$ denote the entire set $G_k \setminus \{1\}$, and choose $\e_i = \pm 1$ such that 
$1 \not\in S(\{x_1^{\e_1}, \dots , x_n^{\e_n}\})$.  

\begin{problem}  Show that the set $G_k \cap S(\{x_1^{\e_1}, \dots , x_n^{\e_n}\})$ is a proper $k$-partition of $G_k$, completing the proof of Theorem \ref{loequiv}.
\end{problem}

Another characterization of left-orderability is due to Burns and Hale \cite{BH72}.

\begin{theorem}[Burns-Hale]\label{burnshale}
\index{Burns-Hale theorem}
A group $G$ is left-orderable if and only if for every finitely-generated subgroup 
$H \ne \{ 1 \}$ of $G$, there exists a left-orderable group $L$ and a nontrivial homomorphism $H \to L$.
\end{theorem}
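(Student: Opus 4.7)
The plan is to handle the two directions very differently. The forward direction is immediate: if $G$ carries a left-invariant order, then any nontrivial finitely-generated subgroup $H \leq G$ inherits that order, so taking $L = H$ with the identity map $H \to L$ furnishes the required nontrivial homomorphism to a left-orderable group.

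For the substantive direction, my strategy is to combine two tools already established in the chapter. First, by Theorem \ref{fgLO} it suffices to prove that every finitely generated subgroup of $G$ is left-orderable, so without loss of generality I assume $G$ itself is finitely generated. Second, I invoke the semigroup characterization from Theorem \ref{loequiv}: I must show that for every finite set $\{x_1,\dots,x_n\} \subset G\setminus\{1\}$ there exist signs $\e_i = \pm 1$ such that $1 \notin S(\{x_1^{\e_1},\dots,x_n^{\e_n}\})$.

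I will prove this last statement by induction on $n$. For $n=1$, the subgroup $H = \langle x_1\rangle$ is nontrivial and finitely generated, so the hypothesis gives a left-orderable group $L$ with positive cone $P_L$ and a nontrivial homomorphism $\phi: H \to L$. Necessarily $\phi(x_1) \neq 1$, so I pick $\e_1 \in \{\pm 1\}$ with $\phi(x_1^{\e_1}) \in P_L$; then $\phi$ sends every element of $S(\{x_1^{\e_1}\})$ into $P_L$, in particular away from $1$. For the inductive step with $n > 1$, I apply the hypothesis to $H = \langle x_1,\dots,x_n\rangle$ to obtain a nontrivial $\phi : H \to L$ with $L$ left-ordered and positive cone $P_L$. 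Partition $\{1,\dots,n\} = I \sqcup J$ with $I = \{i : \phi(x_i) \neq 1\}$ and $J = \{i : \phi(x_i) = 1\}$; since $\phi$ is nontrivial on the generating set, $I \neq \emptyset$. If $J = \emptyset$, I pick $\e_i$ so that each $\phi(x_i^{\e_i}) \in P_L$ and every element of the semigroup maps to a product of elements of $P_L$, so is nontrivial. If $J \neq \emptyset$, then $|J| < n$, and the inductive hypothesis applied to $\{x_j : j \in J\}$ yields signs $\e_j$ ($j\in J$) with $1 \notin S(\{x_j^{\e_j}: j \in J\})$; I augment these with signs $\e_i$ ($i\in I$) chosen so that $\phi(x_i^{\e_i}) \in P_L$. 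The key observation is then that any word $w \in S(\{x_1^{\e_1},\dots,x_n^{\e_n}\})$ either uses only indices in $J$---in which case $w \in S(\{x_j^{\e_j}: j \in J\})$ and is nontrivial by induction---or uses at least one index from $I$, in which case $\phi(w)$ is a nonempty product in $L$ whose factors all lie in $P_L \cup \{1\}$ with at least one in $P_L$, hence $\phi(w) \in P_L$ and so $w \neq 1$.

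The main obstacle is precisely the presence of generators whose $\phi$-image is trivial: a one-shot attempt to choose all signs simultaneously fails for these, since $L$ carries no information about them. The induction on $n$ together with the decomposition $I \sqcup J$ is what disposes of this issue cleanly, using positivity in $L$ to control the $I$-part and the inductive hypothesis to control the $J$-part.
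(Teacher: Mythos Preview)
Your proof is correct and follows essentially the same approach as the paper: both use the semigroup criterion (Theorem~\ref{loequiv}) and an induction on $n$, splitting the generators according to whether their image under the supplied homomorphism is trivial, then handling the kernel part by the inductive hypothesis and the non-kernel part by positivity in $L$. The only minor difference is that your preliminary reduction to finitely generated $G$ via Theorem~\ref{fgLO} is unnecessary, since Theorem~\ref{loequiv} already applies to arbitrary $G$; the paper simply invokes the criterion directly.
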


\begin{proof}  One direction is obvious.  To prove the other direction, assume the subgroup condition.  According to Theorem \ref{loequiv}, the result will follow if one can show:

Claim:  For every finite subset  $ \{x_1, \dots , x_n \}$ of $G \setminus \{1\}$ , there exist $\e_i = \pm 1$ such that  $1 \not\in S(x_1^{\e_1}, \dots , x_n^{\e_n})$.

We will establish this claim by induction on $n$.  It is certainly true for $n = 1$, for $S(x_1)$
cannot contain the identity unless $x_1$ has finite order, which is impossible since the cyclic subgroup $\langle x_1\rangle$ must map nontrivially to a left-orderable group.

Next assume the claim is true for all finite subsets of $G \setminus \{1\}$ having fewer than $n$ elements, and consider $ \{x_1, \dots , x_n \} \subset G \setminus \{1\}$.  By hypothesis, there is a nontrivial homomorphism 
$$h : \langle x_1, \dots , x_n   \rangle \rightarrow L$$ 
where $(L, \prec)$ is a left-ordered group.  Not all the $x_i$ are in the kernel since the homomorphism is nontrivial, so we may assume they are numbered so that 
$$h(x_i) 
  \begin{cases}
   \ne 1 \text{ if } i= 1, \dots, r, \\
   = 1  \text{ if } r < i \le n.
   \end{cases}
$$   
Now choose $\e_1, \dots, \e_r$ so that $ 1 \prec h(x_i^{\e_i})$ in $L$ for $i= 1, \dots, r$.
For $i > r$, the induction hypothesis allows us to choose $\e_i = \pm 1$ so that 
$1 \not\in S(x_{r+1}^{\e_{r+1}}, \dots, x_n^{\e_n})$.  We now check that $1 \not\in S(x_1^{\e_1}, \dots , x_n^{\e_n})$ by contradiction.  Suppose that $1$ {\em is} a product of some of the $x_i^{\e_i}$.  If all the $i$ are greater than $r$, this is impossible, as $1 \not\in S(x_{r+1}^{\e_{r+1}}, \dots, x_n^{\e_n})$.  On the other hand if some $i$ is less than or equal to $r$, we see that $h$ must send the product to an element strictly greater than the identity in $L$, again a contradiction.  \end{proof}

A group is said to be {\em indicable} if it has the group of integers $\Z$ as a quotient, and \index{locally indicable} {\it locally indicable} if each of its nontrivial finitely-generated subgroups is indicable.  This notion was introduced by Higman \cite{Higman40} to study zero divisors and units in group rings (see Section \ref{zero divisor section}).

\begin{corollary}
Locally indicable groups are left-orderable.
\end{corollary}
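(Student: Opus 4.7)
The plan is to derive this corollary directly from the Burns--Hale theorem (Theorem \ref{burnshale}). The key observation is that $\mathbb{Z}$ itself is a left-orderable group (indeed, bi-orderable, as in the first example), so any nontrivial homomorphism from a finitely-generated subgroup into $\mathbb{Z}$ automatically provides the kind of homomorphism required by Burns--Hale.

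First I would let $G$ be a locally indicable group and take an arbitrary nontrivial finitely-generated subgroup $H \le G$. By local indicability, $H$ admits $\mathbb{Z}$ as a quotient; that is, there exists a surjective (in particular nontrivial) homomorphism $H \twoheadrightarrow \mathbb{Z}$. Setting $L = \mathbb{Z}$ with its usual left-ordering, this is precisely a nontrivial homomorphism from $H$ to a left-orderable group.

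Since the above applies to every nontrivial finitely-generated subgroup of $G$, the hypothesis of Theorem \ref{burnshale} is satisfied, and we conclude that $G$ is left-orderable. There is no real obstacle here — the whole content of the result is that Burns--Hale allows the target left-orderable group to depend on $H$, and local indicability supplies such a target (always $\mathbb{Z}$) in a uniform way. The only thing to watch is that one does not confuse ``indicable'' (having $\mathbb{Z}$ as a quotient) with the stronger ``locally indicable'' — but since Burns--Hale needs the condition on \emph{all} nontrivial finitely-generated subgroups, it is precisely the local version that is needed.
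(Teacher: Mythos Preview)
Your proof is correct and is exactly the argument the paper intends: the corollary is stated immediately after the Burns--Hale theorem with no separate proof, precisely because taking $L=\mathbb{Z}$ for each nontrivial finitely-generated subgroup $H$ verifies the Burns--Hale hypothesis directly.
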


\begin{corollary}
Suppose $G$ is a group which has a (finite or infinite) family of normal subgroups 
$\{G_\alpha\}$ such that  $\cap_\alpha G_\alpha = \{ 1 \}$.  
If all the factor groups $G/G_\alpha$ are 
left-orderable, then $G$ is left-orderable.
\end{corollary}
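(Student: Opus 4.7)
The plan is to derive this corollary directly from the Burns-Hale theorem (Theorem \ref{burnshale}). That theorem reduces left-orderability of $G$ to producing, for every nontrivial finitely generated subgroup $H$ of $G$, a nontrivial homomorphism from $H$ into some left-orderable group. The hypothesis on the family $\{G_\alpha\}$ is tailor-made to supply such homomorphisms via the quotient maps $G \to G/G_\alpha$.

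Concretely, I would first fix a nontrivial finitely generated subgroup $H \le G$ and pick any element $h \in H$ with $h \neq 1$. Since $\bigcap_\alpha G_\alpha = \{1\}$, the element $h$ fails to lie in at least one of the normal subgroups, say $h \notin G_{\alpha_0}$. Then the restriction to $H$ of the canonical projection $p_{\alpha_0}\colon G \to G/G_{\alpha_0}$ is a homomorphism $H \to G/G_{\alpha_0}$ which sends $h$ to a nontrivial element, so it is nontrivial. By hypothesis the target $G/G_{\alpha_0}$ is left-orderable, so the Burns-Hale criterion is satisfied at $H$.

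Since $H$ was an arbitrary nontrivial finitely generated subgroup of $G$, Theorem \ref{burnshale} concludes that $G$ itself is left-orderable. There is essentially no obstacle in this argument beyond noticing the right way to use the intersection condition: the only thing it is really doing is guaranteeing that for each nontrivial $h \in G$ some quotient $G/G_\alpha$ does not kill $h$, which is exactly the input Burns-Hale demands. No separate compactness argument or explicit construction of a positive cone on $G$ is needed, since all of that work is already packaged inside Theorem \ref{burnshale}.
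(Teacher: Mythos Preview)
Your proof is correct and essentially identical to the paper's: both invoke the Burns-Hale theorem, select a nontrivial element of $H$ (equivalently, observe that $H \setminus G_\alpha$ is nonempty for some $\alpha$), and use the restriction of the projection $G \to G/G_\alpha$ as the required nontrivial homomorphism to a left-orderable group.
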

\begin{proof}
If $H$ is a finitely generated nontrivial  subgroup of $G$, one can choose 
$\alpha$ for which $H \setminus G_\alpha$ is nonempty.  Then the composition of homomorphisms $H \to G \to G/G_\alpha$ is a nontrivial homomorphism of $H$ to a 
left-orderable group. \end{proof}

\begin{problem} 

Show that each of the following conditions on a group $G$ is equivalent to left-orderability: 

(1)  For each element $g \ne 1$ in $G$, there exists a subsemigroup $S_g$ of $G$ which contains $g$ but not $1$ and such that $G \setminus S_g$ is also a semigroup.

(2)  For each finite subset $x_1, \dots , x_n$ of $G$, the intersection of the $2^n$ subsemigroups
$S(1, x_1^{\e_1}, \dots, x_n^{\e_n})$ is equal to $\{ 1 \}$, where the $\e_i$ are $\pm 1$.

(3) There exists a set ${\bf S}$ of subsemigroups of $G$ whose intersection is $\{ 1\}$ and such that for every $g \in G$ and $S \in {\bf S}$, either $g \in S$ or $g^{-1} \in S$. 

See \cite{Conrad59} if you get stuck, but note that he uses the right-ordering convention.
\end{problem}

A subset $Q$ of a group $G$ is called a {\it partial left-order} if it is a subsemigroup 
($Q\cdot Q \subset Q$) such that $Q \cap Q^{-1} = \emptyset$.  $Q$ can be regarded as the positive cone of a left-invariant {\em partial}     order of the group.  In particular, $Q$ corresponds to a total left-order if and only if $G \setminus \{ 1\} = Q \cup Q^{-1}$.  If $Q$ and $Q'$ are partial 
left-orders such that $Q \subset Q'$, then $Q'$ is called an {\it extension} of $Q$.  The following is a useful criterion for a partial order to extend to a total one.

\begin{problem}
A partial left-order $Q$ on $G$ has an extension to a total left-order if and only if whenever
 $ \{x_1, \dots , x_n \}$ is a finite subset of $G$ which does not contain the identity $1$ of $G$, there exist $\e_i = \pm 1$ such that $1 \not\in S(Q \cup \{x_1^{\e_1}, \dots , x_n^{\e_n}\})$.
\end{problem}

\section{Group rings and zero divisors}
\label{zero divisor section}
We will now discuss one of the algebraic reasons it is worth knowing that a group is left-orderable.

If $R$ is a ring with identity and $G$ is a group (written multiplicatively), then the group ring $RG$ is defined to be the free left $R$-module generated by the elements of $G$, endowed with a natural multiplication analogous to products of polynomials.  That is, a typical element of $RG$ is a finite formal linear combination
$$
\sum_{i=1}^m r_ig_i
$$
with $r_i \in R$ and $g_i \in G$.  The product is defined by the formula
\begin{equation}
\left(\sum_{i=1}^m r_ig_i \right) \left(\sum_{j=1}^n s_jh_j \right) = 
\sum_{i=1}^m\sum_{j=1}^n r_is_j(g_ih_j)
\label{prod}
\end{equation}
Of course, on the right-hand side of Equation (\ref{prod}), cancellations may be possible, and this leads to some mischief, as the example below illustrates.  If $1$ is the identity of $G$, then the group ring element $r1$ is customarily denoted simply as $r$, and likewise for the ring identity, also denoted by $1$, $1g$ may be abbreviated as $g$.

Group rings (known as group algebras if $R$ is a field) arise naturally in representation theory, algebraic topology, Galois theory, etc.  An important problem is the so-called zero-divisor conjecture, which dates back at least to the 1940's, often attributed to Kaplansky.  It remains unsolved even for the case $R = \Z$.  Recall that an element $\alpha \ne 0$ of a ring is called a {\em zero divisor} if there exists another ring element $\beta \ne 0$ such that 
$\alpha\beta = 0$.

\begin{conjecture}[Zero divisor conjecture]  \label{zerodivisors}
\index{Zero divisor conjecture}
If $R$ is a ring without zero divisors and $G$ is a torsion-free group, then $RG$ has no zero divisors.
\end{conjecture}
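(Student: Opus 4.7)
Conjecture \ref{zerodivisors} is the classical Kaplansky zero-divisor conjecture, and as the preceding paragraph explicitly records, it is open even for $R = \Z$; so any honest ``proof proposal'' here can only be a plan of attack together with a frank identification of where the plan stalls. My proposed plan has two preliminary reductions and one combinatorial core. First, a hypothetical pair of nonzero $\alpha, \beta \in RG$ with $\alpha\beta = 0$ has finite supports, and those supports generate a finitely generated subgroup $H \le G$ which is itself torsion-free; so the conjecture for finitely generated torsion-free groups implies it in general. Second, for such an $H$ the statement $\alpha\beta \ne 0$ would follow immediately from the \emph{unique product property} (UPP): that for any nonempty finite $A, B \subset H$, some $c \in AB$ has a unique factorization $c = ab$ with $a \in A$, $b \in B$. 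Indeed, writing $\alpha = \sum_i r_i g_i$, $\beta = \sum_j s_j h_j$ with supports $A$ and $B$, the coefficient of such a $c$ in $\alpha\beta$ is a single product $r_i s_j$, which is nonzero because $R$ has no zero divisors.

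The core step is therefore to prove UPP for every finitely generated torsion-free group. The cleanest attempt uses Burns--Hale (Theorem \ref{burnshale}): if every nontrivial finitely generated subgroup of $H$ admits a nontrivial homomorphism to a left-orderable group, then $H$ itself is left-orderable, and a minimum-element argument in the left-ordering delivers a unique-product element of $AB$. For bi-orderable $H$ this is transparent --- $\min(A)\cdot\min(B)$ is the unique minimum of $AB$ --- and for merely left-orderable $H$ one works with the orbit $a_0 B$, where $a_0 = \min A$, and verifies uniqueness by applying left-invariance to any putative alternative factorization $a_0 b_0 = a' b'$.

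The main obstacle, and what makes Conjecture \ref{zerodivisors} genuinely hard rather than merely tedious, is precisely this final step. Being torsion-free is strictly weaker than admitting any of the structures --- local indicability, left-orderability, UPP, or diffuseness --- that the plan above requires: a finitely generated torsion-free group need not be left-orderable (the fundamental group of the Weeks manifold, discussed later in this book, is an explicit example), and Rips and Segev constructed finitely generated torsion-free groups in which UPP fails outright. Thus no argument modelled on the combinatorial techniques developed in this chapter can resolve the full conjecture; a proof would have to import new ingredients --- $L^2$-Betti numbers and L\"uck approximation, the theory of diffuse or sofic groups, or ring-theoretic localisation inside $RG$ itself --- none of which is presently known to cover every torsion-free $G$. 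A responsible write-up therefore records the reduction together with the UPP-based proof in the left-orderable case, and flags the gap rather than claiming the full conjecture.
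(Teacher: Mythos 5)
This statement is a conjecture, not a theorem: the paper offers no proof of it (and explicitly records that it remains open even for $R = \Z$), so the only thing to compare your proposal against is the special case the paper does establish. Your reduction and your treatment of the left-orderable case --- a minimal element of the product set survives cancellation by left-invariance --- is essentially the paper's proof of the theorem immediately following the conjecture, and your identification of the genuine obstruction (torsion-free does not imply left-orderable, locally indicable, or the unique product property) is exactly right.
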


One of the strongest reasons for knowing whether a group is orderable is that the zero divisor conjecture is true for left-orderable groups.  Before proving this, let us discuss by example how zero divisors, and nontrivial units (elements with inverses), can arise in group rings.  If $r$ is an invertible element of $R$ and $g$ an arbitrary element of $G$, then the ``monomial'' $rg$ is clearly a unit of $RG$: $(rg)(r^{-1}g^{-1}) = 1$.  Such a unit is called a {\it trivial} unit of $RG$.

\begin{example}
Consider the ring of integers $R = \Z$ and the cyclic group of order five, 
$G = \langle x \mid  x^5 = 1 \rangle$.  Define the following elements of $RG$:
$$
\alpha =  1 + x + x^2 + x^3 + x^4, \quad \beta = 1 - x, 
\quad \gamma =  1 - x^2 - x^3, \quad \delta = 1 - x - x^4
$$
\end{example}

\begin{problem}  Verify that $\alpha\beta = 0$ and $\gamma\delta = 1$.  Therefore, the group ring in this example has zero divisors and nontrivial units as well.
\end{problem}

The existence of nontrivial units in group rings, like the zero divisor problem, is a notoriously difficult problem in algebra.  However, for left-orderable groups the answer is straightforward.

\begin{theorem}
If $R$ is a ring without zero divisors and $G$ is a left-orderable group, then the group ring $RG$ does not have zero divisors or nontrivial units. 
\end{theorem}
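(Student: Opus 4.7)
The plan is to mimic the standard ``leading term'' argument familiar from group rings of bi-orderable groups, but to adapt it carefully so that only left-invariance of the order is used. Fix a left-ordering $<$ of $G$ and write two nonzero elements
$$\alpha = \sum_{i=1}^m r_i g_i,\qquad \beta = \sum_{j=1}^n s_j h_j,$$
with distinct $g_i$'s, distinct $h_j$'s, and all $r_i, s_j \ne 0$. Sort the $h_j$ so that $h_1 < h_2 < \cdots < h_n$. I would then single out a specific term in the product whose group element cannot be hit by any other pair $(i,j)$, and whose coefficient is therefore forced to be $r_{i^\ast} s_n \ne 0$ (using the hypothesis that $R$ has no zero divisors).

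The key technical step is the following uniqueness claim, which I would prove first. Let $i^\ast$ be the index maximizing $g_i h_n$ in $G$ (well defined since the $g_i h_n$ are distinct). I would show that $g_{i^\ast} h_n$ is the unique maximum of the set $\{g_i h_j : 1 \le i \le m,\ 1 \le j \le n\}$. Indeed, for any $(i,j)$, left-invariance applied to $h_j \le h_n$ gives $g_i h_j \le g_i h_n$, and by choice of $i^\ast$ we have $g_i h_n \le g_{i^\ast} h_n$. Equality throughout forces $j=n$ and then $g_i = g_{i^\ast}$, so $(i,j) = (i^\ast, n)$. Consequently the coefficient of $g_{i^\ast} h_n$ in $\alpha\beta$ is exactly $r_{i^\ast} s_n \ne 0$, so $\alpha\beta \ne 0$. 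This rules out zero divisors.

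For nontrivial units I would run the mirror argument: letting $i_\ast$ minimize $g_i h_1$, the same reasoning shows $g_{i_\ast} h_1$ is the unique minimum of $\{g_i h_j\}$, with coefficient $r_{i_\ast} s_1 \ne 0$. Now assume $\alpha\beta = 1$, so $\mathrm{supp}(\alpha\beta) = \{e\}$. Then the unique maximum $g_{i^\ast} h_n$ and the unique minimum $g_{i_\ast} h_1$ both equal $e$, which forces every product $g_i h_j$ (trapped between them) to equal $e$. But $g_i h_1 = e = g_i h_2$ would force $h_1 = h_2$, contradicting distinctness unless $n=1$; symmetrically $m=1$. Hence $\alpha = r_1 g_1$ and $\beta = s_1 h_1$ are monomials with $g_1 h_1 = e$ and $r_1 s_1 = 1$ (and then $s_1 r_1 = 1$ follows in any ring without zero divisors), i.e.\ both are trivial units.

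The point I expect to require the most care is the uniqueness of the maximum: in the bi-orderable case one simply takes $g_m h_n$ and uses right-invariance to compare $g_i h_n$'s directly, but under only left-invariance the indices $i^\ast$ and $i_\ast$ may be different from $m$ and $1$, and a priori may be distinct from each other. Resisting the temptation to compare $g_i$ and $g_{i'}$ using right-multiplication, and instead always arguing ``row by row'' within a fixed value of $j$ before optimizing over $i$, is the subtle step that makes the proof work with only a left-invariant order.
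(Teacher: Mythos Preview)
Your argument is correct and follows essentially the same approach as the paper: both proofs sort the $h_j$'s and use left-invariance to show that the product has a unique minimal term (with $j=1$) and a unique maximal term (with $j=n$), neither of which can cancel. Your presentation is slightly more explicit in singling out the index $i^\ast$ and in the unit argument, but the underlying idea is identical.
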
 

\begin{proof} Consider a product, as in Equation (\ref{prod}), where we assume that
the $r_i$ and $s_j$ are all nonzero, the $g_i$ are distinct and the $h_j$ are written in strictly ascending order, with respect to a given left-ordering of $G$.  At least one of the group elements $g_ih_j$ on the right-hand side of (\ref{prod}) is minimal in the left-ordering.  If $j > 1$ we have, by left-invariance, that
$g_ih_1 < g_ih_j$ and $g_ih_j$ is not minimal.  Therefore we must have $j=1$.  On the other hand, since we are in a group and the $g_i$ are distinct, we have that $g_ih_1 \ne g_kh_1$ for any $k\ne i$.  We have established that there is exactly one minimal term on the right-hand side of (\ref{prod}), and similarly there is exactly one maximal term.  It follows that they survive any cancellation, and so the right-hand side cannot be zero (because $r_is_1 \ne 0$).  Thus $RG$ has no zero divisors.  If one of $n$ or $m$ is greater than one, there are at least two terms on the right-hand side of 
(\ref{prod}) which do not cancel, so the product cannot equal $1$.  This implies that all units of $RG$ are trivial. \end{proof}

\section{Torsion-free groups which are not left-orderable}  
\label{torsion free nonlo section}

Left-orderable groups are torsion-free, but there are many examples to show the converse is far from true.  One of the simplest examples, which has appeared several times in the literature, is the following.

%

%

\begin{example}
\index{crystallographic group}
We will consider a crystallographic group $G$ which is torsion-free but not left-orderable.
Specifically consider the group $G$ with generators $a, b, c$ acting on $\R^3$ with coordinates $(x, y, z)$ by the rigid motions:
$$a(x, y, z) = (x+1, 1-y, -z)$$
$$b(x, y, z) = (-x, y+1, 1-z)$$
$$c(x, y, z) = (1-x, -y, z+1)$$   

One can easily check the relations $a^2 b a^2 = b, b^2 a b^2 = a$ and  $abc = id$.  By the last relation we see that one generator may be eliminated.  In fact $G$ has the presentation
$G = \langle a, b \mid a^2 b a^2 = b, b^2 a b^2 = a \rangle$.
\end{example}

\begin{problem}
Check the relations cited above.  Argue that the group $G$ is torsion-free.
\end{problem}

\begin{problem}
Argue that $G$ is not left-orderable as follows.  First show that for all choices of 
$m,n \in \{-1, +1\}$ one has $a^{2m}b^na^{2m} = b^n$ and $b^{2n}a^mb^{2n} = a^m$.  Then argue that
\begin{align*}
(a^m b^n)^2(b^n a^m)^2 &= 
a^m b^{-n} b^{2n} a^m b^{2n} a^m b^n a^m \\
&= a^m b^{-n} a^{2m} b^{n} a^{2m} a^{-m} \\
&= a^m b^{-n} b^{n} a^{-m} = 1
\end{align*}
Conclude that if $G$ were left-orderable, all choices of sign for $a$ and $b$ would lead to a contradiction.
\end{problem}

\begin{problem}
Show that the subgroup $A = \langle a^2, b^2, c^2 \rangle$ is generated by shifts (by even integral amounts) in the directions of the coordinate axes, and so is a free abelian group of rank 3.  Moreover $A$ is normal in $G$ and of finite index.  Therefore $G$ is virtually bi-orderable, in the sense that a finite index subgroup is bi-orderable.
\end{problem}

\index{Klein bottle group} 

Next we will construct an infinite family of examples.  Consider the Klein bottle \index{Klein bottle} group
$K = \langle a, b \mid a^2 = b^2\rangle$.   

\begin{problem} Verify that $a^2$ and $ab$ commute, that the subgroup $H = \langle a^2, ab \rangle$ is an index two subgroup of $K$ and that $H \cong \Z^2$.
\end{problem}

In fact, $H$ can be regarded as the fundamental group of the 2-dimensional torus which double-covers the Klein bottle as in Figure \ref{torus double cover}, the so-called oriented double cover. 

\begin{figure}%
\includegraphics[scale=0.8]{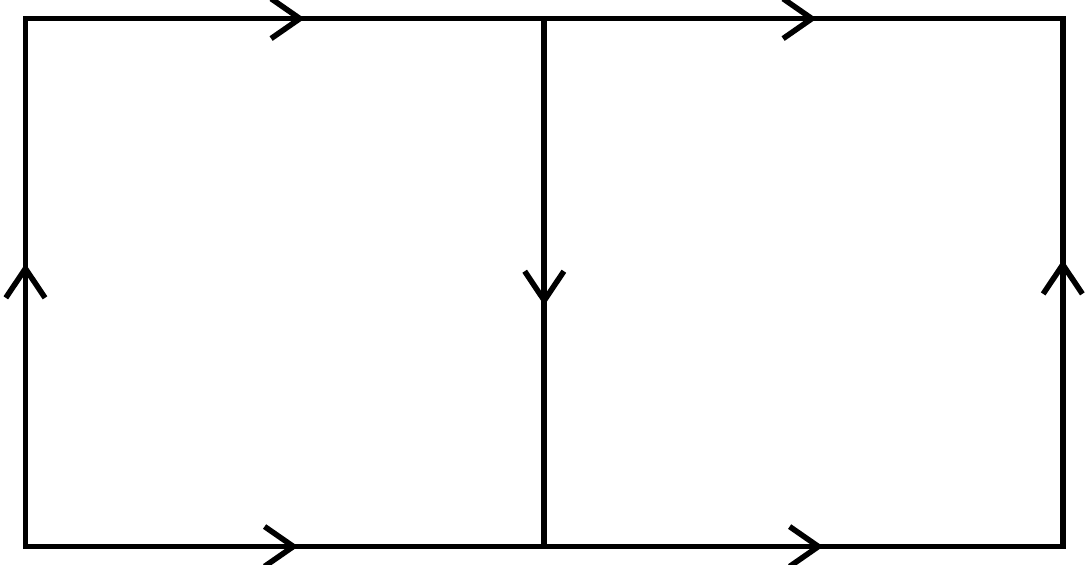}
\caption{The torus as a rectangle with opposite sides identified, which we can subdivide into two Klein bottles as shown.}
\label{torus double cover}
\end{figure}
 
   Alternatively, we can realize $K$ as the 2-dimensional crystallographic group generated by the glide reflections $$a(x, y) = (x+1, -y) \quad b(x,y) = (x+1, 1-y)$$ and $H$ as the subgroup of orientation-preserving motions.

Now take two copies $K_1$ and $K_2$ of the Klein bottle group, and amalgamate them along their corresponding subgroups $H_1$ and $H_2$.  An isomorphism 
$\phi: H_1 \to H_2$ is given by a $2 \times  2$ matrix (using the bases $\{a_i^2, a_ib_i\}$)
$$\phi \sim 
\left(
\begin{array}{cc}
p & q   \\
r  & s   \\ 
\end{array}
\right) $$
with determinant $\pm 1$.  We take this to mean, in multiplicative notation,
$$\phi(a_1^2)=(a_2^2)^p(a_2 b_2)^q; \quad \phi(a_1b_1) = (a_2^2)^r(a_2 b_2)^s$$

This identification defines an amalgamated free product
$$G_\phi := K_1 *_{\phi}K_2$$
which has the presentation
$$
G_\phi = \langle a_1, b_1, a_2, b_2 \mid a_1^2 = b_1^2, \; a_2^2 = b_2^2, \;
a_1^2 = (a_2^2)^p(a_2 b_2)^q,\; a_1b_1 = (a_2^2)^r(a_2 b_2)^s \rangle
$$

The groups $G_\phi$ are torsion-free, since they are amalgamated products of torsion-free (in fact left-orderable) groups.  This can be seen by considering the normal form for elements of an amalgamated free product, see for example \cite{Serre80}, Section 1.3, Corollary 2.

\begin{example}
\label{glued Klein bottles}
Suppose $p, q \ge 0$ and $r, s \le 0$ (or vice-versa).  Then $G_\phi$ is \textit{not} left-orderable.
\end{example}

To see this, suppose for contradiction that $G_\phi$ is left-orderable.  Then the first relation implies that $a_1$ and $b_1$ must have the same sign (either both are positive or both are negative) and the second implies  $a_2$ and $b_2$ also have the same sign.  The third relation 
implies that $a_1$ (and hence $b_1$) has the same sign as $a_2$ and $b_2$ (note that one of $p$ or $q$ must be strictly positive).  But then the last relation implies $a_1b_1$ has the opposite sign as $a_2$ and $b_2$, the desired contradiction. \qed

\begin{problem}
Calculate that the abelianization of $G_\phi$ is a finite group of order $16|p + q -r -s|$, and therefore this construction provides infinitely many non-isomorphic groups which are torsion-free but not left-orderable.
\end{problem}

It will be seen later that the $G_\phi$ are the fundamental groups of an interesting class of  3-manifolds: the union of two twisted $I$-bundles over the Klein bottle.  Further examples of torsion-free groups which are not left-orderable are discussed in Chapter \ref{three manifolds chapter}.

Finally, we mention a useful result, due independently to Brodskii \cite{Brodskii84} and Howie \cite{Howie82}.  See also \cite{Howie00} for a simpler proof.  The difficult direction is to show that torsion-free implies locally indicable.
\index{locally indicable}

\begin{theorem}  If $G$ is a group which has a presentation with a single relation, the following are equivalent:
\index{one-relator groups}
\begin{enumerate} 
\item $G$ is torsion-free 
\item $ G$ is locally indicable  
\item $G$ is left-orderable.
\end{enumerate}
\end{theorem}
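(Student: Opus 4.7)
The plan is to close the triangle (1) $\Rightarrow$ (2) $\Rightarrow$ (3) $\Rightarrow$ (1). Two of these are essentially already in the chapter: (2) $\Rightarrow$ (3) is the corollary stating locally indicable groups are left-orderable (proved via Burns--Hale), and (3) $\Rightarrow$ (1) is Proposition \ref{torsionfree}. So the whole content of the theorem is in the implication (1) $\Rightarrow$ (2): every torsion-free one-relator group is locally indicable. I would state these trivial implications first and then concentrate all effort on this direction.

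For (1) $\Rightarrow$ (2), I would follow the Magnus--Moldavanskii hierarchy. Write $G = \langle X \mid w \rangle$ with $w$ cyclically reduced. By the Karrass--Magnus--Solitar theorem on torsion in one-relator groups, torsion-freeness of $G$ is equivalent to $w$ not being a proper power in the free group on $X$. Now induct on the length of $w$. The base cases are trivial: if $|X|=1$ then $G$ is cyclic and torsion-free, so $G \cong \{1\}$ or $G\cong \Z$; if $w$ is empty then $G$ is free; in either case $G$ is locally indicable. For the inductive step, suppose some generator $t \in X$ has exponent sum zero in $w$. Then Magnus' rewriting process expresses $G$ as an HNN extension
\[
G \;=\; \langle\, G', t \mid t A t^{-1} = B \,\rangle,
\]
where $G'$ is a one-relator group on the conjugates $t^{k} x t^{-k}$ (for $x \in X\setminus\{t\}$ and $k$ in a bounded range) with a shorter defining relator that is still not a proper power, and where $A, B \le G'$ are free subgroups. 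If no generator has exponent sum zero in $w$, apply the standard change-of-generators trick (introducing a new generator and using the exponent sums) to reduce to the previous case. By induction $G'$ is locally indicable.

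The remaining task is to show that such an HNN extension $G$ is locally indicable, assuming the base group $G'$ is. Indicability of $G$ itself is immediate: the homomorphism $G \to \Z$ sending $t \mapsto 1$ and $G' \mapsto 0$ is nontrivial. For local indicability, take any nontrivial finitely generated subgroup $H \le G$ and consider its action on the Bass--Serre tree of the HNN decomposition. Either $H$ fixes a vertex, in which case $H$ is contained in a conjugate of $G'$ and is indicable by the inductive hypothesis, or $H$ acts without a fixed vertex. In the second case, Bass--Serre theory expresses $H$ itself as the fundamental group of a nontrivial graph of groups with vertex groups that are finitely generated subgroups of conjugates of $G'$. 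Collapsing the vertex groups yields a surjection of $H$ onto the fundamental group of the underlying graph, which is a free group of positive rank, giving the required nontrivial map $H \to \Z$.

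The main obstacle is the Magnus hierarchy step: one must justify carefully that the rewritten relator is genuinely simpler, is still not a proper power (so the inductive hypothesis applies), and that the associated subgroups $A, B$ are free---these are standard but nontrivial facts from one-relator group theory, and the exponent-sum-zero reduction requires a little combinatorial care. The HNN-to-local-indicability step is the key innovation of Brodskii and Howie; everything else is setup and Bass--Serre bookkeeping.
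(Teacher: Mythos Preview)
The paper does not actually prove this theorem: it only states the result, attributes it to Brodskii and Howie (with a pointer to \cite{Howie00} for a simpler proof), and remarks that ``the difficult direction is to show that torsion-free implies locally indicable.'' So there is no in-paper argument to compare against; your proposal is being measured against the cited literature.

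Your easy implications are fine and match what the paper has available: (2) $\Rightarrow$ (3) is the corollary of Burns--Hale, and (3) $\Rightarrow$ (1) is Proposition~\ref{torsionfree}. Framing (1) $\Rightarrow$ (2) via the Magnus--Moldavanskii hierarchy is also the right starting point, and you correctly flag the Karrass--Magnus--Solitar characterisation of torsion and the exponent-sum trick as technical ingredients.

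The genuine gap is in your Bass--Serre step. You assert that if $H$ acts on the Bass--Serre tree of the HNN extension without a global fixed vertex, then collapsing vertex groups in the induced graph-of-groups decomposition of $H$ yields a surjection onto a free group of \emph{positive} rank. This is not true in general: the quotient graph $H\backslash T_H$ can perfectly well be a tree. Concretely, if $H$ lies in the kernel of the obvious map $G\to\Z$ (stable letter $\mapsto 1$), then $H$ sits inside a finite iterated amalgam of conjugates of $G'$ and can itself split as a nontrivial amalgam $A *_C B$; the underlying graph is then an edge, its fundamental group is trivial, and your ``collapse'' map gives nothing. Possessing a hyperbolic element does not by itself produce a homomorphism $H\to\Z$.

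This is exactly the crux of the Brodskii--Howie theorem, and it is not handled by routine Bass--Serre bookkeeping. Howie's argument uses a tower construction together with the Freiheitssatz in an essential way; Brodskii's is diagrammatic. In particular, one cannot simply invoke ``local indicability passes to HNN extensions (or amalgams) over free edge groups'' as a black box---that statement, in the generality you need here, is essentially the content of what you are trying to prove. Your outline is a correct roadmap up to this point, but the step you label ``the key innovation of Brodskii and Howie'' really is one, and your sketch does not supply it.
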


Note that the examples of torsion-free non-left-orderable groups described above have two or more defining relations.

We end this chapter with an open question.  Chehata \cite{Chehata52} constructed a bi-orderable group which is simple.  But the example is uncountable, and therefore not finitely generated. In fact, every bi-orderable simple group must be infinitely generated, because finitely generated bi-orderable groups have infinite abelianization (for a proof of this fact, see Theorem \ref{biorderable implies LI}).

\begin{question}
Is there a finitely generated left-orderable simple group?
\end{question}

\chapter{H\"{o}lder's theorem, convex subgroups and dynamics}\label{Holder chapter}

In this chapter we introduce some of the essential dynamical properties of left-orderings of groups.

\section{H\"{o}lder's Theorem}

A left-ordering $<$ of a group $G$ is called \textit{Archimedean} \index{Archimedean ordering} if for every pair of positive elements $g,h \in G$ there exists $n > 0 $ such that $h <g^n$.   For example, the standard orderings of $(\Q, +)$ and $(\R, +)$ are Archimedean.

\begin{problem}
Verify that the orderings of $\mathbb{Z}^2$ constructed in Example \ref{orderZ2} are Archimedean, whenever the vector $\vec{v} \in R^2$ has irrational slope.  On the other hand, the lexicographic ordering is not Archimedian.
\end{problem}

There is a reason why these few examples of Archimedean ordered groups are rather simple.   It turns out that all Archimedian 
left-orderings must be bi-orderings, from which we can prove that Archimedean ordered groups are abelian.  We begin with a proof of these two facts.

\begin{lemma}\cite[Theorem 3.8]{Conrad59}
\label{lem:conrad}
Every Archimedean left-ordering is a bi-ordering.
\end{lemma}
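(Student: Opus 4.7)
By the characterization of bi-orderings among left-orderings as those invariant under conjugation (part (3) of the first problem of this chapter), it suffices to prove: whenever $1 < g$ and $h \in G$, one has $1 < h^{-1}gh$.

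Suppose for contradiction that $g > 1$ while $h^{-1}gh < 1$ for some $h$ (the case of equality is impossible, as it would force $g = 1$). The first step is to reduce to the case $h > 1$. If instead $h < 1$, set $(g', h') := (h^{-1}g^{-1}h,\ h^{-1})$; then $h' > 1$ by assumption, $g' = (h^{-1}gh)^{-1} > 1$ by hypothesis, and the direct computation $(h')^{-1}g'h' = h \cdot h^{-1}g^{-1}h \cdot h^{-1} = g^{-1} < 1$ shows that $(g', h')$ is another ``bad pair'' of the same type, but now with both entries positive.

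Assume therefore that $g, h > 1$ and $h^{-1}gh < 1$. Left-multiplying by $h$ rewrites this as $gh < h$, and iterated left-multiplication by $g$ gives $g^n h < h$ for every $n \geq 1$. The Archimedean hypothesis, applied to the positive pair $g, h$, furnishes some $n$ with $h < g^n$. Chaining, $g^n h < h < g^n$, so $g^n h < g^n$, and left-multiplication by $g^{-n}$ yields $h < 1$, contradicting $h > 1$.

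The main obstacle is spotting the symmetrization in the reduction step. A direct attack in the case $h < 1$ stalls, because then $g^n h < h$ is consistent with everything (both sides lie below $1 < g^n$) and the Archimedean inequality $h < g^n$ cannot be combined usefully. The substitution $(g,h) \mapsto (h^{-1}g^{-1}h,\, h^{-1})$ picks out, among the four conjugates of $g^{\pm 1}$ by $h^{\pm 1}$, the configuration in which both the conjugator and the conjugated element are positive---precisely the setting in which the Archimedean property has any bite.
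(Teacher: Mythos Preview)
Your proof is correct and follows the same two-step strategy as the paper: treat the case of a positive conjugator first, then reduce the negative-conjugator case to it by replacing $(g,h)$ with $(h^{-1}g^{-1}h,\,h^{-1})$. The reduction is literally the paper's second paragraph in different words. The only variation is in the positive-conjugator case: the paper argues directly, choosing $n$ with $h < g^n$ and observing that $h^{-1}g^nh = (h^{-1}g^n)(h)$ is a product of two positives, whence $(h^{-1}gh)^n>1$ and so $h^{-1}gh>1$; you instead iterate $gh<h$ to $g^nh<h$ and combine with $h<g^n$ for a contradiction. Both are one-line manipulations using the same Archimedean comparison.
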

\begin{proof}
Let $P$ denote the positive cone of an Archimedean left-ordering $<$ of a group $G$.  In order to show that $<$ is a bi-ordering, we must show that $g^{-1}Pg \subset P$ for all $g \in G$.

So, let $h \in P$ and $g \in G$, and first we will suppose that $g$ is positive.   Because the ordering is Archimedean there exists $n >0$ such that $ g<h^n$.  Therefore $1 < g^{-1} h^n$, and so $1 < g^{-1} h^n g$ since it is a product of the positive elements $g^{-1} h^n$ and $g$.  Now $1<g^{-1} h g$ since its $n$-th power is positive, and we conclude that $g^{-1} h g \in P$ for all $h \in P$.  In other words, $g^{-1} P g \subset P$.

In the second case where $g$ is negative and $h \in P$, suppose that $g^{-1} h g \notin P$ and we'll arrive at a contradiction.  Since  $g^{-1} h g \notin P$ we have 
\[ 1 < (g^{-1} h g)^{-1} = g^{-1} h^{-1} g,
\]
and by the previous paragraph, conjugation of this element by the positive element $g^{-1}$ will give a positive element.  In other words
\[ 1 < g(g^{-1} h^{-1} g)g^{-1} = h^{-1},
\]
a contradiction.  Thus $g^{-1} P g \subset P$ for negative $g \in G$ as well.
\end{proof}

\begin{problem}  Show that in an Archimedean ordered group, for every nonidentity element $g$ and every  $h \in G$ there exists $n \in \mathbb{Z}$ such that $g^n \leq h < g^{n+1}$.
\end{problem}

\begin{lemma}
\label{lem:archabel}
Every Archimedean left-ordered group is abelian.
\end{lemma}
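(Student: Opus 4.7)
The plan is to prove the stronger Hölder's theorem: every Archimedean ordered group embeds as an ordered subgroup of $(\R,+)$. Since $(\R,+)$ is abelian, this forces $G$ to be abelian as well. By Lemma \ref{lem:conrad} the ordering is already bi-invariant, so we may multiply inequalities termwise throughout.

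Fix once and for all some $g \in G$ with $1 < g$. For each $h \in G$ with $1 < h$ and each positive integer $n$, the problem preceding the lemma furnishes a unique integer $m_n(h)$ satisfying $g^{m_n(h)} \le h^n < g^{m_n(h)+1}$. Define $\phi(h) := \lim_{n \to \infty} m_n(h)/n$, then extend by $\phi(1) = 0$ and $\phi(h^{-1}) := -\phi(h)$. To verify that the limit defining $\phi(h)$ exists, raise the defining inequality to the $k$-th power, using bi-invariance repeatedly, to obtain $g^{k m_n(h)} \le h^{kn} < g^{k(m_n(h)+1)}$; combined with the definition of $m_{kn}(h)$ this yields $k m_n(h) \le m_{kn}(h) \le k m_n(h) + k - 1$. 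Applying this with the roles of $n$ and $k$ interchanged and comparing both to $m_{nk}/(nk)$ shows that $m_n(h)/n$ is a Cauchy sequence. One then checks that $\phi$ is strictly order-preserving: if $1 < h_1 < h_2$ then by the Archimedean property some power of $h_1^{-1}h_2$ exceeds $g$, which forces $m_n(h_2) - m_n(h_1)$ to grow linearly in $n$ and hence $\phi(h_1) < \phi(h_2)$. In particular $\phi$ is injective.

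The main obstacle is showing that $\phi$ is a homomorphism, i.e.\ $\phi(h_1 h_2) = \phi(h_1) + \phi(h_2)$ for positive $h_1,h_2$. Bi-invariance gives the clean sandwich $g^{m_n(h_1) + m_n(h_2)} \le h_1^n h_2^n < g^{m_n(h_1) + m_n(h_2) + 2}$, but $h_1^n h_2^n$ is not equal to $(h_1 h_2)^n$ in a non-abelian group---which is precisely what we are trying to disprove. The key idea is to show that the discrepancy $|m_n(h_1 h_2) - m_n(h_1) - m_n(h_2)|$ stays bounded independently of $n$, so that it vanishes after division by $n$. This is achieved by sandwiching $(h_1 h_2)^n$ between group elements whose relation to $g^{m_n(h_1)+m_n(h_2)}$ can be controlled using bi-invariance and the Archimedean axiom applied to the commutator corrections that arise when reassociating the $2n$-fold product $h_1 h_2 h_1 h_2 \cdots h_1 h_2$.

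Once $\phi : G \to (\R,+)$ has been verified to be an order-preserving homomorphism, injectivity from the previous step embeds $G$ into the abelian group $\R$, and the conclusion follows. The hard part, as flagged, is the additivity; the existence of the limit, the order-preservation, and the final deduction of commutativity are all straightforward bookkeeping with the Archimedean axiom and bi-invariance supplied by Lemma \ref{lem:conrad}.
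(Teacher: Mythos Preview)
Your proposal has a genuine gap at exactly the point you flag as ``the hard part.'' You correctly identify that additivity of $\phi$ is the obstacle, but your sketch for overcoming it---bounding the commutator corrections that appear when rewriting $(h_1 h_2)^n$---does not go through as stated. To pass from $(h_1 h_2)^n$ to $h_1^n h_2^n$ one must perform on the order of $n(n-1)/2$ swaps of an $h_1$ past an $h_2$, each introducing a conjugate of the commutator $[h_1,h_2]$. Bi-invariance tells you these conjugates have the same sign as $[h_1,h_2]$, but gives no uniform bound on their size; and even if each were bounded by a fixed power $g^K$, the accumulated error would be $O(n^2)$, so after dividing by $n$ it would diverge rather than vanish. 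In short, the additivity step presupposes exactly the commutativity you are trying to deduce---this is why the paper's own proof of H\"older's theorem (Theorem~\ref{theorem:Holder}) explicitly invokes commutativity at the analogous moment.

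The paper avoids this circularity by establishing abelianness \emph{before} attempting any embedding into $\R$, via a short direct argument. Using bi-invariance (Lemma~\ref{lem:conrad}), it separates into two cases: if the positive cone has a least element, the group is cyclic; otherwise, one can find $x>1$ with $x^2$ smaller than any given positive commutator (Lemma~\ref{square}), and then sandwiching $g$ and $h$ between consecutive powers of $x$ and multiplying the four resulting inequalities forces $ghg^{-1}h^{-1} < x^2$, a contradiction. Only after this is H\"older's embedding constructed, and there the homomorphism property is immediate because $(gh)^n = g^n h^n$. Your strategy inverts this order, and the step you waved at is precisely where it breaks.
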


\pf  By the above, the ordering is bi-invariant.  We consider two cases.  

 Case 1:  The positive cone $P$ has a least element $p$.  Then we claim that the infinite cyclic subgroup $\langle p \rangle$ is the whole of $G$.  For if $g \in G \setminus \langle p \rangle$, there exists $n$ such that $p^n < g < p^{n+1}$ and therefore $1 < p^{-n}g < p$, contradicting minimality of $p$.  So $G \cong \Z$ in this case, and the theorem follows.

 Case 2: $P$ does not have a least element.  By way of contradiction, suppose $g, h \in G$ do not commute.  Without loss of generality, we may assume $g$ and $h$ and their commutator $ghg^{-1}h^{-1}$ are all positive.  Lemma \ref{square} guarantees the existence of $x > 1$ in $G$ such that $1 < x^2 < ghg^{-1}h^{-1}$.  Using the Archimedean property, there exist integers $m, n$ such that $x^m \le g < x^{m+1}$ and $x^n \le h < x^{n+1}$.
Then $g^{-1} \le x^{-m}$ and $h^{-1} \le x^{-n}$.  Multiplying the appropriate inequalities implies  
$ghg^{-1}h^{-1} < x^{m+1+n+1-m-n} = x^2$, a contradiction. \qed

\begin{lemma}\label{square}
If $G$ is bi-ordered and does not have a least positive element, then given $p>1$ there exists $q>1$ in $G$ such that $1 < q^2 < p$.
\end{lemma}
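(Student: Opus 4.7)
The plan is to exhibit $q$ as the smaller of $g$ and $pg^{-1}$ for a suitably chosen $g$ strictly between $1$ and $p$. The essential ingredient will be Problem~\ref{multineq}: inequalities multiply in bi-ordered groups. This lets one convert two separate upper bounds on a candidate element into a single upper bound on its square, which is precisely what is needed.

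First, using the no-least-positive hypothesis applied to $p$, pick $g$ with $1 < g < p$. The companion element $pg^{-1}$ is also strictly greater than $1$: right-invariance of the ordering turns $g < p$ into $1 < pg^{-1}$. Set $m = \min(g, pg^{-1})$, so $m > 1$, $m \leq g$, and $m \leq pg^{-1}$. By bi-invariance, these two inequalities combine (pairing the first factor with $pg^{-1}$ and the second with $g$) to give
\[
m^2 \leq (pg^{-1}) \cdot g = p,
\]
and since $m > 1$ forces $m^2 > 1$, we have $1 < m^2 \leq p$.

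If the right-hand inequality is strict, take $q = m$ and we are done. Otherwise $m^2 = p$; in that case I invoke the no-least-positive hypothesis once more to pick $m_1$ with $1 < m_1 < m$. The strict form of Problem~\ref{multineq} then gives $m_1^2 < m^2 = p$, while $m_1 > 1$ forces $m_1^2 > 1$, so $q = m_1$ works.

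I do not foresee a serious obstacle. The one mildly subtle point is pairing $g$ with its complementary factor $pg^{-1}$: simply choosing $g$ small and bounding $g^2$ directly yields nothing, but the product $(pg^{-1}) \cdot g$ collapses exactly to $p$, and that is what makes the bound on $m^2$ come out sharp. Once one thinks of $m = \min(g, pg^{-1})$ as playing the role of a (nonexistent) square root of $p$, the rest is routine bookkeeping, with the second appeal to no-least-positive disposing of the degenerate equality case.
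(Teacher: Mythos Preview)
Your proof is correct and follows essentially the same approach as the paper's. Both arguments rest on the observation that if a positive element factors as a product of two positives, the smaller factor squares to at most the product. The paper applies this to an intermediate element $r$ with $1 < s < r < p$ (using no-least-positive twice up front, so that $q^2 \le r < p$ is automatically strict), whereas you factor $p$ itself as $(pg^{-1})\cdot g$ and handle the possible equality $m^2 = p$ with a second appeal to the hypothesis. This is a minor organizational difference, not a different idea.
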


\pf Let $p > r > s > 1$, and consider $rs^{-1} > 1$.  If 
$(rs^{-1})^2 \ge r$, then $s^{-1}rs^{-1} \ge 1$ and $r \ge s^2$, so we can choose $q = s$.  Otherwise, let $q = rs^{-1}$. \qed

In the same sense that $\mathrm{Homeo}_+(\mathbb{R})$ is universal for countable left-orderable groups (see Theorem \ref{LO_universal}), H\"{o}lder's theorem tells us that the group $(\mathbb{R}, +)$ is universal for Archimedean ordered groups.

\begin{theorem}[H\"{o}lder 1901 , \cite{Holder01}]
\index{H\"{o}lder's Theorem}
\label{theorem:Holder}
If $G$ is a group with an Archimedian left-ordering, then $G$ is isomorphic with a subgroup of the additive reals, by an isomorphism under which the ordering of $G$ corresponds to the usual order of $\R$.
\end{theorem}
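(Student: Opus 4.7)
My plan is to construct an explicit order-preserving embedding $\phi : G \to (\mathbb{R},+)$ by a translation-number-style limit, mimicking the formula ``$\phi(h) = \log h/\log g_0$''. The indispensable preliminary is Lemma \ref{lem:archabel}, which tells us $G$ is already abelian; this is what will let me freely multiply inequalities and raise them to powers below.

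Assume $G$ is nontrivial and fix a positive element $g_0 \in G$. The floor-type exercise stated just before Lemma \ref{lem:archabel}, applied to the element $h^n$, produces for each $h \in G$ and each $n \ge 1$ a unique integer $m_n(h)$ satisfying
\[
g_0^{m_n(h)} \;\le\; h^n \;<\; g_0^{m_n(h)+1}.
\]
I then set $\phi(h) := \lim_{n \to \infty} m_n(h)/n$. To see the limit exists, raise the above inequality to the $k$-th power (legal because $G$ is abelian) to get $g_0^{k\,m_n(h)} \le h^{nk} < g_0^{k\,m_n(h)+k}$; uniqueness of $m_{nk}(h)$ then forces $k\,m_n(h) \le m_{nk}(h) \le k\,m_n(h) + k - 1$, whence $|m_{nk}(h)/(nk) - m_n(h)/n| < 1/n$, so the sequence $(m_n(h)/n)_{n \ge 1}$ is Cauchy.

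Next I would verify that $\phi$ is a homomorphism and strictly order-preserving. Multiplying the defining inequalities for $h$ and for $k$ (again using commutativity) gives $g_0^{m_n(h)+m_n(k)} \le (hk)^n < g_0^{m_n(h)+m_n(k)+2}$, so $m_n(hk) - m_n(h) - m_n(k) \in \{0,1\}$; dividing by $n$ and letting $n \to \infty$ yields $\phi(hk) = \phi(h)+\phi(k)$. For order-preservation, if $h > 1$ then by the Archimedean property there is some $N$ with $h^N > g_0$; iterating via abelianness gives $h^{Nk} > g_0^{k}$ for every $k \ge 1$, so $m_{Nk}(h) \ge k$ and therefore $\phi(h) \ge 1/N > 0$. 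Dually $\phi(h) < 0$ whenever $h < 1$, so $\phi$ is injective and carries the given ordering of $G$ to the usual order of $\mathbb{R}$.

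The main obstacle will be the clean setup of the limit defining $\phi$; everything else is bookkeeping once $G$ is known to be abelian. Each estimate above reduces to ``multiply the inequalities together'' or ``raise them to a power'', and none of that is available without Lemma \ref{lem:archabel}, which is precisely why its proof precedes the present theorem.
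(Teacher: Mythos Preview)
Your proof is correct and follows essentially the same translation-number approach as the paper: fix a positive reference element, bracket $h^n$ between consecutive powers of it, and define $\phi(h)$ as the limit of the resulting integer over $n$. The only technical difference is that the paper passes to the dyadic subsequence $n=2^k$ to get the Cauchy estimate $|\phi_{k+1}-\phi_k|\le 2^{-k}$ directly, whereas you work with the full sequence via the subadditivity bound $|m_{nk}/(nk)-m_n/n|<1/n$; note that this bound alone only compares indices that divide one another, so to conclude the full sequence is Cauchy you implicitly use the common-multiple trick $|L_n-L_m|\le|L_n-L_{nm}|+|L_{nm}-L_m|<1/n+1/m$, which is routine but worth stating.
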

\begin{proof}

We first fix a nonidentity element $f \in G$ and note that any homomorphism $\phi : G \rightarrow (\mathbb{R}, +)$ can be post-composed with multiplication by the real number $\frac{1}{\phi(f)}$ in order to produce a homomorphism with $\phi(f) =1$.  So if we wish to show that there is a homomorphism $\phi:G \rightarrow (\mathbb{R}, +)$, there is no harm in beginning with $\phi(f) =1$.

Now for each $g\in G$ and  $n \in \mathbb{Z}$, an application of the Archimedean property yields a corresponding integer $a_n \in \mathbb{Z}$ such that 
\[  f^{a_n} \leq g^n < f^{a_n+1}
 \]
Thus if we are to succeed in creating an order-preserving homomorphism $\phi : G \rightarrow (\mathbb{R}, +)$ with $\phi(f)=1$, these inequalities give
\[ a_n \leq n \phi(g) < a_n+1
\]
for all $n$.  In particular, it means that we are forced to set
\[ \phi(g) = \lim_{n \to \infty} \frac{a_n}{n}
\]
whenever the limit exists.  

It turns out that this limit exists for all $g \in G$, so no matter our approach this must be the value that we assign to $\phi(g)$ once we fix $\phi(f) =1$.   However proving convergence of the sequence $\{a_n/n \}$ is a bit tricker than if we pass to the subsequence
\[ \phi_k(g) = \frac{a_{2^k}}{2^k},
\]
and proceed as in the following exercise.

\begin{problem} Verify that 
$|\phi_{k+1}(g) - \phi_k(g)| \le 1/2^k$, and conclude that $\{\phi_k(g)\}$ is a Cauchy sequence.  Hence there is a limit.  

Define
$$
\phi (g) = \lim_{k \to \infty} \phi_k (g)
$$
and verify (here the commutativity of $G$ is needed) that for any
$g, h \in G$,
$$
|\phi_k (gh) - \phi_k(g) - \phi_k(h)| \le 1/2^k .
$$
Conclude that $\phi$ is a homomorphism.  

Finally, verify that if $g > 1$ in $G$, then $\phi(g) > 0$ in $\R$.  Conclude both that $\phi$ is injective and order-preserving.
\end{problem}
\end{proof}

\begin{problem} 
Recall the bi-ordering of $\Z^2$ introduced in Example \ref{orderZ2}.   For $\vec{v} \in \R^2$ with irrational slope, and any two vectors $\vec{m} = (m_1, m_2), \vec{n} = (n_1, n_2) \in \Z^2$, we have
$$\vec{m} < \vec{n} \iff  m_1v_1 + m_2v_2 < n_1v_1 + n_2v_2.$$ 
Define a map $\phi: \Z^2 \rightarrow (\R, +)$ by
\[ \phi( \vec{m}) = \frac{ \vec{m} \cdot \vec{v}}{||\vec{v}||},
\]
note that this is the formula for orthogonal projection onto $\vec{v}$.  Verify that $\phi$ is order preserving and injective.

\end{problem}

\section{Convex subgroups}

Suppose $G$ is a left-ordered group with ordering $<$.   A subset $C \subset G$ is \textit{convex} \index{convex subgroup} relative to $<$ if for all $g, h \in C$ and $f \in G$, the implication $g<f<h \Rightarrow f \in C$ holds.   A subset $C \subset G$ is \textit{relatively convex} \index{relatively convex} if there exists an ordering of $G$ relative to which $C$ is convex.  We will be primarily interested in the case where $C$ is a subgroup of $G$, in which case $C$ is called a convex subgroup or a relatively convex subgroup of $G$.

\begin{problem}
\label{prop:convex_intersection_and_union}
Let $G$ be a group with left-ordering $<$, and suppose that $C$ and $D$ are subgroups that are convex relative to $<$.  Show that either $C \subset D$, or $D \subset C$.   
\end{problem}

The conclusion of Problem \ref{prop:convex_intersection_and_union} is often stated simply as ``the convex subgroups of $G$ with ordering $<$ are linearly ordered by inclusion."  If $C \subset D$ are convex, then the pair $(C, D)$ is called a {\em convex jump} \index{convex jump} if there is no convex subgroup strictly between them.

\begin{problem}
\label{problem:convex_power}
 Let $G$ be a left-ordered group with convex subgroup $C$.  Given an element $g \in G$ and integer $n \ne 0$, show that $g^n \in C$ implies $g \in C$.
\end{problem}

\begin{problem}
Show that an Archimedean ordered group has no convex subgroups other than the trivial ones: the whole group and $\{1 \}$.
\end{problem}

\begin{problem}
Show that the orderings of $\Z^2$ defined in Example \ref{orderZ2} have a nontrivial convex subgroup whenever the vector $\vec{v}$ has rational slope.
\end{problem}

Convex subgroups of a left-orderable group are closely related to orderability of its quotients.  If $G \rightarrow H$ is a homomorphism from a left-orderable group $G$ onto a left-orderable group $H$, then its kernel is relatively convex.  Conversely if the kernel of some homomorphism $\phi: G \rightarrow H$ is relatively convex then the image $\phi(G)$ is left-orderable.   This is essentially the content of Problem \ref{extension}, reworded using our new definitions.  

By using the notion of convexity of a subgroup, we can consider subgroups $C \subset G$ that are not normal in $G$, but whose left cosets nonetheless admit an ordering that is invariant under the left action of $G$.

\begin{problem} 
\index{lexicographic ordering}
\label{extension_generalization}
Prove the following generalization of Problem \ref{extension}, which created lexicographic orderings via short exact sequences:
Suppose that $C$ is a subgroup of $G$, denote the set of left cosets $\{gC \}_{g \in G}$ by $G/C$.  The subgroup $C$ is relatively convex in $G$ if and only if there exists an ordering $\prec$ of the cosets $G/C$ that is invariant under left multiplication by $G$, i.e. $gC \prec hC$ implies $fgC \prec fhC$ for all $f, g, h \in G$.
\end{problem}

A consequence of the previous problem is that whenever a subgroup $C$ is convex in a left-ordering of $G$, we can choose a different left-ordering of $C$ while keeping the same ordering of the left cosets $G/C$.  This gives a way of making new left-orderings of $G$. 

\begin{problem}
\label{nested_subgroups_problem}
Suppose that $C$ is a normal, convex subgroup of a left-ordered group $G$ with ordering $<$.   Show that a subgroup $H$ of $G$ satisfying $C \subset H \subset G$ is convex relative to the ordering $<$ of $G$ if and only of $H/C$ is convex in $G/C$ relative to the natural quotient ordering.
\end{problem}



\begin{problem}
Fix an ordering of $\Z^n$.  Show that there can be at most $n -1$ proper, nontrivial subgroups of $\Z^n$ that are convex relative to this ordering.
\end{problem}

Recall that a (left) action of a group $G$ \index{group action} on a set $X$ is a binary operation $G \times X \to X$ which satisfies $1x = x$ and
$(gh)x = g(hx)$ for all $g, h \in G, x \in X$.  The {\em stabilizer} \index{stabilizer} of $x \in X$ under the action is the subgroup $\{g \in G \mid gx = x\}$. One says $G$ acts {\em effectively} \index{effective action} if whenever $gx = x$ for all $x \in X$, then $g = 1$.  If $X$ is linearly ordered by $<$, then the action is {\em order-preserving} if $x < y$ implies $gx < gy$. 

\begin{problem}
\label{relatively_convex_intersection_problem}
Suppose that $G$ acts effectively (on the left) on a linearly ordered set $X$ by order-preserving bijections.  For a given $x_0 \in X$, choose a well-ordering $\prec$ of $X$ for which $x_0$ is the smallest element.  Construct a left-ordering $<$ of $G$ as in the proof of Example \ref{homeo+}.  Show that the stabilizer of $x_0$ is convex relative to the ordering $<$ of $G$. 

Therefore the stabilizer of each $x \in X$ is a relatively convex subgroup of $G$.
\end{problem}

\begin{problem}
\label{intersection_and_union}
Fix a left-ordering $<$ of a group $G$.   Show that an arbitrary union of convex subgroups of $G$ is a convex subgroup, and an arbitrary intersection of convex subgroups of $G$ is a convex subgroup.
\end{problem}

 In fact when considering the intersection of convex subgroups, we do not need that they all be convex relative to the same left-ordering of the group.   It suffices that each subgroup be relatively convex in order to conclude that their intersection is also relatively convex, though the proof is trickier than the solution to Problem \ref{intersection_and_union}.
\begin{proposition}
An arbitrary intersection of relatively convex subgroups of a left-orderable group is a relatively convex subgroup. 
\end{proposition}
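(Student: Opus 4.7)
My plan is to use the characterization in Problem~\ref{extension_generalization}: a subgroup $C \le G$ is relatively convex if and only if the left cosets $G/C$ admit a linear ordering invariant under left multiplication by $G$. So the task reduces to producing such a $G$-invariant ordering on $G/D$, where $D = \bigcap_\alpha C_\alpha$ and the $\{C_\alpha\}$ is the given family of relatively convex subgroups.

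The first step is to apply Problem~\ref{extension_generalization} in the opposite direction to each $C_\alpha$, obtaining for every index $\alpha$ a $G$-invariant linear ordering $\prec_\alpha$ on the coset space $X_\alpha := G/C_\alpha$. Next I consider the diagonal map
\[
\iota : G/D \longrightarrow \prod_\alpha X_\alpha, \qquad gD \longmapsto (gC_\alpha)_\alpha.
\]
This is well-defined (if $gD = g'D$ then $g^{-1}g' \in D \subset C_\alpha$ for every $\alpha$) and, more importantly, \emph{injective}: if $(gC_\alpha)_\alpha = (g'C_\alpha)_\alpha$ then $g^{-1}g' \in C_\alpha$ for every $\alpha$, hence $g^{-1}g' \in D$. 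The map is also $G$-equivariant, where $G$ acts on the product diagonally.

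The main technical step, and the place where I expect to do the real work, is to build a $G$-invariant linear ordering on the product $\prod_\alpha X_\alpha$. To do this I well-order the index set (invoking the axiom of choice) and use the lexicographic rule: for distinct $(x_\alpha), (y_\alpha) \in \prod_\alpha X_\alpha$, the set of indices where they differ is a non-empty subset of a well-ordered set, so it has a least element $\alpha_0$, and I declare $(x_\alpha) < (y_\alpha)$ if and only if $x_{\alpha_0} \prec_{\alpha_0} y_{\alpha_0}$. The verification that this is a total order amounts to a case analysis on how the minima of differing indices compare; transitivity works because ``first index of difference'' behaves well under the three cases $\alpha_0 < \alpha_1$, $\alpha_0 = \alpha_1$, $\alpha_0 > \alpha_1$. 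Invariance under the diagonal action of $G$ is immediate from the invariance of each $\prec_\alpha$, since multiplication by $f \in G$ preserves the first index of difference and compares there via $\prec_{\alpha_0}$.

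Finally I pull this ordering back through $\iota$ to obtain a linear order on $G/D$, which inherits $G$-invariance from the $G$-equivariance of $\iota$. Applying Problem~\ref{extension_generalization} once more concludes that $D = \bigcap_\alpha C_\alpha$ is relatively convex in $G$. The only genuine subtlety is that each $\prec_\alpha$ must be chosen independently, so there is no coherence issue to worry about; the well-ordering of the index set is exactly what glues them into a single lexicographic order.
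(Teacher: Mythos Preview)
Your proof is correct and follows essentially the same approach as the paper: both use Problem~\ref{extension_generalization} to obtain $G$-invariant orderings on each $G/C_\alpha$, well-order the index set, and form the lexicographic ordering on the product $\prod_\alpha G/C_\alpha$. The only difference is in the finishing step: the paper makes the $G$-action on the product effective by adjoining a copy of $G$ (ordered above the product) and then invokes Problem~\ref{relatively_convex_intersection_problem} to conclude that the stabilizer of the basepoint $(C_\alpha)_\alpha$ is relatively convex, whereas you pull the product ordering back through the injective diagonal map $\iota : G/D \hookrightarrow \prod_\alpha G/C_\alpha$ and apply Problem~\ref{extension_generalization} directly. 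Your route is slightly more economical, since it sidesteps the effectiveness issue altogether.
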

\begin{proof}
Let $\{C_i\}_{i \in I}$ be a family of relatively convex subgroups of a group $G$, assume that for each $i \in I$ the subgroup $C_i$ is convex relative to the left-ordering $<_i$.  By Problem \ref{extension_generalization}, each set of left cosets $G/C_i$ admits an ordering $\prec_i$ that is invariant under the left action of $G$.  Fix an arbitrary well-ordering of the index set $I$, and use the well-ordering of $I$ to define a lexicographic ordering of product  $\Pi_{i \in I} G/C_i$, which we will denote by $\prec$.  Since $\prec$ restricts to the ordering $\prec_i$ on each factor $G/C_i$, it is preserved by the left-action of $G$ on $\Pi_{i \in I} G/C_i$.

We would like to use the left action of $G$ on $\Pi_{i \in I} G/C_i$ to create a left-ordering of $G$, but the action may not be effective.  Therefore we correct this problem as follows.  Fix a left-ordering $<$ of $G$, and order the union 
\[(\Pi_{i \in I} G/C_i ) \cup G
\]
by ordering $\Pi_{i \in I} G/C_i$ using $\prec$, ordering $G$ using $<$, and declaring that every element of $G$ is greater than every element of  $\Pi_{i \in I} G/C_i$.  The result is an effective, order-preserving action of $G$ on the totally ordered set $(\Pi_{i \in I} G/C_i) \cup G$.  The stabilizer of the element $(C_i)_{i\in I} \in \Pi_{i \in I} G/C_i$ is the intersection $\bigcap_{i \in I} C_i$, so by Problem \ref{relatively_convex_intersection_problem}, the intersection is relatively convex.  That it is a subgroup is clear as any intersection of subgroups is a subgroup.
\end{proof}

\section{Bi-orderable groups are locally indicable}
\label{section: BO implies LI}
\index{locally indicable}
With these preparations, we can argue that all bi-orderable groups are locally indicable: Consider a finitely generated bi-ordered group $G$ with generators which we may take to be positive and ordered $1 < g_1 < g_2 < \cdots < g_n$.  Let $C$ be the maximal convex subgroup of $G$ which  does not contain the largest generator $g_n$, that is, $C$ is the union of all convex subgroups not containing $g_n$. Clearly the only convex subgroup which properly contains $C$ is $G$ itself.  Since bi-orderings are conjugation invariant, so is the convex subgroup $C$, meaning that $C$ is normal and $G/C$ inherits a bi-ordering.  We wish to argue that the ordering of $G/C$ is Archimedean, so for contradiction suppose it is not.  Then there is an element $g \in G/C$ so that the set 
\[ H = \{ h \in G/C \mid \mbox{there exists $k \in \mathbb{Z}$ such that } g^{-k}<h <g^k \}
\]
is a proper convex subgroup of $G/C$.  By Problem \ref{nested_subgroups_problem}, $H$ corresponds to a proper, convex subgroup $D$ of $G$ which contains $C$.  Since $D$ is proper it cannot contain $g_n$---but this contradicts maximality of $C$.  Thus we have proved that $(C, G)$ is a convex jump and $G/C$ is a nontrivial finitely-generated torsion-free abelian group.  The homomorphisms $G \to G/C \to \Z$ establish the following, which had been observed by Levi \cite{Levi42}.

\begin{theorem}  
\label{biorderable implies LI}
If $G$ is a finitely generated bi-orderable group, then there is a surjective homomorphism $G \rightarrow \mathbb{Z}$.
\end{theorem}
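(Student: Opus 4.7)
The plan is to construct a convex jump $(C, G)$ in $G$ whose quotient $G/C$ is a nontrivial finitely generated torsion-free abelian group, and then project onto $\Z$. The construction of $C$ is the standard one: fix a bi-ordering of $G$ and a (necessarily finite) generating set, which we may take to consist of positive elements $1 < g_1 < g_2 < \cdots < g_n$. Let
\[
C = \bigcup \{\, H \leq G : H \text{ is convex and } g_n \notin H \,\}.
\]
By the problem stating that an arbitrary union of convex subgroups is convex, $C$ is itself a convex subgroup, and it is maximal among convex subgroups missing $g_n$. Because bi-orderings are conjugation-invariant, every convex subgroup is normal, so $C \triangleleft G$ and $G/C$ inherits a bi-ordering.

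First I would show that any convex subgroup $D$ of $G$ that strictly contains $C$ must be all of $G$. By maximality $D$ contains $g_n$, and since $1 \in D$, convexity forces each $g_i$ (which lies between $1$ and $g_n$) into $D$; thus $D$ contains every generator and $D = G$. So $(C,G)$ is a convex jump, and in particular $G/C$ is nontrivial.

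The main obstacle is verifying that the induced bi-ordering on $G/C$ is Archimedean. Suppose for contradiction that it is not. Then there is a non-identity element $\bar g \in G/C$ such that
\[
H = \{\, \bar h \in G/C : \bar g^{-k} < \bar h < \bar g^{k} \text{ for some } k \in \Z \,\}
\]
is a proper subgroup of $G/C$, and it is easily checked to be convex. By the problem about convex subgroups containing a normal convex subgroup, $H$ pulls back to a convex subgroup $D$ of $G$ with $C \subsetneq D \subsetneq G$, contradicting the jump property established above. Hence $G/C$ is Archimedean.

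To finish, invoke Hölder's theorem to embed $G/C$ order-isomorphically into $(\R,+)$. Since $G$ is finitely generated, so is $G/C$, and any finitely generated subgroup of $\R$ is isomorphic to $\Z^k$ for some $k$; nontriviality of $G/C$ gives $k \geq 1$. Composing the quotient $G \to G/C \cong \Z^k$ with projection onto any coordinate yields the desired surjection $G \twoheadrightarrow \Z$.
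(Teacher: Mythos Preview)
Your proof is correct and follows essentially the same approach as the paper: construct the maximal convex subgroup $C$ missing the largest generator, argue that $(C,G)$ is a convex jump, show the quotient bi-ordering is Archimedean by pulling back a hypothetical intermediate convex subgroup, and then use H\"older's theorem to conclude $G/C$ is finitely generated torsion-free abelian. The only cosmetic difference is that you spell out why any convex $D \supsetneq C$ must equal $G$ and explicitly invoke H\"older to identify $G/C$ with some $\Z^k$, whereas the paper leaves these as evident.
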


\begin{corollary}
Bi-orderable groups are locally indicable.
\end{corollary}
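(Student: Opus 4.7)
The plan is to deduce the corollary directly from Theorem \ref{biorderable implies LI}, using only the trivial observation that bi-orderability passes to subgroups. By the definition of locally indicable, I need to show that every nontrivial finitely-generated subgroup $H$ of a bi-orderable group $G$ admits a surjection onto $\mathbb{Z}$.

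First I would fix a bi-ordering $<$ on $G$ and let $H \le G$ be any nontrivial finitely-generated subgroup. The restriction of $<$ to $H$ is still a total order, and both left-invariance and right-invariance are properties of individual inequalities, so they are automatically inherited by $H$. Thus $(H, <|_H)$ is a finitely-generated bi-ordered group, and since $H$ is nontrivial it has a nonempty set of generators, putting us squarely in the setting of Theorem \ref{biorderable implies LI}.

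Applying that theorem to $H$ yields a surjective homomorphism $H \twoheadrightarrow \mathbb{Z}$, which is exactly what it means for $H$ to be indicable. Since $H$ was an arbitrary nontrivial finitely-generated subgroup of $G$, this establishes that $G$ is locally indicable.

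There is essentially no obstacle here; the entire content of the corollary is packaged in the preceding theorem, and the only thing to verify is the elementary fact that subgroups of bi-ordered groups inherit bi-orderings. The proof should be one or two sentences.
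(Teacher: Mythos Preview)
Your proposal is correct and matches the paper's intended argument: the corollary is stated immediately after Theorem~\ref{biorderable implies LI} with no separate proof, precisely because one just applies that theorem to an arbitrary nontrivial finitely-generated subgroup, using the obvious fact that bi-orderability restricts to subgroups. Your write-up is exactly the one- or two-sentence deduction the paper leaves implicit.
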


Note that the converse of this corollary is not true.  For example the Klein bottle group is locally indicable, but it is not bi-orderable.  Local indicability and its relationship with orderability is discussed in detail in Chapter \ref{Conrad chapter}.

\begin{problem}
Let $G$ be a finitely generated group with infinite cyclic abelianization generated by the image of $t \in G$.  Show that $G$ is bi-orderable if and only if the conjugation action of $t$ on $[G, G]$ preserves a bi-ordering of $[G, G]$.
\end{problem}

\section{The dynamic realization of a left-ordering}
\label{dynamicalsection}
If $G$ is an Archimedean ordered group, then H\"{o}lder's theorem gives an order-preserving injective homomorphism $\phi : G \rightarrow (\R, +)$. Regarding $(\R, +)$ as the subgroup of translations of $\mathrm{Homeo_+(\R)}$, the homomorphism arising from H\"{o}lder's theorem is a special case of a more general construction that yields a homomorphism $G \rightarrow \mathrm{Homeo}_+(\R)$.

In this section we prove that a countable group $G$ is left-orderable if and only if there is an embedding $G \rightarrow \mathrm{Homeo}_+(\R)$; we already saw one direction of this proof in Chapter 1.  We also introduce a standard way of constructing such an embedding, called the \textit{dynamic realization}.  To begin we recall a classical theorem due to Cantor.

\begin{theorem}[Cantor]
\label{Cantor_theorem}
If $S$ is a countable, totally densely ordered set without a maximum or minimum element, then there exists an order-preserving bijection $\phi:S \rightarrow \mathbb{Q}$.
\end{theorem}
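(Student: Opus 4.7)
The plan is to prove this by the standard back-and-forth argument, building $\phi$ as an increasing union of finite order-preserving bijections between finite subsets of $S$ and $\mathbb{Q}$.

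First I would fix enumerations $S = \{s_1, s_2, \ldots\}$ and $\mathbb{Q} = \{q_1, q_2, \ldots\}$. I will construct a nested sequence of finite partial maps $\phi_0 \subset \phi_1 \subset \phi_2 \subset \cdots$, where each $\phi_n$ is an order-preserving bijection between some finite subset $A_n \subset S$ and some finite subset $B_n \subset \mathbb{Q}$. Start with $\phi_0 = \emptyset$. Then alternate between two kinds of steps: on odd steps (the ``forth'' step), take the element $s_i \in S$ of smallest index not yet in $A_n$ and extend $\phi_n$ by defining $\phi_{n+1}(s_i) = q$ for some appropriately chosen $q \in \mathbb{Q} \setminus B_n$; on even steps (the ``back'' step), take the element $q_j \in \mathbb{Q}$ of smallest index not yet in $B_n$ and extend by choosing a preimage $s \in S \setminus A_n$ with $\phi_{n+1}(s) = q_j$.

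The key step, and the only nontrivial one, is verifying that such extensions are always possible while preserving the order. Suppose we wish to add $s_i$ on a forth step. The already-chosen $A_n = \{a_1 < \cdots < a_k\}$ partitions into those below $s_i$ and those above $s_i$; call these $A_n^-$ and $A_n^+$. We need $q \in \mathbb{Q}$ to satisfy $\phi_n(a) < q$ for every $a \in A_n^-$ and $q < \phi_n(a')$ for every $a' \in A_n^+$. If $A_n^- = \emptyset$, use that $\mathbb{Q}$ has no minimum to find $q < \min \phi_n(A_n^+)$; if $A_n^+ = \emptyset$, use lack of maximum in $\mathbb{Q}$; otherwise use density of $\mathbb{Q}$ between $\max \phi_n(A_n^-)$ and $\min \phi_n(A_n^+)$. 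The back step is symmetric, using the hypotheses that $S$ is densely ordered with no endpoints.

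Finally, set $\phi = \bigcup_n \phi_n$. This is a well-defined order-preserving injection $\phi: S \to \mathbb{Q}$ because each $\phi_n$ is, and the forth steps guarantee that every $s_i$ eventually enters some $A_n$, so $\phi$ is defined on all of $S$. The back steps guarantee that every $q_j$ eventually enters some $B_n$, so $\phi$ is surjective. The main obstacle is purely bookkeeping—making sure the alternation really exhausts both enumerations—while the substantive content is the extension lemma above, which is exactly where the density and absence of endpoints on both sides are needed. This last point also explains the necessity of the hypotheses: without density or without missing endpoints, no such isomorphism with $\mathbb{Q}$ could exist.
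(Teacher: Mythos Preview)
Your proof is correct and is essentially the same back-and-forth argument the paper gives. If anything, you are slightly more careful in explicitly separating the endpoint cases (where absence of maximum/minimum is used) from the interior case (where density is used), whereas the paper just cites density.
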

\begin{proof}
The argument we will present is known as Cantor's ``back and forth argument" \cite[p. 35--36]{Huntington55}.  

Let $\{ s_0, s_1, \ldots \}$ and $\{ r_0, r_1, \ldots \}$ be enumerations of the set $S$ and the rational numbers $\mathbb{Q}$ respectively.  Set $S_0 = \{s_0\}$ and $R_0 = \{ r_0 \}$, and we begin our construction of the map $\phi: S \rightarrow \mathbb{Q}$ by declaring $\phi(s_0) = r_0$.   Now assuming we have defined an order-preserving bijection $\phi: S_k \rightarrow R_k$ between finite subsets $S_k $ and $R_k $ of $S$ and $\mathbb{Q}$ respectively, we extend $\phi$ to an order-preserving bijection between larger finite subsets according to the following steps.  
\begin{enumerate}
\item Choose the smallest $i$ such that $s_i \notin S_k$, and set $S_{k+1} = S_k \cup \{ s_i \}$. Choose $r_j \in \mathbb{Q} \setminus R_k$ such that setting $\phi(s_i) = r_j$ defines an order-preserving bijection $\phi : S_{k+1} \rightarrow R_k \cup \{r_j\}$ (such a choice of $r_j$ is possible by density of the ordering of $\mathbb{Q}$).  Set $R_{k+1} = R_k \cup \{ r_j \}$.

\item Choose the smallest $j$ such that $r_j \notin R_{k+1}$ and set $R_{k+2} = R_{k+1} \cup \{r_j\}$.  Choose $s_i \in S \setminus S_{k+1}$ such that setting $\phi^{-1}(r_j) = s_i$ defines an order-preserving bijection $\phi^{-1}:R_{k+2} \rightarrow S_{k+1} \cup \{s_i \}$ (similar to step 1, this is possible by density of the ordering of $S$).  Set $S_{k+2} = S_{k+1} \cup \{s_i \}$.
\item Return to step 1, and repeat the process.
\end{enumerate}

This procedure produces a map $\phi:S \rightarrow \mathbb{Q}$ which is order-preserving and injective by construction.  The map is also surjective, because after $n$ iterations of these three steps, $r_n$ is sure to be in the image of $\phi$.
\end{proof}

\begin{theorem}
\label{LO_universal}
Suppose that $G$ is a countable group.  Then $G$ is left-orderable if and only if $G$ is isomorphic to a subgroup of $\mathrm{Homeo}_+(\mathbb{R})$.
\end{theorem}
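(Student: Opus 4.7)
The plan is as follows. One direction is immediate: $\mathrm{Homeo}_+(\R)$ is left-orderable by Example \ref{homeo+}, and left-orderability manifestly passes to subgroups, so any group isomorphic to a subgroup of $\mathrm{Homeo}_+(\R)$ is left-orderable. The substantive content is the converse, and the key idea is to manufacture a faithful order-preserving action of $G$ on a countable, \emph{dense}, endpointless totally ordered set, and then transfer and extend that action to $\R$.

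Given a countable left-ordered group $(G,<)$, I would first consider the set $X = G \times \Q$ equipped with the lexicographic order: $(g,q) < (g',q')$ if and only if $g < g'$ in $G$, or $g = g'$ and $q < q'$ in $\Q$. Straightforward checks show that $X$ is countable, totally ordered, densely ordered (the $\Q$-coordinate permits insertion of a point between any two given elements), and has neither a largest nor a smallest element. The left multiplication action $g\cdot(h,q) := (gh,q)$ preserves this order by left-invariance of $<$ and is faithful, since $g\cdot(1,0)=(1,0)$ forces $g=1$.

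Next, I would apply Cantor's theorem (Theorem \ref{Cantor_theorem}) to obtain an order-preserving bijection $\psi : X \to \Q$. For each $g \in G$, conjugation by $\psi$ produces an order-preserving bijection $\varphi_g : \Q \to \Q$ defined by $\varphi_g(q) = \psi\bigl(g \cdot \psi^{-1}(q)\bigr)$, and $g \mapsto \varphi_g$ is a faithful group homomorphism into $\mathrm{Aut}(\Q,<)$. I would then extend each $\varphi_g$ to a map $\bar\varphi_g : \R \to \R$ by
\[
\bar\varphi_g(x) = \sup\{\varphi_g(q) : q \in \Q,\ q \le x\}.
\]
Density of $\Q$ in $\R$ together with the order-preserving bijectivity of $\varphi_g$ (and of its inverse $\varphi_{g^{-1}}$) imply that $\bar\varphi_g$ is a continuous, strictly increasing bijection of $\R$, hence an element of $\mathrm{Homeo}_+(\R)$.

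Finally, uniqueness of the continuous order-preserving extension from a dense subset forces $\bar\varphi_{gh} = \bar\varphi_g \circ \bar\varphi_h$, so $g \mapsto \bar\varphi_g$ is a group homomorphism $G \to \mathrm{Homeo}_+(\R)$, and injectivity follows from the faithfulness of the action on $X$. The main obstacle to watch is really just the extension step: verifying that the sup-prescription produces a genuine self-homeomorphism of $\R$ and that composition is respected. This is a routine density and continuity argument once the setup is in place, so the substantive trick is the initial passage from $G$ to $G \times \Q$ in order to manufacture a dense ordered set on which Cantor's theorem can be applied.
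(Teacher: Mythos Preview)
Your proof is correct and follows essentially the same route as the paper: embed $G$ into $G\times\Q$ to force density, invoke Cantor's theorem to transport the action to $\Q$, and then uniquely extend each resulting order-preserving bijection of $\Q$ to a homeomorphism of $\R$ (the paper defers this last step to Problem \ref{unique_extension}, whereas you spell out the sup formula explicitly).
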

\begin{proof}
If $G$ is isomorphic to a subgroup of $\mathrm{Homeo}_+(\mathbb{R})$, then $G$ is left-orderable because $\mathrm{Homeo}_+(\mathbb{R})$ is a left-orderable group (by Example \ref{homeo+}).

On the other hand, suppose that $G$ is countable and left-orderable, we will build an injective homomorphism $\rho :G \rightarrow \mathrm{Homeo}_+(\mathbb{R})$.  Observe that $G$ can be embedded in the group $G \times \mathbb{Q}$, which is also countable, and which can be densely left-ordered using the standard lexicographic construction.  So, by embedding $G$ into $G \times \mathbb{Q}$ if necessary, we can assume that the left-ordering of $G$ is dense.

By Theorem \ref{Cantor_theorem}, there is an order-preserving injective map $t : G \rightarrow \mathbb{R}$ whose image is $\mathbb{Q}$.   For each $g \in G$, define a map $\rho(g) : \mathbb{R} \rightarrow \mathbb{R}$ by first defining its action on $\mathbb{Q}$ according to the rule $\rho(g)( t(h) ) = t(gh)$ for all $h \in G$, this action preserves order because left-multiplication preserves the ordering of $G$.  By Problem \ref{unique_extension}, this uniquely determines an order-preserving homeomorphism $\rho(g): \mathbb{R} \rightarrow \mathbb{R}$.  This defines the required homomorphism $\rho :G \rightarrow \mathrm{Homeo}_+(\mathbb{R})$.
\end{proof}

\begin{problem}
\label{unique_extension}
Suppose that $f : \mathbb{Q} \rightarrow \mathbb{Q}$ is an order-preserving bijection.  Show that $f$ can be uniquely extended to an order-preserving homeomorphism $\bar{f} : \mathbb{R} \rightarrow \mathbb{R}$.
\end{problem}

Given a group $G$ with a left-ordering $<$, there is a second construction of an embedding $t:G \rightarrow \mathbb{R}$ which has become standard in the literature.  The action induced on $\mathbb{R}$ by this construction is called the \textit{dynamic realization} of the left-ordering $<$, and it is described as follows.
\index{dynamic realization}

 Fix an enumeration $\{ g_0, g_1, g_2, \ldots \}$ of $G$ with $g_0 = 1$, and proceed as follows to inductively define an order-preserving embedding $t:G\rightarrow \mathbb{R}$.  Begin by setting $t(g_0) = 0$.   If $t(g_0), \ldots, t(g_i)$ have already been defined and $g_{i+1}$ is either larger or smaller than all previously embedded elements, then set:
\begin{displaymath}
   t(g_{i+1}) = \left\{
     \begin{array}{ll}
       \mathrm{max}\{t(g_0), \ldots , t(g_i) \} +1 & \mbox{ if } g_{i+1} > \mathrm{max}\{g_0, \ldots , g_i \} \\
       \mathrm{min}\{t(g_0), \ldots , t(g_i) \} -1 & \mbox{ if } g_{i+1} < \mathrm{min}\{g_0, \ldots , g_i \} \\

     \end{array}
   \right.
\end{displaymath} 
On the other hand, if there exist $j, k \in \{ 0, \ldots, i \}$ such that $g_j < g_{i+1} < g_k$ and there is no $n \in \{ 0, \ldots ,i \}$ such that $g_j < g_{n} < g_k$, then set
\[ t(g_{i+1}) = \cfrac{t(g_j)+t(g_k)}{2}.
\]
The group $G$ acts in an order-preserving way on the set $t(G)$ according to the rule $g(t(h)) = t(gh)$.  

This rule extends to an order-preserving action on the closure $\overline{t(G)}$.
The complement of the set $\overline{t(G)}$ is a union of open intervals, with the action of every $g \in G$ defined on their endpoints.  For every $g \in G$ we can extend this action to an order-preserving homeomorphism $\rho(g)$ by extending the action of $g$ on $t(G)$ affinely on the complement $\mathbb{R} \setminus \overline{t(G)}$.  With a fair amount of work, one can show that this defines a faithful representation $\rho : G \rightarrow \mathrm{Homeo}_+(\mathbb{R})$.  The representation constructed in this way is the dynamic realization of $<$.  One can recover the original ordering of $G$ from the dynamic realization by declaring $g >1$ if and only if $\rho(g)(0)>0$.

\begin{problem}
Let $\rho: G \rightarrow \mathrm{Homeo}_+(\R)$ denote the dynamical realization of the left-ordering $<$ of $G$.  Show that $\mathrm{Homeo}_+(\R)$ admits a left-ordering that extends the natural left-ordering of $\rho(G)$.  (Hint:  Well-order the reals so that $0$ is the smallest element, and use the construction of Example \ref{homeo+})
\end{problem}

Since the construction of the dynamic realization involves many choices, for example a choice of enumeration of $G$ and a choice of order preserving function $t:G \rightarrow \R$, it is not unique.  The degree to which this construction is unique is a rather subtle question--we refer the reader to \cite{Navas10a} for an investigation of this question.

%
%
%



\chapter{Free groups, surface groups and covering spaces}
\label{chapter free and surface}

The goal of this chapter is to show that free groups, as well as almost all
surface groups (the exceptions being the projective plane and Klein bottle) are bi-orderable.  
We conclude the chapter with an interesting connection between the orderability of the fundamental group of a space and topological properties of the universal cover.

\section{Surfaces}  

By a surface we shall mean a metric space for which each point has a neighbourhood homeomorphic with either $\R^2$ or the closed upper half-space $\R_+^2$.  Unless otherwise stated, we will assume surfaces to be connected.  We include non-compact surfaces and surfaces with boundary  --- in both cases the fundamental group is a free group --- and also include non-orientable surfaces.  By a {\em surface group} \index{surface group}we mean a group isomorphic with the fundamental group $\pi_1(\Sigma)$ of a surface $\Sigma$.

The basic building blocks for closed (i.e. compact without boundary) surfaces are the sphere $S^2$, the torus $T^2 \cong S^1 \times S^1$ and the (real) projective plane $P^2$. 
One can regard $P^2$ as the quotient of $S^2$ in which antipodal points are identified, or equivalently, the union of a disk and M\"obius band, sewn together along their boundaries. The connected sum of two closed surfaces is gotten by deleting an open disk from the interior of each surface, and then sewing the resulting punctured surfaces together along their boundaries.  The connect sum operation is denoted by `$\#$', for example, Figure \ref{connect_sum} is the sum $T^2 \# T^2$. 

\begin{figure}%
\includegraphics[scale=0.8]{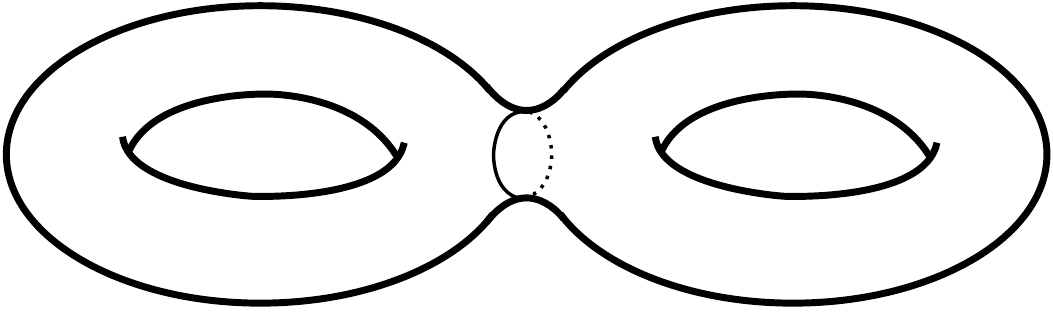}
\caption{The connect sum of two tori.}
\label{connect_sum}
\end{figure}

Closed surfaces are classified as follows:

\begin{itemize}
\item Orientable: $S^2, T^2, 2T^2 = T^2 \# T^2, 3T^2 = T^2 \# T^2 \# T^2, \dots $

\item Nonorientable: $P^2, 2P^2 = P^2 \# P^2, 3P^2 = P^2 \# P^2 \# P^2, \dots$

\end{itemize}
This allows us to define the \textit{genus} \index{genus} of a surface $\Sigma$ by identifying it with one of the surfaces in the list above:  if $\Sigma$ is either $kT^2$ or $kP^2$ then the genus of $\Sigma$ is $k$, if $\Sigma = S^2$ then its genus is $0$.

In fact, the set of surfaces can be regarded as a commutative monoid generated by $T^2$ and $P^2$ ($S^2$ is the identity element) and subject to the famous relation

$$T^2 \# P^2 \cong 3P^2.$$

They have geometric structures as follows:
\begin{itemize}
\item Spherical: $S^2, P^2$

\item Euclidean: $T^2$, $2P^2$ = Klein bottle 

\item Hyperbolic: all the rest.
\end{itemize}

\begin{problem}
Verify that $T^2 \# P^2 \cong 3P^2.$
\end{problem}

It has long been known that the fundamental groups of the closed {\em orientable} surfaces are bi-orderable.  One proof, due to Baumslag \cite{Baumslag2010}, depends on the fact that they are residually free,\index{residually free}\footnote{A group $G$ is residually free if for every nonidentity element $g \in G$ there is a homomorphism $\phi:G \rightarrow F$ onto a free group such that $\phi(g) \neq 1$.} and therefore embed in a direct product of free groups.  We will give another proof, which also applies to the nonorientable case.  Baumslag's argument does not apply to nonorientable surface groups (see \cite{BRW05} for a discussion).  Indeed, Levi \cite{Levi43} had remarked that nonorientable surface groups are NOT bi-orderable, though they were understood to be left-orderable (with the obvious exception of the projective plane $P^2$). 
The argument went that the embedded M\"obius bands introduced a relation saying that an element is conjugate to its inverse --- in fact this only happens for the Klein bottle (whose group has the defining relation $xyx^{-1} = y^{-1}$) and projective plane (where each element equals its inverse). This assumption apparently stood until 2001, when it was shown in \cite{RW01} that the fundamental groups of the hyperbolic nonorientable surfaces are bi-orderable after all.  The proof presented here is essentially the same as the one in that paper, where the interested reader may find further details.  Before considering surface groups, we turn to the simpler case of free groups.

\section{Ordering free groups}

The free group with generators $x_1, x_2, \dots$, possibly an infinite list, can be regarded as the set of equivalence classes of (finite) words in the letters $x_i$ and their formal inverses $x_i^{-1}$, where words are considered equivalent if one can pass from one to the other by removing (or inserting) consecutive letters of the form $x_i^{}x_i^{-1}$ or $x_i^{-1}x_i^{}$.  The group operation is concatenation and the empty word represents the identity element.

There are a number of ways to order free groups---in fact for free groups with more than one generator there are uncountably many.  The method we will use here, following Magnus, has the advantage that one can decide by straightforward calculation which of two given words is bigger in the ordering. 

Let $F = F(x_1, x_2, \dots)$ denote the free group on the generators
$x_1, x_2, \dots$, possibly an infinite list.  We define the ring
$$
\Lambda = \Z[[X_1, X_2, \dots]]
$$
to be the ring of formal power series in the non-commuting variables $X_i$, one for each generator of $F$.  If there are infinitely many variables, we only allow expressions involving a finite set of variables to belong to $\Lambda$, so that an element of $\Lambda$ has only a finite number of terms of a given degree.
The advantage of $\Lambda$ is that (unlike $F$) the variables have no negative exponents and so it is easier to define an ordering, without having to worry about cancellation problems.  Magnus used the following embedding (the Magnus expansion) \index{Magnus expansion} to argue, among other things, that the intersection of the lower central series of a free group is just the identity element.

Define the (multiplicative) homomorphism $\mu: F \to \Lambda$ on the generators of $F$ as follows:
\begin{eqnarray*}
\mu(x_i) & = & 1 + X_i \\
\mu(x_i^{-1}) & = & 1 - X_i + X_i^2 - X_i^3 + \cdots
\end{eqnarray*}

Thus, for example, 
\begin{eqnarray} \label{conjugate}
\mu(x_1x_2^2x_1^{-1}) & = & (1 + X_1)(1 + X_2)^2(1 - X_1 + X_1^2 - X_1^3 + \cdots)  \nonumber \\
  & = & 1 + 2X_2 + 2X_1X_2 - 2X_2X_1 + X_2^2 + O(3) 
\end{eqnarray}

The notation $O(n)$ stands for the sum of all terms of total degree $n$ or greater.  We now define an ordering on $\Lambda$.  First, write elements of $\Lambda$ with lower degree terms preceding terms of higher degree.  Within a fixed degree, they may be ordered arbitrarily, but to be definite we will order them lexicographically according to subscript (as in the example).
Now given two elements of $\Lambda$, write them in the standard form, as described above, and order them according to the coefficient of the first term at which they differ.

For example, $(1 + X_i)^p = 1 + pX_i + O(2)$, even for negative $p$, so that such expressions are ordered in the same way as their exponents, that is $(1 + X_i)^p < (1 + X_i)^q$ if and only if $p<q$.  

\begin{lemma} The homomorphism $\mu$ is injective, and embeds $F$ into the group of units of $\Lambda$ of the form $1 + O(1)$.
\end{lemma}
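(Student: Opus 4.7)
The plan is to check well-definedness, verify the image lands in the claimed subgroup of $\Lambda^{\times}$, and then attack injectivity through a filtration argument comparing powers of the augmentation ideal in $\Lambda$ with the lower central series of $F$.

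For well-definedness, by the universal property of the free group $F$ it suffices to check that $\mu(x_i)$ has an inverse in $\Lambda$ equal to the prescribed $\mu(x_i^{-1})$; this follows from the telescoping identity $(1+X_i)(1 - X_i + X_i^2 - \cdots) = 1$, which holds term-by-term in $\Lambda$. Let $I$ denote the (two-sided) augmentation ideal of $\Lambda$ generated by all $X_i$. The subset $U = 1 + I$ is a subgroup of $\Lambda^{\times}$: it is closed under multiplication, and every $1 + a$ with $a \in I$ has inverse $\sum_{k \ge 0}(-a)^k$, a sum that makes sense in $\Lambda$ because $(-a)^k \in I^k$ contributes only terms of degree at least $k$, so only finitely many $k$ affect each homogeneous component. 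Since $\mu(x_i) = 1 + X_i \in U$, the image of $\mu$ lies in $U$, which is exactly ``$1 + O(1)$''.

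For injectivity, I would exploit that the $I$-adic filtration is separated: $\bigcap_n I^n = 0$, since any element lying in every $I^n$ has no terms of any degree. The first step is to show by induction on $n$ that $\mu(\gamma_n F) \subset 1 + I^n$, where $\gamma_n F$ is the $n$th term of the lower central series. The base case is the image statement above. For the inductive step the key computation is: if $\mu(w) = 1 + a$ with $a \in I^m$ and $\mu(v) = 1 + b$ with $b \in I^{\ell}$, then
\[ \mu([w,v]) - 1 \;=\; (ab - ba)\,\mu(w)^{-1}\mu(v)^{-1} \;\in\; I^{m+\ell}, \]
because $ab - ba \in I^{m+\ell}$ and the remaining factor lies in $U \subset 1 + I$. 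Iterating yields $\mu(\gamma_n F) \subset 1 + I^n$ for every $n$.

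The main obstacle is the reverse containment: if $\mu(w) \in 1 + I^n$ then $w \in \gamma_n F$; equivalently, the induced map $F/\gamma_n F \to (1 + I)/(1 + I^n)$ is injective for every $n$. I would establish this by passing to associated graded objects. The Magnus--Witt theorem identifies $\bigoplus_n \gamma_n F / \gamma_{n+1} F$, endowed with the commutator bracket, as the free Lie ring on the $x_i$, while $\bigoplus_n I^n / I^{n+1}$ is the free associative (tensor) ring on the $X_i$. The graded map induced by $\mu$ sends the Lie generator $x_i$ to $X_i$ and is therefore the canonical map from the free Lie ring into its universal enveloping algebra, injective by Poincar\'e--Birkhoff--Witt. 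Graded injectivity then lifts to injectivity on each quotient $F/\gamma_n F$ by induction on $n$. Combining this with the separatedness $\bigcap_n I^n = 0$ and the residual nilpotence $\bigcap_n \gamma_n F = \{1\}$ (itself a by-product of the same filtration argument) shows that $\ker \mu = \{1\}$, completing the proof.
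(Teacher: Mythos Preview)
Your argument is correct but takes a far more elaborate route than the paper. The paper's proof is a two-line elementary observation: write a nontrivial $w \in F$ in reduced form $w = x_{i_1}^{p_1}\cdots x_{i_k}^{p_k}$ with $i_j \ne i_{j+1}$, and note that in the expansion of $\mu(w) = \prod_j (1+X_{i_j})^{p_j}$ the monomial $X_{i_1}X_{i_2}\cdots X_{i_k}$ appears with coefficient exactly $p_1 p_2 \cdots p_k \ne 0$, since by noncommutativity and the alternation of indices the only way to assemble that specific monomial is to take the degree-one term from each factor. Your filtration argument via the lower central series, the Magnus--Witt identification of the associated graded Lie ring, and Poincar\'e--Birkhoff--Witt is sound, but it imports machinery that is historically \emph{derived from} this very embedding: Magnus introduced $\mu$ precisely to prove that free groups are residually nilpotent. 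What your approach buys is a structural statement (the preimage of $1 + I^n$ under $\mu$ is exactly $\gamma_n F$), at the cost of a much longer argument and some risk of circularity depending on which proof of Magnus--Witt you are prepared to cite.
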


\begin{proof} We need only verify that the kernel of $\mu$ is trivial.  Write any non-identity element $w$ of $F$ in standard form 
$w = x_{i_1}^{p_1}x_{i_2}^{p_2}\cdots x_{i_k}^{p_k}$, where $i_j \ne i_{j+1}$ for each $j = 1, \dots, k-1$.  From the remark preceding this lemma, we see that $\mu(w)$ has a unique term $pX_{i_1}X_{i_2} \cdots X_{i_k},$ with $p = p_1 \cdots p_k$, which is the product of the degree one terms of the factors 
$(1 + X_{i_j})^{p_j}$.  It follows that $\mu(w) \ne 1$. \end{proof}

Our intention is to order $F$ by considering it as a subgroup (via the Magnus embedding $\mu$) of the (multiplicative) group
$\{ 1 + O(1)\}$ which lies inside $\Lambda$.  The ordering of $\Lambda$ that we have described is easily seen to be invariant under addition, but it is certainly not preserved by multiplication by certain terms, for example $-1$.  However, the following saves the day.

\begin{lemma} The multiplicative group $G = \{ 1 + O(1)\} \subset \Lambda$ is a bi-ordered group, under the ordering of $\Lambda$ described above.
\end{lemma}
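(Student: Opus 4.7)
My plan is to reduce everything to one easy observation about ``leading terms'' and then check bi-invariance by a single multiplicative computation.

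First I would recast the ordering on $\Lambda$ in a cleaner form. Because each element of $\Lambda$ has only finitely many terms of any given degree, for every nonzero $z \in \Lambda$ there is a well-defined \emph{leading term} $\gamma_z T_z$: the coefficient and monomial of the smallest nonzero term in the degree-then-lex ordering. The prescribed order on $\Lambda$ then says exactly that $z_1 < z_2$ iff the leading coefficient of $z_2 - z_1$ is positive. Straight from this description one checks the three properties of a strict total ordering: if $u \neq v$ in $G$, then $v - u \neq 0$ has a well-defined leading coefficient whose sign decides the comparison; if $u < v$ and $v < w$ then the leading term of $(w - u) = (w - v) + (v - u)$ is either that of $w - v$, that of $v - u$, or their sum when they sit at the same monomial, and in all three cases is positive; antisymmetry is immediate from $u - v = -(v - u)$. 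Restricting to $G = \{1 + O(1)\}$ keeps these properties.

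The key lemma is: if $z \in \Lambda$ is nonzero with leading term $\gamma_z T_z$ of degree $d$, and $w = 1 + c \in G$ (so $c \in O(1)$), then $wz$ and $zw$ both have leading term $\gamma_z T_z$. Indeed $wz = z + cz$; every monomial in $cz$ is a product of a monomial in $c$ (degree $\geq 1$) with a monomial in $z$ (degree $\geq d$), hence has degree $\geq d+1$, so $cz$ contributes nothing to the degree-$d$ part, and the leading term of $wz$ equals that of $z$. The argument for $zw = z + zc$ is identical.

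To finish, given $u, v, w \in G$ with $u < v$, apply the lemma with $z = v - u$ (whose leading coefficient is positive by hypothesis). Then
\[
wv - wu = w(v - u) \quad \text{and} \quad vw - uw = (v - u)w
\]
both have the same (positive) leading coefficient as $v - u$, so $wu < wv$ and $uw < vw$. Hence the ordering is simultaneously left- and right-invariant, which is what it means for $G$ to be bi-ordered.

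I do not anticipate a genuine obstacle; the only subtlety is bookkeeping the two levels of the ordering (degree, then lexicographic within a degree), which is handled cleanly once the leading-term language is introduced. The reason the argument succeeds is purely that elements of $G$ differ from $1$ only by terms of strictly positive degree, so multiplication by an element of $G$ can never disturb the lowest-degree term of whatever it multiplies.
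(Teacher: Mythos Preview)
Your proof is correct and follows essentially the same line as the paper's. The paper computes $(1+U)(1+W)-(1+U)(1+V)=(W-V)+U(W-V)$ and observes that $U(W-V)$ consists only of terms of degree strictly larger than the first nonzero term of $W-V$; your ``leading term'' lemma is precisely this observation packaged abstractly, and your application $wv-wu=w(v-u)$ is the same computation. Your version is a bit more thorough in that you verify transitivity explicitly and treat left- and right-invariance symmetrically rather than declaring one case ``similar,'' but the underlying idea is identical.
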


\begin{proof} We check that the ordering is preserved by multiplication on the left, the verification for right-multiplication being similar.  Let $U, V, W \in O(1)$ and suppose $1 + V < 1 + W$, which is equivalent to $W - V > 0$.  We calculate 
\begin{eqnarray*}
(1 + U)(1 + V) & = & 1 + U + V + UV \\ 
(1 + U)(1 + W) & = & 1 + U + W + UW  
\end{eqnarray*}
In the difference $(1 + U)(1 + W) - (1 + U)(1 + V) = W - V + U(W - V)$
we note that all terms of $U(W - V)$ have degree greater than the first nonzero term of $W - V$.  So the difference is greater than zero, and we conclude that $(1 + U)(1 + V) < (1 + U)(1 + W)$. \end{proof}

Now we can formally define the ordering on the free group $F$ by declaring
$$
v < w \ {\rm in}\ F \quad \Leftrightarrow \quad \mu(v) < \mu(w)  \ {\rm in}\  \Lambda.
$$

By way of example, comparing $(1 + X_2)^2 = 1 + 2X_2 + X_2^2$ with the expression in Equation \ref{conjugate}, we conclude that 
$x_2^2 < x_1x_2^2x_1^{-1}$, because the first term at which their Magnus expansions differ, namely $(X_1X_2)$, has coefficient $0$ for $x_2^2$ and $2$ for $x_1x_2^2x_1^{-1}$.  We have established the following, moreover with an explicit, computable ordering.

\begin{theorem}\label{freeBO}
Every free group is bi-orderable.
\end{theorem}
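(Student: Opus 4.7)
The plan is to combine the two lemmas just established with essentially no additional work: the Magnus embedding $\mu : F \to \Lambda$ is an injective group homomorphism whose image lies in the bi-ordered multiplicative group $G = \{1 + O(1)\}$, so a bi-ordering on $F$ can simply be pulled back from $G$.

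First I would define $<$ on $F$ by setting $v < w$ iff $\mu(v) < \mu(w)$ in $G$. Transitivity and asymmetry transfer immediately from the order on $\Lambda$; totality uses the lemma that $\mu$ is injective, so for distinct $v, w \in F$ we have $\mu(v) \ne \mu(w)$ in $G$, and trichotomy in $G$ forces exactly one of $v < w$, $w < v$, or $v = w$ to hold. This shows $<$ is a strict total ordering of $F$.

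Next I would verify bi-invariance. Given $v < w$ and any $u \in F$, bi-invariance of the ordering on $G$ (the content of the second lemma) gives $\mu(u)\mu(v) < \mu(u)\mu(w)$ and $\mu(v)\mu(u) < \mu(w)\mu(u)$. Because $\mu$ is a homomorphism these are the same as $\mu(uv) < \mu(uw)$ and $\mu(vu) < \mu(wu)$, i.e. $uv < uw$ and $vu < wu$ in $F$. Thus $<$ is bi-invariant.

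There is no real obstacle here; the entire mathematical content is already packaged in the Magnus embedding $\mu$ and the verification that $G$ is bi-ordered. The only minor concern is free groups of uncountable rank, but $\Lambda$ was defined to accommodate any (even infinite) index set of variables by restricting to power series involving only finitely many at a time, so the same argument applies verbatim. Alternatively, one could invoke the bi-ordered analog of Theorem \ref{fgLO} together with the fact that every finitely generated subgroup of a free group is itself free (Nielsen--Schreier) to reduce to the finitely generated case, but this detour is unnecessary.
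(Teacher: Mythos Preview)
Your proposal is correct and matches the paper's approach exactly: pull back the bi-ordering of $G = \{1 + O(1)\}$ through the injective Magnus homomorphism $\mu$, with the two preceding lemmas doing all the real work. If anything, you spell out the verification of totality and bi-invariance more explicitly than the paper does.
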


\begin{problem}\label{lowercentral}  Suppose $G$ is a group and consider its lower central series $G_0 \supset G_1 \supset G_2 \supset \cdots$, defined by $G_0 = G$ and \index{lower central series}
$G_{n+1} = [G, G_n]$, the subgroup generated by all commutators $ghg^{-1}h^{-1}$ with $g \in G \ h \in G_n$.  Check that each $G_n$ is normal in $G$, and that $G_n/G_{n+1}$ is central in $G/G_{n+1}$ (so $G_n/G_{n+1}$ is abelian).  Assume that 
$
\displaystyle\cap_{i=0}^{\infty} G_i = \{1\}$  and that $ G_n/G_{n-1}$ is torsion-free for all $n$. 
Verify that any group $G$ satisfying these properties is bi-orderable.  (Hint: use Theorem \ref{torsion free abelian} to take an arbitrary ordering on each $G_n/G_{n+1}$, and define a non-identity
$g \in G$ to be positive if its projection in $G_n/G_{n+1}$ is positive, where $n$ is the largest integer such that $g \in G_n$.)
\end{problem}

This problem gives an alternate proof that free groups are orderable (in fact, it is essentially equivalent to the Magnus expansion argument).  Many other groups of topological interest satisfy the conditions of the problem, for example the fundamental group of any orientable surface and the pure braid groups, so they are also bi-orderable.  It first appeared as a theorem in \cite{neumann49a}, where there is a similar criterion for orderability involving the ascending central series. 

\begin{problem}  Determine the ordering of the following in a free group on generators $x, y$, with $x = x_1$ and $y = x_2$ in the ordering described above:
$1$, $ x $, $ y $, $x^2$,$ yx^2y^{-1}$, $ y^{-1}x^2y$, $xyx^{-1}y^{-1}.$
\end{problem} 

\begin{problem}   \index{dense ordering} Show that the ordering described on a free group of two (or more) generators is dense, meaning that given $u, v \in F(x,y)$ with $u < v$ there exists $w\in F(x,y)$ with $u < w < v$.  (Hint: argue that a left- or bi-ordering on a group is dense if and only if there does {\em not} exist a least element greater than the identity.)
\end{problem}  

\begin{problem}   Show that {\em every} bi-ordering a free group of two (or more) generators must be order-dense.  (Hint: If $u$ is the least element which is greater than the identity, argue that any conjugate of $u$ also has this property.  Conclude that $u$ is central --- but free groups have trivial centre.)
\end{problem}  

\begin{problem}
Suppose that $<$ is a bi-ordering of a free group $F$, and suppose that $g \in F$ is neither the identity nor a power of a nonidentity element.  Define a new ordering $\prec$ of $F$ as follows: if $h$ is not a power of $g$,  declare $1 \prec h$ if $g< hgh^{-1}$, if $h = g^k$ then declare $h$ to be positive if $k$ is positive.

Show that $\prec$ is a left-ordering of $F$, and that $g$ is the smallest positive element of $F$ with respect to the ordering $\prec$.  Compare your result with the solution of Problem \ref{relatively_convex_intersection_problem}.
\end{problem}

An element of a free group will be called {\it Garside positive} \index{Garside positive} if it can be expressed as a word in the generators without negative exponents.  The subset of Garside positive elements is a monoid.

\begin{problem}  Show that the set  $F^+$ of {\it Garside positive} words is positive in the ordering described above using the Magnus expansion, that is $1 < w$ whenever $1 \ne w \in F^+$.  Moreover, $F^+$ is {\it well-ordered} by this ordering, that is, every nonempty subset of $F^+$ has a smallest element.
\end{problem} 

\section{Ordering surface groups}

The goal of this section is to prove the following.

\begin{theorem}\label{BOsurface}
If $\Sigma$ is a surface other than the projective plane $P^2$ or the Klein bottle $P^2 \# P^2$, 
then $\pi_1(\Sigma)$ is bi-orderable.
\end{theorem}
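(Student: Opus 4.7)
The plan is to split by the topological type of $\Sigma$. The easy cases go first: if $\Sigma$ has nonempty boundary or is non-compact, then $\Sigma$ deformation retracts onto a $1$-complex, so $\pi_1(\Sigma)$ is free and bi-orderable by Theorem \ref{freeBO}. The sphere $S^2$ has trivial fundamental group, and the torus $T^2$ has $\pi_1(T^2)\cong\Z^2$, which is bi-orderable by Example \ref{orderZ2}. This reduces matters to the closed surfaces $\Sigma_g$ (orientable, $g\ge 2$) and $N_k$ (nonorientable, $k\ge 3$).

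For $\Sigma_g$ with $g\ge 2$, I would follow Baumslag's approach \cite{Baumslag2010}: prove that $\pi_1(\Sigma_g)$ is residually free, meaning that for every nontrivial $\gamma\in\pi_1(\Sigma_g)$ there is a homomorphism onto a free group under which $\gamma$ survives, produced by cutting $\Sigma_g$ along a suitable essential simple closed curve and using the resulting free quotient. This gives an embedding of $\pi_1(\Sigma_g)$ into a countable direct product of free groups. Since free groups are bi-orderable (Theorem \ref{freeBO}) and a countable direct product of bi-ordered groups can be bi-ordered lexicographically after well-ordering the factors, $\pi_1(\Sigma_g)$ is bi-orderable. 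An alternate route is to check directly that $\pi_1(\Sigma_g)$ satisfies the hypotheses of Problem \ref{lowercentral}: its lower central series intersects in the identity, and each successive quotient is torsion-free.

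For $N_k$ with $k\ge 3$, the argument is considerably more delicate. Baumslag's route is unavailable because the abelianization $H_1(N_k;\Z)\cong\Z^{k-1}\oplus\Z/2$ has $2$-torsion, so $\pi_1(N_k)$ cannot embed in any torsion-free group, in particular not in a product of free groups. Following \cite{RW01}, I would instead produce a bi-ordering from the topology itself. One concrete avenue is to use the classical identity $3P^2\cong T^2\#P^2$ to decompose $N_k$ as a connect sum with an orientable summand, realize $\pi_1(N_k)$ as an amalgamated free product of bi-orderable pieces along a cyclic subgroup, and verify that the amalgamation is compatible with a bi-invariant positive cone. A second avenue is to establish that $\pi_1(N_k)$ is residually torsion-free nilpotent, after which Problem \ref{lowercentral} applies.

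The hard part will be the nonorientable case. The obvious obstruction is that nonorientable surfaces always contain Möbius bands, and in the Klein bottle such a band forces the relation $xyx^{-1}=y^{-1}$, which is incompatible with bi-orderability. For $k\ge 3$ the Möbius bands are still present, but the extra genus gives enough room that no conjugate-to-inverse relation is forced among nontrivial elements; the substantive work is to exploit this slack and exhibit an explicit bi-invariant positive cone that is stable under conjugation by orientation-reversing loops. This is precisely the content of the delicate construction in \cite{RW01}.
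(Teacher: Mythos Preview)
Your reduction to closed surfaces and your handling of the orientable cases are fine, though different from the paper's route. The paper does not treat the orientable and nonorientable cases separately; instead it observes that for every closed hyperbolic surface (orientable or not) there is a finite-sheeted covering $\Sigma \to 3P^2$, so $\pi_1(\Sigma)$ sits inside $\pi_1(3P^2)$, and it suffices to bi-order that single group.

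There are two genuine problems in your nonorientable discussion. First, the sentence ``$\pi_1(N_k)$ cannot embed in any torsion-free group'' is false: $\pi_1(N_k)$ is itself torsion-free. What the $2$-torsion in $H_1$ actually obstructs is residual freeness (via the Lyndon identity $x^2y^2=z^2 \Rightarrow [x,y]=1$ in free groups), not embeddability into torsion-free groups. Second, your ``second avenue'' cannot work as stated: Problem~\ref{lowercentral} requires every lower-central quotient $G_n/G_{n+1}$ to be torsion-free, and already $G_0/G_1 = H_1(N_k)\cong \Z^{k-1}\oplus\Z/2$ fails this. So neither residual torsion-free nilpotence nor the lower-central-series criterion is available, and your ``first avenue'' (compatible positive cones on an amalgam) remains a wish rather than an argument.

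The paper's proof, which is essentially the construction from \cite{RW01}, goes as follows for $\Sigma = 3P^2 \cong T^2 \# P^2$. View $\Sigma$ as a torus with one crosscap; its $\Z^2$-cover $\tilde\Sigma$ is the plane with a crosscap at each point $(m+\tfrac12,n+\tfrac12)$, so $\pi_1(\tilde\Sigma)$ is free on countably many generators $x_{m,n}$, and there is an exact sequence
\[
1 \longrightarrow \pi_1(\tilde\Sigma) \longrightarrow \pi_1(\Sigma) \longrightarrow \Z\oplus\Z \longrightarrow 1.
\]
One bi-orders the free kernel via the Magnus expansion (with the variables $X_{m,n}$ ordered lexicographically by subscript) and checks, using the fact that conjugates have the same leading term in the Magnus expansion, that the $\pi_1(\Sigma)$-conjugation action on $\pi_1(\tilde\Sigma)$ preserves this bi-ordering. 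Problem~\ref{extendO} then assembles the orderings on kernel and quotient into a bi-ordering of $\pi_1(\Sigma)$. This is exactly the missing ``delicate construction'' you allude to; without it, your proposal does not prove the nonorientable case.
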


Before embarking on the proof, note that we need only consider closed surfaces $\Sigma$, for otherwise the fundamental group is a free group, which we have just shown to be bi-orderable.  For $\Sigma$ the 2-sphere or torus $T^2$, the theorem is obvious, as their groups are trivial and $\Z \oplus \Z$ respectively.   The remaining cases will be settled by proving the theorem for the particular surface $\Sigma = 3P^2$, since its group contains all the remaining ones as subgroups, according to the following observations.  We note that for any integer $k > 2$ there is a covering map $kT^2 \to 2T^2$ with deck transformations the cyclic group of order $k - 1$.  It follows that $\pi_1(2T^2)$ contains  $\pi_1(kT^2)$ as a normal subgroup of index $k-1$. 

\begin{figure}[h!]
\includegraphics[scale=0.4]{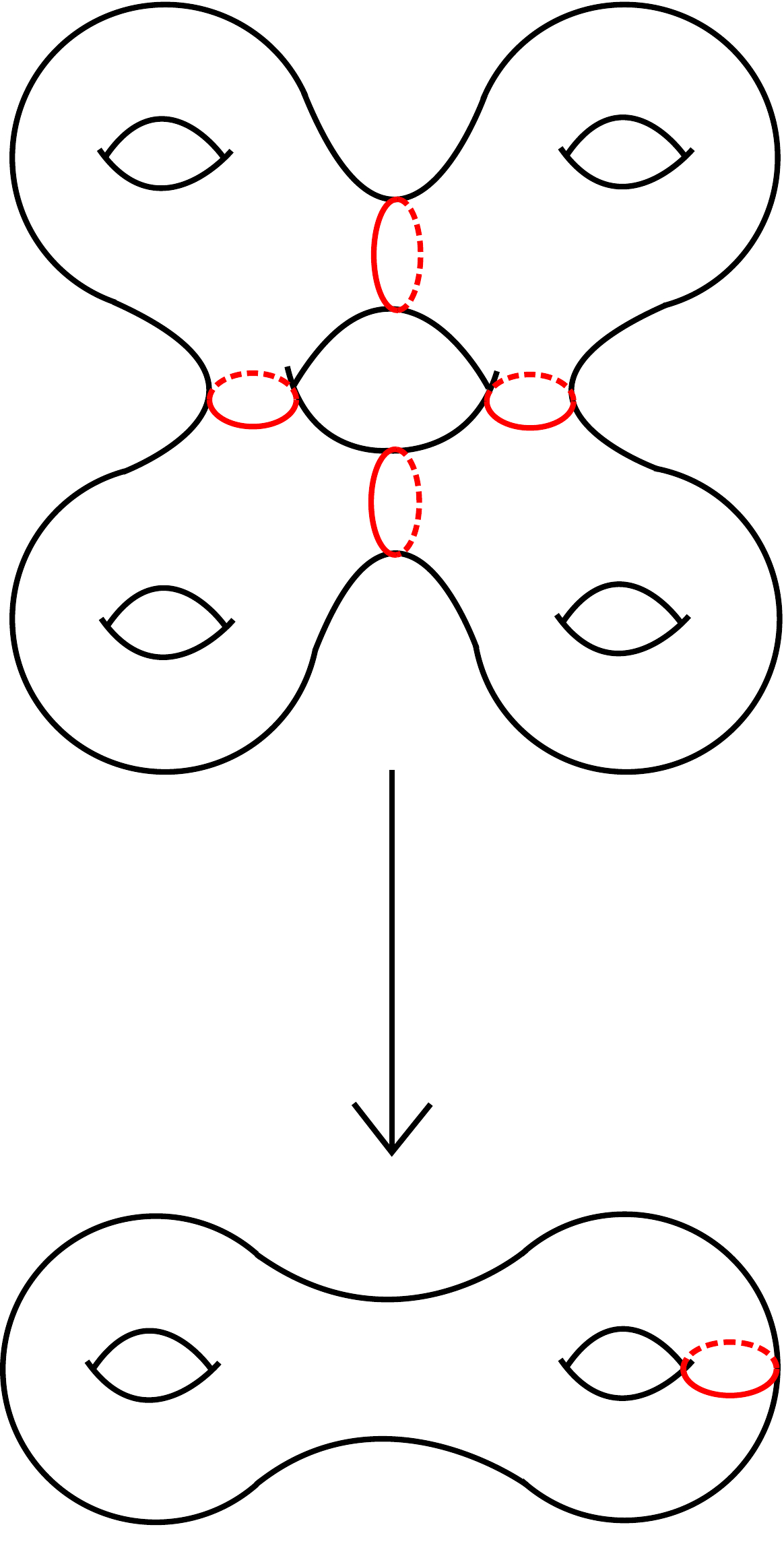}
\caption{A covering of $2T^2$ by $kT^2$ in the case $k=5$.  Deck transformations act as rotations by multiples of $\pi/2$.}
\label{covering of 2T2}
\end{figure}

\begin{problem}  Show that for $k \ge 1$ the nonorientable surface $kP^2$ has oriented double cover homeomorphic with $(k-1)T^2$ (where $0T^2 \cong S^2$).  Moreover for $k \ge 3$ there is a $(k-2)$-sheeted cyclic covering $kP^2 \to 3P^2$.  It follows that 
$\pi_1(3P^2)$ contains finite index subgroups isomorphic with $\pi_1(kP^2)$ for all $k \ge 3$ and also $\pi_1(kT^2)$ for $k \ge 2$.  (Hint: think of $3P^2 = T^2 \# P^2$ as a torus with a disk removed and replaced by a M\"obius band, and similarly think of $kP^2$.)
\end{problem}

Picture $\Sigma = 3P^2$ as a torus with a small disk removed and replaced by a M\"obius band (cross-cap).  The torus has universal cover $\R^2$, with covering group being integral translations 
$(x, y) \mapsto (x+m, y+n),\ m,n \in \Z$.  Removing small disks about each point $(m + 1/2, n + 1/2)$ and replacing them by cross-caps, we obtain a covering 
$\tilde \Sigma \rightarrow \Sigma$.  It is not the universal cover.  Indeed,
$\pi_1(\tilde \Sigma)$ is a free group on a countable number of generators, which we may picture as loops which are the central curves of the cross-caps, connected by ``tails'' to the origin of $\R^2$ in some canonical manner (see Figure \ref{cover_crosscap} and Problem \ref{crosscap cover problem}).  

\begin{figure}[h!]
\includegraphics[scale=0.6]{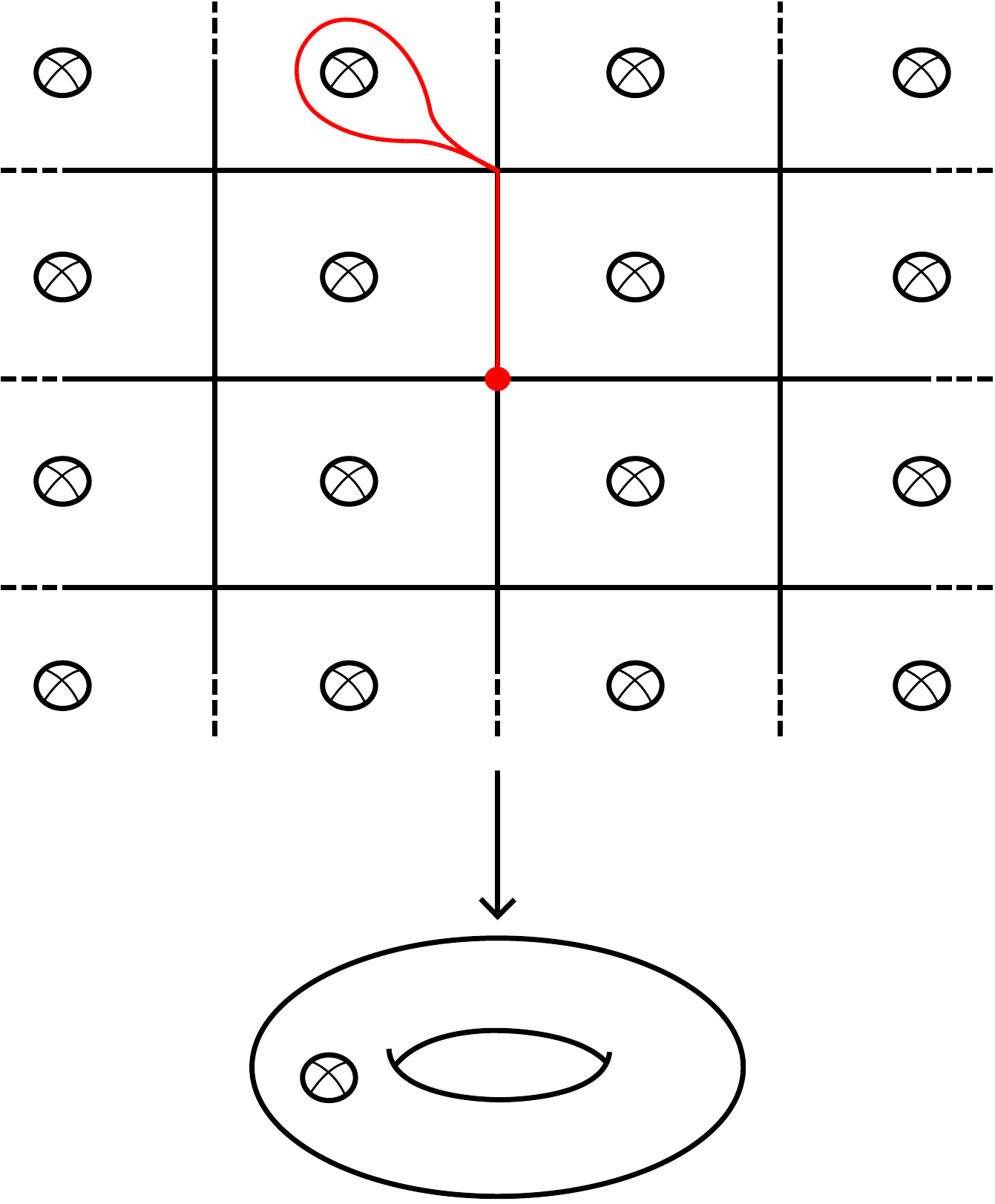}
\caption{A covering of $T^2$ with an added crosscap by a plane with crosscaps at each point $(m + 1/2, n + 1/2)$; the curve in red is the generator $a_{-1, 2}$ described in Problem \ref{crosscap cover problem}.}
\label{cover_crosscap}
\end{figure}

Let $x_{m,n}$ denote the generator corresponding to the cross-cap at $(m+ 1/2, n + 1/2)$.
We have an exact sequence
$$
1 \longrightarrow \pi_1(\tilde\Sigma) \longrightarrow \pi_1(\Sigma) \longrightarrow
\Z \oplus \Z \longrightarrow 1,
$$ 
in which $\pi_1(\Sigma)$ is flanked by bi-orderable groups.  

Using Problem \ref{extendO} the theorem will follow once we establish that the action of conjugation by $\pi_1(\Sigma)$ upon $\pi_1(\tilde \Sigma)$
preserves some ordering of $\pi_1(\tilde \Sigma)$.  To this end, we order
$\pi_1(\tilde \Sigma)$ using the Magnus expansion ordering, as described in the previous section.  The symbols $X_{m,n}$ corresponding to the generators of 
$\pi_1(\tilde \Sigma)$ are considered ordered lexicographically by their subscripts.  Now $\pi_1(\Sigma)$ acts on $\tilde \Sigma$ by covering translations, and therefore, each $x_{m,n}$ is taken to a conjugate of $x_{m+m_0,n+n_0}$.  It follows that the action preserves the Magnus ordering (see Problem \ref{MagnusConj}). \qed

\begin{problem}
\label{crosscap cover problem}
  Verify that $\pi_1(\tilde \Sigma)$ is a free group on a countable number of generators, as follows:  The fundamental group of $\R^2$ minus the disks is a free group with generators $a_{m,n}$ represented by a curve bounding the disk removed at 
$(m+ 1/2, n + 1/2)$, plus a tail connecting it to the basepoint.   If $x_{m,n}$ is represented by the central curve of the M\"obius band, we introduce, by Van Kampen's theorem, relations 
$a_{m,n} = x_{m,n}^2$, and we calculate:
$$\pi_1(\tilde \Sigma) \cong  \langle x_ {m,n}, a_{m,n} : a_{m,n} = x_{m,n}^2 \rangle 
\cong \langle x_{m,n} \rangle,$$
where $ \langle x_{m,n} \rangle$ is the free group generated by $\{ x_{m,n}\}_{m, n \in \mathbb{Z}}$.
\end{problem}

\begin{problem} \label{MagnusConj}
Define the "leading term" of a Magnus expansion (written with lower degree terms preceding higher) to be the first non-constant term with nonzero coefficient.  Verify that if two elements of the free group are conjugate, then the leading terms of their Magnus expansions are the same.   The positive cone of the ordering of $\pi_1(\tilde \Sigma)$ defined above consists of all group elements whose Magnus expansion has leading term with positive coefficient.  It follows that the covering translations of $\Sigma$ described above preserve the positive cone, and hence the bi-ordering defined in the proof of Theorem \ref{BOsurface}.
\end{problem}

\section{A theorem of Farrell}

In this section we will show how the topological properties of a universal covering space $p: \widetilde{B} \rightarrow B$ are related to orderability of $\pi_1(B)$ via the action by deck transformations  \cite{farrell76}.  

Also, for this section only we will consider \textit{right}-orderings of groups: we lose nothing by doing so, since by Problem \ref{right order} every left-ordering of a group uniquely determines a right-ordering and vice versa.   We adopt this convention so that our notation will agree with the standard convention in topology that concatenated paths are written in the order they appear: 
\begin{equation*}
(\alpha*\beta)(t)= 
\begin{cases} \alpha(2t) & \text{if $t \in [0, \frac{1}{2}]$,}
\\
\beta(2t-1) &\text{if $t \in [\frac{1}{2},1]$.}
\end{cases}
\end{equation*}

We also recall the standard correspondence between the fundamental group of $B$ and deck transformations $\widetilde{B} \rightarrow \widetilde{B}$.  To describe the correspondence, we fix a basepoint $b_0$ in $B$ and a preimage $\widetilde{b}_0$ in $\widetilde{B}$.  Then given an element $[\gamma] \in \pi_1(B, b_0)$, we send $[\gamma]$ to the deck transformation $d : \widetilde{B} \rightarrow \widetilde{B}$ satisfying $d(\widetilde{b_0}) = \widetilde{\gamma}(1)$, where $\widetilde{\gamma}$ is the lift of $\gamma$ which starts at $\widetilde{b_0}$.  We will write $[\gamma]$ in place of the associated deck transformation $d$, so that our equation becomes $[\gamma] (\widetilde{b_0}) =  \widetilde{\gamma}(1)$. For simplicity we'll write $[\gamma] \widetilde{b_0}$, thinking of the deck transformations as a group action.

\begin{proposition}
Suppose that $p: \widetilde{B} \rightarrow B$ is a universal covering space.  If there exists a continuous map $h: \widetilde{B} \rightarrow \mathbb{R}$ such that $f: \widetilde{B} \rightarrow B \times \mathbb{R}$ given by $f(x) = (p(x), h(x))$ is an injection, then $\pi_1(B)$ is right-orderable.
\end{proposition}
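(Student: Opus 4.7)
The plan is to transport the natural order on $\R$ onto $\pi_1(B)$ using $h$. Fixing $\widetilde{b_0} \in p^{-1}(b_0)$ as in the paper's setup, identify $\pi_1(B, b_0)$ with the fiber $p^{-1}(b_0)$ via $[\gamma] \mapsto [\gamma]\widetilde{b_0}$ and declare
\[ [\alpha] \prec [\beta] \iff h([\alpha]\widetilde{b_0}) < h([\beta]\widetilde{b_0}).
\]
Trichotomy follows directly from the hypotheses: for $[\alpha] \neq [\beta]$, freeness of the deck action on the universal cover gives $[\alpha]\widetilde{b_0} \neq [\beta]\widetilde{b_0}$, and since both points lie in $p^{-1}(b_0)$, injectivity of $f = (p, h)$ forces $h([\alpha]\widetilde{b_0}) \neq h([\beta]\widetilde{b_0})$. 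Transitivity is inherited from $(\R, <)$.

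The substantive step is right-invariance: $[\alpha] \prec [\beta] \Rightarrow [\alpha][\gamma] \prec [\beta][\gamma]$. The paper's path-concatenation convention makes $[\gamma] \mapsto ([\gamma] : \widetilde{B} \to \widetilde{B})$ a group homomorphism, so $([\alpha][\gamma])\widetilde{b_0} = [\alpha]([\gamma]\widetilde{b_0})$, and writing $y = [\gamma]\widetilde{b_0}$ the claim reduces to showing
\[ h([\alpha]\widetilde{b_0}) < h([\beta]\widetilde{b_0}) \;\Longrightarrow\; h([\alpha] y) < h([\beta] y).
\]
My key tool is the auxiliary function $F : \widetilde{B} \to \R$ defined by $F(x) = h([\alpha] x) - h([\beta] x)$. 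This is continuous, and I claim it is nowhere zero: at every $x \in \widetilde{B}$, freeness of the deck action of the nontrivial element $[\alpha][\beta]^{-1}$ gives $[\alpha] x \neq [\beta] x$, while $p([\alpha] x) = p(x) = p([\beta] x)$, so injectivity of $f$ forces $h([\alpha] x) \neq h([\beta] x)$.

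Since $F$ is continuous and nowhere zero on the path-connected space $\widetilde{B}$ (the universal cover of a path-connected $B$), it has constant sign; therefore $F(\widetilde{b_0}) < 0$ propagates to $F(y) < 0$ for every $y \in \widetilde{B}$, and in particular at $y = [\gamma]\widetilde{b_0}$, completing the argument. The main conceptual obstacle---and the reason that injectivity of $f$ throughout $\widetilde{B}$ (not merely on a single fiber) is needed---is precisely this propagation step: the global nonvanishing of $F$ together with connectedness of $\widetilde{B}$ is what upgrades a mere total ordering of the fiber $p^{-1}(b_0)$ into a right-invariant ordering of the whole group.
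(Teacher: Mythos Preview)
Your proof is correct and follows essentially the same approach as the paper: both argue that the continuous function $h([\alpha]\,\cdot) - h([\beta]\,\cdot)$ cannot change sign, because a zero would contradict injectivity of $f$. The only cosmetic difference is that the paper restricts this function to the lifted path $\widetilde{\gamma}(t)$ and invokes the intermediate value theorem on $[0,1]$, whereas you define it globally on $\widetilde{B}$ and invoke connectedness; your $F$ pulled back along $\widetilde{\gamma}$ is exactly the paper's $g$.
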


\begin{proof}
Suppose such a map $h$ exists, and define an ordering of $\pi_1(B, b_0)$ according to the rule $[\alpha] < [\beta]$ if and only if $h([\alpha]\widetilde{b_0}) < h([\beta]\widetilde{b_0})$.  

Now suppose that this ordering is not right invariant, so that there exists $[\alpha], [\beta], [\gamma] \in \pi_1(B, b_0)$ with $[\alpha] < [\beta]$ yet $[\alpha] [\gamma] > [\beta] [\gamma]$.  Let $\widetilde{\gamma}_1$ denote the lift of $\gamma$ with $\widetilde{\gamma}_1(0) = [\alpha]\widetilde{b_0}$, and $\widetilde{\gamma}_2$ denote the lift of $\gamma$ with $\widetilde{\gamma}_2(0) = [\beta]\widetilde{b_0}$.  Note that $\widetilde{\gamma_1}(1) = [\alpha * \gamma]\widetilde{b_0}=[\alpha ][ \gamma]\widetilde{b_0}$ and $\widetilde{\gamma_2}(1) = [\beta * \gamma]\widetilde{b_0}=[\beta ][ \gamma]\widetilde{b_0}$.

Then set $g(t) = h( \widetilde{\gamma}_1(t)) - h(\widetilde{\gamma}_2(t))$, so that $g(0) <0$ and $g(1)>0$.  By the intermediate value theorem, there exists $t_0$ with $g(t_0) =0$ and therefore $ \widetilde{\gamma}_1(t_0) = \widetilde{\gamma}_2(t_0)$. Consequently both of the lifts of $\gamma$ must be the same, since they overlap.  This implies $[\alpha]\widetilde{b_0}= [\beta]\widetilde{b_0}$, forcing $[\alpha] = [\beta]$, a contradiction.
\end{proof}
Note the necessity of right-orderability in the argument above.  For the converse, we first need to prepare some facts.
\begin{problem}
Show that there exists a discrete, densely ordered subset of the real line which has no maximum or minimum element (Hint: Consider the midpoints of the deleted intervals used to construct the middle thirds Cantor set).
\end{problem}

\begin{problem}
If $G$ is a countable right-ordered group, use the previous exercise to show that there exists an order-preserving map $t: G \rightarrow \mathbb{R}$ with discrete image. (Hint: mimic the arguments appearing in the proof of \ref{LO_universal}).
\end{problem}

\begin{proposition}
Suppose that $B$ is a space admitting a triangulation.  If $\pi_1(B)$ is right-orderable, then there exists a map $h: \widetilde{B} \rightarrow \mathbb{R}$ such that the map $f: \widetilde{B} \rightarrow B \times \mathbb{R}$ given by $f(x) =(p(x), h(x))$ is an embedding.
\end{proposition}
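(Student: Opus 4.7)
The plan is to define $h$ piecewise linearly with respect to a triangulation of $\widetilde{B}$, using the order-preserving embedding of $\pi_1(B)$ into $\mathbb{R}$ to prescribe values on vertices. Since $B$ admits a triangulation it has countable fundamental group, so the previous two problems supply an order-preserving map $t : \pi_1(B) \to \mathbb{R}$ with discrete image.

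First I would refine the given triangulation of $B$ (passing to a sufficiently fine subdivision if necessary) so that the lifted simplicial structure on $\widetilde{B}$ makes $p$ a simplicial covering; equivalently, no nontrivial deck transformation sends any simplex of $\widetilde{B}$ to itself. Then for each vertex $v$ of $B$ I pick a preferred lift $\widetilde{v} \in \widetilde{B}$, so that every vertex of $\widetilde{B}$ is uniquely of the form $g \cdot \widetilde{v}$ for some $g \in \pi_1(B)$. Define $h$ on the vertex set of $\widetilde{B}$ by $h(g \cdot \widetilde{v}) = t(g)$, and extend affinely inside each simplex using barycentric coordinates. This produces a well-defined continuous map $h : \widetilde{B} \to \mathbb{R}$.

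Next I would verify that $f(x) = (p(x), h(x))$ is injective. If $f(x) = f(y)$ then $p(x) = p(y)$, so $y = g \cdot x$ for a unique $g \in \pi_1(B)$, and the goal is $g = 1$. Let $\sigma$ be a simplex containing $x$, with vertices $w_j = g_j \cdot \widetilde{v_{i_j}}$ and with $x = \sum_j \lambda_j w_j$ in barycentric coordinates. Because $\pi_1(B)$ acts simplicially, $g \cdot x$ lies in $g \cdot \sigma$ with the same barycentric coordinates relative to the vertices $g g_j \cdot \widetilde{v_{i_j}}$. Thus
$$
h(g \cdot x) - h(x) \;=\; \sum_j \lambda_j \bigl[\,t(g g_j) - t(g_j)\,\bigr].
$$
If $g > 1$, then right-invariance of the ordering gives $g g_j > g_j$ for every $j$, and since $t$ is order-preserving each summand is strictly positive, forcing $h(g \cdot x) > h(x)$; the case $g < 1$ is symmetric. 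Either conclusion contradicts $h(x) = h(g \cdot x)$, so $g = 1$ and $x = y$.

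The main obstacle is the preliminary step of arranging the triangulation so that the deck action is free on simplices (rather than merely on points): this is needed to guarantee that $h$ is well-defined on vertices and that the piecewise-linear computation above compares distinct simplices $\sigma$ and $g\cdot\sigma$. Once that geometric setup is in place, the rest of the argument is simply the translation of right-invariance of the ordering on $\pi_1(B)$ into strict monotonicity of $h$ along each orbit, which is precisely what is needed for $f$ to separate points.
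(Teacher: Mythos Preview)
Your proposal is correct and follows essentially the same route as the paper: choose an order-preserving $t:\pi_1(B)\to\mathbb{R}$ with discrete image, pick a lift of each vertex of $B$, set $h(g\cdot\widetilde{v})=t(g)$, extend affinely over simplices, and use right-invariance to show that $h$ is strictly monotone along each deck orbit. Your barycentric computation of $h(g\cdot x)-h(x)$ is exactly the argument the paper sketches (and then leaves as an exercise). The one place you are more careful than the paper is in explicitly subdividing so that no nontrivial deck transformation carries a simplex to itself; the paper tacitly assumes this when it speaks of ``the triangulation of $\widetilde{B}$ that we get by taking preimages of simplices,'' but you are right that it deserves mention. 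Like the paper, you stop short of the final step---upgrading the continuous bijection to an embedding---which is where the discreteness of the image of $t$ is actually used.
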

\begin{proof}
Begin by fixing a triangulation of $B$, and correspondingly a triangulation of $\widetilde{B}$ that we get by taking preimages of simplices under the covering map.  We also fix an order-preserving map $t:\pi_1(B) \rightarrow \mathbb{R}$ with discrete image. 

First, we define $h$ on the vertices of $\widetilde{B}$.  For each $v \in B$, pick a point $\widetilde{v} \in \widetilde{B}$ satsifying $p(\widetilde{v}) = v$.  Then every vertex in the triangulation of $\widetilde{B}$ can be written as $[\gamma] \widetilde{v}$ for some $v \in B$ and $[\gamma] \in \pi_1(B)$, and we define
\[ h( [\gamma] \widetilde{v}) = t([\gamma]).
\]

Now extend this definition linearly to the rest of $\widetilde{B}$ using barycentric coordinates.  Let $x \in \widetilde{B}$ be a point lying in a simplex with vertices $[\gamma_0]\widetilde{v}_0, [\gamma_1]\widetilde{v}_1, \ldots, [\gamma_n]\widetilde{v}_n$, and write $x$ as
\[ x = c_0[\gamma_0]\widetilde{v}_0 + c_1[\gamma_1]\widetilde{v}_1 + \cdots + c_n[\gamma_n]\widetilde{v}_n
\]
where $c_1 + c_2 + \ldots + c_n=1$.   Set 
\[ h(x) = c_0 t([\gamma_0]) + c_1t([\gamma_1]) + \cdots + c_n t([\gamma_n]).
\]

The next two exercises complete the proof by showing that $f(x) =(p(x), h(x))$ is an embedding. 

\begin{problem}
For injectivity, suppose that $f(x) = f(y)$, then $x$ and $y$ must lie inside simplices $\Delta_1$, $\Delta_2$ that are preimages of a common simplex $\Delta \subset B$.  Therefore there exists $[\gamma] \in \pi_1(B)$ such that the corresponding deck transformation sends $\Delta_1$ to $\Delta_2$.  Write $x$ and $y$ in barycentric coordinates, observe that the vertices of $\Delta_2$ are the image under the action of $[\gamma]$ of the vertices of $\Delta_1$.  Show that $[\gamma] >1$ and $[\gamma]<1$ result in $h(y)>h(x)$ and $h(y) < h(x)$ respectively, since $h$ is order-preserving.
\end{problem}

\begin{problem}
Show that $f$ is an embedding (it is here that we need the map $t:\pi_1(B) \rightarrow \mathbb{R}$ to have discrete image).
\end{problem}
\end{proof}

So in the case that $B$ is a triangulable space, we have the following equivalence.

\begin{theorem}[\cite{farrell76}]
If $B$ is a space admitting a triangulation, then $\pi_1(B)$ is right-orderable if and only if there is an embedding $f: \widetilde{B} \rightarrow B \times \mathbb{R}$ so that the following commutes:

\[\xymatrix{ 
\widetilde{B} 
\ar[rr]^{f} 
\ar[dr]_{p} 
&& B \times \mathbb{R}
\ar[dl]^{\pi_1 } \\ 
& B }
\]
Here, $\pi_1$ is projection onto the first factor.
\end{theorem}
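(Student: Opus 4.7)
The plan is to observe that this theorem is simply the combination of the two propositions immediately preceding it, so the proof amounts to assembling those statements and verifying that the commuting diagram condition is automatic from the form $f(x)=(p(x),h(x))$.

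First I would handle the forward direction ($\pi_1(B)$ right-orderable $\Rightarrow$ embedding exists). Since $B$ is triangulable by hypothesis, this is exactly the content of the second proposition: fixing a triangulation, lifting it to $\widetilde{B}$, choosing an order-preserving map $t:\pi_1(B)\to\R$ with discrete image, defining $h$ on vertices by $h([\gamma]\widetilde v)=t([\gamma])$ and extending barycentrically, one obtains a continuous $h:\widetilde B\to\R$ for which $f=(p,h)$ is an embedding. The diagram commutes by construction, since the first coordinate of $f$ is literally $p$.

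For the reverse direction (embedding exists $\Rightarrow$ $\pi_1(B)$ right-orderable), I would simply invoke the first proposition. Given any embedding $f:\widetilde B\to B\times\R$ making the diagram commute, composition with projection to the second factor yields a continuous map $h:\widetilde B\to\R$ with $f(x)=(p(x),h(x))$, which is the hypothesis of the first proposition. That proposition then produces a right-ordering of $\pi_1(B)$ from the values of $h$ at the translates $[\gamma]\widetilde b_0$ of a chosen basepoint lift.

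There is no real obstacle here, since all the technical work was done in the two preceding propositions (the intermediate value theorem argument for right-invariance, and the barycentric extension together with the discrete image property of $t$ for embedding). The only minor point worth flagging is that triangulability is used only in the construction of $h$ (the second proposition); the first proposition holds with no such hypothesis, so the ``only if'' direction of the equivalence is actually valid for any space admitting a universal cover.
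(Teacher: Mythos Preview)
Your proposal is correct and matches the paper's approach exactly: the theorem is presented there as the immediate combination of the two preceding propositions, with no additional argument beyond what you have written. Your observation that triangulability is needed only for the forward direction is also accurate and consistent with the paper's remark that Farrell actually proved a more general result.
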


It should be mentioned that Farrell actually proved a more general result.  One need only assume
that $B$ is a Hausdorff, paracompact space with a countable fundamental group.  Moreover there is a generalization to arbitrary regular covering spaces $\widetilde{B}$ of $B$ stating that there is an embedding $f: \widetilde{B} \rightarrow B \times \mathbb{R}$ making the above diagram commute if and only if the quotient group $\pi_1(B)/p_*\pi_1(\widetilde{B})$ is right-orderable.  The interested reader is referred to \cite{farrell76} for details.  

\chapter{Knots}
\label{knots chapter}

In this chapter we investigate left- and bi-orderability of knot groups.  It turns out that all knot groups are left-orderable (in fact, locally indicable), whereas some knot groups are bi-orderable while others are not.  We close the chapter with an application of left-orderability of surface groups to the theory of knots in thickened surfaces.

\section{Review of classical knot theory}

For the reader's convenience, we outline (mostly without proof) some of the basic ideas of classical knot theory.  By a {\em knot} $K$ we mean a smoothly embedded simple closed curve in the 3-dimensional sphere $S^3$, that is, $K$ is smooth submanifold of $S^3$ which is abstractly homeomorphic with $S^1$.  More generally a {\em link} is a disjoint finite collection of knots in $S^3$.  Other (essentially equivalent) versions of knot theory consider knots in $\R^3$ or require them to be piecewise linear.  Of course it is more convenient to visualize knots in 
$\R^3$ and consider $S^3$ to be $\R^3$ with a point at infinity adjoined.   We will not consider so-called {\em wild} knots.  

Two knots or links are considered {\em equivalent} (or, informally, equal) if there is an orientation-preserving homeomorphism of $S^3$ taking one to the other.  A well-known construction provides, for any knot $K$, a compact, connected, orientable surface  $\Sigma \subset S^3$ such that $\partial\Sigma = K$ \cite[Section 5.A.4]{Rolfsen90}.  The minimal genus $g(\Sigma)$ among all such surfaces bounded by a given $K$ is called the {\em genus} \index{knot genus} of the knot, and denoted $g(K)$.  In particular, the {\em trivial} knot (or {\em unknot}), which is equivalent to a round circle in $S^3$, is the unique knot of genus zero.

One may ``add'' two knots $K$ and $K'$ to form their connected sum $K \# K'$ as in Figure \ref{knotsum} \index{knot sum}.  This addition is associative and commutative and the unknot is a unit.  Moreover, genus is additive:
$$g(K \# K') = g(K) + g(K').$$

\begin{figure}[h!]
\includegraphics[scale=0.5]{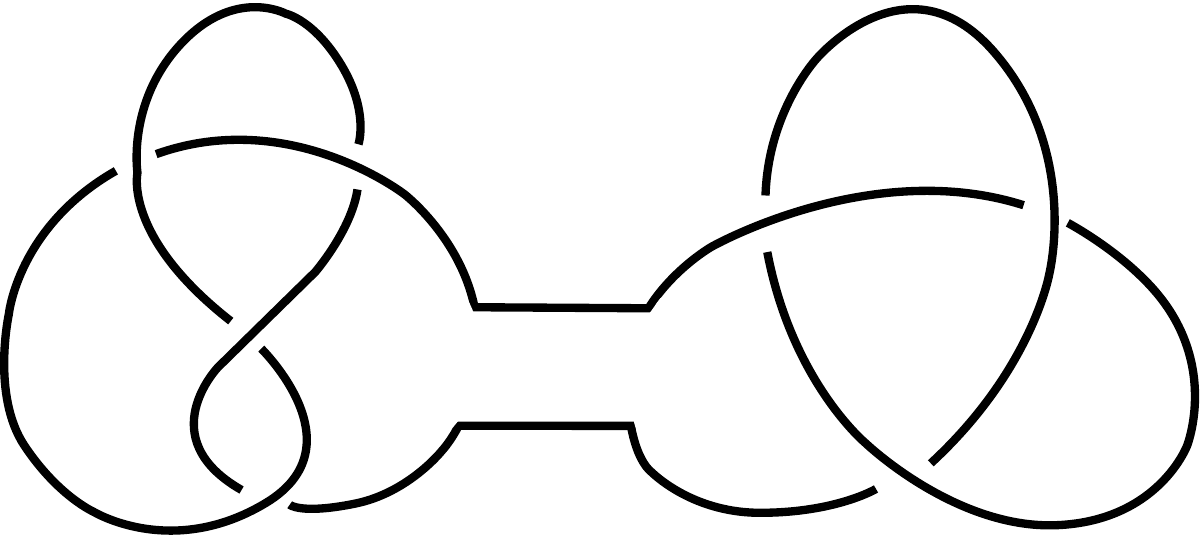}
\caption{The sum of the figure eight knot and trefoil.}
\label{knotsum}
\end{figure}

\begin{problem}  Use genus to argue that there are no inverses in knot addition: the connected sum of nontrivial knots cannot be trivial.
\end{problem}

A knot is said to be {\em prime} \index{prime knot} if it is not the connected sum of nontrivial knots.  Knots have been tabulated by crossing number, that is, the minimum number of simple crossings of one strand over another in a planar picture of the knot.  For example the first nontrivial knot, the trefoil, is denoted $3_1$ the first (and only) knot in the table with crossing number three.  Tabulations of prime knots up to 16 crossings have been made with the aid of computers; there are approximately 1.7 million \cite{HTW}.  Knots with more than ten crossings have names which include a letter `n' or `a' to indicate whether or not they are \textit{alternating}\index{alternating knot}, meaning they can be drawn in such a way that crossings are alternately over and under as one traces the curve.  Thus $11a_5$, pictured below, is the fifth eleven crossing alternating knot in the table.

\begin{figure}[h!]
\includegraphics[scale=0.8]{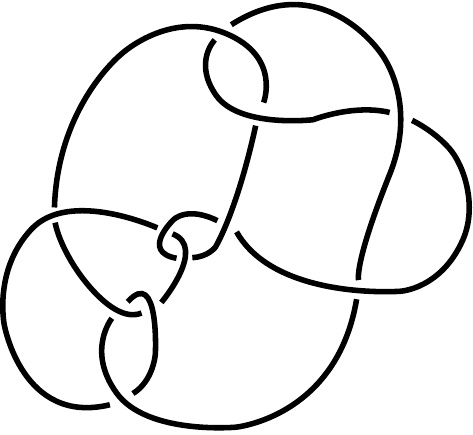}
\caption{The fifth eleven crossing alternating knot.}
\label{11a5}
\end{figure}

\begin{problem}
Knots of genus one are prime.
\end{problem}

If $K$ is a knot, then the fundamental group of its complement 
$\pi_1(S^3 \setminus K)$ is called the {\em knot group} \index{knot group} of $K$.  There are algorithms, for example the Wirtinger or Dehn methods, for explicitly calculating finite presentations of a knot group from a picture of the knot.  An important property of knot groups is that their abelianization, which may be identified with the integral homology group $H_1(S^3 \setminus K)$, is infinite cyclic.  This can be seen, for example, by Alexander duality or by taking the abelianization of the Wirtinger presentation (Problem \ref{abelianization problem}).  It is known that the unknot is the only knot whose group is abelian (and hence infinite cyclic).

If we are given two disjoint oriented knots $J$ and $K$ in $S^3$, since the fundamental group $\pi_1(S^3 \setminus K)$ abelianizes to $\mathbb{Z}$, the class $[J] \in \pi_1(S^3 \setminus K)$ determines an integer in the abelianization.  This integer is called the \textit{linking number} \index{linking number}of $J$ with $K$, denoted $\ell k(J, K)$.  It can be calculated from a diagram of the two knots as follows: for each crossing where $J$ passes under $K$, assign a value of $\pm1$ according to the convention in Figure \ref{linking convention}.  Summing these numbers over all crossings gives the quantity $\ell k(J, K)$.

\begin{figure}[h!]
\setlength{\unitlength}{5cm}
\begin{picture}(1,0.5)%
    \put(0,0){\includegraphics[width=\unitlength]{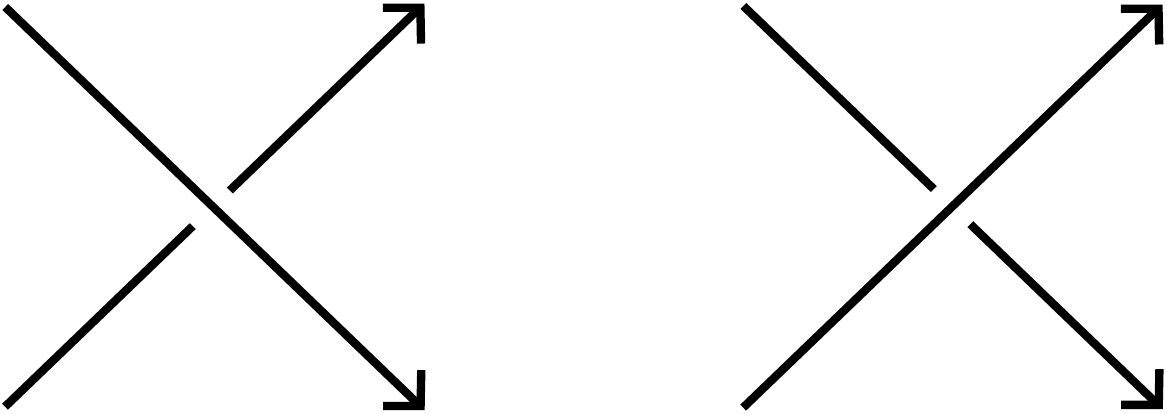}}%
    \put(0.36563984,0.36123092){$J$}%
    \put(0.36563984,-0.03){$K$}
    \put(0.99400173,0.36123092){$K$}
    \put(0.99400173,-0.03){$J$}
        \put(0,0.16){$+1$}%
    \put(0.65,0.16){$-1$}%
  \end{picture}%
\caption{The convention for calculation linking number.}
\label{linking convention}
\end{figure}

A family of knots whose groups are particularly simple are the {\em torus knots} \index{torus knots}.  Consider a torus $T \cong S^1 \times S^1$ which is the boundary of a regular neighborhood of an unknot $U$, as pictured in Figure \ref{torusknot}.  Note that $\pi_1(T) \cong H_1(T) \cong \Z \times \Z$.  We picture the generator of the first $\Z$ to be represented by an oriented curve $\mu$ that links $U$ and the generator $\lambda$ of the second factor represented by a curve running parallel to $U$, but on $T$ and homologically trivial in the complement of $U$.  If $p$ and $q$ are relatively prime integers, there is a knot $K_{p,q}$ on the surface $T$ which (when oriented) represents the class $p\mu + q\lambda \in H_1(T).$  The trefoil is $T_{3,2}$.  An application of the Seifert-van Kampen theorem gives the following presentation for the torus knot group:
$$\pi_1(S^3 \setminus K_{p,q}) \cong \langle a, b \mid a^p = b^q \rangle.$$

\begin{problem}
\label{torus knot groups problem}
Verify the presentation for the torus knot group given above, by proceeding as follows: The complement of $T_{p,q}$ consists of  a solid torus part, with a small trough removed from its surface following the path of the torus knot, and the part outside the torus, with a matching trough removed.  A Seifert--van Kampen argument gives the presentation $\pi_1(S^3 \setminus T_{p,q}) \cong  \langle a, b \mid a^p =b^q \rangle$.
\end{problem}

\begin{figure}[h!]
\includegraphics[scale=0.75]{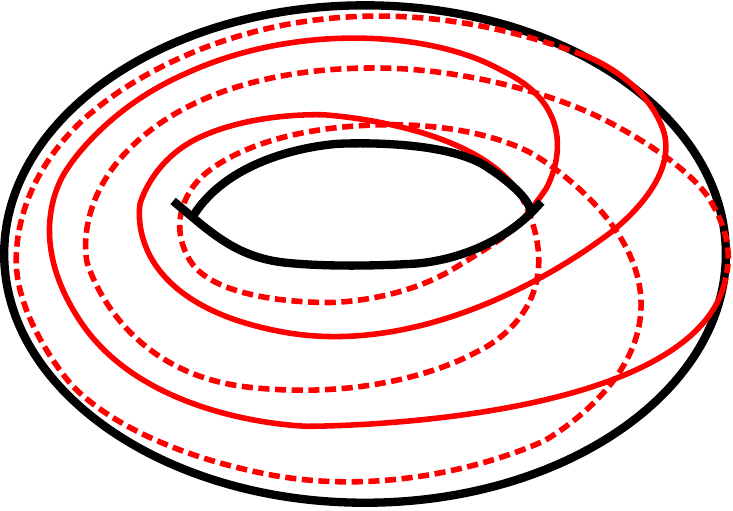}
\caption{The $(2,5)$-torus knot wrapping $2$ times meridionally and $5$ times longitudinally around a torus.}
\label{torusknot}
\end{figure}

A knot  $K \subset S^3$ is {\em fibred} \index{fibred knot} if there is a (locally trivial) fibre bundle map from its complement to the circle with fibre a surface.  All torus knots are fibred, but there are many other fibred knots, some of which are shown in the table later in this chapter.  From the long exact sequence associated with a fibration, we get the following short exact sequence associated to a fibred knot $K$, with fibre $\Sigma$:
$$1 \to \pi_1(\Sigma) \to \pi_1(S^3 \setminus K) \to \pi_1(S^1) \to 1.$$

Note that $ \pi_1(\Sigma)$ is a free group, since  $\Sigma$ is a surface with boundary, and of course
$\pi_1(S^1)$ is infinite cyclic.  Since both of these groups are locally indicable, we apply Problem \ref{LI extension} and we conclude the following:

\begin{theorem}
A fibred knot's group is locally indicable, hence left-orderable.
\end{theorem}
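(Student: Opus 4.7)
The plan is to extract everything from the short exact sequence
\[
1 \to \pi_1(\Sigma) \to \pi_1(S^3 \setminus K) \to \pi_1(S^1) \to 1
\]
displayed just before the theorem. The whole argument reduces to checking that local indicability is preserved under extensions and that both the kernel and quotient in this sequence are locally indicable.

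First I would verify that $\pi_1(\Sigma)$ is locally indicable. Since $K$ is a knot in $S^3$ and $\Sigma$ is the fibre, $\Sigma$ is a compact orientable surface with nonempty boundary (its boundary is $K$), so $\pi_1(\Sigma)$ is a free group of finite rank. Any finitely generated nontrivial subgroup of a free group is itself a nontrivial free group by the Nielsen--Schreier theorem, and a nontrivial free group has infinite cyclic abelianization, hence surjects onto $\mathbb{Z}$. So $\pi_1(\Sigma)$ is locally indicable. The quotient $\pi_1(S^1) \cong \mathbb{Z}$ is trivially locally indicable, since every nontrivial subgroup of $\mathbb{Z}$ is itself infinite cyclic and therefore maps onto $\mathbb{Z}$.

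Next I would invoke Problem~\ref{LI extension} (the fact that an extension of a locally indicable group by a locally indicable group is locally indicable) applied to the sequence above. This immediately gives that $\pi_1(S^3 \setminus K)$ is locally indicable. Finally, I would appeal to the Corollary stated in the chapter, ``Locally indicable groups are left-orderable,'' which itself follows from the Burns--Hale theorem (Theorem~\ref{burnshale}): given a nontrivial finitely generated subgroup $H$, local indicability supplies a nontrivial homomorphism $H \to \mathbb{Z}$, and $\mathbb{Z}$ is left-orderable.

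There is no real obstacle here; the only non-routine input is Problem~\ref{LI extension}, but that is a result stated earlier in the excerpt and so is available. The proof is essentially a three-line assembly: (kernel locally indicable) $+$ (quotient locally indicable) $+$ (extension lemma) $\Rightarrow$ (fibred knot group locally indicable), and then (locally indicable) $\Rightarrow$ (left-orderable) by the corollary to Burns--Hale.
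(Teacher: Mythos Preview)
Your proof is correct and follows exactly the same approach as the paper: identify the kernel and quotient of the fibration sequence as locally indicable (free group and $\mathbb{Z}$ respectively), then apply the extension lemma (Problem~\ref{LI extension}) to conclude. The paper's argument is terser but structurally identical, and you have simply filled in the justifications (Nielsen--Schreier for subgroups of free groups, the Burns--Hale corollary for left-orderability) that the paper leaves implicit.
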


\begin{problem}
\label{LI extension}
Show that if $K$ and $H$ are locally indicable groups and 
\[ 1 \rightarrow K \rightarrow G \rightarrow H \rightarrow 1
\]
is a short exact sequence, then $G$ is locally indicable. 
\end{problem}
As we will soon see, this is true for all classical knot groups.

There are many polynomial invariants of knots.  The oldest of them is the Alexander polynomial, $\Delta_K(t)$ \index{Alexander polynomial}, which can be defined in several ways.  For example, it can be calculated from a presentation of the knot group or from a matrix determined by a surface bounded by the knot.  We refer the reader to \cite{CF77} or \cite{Rolfsen90} for details.  Important properties of the Alexander polynomial are that the coefficients are integers,  $\Delta_K(1) = \pm 1$ and $t^{2n}\Delta_K(t^{-1}) = \Delta_K(t)$, for some non-negative integer ~$n$.   The latter condition means it has even degree and the palindromic property that the coefficients read the same backwards as forwards.  The unknot has trivial polynomial $\Delta(t) = 1$, but so do many nontrivial knots.   It also behaves nicely under connected sum:
$$\Delta_{K \# K'}(t) = \Delta_{K}(t)\Delta_{K'}(t).$$

Alexander polynomials need not be monic, but for fibred knots they must be monic and of degree $2g$, where $g$ is the genus of the fibre surface.  This is because they may be considered as the characteristic polynomial of a linear map, as will be discussed later.

\section{The Wirtinger presentation}

Given a picture of a knot, there are various procedures for calculating the knot group.  One method is the Wirtinger presentation\index{Wirtinger presentation}, which we'll now describe.  We assume the planar knot diagram contains only simple crossings, and they are denoted by deleting a little interval of the lower strand near the crossing. We also assume the knot has been assigned an orientation, that is a preferred direction. What remains of the curve is now a disjoint collection of (oriented) arcs in the plane.  Give each arc a name, say $x, y, ... $.       The knot group will be generated by these symbols.  For each crossing, one introduces a relation in the following way.  Turn your head so that both strands at the crossing are oriented generally from left to right.  Two possibilities are pictured, corresponding to ``positive'' and ``negative'' crossings.  In each case we introduce a relation among the three generators which appear at the crossing, according to the rule given in Figure \ref{wirtinger}.  A presentation for the knot group then consists of the generators $x, y, ... $ and the relations corresponding to the crossings.  

\begin{figure}[h!]
\setlength{\unitlength}{8cm}
\begin{picture}(1,0.5)%
    \put(0,0){\includegraphics[width=\unitlength]{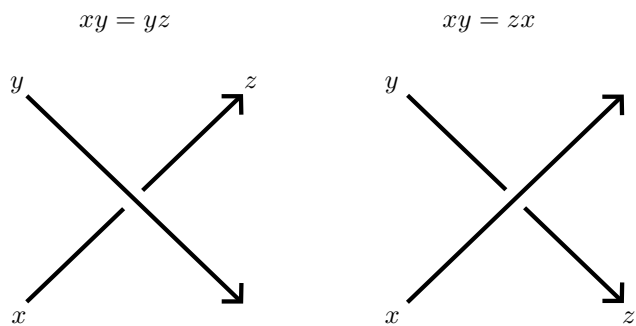}}%
    \put(-0.02333762,0.36123092){$y$}%
    \put(-0.02117985,-0.03){$x$}%
    \put(0.36563984,0.36123092){$z$}%
    \put(0.59954842,0.361230926){$y$}%
    \put(0.59954842,-0.03){$x$}
    \put(0.99400173,-0.03){$z$}
    \put(0.09160393,0.46264578){$xy=yz$}%
    \put(0.69556886,0.46264578){$xy=zx$}%
  \end{picture}%
\caption{Relations in the knot group determined at a crossing.}
\label{wirtinger}
\end{figure}

Here is an explanation of why this works.  Imagine the basepoint for $\pi_1(\R^3 \setminus K)$ to be your eye, situated above the plane of the projection.  For each oriented arc, draw a little arrow under the arc and going from right to left, if one views the arc oriented upward.  Then the loop corresponding to $x$ consists of a straight line running from your eye to the tail of the arrow, then along the arrow, and then returning to your eye again in a straight line, as in Figure \ref{wirtinger loop}.  With a little thought, the relations of Figure \ref{wirtinger} become clear.  We refer the reader to
\cite{Rolfsen90} for the proof that these relations are a complete set of relations (in fact discarding any one of the relations still leaves us with a complete set, but we will not need this).  The curves described above are called ``meridians'' of the knot.  

\begin{figure}[h!]
\includegraphics[scale=0.2]{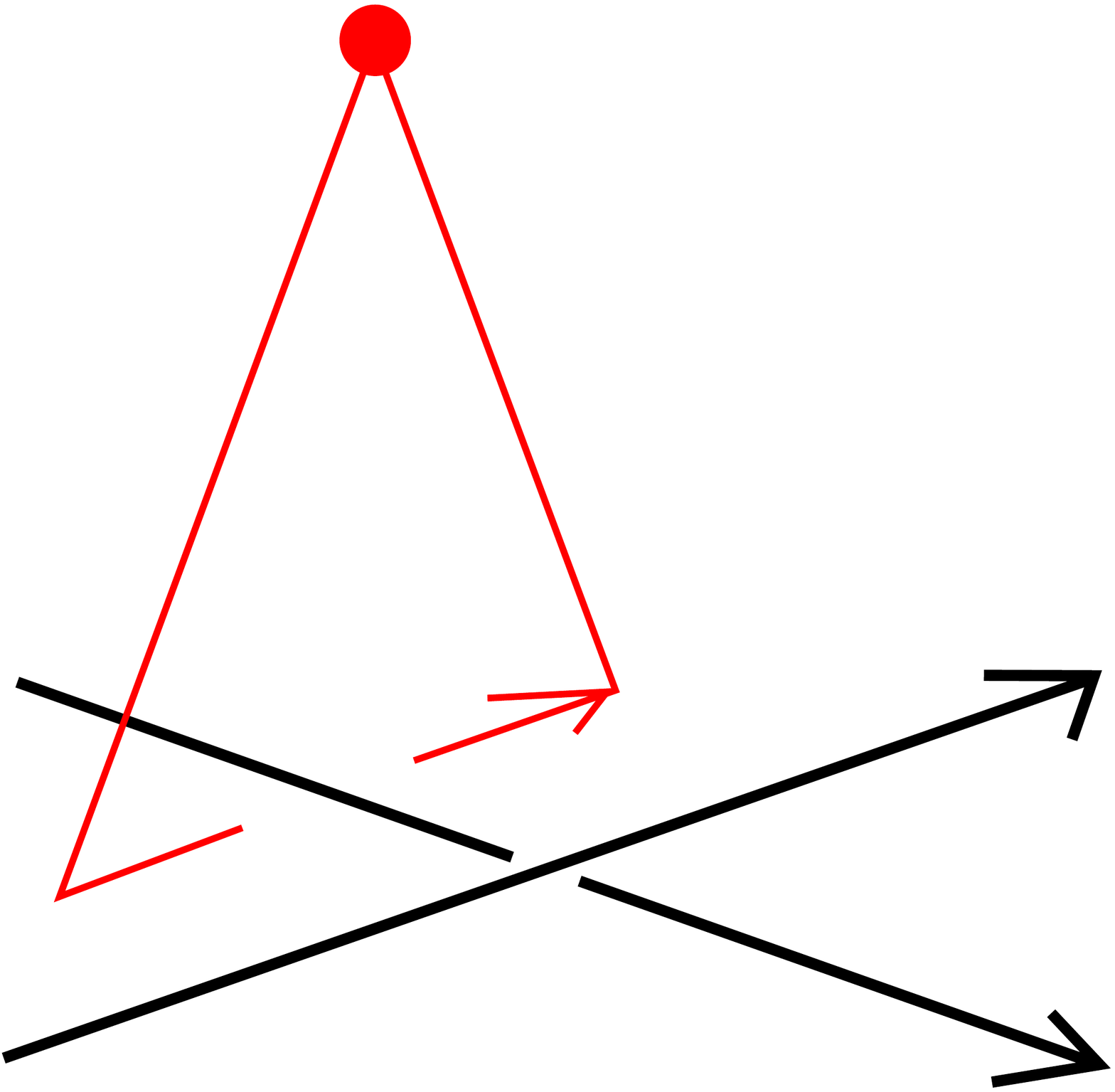}
\caption{A loop (in red) representing a generator of the Wirtinger presentation.}
\label{wirtinger loop}
\end{figure}

\begin{problem}
\label{abelianization problem}
Show that all the meridians in the Wirtinger presentation are conjugate to each other.  Conclude that the abelianization of every knot group is infinite cyclic.
\end{problem}

\begin{example}\label{trefoil group} The group of the  `right-handed'  trefoil $K$ pictured in Figure \ref{trefoil} has presentation with generators $x, y, z$.  The relations coming from the crossings are (1)  $zx = xy$, (2) $xy = yz$ and (3) $yz = zx$.  Clearly the third relation is redundant, so we have
$$\pi_1(S^3 \setminus K) \cong \langle x, y, z \mid   zx = xy = yz \rangle.$$
The second equation can be used to eliminate $z = y^{-1}xy$ and then we obtain a single relation $y^{-1}xyx = xy$, which yields the simpler presentation
$$\pi_1(S^3 \setminus K) \cong \langle x, y \mid   xyx = yxy \rangle.$$
\end{example}

\begin{figure}[h!]
\setlength{\unitlength}{6cm}
\begin{picture}(1,1)%
    \put(0,0){\includegraphics[width=\unitlength]{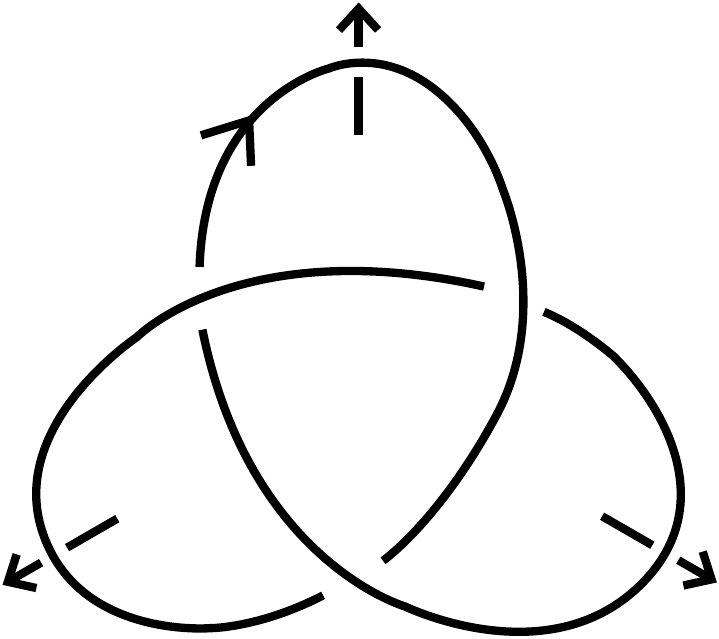}}%
    \put(0.51692502,0.71409156){$z$}%
    \put(0.97356178,0.15235031){$x$}%
    \put(-0.00501988,0.15388088){$y$}%
    \put(0.4514844,0){(1)}%
    \put(0.16947196,0.48398622){(2)}%
    \put(0.74447811,0.4824556){(3)}
  \end{picture}%
\caption{The right-handed trefoil.}
\label{trefoil}
\end{figure}

\begin{problem}
Another way to compute the trefoil's group is to consider it as the $(2, 3)$-torus knot group, and proceed as in Problem  \ref{torus knot groups problem}.  One finds $\pi_1(S^3 \setminus T_{2,3}) \cong \langle a, b \mid a^2 = b^3 \rangle.$
Verify algebraically that this presentation and the presentation  $\langle x, y \mid   xyx = yxy \rangle$ yield isomorphic groups.  
\end{problem}

\section{Knot groups are locally indicable}\label{section LI}

In this section, we begin our investigation into the orderability of knot groups--by showing that they are, in fact, locally indicable.

\index{locally indicable}
\begin{theorem}\label{LI}
Every knot group is locally indicable, and hence left-orderable.
\end{theorem}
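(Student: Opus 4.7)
The plan is to use $3$-manifold topology to realize every nontrivial finitely generated subgroup of a knot group as the fundamental group of a compact orientable $3$-manifold with non-empty boundary, and then to produce a surjection onto $\mathbb{Z}$ via Poincar\'e-Lefschetz duality. Once local indicability is established, left-orderability follows from the corollary to the Burns-Hale theorem already proved in this chapter.

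Set up as follows: let $K \subset S^3$ be a knot with exterior $M = S^3 \setminus \nu(K)$, a compact orientable $3$-manifold with torus boundary and $H_1(M) \cong \mathbb{Z}$, and write $G = \pi_1(M)$. Let $H \leq G$ be a nontrivial finitely generated subgroup and form the covering $p: M_H \to M$ with $p_*\pi_1(M_H) = H$. The first topological input is that $M$ is irreducible and aspherical, the latter because the Sphere Theorem combined with irreducibility forces $\pi_2(M) = 0$; both properties pass to $M_H$. Next I would invoke Scott's Compact Core Theorem to extract a compact connected $3$-submanifold $N \subseteq M_H$ such that the inclusion induces $\pi_1(N) \xrightarrow{\cong} \pi_1(M_H) = H$.

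The essential step is to arrange that $\partial N$ is non-empty and contains no $2$-sphere components. Non-emptiness follows because $M_H$ itself has non-empty boundary (the preimage of the torus $\partial M$), so $N$ cannot be closed: a closed codimension-zero submanifold of the connected manifold $M_H$ would coincide with $M_H$, contradicting $\partial M_H \neq \emptyset$. Any sphere components of $\partial N$ bound balls in $M_H$ by irreducibility and can be capped off without changing $\pi_1$, so after this modification $\partial N$ is a non-empty union of surfaces of positive genus and tori. In particular $\mathrm{rank}\, H_1(\partial N; \mathbb{Q}) > 0$. The half-lives-half-dies inequality from Poincar\'e-Lefschetz duality for a compact orientable $3$-manifold with boundary then gives
\[
\mathrm{rank}\, H_1(N; \mathbb{Q}) \;\geq\; \tfrac{1}{2}\,\mathrm{rank}\, H_1(\partial N; \mathbb{Q}) \;>\; 0,
\]
so $H \cong \pi_1(N)$ admits a surjection onto $\mathbb{Z}$. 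Hence every nontrivial finitely generated subgroup of $G$ is indicable, proving $G$ is locally indicable.

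The main obstacle is the topological cleanup of the compact core: one must use asphericity and irreducibility of the cover $M_H$ in a genuine way to guarantee both that $\partial N$ is non-empty and that its sphere components can be eliminated, since without these properties half-lives-half-dies yields no information. Once this cleanup is done, the algebraic conclusion falls out of duality, and left-orderability is immediate from the corollary to Burns-Hale.
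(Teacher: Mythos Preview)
Your argument is correct and follows essentially the same route as the paper: pass to the cover corresponding to $H$, invoke Scott's compact core theorem, use irreducibility of the knot exterior (inherited by the cover) to eliminate sphere boundary components, and then show the core has infinite first Betti number. The only differences are cosmetic: the paper works with the open complement rather than the compact exterior, separates the finite-index case (handled directly by the Hurewicz map) from the infinite-index case, and in place of half-lives-half-dies uses the Euler characteristic identity $\chi(2C)=2\chi(C)-\chi(\partial C)=0$ to force $b_1(C)\geq 1$ once $\partial C$ has no spheres.
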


\begin{proof}
Before beginning the proof, we first note that knot groups are indicable.  If $X = S^3 \setminus K$ then the Hurewicz homomorphism
$h: \pi_1(X) \to \ H_1(X)$ is surjective, and
$\ H_1(X)$ is infinite cyclic since it is equal to the abelianization of $\pi_1(X)$, which is infinite cyclic by Problem \ref{abelianization problem}.   To prove that $\pi_1(X)$ is 
{\em locally} indicable, we need to consider an arbitrary nontrivial finitely generated subgroup $G$ of $\pi(X)$ and argue that it admits a nontrivial homomorphism to $\Z$.  This argument is due essentially to Howie and Short \cite{HS85}.

Case 1: $G$ has finite index.  Then the restriction $h|_G$ of the Hurewicz homomorphism is nontrivial and we are done.

Case 2: $G$ has infinite index.  There is a covering space $p: \tilde{X} \to X $ such that, for suitably chosen basepoint, $p_*(\pi_1(\tilde{X})) = G$.  Although  $\tilde{X}$ must be noncompact, its fundamental group is finitely generated, by assumption.  By a theorem of P. Scott \cite{Scott73}, $\tilde{X}$ has a compact ``core'' --- that is a compact connected submanifold $C$ of $\tilde{X}$ such that inclusion induces an isomorphism  $i_* : \pi_1(C) \to \pi_1(\tilde{X})$.  See Figure ~\ref{core}.

\begin{figure}[h!]
\setlength{\unitlength}{7cm}
\begin{picture}(1,0.3694672)%
    \put(0,0){\includegraphics[width=\unitlength]{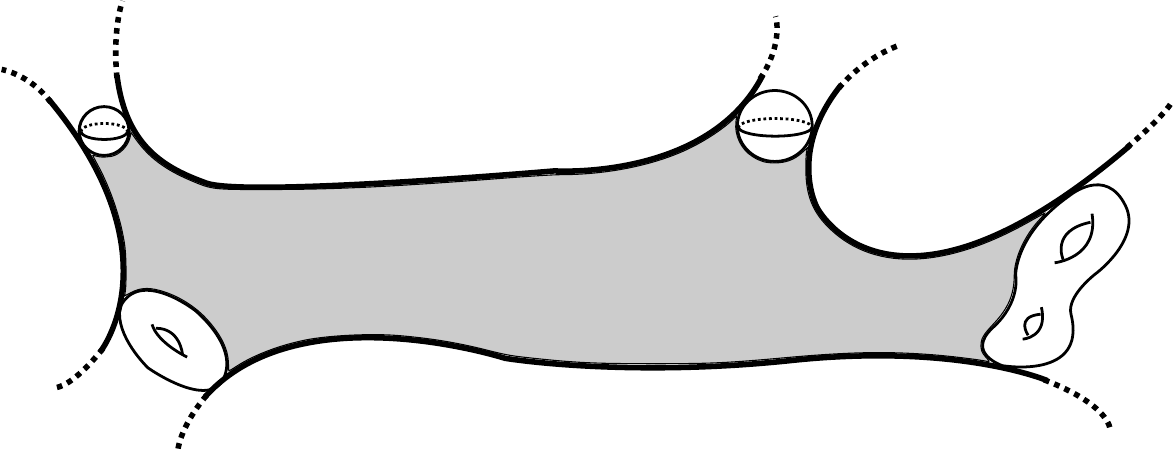}}%
    \put(0.43920994,0.13333918){$C$}%
    \put(-0.00355261,0.14992679){$\widetilde{X}$}%
  \end{picture}%

\caption{The compact core $C \subset \widetilde{X}$.}
\label{core}
\end{figure}

Noting that $C$ must have nonempty boundary, we first argue that  we can assume there are 
no 2-sphere components of $\partial C$.  For suppose $\Sigma \subset \partial C$ is a 2-sphere.
From Alexander's theorem one knows that knot complements are \textit{irreducible}\index{irreducible 3-manifold}, meaning that every tame $2$-sphere in the manifold bounds a ball.  Then, since irreducibility is inherited by coverings, we know $\tilde X$ is irreducible (see \cite{Hatchernotes}, for example, for proofs of these facts).  Therefore there is a 3-ball $B$ in $\tilde{X}$ with $\partial B = \Sigma$.  It is easy to see that $C$ is either a subset of 
$B$ or else disjoint from the interior of $B$.  But $C \subset B \subset \tilde{X}$ would contradict the fact that $i_*$ is a nontrivial homomorphism, so we conclude that $B$ is disjoint from the interior of $C$. 
 If we now define $C' := C \cup B$ we see that $C'$ will also serve as a compact core for 
$\tilde{X}$.  After repeating this a finite number of times we obtain a compact core, which we will again call $C$, such that $\partial C \ne \emptyset$ and every component of $\partial C$ is a surface of positive genus, as in Figure \ref{core_capped}.

\begin{figure}[h!]
\setlength{\unitlength}{7cm}
\begin{picture}(1,0.37675654)%
    \put(0,0){\includegraphics[width=\unitlength]{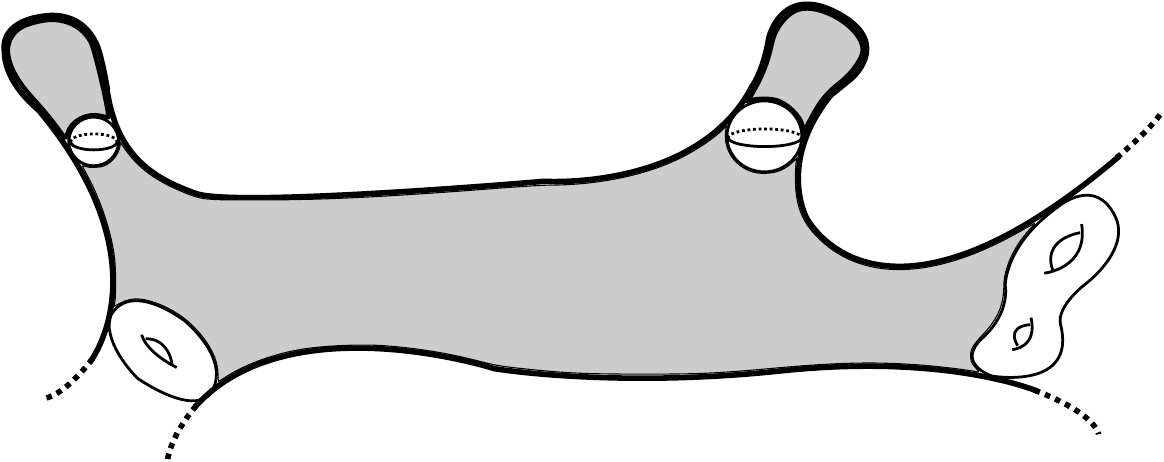}}%
    \put(0.43920994,0.13333918){$C$}%
    \put(-0.00355261,0.14992679){$\widetilde{X}$}%
    \put(0.09137035,0.35899734){$B$}%
    \put(0.7394862,0.31441083){$B$}%
  \end{picture}%
\caption{The compact core $C \subset \widetilde{X}$ with boundary spheres capped off.}
\label{core_capped}
\end{figure}

Lemma \ref{rank lemma} finishes Case 2, because then one easily constructs a surjection of the abelian group $H_1(C)$  onto $\Z$ and combines it with the Hurewicz map to get a surjection $ G \cong \pi_1(C) \to H_1(C) \to \Z$.  
\end{proof}

\begin{lemma}
\label{rank lemma}
The integral homology group $H_1(C)$ is infinite.
\end{lemma}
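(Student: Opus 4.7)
The plan is to invoke the classical \emph{half-lives-half-dies} principle for compact orientable $3$-manifolds with boundary, applied to the core $C$. First I would verify that $C$ is orientable: since $X = S^3 \setminus K$ is orientable and $\widetilde X \to X$ is a covering, $\widetilde X$ is orientable, and $C \subset \widetilde X$ inherits an orientation. Moreover $\partial C \neq \emptyset$, and by the construction in the proof every boundary component of $C$ is a closed orientable surface of genus at least $1$, so $H_1(\partial C;\mathbb{Q})$ has rank at least $2$.

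Next I would quote (or briefly sketch) the half-lives-half-dies lemma: if $C$ is a compact orientable $3$-manifold with boundary, then the kernel of the inclusion-induced map $i_\ast\colon H_1(\partial C;\mathbb{Q}) \to H_1(C;\mathbb{Q})$ has dimension exactly $\tfrac12 \dim H_1(\partial C;\mathbb{Q})$. The standard derivation uses Poincar\'e--Lefschetz duality and the long exact sequence of the pair $(C,\partial C)$: one shows that the image of $i_\ast$ is a Lagrangian subspace of $H_1(\partial C;\mathbb{Q})$ with respect to the intersection form, so its dimension equals half of $\dim H_1(\partial C;\mathbb{Q})$.

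Applying this to $C$, the image of $i_\ast$ has rank equal to half of $\dim H_1(\partial C;\mathbb{Q}) \geq 2$, hence is at least $1$. In particular $H_1(C;\mathbb{Q})$ is nontrivial, so $H_1(C)$ contains an element of infinite order and is infinite, as required.

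The main obstacle is simply producing (or justifying citing) the half-lives-half-dies statement; everything else is bookkeeping. An alternative route would be to compute $\chi(C)$ via $\chi(C) = \tfrac12\chi(\partial C) < 0$ for a compact orientable $3$-manifold, combined with the fact that $C$ is connected, to force $b_1(C) > 0$; this avoids the symplectic/duality argument but still relies on orientability of $C$ and the genus hypothesis on $\partial C$. Either approach concludes the proof of Lemma~\ref{rank lemma} and hence of Theorem~\ref{LI}.
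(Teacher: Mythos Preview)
Your main argument via half-lives-half-dies is correct and gives a clean proof. The paper, however, takes exactly your ``alternative route'': it doubles $C$ to a closed $3$-manifold $2C$, uses $\chi(2C)=0$ to obtain $\chi(C)=\tfrac12\chi(\partial C)\le 0$ (note: $\le$, not strict, since torus boundary components contribute zero), and then reads off $b_1(C)\ge 1+b_2(C)\ge 1$ from the alternating sum defining $\chi$. So what you call an alternative is in fact the paper's argument.

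Your primary approach via Poincar\'e--Lefschetz duality is slightly heavier machinery but buys you the sharper conclusion that the image of $H_1(\partial C;\Q)\to H_1(C;\Q)$ already has positive rank. The Euler-characteristic route is more elementary and entirely self-contained, which is presumably why the paper chose it. Either way the orientability of $C$ and the absence of sphere boundary components are the essential inputs, and you identified both correctly.
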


\begin{proof}  This is a standard argument, repeated here for the reader's convenience. We will show, equivalently, that the {\em rational} homology $H_1(C; \Q)$ has positive rank.  Recall that $C$ is a compact orientable 3-manifold with nonempty boundary containing no 2-spheres.  Consider the {\em closed} manifold $2C$ obtained from two copies of $C$, with their boundaries glued together by the identity map.  The Euler characteristic of a closed 3-manifold is always zero, so we have
$ 0 =  \chi (2C) = 2\chi (C) - \chi(\partial C) $. Our assumption on $\partial C$ implies that its Euler characteristic is less than or equal to zero.  Therefore $\chi(C) \le 0$.  But by definition of $\chi(C)$ as an alternating sum of ranks, we conclude
$$1 - \mathrm{rank} (H_1(C;\Q)) + \mathrm{rank}( H_2(C,\Q)) - 0 \le 0$$
which implies $\mathrm{rank}( H_1(C;\Q)) \ge 1$. \end{proof}

\section{Bi-ordering certain knot groups} 

In this section we'll investigate the bi-orderability of knot groups.  We organize our discussion by considering the cases of fibred and non-fibred knots separately.

\subsection{Fibered knots}
The focus of our discussion concerning fibred knots will be the following two results.  More details may be found in  \cite{PR03} and \cite{CR12}.

\begin{theorem}\label{AS}
If $K$ is a fibred knot whose Alexander polynomial $\Delta_K(t)$ has all roots real and positive, then its knot group is bi-orderable.
\end{theorem}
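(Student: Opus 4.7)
Since $K$ is fibred with fibre surface $\Sigma$ of genus $g$, the fibration gives the short exact sequence
\[ 1 \to F \to \pi_1(S^3 \setminus K) \to \Z \to 1, \]
where $F = \pi_1(\Sigma)$ is free of rank $2g$. By Problem~\ref{extendO}, the knot group is bi-orderable provided $F$ admits a bi-ordering that is preserved by the conjugation action of the quotient $\Z$, i.e.\ by the geometric monodromy automorphism $\phi: F \to F$. The plan is to construct such a $\phi$-invariant bi-ordering of $F$ using the Magnus expansion, exploiting the hypothesis on $\Delta_K(t)$ to control the action of $\phi$ on the graded pieces.

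\textbf{Magnus-graded framework.} Via the Magnus embedding $\mu: F \hookrightarrow \Lambda = \Z[[X_1,\ldots,X_{2g}]]$ of Section~3.2, the automorphism $\phi$ extends to a filtration-preserving ring automorphism $\tilde\phi$ of $\Lambda$. On the degree-$1$ piece $V = H_1(\Sigma;\R) \cong \R^{2g}$ it acts by the homological monodromy $M = \phi_\ast$, and on the degree-$k$ piece of the associated graded it acts by the tensor power $M^{\otimes k}$. The characteristic polynomial of $M$ is, up to units of $\Z[t,t^{-1}]$, the Alexander polynomial $\Delta_K(t)$. Thus the hypothesis that the roots of $\Delta_K(t)$ are all real and positive translates into the statement that every eigenvalue of $M$ is a positive real number.

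\textbf{Constructing the $\phi$-invariant ordering.} Put $M$ in real Jordan form with respect to a basis $\vec e_1,\ldots,\vec e_{2g}$ of $V$, so that $M$ is upper triangular with positive diagonal entries $\lambda_1,\ldots,\lambda_{2g}$. Order $V$ by reverse-lexicographic comparison of coordinates in this basis: $\vec m$ is positive iff the coordinate with the largest index that is nonzero is positive. Upper triangularity with positive diagonal implies that the top nonzero coordinate of $M\vec m$ is exactly $\lambda_k$ times the top nonzero coordinate of $\vec m$, so this ordering on $V$ is $M$-invariant; and since the basis is a real change-of-coordinates from $\Z^{2g}$, distinct integer vectors have distinct real coordinates, so restriction to $\Z^{2g}$ remains a total order. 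The same construction applied to the tensor Jordan basis of $V^{\otimes k}$ (ordered by lex on index tuples) yields an $M^{\otimes k}$-invariant total order on each graded piece, whose diagonal entries $\lambda_{i_1}\cdots\lambda_{i_k}$ are again positive. Finally, for $w\ne 1$ in $F$, let $\mu(w)-1$ have leading homogeneous term of degree $k$ and declare $w$ positive iff that term is positive in the degree-$k$ ordering. As in the proof that the Magnus ordering is a bi-ordering, this defines a bi-ordering of $F$; and since $\tilde\phi$ preserves the filtration and its graded action preserves each degree-wise ordering, this bi-ordering is $\phi$-invariant, as required.

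\textbf{Main obstacle.} The heart of the matter is the interaction between the hypothesis on $\Delta_K(t)$ and the Jordan-form argument: we need not merely one positive real eigenvalue, but enough positivity to produce an invariant total order on every graded piece. Verifying this for $M^{\otimes k}$ requires checking that real Jordan form tensors correctly and that the induced upper-triangular structure with positive diagonal persists for all $k$; the case of repeated eigenvalues with nontrivial Jordan blocks (where $M\vec e_i = \lambda_i\vec e_i + \vec e_{i-1}$) is the one where one must think carefully, though the superdiagonal contribution always lies in lower-indexed eigen-directions and so does not disturb the top-coordinate argument. A second subtlety is reconciling the $\R$-Jordan basis with $\Z$-integrality of the Magnus coefficients; this is handled by comparing integer vectors via their (generically irrational) real coordinates, which remain distinct for distinct integer vectors.
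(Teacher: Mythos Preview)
Your proposal is correct and follows essentially the same approach as the paper. The paper phrases the argument in terms of the lower central series of $F$---ordering each quotient $F_i/F_{i+1}$ via its embedding in $F_{ab}^{\otimes i}$, where the induced map is $M^{\otimes i}$---while you work directly with the Magnus filtration and its graded pieces; the paper explicitly notes (after Problem~\ref{lowercentral}) that these two viewpoints are equivalent, and your Jordan-basis construction of the $M^{\otimes k}$-invariant orderings on the graded pieces is exactly the content of the paper's linear algebra proposition on upper-triangular matrices with positive diagonal.
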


\begin{theorem} \label{AN}
If $K$ is a nontrivial fibred knot whose knot group is bi-orderable, then $\Delta_K(t)$ has at least two real positive roots.
\end{theorem}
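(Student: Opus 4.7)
The plan is to use the fibration of the knot complement, restrict the given bi-ordering to the fibre group, and extract a positive real eigenvalue of the monodromy from a maximal Archimedean quotient via H\"older's theorem. Since $K$ is fibred with fibre $\Sigma$ of genus $g\ge 1$, there is a short exact sequence
\[ 1 \longrightarrow F \longrightarrow \pi_1(S^3\setminus K) \longrightarrow \mathbb{Z} \longrightarrow 1 \]
with $F = \pi_1(\Sigma)$ free of rank $2g\ge 2$. A lift $\tau$ of a generator of the base acts on $F$ by conjugation, realising the monodromy automorphism $\phi$, and $\phi_*$ on $F^{\mathrm{ab}}\cong\mathbb{Z}^{2g}$ has characteristic polynomial $\Delta_K(t)$. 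If $\pi_1(S^3\setminus K)$ admits a bi-ordering with positive cone $P$, then $P\cap F$ is a positive cone for a bi-ordering of $F$, and conjugation invariance of $P$ makes this bi-ordering $\phi$-invariant.

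Next I would build the relevant Archimedean quotient. Because convex subgroups are linearly ordered by inclusion (Problem \ref{prop:convex_intersection_and_union}) and $F$ is finitely generated, the union $M$ of all proper convex subgroups is itself the unique maximal proper convex subgroup of $F$. Uniqueness, together with the fact that $\phi$ permutes convex subgroups, forces $\phi(M)=M$. The induced bi-ordering on $F/M$ has no proper non-trivial convex subgroup, hence is Archimedean; by Lemma \ref{lem:archabel} and Theorem \ref{theorem:Holder}, $F/M$ is abelian and admits an order-embedding $\iota\colon F/M\hookrightarrow(\mathbb{R},+)$. In particular $[F,F]\subseteq M$, so $F/M$ is a non-trivial finitely generated torsion-free abelian quotient of $F^{\mathrm{ab}}$.

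The induced automorphism $\bar\phi$ of $F/M$ is order-preserving, so by continuous extension along $\iota$ it acts on the image as multiplication by some positive real $\lambda$ (with $\lambda=1$ automatically in the trivial rank-one case). Linearly extending $\iota$ to $\ell\colon(F/M)\otimes\mathbb{R}\to\mathbb{R}$ yields $\ell\circ(\bar\phi\otimes\mathbb{R})=\lambda\,\ell$, so $\lambda$ is an eigenvalue of $\bar\phi\otimes\mathbb{R}$. Since $\phi_*\otimes\mathbb{R}$ preserves $V:=(M/[F,F])\otimes\mathbb{R}\subset\mathbb{R}^{2g}$ and induces $\bar\phi\otimes\mathbb{R}$ on the quotient, the factor $(t-\lambda)$ divides $\Delta_K(t)$. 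The relation $\Delta_K(1)=\pm 1\ne 0$ forces $\lambda\ne 1$, and the palindromic identity $t^{2g}\Delta_K(1/t)=\Delta_K(t)$ then gives $1/\lambda$ as a second, distinct real positive root.

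I expect the main obstacle to be the construction of $M$: its existence relies on finite generation of $F$ to ensure the ascending chain of proper convex subgroups does not exhaust $F$, and its $\phi$-invariance depends on uniqueness of the maximal proper convex subgroup, itself a consequence of convex subgroups being linearly ordered by inclusion. Once this structural step is in place, the rest follows directly from H\"older's theorem combined with the standard facts $\Delta_K(1)=\pm 1$ and $t^{2g}\Delta_K(1/t)=\Delta_K(t)$.
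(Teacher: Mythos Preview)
Your argument is correct and follows the same architecture as the paper's: restrict the bi-ordering to the fibre group $F$, take the unique maximal proper convex subgroup $M$ (automatically $\phi$-invariant), pass to the Archimedean quotient $F/M$, extract a positive eigenvalue of the homology monodromy there, and then use $\Delta_K(1)=\pm 1$ together with the palindromic symmetry to get a second, distinct positive root. The one genuine difference is in how the eigenvalue is extracted. The paper proves a general linear-algebra fact (Proposition~\ref{posreal}): any invertible linear map of $\R^n$ preserving a bi-ordering has a positive real eigenvalue, established by applying the Brouwer fixed-point theorem to the closed positive hemisphere. You instead observe that the H\"older embedding $\iota$ itself is a dual eigenvector: transported along $\iota$, the order-preserving automorphism $\bar\phi$ of $F/M$ becomes an order-preserving automorphism of a subgroup of $\R$, which must be multiplication by some $\lambda>0$ (this is exactly Hion's Lemma, stated later in the text as Lemma~\ref{hion's lemma}), whence $\ell\circ(\bar\phi\otimes\R)=\lambda\,\ell$. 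Your route is more elementary, sidestepping the topological fixed-point argument, though it is specific to the Archimedean situation rather than a statement about ordered vector spaces in general.
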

 
Before discussing the proofs of these theorems, we'll consider some examples and then prepare some preliminary results in Subsections \ref{eigenvalues subsection} and \ref{linear algebra}. 

\begin{example}[Torus knots]
\index{torus knots}
Recall from Problem \ref{torus knot groups problem} that the $(p, q)$-torus knots have knot group
$\langle a, b \mid a^p = b^q \rangle.$
Note that $a$ commutes with $b^q$ but not with $b$ (unless the group is abelian, and the knot unknotted).  By Problem \ref{central}  we conclude:
 
\begin{proposition}
Nontrivial torus knot groups are not bi-orderable.
\end{proposition}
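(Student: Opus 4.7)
The plan is to apply Problem \ref{central} directly to the presentation $\langle a, b \mid a^p = b^q \rangle$ of a nontrivial torus knot group. The key observation, already pointed out in the preceding paragraph, is that $a$ commutes with $b^q$ (because $b^q$ equals $a^p$, which is a power of $a$). If the group were bi-orderable, then Problem \ref{central} would force $a$ to commute with $b$ itself, making the group abelian.

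My plan, then, is to argue by contradiction: assume the torus knot group $G = \langle a, b \mid a^p = b^q \rangle$ admits a bi-ordering, with $p, q \geq 2$ and $\gcd(p, q) = 1$ (the standing hypothesis for the $(p,q)$-torus knot to be nontrivial and well-defined). From $a \cdot b^q = b^q \cdot a$ and Problem \ref{central}, conclude $ab = ba$. The abelianization of $G$ modulo the added commutativity relation is $\mathbb{Z}^2 / \langle p\vec{e}_1 - q\vec{e}_2 \rangle$, which is infinite cyclic since $\gcd(p,q) = 1$. Thus $G$ itself would be infinite cyclic, contradicting the fact (which I would take as standard, e.g.\ from the classification of knots with cyclic group) that a nontrivial torus knot has non-abelian group.

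The only point requiring care is the last step: verifying that $G$ is non-abelian for $p, q \geq 2$. One clean way is to exhibit a non-abelian quotient of $G$; for instance, sending $a$ and $b$ to generators of finite rotations of orders $p$ and $q$ inside the triangle group $\langle x, y \mid x^p = y^q = (xy)^r = 1\rangle$ for suitable $r$ produces a non-abelian homomorphic image, so $G$ cannot be abelian. Alternatively one can cite that the trefoil and higher torus knots have non-abelian groups because their Alexander polynomial is nontrivial, or that the unique knot with abelian group is the unknot (a standard fact from the discussion of knot groups). Either route closes the argument.

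I expect no serious obstacle here: the essential work was already done in Problem \ref{central}, and the proof reduces to a three-line application of that fact together with the standard non-abelianness of nontrivial torus knot groups.
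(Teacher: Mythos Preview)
Your proposal is correct and follows exactly the paper's approach: apply Problem \ref{central} to the relation $a^p = b^q$ to conclude that bi-orderability would force $a$ and $b$ to commute, contradicting non-abelianness of nontrivial torus knot groups. The paper's proof is the one-line version of precisely this argument.
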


This could also be proved using Theorem \ref{AN} and the fact that torus knots are fibred.  
As a typical example, consider the knot $8_{19}$, which is the $(4,3)$-torus knot. 
It has Alexander polynomial 
$$\Delta_{8_{19}} = 1-t+t^3-t^5+t^6 = (t^2 +\sqrt{3} t + 1)(t^2 -\sqrt{3} t + 1)(t^2 - t + 1).$$ 
Its six roots are $(\sqrt{3} \pm i)/2, (-\sqrt{3} \pm i)/2$ and $(1 \pm i\sqrt{3} )/2$.  More generally, the torus knot $T(p, q)$ has Alexander polynomial
$$\Delta_{T_{p,q}} = \frac{(t^{pq} - 1)(t - 1)}{(t^p -1)(t^q - 1)}$$
whose roots are on the unit circle and not real. 
\end{example}

\begin{example}[The knot $4_1$]  This knot, sometimes called the figure-eight knot and pictured in the table below, is also a fibred knot.  It has Alexander polynomial $1 - 3t + t^ 2$, whose roots are $(3 \pm \sqrt{5})/2$, both real and positive.  Theorem \ref{AS} implies the following.

\begin{proposition}
The group of the knot  $4_1$ is bi-orderable.
\end{proposition}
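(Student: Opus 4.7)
The plan is to apply Theorem~\ref{AS} directly, so my task reduces to verifying its three hypotheses for $K = 4_1$: that the knot is fibred, that its Alexander polynomial is $1 - 3t + t^2$, and that the roots of this polynomial are real and positive.

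First, I would recall that the figure-eight knot is fibred. The standard way to see this is to exhibit a Seifert surface $\Sigma$ of genus $1$ (which one obtains from the standard alternating diagram by Seifert's algorithm) and verify that the complement $S^3 \setminus K$ fibres over $S^1$ with fibre the interior of $\Sigma$. A clean approach is to identify $4_1$ as the closure of the $3$-braid $\sigma_1 \sigma_2^{-1} \sigma_1 \sigma_2^{-1}$, or equivalently as a punctured-torus bundle over $S^1$ with monodromy given by the Anosov matrix $\bigl(\begin{smallmatrix} 2 & 1 \\ 1 & 1 \end{smallmatrix}\bigr)$; this presentation makes the fibration manifest. Alternatively, one can cite the well-known fact that a knot is fibred if and only if its Alexander polynomial is monic of degree $2g$, which applies once $\Delta_{4_1}(t)$ has been computed.

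Next, I would compute the Alexander polynomial from a Wirtinger presentation derived from the standard $4$-crossing diagram and the Fox calculus, or more quickly from a Seifert matrix $V$ associated to the genus-one Seifert surface via $\Delta_K(t) \doteq \det(V - tV^T)$. With the standard basis for $H_1(\Sigma)$, one obtains the Seifert matrix $V = \bigl(\begin{smallmatrix} 1 & 1 \\ 0 & -1 \end{smallmatrix}\bigr)$ (up to sign conventions), which yields $\Delta_{4_1}(t) = -t^2 + 3t - 1$, or equivalently $1 - 3t + t^2$ after normalization so that $\Delta_{4_1}(1) = \pm 1$. The quadratic formula then gives the roots $(3 \pm \sqrt{5})/2$, both of which are real and strictly positive since $3 > \sqrt{5}$.

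With the three hypotheses verified, Theorem~\ref{AS} applies and yields that $\pi_1(S^3 \setminus 4_1)$ is bi-orderable. The only non-mechanical step in this plan is the verification that $4_1$ is fibred; the polynomial computation and the check on its roots are entirely routine, and no further argument is needed once Theorem~\ref{AS} is invoked.
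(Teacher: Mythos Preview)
Your proposal is correct and follows exactly the same approach as the paper: verify that $4_1$ is fibred, compute $\Delta_{4_1}(t) = 1 - 3t + t^2$, observe its roots $(3 \pm \sqrt{5})/2$ are real and positive, and invoke Theorem~\ref{AS}. The paper simply asserts the fibredness and the polynomial without justification, whereas you supply the standard verifications, but the argument is the same.
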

 \end{example} 
  
\begin{example}[Other bi-orderable fibred knot groups.] The criterion of having all roots of $\Delta_K(t)$ real and positive is not really very common.  
The table below contains all nontrivial prime knots with $12$ or fewer crossings whose groups are known to be bi-orderable because the knots are fibred and all roots of the Alexander polynomial are in $\R^+$.  The diagrams were produced using Rob Scharein's program Knotplot \cite{Sch}.  Some of the data on the knots are from {\em KnotInfo} and its database \cite{CL}, kindly provided by Chuck Livingston.

{
\renewcommand{\arraystretch}{2.2}
\begin{longtable}{ccc}
	\hline
\nopagebreak Knot&  & Alexander polynomial \\
  \hline

  \multirow{3}{*}{\includegraphics[height=20mm]{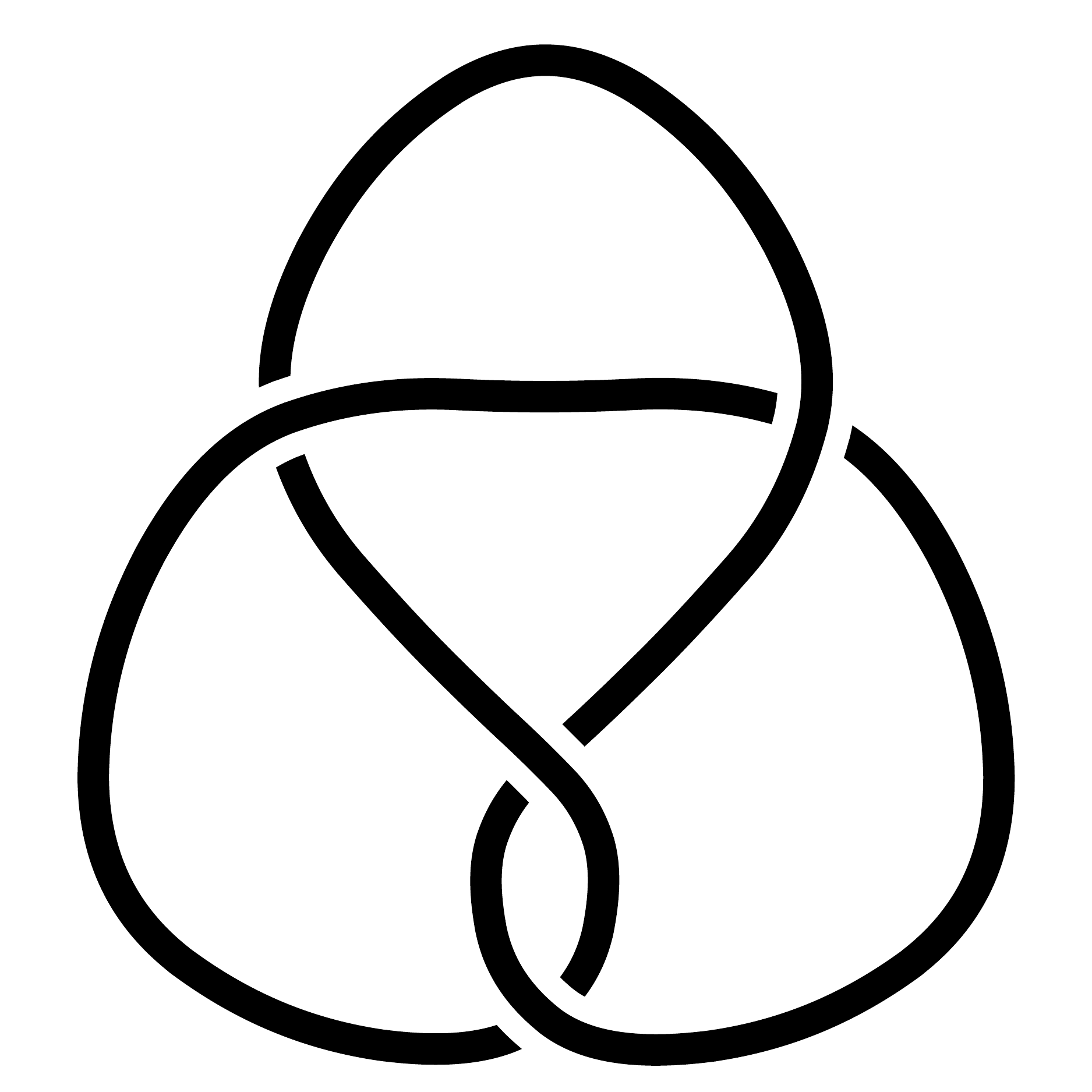}}&     \\
\nopagebreak   & $4_1$ & $1-3t+t^2$ \\
\nopagebreak  &  &  \\
  \multirow{3}{*}{\includegraphics[height=20mm]{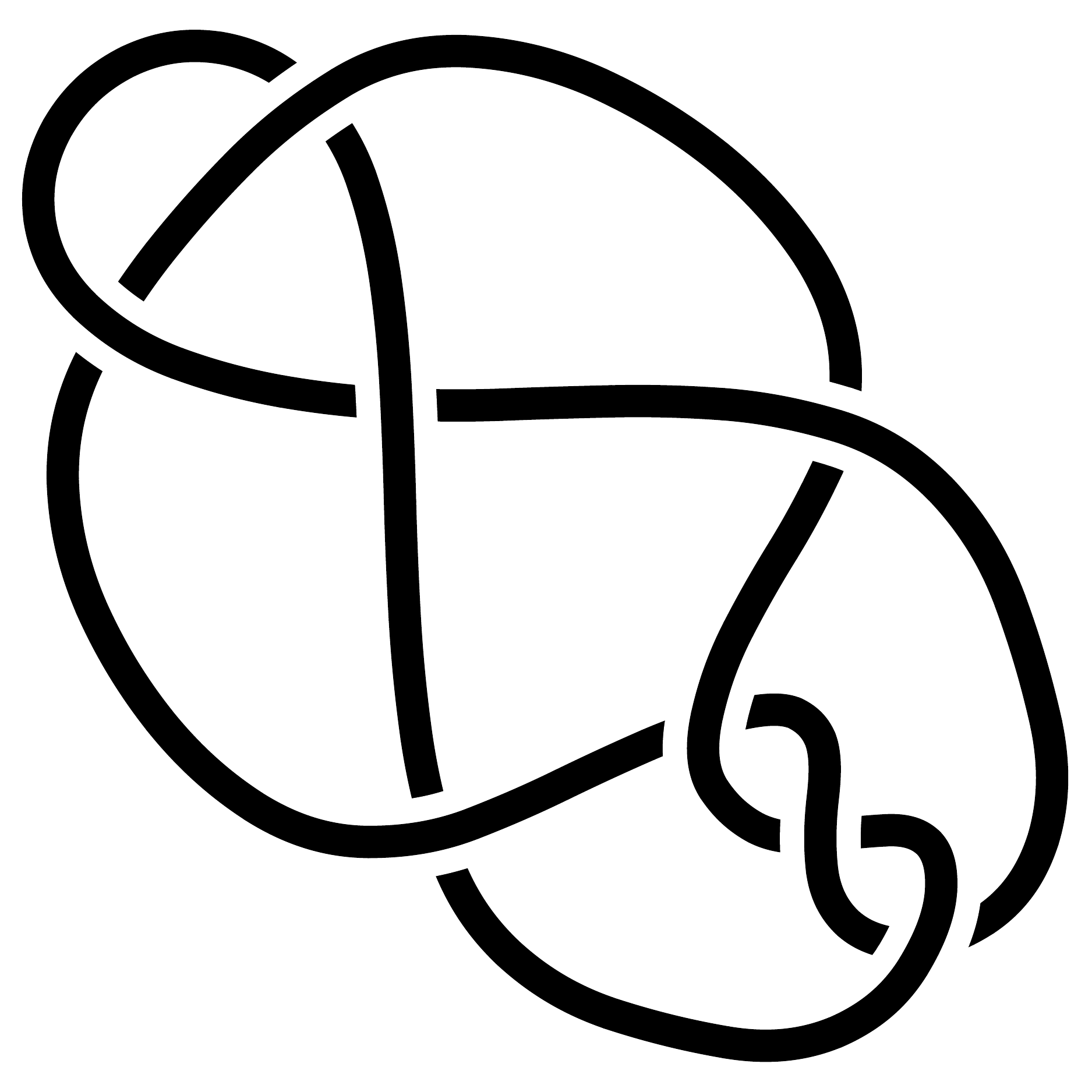}} &   \\
\nopagebreak   & $8_{12}$ & $1-7t+13t^2-7t^3+t^4$  \\
\nopagebreak  &  &  \\
  \multirow{3}{*}{\includegraphics[height=20mm]{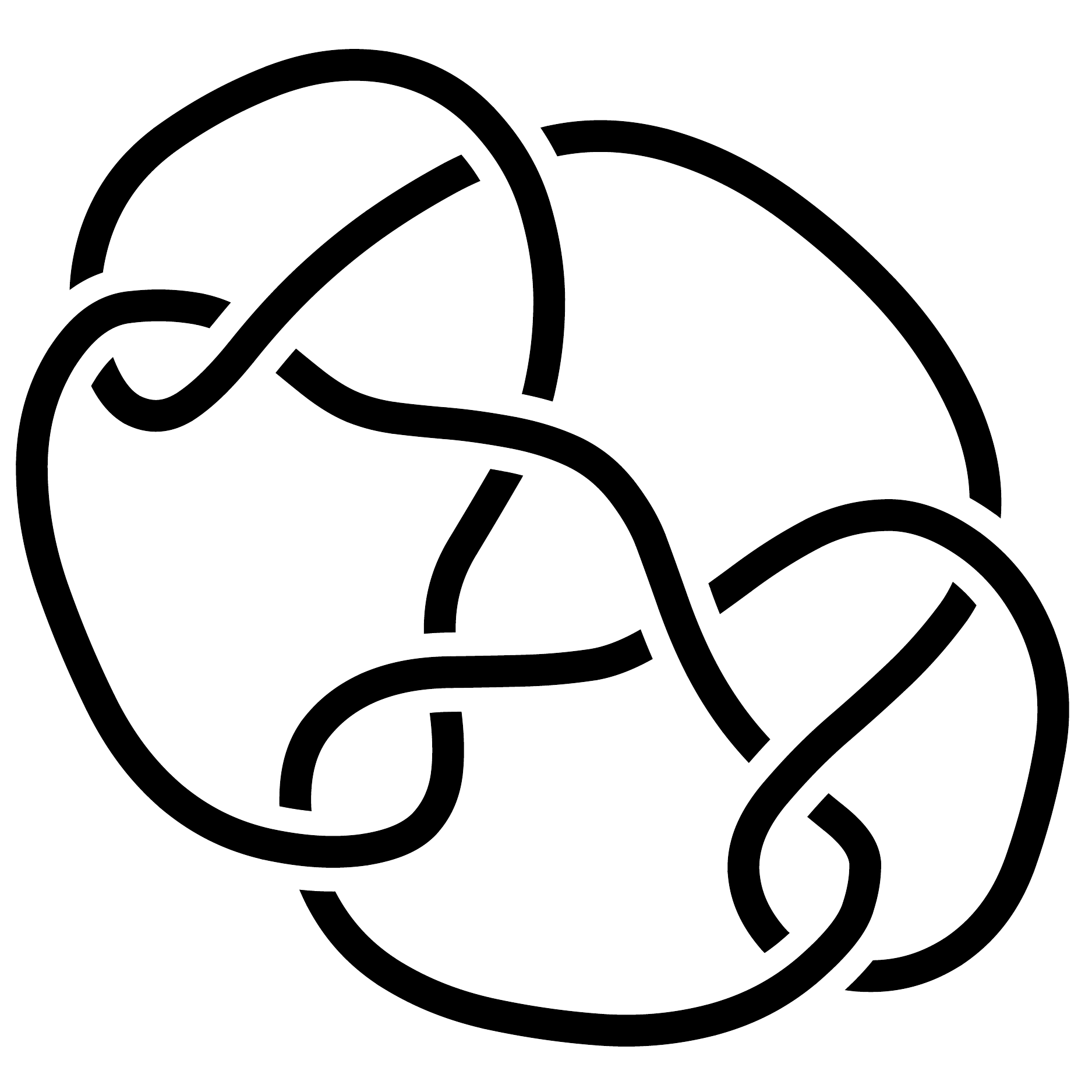}}&   \\
\nopagebreak   & $10_{137}$ & $1-6t+11t^2-6t^3+t^4$ \\
\nopagebreak  &  &  \\
  \multirow{3}{*}{\includegraphics[height=20mm]{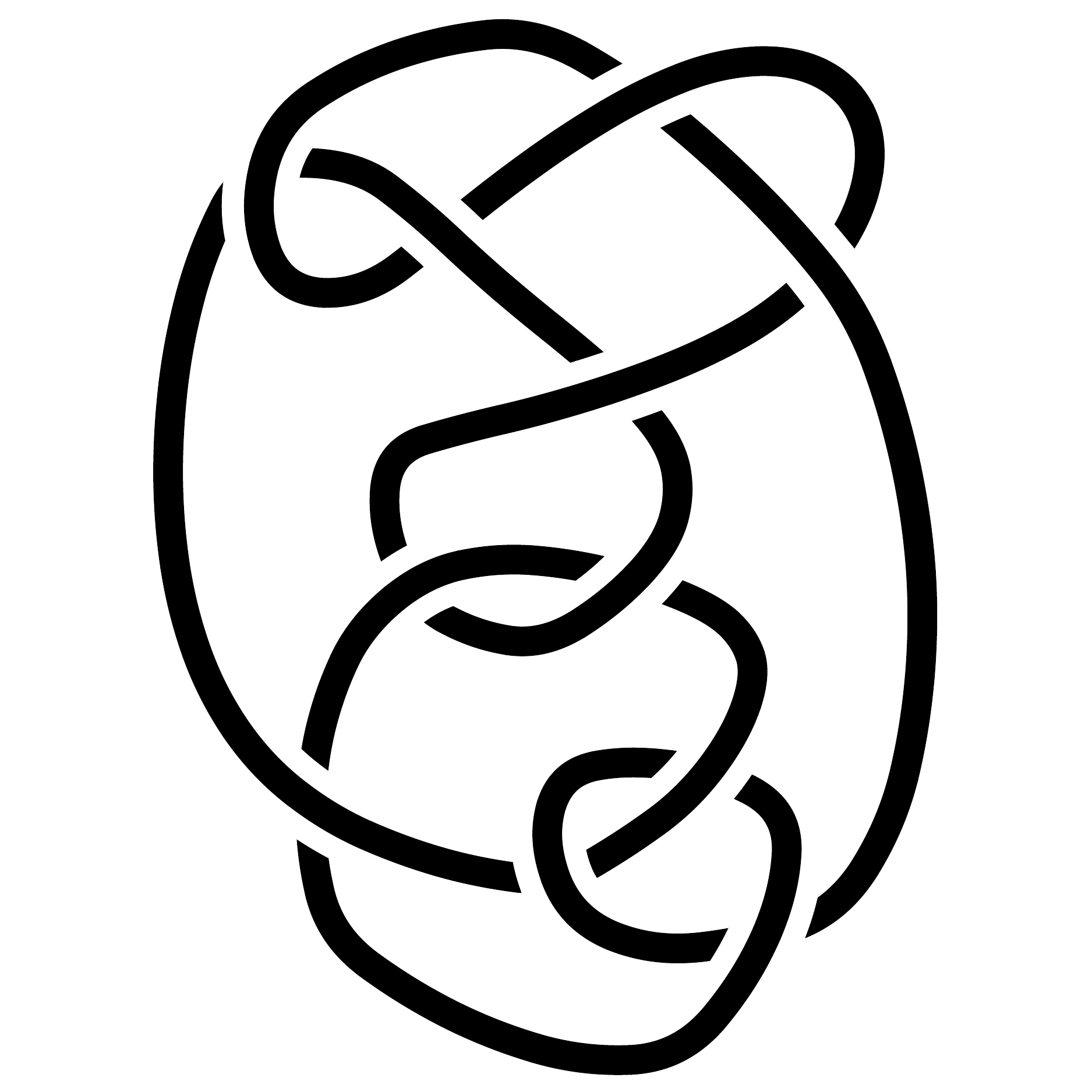}}&    \\
\nopagebreak   & $11a_{5}$ & $1-9t+30t^2-45t^3+30t^4-9t^5+t^6$  \\
\nopagebreak  &  & \\
\multirow{3}{*}{\includegraphics[height=20mm]{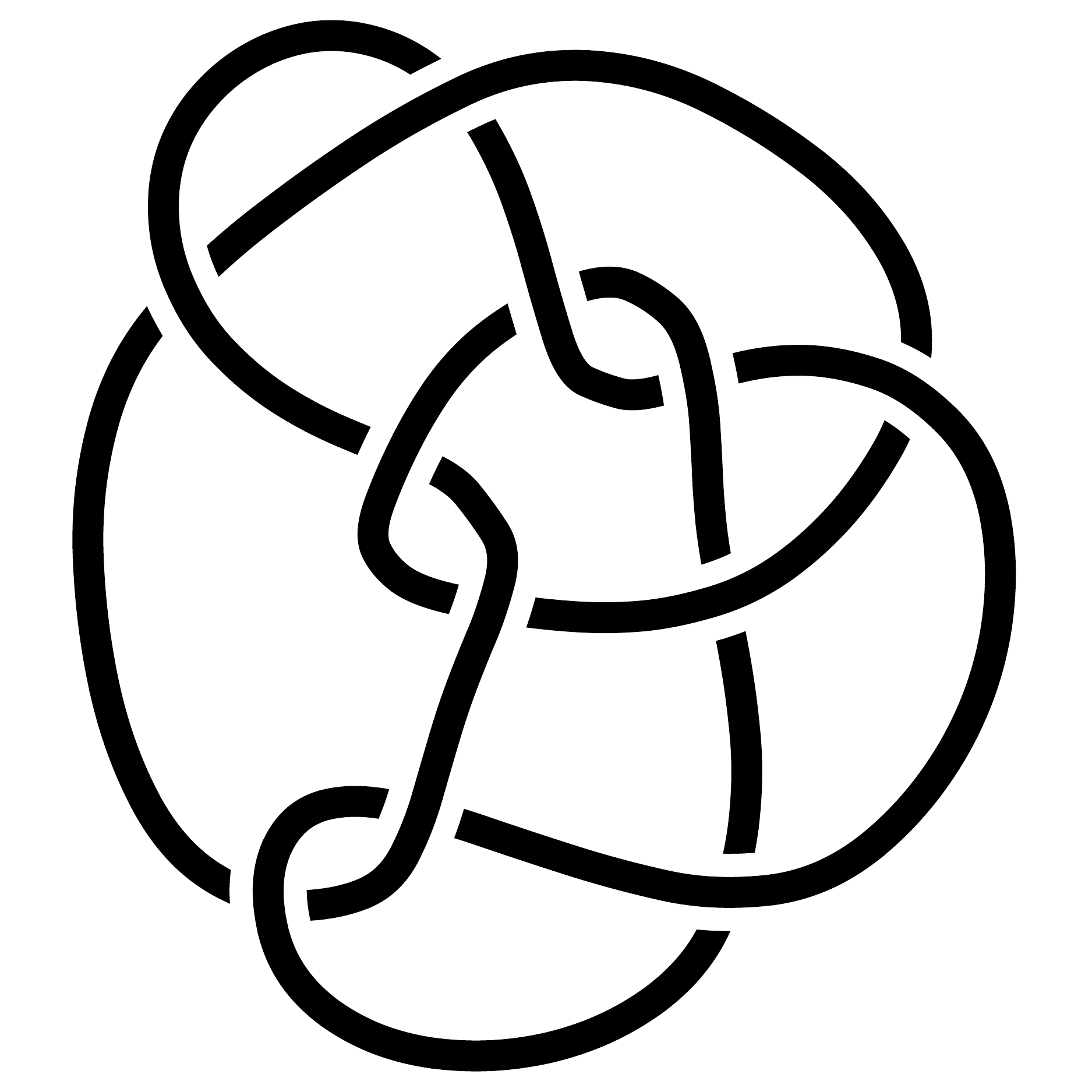}}&   \\
\nopagebreak   & $11n_{142}$ & $1-8t+15t^2-8t^3+t^4$ \\
\nopagebreak  &  &  \\
\multirow{3}{*}{\includegraphics[height=20mm]{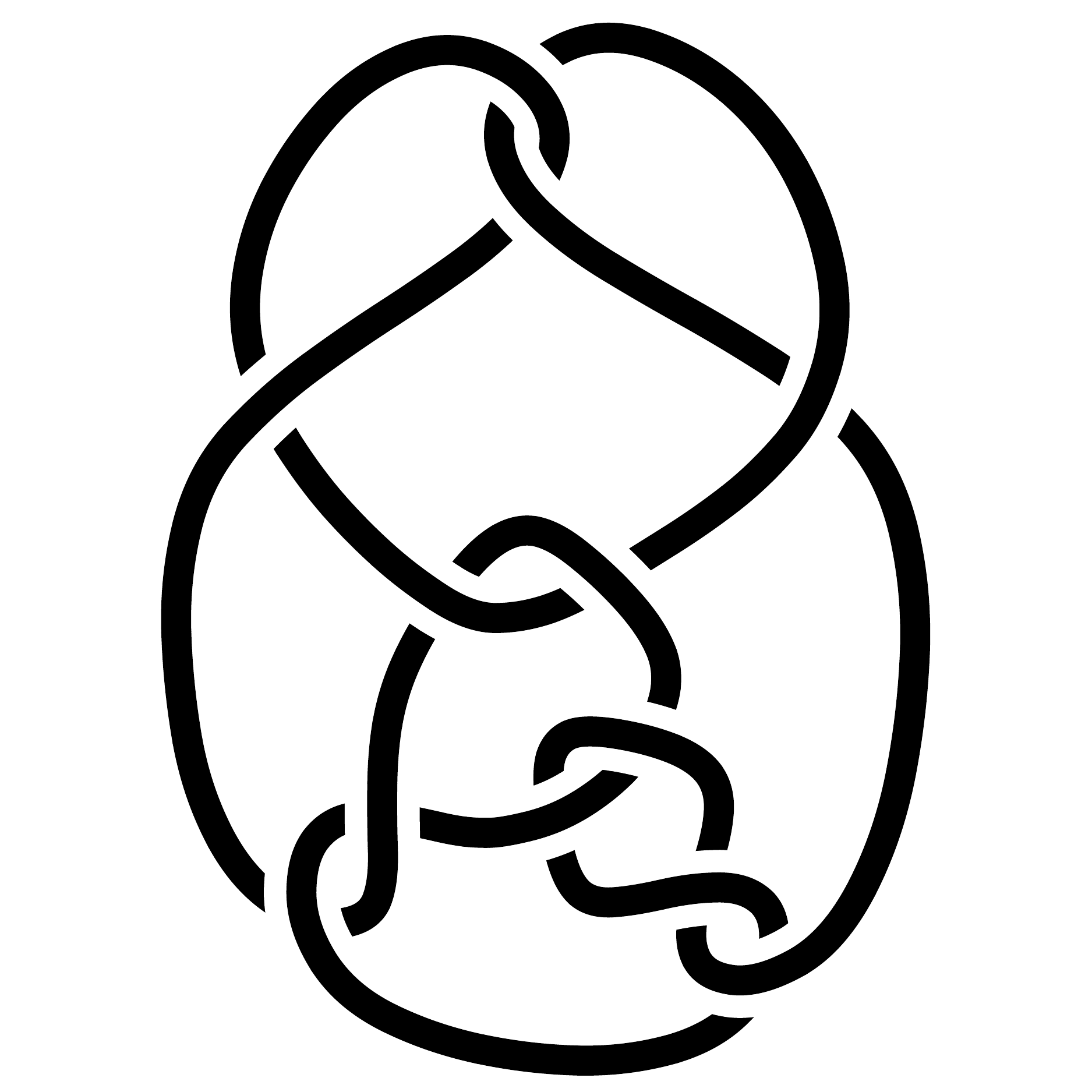}}&  \\
\nopagebreak   & $12a_{0125}$ & $1-12t+44t^2-67t^3+44t^4-12t^5+t^6$ \\
\nopagebreak  &  &  \\
\multirow{3}{*}{\includegraphics[height=20mm]{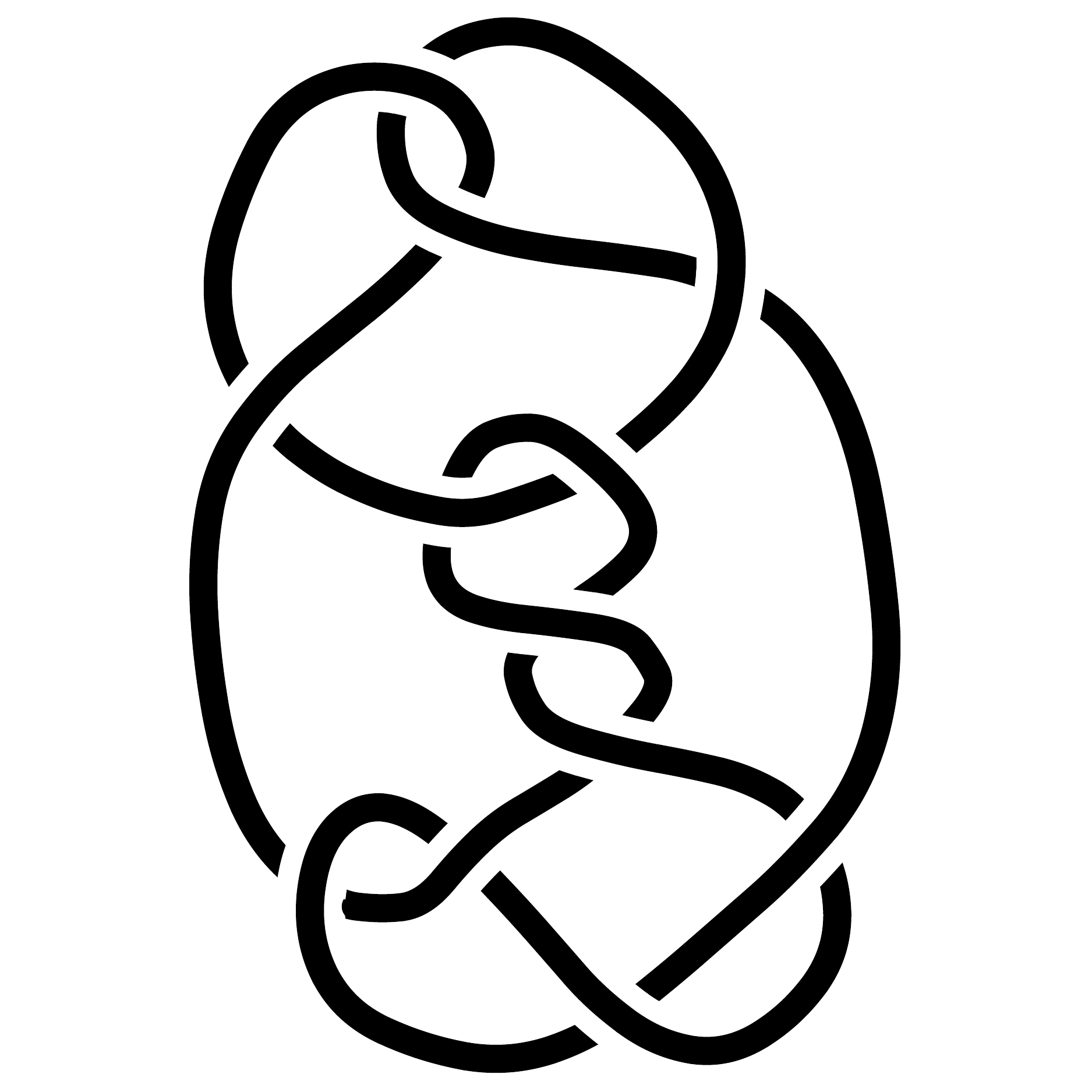}}&   \\
\nopagebreak   & $12a_{0181}$ & $1-11t+40t^2-61t^3+40t^4-11t^5+t^6$ \\
\nopagebreak  &  &  \\
\multirow{3}{*}{\includegraphics[height=20mm]{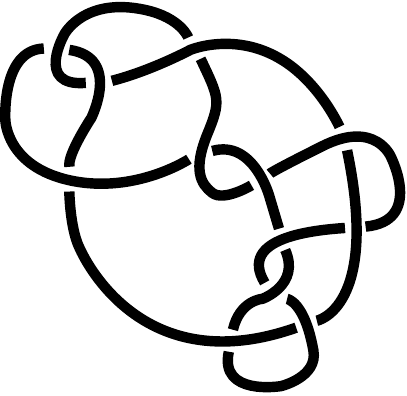}}&   \\
\nopagebreak   & $12a_{0477}$ & $1-11t+ 41t^2-63t^3+ 41t^4-11t^5+ t^6$ \\
\nopagebreak  &  &  \\
\multirow{3}{*}{\includegraphics[height=20mm]{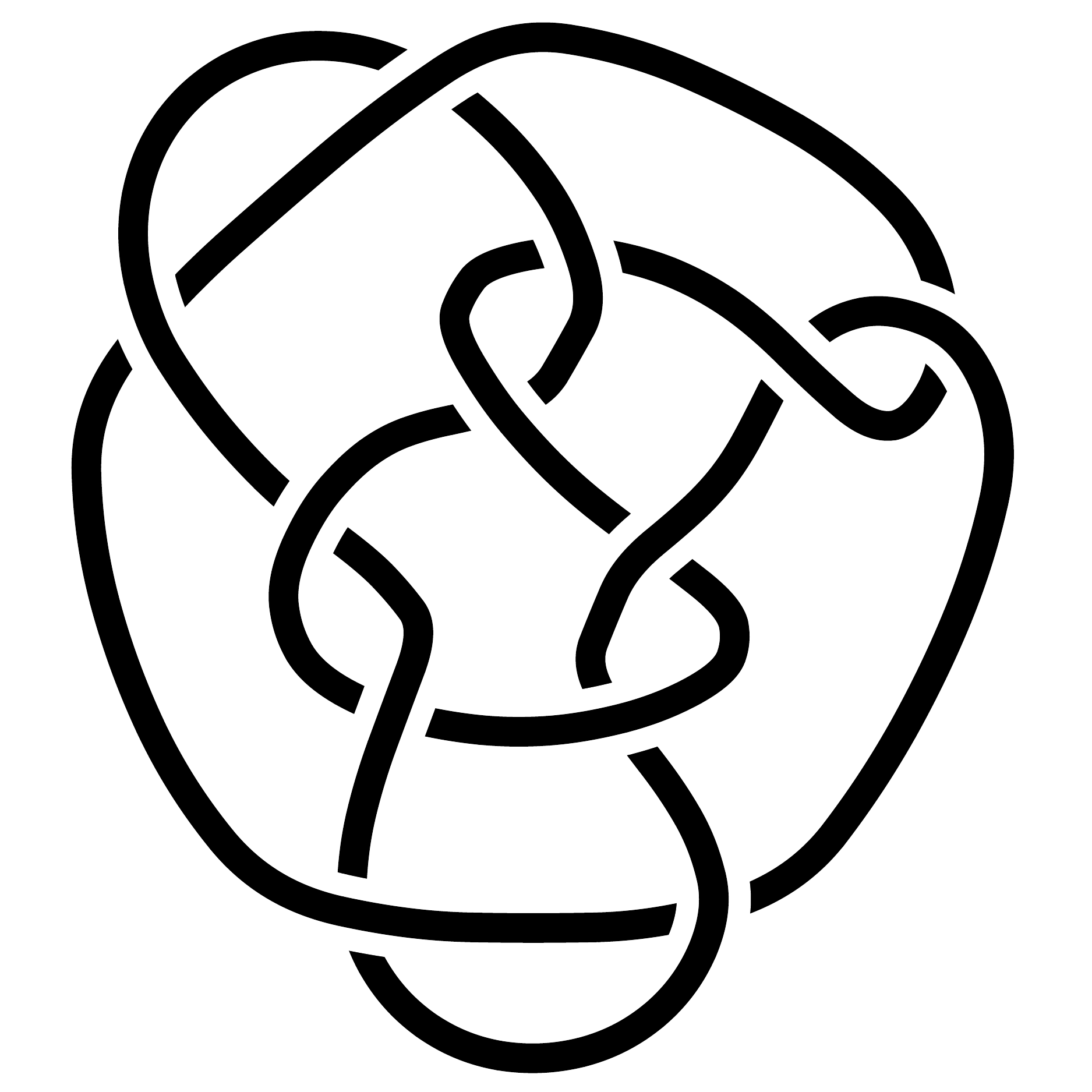}}&   \\
\nopagebreak   & $12a_{1124}$ & $1-13t+50t^2-77t^3+50t^4-13t^5+t^6$ \\
\nopagebreak  &  & \\
\multirow{3}{*}{\includegraphics[height=20mm]{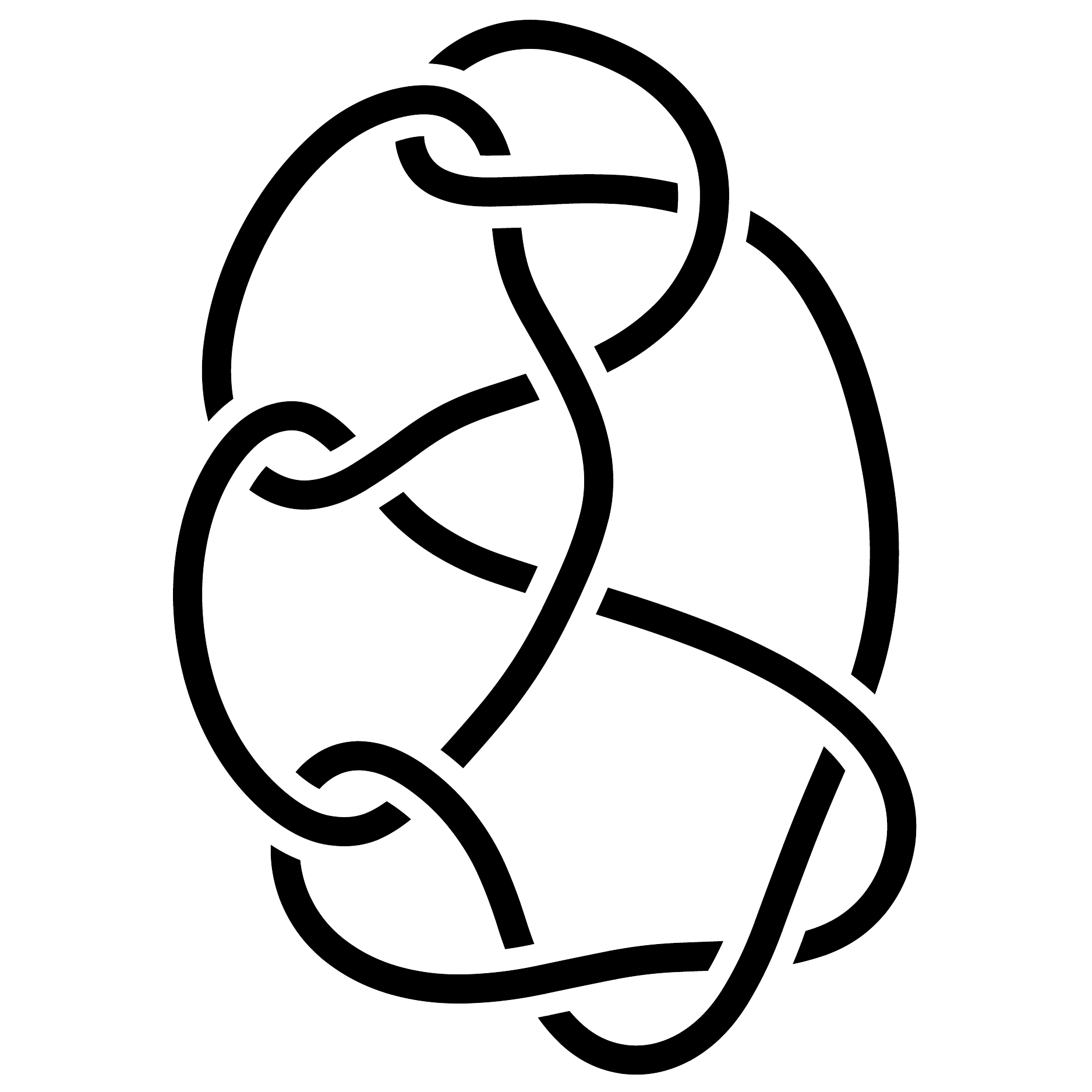}}&   \\
\nopagebreak   & $12n_{0013}$ &  $1-7t+13t^2-7t^3+t^4$\\
\nopagebreak  &  &  \\
\multirow{3}{*}{\includegraphics[height=20mm]{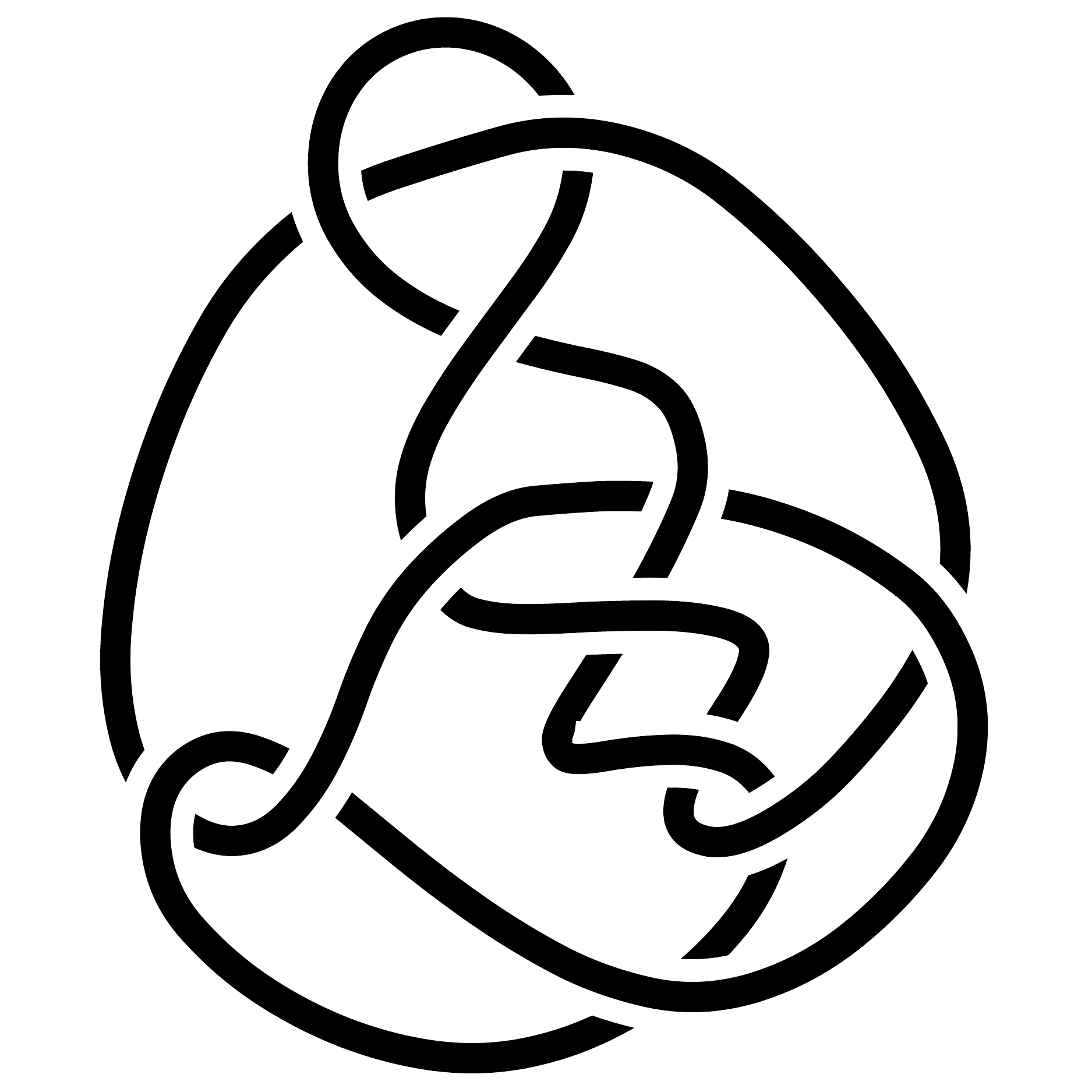}}&   \\
\nopagebreak   & $12n_{0145}$ & $1-6t+11t^2-6t^3+t^4$ \\
\nopagebreak  &  &  \\
\multirow{3}{*}{\includegraphics[height=20mm]{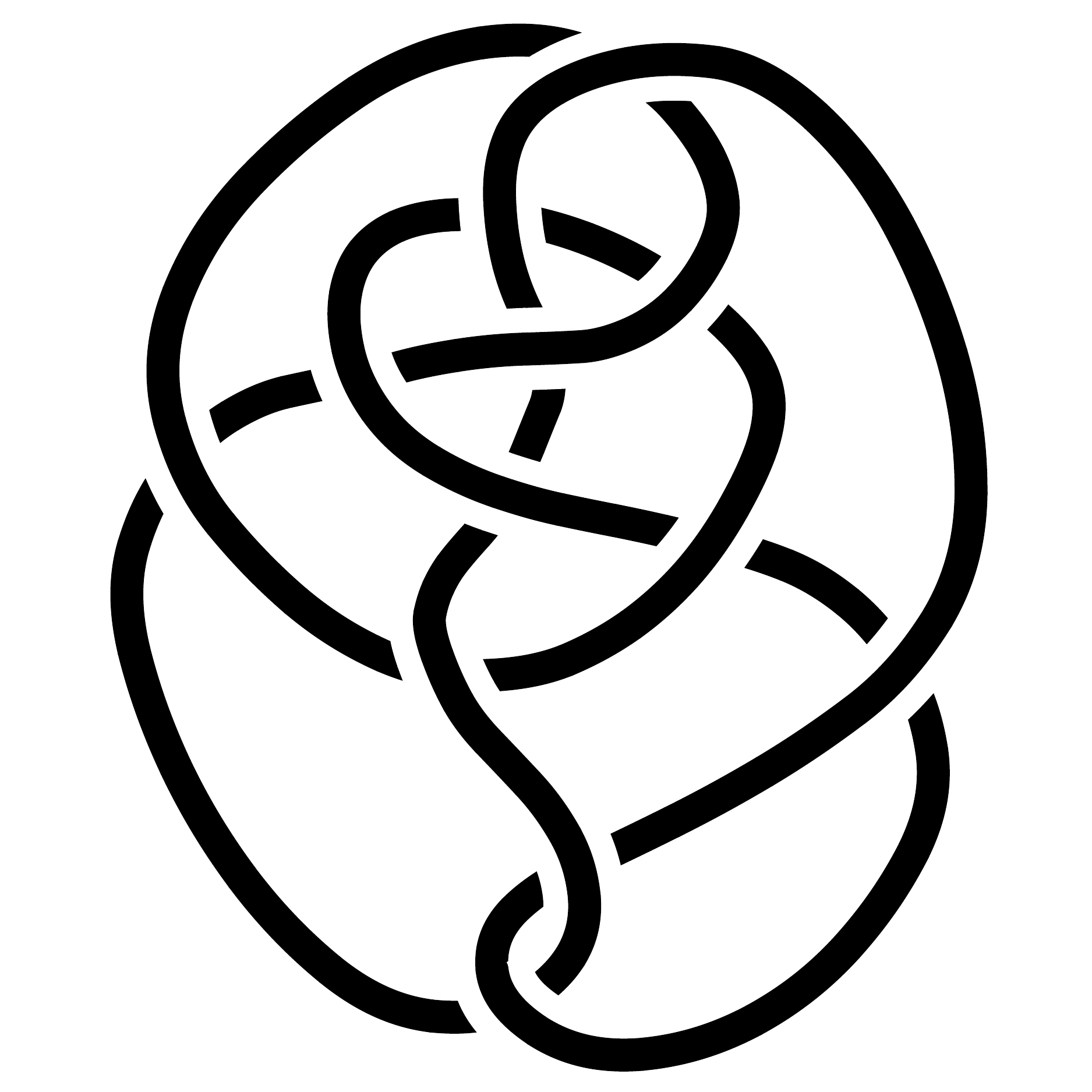}}&    \\
\nopagebreak   & $12n_{0462}$ & $1-6t+11t^2-6t^3+t^4$ \\
\nopagebreak  &  &  \\
\multirow{3}{*}{\includegraphics[height=20mm]{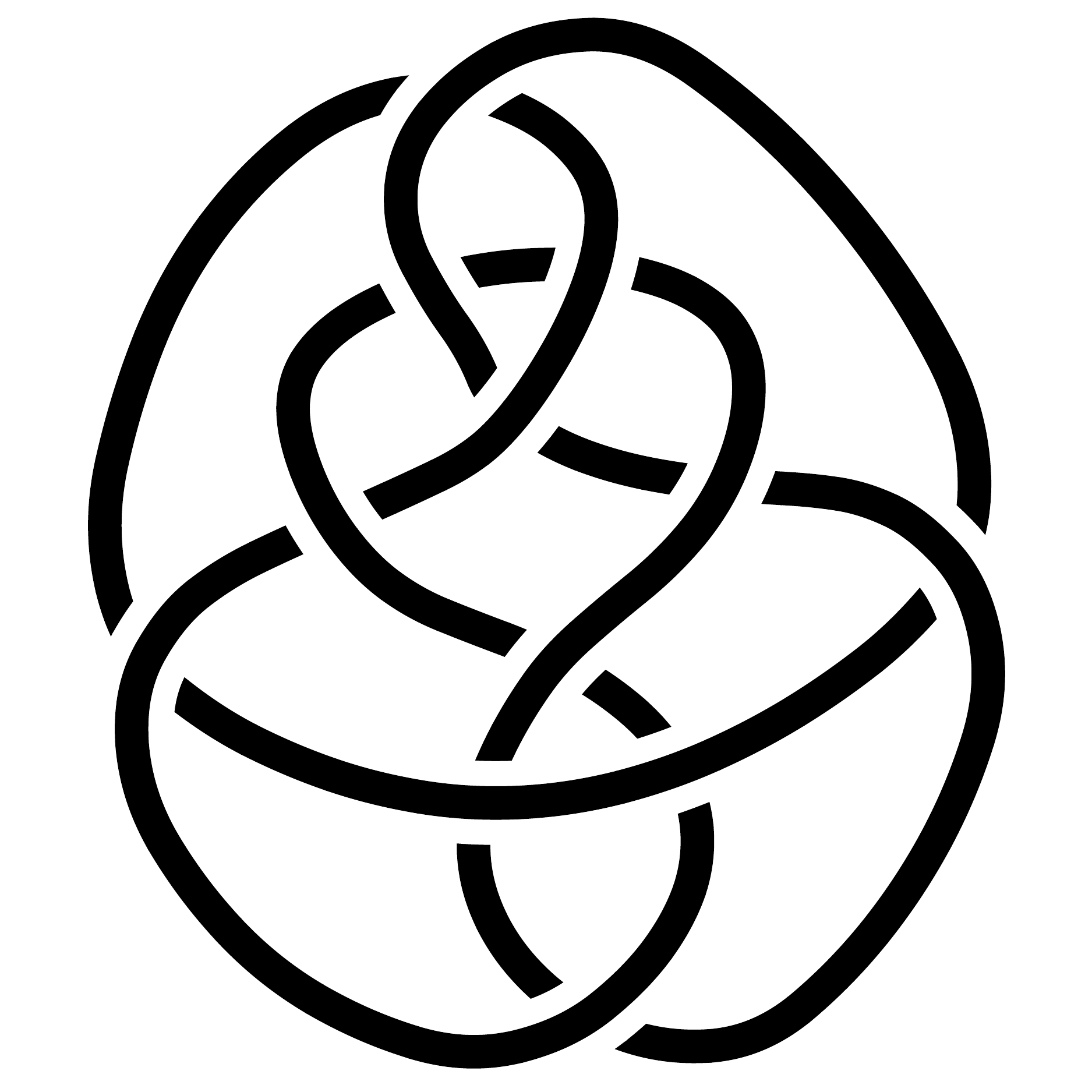}}&    \\
\nopagebreak   & $12n_{0838}$ & $1-6t+11t^2-6t^3+t^4$\\
\nopagebreak  &  &  \\
\end{longtable}
}

We think these are the only fibered knots of at most twelve crossings known to have bi-orderable group at the time of the writing. 
\end{example}

\begin{example}[Non-bi-orderable fibered knot groups]  There are many more knot groups which are known {\em not} to be bi-orderable by applying Theorem \ref{AN}.  According to \cite{CL}, among the knots of 12 or fewer crossings, 1246 of them are fibred and among those knots 485\footnote{While \cite{CL} says that 487 knots have Alexander polynomial with no roots in $\R^+$, there is actually a numerical error in two cases caused by rounding error.} have 
Alexander polynomials with {\em no} roots in ~$\R^+$, so they cannot be bi-orderable.  A complete list of them can be found in \cite{CR12}; the examples with up to ten crossings are: $3_{1}$, $5_{1}$, $6_{3}$, $7_{1}$, $7_{7}$, $8_{7}$, $8_{10}$, $8_{16}$, $8_{19}$, $8_{20}$, $9_{1}$, $9_{17}$, $9_{22}$, $9_{26}$, $9_{28}$, $9_{29}$, $9_{31}$, $9_{32}$, $9_{44}$, $9_{47}$, $10_{5}$, $10_{17}$, $10_{44}$, $10_{47}$, $10_{48}$, $10_{62}$, $10_{69}$, $10_{73}$, $10_{79}$, $10_{85}$, $10_{89}$, $10_{91}$, $10_{99}$, $10_{100}$, $10_{104}$, $10_{109}$, $10_{118}$, $10_{124}$, $10_{125}$, $10_{126}$, $10_{132}$, $10_{139}$, $10_{140}$, $10_{143}$, $10_{145}$, $10_{148}$, $10_{151}$, $10_{152}$, $10_{153}$, $10_{154}$, $10_{156}$, $10_{159}$, $10_{161}$, $10_{163}$.
\end{example}

\subsection{Fibred knots, bi-ordering and eigenvalues}
\label{eigenvalues subsection}

A fibration $X \to S^1$, with fibre $\Sigma$, can be regarded as a product  of  $\Sigma$ with an 
interval $I = [0, 1]$ with the ends identified via some homeomorphism $h$ of $\Sigma$ called the {\em monodromy} \index{monodromy} associated with the fibration: 
$$X = (\Sigma \times I )/ (x,1) \sim (h(x), 0).$$

The fundamental group of $X$ is an HNN extension of that of $\Sigma$ (see \cite{HNN49} for background on HNN extensions).   In the special case that $X$ is the complement of a fibred knot or link, $\Sigma$ is a connected surface with boundary and its fundamental group is a free group. We may write
$$\pi_1(X) = \langle x_1, \dots, x_{2g}, t  \; | \;  t^{-1}x_it = h_* (x_i), i = 1, \dots, 2g  \rangle$$
where $\{x_i\}$ is a set of free generators of $\pi_1(\Sigma)$, the map $h_* : \pi_1(\Sigma) \to \pi_1(\Sigma)$ is the homotopy monodromy map induced by $h$, and $g$ is the genus of the surface $\Sigma$.

We know that free groups are bi-orderable.  Although left-orderability is preserved under taking HNN extensions, bi-orderability may not be.  In fact it is not difficult to verify that an HNN extension of a bi-orderable group $G$ is again bi-orderable if and only if there is a bi-ordering of $G$ which is invariant under the automorphism $\varphi : G \to G$ associated with the extension.

Thus when $X$ is the complement of a fibred knot, its fundamental group is bi-orderable if and only if there exists a bi-ordering of the free group $\pi_1(\Sigma)$ which is invariant under the homotopy monodromy $h_* :\pi_1( \Sigma) \to \pi_1(\Sigma)$.  We also note that for a fibred knot $K$ the Alexander polynomial $\Delta_K(t)$ is precisely the characteristic polynomial for the {\em homology} monodromy  $h_* : H_1(\Sigma) \to H_1(\Sigma)$.  The homology groups may be considered with rational (or even real or complex) coefficients, so that $h_*$ is a linear map of vector spaces.  (Note that we are using the same symbol for the homotopy monodromy and the homology monodromy -- the induced map upon abelianization.  The context should make it clear which map is under discussion.)

\subsection{Digression on linear algebra}
\label{linear algebra}

Suppose we have an invertible linear map $L : \R^n \to \R^n$ and we wish to find a bi-ordering $<$ of $\R^n$ as an additive group such that $L$
preserves the order: ${v} < {w} \Leftrightarrow L(v) < L(w)$.  

It may be impossible --- for example if $L$ has finite order or even a finite orbit.  Indeed suppose that 
 $L(v) \ne v$ but $L^k(v) = v$ and $<$ is an invariant ordering.  If $L(v) < v$ we have 
 $L^2(v) < L(v) < v$ and inductively $L^k(v) < v$, a contradiction.  If $L(v) > v$ a similar contradiction ensues. 
 
On the other hand, suppose $L$ is represented by an upper triangular matrix, as in the following equation (take $n= 3$ for simpicity, here $a, b$ and $c$ are arbitrary):

$$\left( \begin{array}{ccc}
\lambda_1 & a & b \\
0 & \lambda_2 & c \\
0 & 0 & \lambda_3 \end{array} \right) 
\left(\begin{array}{c} x_1 \\ x_2 \\ x_3\end{array}\right) 
=\left(\begin{array}{c} \lambda_1 x_1 + ax_2  +bx_3\\ \lambda_2 x_2 +cx_3 \\ \lambda_3 x_3\end{array}\right).
$$

Further suppose the eigenvalues $\lambda_i$ are all positive.  Then we can order vectors by taking the positive cone to be all vectors whose last nonzero coordinate is greater than zero.  In other words, we use a reverse lexicographic order.  Then one sees from the above equation that $L$ preserves that positive cone and hence respects the ordering.  We have sketched a proof of the following.

\begin{proposition}
If all the eigenvalues of a linear map $L: \R^n \to \R^n$ are real and positive, then there is a bi-ordering of $\R^n$ which is preserved by $L$.  Similarly for $\Q^n$ in place of $\R^n$. 
\end{proposition}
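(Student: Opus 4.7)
The plan is to reduce the general case to the upper-triangular one already treated in the excerpt's preamble, and then restrict from $\R^n$ to $\Q^n$. First I would find a basis of $\R^n$ relative to which the matrix of $L$ is upper triangular with the (positive) eigenvalues on the diagonal. Since every eigenvalue of $L$ is real, this is a routine induction on $n$: pick a real eigenvector $v_1$ with eigenvalue $\lambda_1 > 0$, consider the induced linear map on the quotient $\R^n / \langle v_1 \rangle$ (whose characteristic polynomial is obtained from that of $L$ by dividing out $(t-\lambda_1)$, so its eigenvalues are the remaining eigenvalues of $L$, still real and positive), apply the inductive hypothesis there, and lift the resulting triangularizing basis.

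In this basis I would define the positive cone
\[
P = \{v \in \R^n \setminus \{0\} : \text{the last nonzero coordinate of } v \text{ is positive}\}.
\]
Standard bookkeeping shows that $P$ is closed under addition and satisfies $P \cap (-P) = \emptyset$ and $P \cup (-P) = \R^n \setminus \{0\}$, so $P$ is the positive cone of a bi-ordering of the abelian group $(\R^n,+)$ (this is just the reverse-lexicographic order). To see that $L$ preserves this ordering, let $v \in P$ with last nonzero coordinate $v_k > 0$. Because the matrix is upper triangular, the $j$-th coordinate of $Lv$ for $j > k$ is a linear combination of coordinates $v_i$ with $i \ge j > k$, all of which vanish; hence these entries of $Lv$ are zero. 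The $k$-th coordinate of $Lv$ equals $\lambda_k v_k > 0$, so $Lv \in P$.

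For the $\Q^n$ statement I would simply embed $\Q^n \hookrightarrow \R^n$ and restrict the ordering constructed above. Since $L$ extends to an $\R$-linear map with the same eigenvalues, the $\R^n$-ordering is $L$-invariant, and it restricts to a bi-ordering on $\Q^n$ which is preserved by $L$ because $L(\Q^n) \subseteq \Q^n$. I do not expect any substantial obstacle: the only subtle point is that one cannot in general triangularize $L$ over $\Q$, since the eigenvectors may be irrational, but this is circumvented by triangularizing over $\R$ and restricting. The main (mild) step is the real triangularization, which hinges on the hypothesis that the eigenvalues are all real.
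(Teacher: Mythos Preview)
Your proposal is correct and follows essentially the same route as the paper: the paper's discussion immediately preceding the proposition already sketches the upper-triangular reduction and the reverse-lexicographic positive cone, and then leaves the verification (including the $\Q^n$ case) as an exercise. Your handling of the $\Q^n$ case by restricting the $\R^n$-ordering, rather than attempting a triangularization over $\Q$, is exactly the right maneuver and is implicit in the paper's ``Similarly for $\Q^n$'' remark.
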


\begin{problem}  Prove this proposition.
\end{problem}

There is a partial converse.
\begin{proposition}\label{posreal}
Suppose there is a bi-ordering of $\R^n$ which is preserved by the nonsingular linear map $L: \R^n \to \R^n$.  Then $L$ has at least one positive real eigenvalue.  Similarly for $\Q^n$ in place of $\R^n$. 
\end{proposition}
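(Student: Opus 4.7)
\proof
[Proof plan.]
My strategy is to combine the structure of convex subgroups with H\"older's theorem in order to extract a positive real eigenvalue of $L$.

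First, I would observe that since $L$ is a nonsingular linear bijection preserving the bi-ordering, it sends convex subgroups to convex subgroups and preserves inclusion.  By Problem \ref{problem:convex_power} any convex subgroup of an ordered group is divisible, hence a $\Q$-subspace; viewing $\R^n$ as an ordered $\R$-vector space (so that the bi-ordering is compatible with positive real scaling), I would strengthen this to show each convex subgroup is in fact an $\R$-subspace: if $v\in C$ with $v>0$ and $r>0$, choose an integer $m>r$, then $0<rv<mv$ with $0,mv\in C$, so $rv\in C$ by convexity (negative scalars follow by symmetry).  Because $\R^n$ has finite $\R$-dimension, it follows that the chain of convex subgroups is finite, of length at most $n+1$.

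Next, since $L$ permutes this finite chain of convex subgroups while preserving inclusion, $L$ must fix every convex subgroup in the chain.  Choosing any convex jump $(C,D)$ coming from consecutive members, both $C$ and $D$ are $L$-invariant $\R$-subspaces, and $L$ descends to an order-preserving $\R$-linear automorphism of the quotient $D/C$, which is Archimedean because $(C,D)$ is a jump.

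By H\"older's theorem (Theorem \ref{theorem:Holder}), $D/C$ embeds order-preservingly into $(\R,+)$.  Since $D/C$ is an $\R$-vector space embedding into the $1$-dimensional $\R$-vector space $\R$, it must have $\R$-dimension exactly one; that is, $D/C\cong\R$ with its standard order.  The induced $L|_{D/C}$ is then an order-preserving $\R$-linear automorphism of $\R$, forcing it to be multiplication by some $\lambda>0$.  Because $C$ is $L$-invariant, the eigenvalues of $L|_{D/C}$ are eigenvalues of $L$, yielding the desired positive real eigenvalue $\lambda$.

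The main obstacle is the step asserting that convex subgroups are $\R$-subspaces, which depends on the compatibility of the bi-ordering with positive real scalar multiplication; without this, one can in principle produce bi-orderings of the underlying abelian group not detected by the $\R$-linear eigenvalues of $L$.  In the geometric setting of this Proposition (paired with the preceding one) that compatibility is the intended interpretation, and it makes the proof go through cleanly.  The analogous $\Q^n$ statement is handled identically, using the fact that convex subgroups are automatically $\Q$-subspaces by Problem \ref{problem:convex_power}, and that an order-preserving $\Q$-linear automorphism of an Archimedean $\Q$-subspace of $\R$ acts as multiplication by a positive real, which is then a root of the characteristic polynomial of $L$.
\qed
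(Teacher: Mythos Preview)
Your approach is genuinely different from the paper's.  The paper argues topologically: it lets $H$ be the set of points every neighbourhood of which contains both positives and negatives, shows $H$ is a codimension-one hyperplane, takes the closed hemisphere $D\subset S^{n-1}$ on the positive side, observes that $v\mapsto L(v)/|L(v)|$ carries $D$ into itself, and applies the Brouwer fixed-point theorem to produce an eigenvector with positive eigenvalue.  You instead exploit the convex-subgroup filtration together with H\"older's theorem, which is more algebraic and in fact anticipates the strategy the paper uses later for Theorem~\ref{poseval}.

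Two remarks on your execution.  First, you are right to flag the scalar-compatibility issue in the $\R^n$ case: without assuming that multiplication by positive reals preserves the order, convex subgroups need not be $\R$-subspaces (indeed $\R^n$ has uncountable $\Q$-dimension and the convex chain can be wild).  But note that the paper's proof needs the same hypothesis implicitly --- the step ``$H$ is closed under multiplication by scalars'' in Problem~\ref{hyperplane} uses exactly this --- so your caveat applies equally to both arguments, and in the intended application (orderings coming from $\Z^k\hookrightarrow\Q^k\hookrightarrow\R^k$) the compatibility holds.

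Second, there is a genuine gap in your $\Q^n$ case.  You write that $D/C$ has dimension one, but this fails over~$\Q$: take $\Q^2$ with the ordering of Example~\ref{orderZ2} using an irrational slope --- it is Archimedean, so the only convex jump is $(\{0\},\Q^2)$ and $D/C\cong\Q^2$.  What survives is that the H\"older embedding $\tau:D/C\hookrightarrow\R$ satisfies $\tau\circ\bar L=\lambda\,\tau$ for some $\lambda>0$ (this is Hion's Lemma~\ref{hion's lemma}), exactly as you say.  The missing step is to show $\lambda$ is an eigenvalue of $L$: extend scalars to get $\tau_\R:(D/C)\otimes_\Q\R\to\R$, a nonzero linear functional with $\bar L_\R$-invariant kernel, so $\bar L_\R$ acts by $\lambda$ on the one-dimensional quotient and $\lambda$ is a root of the characteristic polynomial of $\bar L$, hence of $L$.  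With that addition your argument for $\Q^n$ is complete.
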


This may be proved topologically.  Consider the set $H$ of all points in $\R^n$ for which every neighbourhood contains points greater than zero and also points less than zero in the given ordering.  

\begin{problem}\label{hyperplane} Show that $H$ is a linear subspace of $\R^n$ by arguing that $0 \in H$ and $H$ is closed under addition and by multiplication by scalars.  Moreover argue that $H$ separates 
$\R^n$, so it is a codimension one subspace
with positive points on one side and negative points on the other.  Points on $H$ may be either positive or negative in the given ordering, with the exception of zero, of course.  Inductively $H$ may be separated by a subspace again in a similar manner.  
\end{problem}

To continue with the proof of Proposition \ref{posreal}, let $D \subset S^{n-1}$ be the (closed) half of the unit sphere on the positive side of $H$.  Then $D$ is homeomorphic with an $n-1$ dimensional ball.  Since $L$ preserves the ordering, the map $v \to L(v)/ |L(v)|$ takes $D$ to itself.  By the Brouwer fixed-point theorem, that map has a fixed point.  Finally, we observe that such a fixed point is an eigenvector of $L$ with positive eigenvalue.  To argue for $\Q^n$ just repeat this argument, considering $\Q^n$ inside $\R^n$ in the usual way. \qed

\subsection{Proof of Theorem \ref{AS}}

Since roots of the Alexander polynomial are exactly the eigenvalues of the homology monodromy associated with a fibred knot, our problem reduces to showing:

\begin{proposition}
Suppose $h : F \to F$ is an automorphism of a finitely-generated free group.  If all the eigenvalues of  the induced map  $h_* : H_1(F ; \Q) \to  H_1(F ; \Q)$
are real and positive, then there is a bi-ordering of $F$ preserved by $h$.
\end{proposition}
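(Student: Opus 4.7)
My plan is to use the lower central series of $F$ to reduce the statement to the linear-algebraic proposition just established, via the identification of the graded pieces of that series with the free Lie algebra on $H_1(F;\Q)$.

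First, I would recall the classical Magnus--Witt theory of the free group: if $F = F_0 \supset F_1 \supset F_2 \supset \cdots$ is the lower central series of the finitely generated free group $F$, then each quotient $F_n/F_{n+1}$ is a finitely generated torsion-free abelian group, and $\bigcap_n F_n = \{1\}$. Moreover, the graded group $\bigoplus_{n\ge 0} (F_n/F_{n+1}) \otimes \Q$ is naturally isomorphic as a Lie algebra to the free Lie algebra $\mathcal{L}(V)$ on $V := H_1(F;\Q)$, with $F_n/F_{n+1}\otimes\Q$ corresponding to the degree-$(n+1)$ piece $\mathcal{L}_{n+1}(V) \subset V^{\otimes(n+1)}$. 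This setup is exactly what makes Problem~\ref{lowercentral} applicable.

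Next, since $h$ is an automorphism of $F$, it preserves the lower central series and descends to automorphisms $h_n : F_n/F_{n+1} \to F_n/F_{n+1}$. Under the identification above, the induced $\Q$-linear map on $F_n/F_{n+1}\otimes\Q$ is the restriction of $h_*^{\otimes(n+1)}$ to $\mathcal{L}_{n+1}(V)$, where $h_*$ denotes the given action on $V = H_1(F;\Q)$. The eigenvalues of $h_*^{\otimes(n+1)}$ are exactly $(n+1)$-fold products of eigenvalues of $h_*$; in particular, if every eigenvalue of $h_*$ is real and positive, the same is true for $h_*^{\otimes(n+1)}$ and hence for its restriction to the invariant subspace $\mathcal{L}_{n+1}(V)$. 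Applying the proposition from Subsection~\ref{linear algebra} (with the upper-triangular/Jordan-form reverse-lexicographic construction) to each $h_n$ on $F_n/F_{n+1}\otimes\Q$ yields a bi-ordering $<_n$ of that rational vector space preserved by $h_n$; restricting $<_n$ to the integer lattice $F_n/F_{n+1}$ gives an $h_n$-invariant bi-ordering of each quotient.

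Finally, I would assemble these piecewise orderings into a bi-ordering of $F$ by the reverse-lexicographic recipe of Problem~\ref{lowercentral}: declare a non-identity $g\in F$ to be positive if and only if its image in $F_n/F_{n+1}$ is positive (with respect to $<_n$), where $n$ is the largest integer with $g\in F_n$. Problem~\ref{lowercentral} ensures this is a bi-ordering of $F$. Invariance under $h$ is then immediate: since $h(F_n)=F_n$ for all $n$, the filtration level of $g$ is preserved, and since each $h_n$ preserves $<_n$, the sign of the leading nontrivial projection is preserved as well. This gives the desired $h$-invariant bi-ordering of $F$.

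The main obstacle is the identification of the graded pieces of the lower central series with the free Lie algebra and, in particular, the compatibility of $h_n$ with $h_*^{\otimes(n+1)}$ restricted to $\mathcal{L}_{n+1}(V)$; once this is granted, the eigenvalue bookkeeping and the assembly step are routine. An alternative, essentially equivalent route would be to work directly with the Magnus expansion $\mu: F \hookrightarrow \Lambda$: extend $h$ to a graded algebra automorphism of $\Lambda$, observe that on the degree-$n$ part it is $h_*^{\otimes n}$, and use the upper-triangularization of $h_*$ over $\R$ to choose a basis of $\R\langle X_1,\ldots,X_{2g}\rangle$ in which $h$ acts upper-triangularly with positive diagonal entries at every degree, then use the resulting reverse-lexicographic order on $\Lambda$ restricted via $\mu$.
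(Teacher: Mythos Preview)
Your proposal is correct and follows essentially the same approach as the paper: both use the lower central series, embed each graded quotient $F_n/F_{n+1}$ into a tensor power of $F_{ab}$ (the paper states this embedding directly, referring to \cite{PR03}, while you phrase it via the free Lie algebra identification), observe that tensor powers of $h_*$ again have only positive real eigenvalues, apply the linear-algebra proposition to order each quotient invariantly, and assemble via the recipe of Problem~\ref{lowercentral}. Your alternative Magnus-expansion route is not pursued in the paper but is, as you note, equivalent in spirit.
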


\begin{proof} One way to order a free group $F$ is to use the lower central series as discussed in Problem \ref{lowercentral}.  Recall the series is
$F_1 \supset F_2 \supset \cdots$ defined by

$$F_1 = F, \quad F_{i+1} = [F, F_i]$$

This has the properties that $\displaystyle \bigcap F_i = \{1\}$ and each $F_{i}/F_{i+1}$ is free abelian.

Choose an arbitrary bi-ordering of $F_{i}/F_{i+1}$, and define a positive cone of $F$ by declaring
$1 \ne x \in F$ positive if its class in $F_{i}/F_{i+1}$ is positive in the chosen ordering, where $i$ is the last subscript such that $x \in F_i$.  This is a bi-ordering of $F$.

If $h: F \to F$ is an automorphism it preserves the lower central series and induces maps of the lower central quotients:  $h_i: F_{i}/F_{i+1} \to F_{i}/F_{i+1}$.   With this notation, $h_1$ and $F/F_1$ are just the abelianization $h_{ab}$ and $F_{ab}$ respectively; and it turns out that, in a sense, all the $h_i$ are determined by $h_1$.  That is, there is an embedding of $F_{i}/F_{i+1}$ in the tensor power $F_{ab}^{\otimes k}$, and the map $h_i$ is just the restriction of $h_{ab}^{\otimes k}$.  The reader is referred to \cite{PR03} for details.

The assumption that all eigenvalues of $h_{ab}$ are real and positive implies that the same is true of all its tensor powers. 
This allows us to find bi-orderings of the free abelian groups $F_{i}/F_{i+1}$ which are invariant under $h_i$ for all $i$.  Using these to bi-order $F$, we get invariance under $h$, which proves the present proposition and Theorem \ref{AS} as well. \end{proof}

\begin{problem}  Verify the assertions of the preceding paragraph.
\end{problem}

\subsection{Proof of Theorem \ref{AN}}

Let's turn to the proof of the third main theorem: If $K \subset S^3$ is a nontrivial fibred knot whose knot group is bi-orderable, then $\Delta_K(t)$ has at least two real positive roots.  First of all, since the Alexander polynomial satisfies $\Delta_K(t) = t^{2g}\Delta_K(1/t)$ and $\Delta_K(1) = \pm 1$, any positive real root $r$ will produce another, namely $1/r$.  So we need only find one positive real root.

Our third theorem will follow from a more general result.   Suppose $G$ is an arbitrary finitely generated group. If $\phi : G \to G$ is an automorphism, we can define its {\it eigenvalues} to be the eigenvalues of its induced map on the rational vector space $H_1(G; \Q) \cong (G/G') \otimes \Q$. 

\begin{theorem}\label{poseval}
Suppose $G$ is a nontrivial finitely generated bi-orderable group and 
that  the automorphism $\phi : G \to G$ preserves a bi-ordering of $G$.  Then $\phi$ has a positive eigenvalue.
\end{theorem}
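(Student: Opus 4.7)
My plan is to reduce the statement to the linear algebra result of Proposition \ref{posreal} by passing to a suitable Archimedean abelian quotient of $G$. First, since convex subgroups of $(G,<)$ are linearly ordered by inclusion (Problem \ref{prop:convex_intersection_and_union}) and $G$ is finitely generated, if the union of all proper convex subgroups were all of $G$ then a finite generating set would lie in a single proper convex subgroup, forcing it to equal $G$---a contradiction. Hence a unique maximal proper convex subgroup $C$ exists, just as in the proof of Theorem \ref{biorderable implies LI}. Bi-invariance of $<$ means conjugation is an order-automorphism; combined with the total ordering of convex subgroups by inclusion, this forces every convex subgroup of $G$ to be normal, so in particular $G/C$ inherits a bi-ordering. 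By the correspondence between convex subgroups of $G/C$ and convex subgroups of $G$ containing $C$ (Problem \ref{nested_subgroups_problem}), $G/C$ has no nontrivial proper convex subgroups and is therefore Archimedean. H\"older's theorem (Theorem \ref{theorem:Holder}) together with finite generation then yields an order-preserving isomorphism $G/C \cong \Z^n \subset \R$ for some $n \ge 1$.

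Next I would observe that $\phi(C)$ is also a maximal proper convex subgroup of $G$, so uniqueness gives $\phi(C) = C$. Thus $\phi$ descends to an order-preserving automorphism $\bar\phi$ of $\Z^n$. Extending $\bar\phi$ $\Q$-linearly to $\Q^n = (G/C) \otimes \Q$, and extending the ordering via the unique $\Q$-linear injection $\Q^n \hookrightarrow \R$ coming from $\Z^n \hookrightarrow \R$, I obtain a bi-ordering of $\Q^n$ preserved by the nonsingular linear map $\bar\phi$. Proposition \ref{posreal} then supplies a positive real eigenvalue $\lambda > 0$ of $\bar\phi$.

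Finally I would transfer $\lambda$ back to $H_1(G;\Q)$. Since $G/C$ is abelian we have $[G,G] \subseteq C$, so the quotient $G \twoheadrightarrow G/C$ factors through $G^{\mathrm{ab}}$; tensoring with $\Q$ produces a $\Q$-linear surjection $p: H_1(G;\Q) \twoheadrightarrow \Q^n$ satisfying $p \circ \phi_* = \bar\phi \circ p$. The kernel of $p$ is $\phi_*$-invariant, so the characteristic polynomial of $\phi_*$ factors as the product of that of $\bar\phi$ with that of $\phi_*|_{\ker p}$, and thus $\lambda$ is an eigenvalue of $\phi_*$ as required. The main obstacle throughout is justifying $\phi(C) = C$, which in turn requires the existence and uniqueness of the maximal proper convex subgroup in $(G,<)$; once this is in hand, the rest is a straightforward combination of H\"older's theorem, Proposition \ref{posreal}, and elementary linear algebra.
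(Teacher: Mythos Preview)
Your proof is correct and follows essentially the same route as the paper: take the maximal proper convex subgroup $C$ (which is $\phi$-invariant since $\phi$ preserves the ordering), pass to the Archimedean quotient $G/C$, apply Proposition \ref{posreal} there, and then use the short exact sequence $0 \to C/G' \to G/G' \to G/C \to 0$ tensored with $\Q$ to see that the positive eigenvalue on the quotient is an eigenvalue of $\phi_*$ on $H_1(G;\Q)$. Your treatment of the characteristic-polynomial factorization via the $\phi_*$-invariant subspace $\ker p$ is in fact slightly cleaner than the paper's phrasing $\phi_V = \phi_U \oplus \phi_W$, which tacitly assumes a choice of splitting.
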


\begin{proof}
To prove this, assume $\phi : G \to G$ preserves a bi-ordering of $G$.  Then $\phi$
induces an automorphism $\phi_*: G/G' \to G/G'$, but (unless the commutator subgroup $G'$ is convex) we don't know that $G/G'$ inherits a $\phi_*$-invariant ordering.  However $G$, being finitely generated, does have a maximal proper convex subgroup $C$ as we saw in Section \ref{section: BO implies LI}.  Since $\phi$ respects the ordering, $C$ is $\phi$-invariant.  Moreover $G/C$ is abelian so $G' \subset C$ and we have the commutative diagram with exact rows:

\[
\begin{CD}0 @>>> C/G' @>>> G/G' @>>> G/C @>>> 0\mbox{ } \\
 & & @VVV @V{\phi_*}VV @V{\phi_C}VV \\
0 @>>> C/G' @>>> G/G' @>>> G/C @>>>0.
\end{CD}
\]
Also since $C$ is convex, $G/C$ inherits an order from $G$ which is invariant under 
$\phi_C$.

Writing $U = C/G' \otimes \Q$, $V= G/G' \otimes \Q$, and $W= G/C \otimes \Q$, tensoring with $\Q$  yields the commutative diagram of finite-dimensional vector spaces over $\Q$ with exact rows:
\[
\begin{CD}0 @>>>  U @>>> V @>>> W @>>> 0\mbox{ }  \\
& &  @VVV @V{\phi_V}VV @V{\phi_W}VV \\
0 @>>> U @>>> V @>>> W @>>>0 ,
\end{CD}
\]
where $\phi_W = \phi_C \otimes id$ and $\phi_V = \phi_* \otimes id$. 

Since $\phi_W$ preserves the induced ordering of $W$, it has a positive real eigenvalue.

Letting $\phi_U$ be 
$\phi_V$ restricted to $U$, we conclude that  $\phi_V = \phi_{U} \oplus \phi_W$. 
Therefore the characteristic polynomial of $\phi_V$ factors as
\[ \chi_{\phi_V}(\lambda) = \chi_{\phi_{U}}(\lambda) \cdot \chi_{\phi_W}(\lambda) . 
\]
The positive eigenvalue of $\phi_W$ is also an eigenvalue of $\phi_V$, concluding the proof.
\end{proof}

\subsection{Non-fibered knots} Next we turn our attention to non-fibred knots.
At the time of this writing, non-fibered knots are not as well understood as fibered knots.  Indeed, there are analogues of Theorems \ref{AS} and \ref{AN} in the non-fibered case, but they only apply to groups with two generators and a single relator \cite{CGW14}.  Moreover, the relator must satisfy some technical combinatorial conditions which we will not cover here.
  
Nevertheless, there are some classes of non-fibered knots whose groups have a presentation of the necessary form, and so bi-orderability of their groups can be determined by examining the roots of the Alexander polynomial.  We present three such examples below, see \cite{CDN14} for full details.  

\begin{example}[Two-bridge knots]
\index{two-bridge knots}
A \textit{two-bridge} knot is a knot which admits a diagram appearing as in Figure \ref{2bridge}.  In that diagram, each box represents some number $a_i$ of horizontal half-twists; the sign of $a_i$ indicates the direction of twisting.  For example, Figure \ref{twist example} shows the case of $a_i =3$ and $a_i =-1$ twists.

It is a result of Schubert that two-bridge knots are in one-to-one correspondence with coprime pairs of odd integers $p$ and $q$, with $0<p<q$ \cite{Schubert56}.  Thus every two-bridge knot may be written as $K_{\frac{p}{q}}$ where $\frac{p}{q}$ is a reduced fraction.  Their knot groups are given by the presentation
\[ \pi_1(S^3 \setminus K_{\frac{p}{q}}) = \langle a, b \mid aw=wb  \rangle  
\]
where $w= b^{\epsilon_1}a^{\epsilon_2} \cdots b^{\epsilon_{q-2}} a^{\epsilon_{q-1}}$ and $\epsilon_i = (-1)^{\lfloor \frac{ip}{q} \rfloor}$.  See  \cite{Murasugi61} for details of this presentation. 

\begin{figure}[h!]
\setlength{\unitlength}{8.5cm}
\begin{picture}(1,0.29618768)%
    \put(0,0){\includegraphics[width=\unitlength]{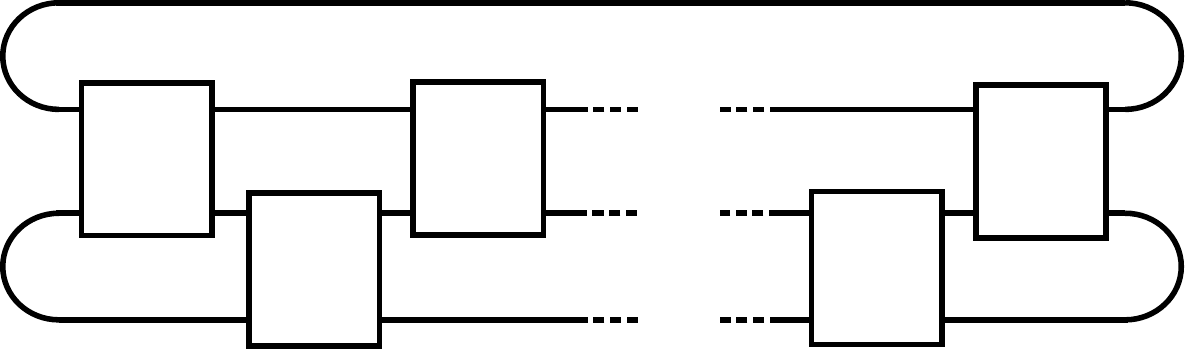}}%
    \put(0.10089022,0.15219595){$a_1$}%
    \put(0.24332844,0.05327264){$a_2$}%
    \put(0.37738792,0.15219595){$a_3$}%
    \put(0.70086088,0.05327264){$a_{n-1}$}%
    \put(0.85492036,0.15219595){$a_n$}%
  \end{picture}%
\caption{A two bridge knot.  In each box, $a_i$ is an integer which indicates the number of half twists.}
\label{2bridge}
\end{figure}

\begin{figure}[h!]
\includegraphics[width=3.5cm]{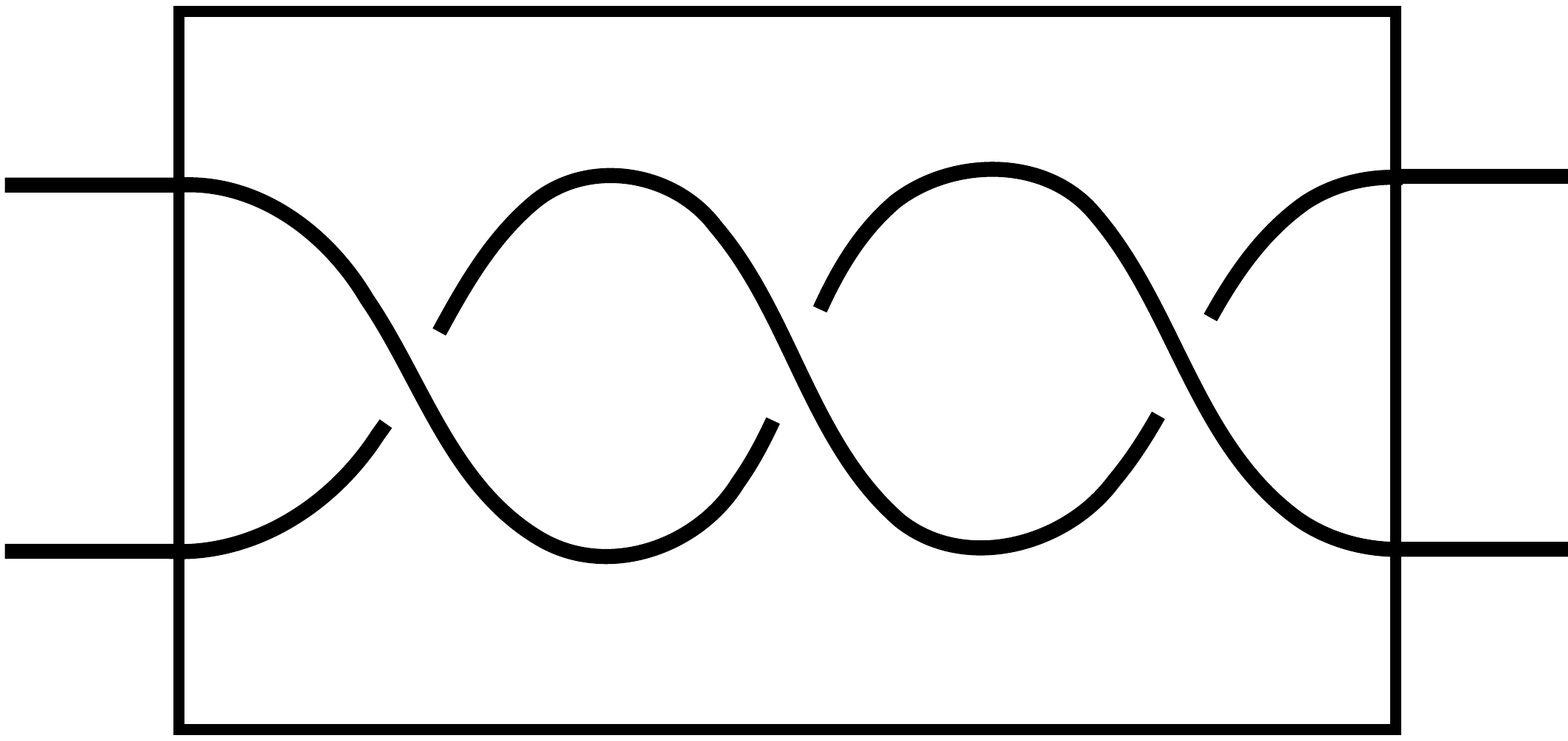}  \hspace{3em}  \includegraphics[width=3.5cm]{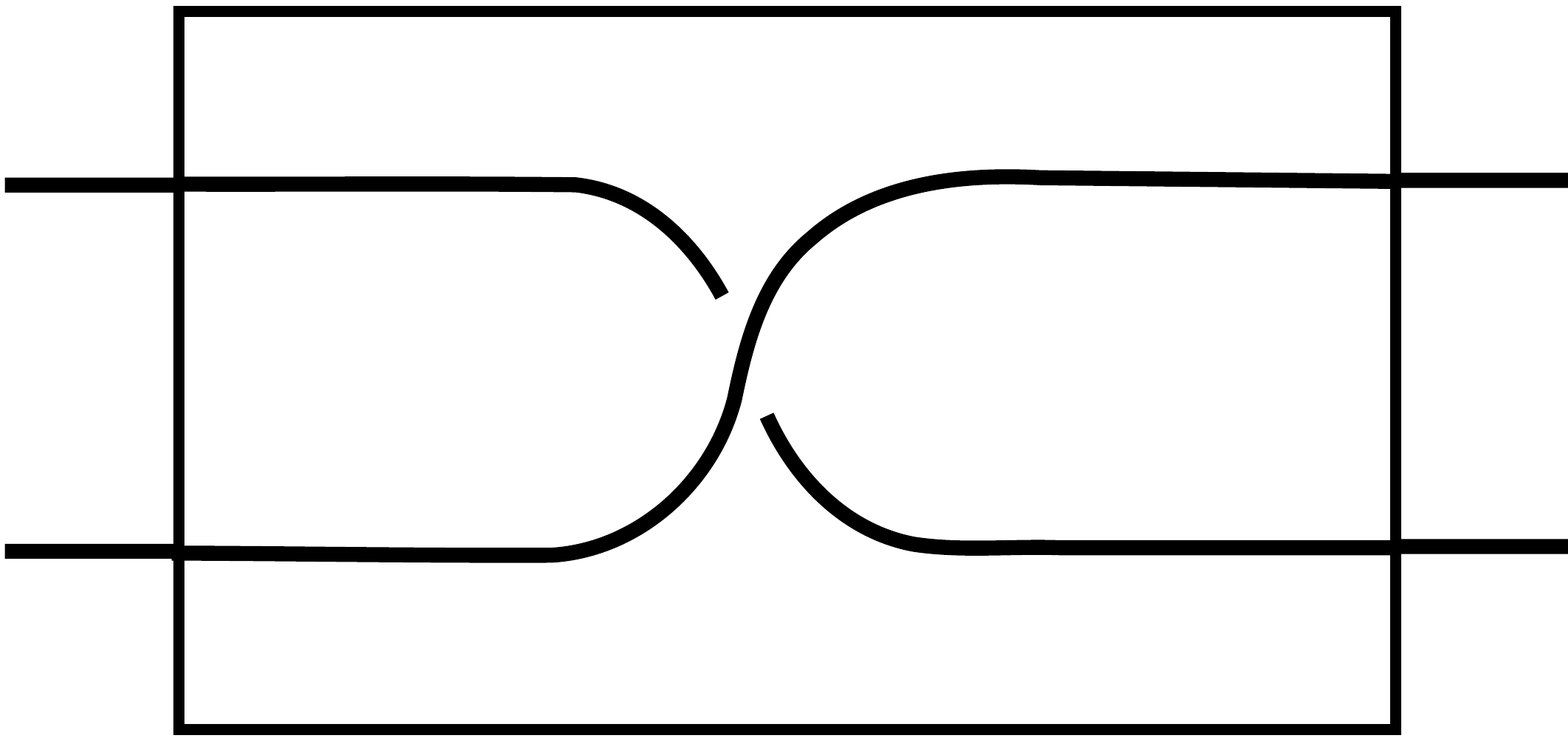}
\caption{Our twisting convention, with $a_i =3$ on the left and and $a_i =-1$ on the right.}
\label{twist example}
\end{figure}

Therefore two-bridge knot groups have two generators and one relation.  With some work, one can also show that the relator $awb^{-1}w^{-1}$ satisfies the combinatorial condition needed to apply \cite[Theorem A]{CGW14}, from which we conclude:

\begin{theorem} If $K$ is a two-bridge knot whose knot group is bi-orderable, then $\Delta_K(t)$ has at least two positive real roots.
\end{theorem}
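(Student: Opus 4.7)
The plan is to reduce the statement to the non-fibered analogue of Theorem \ref{AN} established in \cite[Theorem A]{CGW14}, which asserts (for two-generator, one-relator groups whose defining relator satisfies certain combinatorial conditions) that bi-orderability of the group forces the Alexander polynomial of the associated knot to have a positive real root.

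First, I would recall the presentation given in the excerpt: the two-bridge knot group has the form
\[ \pi_1(S^3 \setminus K_{\frac{p}{q}}) = \langle a, b \mid aw = wb \rangle, \]
where $w = b^{\epsilon_1} a^{\epsilon_2} \cdots b^{\epsilon_{q-2}} a^{\epsilon_{q-1}}$ with $\epsilon_i = (-1)^{\lfloor ip/q \rfloor}$. In particular, this presentation has exactly two generators and one relator, so the general framework of \cite{CGW14} is available provided the combinatorial hypotheses are verified.

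The main obstacle, which I would expect to be the hardest step, is checking that the relator $r = awb^{-1}w^{-1}$ satisfies the technical combinatorial conditions of \cite[Theorem A]{CGW14}. These conditions concern the interaction of the exponent-sum structure with the Magnus-style rewriting of the relator, and they are not automatic for arbitrary one-relator groups. The advantage here is that the word $w$ has a very rigid structure: it is alternating in $a$ and $b$ with signs determined by the Stern--Brocot pattern $\epsilon_i = (-1)^{\lfloor ip/q \rfloor}$. This rigidity is what makes the verification possible, and the details are carried out in \cite{CDN14}.

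Once the hypotheses of \cite[Theorem A]{CGW14} are in place, the theorem produces a positive real root $r$ of $\Delta_K(t)$ from bi-orderability of the knot group. To upgrade this to \emph{two} positive real roots, I would invoke two standard facts about the Alexander polynomial recalled earlier in the chapter: the palindromic symmetry $t^{2g}\Delta_K(1/t) = \Delta_K(t)$, and the normalization $\Delta_K(1) = \pm 1$. The symmetry implies that whenever $r > 0$ is a root, so is $1/r$; the normalization rules out $r = 1$ as a root, so $r \ne 1/r$. Hence $r$ and $1/r$ are two distinct positive real roots, completing the proof.
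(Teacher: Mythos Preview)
Your proposal is correct and follows essentially the same approach as the paper: invoke the two-generator one-relator presentation of the two-bridge knot group, defer to \cite{CDN14} for the verification that the relator $awb^{-1}w^{-1}$ meets the combinatorial hypotheses of \cite[Theorem A]{CGW14}, and conclude. The paper treats this theorem only at the level of a sketch (the paragraph preceding the statement), so your added step---using the palindromic symmetry $t^{2g}\Delta_K(1/t)=\Delta_K(t)$ together with $\Delta_K(1)=\pm 1$ to pass from one positive real root to two, exactly as in the proof of Theorem~\ref{AN}---makes explicit what the paper leaves implicit.
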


This allows us to find many more knots whose group is not bi-orderable, a number of which are non-fibred.  For example, amongst knots with 10 or fewer crossings, the following knots are non-fibered, two-bridge, and their Alexander polynomials have no positive roots; therefore their groups are not bi-orderable: 
$5_{2}$, $7_{2}$, $7_{3}$, $7_{4}$, $7_{5}$, $8_{8}$, $8_{13}$, $9_{2}$, $9_{3}$, $9_{4}$, $9_{5}$, $9_{6}$, $9_{7}$, $9_{9}$, $9_{10}$, $9_{13}$, $9_{14}$, $9_{18}$, $9_{19}$, $9_{23}$, $10_{10}$, $10_{12}$, $10_{15}$, $10_{19}$, $10_{23}$, $10_{27}$, $10_{28}$, $10_{31}$, $10_{33}$, $10_{34}$, $10_{37}$, $10_{40}$.
\end{example}

\begin{example}[Twist knots] 
\index{twist knots}
\textit{Twist knots} are a special class of two-bridge knots that appear as in Figure \ref{twist}. Restricting to twist knots on can say more:

\begin{figure}[h!]
\setlength{\unitlength}{3cm}
 \begin{picture}(1.75,1)%
    \put(0,0){\includegraphics[height=\unitlength]{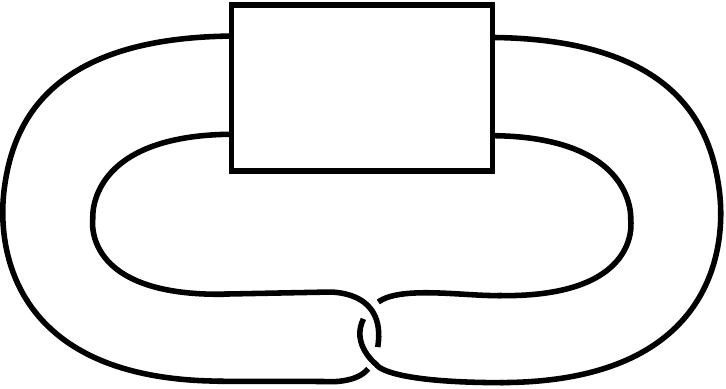}}%
    \put(0.69,0.75718257){$m$ twists}%
  \end{picture}%
\caption{The twist knot $K_m$ with $m$ positive half-twists.}
\label{twist}
\end{figure}

\begin{theorem}
\label{twist_theorem}
For each integer $m > 1$, let $K_m$ denote the twist knot with $m$ twists.  If $m$ is even, then $K_m$ has bi-orderable knot group, otherwise its group is not bi-orderable.
\end{theorem}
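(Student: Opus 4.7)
The plan is to reduce the theorem to a direct computation of the roots of the Alexander polynomial $\Delta_{K_m}(t)$, and then combine the necessary condition for two-bridge knot groups just established with the converse refinement from \cite{CDN14}.

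First I would compute $\Delta_{K_m}(t)$ explicitly, either from a Seifert form for the standard genus-one Seifert surface of $K_m$ or by a direct skein calculation. One obtains the palindromic polynomial
\[
\Delta_{K_m}(t) =
\begin{cases}
\dfrac{m}{2}\, t^2 \;-\; (m+1)\, t \;+\; \dfrac{m}{2}, & m \text{ even,}\\[6pt]
\dfrac{m+1}{2}\, t^2 \;-\; m\, t \;+\; \dfrac{m+1}{2}, & m \text{ odd,}
\end{cases}
\]
consistent with the known cases $\Delta_{4_1}(t)=t^2-3t+1$ for $m=2$ and $\Delta_{5_2}(t)=2t^2-3t+2$ for $m=3$. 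In the even case the discriminant equals $(m+1)^2-m^2 = 2m+1>0$, and since the two roots have product $1$ and sum $2(m+1)/m$, they are both positive real numbers (and reciprocals of each other). In the odd case the discriminant equals $m^2-(m+1)^2=-(2m+1)<0$, so the polynomial has no real roots whatsoever.

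For odd $m$ the theorem then follows immediately from the two-bridge result stated just above: if $\pi_1(S^3\setminus K_m)$ were bi-orderable, $\Delta_{K_m}$ would have at least two positive real roots, contradicting the computation. For even $m$ what is needed is the converse direction, namely that the existence of positive real roots forces bi-orderability of $\pi_1(S^3\setminus K_m)$. Here I would appeal to the biconditional refinement of the two-bridge theorem given in \cite{CDN14}, which applies provided the defining relator $awb^{-1}w^{-1}$ satisfies the technical combinatorial hypothesis coming from \cite[Theorem A]{CGW14}. The main obstacle of the proof is therefore to verify this combinatorial hypothesis for the specific Schubert word $w = b^{\epsilon_1}a^{\epsilon_2}\cdots b^{\epsilon_{m-2}}a^{\epsilon_{m-1}}$ associated with $K_m$, namely to show that when $m$ is even, the sign sequence $\epsilon_1,\ldots,\epsilon_{m-1}$ determined by $\epsilon_i=(-1)^{\lfloor ip/q\rfloor}$ for the Schubert fraction $p/q$ of $K_m$ has the alternation/balance property that the hypothesis demands. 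Once this is confirmed, the two positive real roots produced above supply the required bi-ordering of $\pi_1(S^3\setminus K_m)$ and complete the proof.
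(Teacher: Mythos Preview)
The paper does not actually give a proof of this theorem; it is stated without argument and the reader is referred to \cite{CDN14} for details. Your outline is consistent with the strategy one finds there: compute $\Delta_{K_m}$ explicitly, observe that its two roots are positive real precisely when $m$ is even, use the two-bridge necessary condition stated just before the theorem to rule out bi-orderability for odd $m$, and in the even case invoke the sufficiency direction of \cite{CGW14} after checking its combinatorial hypothesis on the relator. So your plan is essentially the intended one.

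One correction of detail: the Schubert word $w$ for the two-bridge knot $K_{p/q}$ has $q-1$ letters, and for the twist knot $K_m$ one has $q=2m\pm 1$ (depending on convention), not $q=m$. So the sign sequence $\epsilon_1,\ldots,\epsilon_{q-1}$ whose combinatorial properties you must verify has roughly $2m$ terms, not $m-1$ as you wrote. This does not affect the logic of your argument, but it does mean the verification you describe as the ``main obstacle'' is a computation with a word of a different shape than you indicated.
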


Using Theorem \ref{twist_theorem} one finds an additional 4 knots with 12 or fewer crossings which are non-fibered and have bi-orderable group: $6_1$, $8_1$, $10_1$, $12a_{0803}$.  
\end{example}

At present, very little is known about non-fibred knots that are not two-bridge.  We end this section with one such example.

\begin{example}[Other non-fibered knots with bi-orderable knot group]
The knot $10_{13}$ is also known to have a bi-orderable knot group, though it is not two-bridge.  Its group must be analyzed directly using the theorems of \cite{CGW14}, then one applies the fact that its Alexander polynomial $\Delta_{10_{13}} = 2-13t+ 23t^2-13t^3+ 2t^4$ has only positive real roots.

\begin{figure}[h!]
\includegraphics[scale=0.2]{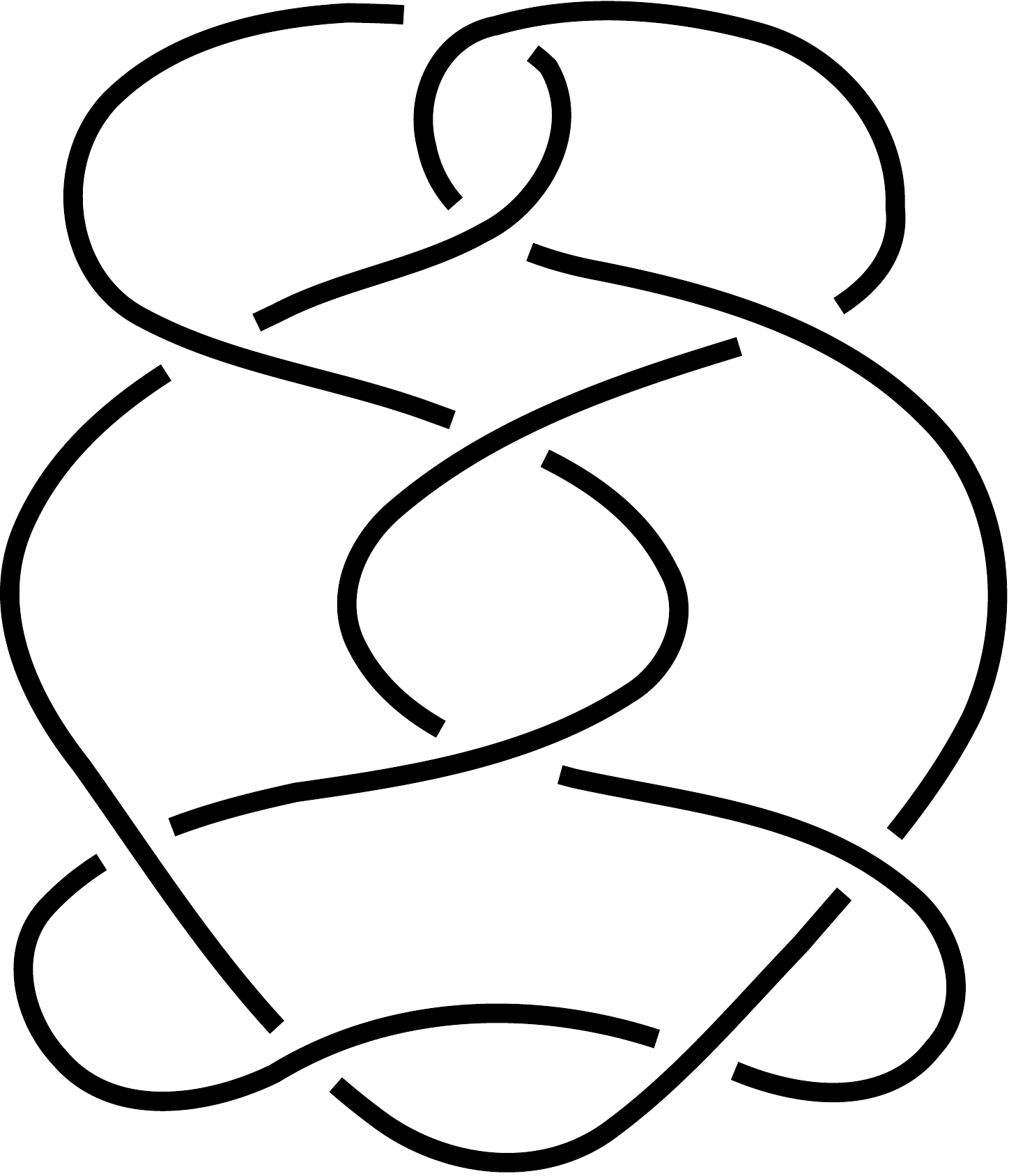}
\caption{The knot $10_{13}$.}
\end{figure}
\end{example}

\section{Crossing changes: a theorem of Smythe}

One of the early applications of orderable groups to knot theory appeared in a 1967 paper by N. Smythe \cite{smythe67}.  We have been depicting knots in $\R^3$ by drawing projections in the plane which have only a finite set of transverse double points, and are otherwise nonsingular.  At each double point, one strand is ``over" and the other ``under" referring to its co-ordinate in the third dimension.  This is traditionally depicted by putting a little gap in the understrand.

A {\em crossing change} consists of reversing the situation, so that the previously ``over" strand becomes the ``under'' and vice-versa.  It has long been known (for example, see \cite{Alexander28} p. 299) that any knot projection will become a projection of an unknot after making some crossing changes.
One method of doing this is the process of ``laying down a rope."  First choose a nonsingular point of the projection as basepoint and orient the curve.  Then, starting at the basepoint and travelling in the direction of the orientation, each time a crossing is encountered one makes the first visit to that crossing be ``under'' by changing the crossing if necessary.  The process is illustrated in Figure \ref{annulus} in which the knot on the left becomes the one on the right after changing crossings by laying the rope, starting at the base point indicated by the dot.   

\begin{figure}[h!]
\centering
\begin{minipage}[b]{0.45\linewidth}
  \includegraphics[scale=0.35]{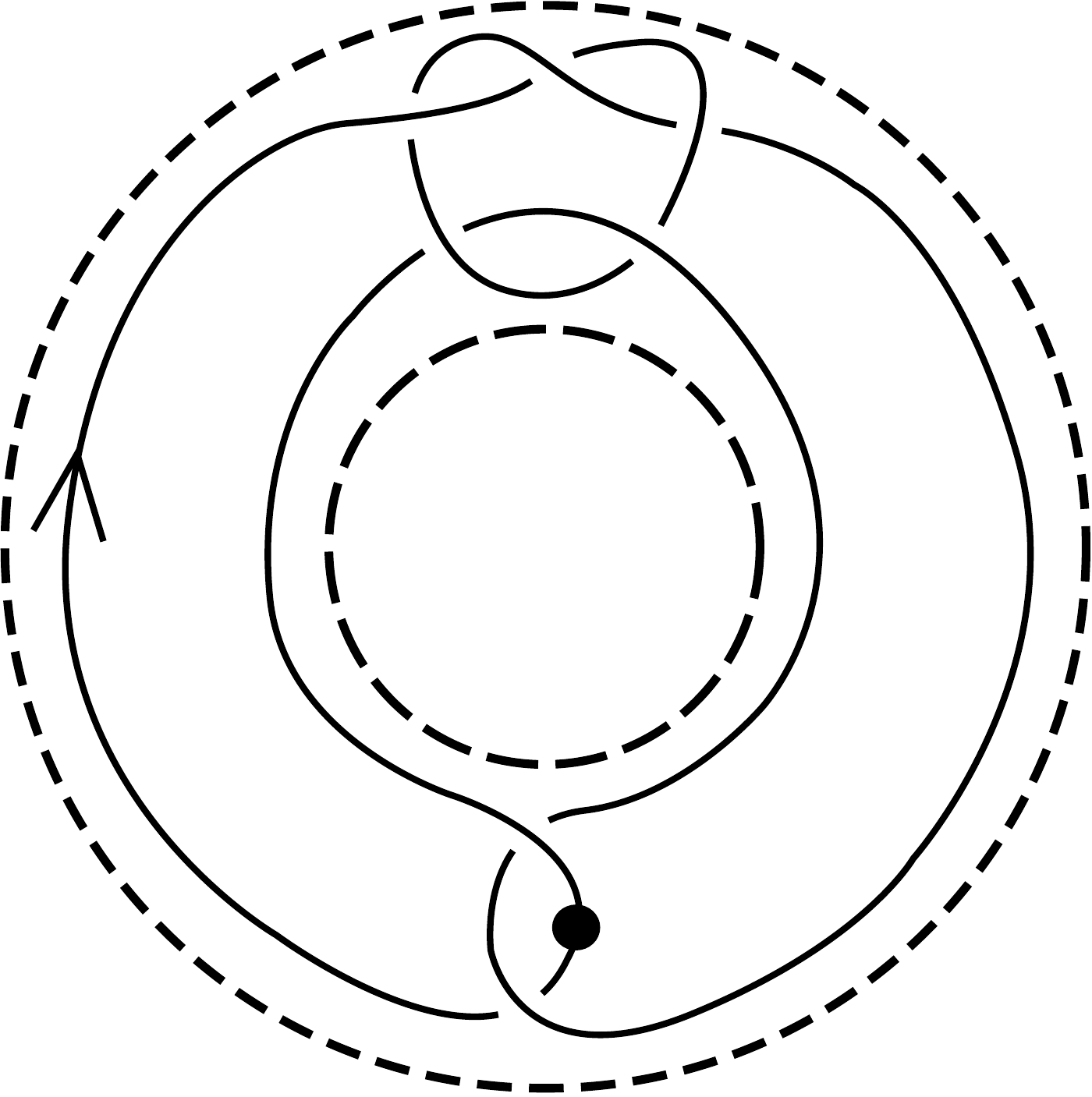}
\end{minipage}
\quad
\begin{minipage}[b]{0.45\linewidth}
  \includegraphics[scale=0.35]{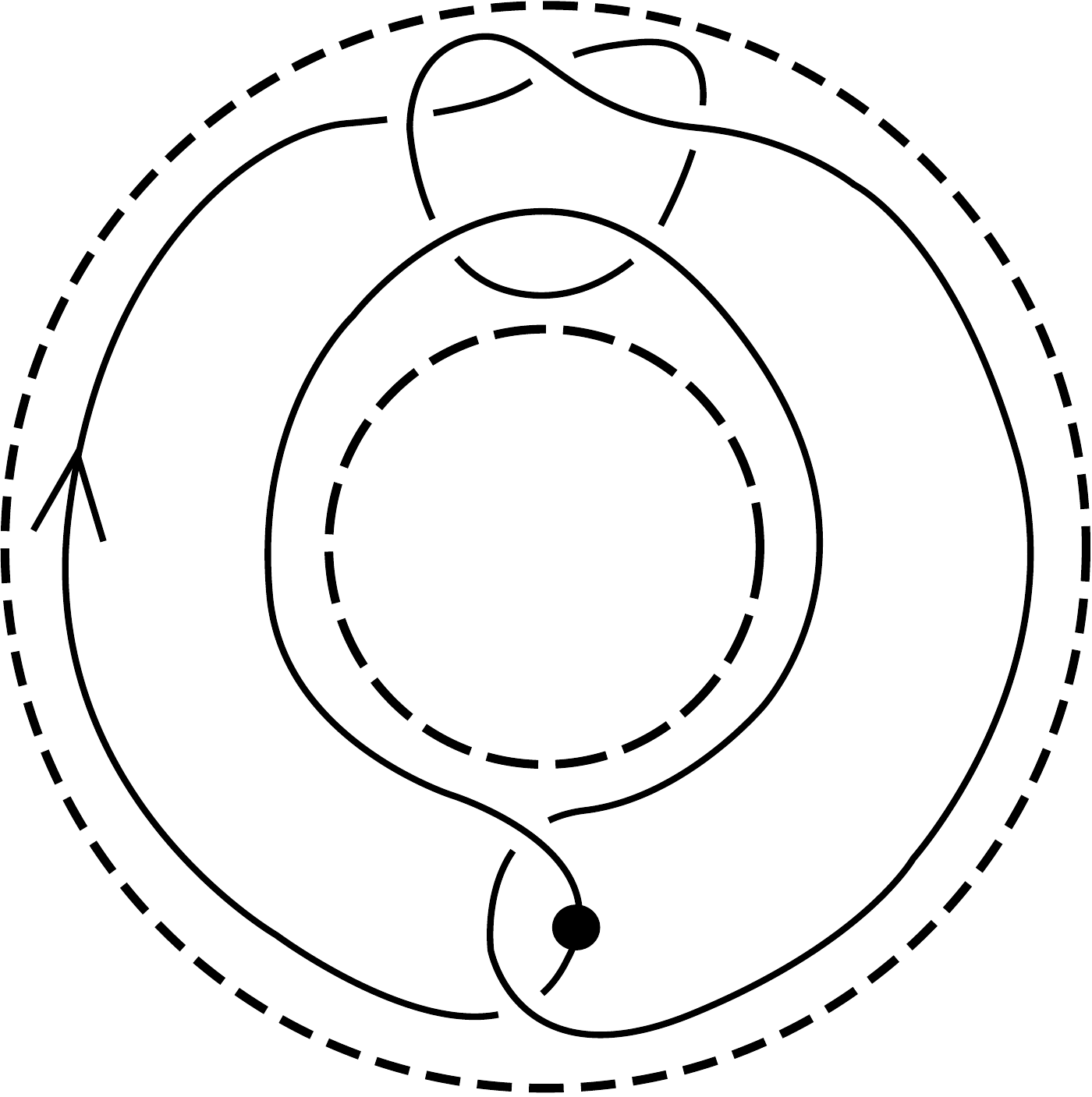}
\end{minipage}
\caption{A knot before and after crossing changes}
\label{annulus}
\end{figure}   
   
%
%
%

Note that the knot depicted on the right in Figure \ref{annulus} is indeed unknotted.  However, suppose that instead of the knots being in $\R^2 \times \R$ and projecting onto the first coordinate, we regard the knots as being in a thickened annulus $\Sigma \times \R$, where $\Sigma$ is the region between the dashed circles depicted.  Then the knot depicted on the right is {\em nontrivial} in $\Sigma \times \R$, in the sense that it does not bound a nonsingular disk in that space.  That is, the laying down the rope trick does {\em not} work in this setting.  Nevertheless, there is a method of changing crossings to trivialize knots in the more general setting of a ``thickened surface''  
$\Sigma \times \R$ and the obvious projection $p : \Sigma \times \R \to \Sigma \times \{0\}$.  If there is any hope of doing this, we must assume the knot is homotopically trivial, as changing the crossing can be realized by a homotopy and an unknot, which by definition bounds a disk, is contractible in  
$\Sigma \times \R$.

\begin{theorem}[Smythe \cite{smythe67}]  Suppose $\Sigma$ is a surface (orientable or not, and with or without boundary) and that $K$ is a knot in the interior of $\Sigma \times \R$ which projects to a curve in 
$\Sigma \times \{0\}$ with only transverse double points, otherwise nonsingular.  Further assume that $K$ is contractible in $\Sigma \times \R$.  Then one may change some of the crossings in the projection to obtain a knot $K'$ which is trivial, in the sense that it bounds a nonsingular disk in $\Sigma \times \R$. 
\end{theorem}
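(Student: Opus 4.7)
The plan is to invoke Farrell's theorem to convert a left-ordering of $\pi_1(\Sigma)$ into an embedding $f:\tilde\Sigma\hookrightarrow\Sigma\times\R$ that commutes with the projection, and then construct $K'$ within a thin slab about the embedded surface $f(\tilde\Sigma)$ by combining $f$'s height function (to control the ``fake'' double points of $p\circ K$) with classical laying-the-rope heights (to control the ``genuine'' ones).  Left-orderability of $\pi_1(\Sigma)$ is furnished for every surface by Theorem \ref{BOsurface} together with Problem \ref{klein}, with the sole exception $\pi_1(P^2)=\Z/2$; in that case the universal cover is $S^2$ and a direct argument in $S^2\times\R$ suffices, so I will concentrate on the main case.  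Throughout, $p:\Sigma\times\R\to\Sigma$ denotes the coordinate projection, $q:\tilde\Sigma\to\Sigma$ the universal cover, and $\tilde\pi:\tilde\Sigma\times\R\to\Sigma\times\R$ the induced covering.

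First I would set $\bar\gamma=p\circ K$.  Because $K$ is contractible and $\Sigma\times\R\simeq\Sigma$, $\bar\gamma$ is null-homotopic in $\Sigma$ and lifts to a closed loop $\tilde\gamma:[0,1]\to\tilde\Sigma$.  Call a double point of $\bar\gamma$ at parameters $s<t$ \emph{genuine} when $\tilde\gamma(s)=\tilde\gamma(t)$ and \emph{fake} otherwise.  Writing $f(y)=(q(y),h(y))$ for some $h:\tilde\Sigma\to\R$, injectivity of $f$ guarantees that at every fake double point the two heights $h(\tilde\gamma(s))$ and $h(\tilde\gamma(t))$ are distinct.

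Now I would define the knot $K'$ over $\bar\gamma$ by the height function
\[
z(t) \;=\; h(\tilde\gamma(t)) + \varepsilon\, r(t),
\]
where $r:[0,1]\to\R$ is smooth with $r(0)=r(1)$ and $r(s)<r(t)$ at every genuine double point $(s,t)$---a smoothed version of the classical laying-the-rope height, realizable by making $r$ monotonic on $[0,1-\delta]$ and dropping quickly to $0$ on $[1-\delta,1]$ with the basepoint placed in a nonsingular arc of $\bar\gamma$.  Setting $K'(t)=(\bar\gamma(t),z(t))$, for $\varepsilon$ small one checks that $K'$ is embedded: the dominant term $h$ separates strands at fake double points and the perturbation $\varepsilon r$ separates them at genuine ones.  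Since $K$ and $K'$ share the projection $\bar\gamma$, $K'$ is obtained from $K$ by (some set of) crossing changes.

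The main obstacle is exhibiting a nonsingular disk in $\Sigma\times\R$ bounded by $K'$.  The lift $\tilde K'(t)=(\tilde\gamma(t),z(t))$ lies in an $\varepsilon$-slab about the graph of $h$ in $\tilde\Sigma\times\R$.  Inside this slab---itself homeomorphic to $\tilde\Sigma\times\R\cong\R^3$---$\tilde K'$ is exactly the classical laying-the-rope knot with projection $\tilde\gamma$, hence is unknotted and bounds an embedded disk $\tilde D$ inside the slab.  To conclude, one must show the slab projects injectively to $\Sigma\times\R$: the compact set $\tilde\gamma$ meets each $\pi_1(\Sigma)$-orbit in finitely many points, and by injectivity of $f$ together with compactness, the set of heights $\{h(g\tilde\gamma(t)):g\in\pi_1(\Sigma),\ t\in[0,1]\}$ has a uniform positive gap $\eta>0$ at each point of $\bar\gamma$, so choosing $\varepsilon<\eta/2$ makes the slab disjoint from all its translates under nontrivial deck transformations.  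Hence $\tilde\pi(\tilde D)$ is the desired nonsingular disk in $\Sigma\times\R$ bounded by $K'$.
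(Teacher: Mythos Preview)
Your route via Farrell's theorem is a pleasant repackaging of the paper's argument, which instead works directly with a left-ordering $\prec$ of $\pi_1(\Sigma)$: it lifts $K$ to the full family $\{\tilde K_u\}_{u\in\pi_1(\Sigma)}$ in $\tilde\Sigma\times\R$, and at a crossing involving distinct lifts $\tilde K_u,\tilde K_v$ declares $\tilde K_v$ to lie above iff $u\prec v$, while self-crossings of a single lift are handled by laying-the-rope.  Your height $h\circ\tilde\gamma$ accomplishes the same layering.  The main strategic difference is that the paper makes no attempt to have the spanning disk project injectively: it simply projects the disk bounded by $\tilde K_1$, obtains a map of a disk into $\Sigma\times\R$ that is embedded along its boundary $K'$, and invokes Dehn's lemma to replace it by an embedded disk.

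Your attempt to bypass Dehn's lemma has a real gap.  The uniform separation $\eta$ you establish lives only over the curve $\bar\gamma$, whereas the disk $\tilde D$ sits over a compact two-dimensional region $R\subset\tilde\Sigma$; injectivity of $\tilde\pi|_{\tilde D}$ requires $|h(y)-h(gy)|\ge 2C\varepsilon$ for every $y\in R$ and every nontrivial $g$ with $gR\cap R\ne\emptyset$.  Relatedly, the sentence ``makes the slab disjoint from all its translates'' is false as written: over all of $\tilde\Sigma$ there is generally no uniform positive lower bound on $|h(y)-h(gy)|$.  The repair is to reverse the order of choices.  In slab coordinates $(y,s)\mapsto(y,h(y)+\varepsilon s)$ the lifted knot becomes $(\tilde\gamma(t),r(t))$, which is independent of $\varepsilon$; so one first finds its spanning disk $D_0$ and the compact shadow $R\subset\tilde\Sigma$ once and for all, then uses proper discontinuity (only finitely many $g$ satisfy $gR\cap R\ne\emptyset$) together with compactness of $R$ and injectivity of $(q,h)$ to obtain $\eta=\min_{y\in R,\,g\ne 1}|h(y)-h(gy)|>0$, and only then chooses $\varepsilon<\eta/(2C)$.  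With that reordering your argument goes through and indeed avoids Dehn's lemma; alternatively you can simply project $\tilde D$ and quote Dehn's lemma as the paper does.
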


\begin{proof}  The case $\Sigma \cong S^2$ is clear: just find a point $x \in S^2$ so that $K$ is disjoint from the line $\{x\} \times \R$, and remove that line to bring the problem back to the standard 
$\R^2 \times \R$ case which is already known.  The case $\Sigma$ being the projective plane ${\R}P^2$ is dealt with separately in \cite{smythe67} and for simplicity we will ignore that case.  In case $\Sigma$ has boundary, remove the boundary and call the interior again $\Sigma.$

In all the remaining cases, the universal cover $\widetilde{\Sigma}$ is homeomorphic with the plane $\R^2$ and the fundamental group $\pi_1(\Sigma)$ is left-orderable, as we have seen in Chapter \ref{chapter free and surface} (in fact in most cases it is bi-orderable, but we will not need this).  Note that $\widetilde{\Sigma} \times \R$ is the universal cover of $\Sigma \times \R$.  Choosing a basepoint at a regular point of $K$ and orienting $K$, the knot $K$ lifts to a knot (rather than a path) in 
$\widetilde{\Sigma} \times \R$ because it represents a null-homotopic loop.  In fact, it lifts to infinitely many knots $\tilde{K}_u \subset \widetilde{\Sigma} \times \R$, parametrized by $u \in \pi_1(\Sigma)$, which we regard as the group of deck transformations of $\widetilde{\Sigma} \times \R$.  These lifts inherit basepoints and orientations from those of $K$.  See Figure \ref{covering-before} for an illustration.

\begin{figure}[h!]
 \begin{picture}(500,90)%
    \put(-50,0){\includegraphics[scale=.9]{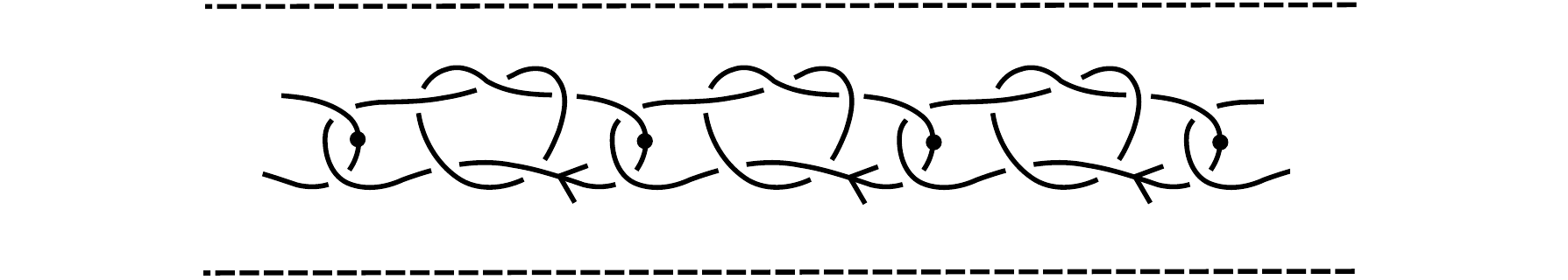}}%
    \put(90,10){$\widetilde{k}_{t^{-1}}$}%
 \put(175,10){$\widetilde{k}_{1}$}
 \put(260,10){$\widetilde{k}_{t}$}
  \end{picture}%
\caption{The lifts of $K$ in $\widehat{\Sigma} \times \R$, with $\pi_1(\Sigma) = \Z = \langle t \rangle$}
\label{covering-before}
\end{figure}

Note that under the projection $\widetilde{\Sigma} \times \R \to \widetilde{\Sigma} \times \{0\}$ the family 
$\tilde K_u, u \in \pi_1(\Sigma) $ projects to $\widetilde{\Sigma} \times \{0\}$ with simple double points, which are isolated, though infinite in number if there are any.   Now let $\prec$ be a left-invariant ordering of $\pi_1(\Sigma)$, and change the crossings of the projection of the $\tilde{K}_u$ according to the following algorithm:

Case 1.  If the crossing involves the projection of two distinct lifts $\tilde{K}_u$ and $\tilde{K}_v$, change it if necessary so that $\tilde{K}_v$ is above $\tilde{K}_u$ (\textit{i.e.}, has greater $\R$ coordinate) if and only if $u \prec v$.

Case 2.  If the crossing involves strands from the same lift, then make the strand lower if it is the first encounter of that crossing, when proceeding along the knot in the direction of the orientation, starting from the basepoint.
 
 This procedure is illustrated in Figure \ref{covering-after}.  It is easy to see, using the left-invariance of 
 $\prec$ that these crossing changes are equivariant with respect to the covering translations.  Now, letting $1$ denote, as usual, the identity element of $ \pi_1(\Sigma)$, we see that $\tilde{K}_1$ is above all the curves $\tilde{K}_u$ with 
 $u \prec 1$ and below those with $1 \prec u$.  The lifts of $K$ have become ``layered'' in 
 $\widetilde{\Sigma} \times \R$ because of the Case 1 moves.  In particular, by an isotopy of 
 $\widetilde{\Sigma} \times \R$ which preserves projection onto  $\widetilde{\Sigma}$ we can regard
 $\tilde{K}_1$ as lying in the slab
$\widetilde{\Sigma} \times (-1, 1)$ while all the other  $\tilde{K}_u$ are outside this slab, either above or below.  Moreover, $\tilde{K}_1$ has become unknotted in $\widetilde{\Sigma} \times (-1, 1)$, because Case 2 is just the ``laying of the rope algorithm'' for changing self-crossings of the projection of 
$\tilde{K}_1$ to $\widetilde{\Sigma} \times \{0\}$, recalling that $\widetilde{\Sigma} \cong \R^2$.  This is shown in Figure \ref{covering-after}.

\begin{figure}[h!]
\begin{picture}(500,90)%
    \put(-50,0){\includegraphics[scale=.9]{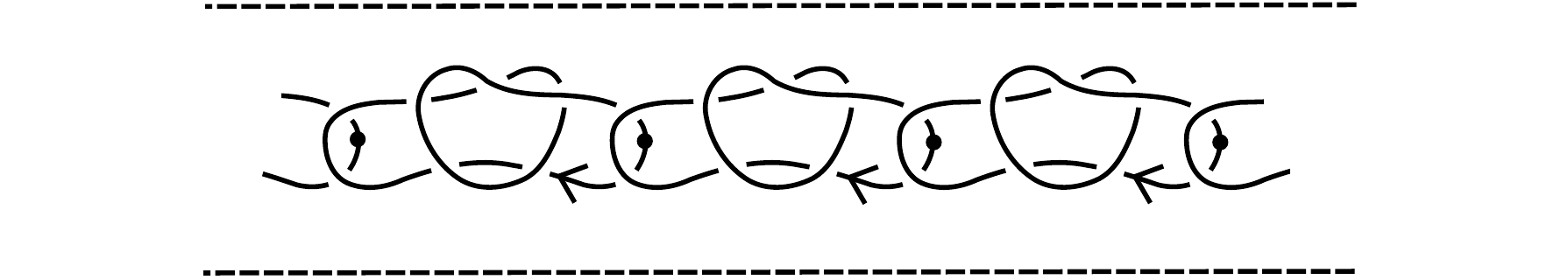}}%
  \end{picture}%
\caption{After applying the crossing-change algorithm.}
\label{covering-after}
\end{figure}

That is, $\tilde{K}_1$ bounds a nonsingular disk 
$D \subset \widetilde{\Sigma} \times (-1, 1)$.  By reversing the isotopy mentioned above, so that the lifts 
$\tilde{K}_u$ are again equivariant under covering translations, its image, which we'll again call $D$, remains a disk disjoint from all the $\tilde{K}_u, u \ne 1$.  Now consider the image $D'$ of $D$ under the covering projection $\widetilde{\Sigma} \times \R \to \Sigma \times \R$.  It may be that $D'$ is a singular disk, but since $D$ is disjoint from the lifts $\tilde{K}_u, u \ne 1$ all the singularities must be in the interior of $D'$ and $K$ is the nonsingular boundary of $D'$.  It follows from the theorem known as Dehn's lemma \cite{Papakyriakopoulos57} that $K$ bounds a nonsingular disk in $\Sigma \times \R$, so it is unknotted.
\end{proof}

\chapter{Three-dimensional manifolds}
\label{three manifolds chapter}

We saw in Chapter \ref{chapter free and surface} that surfaces, that is to say two-dimensional manifolds, have 
bi-orderable fundamental groups, with the exception of the Klein bottle (whose group is only left-orderable) and the real projective space.

Not surprisingly, in three dimensions there is a much greater variety of manifolds and also of groups which are their fundamental groups.  We recall that a 3-manifold \index{three-dimensional manifold} is a metric space each of whose points has a neighbourhood homeomorphic to $\R^3$, Euclidean 3-space, or else the closed half-space $\R^3_+ = \{(x, y, z) : z \ge 0\}$ for 3-manifolds with boundary.  We may assume our manifolds are smooth, meaning that there is a covering by charts homeomorphic with $\R^3$ or $\R^3_+$ such that the transition functions between charts are infinitely differentiable.  This enables us to talk about smooth surfaces in such a manifold and avoid pathologies such as the Alexander horned spheres and other ``wild'' examples.  We will also make the assumption that all the $3$-manifolds under consideration, other than knot and link complements, are compact.

A 3-manifold $M$ is {\em irreducible} \index{irreducible 3-manifold} if every smooth 2-sphere in $M$ bounds a 3-dimensional ball in $M$.  For example, we already saw in the proof of Theorem \ref{LI} that knot complements are irreducible.   There is a notion of connected sum of 3-manifolds, analogous to the case of surfaces, and a 3-manifold is called ``prime'' if it is not the connected sum of two manifolds, neither being $S^3$.  Irreducible 3-manifolds are prime, but there is one 3-manifold, namely $S^2 \times S^1$, which is prime but not irreducible.
It is a theorem of Milnor \cite{Milnor62} that each compact 3-manifold is uniquely (up to the ordering of the factors) expressible as a connected sum of prime manifolds.

\section{Ordering 3-manifold groups}

We will begin with a result of \cite{BRW05}, which might be considered a fundamental theorem of left-orderability of 3-manifold groups.

\begin{theorem} \label{fundamental}  Suppose $M$ is an orientable, irreducible 3-manifold, possibly noncompact and possibly with boundary.  Then $\pi_1(M)$ is left-orderable if and only if there exists a nontrivial left-orderable group $L$ and a surjective homomorphism $\phi: \pi_1(M) \to L$.
\end{theorem}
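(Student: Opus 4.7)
One direction is trivial: if $\pi_1(M)$ is left-orderable, take $L = \pi_1(M)$ and $\phi$ the identity. For the converse, assume a surjection $\phi : \pi_1(M) \to L$ onto a nontrivial left-orderable group $L$, and note that $L$ is infinite by Proposition \ref{torsionfree}. My plan is to verify the Burns--Hale criterion (Theorem \ref{burnshale}) for $\pi_1(M)$: namely, that every nontrivial finitely generated subgroup $H \le \pi_1(M)$ maps nontrivially to some left-orderable group. Fix such an $H$ and split on whether $H$ has finite or infinite index in $\pi_1(M)$.

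If $[\pi_1(M):H]$ is finite, then $\pi_1(M)$ is a finite union of left cosets of $H$, so $L = \phi(\pi_1(M))$ is a finite union of left cosets of $\phi(H)$; thus $\phi(H)$ has finite index in the infinite group $L$, hence is nontrivial, and $\phi|_H : H \to L$ is the desired homomorphism.

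If $[\pi_1(M):H]$ is infinite, let $p : \widetilde{M} \to M$ be the covering with $p_\ast \pi_1(\widetilde{M}) = H$; then $\widetilde{M}$ is noncompact. Since $M$ is orientable and irreducible, irreducibility passes to the cover (this is the inheritance fact used in the proof of Theorem \ref{LI}), so $\widetilde{M}$ is itself orientable and irreducible. Because $H$ is finitely generated, Scott's core theorem supplies a compact connected 3-submanifold $C \subset \widetilde{M}$ with $\pi_1(C) \cong \pi_1(\widetilde{M}) \cong H$. Using irreducibility of $\widetilde{M}$, I may cap off any spherical components of $\partial C$ with 3-balls, replacing $C$ by an enlarged core (exactly as in the proof of Theorem \ref{LI}) to arrange that every component of $\partial C$ has positive genus. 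Moreover $\partial C \neq \emptyset$, for otherwise $C$ would be simultaneously compact, open, and closed in (the interior of) the connected noncompact manifold $\widetilde{M}$, forcing $C = \widetilde{M}$.

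At this point the Euler-characteristic computation of Lemma \ref{rank lemma} applies verbatim: the orientable double $2C$ is a closed 3-manifold, so $\chi(2C)=0$, and from $\chi(2C)=2\chi(C)-\chi(\partial C)$ together with $\chi(\partial C)\le 0$ (no sphere components) I conclude $b_1(C;\mathbb{Q}) \ge 1$. Composing the Hurewicz map with any surjection $H_1(C;\Z) \twoheadrightarrow \Z$ yields a nontrivial homomorphism $H \cong \pi_1(C) \to \Z$ to the left-orderable group $\Z$. Burns--Hale then delivers a left-ordering of $\pi_1(M)$. The principal obstacle is the topology of Case 2, specifically arranging that the compact core has only positive-genus boundary; this rests on inheritance of irreducibility under covers, which is the least elementary ingredient (a consequence of the sphere theorem). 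Everything else parallels the locally indicable argument for knot groups already given in Theorem \ref{LI}, with the Burns--Hale reduction taking the place of the direct Hurewicz argument used there.
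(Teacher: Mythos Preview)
Your proof is correct and follows precisely the adaptation the paper intends: the paper's ``proof'' of this theorem is in fact a problem directing the reader to modify the argument of Theorem~\ref{LI} (knot groups are locally indicable), and you have carried out exactly that modification, replacing the direct local-indicability conclusion with the Burns--Hale criterion so that in the finite-index case the target can be $L$ rather than $\Z$. The infinite-index case is verbatim the Scott-core/Euler-characteristic argument already given there.
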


\begin{problem}  Consult the proof of Theorem \ref{LI} in Section \ref{section LI} and adapt it to prove 
Theorem \ref{fundamental}.  Show that a similar argument proves the next theorem as well.
\end{problem}

\begin{theorem}  If an irreducible 3-manifold $M$ has infinite first homology, then its fundamental group is locally indicable, hence left-orderable.
\end{theorem}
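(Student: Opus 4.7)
The plan is to mimic the two-case structure of the proof of Theorem \ref{LI}. Let $G$ be any nontrivial finitely generated subgroup of $\pi_1(M)$; we must exhibit a surjection $G \twoheadrightarrow \Z$. Since $H_1(M)$ is infinite and, for an irreducible 3-manifold, one may produce a surjection $\phi : \pi_1(M) \to \Z$ from a $\Z$-quotient of the abelianization, this $\phi$ will handle the easy case and a covering-space argument will handle the hard one.

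\emph{Case 1: $G$ has finite index in $\pi_1(M)$.} Here $\phi(G)$ is a subgroup of finite index in $\Z$, hence nonzero, hence infinite cyclic. The restriction $\phi|_G : G \to \Z$ is therefore a nontrivial homomorphism, which is what we need.

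\emph{Case 2: $G$ has infinite index in $\pi_1(M)$.} Take the covering $p : \widetilde M \to M$ with $p_\ast \pi_1(\widetilde M) = G$. Because the index is infinite, $\widetilde M$ is noncompact; because irreducibility is inherited by coverings, $\widetilde M$ is irreducible. Since $G$ is finitely generated, Scott's compact core theorem gives a compact connected submanifold $C \subset \widetilde M$ with $i_\ast : \pi_1(C) \xrightarrow{\cong} \pi_1(\widetilde M) = G$. A closed connected $3$-submanifold of a connected noncompact $3$-manifold is both open and closed, so $\partial C \neq \emptyset$. Using irreducibility of $\widetilde M$ exactly as in the proof of Theorem \ref{LI}, any $2$-sphere component of $\partial C$ bounds a ball in $\widetilde M$ disjoint from the interior of $C$ (otherwise $C$ would be contained in that ball, contradicting $i_\ast$ being nontrivial), and capping it off enlarges $C$ to a new compact core whose boundary has no sphere components.

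After this reduction, every component of $\partial C$ has positive genus, so $\chi(\partial C) \le 0$. The closed-manifold identity $0 = \chi(2C) = 2\chi(C) - \chi(\partial C)$ then gives $\chi(C) \le 0$, which, written as an alternating sum of ranks, forces $\mathrm{rank}\, H_1(C;\Q) \ge 1$, verbatim as in Lemma \ref{rank lemma}. Compose the Hurewicz map with a surjection $H_1(C) \twoheadrightarrow \Z$ to obtain the required nontrivial homomorphism $G \cong \pi_1(C) \to H_1(C) \to \Z$.

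\emph{Main obstacle.} The proof is essentially a transcription of the knot-group argument; the only nontrivial ingredient is producing the compact core $C$ with boundary having no sphere components, which relies on Scott's theorem together with irreducibility. The one point that deserves care, and which I expect to be the real subtlety if the hypotheses are pushed, is ensuring that the initial surjection $\pi_1(M) \twoheadrightarrow \Z$ exists --- for compact $M$ this is immediate from $H_1(M)$ being finitely generated and infinite, but in general one uses that an irreducible $3$-manifold with nonzero $H_1$ admits such a map to $\Z$ (which is also what is needed to make Case 1 go through verbatim).
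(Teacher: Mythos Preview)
Your proposal is correct and follows essentially the same route the paper intends: the paper's ``proof'' of this theorem is the instruction to adapt the argument for Theorem~\ref{LI} (knot groups are locally indicable), and your two-case breakdown via Scott's compact core, capping off sphere boundary components using irreducibility, and Lemma~\ref{rank lemma} is exactly that adaptation. Your closing caveat about the existence of a surjection $\pi_1(M)\to\Z$ is well placed; in the paper's standing convention $M$ is compact, so $H_1(M)$ is finitely generated and the surjection is immediate.
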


We can also show that some classes of reducible manifolds have locally indicable fundamental group, but for this we first need to to prepare a preliminary result.

\begin{problem}
\label{LI problem}
The goal of this exercise is to show that the free product of locally indicable groups is locally indicable, following the appendix of \cite{Higman40}.  First, use Problem \ref{LI extension} to show that the product $G \times H$ of two locally indicable groups is locally indicable.  Combine this results with the short exact sequence\footnote{That the kernel of this sequence is a free group follows from the Kurosh subgroup theorem.}
\[ 1 \rightarrow F \rightarrow G * H \rightarrow G \times H \rightarrow 1
\]
where $F$ is a free group, to show that if $G$ and $H$ are locally indicable, then so is $G * H$.
\end{problem}

\begin{corollary}  If $L$ is a link, that is a nonempty collection of disjoint knots in $\R^3$ or $S^3$, then the link group $\pi_1(S^3 \setminus L)$ is locally indicable. 
\end{corollary}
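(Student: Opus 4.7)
The plan is to combine Theorem \ref{LI} (and its generalization to irreducible 3-manifolds with infinite first homology) with Problem \ref{LI problem} via an induction on the number of components of $L$. First I would separate the two essentially different cases based on whether $L$ is split. Recall that $L$ is called \emph{split} if there is an embedded 2-sphere in $S^3 \setminus L$ that separates $L$ into two nonempty sub-links; otherwise $L$ is non-split. The first homology $H_1(S^3 \setminus L)$ is free abelian of rank equal to the number of components of $L$, so it is always infinite.

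If $L$ is non-split, I claim that $S^3 \setminus L$ is irreducible. Indeed, any smoothly embedded 2-sphere $\Sigma \subset S^3 \setminus L$ bounds two 3-balls in $S^3$ (Alexander's theorem); if both balls met $L$, then $\Sigma$ would split $L$, contradicting the assumption. Hence one of these balls lies in $S^3 \setminus L$, showing irreducibility. Applying the theorem asserting that irreducible 3-manifolds with infinite $H_1$ have locally indicable fundamental group, we conclude that $\pi_1(S^3 \setminus L)$ is locally indicable in this case. (The base case of a single knot is subsumed here, but is also just Theorem \ref{LI}.)

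If $L$ is split, pick a splitting sphere $\Sigma$, which decomposes $S^3$ into two 3-balls $B_1, B_2$ whose common boundary is $\Sigma$, with $L \cap B_i = L_i$ a nonempty sublink and $L = L_1 \sqcup L_2$. Each $B_i \setminus L_i$ deformation retracts onto a copy of $S^3 \setminus L_i$ (remove a point from the interior of $B_{3-i}$ to get $\mathbb{R}^3 \setminus L_i$). Van Kampen's theorem, applied to the open cover of $S^3 \setminus L$ obtained by thickening $B_1 \setminus L_1$ and $B_2 \setminus L_2$ across $\Sigma$, yields
\[
\pi_1(S^3 \setminus L) \;\cong\; \pi_1(S^3 \setminus L_1) \,*\, \pi_1(S^3 \setminus L_2),
\]
since $\Sigma$ is simply connected. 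By induction on the number of components, each factor is locally indicable, and then Problem \ref{LI problem} guarantees that the free product is locally indicable as well.

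The main obstacle in the argument, or at least the only non-bookkeeping step, is the irreducibility of non-split link complements: everything else is a direct invocation of results already at our disposal (the irreducible/infinite-$H_1$ theorem, Alexander's theorem on spheres in $S^3$, van Kampen, and Problem \ref{LI problem}). Once irreducibility is in hand, the induction proceeds cleanly, and the split/non-split dichotomy ensures that every link complement is built from locally indicable pieces by operations that preserve local indicability.
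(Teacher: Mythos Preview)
Your proof is correct and follows essentially the same approach as the paper: handle the irreducible case via the theorem on irreducible 3-manifolds with infinite first homology, and handle the reducible case by decomposing along spheres into a free product and invoking Problem \ref{LI problem}. The only organizational difference is that the paper appeals in one stroke to a classical sphere decomposition into irreducible pieces, while you proceed by induction on the number of components using a single splitting sphere at a time; you are also more explicit than the paper in arguing that non-split implies irreducible.
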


\begin{proof}
If $S^3 \setminus L$ is irreducible, the corollary follows immediately, since $H_1(S^3 \setminus L) \cong \Z^n$, where $n$ is the number of components of $L$.  If it is not irreducible, a classical argument provides 2-spheres separating $L$ into irreducible components, each of whose groups is therefore locally indicable.  But $\pi_1(S^3 \setminus L)$ is a free product of those groups, and so is also locally indicable by the previous problem.
\end{proof}

\section{Surgery}\label{section surgery}

Surgery is one of the key connections between knot theory and 3-manifolds.  It was pioneered by Max Dehn 
\cite{Dehn10} who used surgery to give an alternate construction of Poincar\'e's homology sphere.

\subsection{Surgery along a knot}
Consider a knot $K$ in the 3-sphere.  Surgery along $K$ consists of removing a tubular neighbourhood of $K$ and then sewing it back, but possibly by a different attaching map along the boundary.

Formally, we can think of a neighbourhood of $K$ as parametrized 
$$ N(K) \cong S^1 \times D^2,$$ the product of a unit circle and unit disk in $\C$.  Under this correspondence, the curve $S^1 \times \{0\}$ corresponds to $K$.  The curve $\lambda \cong S^1 \times \{1\}$ is called a parallel, or {\em longitude}.  It is disjoint from, but parallel to, $K$.  There is an ambiguity (choice of framing) in the choice of longitude, in the sense that it may wrap around the knot $K$ any  number of times.  We choose the parametrization so that 
$\lambda$ is homologically trivial in $S^3 \setminus K$ (this is called the `preferred' longitude).  It is characterized by specifying that the linking number \index{linking number}of $\lambda$ with $K$ be zero, where the linking number is calculated by orienting $\lambda$ and $K$ to be parallel.  A {\em meridian} is the curve 
$\mu \cong \{1\} \times S^1$.  Note that $\lambda$ and $\mu$ meet at a single point, corresponding to 
$\{1\} \times \{1\}$.  Since $\lambda$ and $\mu$ lie on the torus $\partial N(K)$, their classes in $\pi_1(S^3 \setminus K)$, which we will designate by the same symbols $\mu$ and $\lambda$, commute and serve as generators of the subgroup $\pi_1(\partial N(K))$ of 
$\pi_1(S^3 \setminus int(N(K))) \cong \pi_1(S^3 \setminus K)$.  It is a classical fact that if $K$ is a nontrivial knot, inclusion induces an {\em injective} homomorphism
$\pi_1(\partial N(K)) \to \pi_1(S^3 \setminus int(N(K))).$  It is convenient to orient 
$\mu$ so that its linking number with $K$ is $+1$.

If $J$ is any homotopically nontrivial simple closed curve on the torus $\partial N(K)$, we may form the space
$$M := (S^3 \setminus int(N(K))) \cup_\varphi (S^1 \times D^2),$$ where $\varphi: S^1 \times S^1 \to \partial N(K)$ is a homeomorphism that sends the meridian $\{1\} \times S^1$ of $S^1 \times D^2$ to $J$.  Such a map $\varphi$ exists for any such curve $J$.  The manifold $M$ depends, up to homeomorphism, only on the isotopy class of $J$ in $\partial N(K)$.

If $J$ is oriented, then there is an expression $[J] = \mu^p\lambda^q$ in $\pi_1(\partial N(K))$ where $p$ and $q$ are relatively prime integers (including the case $\{p, q\} = \{0, \pm 1\}$).  Reversing the signs of $p$ and $q$ simultaneously just changes the orientation of $J$, and does not affect the homeomorphism class of $M$.  So it is convenient to denote the choice of $J$ by the fraction $p/q \in \Q \cup \{ \infty \}$.  The surgery manifold $M$ is then specified as $p/q$-surgery along $K$, and denoted $M = S^3(K, p/q)$.  Note that in $M$, the curve $J$ bounds a disk, hence the class of $J$ in the knot group becomes trivial in $\pi_1(M)$.  In fact $\pi_1(M)$ can be computed from the knot group by killing $[J]$, that is, by adding the relation $[J] = 1$ to a presentation of the knot group.  The case $p/q = \pm1/0 = \infty$ corresponds to the `trivial' surgery, resulting in $S^3(K, \infty) \cong S^3$.

The homology of $M$ is also easily calculated.  Since $H_1(S^3 \setminus K)$ is infinite cyclic, generated by the class of a meridian, we can use a Mayer-Vietoris argument to conclude that for any knot $K$ we have 
$H_1(S^3(K, p/q)) \cong \Z/p\Z$.  Note that $S^3(K, p/q)$ is a {\em homology sphere} \index{homology sphere} (meaning that its first homology group is trivial) if and only if $p=1$.  Therefore, surgery on any knot will produce many examples of homology spheres.

\begin{figure}
\setlength{\unitlength}{6cm}
  \begin{picture}(1,1.06957193)%
    \put(0,0){\includegraphics[width=\unitlength]{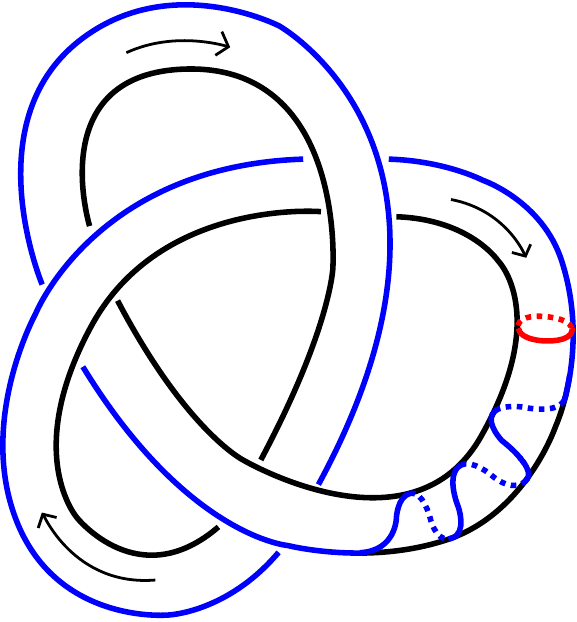}}%
\put(1.02,0.48){\huge{$\mu$}}
\put(0.97,0.25){\huge{$\lambda$}}
  \end{picture}%
\caption{A tubular neighbourhood of the right-handed trefoil, with meridian in red and preferred longitude in blue.}
\label{trefoil surgery}
\end{figure}

Dehn \cite{Dehn10} argued that $+1$ surgery on the right-handed trefoil yields the Poincar\'e dodecahedral space, \index{Poincar\'e dodecahedral space} which is a homology sphere with nontrivial, but finite, fundamental group (see Problem \ref{Dehn construction}).   

\begin{example} Let $M = S^3(K, -1)$ be constructed as $-1$ surgery on the right-handed trefoil $K$.  The framing curve $J$ looks just like the preferred longitude of Figure \ref{trefoil surgery}, but with one extra turn around $K$ so that the linking number of $J$ with $K$ is $-1$.  To work out $\pi_1(M)$ we take the knot group and kill the element corresponding to $J$.  Referring to Example \ref{trefoil group}, $J$ can be read off as  
$[J] = x^{-4}yzx = x^{-4}yxy^{-1}xy$.  So we have 
$$\pi_1(M) \cong \langle x, y \mid xyx = yxy,  x^4 = yxy^{-1}xy \rangle$$

We can simplify this presentation with the substitution $a = x$, $b = xy$, so $y = a^{-1}b$.  The first relation becomes
$ba = a^{-1}b^2$ or $aba = b^2$ or $(ab)^2 = b^3$.  The second becomes $a^4 = a^{-1}bab^{-1}ab$ or 
$a^6 = abab^{-1}ab$, which, in the presence of the relation $aba = b^2$ becomes $a^6 = bab$ or $a^7 = (ab)^2$.
So we have $$\pi_1(M) \cong \langle a, b \mid (ab)^2 = b^3 = a^7 \rangle.$$
\end{example}

\begin{problem}
\label{perfect check}
Check that this group is perfect, that is, it abelianizes to the trivial group.  Conclude that it is not indicable.
\end{problem}

By Problem \ref{perfect check} $M$ is a homology sphere.  In fact, $M$ is a \index{Brieskorn manifold} Brieskorn manifold, denoted $\Sigma(2,3,7)$.  In general, for positive integers $p, q, r$ one may define
$$\Sigma(p, q, r) = \{(u,v,w) \in \C^3 \mid  u^p + v^q + w^r = 0 \hspace{1em} \mbox{and} \hspace{1em} |u|^2 + |v|^2 + |w|^2 = 1\}.$$  In other words, it is the intersection of the unit 5-sphere in complex 3-space with the variety $u^p + v^q + w^r = 0.$ 

  Since 
$\pi_1(\Sigma(2, 3, 7))$ is not indicable (there is no map $\pi_1(\Sigma(2, 3, 7)) \rightarrow \mathbb{Z}$) it is certainly not locally indicable.  Nevertheless, Bergman \cite{Bergman91} gave an explicit embedding of this group in the group 
$\widetilde{{\rm PSL}(2, \R)}$, which is the universal cover of the group ${\rm PSL}(2, \R)$.  Now ${\rm PSL}(2, \R)$ acts on the circle $S^1$, for example by fractional linear transformations on $\R \cup \{ \infty \} \cong S^1$.  Moreover, as a space ${\rm PSL}(2, \R)$ has the homotopy type of $S^1$, so its universal cover 
$\widetilde{{\rm PSL}(2, \R)}$ is an infinite cyclic cover and is a group which acts effectively on the real line (see Example \ref{special linear groups}).  From Proposition \ref{LO_universal} we can conclude that 
$\widetilde{{\rm PSL}(2, \R)}$ is a left-orderable group and then conclude  

\begin{theorem} The group
 $\pi_1(\Sigma(2, 3, 7)) \cong  \langle a, b | (ab)^2 = b^3 = a^7 \rangle$ is a nontrivial left-orderable group which is not locally indicable.
\end{theorem}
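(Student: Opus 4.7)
The plan is to split the theorem into three independent claims, each of which follows from material already developed (or cited) in the excerpt. That $G := \pi_1(\Sigma(2,3,7))$ fails to be locally indicable is immediate from Problem \ref{perfect check}: it shows $G$ is perfect, so $G$ admits no surjection to $\Z$, hence is not indicable, and so certainly not locally indicable.

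For nontriviality, set $z := a^7 = b^3 = (ab)^2$. A direct check from the defining relations shows $z$ is central in $G$ (for example $a z a^{-1} = a^7 = z$, and $b z b^{-1} = b\cdot b^3\cdot b^{-1} = z$). The quotient $G/\langle z\rangle$ has presentation $\langle a,b \mid a^7 = b^3 = (ab)^2 = 1\rangle$, which is the $(2,3,7)$ triangle group $\Delta(2,3,7)$ --- a well-known infinite cocompact Fuchsian subgroup of $\mathrm{PSL}(2,\R)$. Thus $G$ surjects onto an infinite group and is itself infinite, in particular nontrivial.

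For left-orderability, I would invoke Bergman's theorem \cite{Bergman91} as a citation: it provides an injective homomorphism $G \hookrightarrow \widetilde{\mathrm{PSL}}(2,\R)$. Given this, Example \ref{special linear groups} exhibits $\widetilde{\mathrm{PSL}}(2,\R)$ as a subgroup of $\mathrm{Homeo}_+(\R)$, which is left-orderable, and left-orderability passes to subgroups; so $G$ is left-orderable.

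The only substantive obstacle is Bergman's embedding itself. Concretely, one must choose lifts $\tilde\alpha,\tilde\beta \in \widetilde{\mathrm{PSL}}(2,\R)$ of the order-$7$ and order-$3$ generators of $\Delta(2,3,7)$ such that $\tilde\alpha^7$, $\tilde\beta^3$, and $(\tilde\alpha\tilde\beta)^2$ all equal one and the same generator of the central kernel $\Z$ of the covering map $p:\widetilde{\mathrm{PSL}}(2,\R)\to \mathrm{PSL}(2,\R)$. This is an explicit computation in the $\mathrm{SU}(1,1)$ model, tracking rotation angles of the lifts, and is the step carried out by Bergman. Once it is in hand, the universal property of the presentation of $G$ yields a homomorphism $G\to \widetilde{\mathrm{PSL}}(2,\R)$ sending $a\mapsto \tilde\alpha$, $b\mapsto \tilde\beta$, and its injectivity follows because $\tilde\alpha^7$ has infinite order in the center while the induced map $G/\langle z\rangle \to \mathrm{PSL}(2,\R)$ is just the inclusion of the triangle group.
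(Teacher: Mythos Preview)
Your proposal is correct and follows essentially the same route as the paper: non-local-indicability via perfectness (Problem~\ref{perfect check}), and left-orderability via Bergman's embedding into $\widetilde{\mathrm{PSL}}(2,\R)\subset\mathrm{Homeo}_+(\R)$. Your explicit nontriviality argument through the $(2,3,7)$ triangle group quotient and your sketch of how Bergman's lift works add detail the paper leaves to the citation, but the underlying strategy is the same.
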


A more general proof which includes this case appears in Theorem \ref{Seifert homology sphere}, noting that $\Sigma(2, 3, 7)$ is a Seifert fibred 3-manifold (see Section \ref{section sfs}).



\begin{problem}\label{Dehn construction}
Show that $+1$ surgery on the right-hand trefoil yields a manifold with fundamental group
$\langle a, b \mid (ab)^2 = b^3 = a^5 \rangle$.  
\end{problem}

This is a finite perfect group of order 120.  The resulting manifold is Dehn's construction of the Poincar\'{e} homology sphere, also known as $\Sigma(2, 3, 5)$.

\subsection{Surgery along a link}

Not all closed 3-manifolds arise as surgery on a knot.  In particular, the first homology group of such a manifold must be cyclic.  However, one may generalize the idea of surgery to a link $L = L_1 \cup \cdots \cup L_n$ of $n$ components.  One specificies coefficients $r_i = p_i/q_i$, one for each component $L_i$ and performs surgery on all the knots simultaneously, removing disjoint tubular neighbourhoods of the $L_i$ and sewing in solid tori $S^1 \times D^2$ according to the coefficients $r_i$ exactly as in the knot case.

\begin{theorem}[Lickorish - Wallace]
\label{lickorish wallace}
Every closed orientable 3-manifold arises as surgery along some link in $S^3$.
\end{theorem}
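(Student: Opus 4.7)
The plan is to argue via Heegaard splittings together with the Lickorish twist theorem, converting the gluing data of a Heegaard splitting into surgery data on a framed link in $S^3$.

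The first ingredient is that every closed orientable 3-manifold $M$ admits a Heegaard splitting $M \cong H_g \cup_\phi H_g'$, where $H_g$ and $H_g'$ are genus-$g$ handlebodies and $\phi : \partial H_g \to \partial H_g'$ is an orientation-reversing homeomorphism. This follows from the existence of a smooth triangulation of $M$: a regular neighbourhood of the $1$-skeleton of the triangulation serves as $H_g$, while the closure of its complement is a dual handlebody $H_g'$. Fixing a ``standard'' identification $\iota : \partial H_g \to \partial H_g'$, the resulting manifold $M_0 := H_g \cup_\iota H_g'$ is homeomorphic to the connected sum of $g$ copies of $S^2 \times S^1$, and a direct check (by viewing $M_0$ as the double of a handlebody and exhibiting it as the boundary of a 4-manifold built by attaching $g$ zero-framed $2$-handles to $B^4$ along an unlink) shows that $M_0$ is obtained from $S^3$ by performing $0$-surgery on a $g$-component unlink. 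Any other gluing $\phi$ differs from $\iota$ by a self-homeomorphism $\psi$ of the Heegaard surface $\Sigma_g := \partial H_g$, so we may write $\phi = \iota \circ \psi$ with $[\psi] \in \mathrm{Mod}(\Sigma_g)$.

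The main input is the Lickorish twist theorem: $\mathrm{Mod}(\Sigma_g)$ is generated by Dehn twists along a finite collection of simple closed curves on $\Sigma_g$. Decomposing $\psi$ as a product $\tau_{c_n}^{\epsilon_n} \cdots \tau_{c_1}^{\epsilon_1}$ with $\epsilon_i = \pm 1$, one then establishes the key geometric translation: modifying a Heegaard gluing by post-composing with a Dehn twist $\tau_c^{\epsilon}$ along a curve $c \subset \Sigma_g$ has the same effect on the resulting closed 3-manifold as performing $(-\epsilon)$-Dehn surgery along the pushoff of $c$ into the interior. Iterating, $M$ is obtained from $M_0$ by a sequence of $\pm 1$-surgeries along curves $c_1, \dots, c_n$ sitting on the Heegaard surface. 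Combining with the surgery description of $M_0$, we exhibit $M$ as surgery on the link in $S^3$ consisting of the $g$-component unlink (with $0$-framings) together with the twist curves $c_1, \dots, c_n$ (with $\pm 1$-framings).

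The main obstacle is the Lickorish twist theorem itself; its proof requires a careful induction on genus, essentially showing that the Dehn-twist subgroup acts transitively on isotopy classes of nonseparating simple closed curves on $\Sigma_g$ and then bootstrapping to produce arbitrary mapping classes. By contrast, the translation from Dehn twists to surgery is a clean geometric observation: cutting $\Sigma_g$ along $c$, re-gluing by $\tau_c^{\epsilon}$, and reattaching the handlebodies has the same effect as drilling out a solid torus neighbourhood of the pushoff of $c$ and refilling it with a framing that differs from the original by $\pm 1$ meridian, which is exactly integer Dehn surgery.
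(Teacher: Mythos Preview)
The paper does not prove this theorem; it is simply stated as background and attributed to Lickorish and Wallace. Your outline is correct and follows the classical Lickorish argument: Heegaard splitting, factorization of the gluing map into Dehn twists via the Lickorish twist theorem, and the translation of each twist into a $\pm 1$ surgery.

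One small economy: rather than taking the ``standard'' gluing to produce $M_0 = \#_g(S^2 \times S^1)$ and then carrying along an extra $g$-component $0$-framed unlink, you can instead compare $\phi$ directly to the genus-$g$ Heegaard gluing $\phi_0$ of $S^3$ (which exists for every $g$, e.g.\ by stabilizing the genus-$0$ splitting). Writing $\phi = \phi_0 \circ \psi$ and decomposing $\psi$ into twists then exhibits $M$ as surgery on $S^3$ along only the twist curves, with no auxiliary unlink needed. Either route is fine.
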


\begin{figure}
\setlength{\unitlength}{6cm}
 \begin{picture}(1,0.85)%
    \put(0,0){\includegraphics[width=\unitlength]{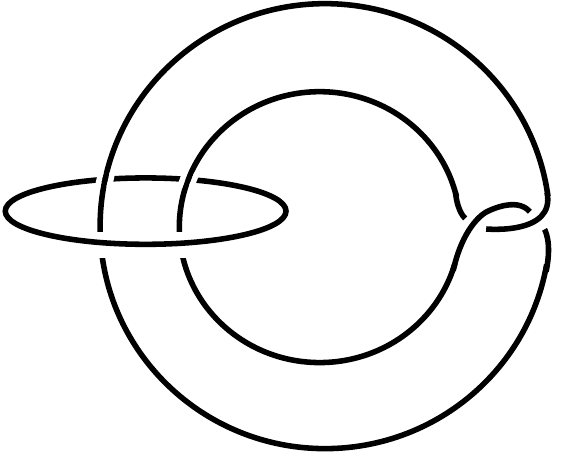}}%
    \put(1,0.3145598){\Large{$\cfrac{5}{2}$}}%
    \put(-0.00412026,0.29599183){\Large{$5$}}%
\end{picture}
\caption{Surgery description of the Weeks manifold.}
\label{weeks}
\end{figure}

The example of $\Sigma(2, 3, 7)$ in the previous section is a manifold whose geometry is modeled on $\widetilde{{\rm PSL}(2, \R)}$, one of the eight Thurston geometries in dimension three.  The most important of the geometries is hyperbolic.  A closed manifold is said to be {\em hyperbolic} \index{hyperbolic 3-manifold} if it has hyperbolic 3-space $\H^3$ as its universal cover, so that the deck transformations are hyperbolic isometries.  One can use this to define the {\em volume} of the manifold, and by Mostow rigidity the volume is actually a topological invariant.  According to Thurston, the set of volumes of hyperbolic 3-manifolds is a well-ordered set of real numbers, so in particular there is a smallest volume.

\begin{example}
\label{Weeks}  The Weeks manifold \index{Weeks manifold} $W$ described by surgery as in Figure \ref{weeks} is the closed hyperbolic 3-manifold of least volume \cite{GMM}.   Its fundamental group is
$$\pi_1(W) \cong \langle a, b \mid babab=ab^{-2}a, ababa=ba^{-2}b \rangle.$$
\end{example}

\begin{problem}
The abelianization of this group is isomorphic with $\Z / 5\Z \oplus \Z / 5\Z$.  
\end{problem}

So the group $\pi_1(W)$ is not perfect, but $W$ is a {\em rational} homology sphere, \index{rational homology sphere} in that its homology with rational coefficients agrees with that of $S^3$.  

\begin{theorem}[\cite{CD03}]
The fundamental group of the Weeks manifold $W$ is not left-orderable, though it is torsion-free.
\end{theorem}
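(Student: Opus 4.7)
The plan is to prove the two claims of the theorem separately. Torsion-freeness is the easier part: $W$ is a closed hyperbolic 3-manifold, so its universal cover is $\mathbb{H}^3$, which is contractible. Hence $\pi_1(W)$ acts freely and properly discontinuously on a contractible space, and any nontrivial element of finite order would have to fix a point of $\mathbb{H}^3$ (since a finite subgroup of $\mathrm{Isom}(\mathbb{H}^3)$ always has a fixed point via its action on the visual ball). This rules out torsion.

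For non-left-orderability I would use the positive-cone criterion and the finite-partition test of Theorem \ref{partition}, contrapositively: it suffices to exhibit a radius $k$ such that the ball $G_k$ in the Cayley graph of $\pi_1(W)$ with respect to the generating set $\{a,b\}$ admits no proper $k$-partition. Assume for contradiction that a positive cone $P \subset \pi_1(W)\setminus\{1\}$ exists. Because the presentation is symmetric under $a \leftrightarrow b$ (this swap sends the first relation to the second), and because replacing $P$ by $P^{-1}$ again gives a positive cone, the case analysis reduces to two configurations: (i) $1 < a$ and $1 < b$, or (ii) $1 < a$ and $1 < b^{-1}$. In each case one propagates sign constraints through the two relations $babab = ab^{-2}a$ and $ababa = ba^{-2}b$: for instance, in case (i) one obtains $1 < babab$ and hence $1 < ab^{-2}a$, which combines with positivity of $a^2$ to force $b^{-2} > a^{-2}$, i.e.\ $a^2 > b^2$; applying the symmetric argument to the second relation forces $b^2 > a^2$, which is already the outline of a contradiction (and a completely analogous but slightly longer chain handles case (ii)).

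The serious obstacle is purely combinatorial: translating the above sketch into a bullet-proof contradiction at a specific finite radius requires multiplying out many words of length $\leq k$ and checking that every assignment of signs to the elements of $G_k$, consistent with closure under the semigroup operation inside $G_k$, runs into an inconsistency produced by one of the two relators. Doing this by hand is unpleasant because one must be sure that no assignment has been overlooked. The honest approach, and the one actually taken in \cite{CD03}, is to invoke Dunfield's algorithm: it systematically enumerates partial positive cones on $G_k$, closes them under the partial multiplication, and reports a $k$ for which no proper $k$-partition exists. The output of that algorithm for $\pi_1(W)$ is the certificate of non-left-orderability; I would include the resulting finite list of forced inequalities in the proof, verify by direct computation that each is a consequence of the relations together with the assumption $a,b \in P^{\pm 1}$, and observe that the list is inconsistent, completing the argument.
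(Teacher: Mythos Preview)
Your torsion-free argument is fine and essentially matches the paper's (contractible universal cover $\mathbb{H}^3$; the paper phrases it via finite cohomological dimension rather than the fixed-point argument, but the content is the same).

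For non-left-orderability, however, your hand sketch contains a genuine error and your overall strategy diverges from the paper's. In case (i) you write that $1 < ab^{-2}a$ together with $1 < a^2$ ``forces $b^{-2} > a^{-2}$''. That deduction is invalid in a merely left-ordered group: you would need to cancel $a$ on the right and on the left, and right-cancellation requires right-invariance, which you do not have. So the sketched contradiction does not go through, and you correctly sense this when you retreat to the algorithmic certificate.

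The paper does \emph{not} invoke the algorithm here; it gives a short direct argument that you are missing. The key manipulation is to rewrite the two relations as
\[
(ab)^2 \;=\; b^{-1}ab^{-2}a \;=\; ba^{-2}ba^{-1}, \qquad
(ba)^2 \;=\; a^{-1}ba^{-2}b \;=\; ab^{-2}ab^{-1}.
\]
Assuming a left-ordering exists, one may take $a>1$ and then split into the three cases $b<1$, $1<b<a$, and $1<a<b$. In each case one of the displayed expressions for $(ab)^2$ or $(ba)^2$ is visibly a product of negative elements (hence negative), while another expression for the same element is a product of positives (hence positive). For instance, if $1<b<a$ then $a^{-1}b<1$ and $a^{-1}<1$, so $(ba)^2=(a^{-1}b)(a^{-1})(a^{-1}b)<1$; but $b,a>1$ give $(ba)^2>1$. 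The other two cases are equally short. No algorithmic search is needed, and the whole argument fits in a paragraph. Your approach via Dunfield's algorithm would also work in principle, but it is heavier than necessary and obscures the simple combinatorial reason the group fails to be left-orderable.
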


It is torsion-free because the universal cover of $W$ is $\H^3$, which is contractible, so $\pi_1(W)$ has finite cohomological dimension.  This implies it cannot have torsion elements.  Here is the idea of the proof that $\pi_1(W)$ is not left-orderable.  The defining relations can be rewritten as $b^{-1}ab^{-2}a=(ab)^2=ba^{-2}ba^{-1}$ and $a^{-1}ba^{-2}b=(ba)^2=ab^{-2}ab^{-1}$.   If such a left-ordering were to exist, we may assume that $a>1$ and get a contradiction to each of the cases $b<1$, $a>b>1$ and $b>a>1$.  For example $a>b>1$ implies $a^{-1}b < 1$.  But then the relation $(ba)^2 = a^{-1}ba^{-2}b$ leads to a contradiction, as $(ba)^2$ is positive, whereas 
$a^{-1}ba^{-2}b = (a^{-1}b)a^{-1}(a^{-1}b)$ must be less than the identity, being a product of three negative elements.

\begin{problem}
Verify the remaining details of the argument that $\pi_1(W)$ cannot be left-ordered. 
\end{problem}


\section{Branched Coverings}

Another important application of knot theory to 3-dimensional manifolds is the construction of \index{branched coverings} branched coverings.  As with surgery, the branched cover construction over knots and links in $S^3$ gives rise to all closed orientable 3-manifolds.  Moreover, Hilden, Lozano and Montesinos \cite{MLM83} showed that there is a single knot in $S^3$ which produces all such 3-manifolds as branched covers of that single knot!  For this reason it is called a universal knot.  In fact the figure-eight knot $4_1$ is known to be universal \cite{HMM85}.

\subsection{Constructing branched covers}
The prototype of branched covers (in dimension 2) is the mapping of the plane $\R^2$, expressed in polar coordinates by $(r, \theta) \mapsto (r, n\theta)$, where $n$ is a fixed integer greater than 1.  If  the origin is removed, this mapping is a regular $n$-fold covering map 
$\R^2 \setminus \{0\} \to \R^2 \setminus \{0\}$ in the usual sense, but as a map
$\R^2 \to \R^2$ it has the origin as branch point, where it is one-to-one instead of $n$-to-one, as it is away from the origin.  The same formula provides a branched cover of the unit disk $D^2 \subset \mathbb{R}^2$ over itself.

In three dimensions we can define a branched cover of a solid torus $S^1 \times D^2$  over itself which will be useful in the following discussion.  For coordinates $\phi \in S^1$ and $(r, \theta) \in D^2$ define 
$$f(\phi,(r,\theta)) = (\phi,(r,n\theta)).$$  
Here the branch set is the central curve $S^1 \times \{0\}$ of the solid torus.  Again those branch points have a single preimage, whereas all other points have $n$ preimages.  

For simplicity, in this section we will only consider a special class of branched covers of a knot $K \subset S^3$, the $n$-fold \emph{cyclic} branched covers, which we may construct as follows.  Let $N \cong S^1 \times D^2$ be a tubular neighbourhood of $K$ in $S^3$, where $S^1 \times \{0\}$ corresponds to the knot $K$.  Let $X := S^3 \setminus int(M)$ denote the complement of the interior of $N$, sometimes called the ``exterior'' of the knot.  It differs from the knot complement in that it is compact and has a boundary, which is a torus.  However $\pi_1(X)$ is isomorphic with the knot group $\pi_1(S^3 \setminus K)$. Consider the Hurewicz homomorphism $\pi_1(X) \to H_1(X) \cong \Z$, which can also be thought of as the abelianization of the knot group.  Also, let $\Z \to \Z/{n\Z}$ be the mapping onto the finite cyclic group which is reduction modulo $n$.  Taking the composite of these homomorphisms defines the homomorphism $\pi_1(X) \to \Z/{n\Z}$.  The kernel of this map is a subgroup of 
$\pi_1(X)$ which is normal and has index $n$.  By standard covering space theory, there is a regular covering space $p: \tilde{X} \to X$ corresponding to this kernel.  Thus a loop in $X$ lifts to a loop (rather than just a path) in $\tilde{X}$ if and only if the homology class it represents is divisible by $n$; in other words, its linking number with $K$ is a multiple of $n$\index{linking number}.

Notice that the boundary $\partial X$ of $X$ is also the boundary $\partial N$ of $N$, and the lift 
$\widetilde{\partial N} = p^{-1}(\partial{N})$ is a torus $\widetilde{\partial N} \cong S^1 \times S^1$.  The preimage of a meridian on $\partial N$ (corresponding to $\{*\} \times S^1$) is a closed curve on $\widetilde{\partial N}$ which covers the meridian $n$ times.  We may identify the restriction $p|\widetilde{\partial N}$ with the map $f$ defined above,  restricted to the boundary $S^1 \times S^1$ of the solid torus $S^1 \times D^2$.  But now we can use $f$ to extend the covering map $p$ to a solid torus $\tilde{N} \cong S^1 \times D^2$ whose boundary is attached to $\tilde{X}$ along 
$\widetilde{\partial N}$ to define a branched covering $\tilde{X} \cup \tilde{N} \to X \cup N = S^3$.  Its branch set downstairs is exactly the knot $K$ and the manifold $$\Sigma_n(K) :=  \tilde{X} \cup \tilde{N}$$ is called the $n$-fold cyclic branched cover of $S^3$ branched along $K$.  

An interesting class of examples of cyclic branched covers are the Brieskorn manifolds \index{Brieskorn manifolds}   $\Sigma(p,q,r)$ which were discussed in Section \ref{section surgery}.  It was noted by Seifert
\cite[p. 412, Theorem 17]{ST80}
that $\Sigma(p,q,r)$ is the $r$ fold branched cover of $S^3$ branched over the $p,q$ torus knot $T_{p,q}$ that is, 
$\Sigma(p,q,r) \cong \Sigma_r(T_{p,q})$.  And by symmetry the same may be said for any permutation of $p,q,r$ in that formula!

Using more complicated, not necessarily regular, coverings of $X$ we can define other branched coverings of $S^3$ branched over $K$, whose branch sets upstairs may not be single curves, as is the case for the cyclic branched covers.  This is what makes some knots universal.  One can also define branched covers over links.  However, we will not consider these more sophisticated branched covers here.

\subsection{Double branched covers of alternating knots}  
A knot is called \emph{alternating} \index{alternating knot} if it has a planar diagram in which, as one moves along the knot in a fixed direction, the crossings encountered alternate as over- and under-crossings.  This is a large class of knots, but there are many non-alternating knots too.  A beautiful connection between branched coverings and orderability is the following theorem due to Boyer, Gordon and Watson \cite{BGW13}.

\begin{theorem}
\label{branched cover nonlo}
If $K$ is a nontrivial alternating knot in $S^3$, then the fundamental group $\pi_1(\Sigma_2(K))$ of the 2-fold branched cover of $S^3$ over $K$ is not left-orderable.
\end{theorem}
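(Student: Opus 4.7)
The plan is to prove the statement by induction on the knot determinant $\det(K)=|\Delta_K(-1)|$, exploiting the fact that every alternating link belongs to the wider class of \emph{quasi-alternating} links. A link $L$ is quasi-alternating if either $L$ is the unknot, or $L$ admits a crossing whose two smoothings $L_0$ and $L_\infty$ are both quasi-alternating and satisfy $\det(L)=\det(L_0)+\det(L_\infty)$. A result of Ozsv{\'a}th--Szab{\'o} asserts that every nontrivial alternating link is quasi-alternating, making this the natural level of generality for the induction. The base case $\det(K)=1$ forces a quasi-alternating knot to be the unknot, which is excluded by hypothesis; hence one may assume $\det(K)\ge 2$ and that the theorem has been established for every quasi-alternating link of strictly smaller determinant.

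The crucial geometric input for the inductive step is that, at a quasi-alternating crossing, the three double branched covers $\Sigma_2(K)$, $\Sigma_2(K_0)$, $\Sigma_2(K_\infty)$ arise as the three Dehn fillings of a single compact orientable irreducible $3$-manifold $M$ with $\partial M\cong T^2$, along a triad of slopes $\alpha,\beta,\gamma$ on $\partial M$ with pairwise geometric intersection number one:
$$\Sigma_2(K)=M(\alpha), \qquad \Sigma_2(K_0)=M(\beta), \qquad \Sigma_2(K_\infty)=M(\gamma).$$
Here $M$ is obtained by removing from $\Sigma_2(K)$ a solid-torus neighbourhood of the preimage of a small arc transverse to the chosen crossing; the three slopes correspond to the three tangle replacements that recover $K$, $K_0$, and $K_\infty$. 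Applying the inductive hypothesis to the quasi-alternating links $K_0$ and $K_\infty$ yields that both $\pi_1(M(\beta))$ and $\pi_1(M(\gamma))$ fail to be left-orderable.

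The proof then reduces to a gluing lemma, which is the main obstacle: given a compact orientable irreducible $3$-manifold $M$ with torus boundary and three slopes $\alpha,\beta,\gamma$ on $\partial M$ pairwise of distance one, if two of the three Dehn fillings $M(\alpha),M(\beta),M(\gamma)$ have non-left-orderable fundamental group, then so does the third. I would approach this lemma by contradiction. Suppose $\pi_1(M(\alpha))$ admits a left-ordering $<$. Since the peripheral subgroup $\pi_1(\partial M)\cong\Z^2$ injects into $\pi_1(M)$ (thanks to irreducibility of $M$ and the fact that $M$ is not a solid torus), its image in the quotient $\pi_1(M(\alpha))=\pi_1(M)/\langle\!\langle\alpha\rangle\!\rangle$ is an infinite cyclic group inheriting a nontrivial ordering from $<$. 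The classes $\beta$ and $\gamma$ project to nonzero elements of this $\Z$, and the triad relation in $H_1(\partial M)$, combined with the resulting sign information, can be assembled via the positive-cone methodology behind Theorem \ref{fundamental} into a left-ordering of one of $\pi_1(M(\beta))$ or $\pi_1(M(\gamma))$, contradicting the inductive hypothesis. The delicate ingredient is the classification of left-orderings on $\Z^2$ recorded in Example \ref{orderZ2}, which is invoked to ensure that the peripheral sign data extend coherently across the chosen Dehn filling.
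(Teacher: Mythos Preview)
Your approach is genuinely different from the paper's and contains a serious gap. The paper follows Greene's argument: one writes down a presentation of $\pi_1(\Sigma_2(K))$ coming from the white graph of a checkerboard colouring, with one generator $x_v$ per white region and relators $r_v=\prod_i(x_{w_i}^{-1}x_v)^{\epsilon_i}$. For an alternating diagram all the $\epsilon_i$ may be taken equal to $+1$. Assuming a left-ordering, one chooses a vertex $v$ with $x_v$ maximal; if some neighbour $w$ has $x_w<x_v$ then every factor of $r_v$ is $\ge 1$ and one is $>1$, forcing $r_v>1$, a contradiction. The argument is elementary, combinatorial, and entirely internal to the alternating diagram.

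The gap in your proposal is the ``gluing lemma''. The statement that non-left-orderability is closed under surgery triads is not known; it is in effect a special case of the Boyer--Gordon--Watson conjecture combined with the (known) Heegaard--Floer surgery triangle for L-spaces, and proving it would be a major advance. Your sketch does not establish it. A left-ordering on $\pi_1(M(\alpha))$ pulls back, via Theorem~\ref{fundamental}, to a left-ordering on $\pi_1(M)$, but there is no mechanism for pushing that ordering forward to the \emph{different} quotient $\pi_1(M(\beta))=\pi_1(M)/\langle\!\langle\beta\rangle\!\rangle$: the normal closure of $\beta$ has no reason to be convex, and the peripheral sign information you extract is far too weak to manufacture a positive cone on the new quotient. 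The appeal to Example~\ref{orderZ2} only classifies orderings of the peripheral $\Z^2$ itself; it says nothing about whether such an ordering extends over $\pi_1(M(\beta))$. Indeed, the whole point of ``order-detection'' of slopes in the subsequent literature is that this extension problem is subtle and slope-dependent. So as written the inductive step does not go through, and there is no evident repair short of proving a case of the L-space conjecture.
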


In the remainder of this section we'll discuss a particularly elegant proof of this due to Greene \cite{Greenepreprint}.  First some preliminaries.  Consider a planar diagram of a knot $K$ in which (as usual) we assume the only crossings are simple 2-fold crossings.  This diagram separates the plane into regions, which may be coloured black or white in a ``checkerboard'' manner.  Thus any arc of the diagram (away from the crossings) has a black region on one side and a white region on the other side, and at each crossing locally the regions are coloured as in Figure \ref{crossing_sign}, which illustrates the two possible configurations.  For example, we might take the black regions to be the ones consisting of points with the property that a curve transverse to the diagram, from that point to the unbounded region, meets the diagram in an odd number of points.

\begin{problem}
Verify that the construction outlined in the previous sentence does, in fact, produce a checkerboard colouring of the knot diagram.
\end{problem}

\begin{figure}
\setlength{\unitlength}{6cm}
 \begin{picture}(1,0.37628816)%
    \put(0,0){\includegraphics[width=\unitlength]{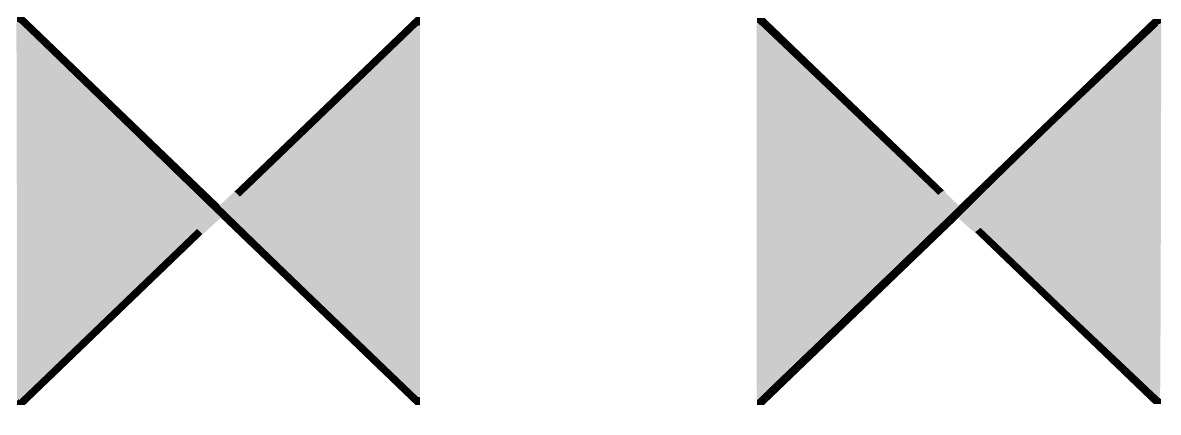}}%
    \put(0.10086978,0.0082987){$\epsilon=+1$}%
    \put(0.7294674,0.0082987){$\epsilon=-1$}%
  \end{picture}%
\caption{Signs assigned to crossings}
\label{crossing_sign}
\end{figure}

We now construct a graph $W = (V, E)$, the ``white graph'' as follows.  The vertices 
$V = \{v_0, \dots, v_p\}$ are the white regions, with a distinguished ``root'' vertex $v_0$ (say the unbounded region).  For each crossing of the diagram, there is an edge $(v,w) \in E$ connecting the two white regions that appear at that vertex. This edge is assigned a sign $\epsilon$ as in Figure \ref{crossing_sign}. Note that $W$ is a connected planar graph.
  
Form a group $\Gamma$ as follows.  It has one generator $x_v$ and one relation $r_v = 1$ for each $v \in V$. To describe the relation $r_v$, imagine a small loop going around the vertex $v$ in a counterclockwise direction, starting at an arbitrary point, and let $(v, w_1), \dots ,(v, w_k)$ be the edges encountered with respective signs $\epsilon_1, \dots, \epsilon_k$ in order.  Then 
$r_v = \prod_{i=1}^{k}(x_{w_i}^{-1}x_v)^{\epsilon_i}.$
We also introduce the relation $x_{v_0} = 1$.

\begin{proposition}[Greene \cite{Greenepreprint}]
The fundamental group of $\Sigma_2(K)$ is isomorphic to $\Gamma$.
\end{proposition}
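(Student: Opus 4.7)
My approach would be to construct $\Sigma_2(K)$ explicitly from the checkerboard data and then extract a presentation via van Kampen. First I would form the \emph{black checkerboard surface} $F$: take the union of the closed black regions of the diagram and replace a small neighborhood of each crossing by a half-twisted band, with the twist direction matching the crossing sign $\epsilon$. Then $F$ is a (possibly non-orientable) spanning surface for $K$, and the double branched cover $\Sigma_2(K)$ is obtained by gluing two copies of $S^3\setminus\nu(F)$ along the connected double cover of $\partial\nu(F)\setminus K$, coming from the homomorphism $\pi_1(S^3\setminus F)\to\mathbb{Z}/2$ that records meridional linking with $F$.

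Next I would analyze the homotopy type of $S^3\setminus\nu(F)$. Each white region of the diagram is a properly embedded disk in $S^3\setminus\nu(F)$, with boundary traversing the bands of $F$ that are incident to it. Collapsing these disks to their centers, $S^3\setminus\nu(F)$ deformation-retracts onto the \emph{white graph} $W$, whose vertices are the white regions and which has one edge per crossing. Taking the unbounded region $v_0$ as basepoint, $\pi_1(S^3\setminus\nu(F))$ is the free group on generators $x_v$ with $v\neq v_0$, where $x_v$ is a loop from $v_0$ passing through the white region $v$ and returning.

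Applying van Kampen to the decomposition $\Sigma_2(K)=N_1\cup N_2$ with $N_i\simeq W$, the generators $x_v$ from the two copies are identified by the gluing and subjected to relations coming from the 2-cells on the common boundary. A short loop encircling a white vertex $v$ in the diagram plane bounds the white region $v$ itself as a disk in $S^3$; this disk lifts to a disk in $\Sigma_2(K)$, forcing the loop's homotopy class to be trivial. A local analysis at a single crossing, carried out in both configurations of Figure~\ref{crossing_sign}, shows that traversing the adjacent band from $v$ to the opposite white vertex $w$ contributes the factor $(x_w^{-1}x_v)^{\epsilon}$, the exponent being exactly $\epsilon$ because the sheet interchange of the branched cover across the band is controlled by the sign of the crossing. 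Assembling these contributions counterclockwise around $v$ yields the relation $r_v=\prod_i(x_{w_i}^{-1}x_v)^{\epsilon_i}=1$, while the relation $x_{v_0}=1$ simply records the basepoint choice.

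The main obstacle is the signed local calculation at a single crossing: one must carefully verify, in each of the two crossing configurations and for each possible handedness of the half-twisted band, that a loop crossing the band from white region $v$ to white region $w$ indeed picks up the class $(x_w^{-1}x_v)^{\epsilon}$ in the branched cover, with the sign precisely matching the convention in Figure~\ref{crossing_sign}. Once this local model is pinned down, the relations $r_v$ assemble automatically from the global combinatorics of $W$, and a Tietze-style count (comparing generators and relations to what a Heegaard splitting adapted to $F$ predicts) can be used to confirm that no further relations remain.
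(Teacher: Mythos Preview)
The paper does not prove this proposition; it is stated with attribution to Greene and then used as a black box in the argument that follows (Problem~\ref{equal signs} and the paragraph after it). So there is no proof in the paper to compare your sketch against.

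That said, your outline is the natural and correct approach, and it parallels the classical derivation of the Goeritz form (which is exactly the abelianization of Greene's relations $r_v$). The structural steps are all sound: the black checkerboard surface $F$ is a spanning surface, $\Sigma_2(K)$ is the double of $S^3$ cut along $F$, and the cut-open piece is a handlebody with spine the white graph $W$. One small refinement that makes the local analysis cleaner: rather than retracting $S^3\setminus\nu(F)$ directly onto $W$, first observe that it retracts onto the \emph{white} checkerboard surface $F'$ (since $F\cup F'$ is essentially the projection $2$-sphere away from the crossings, and the two $3$-balls it bounds collapse), and then $F'$ retracts onto $W$. This intermediate step makes the boundary gluing and the contribution $(x_w^{-1}x_v)^{\epsilon}$ at each band easier to read off. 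You have correctly identified the signed local calculation as the place where the real work sits; once that model is verified against the convention of Figure~\ref{crossing_sign}, the global relations $r_v$ follow mechanically.
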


\begin{problem}\label{equal signs}
Show that a knot diagram is alternating if and only if all the signs at the crossings are the same.
\end{problem}

We now assume the diagram of our knot $K$ is alternating and that $\Gamma \cong \pi_1(\Sigma_2(K))$ can be given a left-ordering $<$.  By Problem \ref{equal signs}, and taking the mirror image of an alternating diagram if necessary, which does not change 
$\pi_1(\Sigma_2(K))$, we may assume all the crossings have sign $\epsilon = +1$.  Choose a vertex $v$ for which $x_w \le x_v$ for all $w \in V$.  If $x_w = x_v$ for all $w \in V$, then from the relation $x_{v_0} = 1$ we conclude
$\pi_1(\Sigma_2(K))$ is the trivial group.  By a standard result of knot theory, this implies that $K$ is unknotted.

Thus if $K$ is not the trivial knot, there is some $w \in V$ for which  $x_w < x_v$.  Since $W$ is connected, we may assume $(v, w) \in E$.  In the relation corresponding to ~$v$ we have every term in the product
$r_v = \prod_{i=1}^{k}(x_{w_i}^{-1}x_v)$ is greater than or equal to the identity, and one is strictly greater, and we conclude that $r_v > 1$, contradicting that $r_v = 1$. \qed

\subsection{Heegaard-Floer homology and L-spaces} \index{Heegaard-Floer homology} \index{L-space}
One reason the above result is interesting has to do with a conjecture which is open at the time of this writing.
Ozsv\'ath and Szab\'o \cite{OS05} 
define an {\em L-space} to be a closed 3-manifold $M$ such that $H_1(M; \Q) = 0$ (that is, a rational homology sphere) and its Heegaard-Floer homology 
$\widehat{HF}(M)$ is a free abelian group of rank equal to $|H_1(M; \Z)|$, the simplest possible.  Lens spaces, and more generally 3-manifolds with finite fundamental group are examples of L-spaces, but there are also many L-spaces with infinite fundamental group.

\begin{proposition}[\cite{OS05double}, Proposition 3.3]
\label{branched cover ls}
Every 2-fold branched cover of an alternating knot in $S^3$ is an L-space.
\end{proposition}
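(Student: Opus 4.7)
\medskip
\noindent\textbf{Proof proposal.} The plan is to induct on the crossing number of a reduced alternating diagram, using the unoriented skein exact triangle for $\widehat{HF}$ established by Ozsv\'ath and Szab\'o together with the classical identity $\det(L)=\det(L_0)+\det(L_1)$ for the two resolutions of a crossing in an alternating diagram. Since $|H_1(\Sigma_2(L);\Z)|=\det(L)$ for any non-split link $L$, one is trying to show that rank$(\widehat{HF}(\Sigma_2(L)))=\det(L)$, so both ingredients — one from Heegaard--Floer theory, one purely diagrammatic — will combine to give the desired equality.

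First I would establish the base case: the unknot has $\Sigma_2(\text{unknot})\cong S^3$, whose $\widehat{HF}$ is $\Z$, which has rank $1=|H_1(S^3;\Z)|=\det(\text{unknot})$, so $S^3$ is an L-space. For the inductive step, let $K$ be an alternating knot with a reduced alternating diagram $D$ of $n\ge 1$ crossings. Pick a crossing $c$; the two resolutions produce diagrams $D_0$ and $D_1$ which (after removing nugatory kinks if necessary) are alternating diagrams of links $L_0$, $L_1$ with strictly fewer crossings. I would cite (or verify combinatorially) the determinant identity $\det(K)=\det(L_0)+\det(L_1)$, which is classical for alternating diagrams.

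Next I would invoke the unoriented skein exact triangle of Ozsv\'ath--Szab\'o for branched double covers:
\[
\cdots\longrightarrow\widehat{HF}(\Sigma_2(L_0))\longrightarrow\widehat{HF}(\Sigma_2(L_1))\longrightarrow\widehat{HF}(\Sigma_2(K))\longrightarrow\cdots
\]
Exactness of this triangle yields the rank inequality
\[
\operatorname{rank}\widehat{HF}(\Sigma_2(K))\;\le\;\operatorname{rank}\widehat{HF}(\Sigma_2(L_0))+\operatorname{rank}\widehat{HF}(\Sigma_2(L_1)).
\]
By the inductive hypothesis (applied to the alternating links $L_0,L_1$) the two right-hand ranks are $\det(L_0)$ and $\det(L_1)$ respectively, so the right side equals $\det(K)=|H_1(\Sigma_2(K);\Z)|$. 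Combined with the general lower bound $\operatorname{rank}\widehat{HF}(M)\ge |H_1(M;\Z)|$ valid for any rational homology sphere $M$, we conclude equality, i.e.\ $\Sigma_2(K)$ is an L-space.

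I expect the main obstacle to be the bookkeeping around split links and nugatory crossings: to run the induction cleanly I need to know that for an appropriate choice of crossing in a reduced alternating diagram, both resolutions $L_0$ and $L_1$ are non-split alternating links (otherwise their branched double covers are not rational homology spheres, and the inductive hypothesis does not apply as stated). This is handled by working inside the class of reduced, connected alternating diagrams and choosing the crossing carefully, but it is the one genuinely delicate step. The Heegaard--Floer skein exact triangle itself is deep machinery but is a black box I would cite from \cite{OS05double} rather than reprove.
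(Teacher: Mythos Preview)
The paper gives no proof of this proposition at all: it is simply quoted, with attribution to \cite{OS05double}, as background for the Boyer--Gordon--Watson conjecture. So there is nothing to compare your argument against in this paper.

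That said, your outline is essentially the original Ozsv\'ath--Szab\'o argument from \cite{OS05double}. Two small points worth tightening. First, since resolving a crossing of a knot typically produces links, the induction must be stated and run over all non-split alternating links, not just alternating knots; you invoke the hypothesis for $L_0$ and $L_1$ as links, so you already see this, but the statement of your inductive hypothesis should reflect it from the outset. Second, your worry about split resolutions is the right one: in the original argument one does not avoid this by a clever choice of crossing but rather by iterating the skein triangle to obtain a spectral sequence (or ``resolution cube'') whose $E^1$ term is a sum of $\widehat{HF}(\#^k\, S^1\times S^2)$'s computed from the complete resolution, and whose total rank is governed by the spanning-tree count of the checkerboard graph, which equals $\det(K)$. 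Your single-step version with a careful crossing choice can be made to work, but the cleaner route is the full cube of resolutions.
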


Together, Theorem \ref{branched cover nonlo} and Proposition \ref{branched cover ls} provide evidence for the truth of the following conjecture.

\begin{conjecture}[Boyer-Gordon-Watson \cite{BGW13}]
An irreducible rational homology 3-sphere is an L-space if and only if its fundamental group is NOT left-orderable.
\end{conjecture}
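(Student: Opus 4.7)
The plan is to attack this equivalence through an intermediate structure, namely the existence of a co-oriented taut foliation on $M$, aiming to establish a triangle of equivalences: $M$ is an L-space, $M$ admits no co-oriented taut foliation, and $\pi_1(M)$ is not left-orderable. Each of these three implications is already known in important families---Seifert fibered rational homology spheres, graph manifolds, and double branched covers of alternating knots (Theorem \ref{branched cover nonlo} together with Proposition \ref{branched cover ls})---so the task is to push the arguments into the general, and especially the hyperbolic, setting.

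For the implication ``$\pi_1(M)$ left-orderable $\Rightarrow$ $M$ is not an L-space,'' I would first invoke the theorem of Ozsv\'ath and Szab\'o that a closed oriented 3-manifold carrying a co-oriented taut foliation cannot be an L-space, so that the burden reduces to constructing such a foliation from a left-ordering. The bridge is dynamical: the dynamic realization of Section \ref{dynamicalsection} converts a left-ordering of the countable group $\pi_1(M)$ into a faithful action $\pi_1(M) \to \mathrm{Homeo}_+(\R)$. One then attempts, in the spirit of Farrell's theorem, to promote this action to the deck-transformation action on an $\R$-fibration over $M$ transverse to a foliation; when this succeeds the leaves assemble into a co-oriented taut foliation on $M$.

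For the reverse implication ``$M$ not an L-space $\Rightarrow$ $\pi_1(M)$ is left-orderable,'' I would rely on the conjectural statement of Ozsv\'ath--Szab\'o that every irreducible rational homology sphere which is not an L-space admits a co-oriented taut foliation $\mathcal{F}$. Granting $\mathcal{F}$, the Calegari--Dunfield universal-circle construction produces a non-trivial homomorphism $\pi_1(M) \to \mathrm{Homeo}_+(S^1)$; when the leaf space of the lifted foliation $\widetilde{\mathcal{F}}$ in the universal cover $\widetilde{M}$ is order-isomorphic to $\R$, one obtains instead an action on $\R$, whose faithfulness (using $b_1(M)=0$ and irreducibility) yields a left-ordering via Theorem \ref{LO_universal}.

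The principal obstacle is the construction of a co-oriented taut foliation on a hyperbolic rational homology sphere that is not an L-space; no general technique is known, and even for Dehn surgeries on hyperbolic knots the picture is incomplete. A secondary difficulty runs in the opposite direction: showing that a purely dynamical $\pi_1(M)$-action on $\R$ can always be promoted to a genuine foliation on $M$ likely requires ideas beyond the Calegari--Dunfield correspondence, since an abstract action need not respect any transverse geometric structure. Short of a complete resolution, I would extend the known cases by building new families of left-orderings via amalgamated products and HNN extensions, exploiting Problem \ref{extension} and the lexicographic techniques of the earlier chapters, and then match these against explicit Heegaard--Floer computations on families of surgeries to chip away at the hyperbolic case one example at a time.
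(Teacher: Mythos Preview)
The statement you are addressing is a \emph{conjecture}, and the paper does not prove it; it is presented precisely as an open problem, with Theorem~\ref{branched cover nonlo} and Proposition~\ref{branched cover ls} offered as supporting evidence. So there is no ``paper's own proof'' to compare against.

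Your proposal is not a proof either, as you yourself acknowledge. You correctly identify the two major gaps: (i) producing a co-oriented taut foliation on a non-L-space hyperbolic rational homology sphere is open, and (ii) promoting an arbitrary $\pi_1(M)$-action on $\R$ to a taut foliation on $M$ is open. Both of these are genuine obstructions, not technicalities, and your write-up is honest about that. What you have sketched is the standard conjectural triangle (L-space $\Leftrightarrow$ no taut foliation $\Leftrightarrow$ non-left-orderable $\pi_1$), which is exactly the framework in which the field approaches this problem; but knowing the framework is not the same as resolving it. One further issue: even granting a taut foliation on a non-L-space, the Calegari--Dunfield argument (Theorem~\ref{universal circle LO}) only gives left-orderability of the commutator subgroup in general, not of $\pi_1(M)$ itself, so your reverse implication has an additional gap beyond the existence of the foliation.
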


\begin{problem}  Use the results of this section to show that the (3,7)-torus knot is not alternating.
\end{problem}

\subsection{Other branched covers and complete presentations}
The paper \cite{DPT05} has many examples of branched covers of $S^3$ whose fundamental groups are not left-orderable.  We will concentrate on just one family of examples to illustrate another technique for showing a group is not left-orderable.  It involves what the authors call a \emph{complete} \index{complete presentation}presentation. \index{complete presentation}

\begin{definition}  Consider the free group $F_n$ with generators $x_1, \dots, x_n$.

(1) Given a finite sequence of `signs' $\epsilon_1, \dots, \epsilon_n$, $\epsilon_i = \pm 1$ for all $i$, and a nonempty reduced word 
$w = x_{a_1}^{b_1}\cdots x_{a_m}^{b_m}$ in $F_n$, say that $w$ \emph{blocks} the sequence $\epsilon_1, \dots, \epsilon_n$ if either 
$\epsilon_{a_j}b_j > 0$ for all $j= 1, \dots, m$ or else $\epsilon_{a_j}b_j < 0$ for all $j= 1, \dots, m$.

(2)  A set $W$ of reduced words of $F_n$ is \emph{complete} if for each sequence $\epsilon_1, \dots, \epsilon_n$ of signs, there is a word in $W$ which blocks that sequence.

(3)  The presentation of a group with generators $x_1, \dots, x_n$ and relations $w = 1$ for $w \in W$ is \emph{complete} if the set $W$ is complete.
\end{definition}

\begin{problem} If a group has a complete presentation, then it is not left-orderable.
\end{problem}

A general method for computing the fundamental group of a branched cover is discussed in \cite{DPT05}.  We state one of the conclusions without proof.

\begin{proposition}[\cite{DPT05}] The fundamental group of the $n$-fold branched cover of $S^3$ over the figure-eight knot $4_1$ has the presentation  $$\pi_1(\Sigma_n(4_1)) \cong \langle x_1, x_2, \dots ,x_{2n} \mid  x_i = x_{i-1}^{-1}x_{i+1}, \; x_2x_4 \cdots x_{2n} = 1 \rangle $$
where $i = 1, 2, \dots ,2n$ and subscripts are taken modulo $2n$.
\end{proposition}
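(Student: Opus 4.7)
The plan is to exploit the fibered structure of $4_1$: it is fibered over $S^1$ with once-punctured torus fiber $F$ and pseudo-Anosov monodromy $\phi:F\to F$.  Choosing free generators $x,y$ of $\pi_1(F)=F_2$ so that the induced automorphism $\phi_*$ satisfies $\phi_*(x)=xy$ and $\phi_*(y)=yxy$, we have
\[
\pi_1(S^3\setminus 4_1)\cong F_2\rtimes_{\phi_*}\Z=\langle x,y,t\mid txt^{-1}=\phi_*(x),\ tyt^{-1}=\phi_*(y)\rangle,
\]
with $t$ a meridian.  The induced action on $H_1(F)\cong\Z^2$ is by the unimodular matrix $\bigl(\begin{smallmatrix}1&1\\1&2\end{smallmatrix}\bigr)$, which equals $\bigl(\begin{smallmatrix}0&1\\1&1\end{smallmatrix}\bigr)^2$, the square of the classical Fibonacci matrix.

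First, I would pass to the unbranched $n$-fold cyclic cover $\tilde X$, which is the mapping torus of $\phi^n$; so $\pi_1(\tilde X)=F_2\rtimes_{\phi_*^n}\Z=\langle x,y,s\mid sxs^{-1}=\phi_*^n(x),\ sys^{-1}=\phi_*^n(y)\rangle$ with $s=t^n$.  The Dehn filling producing the branched cover glues in a solid torus whose meridian disk bounds $\mu^n=s$, so after killing $s$ one obtains
\[
\pi_1(\Sigma_n(4_1))=\langle x,y\mid x=\phi_*^n(x),\ y=\phi_*^n(y)\rangle.
\]

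Second, I would introduce intermediate generators $x_{2k+1}:=\phi_*^k(x)$ and $x_{2k+2}:=\phi_*^k(y)$ for $k=0,\ldots,n-1$, indexed cyclically modulo $2n$.  The Fibonacci-type relations $x_{i+1}=x_{i-1}x_i$ then follow, for $i$ in the middle range, from the two identities $\phi_*^{k+1}(x)=\phi_*^k(x)\phi_*^k(y)$ and $\phi_*^{k+1}(y)=\phi_*^k(y)\phi_*^{k+1}(x)$ in $F_2$, both immediate consequences of $\phi_*(x)=xy$ and $\phi_*(y)=y\phi_*(x)$.  The closing-up relations at $i=1$ and $i=2n$ encode $\phi_*^n(y)=y$ and $\phi_*^n(x)=x$ respectively.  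Finally, iterating $\phi_*(x)=xy$ telescopes to $\phi_*^n(x)=x\cdot y\cdot\phi_*(y)\cdots\phi_*^{n-1}(y)=x\cdot x_2x_4\cdots x_{2n}$, so the condition $\phi_*^n(x)=x$ is equivalent to $x_2x_4\cdots x_{2n}=1$ --- this is the extra global relation (redundantly recorded alongside the $i=2n$ relation).

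The main obstacle is the bookkeeping: showing that the $2n+1$ relations in the stated presentation collectively encode exactly the two fibered-structure relations $x=\phi_*^n(x)$ and $y=\phi_*^n(y)$, and no more.  The doubling from the natural $n$-parameter indexing of the cyclic cover to the $2n$-parameter Fibonacci indexing is dictated by the factorization of the monodromy matrix as a square, so its appearance is structurally forced rather than an artifact of the chosen presentation; verifying the indexing conventions and matching the relations is routine but needs care with the cyclic wrap-around at $i=1$ and $i=2n$.
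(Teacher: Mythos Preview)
The paper does not actually prove this proposition; it explicitly states it ``without proof'' and refers to \cite{DPT05} for a general method of computing fundamental groups of branched covers.  Your approach via the fibered structure of $4_1$ is therefore not something to compare against, but it is a correct and efficient proof in its own right.

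Your argument is sound.  The key points all check out: the monodromy $\phi_*(x)=xy$, $\phi_*(y)=yxy$ has the right homological action (trace $3$, matching $\Delta_{4_1}(t)=t^2-3t+1$); the meridian of a fibered knot may be taken as the stable letter $t$ of the HNN extension since it is a section of the fibration restricted to $\partial X$; and killing $s=t^n$ in the mapping torus of $\phi^n$ yields $\langle x,y\mid \phi_*^n(x)=x,\ \phi_*^n(y)=y\rangle$.  The telescoping identity $\phi_*^n(x)=x\cdot x_2x_4\cdots x_{2n}$ is correct and shows the global relation is redundant, as you say.  One small point you should make explicit in the write-up: the $i=1$ relation reads $y=\phi_*^{n-1}(y)\cdot x$, which is not literally $\phi_*^n(y)=y$; you need the $i=2n$ relation (equivalently $\phi_*^n(x)=x$) together with the identity $\phi_*^n(y)=\phi_*^{n-1}(y)\cdot\phi_*^n(x)$ to convert one into the other.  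This is exactly the ``cyclic wrap-around'' bookkeeping you flagged, and it goes through cleanly.

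By contrast, the method of \cite{DPT05} alluded to in the text is a general Reidemeister--Schreier-type computation from a knot diagram, applicable to any knot.  Your fibered approach trades generality for transparency: it explains structurally why the Fibonacci-type recursion and the $2n$ generators appear (via the factorization of the monodromy matrix as a square), rather than having them emerge from a mechanical rewriting procedure.
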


\begin{problem} Show that this is a complete presentation and conclude that $\pi_1(\Sigma_n(4_1))$ is not left-orderable for any $n$ greater than 1.
\end{problem}

\section{Bi-orderability and surgery}

\begin{theorem}
Suppose $K$ is a fibred knot in $S^3$ and nontrivial surgery on ~$K$ produces a 3-manifold $M$ whose fundamental group is bi-orderable.  Then the surgery must be longitudinal (that is, 0-framed) and  $\Delta_K(t)$ has a positive real root.  Moreover, $M$ fibres over $S^1$.
\end{theorem}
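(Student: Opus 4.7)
The plan is to deduce each conclusion in turn from results already developed in the text, handling the case where $K$ is a nontrivial knot (for the unknot the Alexander polynomial assertion fails trivially, but the rest of the statement still holds).

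First I would show that the surgery is longitudinal. Since $M$ is closed, $\pi_1(M)$ is finitely generated; combined with bi-orderability, Theorem \ref{biorderable implies LI} yields a surjection $\pi_1(M) \to \Z$, so $H_1(M)$ is infinite. But by the computation recalled in Section \ref{section surgery}, $H_1(S^3(K, p/q)) \cong \Z/p\Z$, which forces $p = 0$.

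Next I would verify that $M$ fibres over $S^1$. Let $\Sigma$ be a fibre of the fibration $S^3 \setminus K \to S^1$; it is a Seifert surface for $K$, so $\partial \Sigma$ is a preferred longitude on $\partial N(K)$, and the restriction of the fibration to $\partial N(K)$ is a circle bundle whose fibres are preferred longitudes. In $0$-surgery one attaches $S^1 \times D^2$ with $\{*\} \times \partial D^2$ identified to a preferred longitude, so projection onto the first factor $S^1 \times D^2 \to S^1$ agrees with the boundary fibration, and the meridian disks $\{*\}\times D^2$ cap off the boundaries of the surface fibres. Letting $\widehat\Sigma = \Sigma \cup D^2$ denote the resulting closed surface of genus $g(K)$, the fibration extends to a bundle $M \to S^1$ with fibre $\widehat\Sigma$.

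This bundle yields the short exact sequence
\[ 1 \to \pi_1(\widehat\Sigma) \to \pi_1(M) \to \Z \to 1, \]
in which conjugation by a preimage of a generator of $\Z$ represents the monodromy automorphism $\phi\colon \pi_1(\widehat\Sigma) \to \pi_1(\widehat\Sigma)$. Restricting the bi-ordering of $\pi_1(M)$ gives a bi-ordering of $\pi_1(\widehat\Sigma)$, and since bi-orderings are conjugation-invariant this restricted ordering is preserved by $\phi$. Because $K$ is nontrivial, $\widehat\Sigma$ has positive genus, so $\pi_1(\widehat\Sigma)$ is a nontrivial finitely generated bi-orderable group, and Theorem \ref{poseval} produces a positive real eigenvalue of the induced map on $H_1(\widehat\Sigma; \Q)$. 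The inclusion $\Sigma \hookrightarrow \widehat\Sigma$ is an $H_1$-isomorphism (since $\partial \Sigma$ is null-homologous in an orientable Seifert surface), so this characteristic polynomial coincides with that of the homology monodromy on $H_1(\Sigma; \Q)$, which is $\Delta_K(t)$. Hence $\Delta_K(t)$ has a positive real root.

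The main obstacle is the second step: one must check carefully that the $0$-framing is precisely the framing under which the fibration of the knot exterior extends across the attached solid torus, and that the monodromy of the resulting bundle is realized by conjugation inside $\pi_1(M)$. Once this geometric fact is in hand, the remaining deductions are direct applications of Theorems \ref{biorderable implies LI} and \ref{poseval}.
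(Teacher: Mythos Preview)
Your proof is correct and follows essentially the same route as the paper's: first use Theorem \ref{biorderable implies LI} together with $H_1(S^3(K,p/q))\cong\Z/p\Z$ to force $p=0$, then extend the fibration across the glued solid torus, and finally apply Theorem \ref{poseval} to the monodromy of the capped-off fibre $\widehat\Sigma$ (using that $H_1(\Sigma)\cong H_1(\widehat\Sigma)$) to obtain a positive real root of $\Delta_K(t)$. The only detail the paper adds that you leave implicit is the appeal to property P \cite{KM04} to guarantee $\pi_1(M)$ is nontrivial before invoking Theorem \ref{biorderable implies LI}; your parenthetical restriction to nontrivial $K$ serves the same purpose.
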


This is actually an easy application of Theorem \ref{AN}.  First, note that the surgery must be longitudinal for homological reasons.  If $X$ is the complement of the tubular neighbourhood of the knot $K$, the knot group $\pi_1(X)$ has preferred elements which generate the fundamental group of $\partial X$: meridian $\mu$, represented by a loop bounding a disk transverse to the knot, and longitude $\lambda$, which is parallel to the knot and homologically trivial in $X$.  A surgery framing curve $J$ is then represented by a pair of relatively prime integers $p$ and $q$, where
$$ [J] = \mu^p\lambda^q $$
and the surgery manifold $M$ has fundamental group obtained from $\pi_1(X)$ by killing 
$\mu^p\lambda^q $.  Similarly, a Mayer-Vietoris argument shows that $H_1(M)$ can be calculated 
from $H_1(X)$, which is infinite cyclic and generated by the meridian, by killing $\mu^p$.  So $H_1(M)$ is a {\em finite} cyclic group unless $p=0$.  

Now suppose $\pi_1(M)$ is bi-orderable.  It is a nontrivial group by the so-called property P theorem \index{property P}\cite{KM04}.  Recalling Theorem 
\ref {biorderable implies LI}
 and that $\pi_1(M)$ itself is finitely generated, there is a surjection $\pi_1(M) \to \Z$.   This cannot happen if $H_1(M)$ is finite.  It follows that $p$ must be zero, or in other words $[J] = \lambda$.  Since the preimages of points under the fibration map
$X \to S^1$ are longitudinal curves on $\partial X$, they bound meridian disks in $S^1 \times D^2$. The fibration map to $S^1$ thus extends to $S^1 \times D^2$ and we see that $M$ fibres over $S^1$.

The fibration of $M$ is essentially that of the knot exterior $X$: the fibres $\widehat{\Sigma}$ of the former are the 
fibres $\Sigma$ of the latter with disks sewn to the boundary.  The first homology of the fibres coincide in the two cases, and the same is true of the homology monodromy.  By Theorem \ref{poseval}, applied to the group $G = \pi_1(\widehat{\Sigma})$ we conclude that the homology monodromy has a positive eigenvalue.  But this is a root of the Alexander polynomial of the knot. \qed

\begin{theorem}
\label{L-surgery}
If surgery on a knot $K$ in $S^3$ results in an $L$-space, then the knot group 
$\pi_1(S^3 \setminus K)$ is not bi-orderable.
\end{theorem}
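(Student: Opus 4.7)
I would argue by contradiction, combining Theorem \ref{AN} with two deep external results from Heegaard-Floer theory.

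Suppose $\pi_1(S^3 \setminus K)$ is bi-orderable and some nontrivial surgery $S^3(K, p/q)$ is an L-space (the unknot must be excluded, or handled separately, since $\pi_1(S^3 \setminus \mathrm{unknot}) \cong \mathbb{Z}$ is bi-orderable while lens spaces obtained by surgery on the unknot are L-spaces). A fundamental theorem of Ni (building on work of Ghiggini and Ozsv\'ath--Szab\'o) asserts that if a knot admits a nontrivial L-space surgery, then it must be fibred. Consequently $K$ is a nontrivial fibred knot whose group is bi-orderable, and Theorem \ref{AN} then forces $\Delta_K(t)$ to have at least two real positive roots.

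The plan is to derive a contradiction by showing that the Alexander polynomial of any L-space knot has \emph{no} positive real roots. Invoke the Ozsv\'ath--Szab\'o characterization of L-space knot polynomials: $\Delta_K(t)$ may be written as $\sum_{i=0}^{2k}(-1)^{k-i} t^{n_i}$ with $n_0 < n_1 < \cdots < n_{2k}$, so the nonzero coefficients are all $\pm 1$ and strictly alternate in sign. After factoring out $t^{n_0}$ and setting $a_i = n_i - n_0 > 0$, one is reduced to showing that $p(t) = 1 - t^{a_1} + t^{a_2} - \cdots + t^{a_{2k}}$ is strictly positive for $t > 0$. For $t > 1$, group the terms as $1 + (t^{a_2} - t^{a_1}) + (t^{a_4} - t^{a_3}) + \cdots + (t^{a_{2k}} - t^{a_{2k-1}})$, where each parenthesis is of the form $t^a - t^b$ with $a > b$ and is therefore positive; for $0 < t < 1$, regroup as $(1 - t^{a_1}) + (t^{a_3} - t^{a_2}) + \cdots$, where the inequality $a < b$ combined with $t < 1$ again makes each parenthesis positive; and at $t = 1$ the sum telescopes to $p(1) = 1$. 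Hence $\Delta_K(t) > 0$ on all of $(0, \infty)$, contradicting the conclusion of Theorem \ref{AN}.

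The main obstacle is not the order-theoretic step but the reliance on two substantial inputs from Heegaard-Floer homology---Ni's fibred surgery theorem and the Ozsv\'ath--Szab\'o restriction on L-space knot Alexander polynomials---both of which lie well outside the left-orderable groups framework developed in this book and must be cited essentially as black boxes. Once those are granted, the remaining work is the elementary sign analysis of alternating $\pm 1$ polynomials sketched above, together with Theorem \ref{AN} which does the real order-theoretic heavy lifting.
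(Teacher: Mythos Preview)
Your approach is essentially identical to the paper's: invoke Ni's theorem to conclude $K$ is fibred, invoke the Ozsv\'ath--Szab\'o constraint on $\Delta_K$, show such polynomials have no positive real roots, and appeal to Theorem~\ref{AN}. The paper only outlines this and defers details to \cite{CR12}; you have supplied the sign analysis that the outline omits.

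Two small points. First, in your regrouping for $0 < t < 1$ you wrote $(t^{a_3} - t^{a_2})$, but the term with the smaller exponent should come first to get a positive quantity; the correct pairing is $(1 - t^{a_1}) + (t^{a_2} - t^{a_3}) + \cdots + (t^{a_{2k-2}} - t^{a_{2k-1}}) + t^{a_{2k}}$. Your accompanying sentence shows you had the right idea, so this is just a slip in the indices. Second, your remark about the unknot is well taken: the theorem as literally stated fails for the unknot, and the intended hypothesis is that $K$ is nontrivial (equivalently, that the surgery is genuinely nontrivial in the sense of not recovering a lens space from the unknot). The paper glosses over this, but it is implicit in the cited reference.
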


Of course, one may rephrase this by saying that the group of a knot is bi-orderable, then surgery on that knot never results in an L-space.   

Here is an outline the proof, one may consult \cite{CR12} for details.   
By Yi Ni \cite{Ni07} if surgery on $K$ yields an L-space, $K$ must be fibred.  Moreover, 
Ozsv\'ath and Szab\'o show that the Alexander polynomial of $K$ must have a special form if it admits and L-space surgery.  Then one argues that a polynomial of this form has no positive real roots, so the knot group cannot be bi-ordered.


\chapter{Foliations}

In this chapter we consider a natural way of constructing actions of a fundamental group on an ordered space, via a topological structure on the manifold called a \textit{foliation}\index{foliation}.   Loosely speaking, a foliation of a $n$-dimensional manifold $M$ is a decomposition of $M$ into lower dimensional manifolds, which we can strictly state as follows.

 We can partition the $n$-manifold $M$ into connected $k$-dimensional manifolds called \textit{leaves} for some positive $k<n$, and cover $M$ by charts $\phi: U \rightarrow \mathbb{R}^k \times \mathbb{R}^{n-k}$ such that each leaf $L \subset M$ satisfies 
\[ \phi(L \cap U) = \bigcup_{i \in I} \mathbb{R}^k \times \{ P_i \}
\]
where $\{P_i \}_{i \in I}$ is a collection of points in $\mathbb{R}^{n-k}$. In other words, each connected component of $L \cap U$ is a small bit of $\mathbb{R}^k$, called a plaque.  Overlapping plaques piece together to form maximally connected immersed submanifolds, these are the leaves.  The quantity $n-k$ is called the \textit{codimension} \index{codimension} of the foliation.


\section{Examples}
\begin{example}
\index{suspension foliation}
\label{suspension_foliation_example}
Given manifolds $B$ and $F$, let $\widetilde{B} \rightarrow B$ be a universal cover, and suppose we are given a representation $\rho: \pi_1(B) \rightarrow \mathrm{Homeo}(F)$.  Then one can construct a space $M$, which is a fibre bundle with base $B$ and fibre $F$, by using the action of $\pi_1(B)$ on $\widetilde{B} \times F$ given by 
\[ g \cdot (b, y) = (g (b), \rho(g)(y) ) .
\]
Here, the action of $g \in \pi_1(B)$ on $b \in \widetilde{B}$ is by deck transformations.  Then the quotient 
\[ M \cong ( \widetilde{B} \times F )/ \sim 
\]
inherits a foliation depending on the choice of $\rho$ whose leaves are the images of $\widetilde{B} \times \{y_0\}$.  Foliations constructed in this way are called \textit{suspension} foliations.

It is perhaps easiest to see this in a low-dimensional case, such as the torus, which is a (trivial) bundle $S^1 \hookrightarrow T^2 \rightarrow S^1$.  A representation $\rho : \pi_1(S^1) \rightarrow \mathrm{Homeo}(S^1)$ is determined by the image of a generator of $\pi_1(S^1)$, which is infinite cyclic.  Suppose that one of the generators of $\pi_1(S^1)$ is sent to $f : S^1 \rightarrow S^1$.

Then the suspension foliation is constructed as 
\[ (\mathbb{R} \times S^1) / \sim, \hspace{2em} (x, t) \sim (x+1, f(t))
\]
where we understand that the covering map $\mathbb{R} \rightarrow S^1$ is the quotient $\mathbb{R} \rightarrow \mathbb{R} / \mathbb{Z}$. The leaves are the images of $\mathbb{R} \times \{ t \}$ in the quotient.
 This can also be understood as the quotient
\[  ([0,1] \times S^1)/ \sim \hspace{2em} (0, t) \sim (1, f(t)).
\]
Written as above, the suspension of the map $f$ appears as in Figure \ref{torus_suspension}.

\begin{figure}
\setlength{\unitlength}{9cm}
 \begin{picture}(1,1.07475544)%
    \put(0,0){\includegraphics[width=\unitlength]{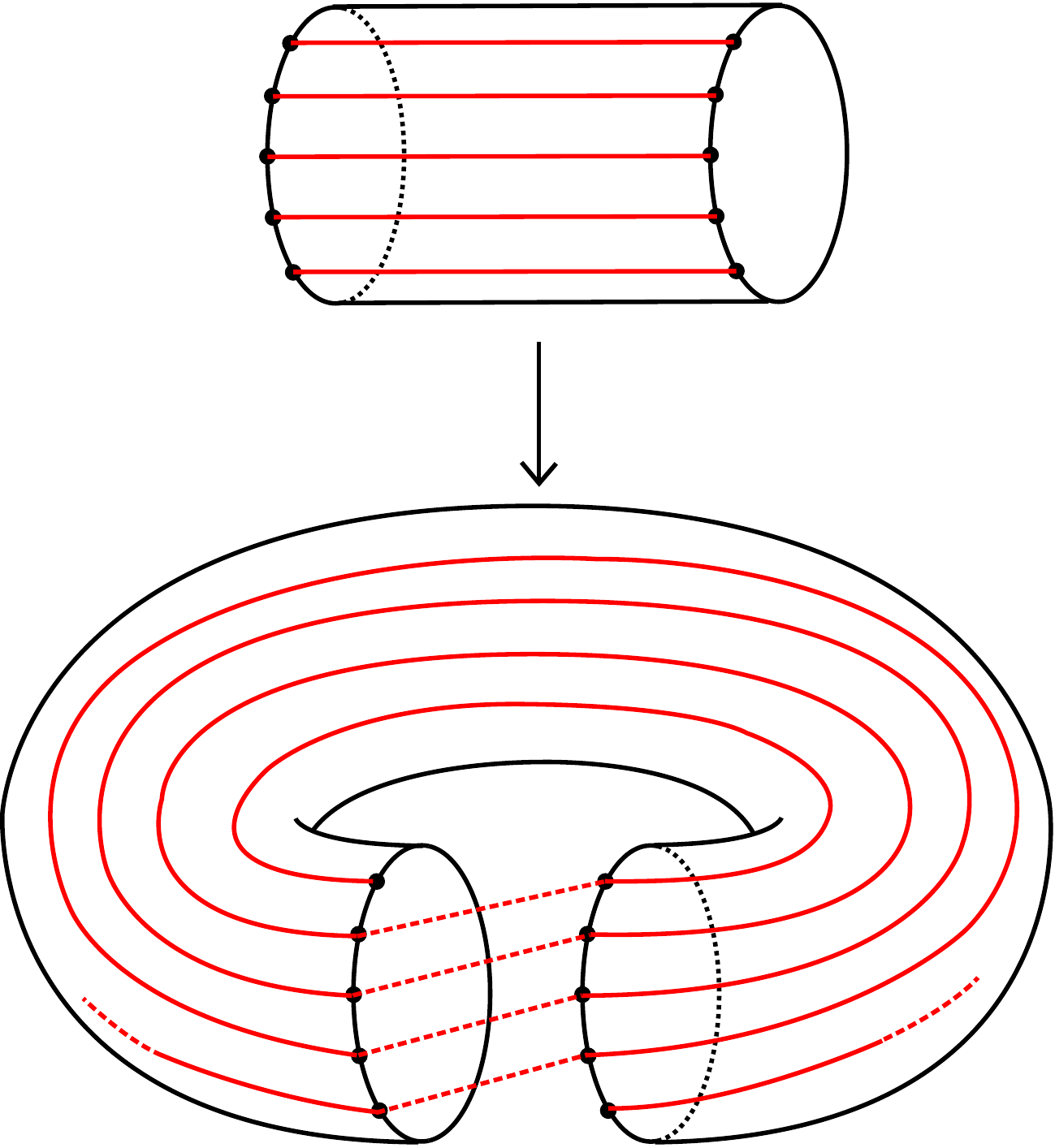}}%
    \put(0.17982127,0.802){$(0, t)$}%
    \put(0.11802056,0.857){$(0, f(t))$}%
    \put(0.09578306,0.912){$(0, f^2(t))$}%
    \put(0.0950021982,0.967){$(0, f^3(t))$}%
    \put(0.11038451,1.022){$(0, f^4(t))$}%
  \end{picture}%

\caption{A suspension foliation of the torus determined by a function $f:S^1 \rightarrow S^1$, with the gluing in the quotient indicated by dotted lines.  Highlighted in red is a portion of a leaf of the foliation, together with its preimage upstairs.}\label{torus_suspension}
\label{crossing sign}
\end{figure}


\end{example}

\begin{example}
\label{reeb annulus}
Consider the vertical strip $[-1, 1] \times \mathbb{R}$, with a foliation whose leaves are described as follows:  We first declare any set of the form 
\[ \left\{ (x, y) : y = \frac{1}{1-x^2}+ c, \mbox{ where } c \in \mathbb{R} \right\}
\]
to be a leaf of the foliation.  This covers the interior of $[-1, 1] \times \mathbb{R}$ by leaves, only two vertical lines remain, namely $x=-1$ and $x=1$.  Declare each vertical line to also be a leaf.  

From this template we can make a foliation of the plane by repeating the construction above on each vertical strip $[k, k+2] \times \mathbb{R}$ where $k \in \mathbb{Z}$ is odd.  The resulting foliation appears in Figure \ref{plane foliation}.

\begin{figure}[h!]
\includegraphics[scale=1.3]{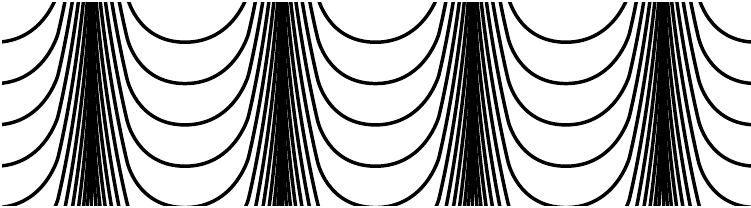}
\caption{A periodic foliation of the plane.}
\label{plane foliation}
\end{figure}

We also create a foliation of the annulus using this template.  Take the quotient map $[ -1, 1] \times \mathbb{R} \rightarrow [-1, 1] \times \mathbb{R} / \mathbb{Z}$ and the image of each leaf projects to a leaf in the quotient, providing us with a foliation of the annulus.
\end{example}

We cannot give many more interesting examples without increasing the dimension of our manifolds:  It is known that the only closed, compact 2-manifolds that admit a foliation (necessarily of codimension 1) are the torus and the Klein bottle \cite{Novikov64}. 

\begin{problem}
\index{Klein bottle} 
Describe the Klein bottle $K$ as a fibre bundle.  Show how your description can be used to construct many suspension foliations of $K$, specifically highlighting the difference between your construction and the construction indicated in Figure \ref{torus_suspension}.
\end{problem}

\begin{example}
\label{reeb_torus}
Consider the foliation of the vertical strip $[-1, 1] \times \mathbb{R}$ of Example \ref{reeb annulus}, and construct $D^2 \times \mathbb{R}$ as a solid of rotation by spinning the strip $[-1, 1] \times \mathbb{R}$ about the $y$-axis.  The resulting space $D^2 \times \mathbb{R}$ inherits a foliation, which descends to a foliation of the solid torus $D^2 \times S^1$ via the quotient map $D^2 \times \mathbb{R} \rightarrow D^2 \times \mathbb{R} / \mathbb{Z}$.  The resulting foliation is called a \textit{Reeb foliation} \index{Reeb foliation} of the solid torus, note that the boundary of the torus is a single leaf of the foliation.
\end{example}

\begin{example}
\label{reeb sphere}
The $3$-sphere can be decomposed as a union of two solid tori.  Explicitly we can take $S^3 \subset \mathbb{R}^4$ to be all the unit vectors, which we can write as \[  S^3 = \{ (x, y , z, w) : x^2 +y^2+z^2+w^2 =1 \}.
\]
Then the two solid tori are
\[ \{(x, y , z, w) \in S^3 :   w^2+x^2 \leq 1/2 \}  \mbox{ and } \{(x, y , z, w) \in S^3 :  y^2+z^2 \leq 1/2 \}
\]
with common boundary the so-called``Clifford torus" $$\{(x, y , z, w) : w^2 + x^2 = 1/2, y^2 +z^2 = 1/2 \}.$$
If each solid torus is given the Reeb foliation from Example \ref{reeb_torus}, then the two foliations piece together to give the Reeb foliation of $S^3$.  Note that we can glue the two foliations together and obtain a foliation of $S^3$, because in each Reeb foliated solid torus the boundary of the torus is a leaf.
\end{example}

\begin{example}
\label{seifert torus}
Here is our first example of a codimension two foliation.   A \textit{model Seifert fibring}  of the solid torus $D^2 \times S^1$ is a decomposition of $D^2 \times S^1$ into disjoint circles, called fibers.  The fibers are constructed by taking the solid torus $T^2$ and building it as
\[ T^2 = (D^2 \times [0,1]) / \sim
\]
where $\sim$ identifies $D^2 \times \{ 0 \}$ with $D^2 \times \{1 \}$ with a $\frac{2 \pi p}{q}$ twist for some $\frac{p}{q} \in \mathbb{Q}$ ($p,q,$ relatively prime positive integers).  Illustrated in Figure \ref{Seifert torus} is the case $q=6$. Then if $p=5$, for example, we would rotate the top by $5$ `clicks' and glue it to the bottom.
\begin{figure}[h!]
\setlength{\unitlength}{1.2cm}
\begin{picture}(3,4)
\put(0,0){
\includegraphics[scale=0.5]{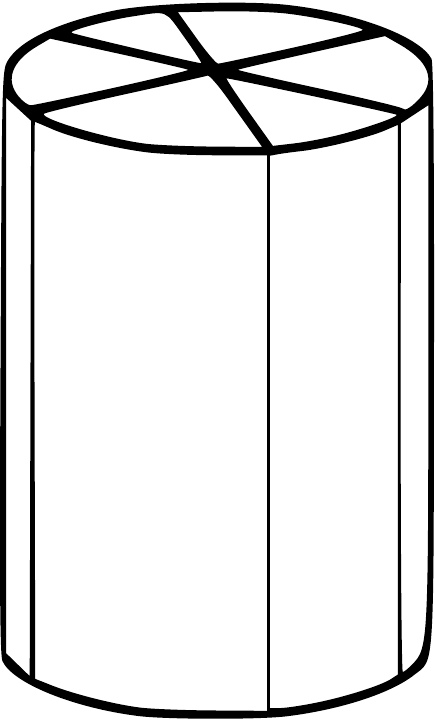}}
\put(0.15,3.25){\includegraphics[scale=0.5]{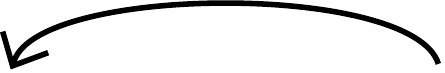}}
\put(2.25,0.15){\includegraphics[scale=0.5]{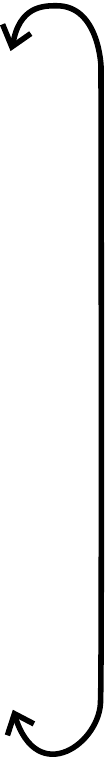}}
\put(0,3.8){Rotate $5$ clicks...}
\put(3,2){then glue top to bottom.}
\end{picture}
\caption{A model Seifert fibering}\label{Seifert torus}
\end{figure}
The leaves of $T^2$ are then built out of segments $\{x \} \times [0,1]$ and are of two kinds:
\begin{itemize}
\item The image of $\{0\} \times [0,1]$, called an exceptional fibre.
\item The image of $q$ equally spaced segments $\{x \} \times [0,1]$ which are glued together end to end, called a regular fibre.
\end{itemize}
This particular foliation of the solid torus will be important in the material to come.
\end{example}

\section{The leaf space}

Given a foliation $\mathcal{F}$ of a manifold $M$, one can construct a quotient space by declaring that two points in $M$ are equivalent if and only if they lie in the same leaf of $\mathcal{F}$.  In other words, you crush each leaf to a single point.  The resulting set is then equipped with the quotient topology and is called the \textit{leaf space} of the foliation $\mathcal{F}$, and it is denoted $M / \mathcal{F}$.

\begin{problem}
Describe the leaf space of a suspension foliation of the torus (Example \ref{suspension_foliation_example}), the Reeb foliation of the annulus (Example \ref{reeb annulus}), and the Reeb foliation of $S^3$ (Example \ref{reeb sphere}).
\end{problem}

\begin{problem}
\label{leaf space action}
Describe the leaf space of the foliation depicted in Figure \ref{plane foliation}.   Thinking of Figure \ref{plane foliation} as the universal cover of a torus $T = S^1 \times S^1$, where the covering map is given by  the quotient $(x, y) \sim (x+2, y+2)$, describe the action of $\pi_1(T)$ on Figure \ref{plane foliation} by deck transformations.  Show that the action of $\pi_1(T)$ by deck transformations on $\mathbb{R}^2$ descends to an action on the leaf space, and describe the action.
\end{problem}

Our interest in the leaf space stems from the situation we observed in Problem ~\ref{leaf space action}.   When $M$ admits a foliation $\mathcal{F}$, there is a corresponding pull-back foliation $\widetilde{\mathcal{F}}$ of the universal cover $\widetilde{M}$.  Then the image of any leaf of $\widetilde{\mathcal{F}}$ under a deck transformation of $\widetilde{M}$ will again be a leaf.  This means that the deck transformations, which are in 1-1 correspondence with elements of $\pi_1(M)$, will act on the leaf space $\widetilde{M} / \widetilde{\mathcal{F}}$.  This fact will be an essential ingredient in both the proof of Theorem \ref{SF theorem} and in Section ~\ref{R-covered section}.
%
%

\section{Seifert fibred spaces}\label{section sfs}

A \textit{Seifert fibring} \index{Seifert fibring} of a $3$-manifold $M$ is a decomposition of $M$ into circles such that the neighbourhood of each circle is fibre-preserving diffeomorphic to the neighbourhood of a fibre in some model Seifert fibring of the solid torus (Example \ref{seifert torus}).  A manifold is called Seifert fibred if it admits a Seifert fibring.  For ease of exposition we will stick to the case of orientable Seifert fibred manifolds. 

Since we have already introduced foliations, it is worth mentioning that Seifert fibred manifolds can also be defined as the $3$-manifolds admitting a codimension $2$ foliation whose leaves are circles.  The equivalence between this definition and our definition in terms of fibred solid torus neighbourhoods is due to a deep theorem of Epstein \cite{Epstein72}.

Here is a way to construct orientable Seifert fibered manifolds, following \cite{Hatchernotes}.  Let $\Sigma$ be a compact, connected, orientable surface with $m$ boundary components.  Choose disks $D_1, \ldots ,D_n \subset \mathrm{int}(\Sigma)$, and let 
\[ \Sigma' = \Sigma \setminus (\mathrm{int}(D_1) \cup \ldots \cup \mathrm{int}(D_n) ).
\]
Let $M'$ denote the manifold $S^1 \times \Sigma'$. 

For each $\partial D_i \subset \Sigma'$ there's a corresponding torus $T_i = S^1 \times \partial D_i \subset \partial M'$.  Then on each torus $T_i$ we fix curves that determine a basis of $\pi_1(T_i)$: our basis will be $[h_i^*] = [\{1\} \times \partial D_i]$ together with $[h]$, the class of a circle $S^1 \times \{ pt \}$ in the product $S^1 \times \Sigma'$.  With these bases, we have a correspondence between curves $\gamma: S^1 \hookrightarrow T_i$ and fractions $\frac{\beta_i}{\alpha_i} \in \mathbb{Q} \cup \{ \infty \}$ by representing the class each such curve relative to our basis:
\[ [\gamma] = \alpha_i [h_i^*] + \beta_i [h]
\]

Now we construct a Seifert fibered manifold $M$ over the surface $\Sigma$ by choosing $n$ reduced fractions $\frac{\beta_i}{\alpha_i} \subset \mathbb{Q}$, for $i =1, \ldots, n$.  Then to each $T_i \subset \partial M'$, attach $D^2 \times S^1$ by gluing $\partial D^2 \times \{ y \}$ to the curve $\alpha_i [h_i^*] + \beta_i [h]$ on $T_i$. Writing $g$ for the genus of $\Sigma$ and $m$ for the number of boundary components, we denote the resulting manifold by 
\[ M( + g, m ; \beta_1/ \alpha_1 , \ldots, \beta_n / \alpha_n)
\]
with $+$ in front of the $g$ to indicate $\Sigma$ is orientable (a minus sign is used for $\Sigma$ non-orientable). \footnote{There is another standard notation for Seifert fibred manifolds which includes an integer `b', which is an Euler class of a certain bundle.  We do not discuss the matter of Euler classes here.}

With some care, we can mimic the above construction when $\Sigma$ is non-orientable by replacing $M'$ with an orientable $S^1$-bundle over $\Sigma$, in which case we end up with a Seifert fibred manifold denoted by $ M( -g, m ; \beta_1/ \alpha_1 , \ldots, \beta_n / \alpha_n)$.  We refer the reader to \cite{Hatchernotes} for details of this case.

\begin{problem}
Thinking of the manifold $M = M( +g, m ; \beta_1/ \alpha_1 , \ldots, \beta_n / \alpha_n)$ as a $3$-manifold with a codimension two foliation $\mathcal{F}$ whose leaves are circles, describe the leaf space $M /\mathcal{F}$.
\end{problem}

This construction is a good way to understand orientable Seifert fibered manifolds, since they all arise in this way.
\begin{proposition} \cite[Proposition 2.1]{Hatchernotes}
\label{SF uniqueness}
Every orientable Seifert fibered manifold is diffeomorphic, via a map which preserves the fibres, to one of the manifolds $M(\pm g, m ; \beta_1/\alpha_1 , \ldots, \beta_n/\alpha_n)$ for some choice of $\Sigma$ and fractions $\beta_i/\alpha_i$.  Two models $M(\pm g, m ; \beta_1/\alpha_1 , \ldots, \beta_n/\alpha_n)$ and $M(\pm g, m ; \beta'_1/\alpha'_1, \ldots, \beta'_n/\alpha'_n)$ are orientation-preserving diffeomorphic if and only if $\beta_i/\alpha_i \cong \beta_i'/\alpha_i' \mbox{ mod }  1$ (up to permuting indices); and if $b=0$ then 
$$ \beta_1/\alpha_1 + \cdots + \beta_n /\alpha_n=  \beta_1'/\alpha_1' + \cdots + \beta_n'/\alpha_n' $$ is also required.
\end{proposition}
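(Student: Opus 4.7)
The plan is to prove existence by removing saturated neighbourhoods of the exceptional fibres and trivializing the resulting circle bundle, then to prove uniqueness by tracking the freedom to change section. For existence, given an orientable Seifert fibered manifold $M$, first observe that the exceptional fibres form a finite set: each fibre has a saturated (fibre-preserving) neighbourhood modelled on Example \ref{seifert torus}, regular fibres have product saturated neighbourhoods, and so exceptional fibres are isolated and hence finite by compactness of $M$. Call them $K_1,\dots,K_n$ and delete pairwise-disjoint saturated solid-torus neighbourhoods $N_i$. The complement $M'$ is then a genuine circle bundle (every remaining fibre is regular) over a compact surface $\Sigma'$ whose boundary consists of the $n$ curves $\partial N_i$ together with the $m$ boundary tori inherited from $\partial M$. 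Since $\Sigma'$ has nonempty boundary it deformation retracts onto a graph, so $H^2(\Sigma';\Z)=0$ and the oriented bundle is trivial. A choice of section identifies $M'$ fibrewise with $S^1\times\Sigma'$; with respect to the induced basis $([h_i^*],[h])$ of $\pi_1(T_i)$, the meridian of the original $N_i$ represents $\alpha_i[h_i^*]+\beta_i[h]$ for coprime $\alpha_i,\beta_i$, and re-gluing the $N_i$ realizes $M$ as $M(+g,m;\beta_1/\alpha_1,\dots,\beta_n/\alpha_n)$, where $g$ is the genus of the closed surface obtained by capping the $\partial N_i$ in $\Sigma'$.

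For the uniqueness up to $\pmod 1$ shifts, suppose two such presentations of $M$ are given by fibre-preserving orientation-preserving diffeomorphic manifolds. Each presentation amounts to a choice of section of the trivial bundle $M'\to\Sigma'$, and two sections differ by a continuous map $s:\Sigma'\to S^1$. On each boundary circle $\partial N_i$, the map $s$ has a well-defined winding number $k_i\in\Z$, and under the corresponding change of trivialization the basis class $[h_i^*]$ is replaced by $[h_i^*]+k_i[h]$. Thus the meridian $\alpha_i[h_i^*]+\beta_i[h]$ rewrites as $\alpha_i[h_i^*]+(\beta_i-k_i\alpha_i)[h]$, showing that $\beta_i/\alpha_i$ is well defined only modulo $1$; permutation of indices simply relabels the $K_i$. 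Conversely, given any prescribed integers $k_i$ one can construct a section with these winding numbers, so integer shifts in the $\beta_i/\alpha_i$ never alter the diffeomorphism type.

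The final condition captures the one extra invariant that appears when the base surface $\Sigma$ is closed (i.e. $m=0$, which in the classical $\{b;\dots\}$ notation of the footnote corresponds to the normalization $b=0$). In this case $\partial\Sigma'=\partial N_1\cup\cdots\cup\partial N_n$, so $\sum_i[\partial N_i]=0$ in $H_1(\Sigma';\Z)$; since section-change maps $s:\Sigma'\to S^1$ are classified by $H^1(\Sigma';\Z)$, pairing with this boundary relation forces
$$\sum_i k_i = 0.$$
Consequently $\sum_i(\beta_i/\alpha_i - \beta_i'/\alpha_i') = -\sum_i k_i = 0$, so $\sum\beta_i/\alpha_i$ becomes a genuine invariant, matching the stated condition. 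When $m>0$, by contrast, the extra boundary components of $\Sigma'$ absorb arbitrary winding numbers, leaving $\sum_i k_i$ unconstrained and no such invariant. The main obstacle will be the bookkeeping in the existence step — specifically, showing that the exceptional fibres really are isolated and that the trivialization of $M'$ matches the combinatorial model so that the $(\alpha_i,\beta_i)$ read off as claimed; the uniqueness analysis is then essentially a calculation in $H^1(\Sigma';\Z)$.
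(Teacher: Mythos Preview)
The paper does not prove this proposition; it is quoted from Hatcher's notes \cite{Hatchernotes} and used as a black box. So there is no ``paper's own proof'' to compare against. That said, your outline is essentially the standard argument Hatcher gives, and the existence half is fine for the orientable-base case.

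There are, however, two genuine gaps. First, you only produce a model $M(+g,m;\dots)$: your trivialization argument uses that $M'$ is a \emph{product} $S^1\times\Sigma'$, which is exactly what fails when the base is nonorientable. In that case the orientable $S^1$-bundle over $\Sigma'$ is twisted (the fibre orientation reverses around orientation-reversing loops in $\Sigma'$), so your section argument does not run as written and the $M(-g,m;\dots)$ models are never reached. Second, your uniqueness argument only shows that \emph{changing the section} alters the $\beta_i/\alpha_i$ by integers with the stated constraint on $\sum k_i$; you have not shown the harder direction, that any fibre-preserving orientation-preserving diffeomorphism between two models is accounted for by a permutation of exceptional fibres together with a change of section. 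That step needs an isotopy argument (carrying the exceptional fibres of one model to those of the other, then straightening on the complementary circle bundle), and it is where most of the work in Hatcher's proof actually lies.

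A smaller point: your reading of ``$b=0$'' as ``$m=0$'' is reasonable in context (the sum $\sum\beta_i/\alpha_i$ becomes an invariant precisely when $\partial M=\emptyset$), but note the paper's footnote indicates $b$ refers to an Euler-class integer in an alternative notation, so some care with conventions is warranted.
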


\begin{problem}
Use the Seifert-Van Kampen theorem to show that the fundamental group of a closed Seifert fibred manifold $M$ constructed from a closed, orientable surface $\Sigma$ (that is $g \geq 0$) as above is:
\[
\begin{split}
&\quad
\pi_1(M) = \langle a_1, b_1, \ldots, a_g, b_g, \gamma_1, \ldots, \gamma_n, h \mid \hspace{10em}  \\
&\quad \hspace{8em} h \mbox{ central }, \gamma_j^{\alpha_j} = h^{-\beta_j}, [a_1, b_1] \ldots [a_g , b_g] \gamma_1 \ldots \gamma_n = 1 \rangle.
\end{split}
\]
\end{problem}

\bigskip

When $g<0$ and $\Sigma$ is closed, the fundamental group turns out to be
\[
\begin{split}
&\quad
\pi_1(M) = \langle a_1, \ldots, a_{|g|}, \gamma_1, \ldots, \gamma_n, h \mid \hspace{10em}  \\
&\quad \hspace{4em} a_j ha_j^{-1} = h^{-1}, \gamma_j^{\alpha_j} = h^{-\beta_j}, \gamma_j h\gamma_j^{-1} = h,  a_1^2\ldots a_{|g|}^2\gamma_1 \ldots \gamma_n = 1 \rangle,
\end{split}
\]
and one can calculate that if $g \neq 0, -1$, then these groups have infinite abelianization.  The group $\pi_1(M)$ will also have infinite abelianization if the underlying surface $\Sigma$ has nonempty boundary, because then the boundary of $M$ consists of a union of tori (for a proof of this fact, just use the same argument as in the proof of Lemma \ref{rank lemma}).  Therefore  if either $g \neq 0, -1$ or $\partial \Sigma \neq \emptyset$, then $|H_1(M)| = \infty$ and in these cases left-orderability is dealt with by using Theorem  \ref{fundamental}.

\begin{theorem} \cite{BRW05}
If $M$ is an orientable closed Seifert fibered manifold and $|H_1(M)| = \infty$, then $\pi_1(M)$ is left-orderable.
\end{theorem}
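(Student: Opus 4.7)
My plan is to read the theorem as a direct application of Theorem \ref{fundamental}. That theorem says that for an orientable irreducible $3$-manifold $M$, $\pi_1(M)$ is left-orderable as soon as it admits a surjection onto a nontrivial left-orderable group. The hypothesis $|H_1(M)|=\infty$ is essentially designed to manufacture such a surjection.

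Concretely, first I would note that $M$ is compact, so $H_1(M;\mathbb{Z})$ is a finitely generated abelian group; together with $|H_1(M)|=\infty$, this forces $H_1(M)$ to have $\mathbb{Z}$ as a quotient. Composing with the Hurewicz homomorphism yields a surjection $\varphi:\pi_1(M)\twoheadrightarrow \mathbb{Z}$. Since $\mathbb{Z}$ is bi-orderable (indeed by Theorem \ref{torsion free abelian} every torsion-free abelian group is bi-orderable), Theorem \ref{fundamental} finishes the job \emph{provided} that $M$ is irreducible.

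The remaining task, and essentially the only technical step, is to handle the case where $M$ is not irreducible. Here I would invoke the standard classification fact that the only closed orientable Seifert fibered $3$-manifold that fails to be irreducible is $S^{2}\times S^{1}$. (In the language of Proposition \ref{SF uniqueness}, if the base has Euler characteristic $\le 0$ the universal cover is either $\mathbb{R}^{3}$ or $\mathbb{H}^{2}\times\mathbb{R}$ or $\widetilde{\mathrm{PSL}(2,\mathbb{R})}$, all contractible, so $M$ is aspherical and hence irreducible; and a case-by-case check on the remaining models with base $S^{2}$ shows that the only one with infinite $H_{1}$ that is not a lens space or spherical space form is $S^{2}\times S^{1}$.) In this exceptional case $\pi_{1}(M)\cong\mathbb{Z}$, which is obviously left-orderable.

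The hard part, if there is one, is the bookkeeping in the second step — making sure no reducible Seifert fibered manifold with $|H_{1}|=\infty$ slips through the net. In practice this is routine 3-manifold topology rather than anything involving ordered groups: the real content of the theorem is simply the combination of Theorem \ref{fundamental} with the observation that infinite $H_{1}$ gives a $\mathbb{Z}$-quotient of $\pi_{1}$. Everything else is either the irreducibility classification for Seifert fibered spaces or the trivial observation that $\mathbb{Z}$ is left-orderable.
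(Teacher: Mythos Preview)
Your proposal is correct and follows exactly the paper's approach: apply Theorem~\ref{fundamental} to the irreducible case using the surjection $\pi_1(M)\to H_1(M)\to\mathbb{Z}$, and observe that the only reducible closed orientable Seifert fibered manifold is $S^2\times S^1$, whose group is $\mathbb{Z}$. The paper's proof is a two-sentence version of precisely this argument.
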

\begin{proof}
Theorem \ref{fundamental} applies to all irreducible Seifert fibred manifolds.  There is only one reducible closed orientable case, $S^1 \times S^2$, and it has group $\Z$.
\end{proof}

Therefore when $M$ is an orientable Seifert fibred manifold, the question of whether or not $\pi_1(M)$ is left-orderable reduces to the case $|H_1(M)| < \infty$.  In this case $\Sigma$ must be either $S^2$ or $\mathbb{R}P^2$ and the groups in question become
\[
\pi_1(M) = \langle \gamma_1, \ldots, \gamma_n, h \mid  h \mbox{ central }, \gamma_j^{\alpha_j} = h^{-\beta_j}, \gamma_1 \ldots \gamma_n = 1 \rangle
\] 
for the $S^2$ case, and for the $\mathbb{R}P^2$ case:
\[
\pi_1(M) = \langle \gamma_1, \ldots, \gamma_n, y, h \mid 
y hy^{-1} = h^{-1}, \gamma_j^{\alpha_j} = h^{-\beta_j}, \gamma_j h\gamma_j^{-1} = h, y^2\gamma_1 \ldots \gamma_n = 1 \rangle.
\]

\begin{problem}
Show that when $\Sigma$ is $S^2$ or $\mathbb{R}P^2$, then the fundamental group $\pi_1(M)$ has finite abelianization. (Hint: If $\phi$ is the abelianization map, the relation $\phi(\gamma_j)^{\alpha_j} = \phi(h)^{\beta_j}$ means that $\phi(\gamma_j)$ and $\phi(h)$ can both be written as powers of the same element.)
\end{problem}

In fact, when $\Sigma = \mathbb{R}P^2$ this presentation allows us to conclude that $\pi_1(M)$ is NOT left-orderable via direction calculation, here is how:  Assume that $h>1$ in some left-ordering of $\pi_1(M)$.  Each of the relations $\gamma_j^{\alpha_j} = h^{-\beta_j}$ forces $h^{-\beta_i} < \gamma_i^{-1} < h^{\beta_i}$ if $\beta_i>0$ and $h^{\beta_i} < \gamma_i^{-1} < h^{-\beta_i}$ if $\beta_i<0$.  Since $\gamma_i$ and $h$ commute for all $i$, these inequalities multiply together as in Problem \ref{multineq} to give $h^{-k} < (\gamma_n^{-1} \ldots \gamma_1^{-1}) < h^k$ for some positive integer $k$.  Therefore $h^{-k} < y^2 < h^k$, and similarly $h^{-k} < y^{-2} < h^k$.

\begin{problem}
Show that $h^{-k} < y^2 < h^k$ and $h^{-k} < y^{-2} < h^k$ where $k>0$ and $h>1$ forces $yhy^{-1}$ to be positive, contradicting $yhy^{-1} = h^{-1} <1$.  Conclude that $\pi_1(M)$ is never left-orderable when $\Sigma = \mathbb{R}P^2$.
\end{problem}

What remains is the case $\Sigma = S^2$, and in this case we can characterize left-orderability of the fundamental group in entirely topological terms.

A foliation of a Seifert fibered manifold is called \textit{horizontal} \index{horizontal foliation} if regular fibers are transverse to the leaves, meaning that $[h]$ is the class of a curve that cuts transversely through the plaques in $M$. A foliation is \textit{co-orientable} \index{co-orientable foliation} if the leaves of the foliation admit a coherent choice of normal vector.  

\begin{theorem} \cite{BRW05}
\label{SF theorem}
Suppose that $M \ncong S^3$ is an orientable Seifert fibred manifold over the surface $\Sigma =S^2$, so that $|H_1(M) | < \infty$.  Then $\pi_1(M)$ is left-orderable if and only if $M$ admits a co-orientable horizontal foliation.
\end{theorem}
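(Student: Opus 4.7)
The plan is to prove the two directions separately, using foliations to build actions on the real line (and vice versa). Throughout, write $h \in \pi_1(M)$ for the class of a regular fiber, which is central and, under our standing hypothesis $|H_1(M)|<\infty$ with $M \not\cong S^3$, has infinite order.

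For the direction $(\Leftarrow)$, assume $M$ carries a co-orientable horizontal foliation $\mathcal{F}$. I will lift $\mathcal{F}$ to a foliation $\widetilde{\mathcal{F}}$ of $\widetilde{M}$, and first establish that the leaf space $L := \widetilde{M}/\widetilde{\mathcal{F}}$ is homeomorphic to $\mathbb{R}$. The argument has three ingredients: co-orientability produces a consistent transverse orientation on $\widetilde{\mathcal{F}}$, horizontality ensures that any lift $\widetilde{\phi}$ of a regular fiber meets every leaf and meets each leaf transversely, and compactness of $M$ together with simple connectivity of $\widetilde{M}$ rules out non-Hausdorff branching. Together these make $L$ a simply connected, orientable 1-manifold, hence $\mathbb{R}$. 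Deck transformations permute leaves, giving a representation $\rho: \pi_1(M) \to \mathrm{Homeo}_+(\mathbb{R})$. Its kernel $K$ is the stabilizer of every leaf; writing $L_0$ for a leaf of $\widetilde{\mathcal{F}}$, the group $K$ is exactly the deck group of the cover $L_0 \to \pi(L_0)\subset M$, where $\pi(L_0)$ is a leaf of $\mathcal{F}$. In particular $K$ is a subgroup of the fundamental group of a surface, hence left-orderable by Theorem \ref{BOsurface} and the fact that subgroups of left-orderable groups are left-orderable. Since $\pi_1(M)/K$ embeds into $\mathrm{Homeo}_+(\mathbb{R})$ and is therefore left-orderable, Problem~\ref{extension} gives a left-ordering on $\pi_1(M)$.

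For the direction $(\Rightarrow)$, assume $\pi_1(M)$ is left-orderable and build a horizontal co-orientable foliation. I will use the ordering to produce (via the dynamic realization of Section \ref{dynamicalsection}) a faithful representation $\rho: \pi_1(M) \to \mathrm{Homeo}_+(\mathbb{R})$. The crucial step is to modify $\rho$ so that the central element $h$ is sent to the translation $x \mapsto x+1$. Since $h$ is central and has infinite order, a careful choice of ordering---or rather a choice guaranteeing that $h$ is \emph{cofinal} (every element of $\pi_1(M)$ is dominated by some power of $h$)---ensures that $\rho(h)$ is a fixed-point-free homeomorphism of $\mathbb{R}$ which commutes with every element of $\rho(\pi_1(M))$. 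By a standard conjugation, we may assume $\rho(h)$ is the unit translation, so the representation descends to a representation $\bar{\rho}: \pi_1(M)/\langle h\rangle \to \mathrm{Homeo}_+(S^1)$ of the base orbifold group of the Seifert fibration.

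Finally, I will construct the foliation by suspension. Realize $M$ as a Seifert fibered space over the orbifold $\Sigma = S^2(\alpha_1,\ldots,\alpha_n)$, and let $\widetilde{\Sigma}$ denote the universal cover of the orbifold. The representation $\bar{\rho}$ yields a diagonal action of $\pi_1^{\mathrm{orb}}(\Sigma)$ on $\widetilde{\Sigma}\times S^1$, and the quotient $N := (\widetilde{\Sigma}\times S^1)/\pi_1^{\mathrm{orb}}(\Sigma)$ is a Seifert fibered space over $\Sigma$ whose circle fibers are transverse to the foliation induced by the pages $\widetilde{\Sigma}\times\{\text{pt}\}$. To conclude, I must check that $N$ is fiber-preservingly diffeomorphic to $M$; this reduces to matching the Seifert invariants, which is a computation using the rotation numbers of $\bar{\rho}(\gamma_i)$ (each constrained by $\gamma_i^{\alpha_i}=h^{-\beta_i}$) compared with $\beta_i/\alpha_i$, in the spirit of Eisenbud--Hirsch--Neumann. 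The transverse orientation of the pages makes the resulting foliation co-orientable.

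The hardest step is the assertion that the left-ordering can be chosen so that $h$ acts as a translation in the dynamic realization, and the subsequent realization of the correct Seifert invariants by the rotation numbers $\mathrm{rot}(\bar{\rho}(\gamma_i))$. This is a rigidity question: in general a representation into $\mathrm{Homeo}_+(S^1)$ need not have the prescribed rotation numbers, and cofinality of $h$ depends on the choice of positive cone. Both obstacles are resolved by exploiting the central position of $h$ in $\pi_1(M)$ and by replacing the ordering, if necessary, with a new one obtained by convexifying along the central subgroup $\langle h\rangle$, but this replacement is the nontrivial technical heart of the proof and is where I expect the real work to lie.
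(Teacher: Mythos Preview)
Your overall architecture matches the paper's, but you have misidentified where the work lies in the $(\Rightarrow)$ direction. You flag as ``the nontrivial technical heart'' the problem of arranging that $\rho(h)$ act without fixed points, proposing to achieve this by selecting a special ordering in which $h$ is cofinal, or by ``convexifying along the central subgroup.'' In fact this holds automatically for the dynamic realization of \emph{any} left-ordering: if $\hat\rho(h)$ fixed some $x_0\in\R$, then the relations $\gamma_j^{\alpha_j}=h^{-\beta_j}$ force each $\hat\rho(\gamma_j)^{\alpha_j}$ to fix $x_0$, hence each $\hat\rho(\gamma_j)$ fixes $x_0$ (an order-preserving homeomorphism whose positive power fixes a point must itself fix that point), so $x_0$ is a global fixed point of the action---impossible for the dynamic realization of an infinite group. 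The paper dispatches this in a one-line exercise; no special choice of ordering and no convexification is needed. Similarly, for matching the Seifert invariants of the suspension with those of $M$, the paper offers a shortcut you omit: rather than computing rotation numbers \`a la Eisenbud--Hirsch--Neumann, one observes that $\pi_1(\widehat M)\cong\pi_1(M)$ by construction and invokes the fact that (away from lens spaces) the fundamental group determines the Seifert manifold.

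For the $(\Leftarrow)$ direction your kernel analysis is correct but heavier than necessary: once you have a nontrivial $\rho:\pi_1(M)\to\mathrm{Homeo}_+(\R)$, the paper simply applies Theorem~\ref{fundamental}, avoiding any discussion of $\ker\rho$. One caution: your claim that ``compactness of $M$ together with simple connectivity of $\widetilde M$ rules out non-Hausdorff branching'' is not a valid general argument---taut foliations of closed $3$-manifolds can have non-Hausdorff lifted leaf space. What actually forces $\widetilde M/\widetilde{\mathcal F}\cong\R$ here is the horizontal structure: a lifted regular fiber meets each leaf exactly once, giving a bijection with $\R$. The paper defers this to \cite{EHN81}.
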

\begin{proof}
First we suppose that $\pi_1(M)$ is left-orderable, and we fix a corresponding dynamic realization $\hat{\rho}: \pi_1(M) \rightarrow \mathrm{Homeo}_+(\mathbb{R})$ as constructed in Section ~\ref{dynamicalsection}. 

\begin{problem}
 Using the relators $\gamma_j^{\alpha_j} = h^{-\beta_j}$, argue that $\hat{\rho}(h)(x_0) = x_0$ implies that $\hat{\rho}(g)(x_0) = x_0$ for all $g \in \pi_1(M)$.  Therefore $\hat{\rho}(h)$ must act without fixed points.   
 \end{problem}
 
 \begin{problem}
Show that any homeomorphism $f: \mathbb{R} \rightarrow \mathbb{R}$ that acts without fixed points must be conjugate to a translation by either $+1$ or $-1$ (Hint: If $f$ has no fixed points, then the intervals $[f^k(0), f^{k+1}(0)]$ partition $\mathbb{R}$.  Conjugate each interval $[f^k(0), f^{k+1}(0)]$ to the interval $[k, k+1]$ via an appropriate function $g_k$ and then piece together all of the $g_k$'s).
\end{problem}

By the previous problems, we assume without loss of generality that $\hat{\rho}(h)$ conjugates to $\rho(h)(x) =x+1$ and so $\hat{\rho}$ conjugates correspondingly to a representation $\rho: \pi_1(M) \rightarrow \mathrm{Homeo}_+(\mathbb{R})$.
Since $\Sigma = S^2$ we know that $h \in \pi_1(M)$ is central, therefore the image of $\rho$ is contained within a certain subgroup of $\mathrm{Homeo}_+(\mathbb{R})$, namely
\[ 
\rho: \pi_1(M) \widetilde{\rightarrow \mathrm{Homeo}_+(S^1)} = \{ f \in \mathrm{Homeo}_+(\mathbb{R}) \mid f(x+1) = f(x) +1 \}.
\]
There is a second homomorphism of interest, which we will call $\phi$, and it is the quotient map
\[ \phi: \pi_1(M) \rightarrow \pi_1(M) / \langle h \rangle.
\]
The group $\pi_1(M) / \langle h \rangle$ acts on a surface $X$, specifically $X$ is the ``universal orbifold cover'' of $S^2$ with cone points of order $\alpha_1, \ldots, \alpha_n$.  For details of this action and a construction of $X$, see \cite[pp. 423 and Lemma 3.2]{Scott83b}.

We then construct a manifold $\widehat{M}$ as a quotient
\[ \widehat{M} = (X \times \mathbb{R} )/ \sim
\] 
where $\sim$ is defined by $(x, t) \sim (\phi(g)(x), \rho(g)(t))$.  Then by construction, we get $\pi_1(M) \cong \pi_1(\widehat{M})$, and from this we can check that in fact $M$ and $\widehat{M}$ are homeomorphic (either by explicitly computing the fractions $\beta_i/\alpha_i$ corresponding to $\widehat{M}$ and then applying Proposition \ref{SF uniqueness}, or by using the fact that the fundamental group of a $3$-manifold determines the manifold, as long as it's not a lens space).  So, this is a way of constructing our original manifold $M$, and this construction makes it clear that $M$ admits a horizontal, co-orientable codimension one foliation:

The planes $X \times \{t\}$ descend to leaves as in the torus example, and the lines $\{ x \} \times \mathbb{R}$ descend to Seifert fibers.  The Seifert fibers are obviously transverse to the leaves, and provide a coherent choice of normal to the leaves as well.

Conversely, suppose that $M$ admits a horizontal co-orientable foliation $\mathcal{F}$.  Let $p : \widetilde{M} \rightarrow M$ be the universal cover, and $\widetilde{\mathcal{F}}$ the pullback foliation of $\widetilde{M}$.  The fiber $h$ in $M$ pulls back to $p^{-1}(h) \cong \mathbb{R}$, and every leaf $\widetilde{L} \subset \widetilde{M}$ intersects $p^{-1}(h)$ transversely exactly once.  Collapsing each leaf $\widetilde{L}$ to a point, we therefore get 
\[ \widetilde{M}/\widetilde{F} \cong \mathbb{R},
\]
a copy of the reals (see \cite{EHN81} for full details).  The action of $\pi_1(M)$ on $\widetilde{M}$ by deck transformations descends to an action on $\widetilde{M}/\widetilde{F}$, and ``co-orientable'' guarantees that the action will be order-preserving.  Thus we have a representation 
\[ \rho: \pi_1(M) \rightarrow \mathrm{Homeo}_+(\mathbb{R})
\]
and it follows that $\pi_1(M)$ is left-orderable by applying Theorem \ref{fundamental}.
\end{proof}

\begin{problem}
Verify the details of the proof above.  Consult \cite[Section 6]{BRW05} or \cite[Proposition 6.3]{BCpreprint} if you get stuck.
\end{problem}

The proof above sets up a `correspondence' between foliations and left-orderings in the case that $M$ is Seifert fibred.  It is worth noting that this correspondence also extends to the case of manifolds $M$ which are constructed by gluing together Seifert fibred manifolds along torus boundary components, known as graph manifolds \cite{BCpreprint}.  We conclude this section by considering the case of Seifert fibred homology spheres; we are grateful to Cameron Gordon for pointing this out.

\begin{theorem}\label{Seifert homology sphere} If $M$ is a Seifert fibred homology sphere other than the Poincar\'e homology sphere $\Sigma(2,3,5)$, then $\pi_1(M)$ is left-orderable.
\end{theorem}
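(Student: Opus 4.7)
The plan is to reduce the theorem to a single geometric case via the classification of Seifert fibred homology spheres. Since $M$ is an integer homology sphere, $|H_1(M)| = 1$ is finite, so by the discussion preceding Theorem \ref{SF theorem} the base orbifold of any Seifert fibring of $M$ must have underlying surface $S^2$: the case $\Sigma = \mathbb{R}P^2$ was ruled out there (its group is never left-orderable, and in fact $H_1$ would be nontrivial), while a positive-genus base or nonempty boundary would force $H_1(M)$ to be infinite. Reading off the presentation of $\pi_1(M)$ in the $S^2$ case and abelianizing, a short computation shows that $H_1(M) = 0$ forces the multiplicities $\alpha_1,\dots,\alpha_n$ to be pairwise coprime, so $M$ is a Brieskorn-type Seifert homology sphere.

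Next I would invoke the geometric classification. A Seifert fibred manifold over $S^2$ with pairwise coprime exceptional multiplicities carries one of the geometries $S^3$, $\mathrm{Nil}$, or $\widetilde{\mathrm{PSL}}(2,\mathbb{R})$, determined by the sign of the rational orbifold Euler characteristic $\chi^{\mathrm{orb}} = 2 - \sum_{i}(1 - 1/\alpha_i)$. A direct arithmetic check shows no pairwise coprime tuple satisfies $\chi^{\mathrm{orb}} = 0$, and the only ones giving $\chi^{\mathrm{orb}} > 0$ are the empty/short tuples (yielding $S^3$, whose trivial group is vacuously left-orderable) and $(2,3,5)$ (yielding $\Sigma(2,3,5)$, which is excluded). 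So under the hypotheses of the theorem, either $\pi_1(M)$ is trivial or $M$ is modelled on $\widetilde{\mathrm{PSL}}(2,\mathbb{R})$.

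In the latter case, $M$ is a quotient $\Gamma \backslash \widetilde{\mathrm{PSL}}(2,\mathbb{R})$ for a torsion-free discrete subgroup $\Gamma$ acting by left translation, so $\pi_1(M) \cong \Gamma$ embeds in $\widetilde{\mathrm{PSL}}(2,\mathbb{R})$. By Example \ref{special linear groups}, $\widetilde{\mathrm{PSL}}(2,\mathbb{R})$ embeds in $\mathrm{Homeo}_+(\mathbb{R})$ and is therefore left-orderable. Since left-orderability is inherited by subgroups, $\pi_1(M)$ is left-orderable, completing the proof.

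The main obstacle will be justifying the appeal to the geometric classification, in particular arranging that $\pi_1(M)$ acts on its universal cover by left translations of $\widetilde{\mathrm{PSL}}(2,\mathbb{R})$ rather than by more general isometries of the model geometry. An alternative route avoiding this is to invoke Theorem \ref{SF theorem}: it would suffice to produce a co-orientable horizontal foliation on $M$, reducing the question to a combinatorial inequality on the Seifert invariants $\beta_i/\alpha_i$ (of Eisenbud--Hirsch--Neumann / Jankins--Neumann type) whose failure characterizes exactly $\Sigma(2,3,5)$ among Seifert fibred homology spheres with infinite fundamental group. Either route requires some external input about Seifert fibred homology spheres, but once that input is available the ordering itself comes essentially for free from the left-invariant structure on $\widetilde{\mathrm{PSL}}(2,\mathbb{R})$ already established in the text.
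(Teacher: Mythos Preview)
Your approach is sound and reaches the same destination as the paper, but by a genuinely different path. The paper does not invoke the Thurston geometrization of Seifert fibred spaces. Instead, from the presentation of $\pi_1(M)$ it writes down an explicit \emph{surjection} $\pi_1(M)\to\Delta(\alpha_1,\alpha_2,\alpha_3)$ onto the triangle group (sending $h\mapsto 1$ and $\gamma_i\mapsto 1$ for $i>3$), realizes $\Delta(\alpha_1,\alpha_2,\alpha_3)$ inside $\mathrm{PSL}(2,\mathbb{R})$ in the hyperbolic case, and then uses the homology-sphere hypothesis in the form $H^2(\pi_1(M);\mathbb{Z})\cong H_1(M;\mathbb{Z})=0$ to kill the Euler-class obstruction and lift this representation to $\widetilde{\mathrm{PSL}}(2,\mathbb{R})$. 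The resulting map is only a nontrivial homomorphism, not an embedding, so the paper finishes with Theorem~\ref{fundamental} rather than with ``subgroups of left-orderable groups are left-orderable.''

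What each buys: your route gives an honest embedding $\pi_1(M)\hookrightarrow\widetilde{\mathrm{PSL}}(2,\mathbb{R})$ and so avoids both the lifting argument and Theorem~\ref{fundamental}, at the cost of importing the geometric classification and, as you correctly flag, the fact that $\pi_1(M)$ lands in $\widetilde{\mathrm{PSL}}(2,\mathbb{R})$ itself rather than in the full isometry group of that geometry. The paper's route is more elementary and self-contained: it needs only the triangle-group realization in $\mathrm{PSL}(2,\mathbb{R})$ and the vanishing of $H^2$, both of which are immediate here, and it illustrates the power of Theorem~\ref{fundamental}. Your alternative suggestion via Theorem~\ref{SF theorem} and the Jankins--Neumann inequalities would also work but is heavier still; the paper in fact deduces the existence of the horizontal foliation as a \emph{corollary} of this theorem rather than as an input to it.
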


\begin{proof}
The sphere $S^3$ is simply-connected, so we assume $M \ncong S^3$.
First we note that we must have $\Sigma = S^2$ and in the presentation
\[
\pi_1(M) = \langle \gamma_1, \ldots, \gamma_n, h \mid  h \mbox{ central }, \gamma_j^{\alpha_j} = h^{-\beta_j}, \gamma_1 \ldots \gamma_n = 1 \rangle
\] 
we have $n \ge 3$ and the integers $\alpha_i$ are pairwise relatively prime (see for example \cite[p. 404, Theorem 12]{ST80}).  Now consider the group 
\[\Delta(\a_1, \a_2, \a_3) = \langle x, y, z \mid x^{\a_1} = y^{\a_2} = z^{\a_3} = xyz = 1 \rangle, \] which is the so-called triangle group.  If the $\a_i \ge 2$ are pairwise relatively prime, this is an infinite group, with the exception of $\{ \a_1, \a_2, \a_3\} = \{2, 3, 5\}$, which corresponds to $M \cong \Sigma(2,3,5)$.  For all other cases, $\Delta(\a_1, \a_2, \a_3)$ is isomorphic to a group of orientation-preserving isometries of the hyperbolic plane $\H^2$, a subgroup of index 2 of the group of isometries generated by reflection in the sides of a hyperbolic triangle having angles $\pi/\a_1, \pi/\a_2, \pi/\a_3$.  The case $\Delta(2,3,7)$ is illustrated as the subgroup of $\mathrm{Isom}_+(H^2)$ preserving the tesselation illustrated for the Poincar\'{e} disk model of $\H^2$ in Figure \ref{237}. 

Now, considering the $\a_i$ in increasing order, there is a surjective homomorphism $\pi_1(M) \to \Delta(\a_1, \a_2, \a_3)$ obtained by mapping 
$$\gamma_1 \to x, \gamma_2 \to y, \gamma_3 \to z, \gamma_i \to 1 ( i>3),  h \to 1.$$
Recalling that $\mathrm{Homeo}_+(\H^2) \cong \mathrm{PSL}(2, \R)$, this defines a nontrivial representation
$\varphi : \pi_1(M) \to \mathrm{PSL}(2, \R)$.  Since the universal cover of $M$ is contractible, $M$ is a 
$K(G, 1)$ (Eilenberg-MacLane space) and so 
$H^2(\pi_1(M);\Z)) \cong H^2(M; \Z) \cong H_1(M; \Z) = 0.$  Therefore there is no obstruction to lifting $\varphi$ to
$\tilde{\varphi}: \pi_1(M) \to \widetilde{ \mathrm{PSL}(2, \R)}$.  As noted in Example \ref{special linear groups},
$\widetilde{ \mathrm{PSL}(2, \R)}$ can be considered a subgroup of $\mathrm{Homeo}_+(\R)$, and so $\pi_1(M)$ has a nontrivial left orderable quotient.  It follows from Theorem \ref{fundamental} that $\pi_1(M)$ is left-orderable.
\end{proof}

\begin{figure}
\includegraphics[scale=0.6]{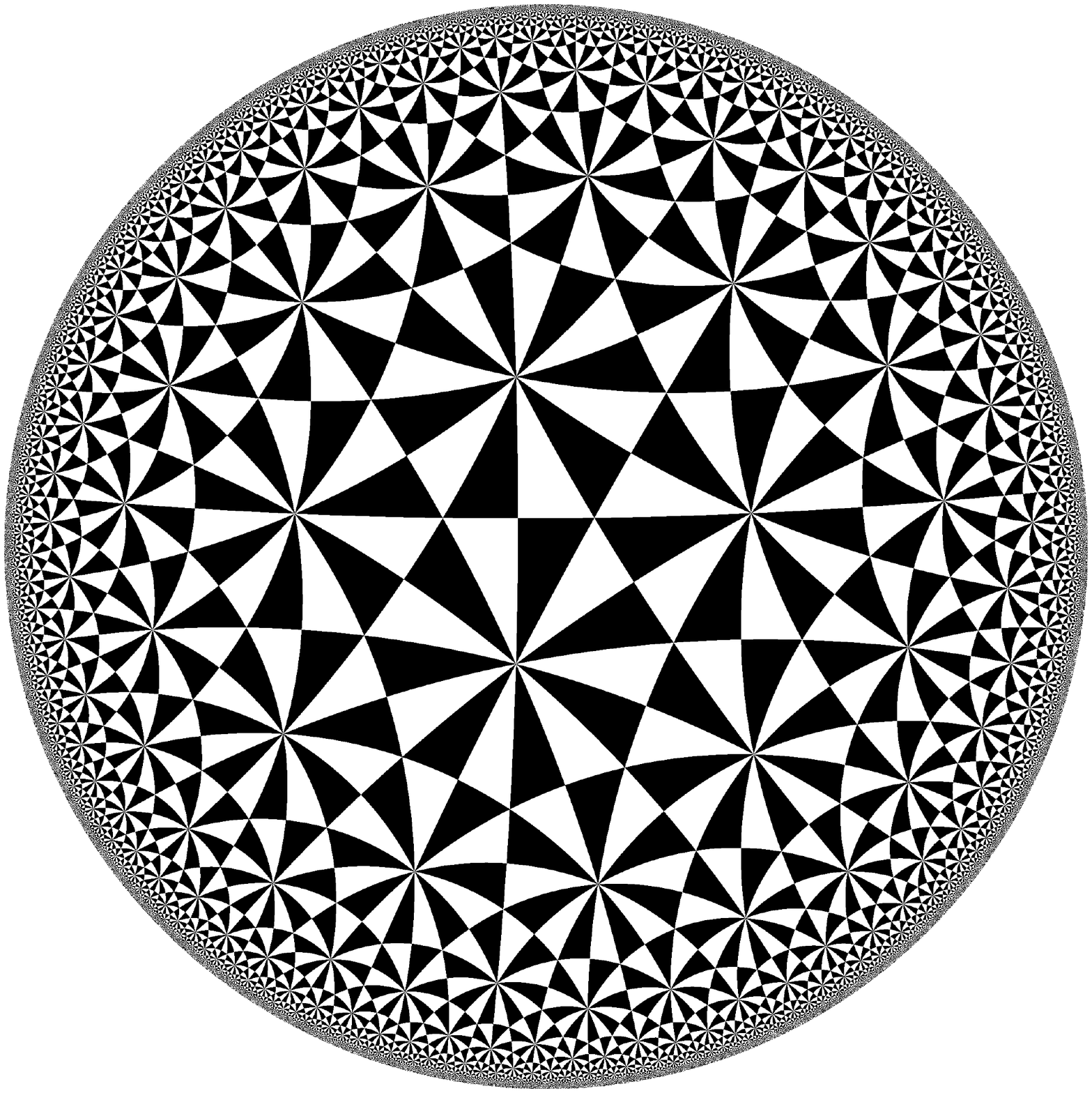}
\label{237}
\vskip-2cm
\caption{A tesselation of $\H^2$ corresponding to $\Delta(2,3,7)$.}
\end{figure}

\begin{corollary} If $M$ is a Seifert fibred homology sphere with infinite fundamental group, then $M$ has a co-orientable horizontal foliation. 
\end{corollary}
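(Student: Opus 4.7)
The plan is to deduce the corollary directly by assembling the two preceding results: Theorem \ref{Seifert homology sphere}, which produces a left-ordering of $\pi_1(M)$, and Theorem \ref{SF theorem}, which turns a left-ordering into a co-orientable horizontal foliation.

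First I would observe that the hypotheses exclude the degenerate cases: since $\pi_1(M)$ is infinite, $M$ is neither $S^3$ (which is simply-connected) nor the Poincar\'{e} homology sphere $\Sigma(2,3,5)$ (whose fundamental group has order $120$). So $M$ falls under the hypothesis of Theorem \ref{Seifert homology sphere}, and we conclude that $\pi_1(M)$ is left-orderable.

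Next I would verify that the topological hypothesis of Theorem \ref{SF theorem} is met. Because $M$ is a homology sphere, $|H_1(M;\Z)| = 1 < \infty$. Moreover, as already invoked in the proof of Theorem \ref{Seifert homology sphere} (see \cite{ST80}), a Seifert fibred homology sphere is necessarily fibred over the base $\Sigma = S^2$ (the base $\mathbb{R}P^2$ never gives rise to a homology sphere, and the other possibilities are excluded by finiteness of $H_1(M)$). Together with orientability of $M$ (which is automatic for a homology sphere), these are precisely the conditions required to invoke Theorem \ref{SF theorem}.

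Finally, applying the nontrivial implication of Theorem \ref{SF theorem} to the left-ordering produced in the first step delivers a co-orientable horizontal foliation on $M$, completing the proof. The only subtlety worth checking is the claim that the base surface is forced to be $S^2$ rather than $\mathbb{R}P^2$; otherwise the argument is a direct chain of implications with no real obstacle.
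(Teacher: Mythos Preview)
Your proposal is correct and follows exactly the intended route: the paper states the corollary immediately after Theorem~\ref{Seifert homology sphere} without a separate proof, leaving it as the evident combination of that theorem with Theorem~\ref{SF theorem}, precisely as you have spelled out. Your observation that the base surface must be $S^2$ (already used in the proof of Theorem~\ref{Seifert homology sphere}) is the only point requiring a moment's thought, and you handle it correctly.
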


\section{$\mathbb{R}$-covered foliations}
\label{R-covered section}

As we saw in Theorem \ref{lickorish wallace}, every $3$-manifold can be constructed from the $3$-sphere by removing a family of tori and gluing them back differently \cite{Lickorish62, Lickorish63, Wallace60}.  By appropriately modifying the Reeb foliation of $S^3$ (Example \ref{reeb sphere}), and by equipping each re-glued torus with a Reeb foliation (Example \ref{reeb_torus}), it is possible to use the Lickorish-Wallace theorem to equip \textit{any} $3$-manifold with a codimension one foliation \cite{Lickorish65, Wood69}. Therefore we need our foliations to have a bit of added structure in order to make the question of their existence an interesting one (just as we asked for co-orientable and horizontal in the Seifert fibred case).  The property that one usually requires is that the codimension one foliation be \textit{taut}, meaning there is a transverse circle embedded in the $3$-manifold which intersects every leaf of the foliation.  Note that every horizontal foliation of a Seifert fibred 3-manifold is taut.

In this section, we will present a few examples of taut foliations that satisfy the stronger condition of being $\mathbb{R}$-covered: this means that the leaf space $\widetilde{M} /\widetilde{\mathcal{F}}$ is homeomorphic to $\mathbb{R}$ (here, $\widetilde{M}$ is the universal cover of $M$ and $\widetilde{\mathcal{F}}$ is the pullback foliation of $\widetilde{M}$).  This stronger condition is sufficient to guarantee left-orderability of the fundamental group when the foliation is also co-orientable.

\begin{example}
\index{Klein bottle} 
Consider the plane $\mathbb{R}^2$ and the isometries
\[ f(x, y) = (1+x, -y) \mbox{ and } g(x,y) = (x, 1+y).
\]
These maps generate a subgroup $H$ of the group of isometries of $\mathbb{R}^2$, and the quotient space $\mathbb{R}/H$ is homeomorphic to the Klein bottle $K$.  

There are two natural codimension one foliations $\widetilde{\mathcal{F}}_1$ and  $\widetilde{\mathcal{F}}_2$ of the plane $\mathbb{R}^2$, namely by horizontal and vertical lines.  Since the action of $H$ on $\mathbb{R}^2$ preserves both of these foliations, each descends to a foliation $\mathcal{F}_i$ of $K$ by circles.  Moreover the leaf space $\mathbb{R}^2/\widetilde{\mathcal{F}}_i$ is homeomorphic to $\mathbb{R}$, so each foliation $\mathcal{F}_i$ of $K$ is $\mathbb{R}$-covered.
\end{example}

\begin{problem}
Verify the details of the preceding example.  By identifying $\pi_1(K)$ with the group $H$, describe the action of $\pi_1(K)$ on $\mathbb{R}^2/\widetilde{\mathcal{F}}_i \cong \mathbb{R}$ that arises from each foliation $\mathcal{F}_i$.
\end{problem}

\begin{problem}
Set $S=  \{ (x, y, z) \mid -1 \leq z \leq 1 \}$ and define isometries $f,g : S \rightarrow S$ according to the formulas 
\[ f(x, y, z) = (1+x, -y, -z) \mbox{ and } g(x,y,z) = (x, 1+y, z).
\]
As in Section \ref{torsion free nonlo section}, let $H$ denote the subgroup of isometries of $S$ generated by $f$ and $g$.  Then set $K \tilde{\times} I := S /H$, which is the twisted $I$-bundle over the Klein bottle $K$ and has the same fundamental group as $K$, namely $\pi_1(K \tilde{\times} I) = \langle x, y, \mid xyx^{-1} = y^{-1} \rangle $.   The boundary of $K \tilde{\times} I$ is a torus $T$, and as generators of $\pi_1(T)$ we take $m= x^2$ and $l =y$.  Then take two copies of $K \tilde{\times} I$ and glue them together to create a new manifold $M$, using a homeomorphism $T \rightarrow T$ which induces the map
\[ \phi_* =  \left( \begin{array}{cc}
p & q  \\
r & s
 \end{array} \right)
\]
relative to the bases $\{ m, l \}$ on each copy of $T$.
The fundamental group of $M$ is the same group considered in Example \ref{glued Klein bottles}, it is:
\[ \pi_1(M) = \langle x_1, y_1, x_2, y_2 | x_1y_1x_1^{-1} = y_1^{-1}, x_2y_2x_2^{-1} = y_2^{-1}, x_1^{2p}y_1^q = x_2^2, x_1^{2r}y_1^s=y_2  \rangle 
\]

Show that the manifold $M$ admits a foliation $\mathcal{F}$ whose leaves are Klein bottles, except for one leaf which is a torus.  Show that $\mathcal{F}$ is $\mathbb{R}$-covered, but that the foliation is not co-orientable.
\end{problem}

The foliations constructed in the proof of Theorem \ref{SF theorem} were also $\mathbb{R}$-covered, and they are co-orientable as long as the base surface $\Sigma$ used in the construction of the Seifert fibred manifold $M$ is an orientable surface.

\section{The universal circle}

If a $3$-manifold $M$ has nonempty boundary and its boundary is a union of tori, then each boundary torus $T$ has an associated inclusion map $i: T \hookrightarrow M$.  If the boundary is \textit{incompressible}, then the inclusion of each boundary torus induces an injective homomorphism of fundamental groups $i_* : \pi_1(T) \rightarrow \pi_1(M)$, so that $\pi_1(M)$ contains a subgroup that is isomorphic to $\mathbb{Z} \oplus \mathbb{Z}$.  Subgroups that arise in this way are called \textit{peripheral} subgroups.  A $3$-manifold is called \textit{atoroidal} if its fundamental group $\pi_1(M)$ doesn't contain any $\mathbb{Z} \oplus \mathbb{Z}$ subgroups other than the peripheral ones.

It is a deep theorem of Thurston that co-orientable, taut foliations of atoroidal $3$-manifolds are connected to actions of their fundamental group on the circle $S^1$.

\begin{theorem}[Thurston's universal circle construction, \cite{CD03}]
\index{universal circle}
Let $M$ be a closed, irreducible, atoroidal rational homology $3$-sphere.  If $M$ admits a co-orientable taut foliation, then there is a nontrivial homomorphism $\rho :\pi_1(M) \rightarrow \mathrm{Homeo_+}(S^1)$.
\end{theorem}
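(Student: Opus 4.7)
The plan is to construct an action of $\pi_1(M)$ on a circle built as a common "limit" of the circles at infinity of the leaves of the pulled-back foliation on $\widetilde{M}$. First I would pass to the universal cover $p:\widetilde{M}\to M$ and let $\widetilde{\mathcal{F}}=p^{-1}(\mathcal{F})$. Since $M$ is irreducible and atoroidal and $\mathcal{F}$ is taut, Novikov's theorem rules out Reeb components and forces leaves of $\widetilde{\mathcal{F}}$ to be properly embedded incompressible planes; the leaf space $L=\widetilde{M}/\widetilde{\mathcal{F}}$ is then a simply connected (generally non-Hausdorff) $1$-manifold on which $\pi_1(M)$ acts by deck transformations, and the action is orientation preserving because $\mathcal{F}$ is co-orientable. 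If $L\cong\mathbb{R}$ we are in fact in the $\mathbb{R}$-covered setting and already get an action on $\mathbb{R}\subset S^1$, so the hard case is when $L$ is non-Hausdorff.

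Next I would equip the leaves with a compatible conformal or hyperbolic structure. By Candel's uniformization theorem, the hypothesis that $M$ is an atoroidal rational homology sphere (which rules out transverse invariant measures of nonnegative Euler characteristic) implies that $\widetilde{\mathcal{F}}$ admits a continuous, leafwise hyperbolic metric varying continuously in the transverse direction and invariant under the deck action of $\pi_1(M)$. Each leaf $\lambda$ is then isometric to $\mathbb{H}^2$ and carries an intrinsic circle at infinity $S^1_\infty(\lambda)$, and $\pi_1(M)$ permutes the pairs $(\lambda,S^1_\infty(\lambda))$ by isometries.

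The heart of the proof is linking these circles across distinct leaves. I would use Thurston's construction of \emph{markers}: pairs of nearby transversal arcs in $\widetilde{M}$ whose intersections with any leaf they both hit determine two geodesic rays converging to a common endpoint of $S^1_\infty$. The collection of markers produces, for each pair of leaves $\lambda,\mu$, a canonical monotone (not necessarily injective or continuous) correspondence $S^1_\infty(\lambda)\dashrightarrow S^1_\infty(\mu)$. Taking an appropriate inverse limit over all leaves, and then quotienting by the equivalence relation generated by these correspondences to remove gaps, yields a single \emph{universal circle} $S^1_{\mathrm{univ}}$ together with monotone degree-one maps $\phi_\lambda:S^1_{\mathrm{univ}}\to S^1_\infty(\lambda)$ compatible with markers. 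By construction the whole package is equivariant under $\pi_1(M)$, giving a homomorphism $\rho:\pi_1(M)\to\mathrm{Homeo}_+(S^1_{\mathrm{univ}})$.

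The main obstacle is the construction of $S^1_{\mathrm{univ}}$ itself: the leaves vary over a non-Hausdorff base and the circles at infinity do not fit together into any literal bundle, so the limit must be built abstractly from markers and shown to be a genuine topological circle on which the monotone maps $\phi_\lambda$ are continuous. Verifying nontriviality is then comparatively easy: if $\rho$ were trivial every $\phi_\lambda$ would be $\pi_1(M)$-invariant, forcing a global fixed point and ultimately a transverse invariant measure on $\mathcal{F}$ incompatible with $M$ being an atoroidal rational homology sphere of infinite fundamental group, contradicting tautness plus atoroidality. Thus $\rho$ gives the desired nontrivial circle action, as carried out in detail by Calegari and Dunfield in \cite{CD03}.
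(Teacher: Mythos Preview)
The paper does not actually prove this theorem; it is stated with attribution to Calegari--Dunfield \cite{CD03} and then used as a black box in the discussion leading to Theorem \ref{universal circle LO}. So there is no ``paper's own proof'' to compare against.

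That said, your outline is a faithful sketch of the Thurston/Calegari--Dunfield construction: lifting to the universal cover, invoking Candel's uniformization to get a leafwise hyperbolic metric, building markers to relate the circles at infinity of distinct leaves, and passing to an inverse-limit/quotient to obtain $S^1_{\mathrm{univ}}$ with its $\pi_1(M)$-action. Two small caveats. First, your treatment of the $\mathbb{R}$-covered case is slightly misleading in this context: an action on $\mathbb{R}$ is not automatically an action on $S^1$, and in any event the universal circle in the $\mathbb{R}$-covered case is built differently (via the leafwise circles, not the leaf space). Second, the nontriviality argument in \cite{CD03} is not quite the transverse-measure contradiction you describe; it proceeds more directly from faithfulness of the action on the laminations associated to the universal circle. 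These are refinements rather than errors in your overall plan.
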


While such homomorphisms can be used to introduce a \textit{circular ordering} of $\pi_1(M)$, a concept not studied in this book, in general they do not give left-orderings of $\pi_1(M)$ unless some additional conditions are met. 

Consider the covering group $\widetilde{\mathrm{Homeo_+}(S^1)}$, which can be identified as
\[\widetilde{\mathrm{Homeo_+}(S^1)} \cong \{ f \in \mathrm{Homeo}_+(\mathbb{R}) \mid f(x+1) = f(x) +1 \}   
\]
 There is a short exact sequence
\[ 1 \rightarrow \mathbb{Z} \rightarrow \widetilde{\mathrm{Homeo_+}(S^1)} \rightarrow \mathrm{Homeo}_+(S^1) \rightarrow 1
\]
where the copy of $\Z$ in $\widetilde{\mathrm{Homeo_+}(S^1)}$ is generated by the function $g(x) = x+1$.  So $\Z$ is central.

If $\rho :\pi_1(M) \rightarrow \mathrm{Homeo_+}(S^1)$ lifts to $\widetilde{\mathrm{Homeo_+}(S^1)}$ then, since 
$\widetilde{\mathrm{Homeo_+}(S^1)}$ is a subgroup of $\mathrm{Homeo}_+(\mathbb{R})$, every lift yields many left-orderings of $\pi_1(M)$ by applying Theorem  ~\ref{fundamental}.

Since $\widetilde{\mathrm{Homeo_+}(S^1)}$ is a central extension, the obstruction to that lifting is the Euler class of 
$\rho$, which is an element of the cohomology group $H^2(\pi_1(M); \Z)$.  One argues that $H^2(\pi_1(M); \Z)$ is isomorphic with $H_1(M;\Z)$, a finite group but nontrivial unless $M$ is a $\Z$-homology sphere.  Thus the Euler class may be nonzero, though it will have finite order.  Nevertheless, Calegari and Dunfield \cite{CD03} argue by direct construction that if one restricts $\rho$ to the commutator subgroup $[\pi_1(M), \pi_1(M)]$ then it lifts to 
$\widetilde{\mathrm{Homeo_+}(S^1)}$.  We refer the reader to \cite{CD03} for details.  This shows the following.

%
%

\begin{theorem} \cite{CD03}
\label{universal circle LO}
Let $M$ be a closed, irreducible, atoroidal rational homology $3$-sphere that admits a co-orientable, taut foliation.  Then the commutator subgroup $[\pi_1(M), \pi_1(M)]$ is left-orderable---in particular, $\pi_1(M)$ is virtually left-orderable.
\end{theorem}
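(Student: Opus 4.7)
The plan is to chain together the universal circle theorem with the lifting discussion that immediately precedes the statement. First I would invoke Thurston's universal circle construction to obtain a nontrivial homomorphism $\rho:\pi_1(M)\to \mathrm{Homeo}_+(S^1)$, which exists because $M$ is assumed to be a closed, irreducible, atoroidal rational homology $3$-sphere carrying a co-orientable taut foliation. The goal is then to lift $\rho$, or at least its restriction to a suitable subgroup, into the covering group $\widetilde{\mathrm{Homeo}_+(S^1)}$, since the latter sits naturally inside $\mathrm{Homeo}_+(\mathbb{R})$ and so any subgroup of $\pi_1(M)$ mapping nontrivially into it will be left-orderable by Theorem~\ref{fundamental} (applied via the embedding into $\mathrm{Homeo}_+(\mathbb{R})$, together with Theorem~\ref{LO_universal}).

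Next I would set up the obstruction theory. The central extension
\[
1\longrightarrow \mathbb{Z}\longrightarrow \widetilde{\mathrm{Homeo}_+(S^1)}\longrightarrow \mathrm{Homeo}_+(S^1)\longrightarrow 1
\]
is classified by a $2$-cocycle, and the obstruction to lifting $\rho$ is the pulled-back Euler class $e(\rho)\in H^2(\pi_1(M);\mathbb{Z})$. Since $M$ is aspherical (its universal cover is $\mathbb{R}^3$, as $M$ is irreducible with infinite fundamental group and torsion-free $\pi_1$ coming from the geometrization/sphere theorem setup assumed here), $M$ is a $K(\pi_1(M),1)$, so $H^2(\pi_1(M);\mathbb{Z})\cong H^2(M;\mathbb{Z})$. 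By Poincar\'e duality and the universal coefficient theorem, this group is isomorphic to $H_1(M;\mathbb{Z})$, which is finite because $M$ is a rational homology sphere. Hence $e(\rho)$ is a torsion class.

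The main obstacle, and the most delicate ingredient, is that a torsion Euler class need not vanish on $\pi_1(M)$ itself, so one cannot directly lift $\rho$. The key observation is that the Euler class, viewed as a class in bounded cohomology or simply as a homomorphism out of $H_2$ once pushed to the abelianization, vanishes on the commutator subgroup: more precisely, the restriction of $e(\rho)$ to $[\pi_1(M),\pi_1(M)]$ is zero because every finite-order cohomology class on a group is killed upon passage to the perfect kernel of the abelianization (for any $2$-cocycle representing a torsion class, an explicit primitive can be constructed on commutators, as carried out in \cite{CD03}). Therefore $\rho|_{[\pi_1(M),\pi_1(M)]}$ lifts to a homomorphism
\[
\widetilde{\rho}:[\pi_1(M),\pi_1(M)]\longrightarrow \widetilde{\mathrm{Homeo}_+(S^1)}\subset \mathrm{Homeo}_+(\mathbb{R}).
\]

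Finally I would check faithfulness and conclude. Using that $\rho$ is already nontrivial on $\pi_1(M)$ and that the universal circle action can be chosen to be faithful on $[\pi_1(M),\pi_1(M)]$ (or at worst, the kernel of $\widetilde\rho$ is a relatively convex subgroup, whose quotient is still left-orderable), Theorem~\ref{fundamental} together with the embedding into $\mathrm{Homeo}_+(\mathbb{R})$ yields a left-ordering of $[\pi_1(M),\pi_1(M)]$. Since $[\pi_1(M),\pi_1(M)]$ has finite index in $\pi_1(M)$ (again because $H_1(M;\mathbb{Z})$ is finite), this shows that $\pi_1(M)$ is virtually left-orderable, completing the argument. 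The main obstacle, as noted, is justifying the vanishing of the Euler class on commutators; the rest is assembly of already stated tools.
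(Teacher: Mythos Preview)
Your approach is essentially identical to the paper's: invoke Thurston's universal circle to get $\rho:\pi_1(M)\to\mathrm{Homeo}_+(S^1)$, identify the lifting obstruction as the Euler class in $H^2(\pi_1(M);\Z)\cong H_1(M;\Z)$ (finite since $M$ is a rational homology sphere), cite \cite{CD03} for the fact that $\rho$ restricted to the commutator subgroup lifts to $\widetilde{\mathrm{Homeo}_+(S^1)}\subset\mathrm{Homeo}_+(\R)$, and conclude via Theorem~\ref{fundamental} applied to the finite cover of $M$ with fundamental group $[\pi_1(M),\pi_1(M)]$.

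One caution: your assertion that ``every finite-order cohomology class on a group is killed upon passage to the perfect kernel of the abelianization'' is not a valid general fact (the restriction map $H^2(G;\Z)\to H^2([G,G];\Z)$ need not annihilate torsion, and $[G,G]$ is not generally perfect). The paper does not claim any such general principle; it simply says Calegari and Dunfield produce the lift on $[\pi_1(M),\pi_1(M)]$ by a direct construction, which is also what you ultimately cite. You should drop the general cohomological claim and rely solely on the reference, as the paper does.
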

\def\s{\sigma}
\chapter{Left-orderings of the braid groups}
\label{braids chapter}

No study of left-orderings and topology is complete without considering the braid groups, as they exhibit many deep connections with both subjects.  In the first two sections we define the standard left-ordering of $B_n$, known as the Dehornoy ordering, and consider whether or not the braid groups are locally indicable or bi-orderable as well.  In the third section we prove that the braid groups are left-orderable for $n \geq 3$ using hyperbolic geometry, and then finish the chapter by showing how left-orderings of the braid groups can be connected with knot theory.

First we recall one of the many definitions of the braid group $B_n$.  Consider $n$ points $\{ p_1, \ldots, p_n\}$
in the plane, which for concreteness we take to be evenly spaced along the $x$-axis inside the unit disk $\mathbb{D}$.  Let $\beta_i :[0,1] \rightarrow  \mathbb{D} \times [0,1]$ be a path that is the identity in the second coordinate,  satisfying $\beta_i(0) = (p_i, 0)$ and $\beta_i(1) = (p_j, 1)$ for some $j \in \{ 1, \ldots, n\}$ and intersecting each slice  $\mathbb{D} \times \{t \}$ exactly once.  An $n$-braid is a tuple of $n$ such paths
\[ \beta = ( \beta_1 (t) , \beta_2(t), \ldots, \beta_n(t))
\]
that do not cross one another, in the sense that $\beta_i(t) \neq \beta_j(t)$ whenever $i \neq j$.  Two $n$-braids are equivalent if one can be continuously deformed into the other through $n$-braids.  The braid group $B_n$ is then the collection of all equivalence classes, the group operation is concatenation of representative $n$-braids.   That is:

\[
(\beta\beta')_i(t) = \begin{cases} \beta_i(2t) &\mbox{if } 0 \le t \le 1/2 \\
\beta'_j(2t-1) & \mbox{if } 1/2 \le t \le 1. \end{cases} 
\]
In the formula above, the subscript $j$ of $\beta_j'$ is chosen so that $\beta_i(1) = (p_j, 1)$.  The identity is the braid consisting of $n$ straight lines $\beta_i(t) = (p_i, t)$ for all $t$ and the inverse of a braid $\beta$ is given by $\beta(1-t)$, its reflection.

Let $\sigma_i$ denote the equivalence class of an $n$-braid that consists of $n-2$ straight lines, except for the paths $\beta_i(t)$ and $\beta_{i+1}(t)$, which cross each other exactly once as in Figure \ref{braid_generator}. 
\begin{figure}[h!]
\includegraphics[scale=0.4]{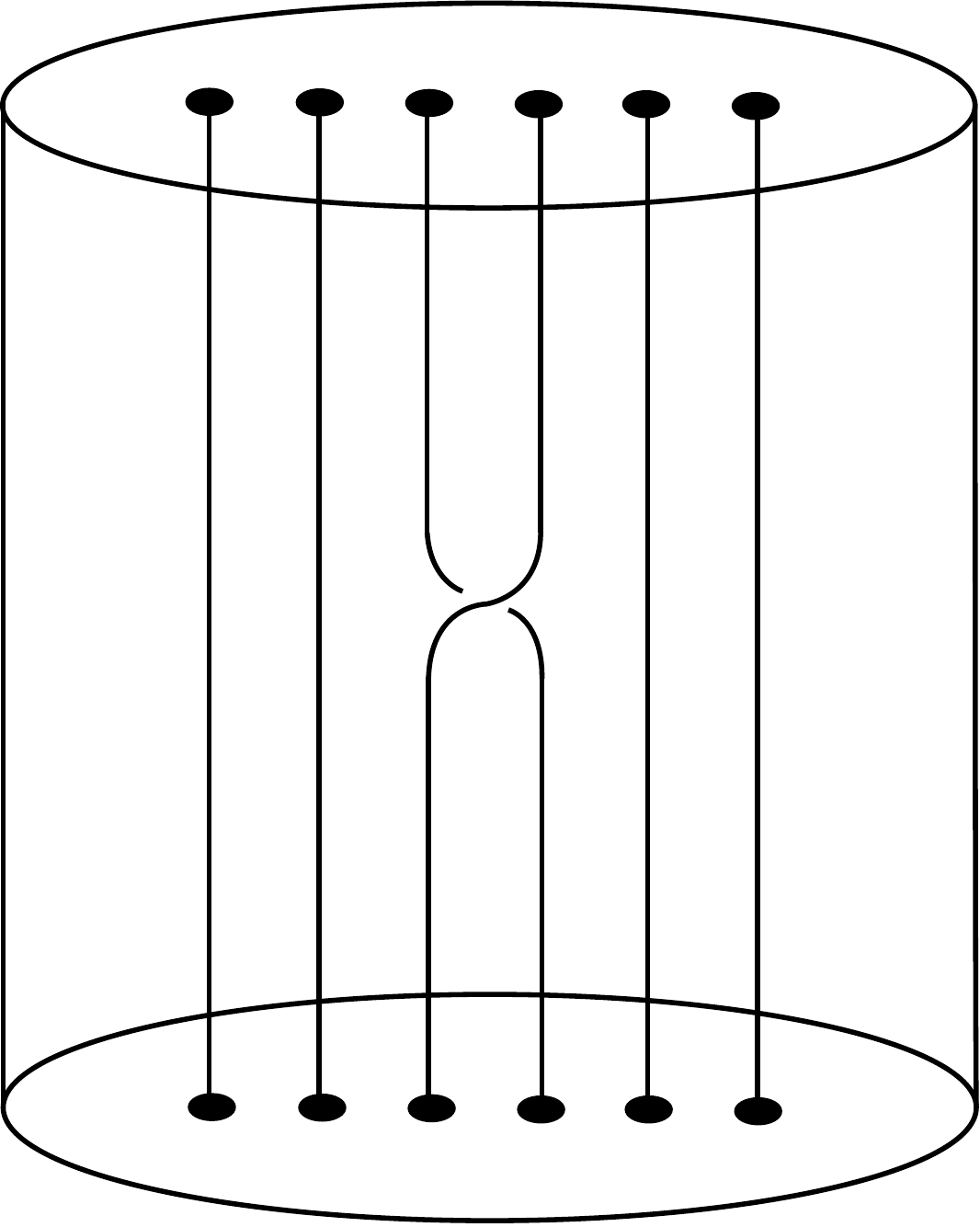}
\caption{The generator $\sigma_3$ in the braid group $B_6$.}
\label{braid_generator}
\end{figure}
As shown by E. Artin \cite{Artin25, Artin47} the braid group $B_n$ is generated by $\{ \sigma_1, \ldots, \sigma_{n-1} \}$, and it has presentation \index{braid groups}
\[ B_n = \left< \sigma_1 , \ldots, \sigma_{n-1}  \hspace{1em}
\begin{array}{|c}
\sigma_i \sigma_j = \sigma_j \sigma_i \mbox{ if $|i-j|>1$}\\
\sigma_i \sigma_j \sigma_i = \sigma_j \sigma_i \sigma_j \mbox{ if $|i-j|=1$} \end{array} 
 \right>
\]

Understood this way, every braid $\beta$ is an equivalence class of words in the generators $\sigma_i$.  

\begin{problem}\label{braid identities}
Verify the following identities in $B_n$, if $|i-j| = 1$:
$$\sigma_i \sigma_j \sigma_i^{-1} = \sigma_j^{-1} \sigma_i \sigma_j \hskip1cm
\sigma_i \sigma_j^{-1} \sigma_i^{-1} = \sigma_j^{-1} \sigma_i^{-1}\sigma_j$$
Show that the words $\sigma_1^2 \sigma_2^{-1} \sigma_1^{-1}$ and 
$\sigma_2^{-1}\sigma_1^{-1}\sigma_2^2$ represent the same braid in $B_3$.
\end{problem}

\begin{problem} Show that in $B_n$, all the generators $\sigma_i$ are conjugate to each other.
Show that the abelianization of $B_n, n \ge 2$ is isomorphic with $\Z$, and that the commutator subgroup $[B_n, B_n]$ is exactly the set of braids represented by words with total exponent sum zero in the generators $\sigma_i$.
\end{problem}

In addition to the abelianization homomorphism $B_n \rightarrow \mathbb{Z}$, there is also a homomorphism onto the symmetric group $S_n$.  Thinking of $S_n$ as a group of permutations of the points $\{ p_1, \ldots, p_n \}$, the map $B_n \rightarrow S_n$ associates a permutation to each braid by ``following the strands.''\footnote{This gives a homomorphism as long as we adopt the left-to-right convention for composing permutations, so that $(1, 2)(1, 3) = (1, 2, 3)$ and not $(1, 2)(1, 3)= (1, 3, 2)$, for example.}  So, for example, the image of the braid $\sigma_i$ is the transposition $(p_i, p_{i+1}) \in S_n$.  ,

\begin{problem}
Verify that map $B_n \rightarrow S_n$ described above is well-defined, and is indeed a homomorphism.
\end{problem}

\section{Orderability properties of the braid groups}

Before turning to left-orderability of the braid groups, we first determine precisely which braid groups are bi-orderable and which are locally indicable.

Our first observation is that the braid group $B_n$ is bi-orderable for  $n=2$, and not bi-orderable when $n>2$.  An easy way to prove this is by checking that the braid $\sigma_1 \sigma_2^{-1}$ is conjugate to its inverse. 

\begin{problem} Show that in $B_n, n > 2$ the following equation holds:
\[ (\sigma_1 \sigma_2 \sigma_1) (\sigma_1 \sigma_2^{-1})( \sigma_1 \sigma_2 \sigma_1)^{-1} = \sigma_2 \sigma_1^{-1}
\]
Verify that $\sigma_1 \sigma_2^{-1}$ and $\sigma_2 \sigma_1^{-1}$ are distinct braids (Hint:  Use the homomorphism $B_n \rightarrow S_n$).
\end{problem}

\begin{problem}
Verify that $B_n$ for $n>2$ does not have unique roots. (Hint: Consider $\sigma_1 \sigma_2$ and $\sigma_2 \sigma_1$, and verify that they are distinct by using the map $B_n \rightarrow S_n$).
\end{problem}

With $B_2$ being bi-orderable, we turn our attention to $B_n$ for $n>2$ and determine which of these groups are locally indicable.  We begin with a case we've  encountered before:

\begin{theorem}
The group $B_3$ is locally indicable.
\label{B_3 LI}
\end{theorem}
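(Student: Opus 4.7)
The plan is to exploit an isomorphism we have already met. Observe that the standard Artin presentation
\[
B_3 = \langle \sigma_1, \sigma_2 \mid \sigma_1\sigma_2\sigma_1 = \sigma_2\sigma_1\sigma_2 \rangle
\]
is literally the second presentation of the trefoil knot group that was derived in Example \ref{trefoil group}, where we showed that $\pi_1(S^3 \setminus K) \cong \langle x, y \mid xyx = yxy \rangle$ for the right-handed trefoil $K$. So the first step of the proof is simply to record this coincidence of presentations and conclude that $B_3$ is isomorphic to the trefoil knot group.

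Once that identification is in place, local indicability is immediate from Theorem \ref{LI}, which asserts that every knot group is locally indicable. That handles $B_3$ with essentially no additional work.

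It is worth noting in passing two alternative routes that reach the same conclusion, in case the reader prefers a more self-contained argument. First, the trefoil is a fibred knot (it is the $(2,3)$-torus knot), so one could instead invoke the fact that fibred knot groups are locally indicable, which followed from Problem \ref{LI extension} applied to the short exact sequence $1 \to \pi_1(\Sigma) \to \pi_1(S^3 \setminus K) \to \pi_1(S^1) \to 1$ with $\pi_1(\Sigma)$ free. Second, one could bypass knot theory altogether and use the presentation $B_3 \cong \langle a, b \mid a^2 = b^3 \rangle$ (obtained via the substitution $a = \sigma_1\sigma_2\sigma_1$, $b = \sigma_1\sigma_2$): the subgroup $\langle a^2 \rangle = \langle b^3 \rangle$ is central and infinite cyclic, the quotient is the free product $\mathbb{Z}/2 * \mathbb{Z}/3$, and one can verify local indicability by combining the indicability of the quotient's finite-index free subgroup with Problem \ref{LI extension}.

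There is no main obstacle; the essential content of the theorem is really the identification $B_3 \cong \pi_1(S^3 \setminus T_{2,3})$, after which Theorem \ref{LI} does all the work. The only mildly delicate point, should one wish to write out full details, is verifying the isomorphism between the two presentations of the trefoil group, but this is exactly the algebraic manipulation already carried out in Example \ref{trefoil group} and the subsequent problem.
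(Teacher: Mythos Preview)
Your argument is correct and matches the paper's proof exactly: identify $B_3$ with the trefoil knot group via the presentation $\langle x,y \mid xyx = yxy\rangle$ from Example~\ref{trefoil group}, then apply Theorem~\ref{LI}. The alternative routes you sketch are fine bonus remarks but are not needed.
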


This is because $B_3$ is isomorphic with the trefoil knot group, according to the computation of Example \ref{trefoil group}.  So $B_3$ is locally indicable since, by Theorem \ref{LI}, knot groups are locally indicable.

%
%

For the 4-strand case
%
our arguments rely on \cite{GL69}, which provides a description of the commutator subgroup of $B_n$ for all $n$.  When $n=4$ the structure of the commutator subgroup will allow us to conclude that $B_4$ is locally indicable.

\begin{theorem}
The braid group $B_4$ is locally indicable.
\label{B_4 LI}
\end{theorem}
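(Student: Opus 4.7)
The plan is to reduce the statement to local indicability of the commutator subgroup $[B_4, B_4]$, and then exploit the structural description of this subgroup provided by Gorin and Lin \cite{GL69}.

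First I would perform the reduction. Since the abelianization map $\varphi \colon B_4 \to B_4/[B_4,B_4] \cong \Z$ is surjective, any finitely generated subgroup $H \le B_4$ either has $\varphi(H) \ne 0$, in which case $\varphi|_H \colon H \to \Z$ already furnishes a nontrivial homomorphism to $\Z$, or else $H \subseteq [B_4,B_4]$. Thus it suffices to establish that $[B_4,B_4]$ is itself locally indicable, for then every finitely generated nontrivial subgroup of $B_4$ either maps nontrivially onto $\Z$ through $\varphi$ or inherits a nontrivial homomorphism to $\Z$ from local indicability of the commutator subgroup.

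Next I would invoke the Gorin--Lin structure theorem. Their analysis of the lower central series of $B_n$ shows that for $n=4$ the commutator subgroup sits in a short exact sequence
\[
1 \longrightarrow F_2 \longrightarrow [B_4,B_4] \longrightarrow F_2 \longrightarrow 1,
\]
where each $F_2$ denotes the free group of rank $2$. The key point is that both the kernel and the quotient in this extension are free groups. By Theorem \ref{freeBO} free groups are bi-orderable, and in particular locally indicable. Applying Problem \ref{LI extension} to the displayed short exact sequence then yields that $[B_4,B_4]$ is locally indicable, completing the argument as explained in the reduction step.

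The main obstacle here is really external: one must have in hand the Gorin--Lin description of $[B_4,B_4]$ as an extension of $F_2$ by $F_2$. Granted that input, the rest of the proof is a routine combination of the abelianization-based reduction with the fact that local indicability passes through group extensions. It is worth noting that this strategy does not extend to $B_n$ for $n \ge 5$, because in those cases \cite{GL69} shows that $[B_n,B_n]$ is a nontrivial perfect group, which obstructs the existence of any nontrivial homomorphism to $\Z$ from a finitely generated subgroup containing it.
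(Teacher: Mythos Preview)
Your proof is correct and follows essentially the same approach as the paper: invoke the Gorin--Lin description of $[B_4,B_4]$ as an extension of $F_2$ by $F_2$, use bi-orderability of free groups to deduce local indicability of each factor, and apply Problem~\ref{LI extension}. The only cosmetic difference is that the paper applies Problem~\ref{LI extension} a second time to the extension $1 \to [B_4,B_4] \to B_4 \to \Z \to 1$, whereas you unpack that step by hand via the abelianization map; the content is identical.
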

\begin{proof}

According to \cite{GL69}, the commutator subgroup of $B_4$ is isomorphic to a certain semidirect product $F_2 \ltimes F_2$.  In particular there's a short exact sequence
\[ 1 \rightarrow F_2 \rightarrow [B_4, B_4] \rightarrow F_2 \rightarrow 1.
\]
Since $F_2$ is bi-orderable, it is locally indicable by Theorem \ref{biorderable implies LI}, then by Problem \ref{LI extension} the group $[B_4, B_4]$ is locally indicable.  Thus from the short exact sequence
\[ 1 \rightarrow [B_4, B_4] \rightarrow B_4  \rightarrow \mathbb{Z} \rightarrow 1
\] and by another application of Problem \ref{LI extension}, $B_4$ is locally indicable as well.
\end{proof}

On the other hand, for $n \geq 5$ the structure of the commutator subgroup shows that $B_n$ is NOT locally indicable.

\begin{theorem}\label{braids not LI}
The braid group $B_n$ is not locally indicable for $n \geq 5$.
\end{theorem}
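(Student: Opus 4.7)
The plan is to exhibit a nontrivial finitely generated subgroup of $B_n$ (for $n\ge 5$) which admits no surjection onto $\mathbb{Z}$, since this is the negation of the local indicability condition. The natural candidate, following the reference \cite{GL69} already invoked in the proof of Theorem \ref{B_4 LI}, is the commutator subgroup $[B_n,B_n]$ itself.

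The key input I would quote from Gorin and Lin is that for $n\ge 5$, the commutator subgroup $[B_n,B_n]$ is \emph{finitely generated} and \emph{perfect}, that is, $[B_n,B_n]=[[B_n,B_n],[B_n,B_n]]$. This is the crucial qualitative change from the $n=4$ case, where the commutator subgroup was a semidirect product of two free groups (hence abelianized to something infinite). Once these two properties are accepted, the conclusion is almost immediate: $[B_n,B_n]$ is nontrivial (since $B_n$ is visibly non-abelian for $n\ge 3$), it is finitely generated by hypothesis, and being perfect it has trivial abelianization. Consequently every homomorphism from $[B_n,B_n]$ to an abelian group — in particular to $\mathbb{Z}$ — is trivial. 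Thus $[B_n,B_n]$ is a nontrivial finitely generated subgroup of $B_n$ which is not indicable, and $B_n$ fails to be locally indicable.

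The main obstacle is of course the Gorin–Lin theorem, which is nontrivial to prove from scratch; but since the paper already used their description of $[B_4,B_4]$ as an ingredient, I would simply cite \cite{GL69} for the statement in the $n\ge 5$ case and not attempt to reprove it. If one wished to make the argument more self-contained, the standard route is to write down an explicit finite generating set for $[B_n,B_n]$ in terms of the Artin generators (for instance $\sigma_i\sigma_{i+1}^{-1}$ for $1\le i\le n-2$) together with the pure-braid-type relations, and then verify directly, using the braid identities of Problem \ref{braid identities}, that each such generator can be rewritten as a commutator of two elements of $[B_n,B_n]$; this is where the hypothesis $n\ge 5$ is actually needed, because it provides enough commuting generators $\sigma_i$ and $\sigma_j$ with $|i-j|>1$ to carry out the rewriting. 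Presenting this calculation is routine but notation-heavy, and I would defer it to the reference rather than carrying it out in the text.
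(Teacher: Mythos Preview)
Your proposal is correct and follows essentially the same approach as the paper: both arguments exhibit the commutator subgroup $[B_5,B_5]$ as a nontrivial finitely generated subgroup with trivial abelianization, relying on \cite{GL69}. The only difference is one of presentation: where you would cite Gorin--Lin directly for the perfectness of $[B_n,B_n]$ when $n\ge 5$, the paper instead quotes from \cite{GL69} only an explicit five-element generating set $\beta_1,\dots,\beta_5$ for $[B_5,B_5]$, writes down five concrete relations among them, and then verifies by hand that these relations force every $\beta_i$ to vanish in the abelianization.
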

\begin{proof}
To prove that $B_5$ is not locally indicable, we will exhibit a finitely generated subgroup $H \subset B_5$ whose abelianization is trivial, so that there is no surjection $H \rightarrow \mathbb{Z}$.  Then since $B_5$ is a subgroup of $B_n$ for all $n >5$, it follows that $B_n$ is not locally indicable for $n \geq 5$.

Let $H$ be the subgroup of $B_5$ generated by the braids $\beta_1, \ldots, \beta_5$ where
\[ \beta_1 = \sigma_1^{-1} \sigma_2, \beta_2 = \sigma_2 \sigma_1^{-1}, \beta_3 = \sigma_1 \sigma_2 \sigma_1^{-2}, \beta_4 = \sigma_3 \sigma_1^{-1}, \beta_5 = \sigma_4 \sigma_1^{-1}.
\]
According to  \cite{GL69}, the subgroup $H$ is actually the commutator subgroup of $B_5$. 

One can verify that the following relations hold among the $\beta_i$'s:
\begin{enumerate}
\item $\beta_1 \beta_5 = \beta_5 \beta_2$
\item $\beta_2 \beta_5 = \beta_5 \beta_3$
\item $\beta_1 \beta_3 = \beta_2$
\item $\beta_1 \beta_4 \beta_3 = \beta_4 \beta_2 \beta_4$
\item $\beta_4 \beta_5 \beta_4 = \beta_5 \beta_4 \beta_5$
\end{enumerate}
Now we consider what happens upon abelianizing the subgroup $H$.  Relations (1) and (2) and (3) give $\beta_1 = \beta_2 = \beta_3 =1$.  Using this equality reduces  relation (4) to $\beta_4 = \beta_4^2$, so $\beta_4 = 1$.  Lastly relation (5) now easily gives $\beta_5 =1$, so $H / H' = \{ 1 \}$.
\end{proof}
\begin{problem} Verify the relations (1) - (5) above.
\end{problem} 
In light of the fact that $B_n$ is not locally indicable when $n \geq 5$, it might come as a surprise that $B_n$ for $n\geq2$ has a finite index subgroup that is bi-orderable.  This subgroup is the subgroup of \textit{pure braids}\index{pure braids}, denoted $P_n$.   Recalling our definition of a braid as an $n$-tuple of paths
\[ \beta = ( \beta_1 (t) , \beta_2(t), \ldots, \beta_n(t)),
\]
a pure braid is a braid $\beta$ for which $\beta_i(1)=(p_i, 1)$ for all $i$.  In other words, every strand in the braid starts and ends at the same point $p_i$.  The subgroup $P_n$ also arises as the kernel of the short exact sequence
\[ 1 \rightarrow P_n \rightarrow B_n \rightarrow S_n \rightarrow 1,
\]
and therefore $P_n$ is a subgroup of index $|S_n| = n!$ in $B_n$.  Here, $S_n$ is the group of permutations of $n$ elements.

\begin{theorem} \cite{RZ98, KR03}
The group $P_n$ is bi-orderable for $n \geq 2$.
\end{theorem}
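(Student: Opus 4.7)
The plan is to exploit the iterated semidirect-product structure of $P_n$ and apply Problem \ref{extendO} inductively. The starting point is the classical Fadell--Neuwirth fibration of configuration spaces $\mathrm{Conf}_n(\mathbb{R}^2) \to \mathrm{Conf}_{n-1}(\mathbb{R}^2)$ which forgets the last point and has fiber $\mathbb{R}^2$ minus $n-1$ points; the associated long exact homotopy sequence yields
$$1 \longrightarrow F_{n-1} \longrightarrow P_n \longrightarrow P_{n-1} \longrightarrow 1,$$
and this sequence splits (for example via a section pushing the last strand out to infinity). Hence $P_n \cong F_{n-1} \rtimes P_{n-1}$, where $P_{n-1}$ acts on $F_{n-1} = \langle x_1, \dots, x_{n-1} \rangle$ by the Artin representation, and crucially each $\beta \in P_{n-1}$ induces an automorphism sending every standard generator $x_i$ to a conjugate $w_{i,\beta}\,x_i\,w_{i,\beta}^{-1}$ of itself.

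I would proceed by induction on $n$. The base case $P_2 \cong \Z$ is trivially bi-orderable. For the inductive step, assume $P_{n-1}$ has a bi-ordering. By Problem \ref{extendO}, the extension $P_n$ inherits a bi-ordering as soon as we exhibit a bi-ordering of the kernel $F_{n-1}$ that is preserved by the conjugation action of $P_{n-1}$---equivalently, by every automorphism in the image of the Artin representation $P_{n-1} \to \mathrm{Aut}(F_{n-1})$. Theorem \ref{freeBO} supplies the Magnus-expansion bi-ordering of $F_{n-1}$ as a natural candidate, so the remaining task is to arrange the Magnus data (the indexing of variables and the choice of monomial order) so that all Artin automorphisms become order-preserving.

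The main obstacle lies exactly in this invariance. Although pure braid automorphisms act trivially on the abelianization of $F_{n-1}$ (each $x_i$ maps to a conjugate of itself), they do not in general act trivially on the higher quotients of the lower central series, so the filtration ordering of Problem \ref{lowercentral} built from arbitrary bi-orderings of each $F_k/F_{k+1}$ need not be preserved. The resolution, following the arguments of Rolfsen--Zhu and Kim--Rolfsen, is to tailor a Magnus-type ordering to the conjugate-generator form: one chooses a monomial order on $\Z[[X_1,\dots,X_{n-1}]]$ so that the ``leading data'' distinguishing positive from negative elements is stable under every conjugation operator $u \mapsto wuw^{-1}$, and hence under every Artin automorphism. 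The combinatorial verification that such a monomial order exists, together with the check that it is preserved by the entire Artin image of $P_{n-1}$, is the technical heart of the argument; once this is in place, Problem \ref{extendO} combined with the inductive hypothesis immediately yields the desired bi-ordering of $P_n$.
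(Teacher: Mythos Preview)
Your proposal is correct and follows the same approach as the paper, which merely sketches the argument: Artin combing (equivalently, the Fadell--Neuwirth splitting you describe) exhibits $P_n$ as an iterated semidirect product of free groups, and one then applies Problem~\ref{extendO} inductively with the Magnus ordering on each free kernel. One clarification on your last paragraph: no special tailoring of the monomial order is needed---since pure braids act on $F_{n-1}$ by IA-automorphisms (each $x_i \mapsto$ a conjugate of $x_i$), the induced substitution on $\Lambda$ sends $X_i \mapsto X_i + O(2)$, so the entire lowest-degree homogeneous part of any Magnus expansion is preserved, and hence the standard Magnus ordering of Theorem~\ref{freeBO} is already invariant (compare Problem~\ref{MagnusConj}).
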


The idea of the proof is a technique called Artin combing \cite{Artin47}, which is used to show that $P_n$ is an iterated semidirect product of free groups, which are bi-orderable, as shown in Theorem \ref{freeBO}.  We refer the reader to \cite{KR03} for details and a construction of an explicit bi-ordering.

\section{The Dehornoy ordering of $B_n$}
 \label{dehornoy introduction}
Next we'll see that $B_n$ is left-orderable for all $n$ --- in fact in uncountably many ways --- though there is a preferred `standard' ordering of $B_n$ which has a number of pleasing algebraic and combinatorial properties.  Moreover, like the Magnus ordering of the free groups the standard ordering of $B_n$ is computable, in the sense that one can decide via calculation which of two braids is bigger than the other.

The \textit{Dehornoy ordering} \index{Dehornoy ordering} (also known as the standard ordering, or the $\sigma$-ordering) of the braid group $B_n$ is a left-ordering that is defined in terms of representative words of braids as follows: A word $w$ in the generators $\sigma_1 ,\ldots, \sigma_{n-1}$ is called $i$-positive (respectively $i$-negative) if $w$ contains at least one occurence of $\sigma_i$, no occurences of $\sigma_1, \ldots, \sigma_{i-1}$, and every occurence of $\sigma_i$ has positive (respectively negative) exponent.  A braid $\beta \in B_n$ is called $i$-positive (respectively $i$-negative) if it admits a representative word $w$ in the generators $\sigma_1, \ldots, \sigma_{n-1}$ that is $i$-positive (respectively $i$-negative).
The Dehornoy ordering of the braid group $B_n$ is the ordering whose positive cone $P_D$ is the set of all braids $\beta \in B_n$ that are $i$-positive for some $i$.   Braids $\beta, \beta'$ satisfy $\beta <_D \beta'$ if and only if $\beta^{-1} \beta' \in P_D$.

\begin{example}
The word $\sigma_1^2 \sigma_2^{-1} \sigma_1^{-1}$ is neither $i$-positive nor  $i$-negative for any $i$, so the braid that it represents may be either positive or negative in the Dehornoy ordering.  But by Problem \ref{braid identities} the braid is also represented by the word
$\sigma_2^{-1}\sigma_1^{-1}\sigma_2^2$, which is $1$-negative.  Therefore it is negative in the Dehornoy ordering, and we write $\sigma_1^2 \sigma_2^{-1} \sigma_1^{-1}<_D 1$.
\end{example}

\begin{problem}
Show that the smallest positive element of the Dehornoy ordering of $B_{n}$ is $\sigma_{n-1}$.
\end{problem}

To show that the positive cone of the Dehornoy ordering is well-defined is a nontrivial task, so we will only cover the case of $B_3$ here; moreover there is already an entire book on the topic \cite{DDRW08}.  However we will provide a proof in the coming sections that $B_n$ is left-orderable for $n \geq 3$ by using hyperbolic geometry.  

That $P_D$ is well-defined and is the positive cone of a left-ordering in the case $n=3$ will follow from the next theorem and problem.

\begin{theorem}
\label{dehornoy in b3}
Every braid $\beta \in B_3$ is either a power of $\sigma_2$, or admits a $1$-positive representative word, or admits a $1$-negative representative word.
\end{theorem}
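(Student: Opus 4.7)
The plan is to prove the statement by giving a reduction procedure which rewrites any word $w$ in $\sigma_1^{\pm 1}, \sigma_2^{\pm 1}$ representing $\beta$ into one of the three desired forms. The central tool will be the pair of identities
\[
\sigma_1\,\sigma_2^k\,\sigma_1^{-1} \;=\; \sigma_2^{-1}\,\sigma_1^{k}\,\sigma_2, \qquad
\sigma_1^{-1}\,\sigma_2^k\,\sigma_1 \;=\; \sigma_2\,\sigma_1^{k}\,\sigma_2^{-1},
\]
valid in $B_3$ for every $k \in \mathbb{Z}$. The first identity is immediate from the braid relation $\sigma_1\sigma_2\sigma_1 = \sigma_2\sigma_1\sigma_2$ in the case $k=1$ (rearrange to get $\sigma_1\sigma_2\sigma_1^{-1} = \sigma_2^{-1}\sigma_1\sigma_2$), and extends to all $k$ by an easy induction, with the $k<0$ cases obtained by taking inverses; the second is the analogue after swapping the roles of $\sigma_1^{\pm 1}$.

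Following the strategy of Dehornoy handle reduction (in its simplest $n=3$ incarnation), I would call a subword of $w$ of the form $\sigma_1^{\varepsilon}\,\sigma_2^{k}\,\sigma_1^{-\varepsilon}$, with $\varepsilon = \pm 1$ and $k \in \mathbb{Z}$, a \emph{$\sigma_1$-handle}, and replace it using the identity above. The procedure is: assume $w$ is freely reduced; if $w$ contains no letter $\sigma_1^{\pm 1}$, then $w$ represents a power of $\sigma_2$ and we stop; if every occurrence of $\sigma_1^{\pm 1}$ in $w$ has the same sign, then $w$ is $1$-positive or $1$-negative and we stop; otherwise $w$ must contain two consecutive occurrences of $\sigma_1^{\pm 1}$ of opposite sign (separated only by a power of $\sigma_2$), that is, a handle, and we apply the corresponding rewriting rule.

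The first main step is to verify that the rewriting does not accidentally destroy what we are trying to produce: a handle reduction only introduces new $\sigma_1$ letters all of the same sign (namely $\varepsilon\cdot\mathrm{sign}(k)$), so occurrences of $\sigma_1^{\pm 1}$ outside the handle are left untouched, and the global structure improves in a controlled way. The second, and hardest, step will be to prove \emph{termination}: this is where the argument is genuinely delicate, since the raw length of the word can grow under a reduction. The plan is to choose a termination measure tailored to $B_3$ --- for example, lexicographically ordering words by $(N_1(w), L(w))$, where $N_1(w)$ is the number of ``sign-changes'' of $\sigma_1^{\pm 1}$ as one reads $w$ from left to right, and $L(w)$ is a secondary complexity keeping track of the positions of these sign changes. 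Each handle reduction strictly decreases $N_1$, since it replaces an adjacent $\sigma_1^{\varepsilon}\cdots\sigma_1^{-\varepsilon}$ pair by letters of uniform sign, at the cost of introducing only $\sigma_2^{\pm 1}$ on the outside and $\sigma_1$'s of a single sign on the inside.

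Once termination is established, the proof will be essentially complete: the procedure halts only when $w$ is a power of $\sigma_2$, or when all occurrences of $\sigma_1^{\pm 1}$ in $w$ have the same sign, which is exactly the trichotomy in the statement. I expect the principal obstacle to be justifying that the chosen complexity really does strictly decrease under every handle reduction, rather than just under ``innermost'' ones; handling this carefully, perhaps by always picking the leftmost (or innermost) handle, is the subtle point that will need genuine verification.
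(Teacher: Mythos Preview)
Your overall strategy --- handle reduction in $B_3$ --- is a legitimate and well-known route to this result, and it is genuinely different from the argument in the paper. But your termination measure has a concrete flaw: the assertion that ``each handle reduction strictly decreases $N_1$'' is false. Take $w = \sigma_1\sigma_2\sigma_1^{-1}\sigma_2\sigma_1^{-1}$: the $\sigma_1$-sign pattern is $(+,-,-)$, with one sign change. Reducing the leftmost handle $\sigma_1\sigma_2\sigma_1^{-1}\mapsto\sigma_2^{-1}\sigma_1\sigma_2$ gives $\sigma_2^{-1}\sigma_1\sigma_2^{2}\sigma_1^{-1}$, whose $\sigma_1$-sign pattern is $(+,-)$, still one sign change. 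The point is that when $\sigma_1\sigma_2^{k}\sigma_1^{-1}$ is replaced by $\sigma_2^{-1}\sigma_1^{k}\sigma_2$ with $k>0$, the inserted $\sigma_1$-block is positive, so a following $\sigma_1^{-1}$ recreates exactly the sign change you just removed. Your fallback to a secondary invariant $L$ is too vague to rescue this; termination of handle reduction, even in $B_3$, is a genuinely nontrivial theorem of Dehornoy and does not follow from the kind of naive counting you propose.

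By contrast, the paper (following Navas) sidesteps the termination problem entirely by switching generators. Setting $a=\sigma_1\sigma_2$ and $b=\sigma_2^{-1}$ gives $B_3\cong\langle a,b\mid ba^2b=a\rangle$, and one checks that $a^3$ is central. Using centrality of $a^3$ together with $b^{-1}a^3=a^2ba^2$, every word is brought to the form $w'a^{3\ell}$ with $w'$ a positive word in $a,b$ and all $a$-exponents at most $2$; then repeated use of $ba^2b=a$ normalizes $w'$ to $a^{r_1}b^{k_1}ab^{k_2}\cdots ab^{k_n}a^{r_2}$ with $r_j\in\{0,1,2\}$ and $k_i\ge 1$. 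If $\ell\ge 0$ this is visibly $1$-positive; if $\ell<0$, the identity $ab^ka^{-1}=(a^{-1}b^{-1})^k$ (applied from right to left) converts it to a word with only negative powers of $a$. Every rewriting step here is manifestly terminating (each reduces a simple count), which is exactly the property your approach is missing.
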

\begin{proof}  This argument is due to A. Navas.
We can rewrite the braid group $B_3 = \langle \sigma_1, \sigma_2 \mid \sigma_1 \sigma_2 \sigma_1 = \sigma_2 \sigma_1 \sigma_2 \rangle$ as $\langle a, b \mid ba^2b = a \rangle $ by making the substitution $a = \sigma_1 \sigma_2$ and $b = \sigma_2^{-1}$.  In this new notation, we can prove the theorem by showing that every word $w$ in $a$ and $b$ which is not a power of $b$ is equivalent to a word containing only positive (or only negative) powers of $a$.  

First note that $a^3$ is central, since 
\[ ba^3 = (ba^2)a = (ab^{-1})a = a(b^{-1}a) = a(a^2b) = a^3b
\]
Therefore, by using the identity $b^{-1}a^3 = a^2 b a^2$ and inserting powers of $a^3$ as necessary, we can assume that $w = w' a^{3 \ell}$, where $w'$ consists of only positive powers of $a$ and $b$.  Moreover, we can arrange for all powers of $a$ appearing in $w'$ to have exponent less than or equal to two.  

Next we can use $ba^2b = a$ to replace the leftmost occurence of $ba^2b$ in $w'$, and then shuffle any resulting powers of $a^3$ to the right.  By iterating this process, we can assume that
\[ w' = a^{r_1} b^{k_1} ab^{k_2} \ldots a b^{k_n} a^{r_2} \]
where $0 \leq r_j \leq 2$ for $j=1,2$ and $k_i \geq 1$ for all $i$.

With these assumptions we consider $w = w' a^{3 \ell}$.  If $\ell \geq 0$ we are done since every generator occurs with only positive exponents, so assume $\ell <0$.  In this case $w' a^{-3}$ in fact admits a representative word containing only negative powers of $a$, which shows that $w=w' a^{3 \ell}$ does as well. This follows from applying the identity $ab^ka^{-1} = (a^{-1}b^{-1})^k$ as follows
\[ w' a^{-3} = a^{r_1} b^{k_1} a \ldots b^{k_{n-1}}(a b^{k_n} a^{-1}) a^{r_2-2} = a^{r_1} b^{k_1} a \ldots a b^{k_{n-1}}(a^{-1}b^{-1})^{k_n} a^{r_2-2} 
\]
and observing that the resulting word has $ab^{k_{n-1}}a^{-1}$ as a subword. So, we are in a position to apply the same identity again.  Since all the $k_i$'s are positive, iterating the application of this identity yields a word with only negative occurences of $a$.
\end{proof}

This theorem shows that the subset $P_D$ of all $i$-positive braids of $B_3$ satisfies $B_3 = P_D \cup P_D^{-1}$.  The next problem completes the proof that $P_D$ is the positive cone of a left-ordering, by showing that $1 \notin P_D$, and so $P_D \cap P_D^{-1} = \emptyset$.

\begin{problem} 
Let $F_3$ denote the free group with generators $\{ x_1, x_2, x_3 \}$, and let $\hat \sigma_i : F_3 \rightarrow F_3$ for $i=1,2$ denote the automorphism 
\[\hat \sigma_i(x_j) =
\left\{
	\begin{array}{ll}
		x_i^2 x_{i+1}  & \mbox{if } j=i \\
		x_{i+1}^{-1}x_i^{-1}x_{i+1} & \mbox{if } j=i+1 \\
		x_j & \mbox{if } j \neq i, i+1
	\end{array}
\right.
\]
There is an injective homomorphism $B_3 \rightarrow \hbox{Aut}(F_3)$ defined by $\sigma_i \mapsto \hat\sigma_i$ \cite{wada92, Shpilrain01}, and it gives an action of $B_3$ on $F_3$.  Show that any $1$-positive braid $\beta$ yields an automorphism $\hat \beta$ such that $\hat \beta(x_1)$ begins with $x_1^2$.  Conclude that $P_D \cap P_D^{-1} = \emptyset$.
\end{problem}
Note that the previous exercise can be easily generalized to arbitrary $n$, and so provides one way of proving a key step in the proof that $<_D$ is well-defined for all ~$n$.


\section{Thurston's orderings of $B_n$}
First we must cover a bit of hyperbolic geometry in order to describe  Thurston's orderings.  For a more complete treatment of this material, see \cite{CB88}.  The methods of this section, due to W. Thurston, first appeared in \cite{SW00}.

\subsection{Braids as mapping class groups.}
The mapping class group of a punctured, compact, connected, oriented surface $\Sigma$ is defined as follows.  Let $P = \{ p_1, \ldots, p_n\}$ denote a finite subset of distinct points on $\Sigma$ that we will call punctures (one often removes the points $p_1, \ldots, p_n$ and speaks of the holes that they leave behind).  Let $\mathrm{Homeo}_+(\Sigma, P)$ denote the group of orientation-preserving homeomorphisms $h : \Sigma \rightarrow \Sigma$ satisfying $h(P) = P$ that restrict to the identity on the boundary of $\Sigma$.  The \textit{mapping class group} \index{mapping class group} of $\Sigma$ with punctures $p_1, \ldots, p_n$ is the group $\mathrm{Mod}(\Sigma, P)$ of isotopy classes of these homeomorphisms, in other words 
\[ \mathrm{Mod}(\Sigma, P) = \pi_0(\mathrm{Homeo}_+(\Sigma, P))
\]
The group operation is composition of representative homeomorphisms.

Our interest will be focused on the mapping class group of the closed unit disk in $\mathbb{C}$, which we'll denote as $\mathbb{D}$.  For each $n \geq 0$ we specify the points $p_1, \ldots, p_n$ to be $n$ evenly spaced points along the real axis, write $\mathbb{D}_n$ for the disk with these points removed (see Figure \ref{figure_Dn}), and write $\mathrm{Mod}(\mathbb{D}_n)$ instead of  $\mathrm{Mod}(\mathbb{D}, P)$.    

\begin{figure}[h!]
\setlength{\unitlength}{6cm}
\begin{picture}(1,1)%
    \put(0,0){\includegraphics[width=\unitlength]{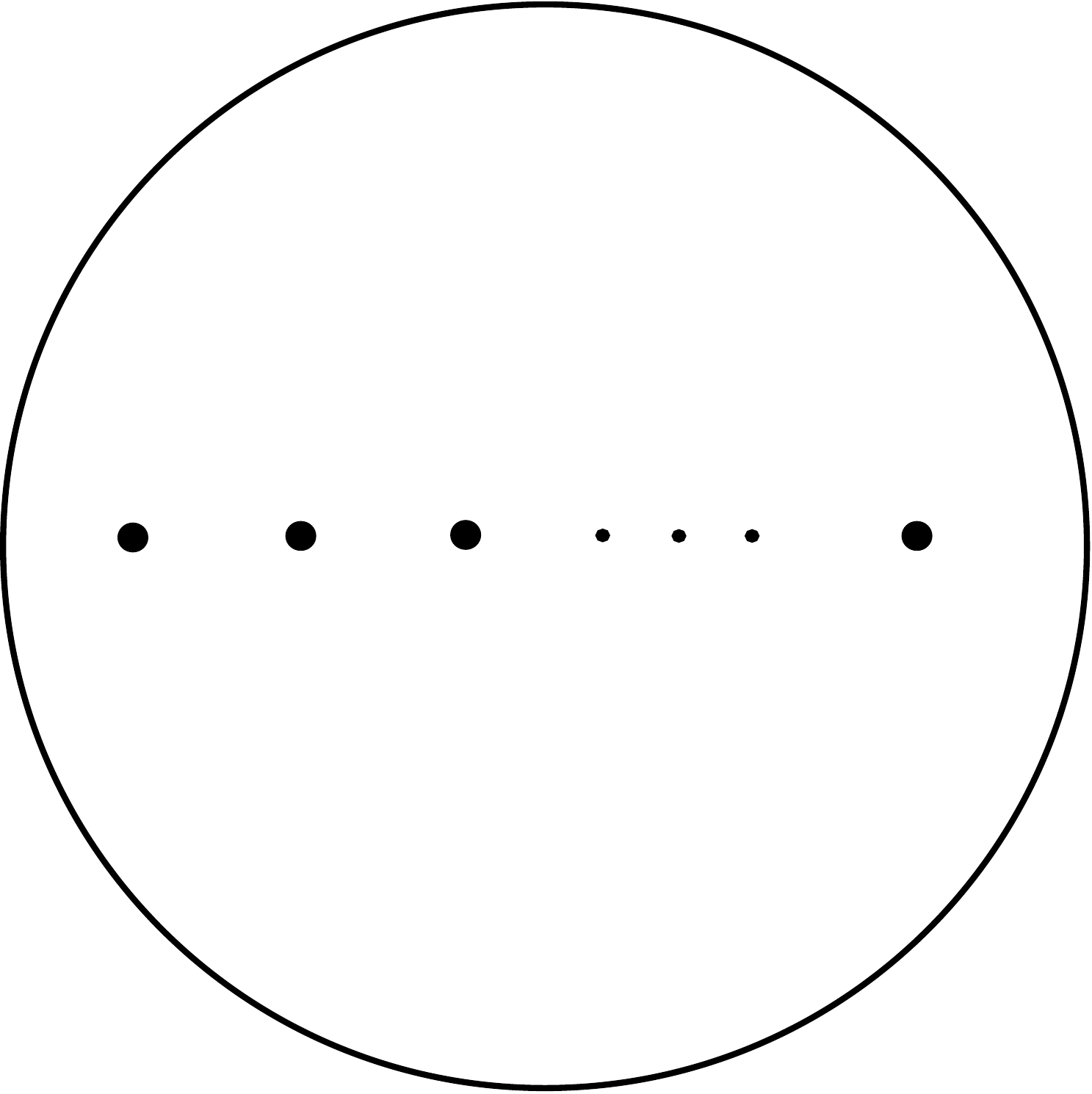}}%
    \put(0.09828143,0.44612921){$p_1$}%
    \put(0.2495178,0.44612932){$p_2$}%
    \put(0.39791846,0.44801962){$p_3$}%
    \put(0.79774962,0.44707441){$p_n$}%
  \end{picture}%
\caption{The disk $\mathbb{D}_n$  with $n$ evenly spaced punctures on the real axis.}
\label{figure_Dn}
\end{figure}
Let's calculate our first mapping class group.

\begin{lemma} \emph{(Alexander's trick)}
\index{Alexander's trick}
The mapping class group of $\mathbb{D}$ is trivial.
\end{lemma}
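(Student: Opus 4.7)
The plan is to use the classical ``coning'' construction, often called the Alexander trick. Given any orientation-preserving homeomorphism $h:\mathbb{D}\to\mathbb{D}$ with $h|_{\partial\mathbb{D}}=\mathrm{id}$, I want to produce an explicit isotopy from $h$ to the identity through homeomorphisms of $\mathbb{D}$ that all fix $\partial\mathbb{D}$ pointwise, which will show $[h]=1$ in $\mathrm{Mod}(\mathbb{D})$.

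First I would define the candidate isotopy $H:\mathbb{D}\times[0,1]\to\mathbb{D}$ by
\[
H_t(x)\;=\;\begin{cases}(1-t)\,h\!\left(\dfrac{x}{1-t}\right) & \text{if } 0\le|x|\le 1-t,\\[4pt] x & \text{if } 1-t\le|x|\le 1,\end{cases}
\]
for $t\in[0,1)$, and $H_1(x)=x$. The geometric idea is to shrink $h$ down to smaller and smaller disks of radius $1-t$ centered at the origin, while letting the identity take over on the surrounding annulus. Since $h$ fixes $\partial\mathbb{D}$, the two pieces agree on the circle $|x|=1-t$, so each $H_t$ is well defined and continuous, and it is straightforward to verify that $H_t$ is a homeomorphism of $\mathbb{D}$ for every $t\in[0,1)$, with $H_0=h$ and $H_t|_{\partial\mathbb{D}}=\mathrm{id}$.

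The only real step that needs care is continuity at $t=1$: one must check that $H_t(x)\to x$ uniformly as $t\to 1$. This follows because for $|x|\le 1-t$ we have $|H_t(x)|\le 1-t$ and $|x|\le 1-t$, so $|H_t(x)-x|\le 2(1-t)\to 0$, while on the complementary annulus $H_t(x)=x$ exactly. Thus $H$ is a continuous isotopy through homeomorphisms that fix the boundary, from $h$ at $t=0$ to the identity at $t=1$.

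The main (and really only) obstacle is the continuity issue at $t=1$, together with verifying that each $H_t$ is a homeomorphism rather than merely a continuous map; both are handled by the simple radius estimate above and by exhibiting an explicit continuous inverse using $h^{-1}$ in place of $h$. Since $h$ was arbitrary, this shows every isotopy class in $\mathrm{Mod}(\mathbb{D})$ is trivial, proving the lemma. Notice that this argument uses in an essential way that there are no punctures: the rescaling $x\mapsto x/(1-t)$ would in general move the puncture points, which is why the analogous statement fails for $\mathbb{D}_n$ and gives rise to the nontrivial group $B_n$.
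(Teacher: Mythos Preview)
Your proof is correct and is precisely the same coning construction given in the paper, with the same formula for the isotopy. You have actually supplied slightly more detail than the paper does (the uniform estimate at $t=1$ and the remark on why punctures obstruct the argument), but the approach is identical.
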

\begin{proof}
Let $h$ be any homeomorphism of $\mathbb{D}$ with itself that restricts to the identity on the boundary.  Define
\[
F(z,t) =
\left\{
	\begin{array}{ll}
		(1-t)h(\frac{z}{1-t})  & \mbox{if } 0 \leq |z| < 1-t \\
		z & \mbox{if } 1-t \leq |z| \leq 1
	\end{array}
\right.
\]
and define $F(z, 1) : \mathbb{D} \rightarrow \mathbb{D}$ to be the identity.  Then $F$ provides an isotopy of $h$ with the identity, by doing the original map $h$ on a small disk of radius $1-t$ at time $t$ and doing the identity outside.  At time $t=1$ the map is the identity on the whole disk.  Since every homeomorphism $h$ of the disk is isotopic to the identity in this way, $\mathrm{Mod}(\mathbb{D})$ is trivial.
\end{proof}

Now let us consider the group $\mathrm{Mod}(\mathbb{D}_n)$, and how to identify this group with the braid group.  Given a homeomorphism $h: \mathbb{D} \rightarrow \mathbb{D}$ representing an element $[h]$ of $\mathrm{Mod}(\mathbb{D}_n)$, $h$ is not isotopic to the identity via an isotopy that fixes punctures unless $[h]$ is the identity.  However, if we disregard the punctures and consider $[h]$ as an element of $\mathrm{Mod}(\mathbb{D})$ then there \textit{is} an isotopy taking $h$ to the identity since $\mathrm{Mod}(\mathbb{D})$ is trivial.  So, let $F(z,t)$ denote an isotopy which carries $h$ to the identity.  As $t$ ranges from $0$ to $1$, the images of the punctures $p_1, \ldots , p_n$ under the function $F(-, t)$ trace out non-intersecting paths in the cylinder $\mathbb{D} \times [0,1]$, as in Figure \ref{braidtube}.

\begin{figure}[h!]
\setlength{\unitlength}{5cm}
\begin{picture}(1,0.89604448)%
    \put(0,0){\includegraphics[scale=0.25]{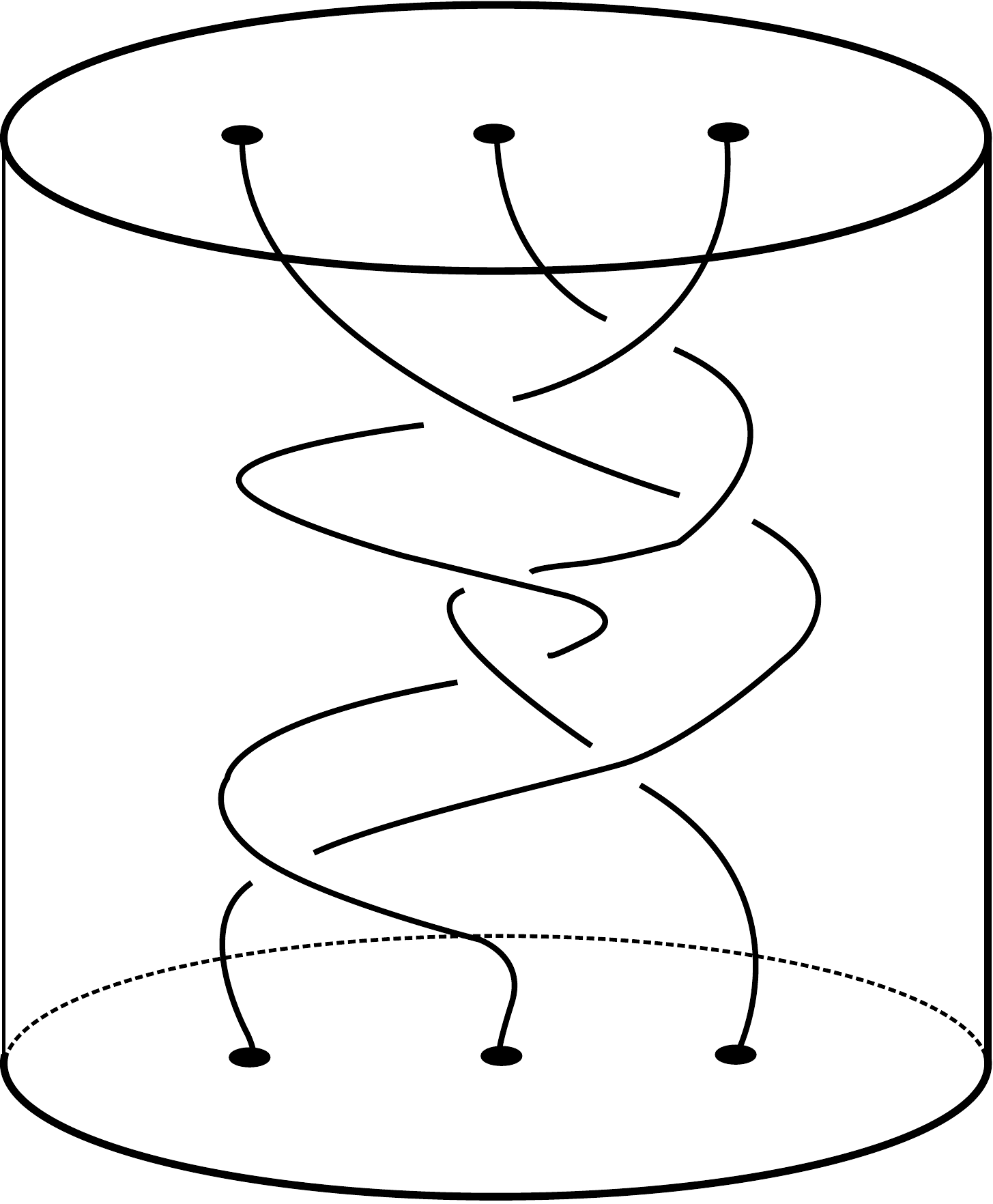}}%
    \put(0.74131285,0.83148657){$F(z,0)$}%
    \put(0.72517206,0.00830293){$F(z,1)$}%
  \end{picture}
\caption{The braid associated to an element of $\mathrm{Mod}(\mathbb{D}_3)$.}
\label{braidtube}
\end{figure}

The images of these paths determine a braid, and so determine an element of the group $B_n$.   This correspondence is actually an isomorphism, though the details are somewhat involved and will not be covered here (see \cite{birman74} for full details). However we have described the key idea behind the theorem:
\begin{theorem} 
The braid group $B_n$ is isomorphic to the mapping class group $\mathrm{Mod}(\mathbb{D}_n)$.
\end{theorem}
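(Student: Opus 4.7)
The plan is to construct two mutually inverse homomorphisms $\Phi : \mathrm{Mod}(\mathbb{D}_n) \to B_n$ and $\Psi : B_n \to \mathrm{Mod}(\mathbb{D}_n)$, following the geometric sketch given just before the statement. To define $\Phi$, given a class $[h] \in \mathrm{Mod}(\mathbb{D}_n)$, I would pick a representative $h : \mathbb{D} \to \mathbb{D}$ fixing $\partial\mathbb{D}$ pointwise and apply Alexander's trick to produce an isotopy $F : \mathbb{D} \times [0,1] \to \mathbb{D}$ from $h$ to $\mathrm{id}_{\mathbb{D}}$. The $n$-tuple of paths $\beta_i(t) = (F(p_i,t),t)$ in $\mathbb{D} \times [0,1]$ is then a geometric braid since $F(-,t)$ is a homeomorphism for each $t$, so the strands are pairwise disjoint; this defines $\Phi([h]) \in B_n$.

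The first main task is to check that $\Phi$ is well defined. If $F$ and $F'$ are two isotopies from the same $h$ to $\mathrm{id}_{\mathbb{D}}$, concatenating one with the reverse of the other gives a loop in $\mathrm{Homeo}_+(\mathbb{D})$ based at $\mathrm{id}$, and contractibility of $\mathrm{Homeo}_+(\mathbb{D})$ (again by Alexander's trick, applied fibrewise) supplies a homotopy between the two braids through geometric braids. Independence from the choice of representative $h$ in its isotopy class rel~$P$ is essentially the same argument applied to a concatenation with the isotopy $h \simeq h'$. The main technical input here is that $\mathrm{Homeo}_+(\mathbb{D}, \partial\mathbb{D})$ is contractible, which is exactly Alexander's trick used in families, so this step is routine once stated carefully.

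Next I would show $\Phi$ is a homomorphism: given $[h_1],[h_2]$ with Alexander isotopies $F_1, F_2$, the concatenation $F_2 * (F_1 \circ h_2)$ is an isotopy from $h_1 \circ h_2$ to $\mathrm{id}$, and the associated braid is the concatenation of the braids from $F_1$ and $F_2$ (once one checks the endpoints match up), which matches the product in $B_n$. For the inverse $\Psi$, given a geometric braid $\beta = (\beta_1,\dots,\beta_n)$ I would extend the motion of the punctures along $\beta$ to an ambient isotopy $G_t : \mathbb{D} \to \mathbb{D}$ with $G_0 = \mathrm{id}$, $G_t$ fixing $\partial\mathbb{D}$ for all $t$, and $G_t(p_i) = \beta_i(t)$; the desired element is $\Psi(\beta) = [G_1^{-1}] \in \mathrm{Mod}(\mathbb{D}_n)$. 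Existence of such an ambient isotopy is the isotopy extension theorem for the inclusion of the finite set $P \hookrightarrow \mathbb{D}$, and well-definedness on equivalence classes of braids again comes down to contractibility of $\mathrm{Homeo}_+(\mathbb{D}, \partial\mathbb{D})$. That $\Phi$ and $\Psi$ are mutually inverse is then essentially a tautology: $\Psi(\Phi([h]))$ arises by extending the motion of the punctures along the braid produced by the Alexander isotopy of $h$, and one can take the Alexander isotopy itself as the ambient extension, recovering $[h]$; the other composition is handled symmetrically.

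The hard part, as indicated, is the well-definedness of $\Phi$ and $\Psi$ --- that is, showing that different choices of isotopies lead to equivalent braids and vice versa. This is where one really needs the (nontrivial) fact that the space of homeomorphisms of the disk rel boundary is contractible, and this is also the reason I would not attempt to write out full details here but instead refer to \cite{birman74} as the excerpt already does. Everything else (definition, composition, existence of the inverse construction) is geometrically transparent once this contractibility input is in hand.
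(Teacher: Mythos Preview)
Your proposal is correct and follows exactly the approach the paper sketches: define the map $\mathrm{Mod}(\mathbb{D}_n)\to B_n$ by tracing the punctures through an Alexander isotopy, and note that well-definedness hinges on the contractibility of $\mathrm{Homeo}_+(\mathbb{D},\partial\mathbb{D})$, with full details deferred to \cite{birman74}. In fact you have filled in considerably more than the paper does---the paper gives only the construction of the forward map and the phrase ``the details are somewhat involved and will not be covered here''---so your outline of the inverse $\Psi$ via isotopy extension and the verification that $\Phi$ is a homomorphism are welcome additions beyond what the text itself provides.
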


\begin{problem}
Describe a homeomorphism of the disc $\mathbb{D}_n$ that represents an equivalence class whose corresponding braid is the generator $\sigma_i$.  Verify that your homeomorphisms of $\mathbb{D}_n$ satisfy, up to isotopy, the braid relations $\sigma_i \sigma_j = \sigma_j \sigma_i$  if $|i-j|>1$ and 
$\sigma_i \sigma_j \sigma_i = \sigma_j \sigma_i \sigma_j$  if $|i-j|=1$.
\end{problem}

\subsection{A hyperbolic metric on the $n$-punctured disk.}
\label{hyperbolic_metric_on_disk}
For a fixed point $ z $ in the interior of $\mathbb{D}$, define an inner product on the tangent vectors (thought of as complex numbers $v, w$) at $z$ by the formula 
\[ g_z(v, w) = 4 \frac{\mathrm{Re}(v \bar{w})}{ (1 - |z|^2)^2}.
\]
This allows us to calculate the length of a tangent vector $w$ at $z$ as  
\[ g_z(w, w)^{1/2} = \left( 4 \frac{\mathrm{Re}(w \bar{w})}{ (1 - |z|^2)^2} \right)^{1/2}= 2 \frac{|w|}{ 1 - |z|^2}.
\]

Therefore, the length of a curve $\gamma:[0,1] \rightarrow \mathrm{int}(\mathbb{D})$ can be calculated by integrating the length of the tangent vector along the curve, yielding
\[ \ell_{\mathbb{H}} (\gamma) = \int_0^1 \frac{2|\gamma'(t)|}{1-|\gamma(t)|^2}dt
\]
  This gives the interior of the unit disk a hyperbolic metric, and equipped with this idea of length of curves we'll denote the open disk by $\mathbb{D}_{\mathbb{H}}$, it is called the \textit{Poincar\'{e} disk model}\index{Poincar\'{e} disk model}.  

The geodesics in $\mathbb{D}_{\mathbb{H}}$ are Euclidean circles meeting the boundary at right angles, and straight lines passing through $0$, as in Figure \ref{figure_geodesics}.

\begin{figure}[h!]
\includegraphics[scale=0.5]{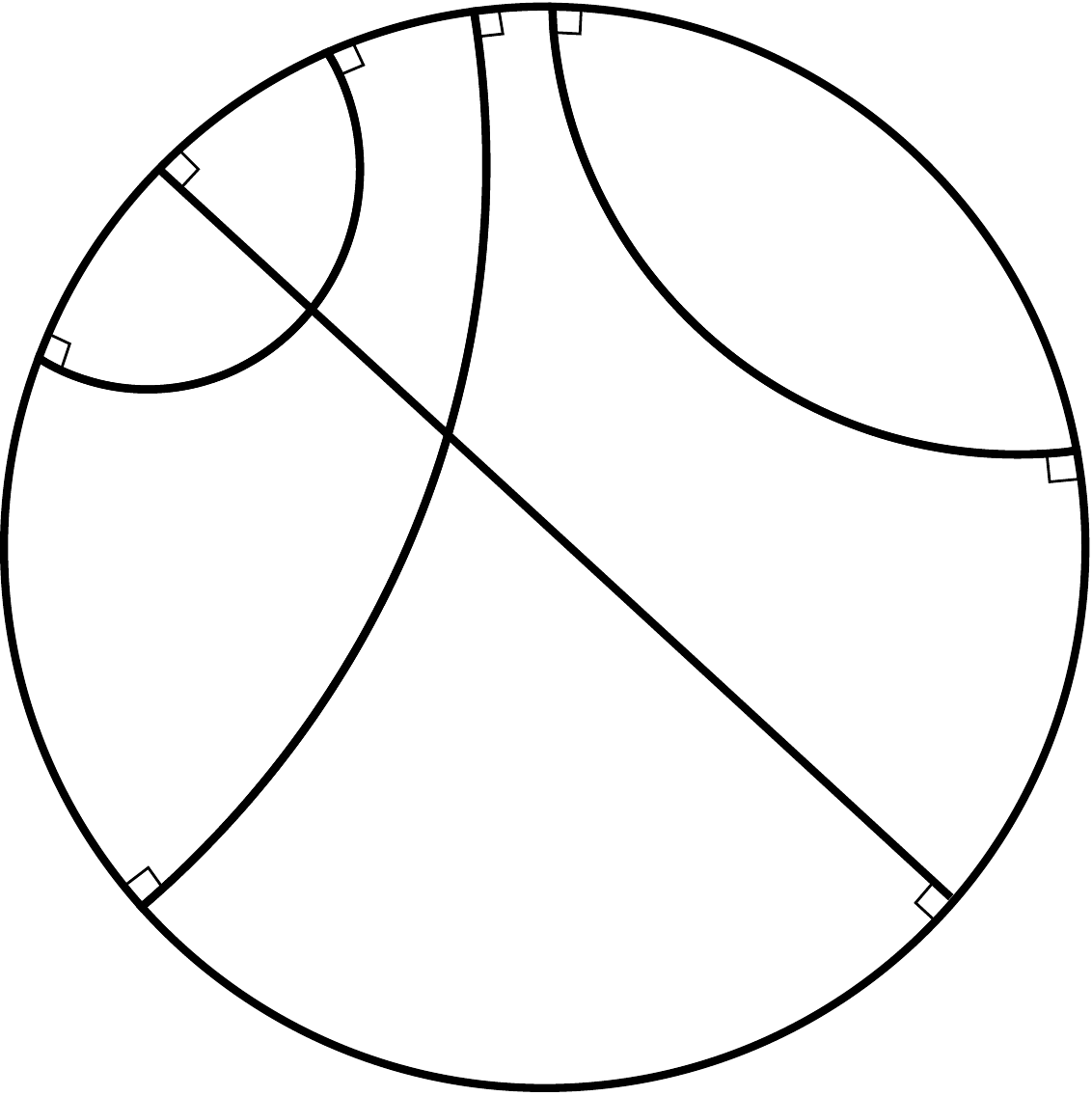}
\caption{Examples of geodesics in $\mathbb{D}_{\mathbb{H}}$.}
\label{figure_geodesics}
\end{figure}

The distance between two points $v, w \in \mathbb{D}_{\mathbb{H}}$ is denoted $d(v, w)$ and is calculated as the length of the shortest curve (i.e. a geodesic) between them.  The isometries of $\mathbb{D}_{\mathbb{H}}$ are the M\"{o}bius transformations of the form
\[ f_{\theta, a} (z)= e^{i \theta}  \left( \frac{ z-a}{1 - \bar{a} z} \right)
\]
where $|a| <1$.

\begin{problem} 
\label{dist_to_zero}
Using the fact that the straight line joining any point $z$ to $0$ is a geodesic, show that the distance between $z \in \mathbb{D}_{\mathbb{H}}$ and $0$ is $ 2 \mathrm{tanh}^{-1}(|z|) = \mathrm{log} \left(   \frac{|z|+1}{1-|z|} \right)$.
\end{problem}

\begin{problem} Show that the group of isometries of $\mathbb{D}_{\mathbb{H}}$ acts transitively on $\mathbb{D}_{\mathbb{H}}$, by exhibiting a M\"{o}bius transformation that will map an arbitrary $w \in \mathbb{D}_{\mathbb{H}}$ to $0$.   Use isometries and Problem \ref{dist_to_zero} to show that for any two points $v, w \in \mathbb{D}_{\mathbb{H}}$ the distance between them is
\[d(v, w) = 2 \mathrm{tanh}^{-1} \left| \frac{w-z}{\bar{z}w -1} \right|
\]
\end{problem}

Now we put a hyperbolic metric on the disk $\mathbb{D}_n$, $n\geq 2$, such that the boundary is a geodesic of any length we please and the area of $\mathbb{D}_n$ is finite.  To do this, we start with the Poincar\'{e} disk as described above and take two copies of the region indicated in Figure \ref{Dn_piece}. 

\begin{figure}
\setlength{\unitlength}{6cm}
 \begin{picture}(1,0.99434127)%
    \put(0,0){\includegraphics[width=\unitlength]{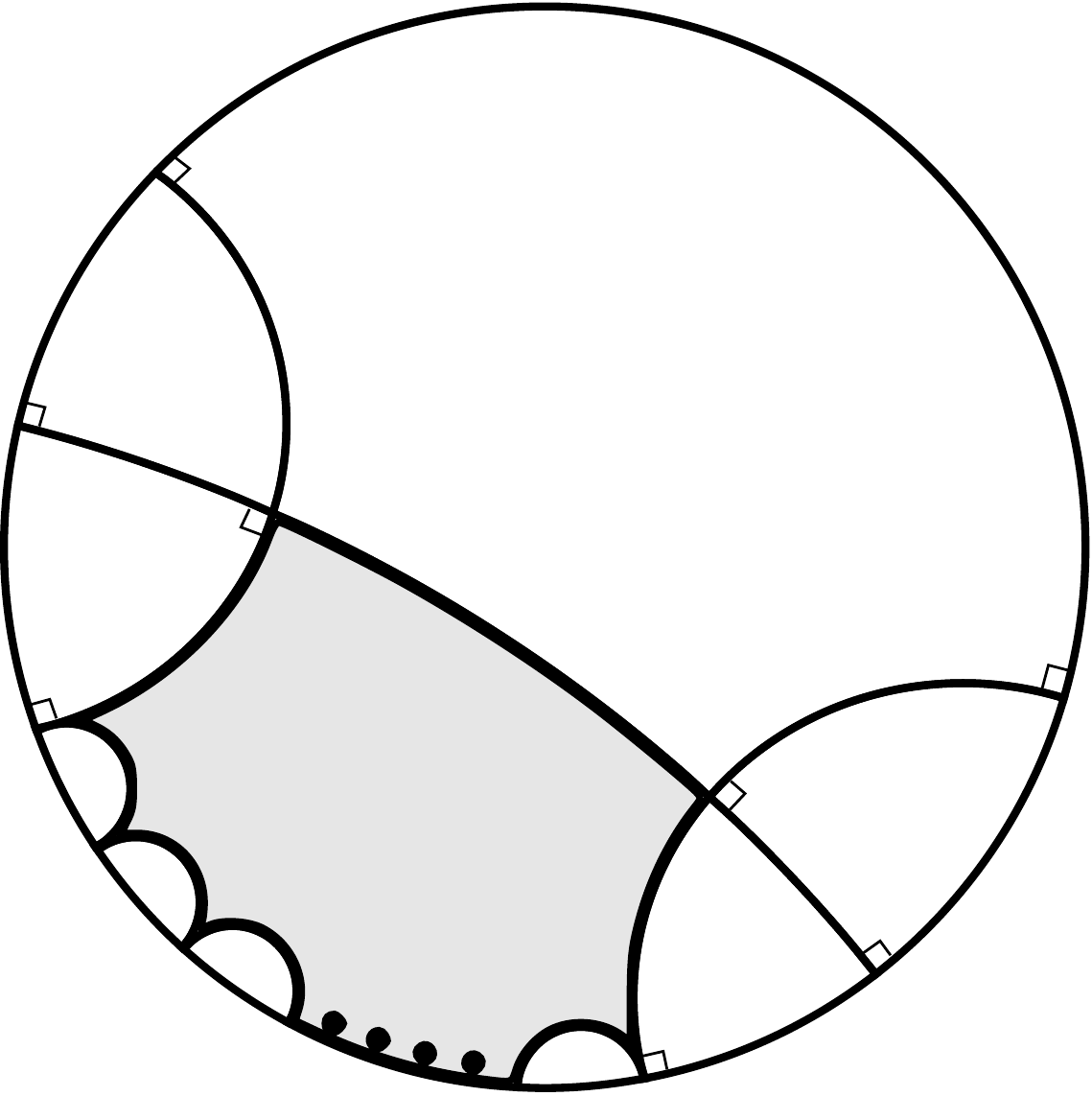}}%
    \put(0.11747152,0.43993631){$\gamma_1$}%
    \put(0.13309678,0.29930901){$\gamma_2$}%
    \put(0.19559778,0.20382136){$\gamma_3$}%
    \put(0.28414093,0.11180589){$\gamma_4$}%
    \put(0.48206079,0.08055545){$\gamma_n$}%
    \put(0.61053511,0.15000098){$\gamma_{n+1}$}%
  \end{picture}%
\caption{A piece of $\mathbb{D}_{\mathbb{H}}$ that we'll use to build $\mathbb{D}_n$.}
\label{Dn_piece}
\end{figure}

Isometrically identify the two copies of each boundary curve $\gamma_i$.  In this way the two pieces of $\mathbb{D}_{\mathbb{H}}$ that we cut out as in Figure  \ref{Dn_piece} will glue together to give a disk with $n$ punctures, see Figure \ref{glued_Dn}.   Each half of $\mathbb{D}_n$ comes equipped with a hyperbolic metric $g_z(v,w)$ inherited from $\mathbb{D}_{\mathbb{H}}$.  We use the inherited hyperbolic metrics on each half of $\mathbb{D}_n$ to define a continuously varying hyperbolic metric on $\mathbb{D}_n$ for which each of the punctures is `pushed off to infinity' to form what is called a \textit{cusp}.   The area of a cusp is finite, and consequently $\mathbb{D}_n$ has finite area when equipped with the metric we have just constructed.   
%

\begin{figure}
\setlength{\unitlength}{6cm}
\begin{picture}(1,1)%
    \put(0,0){\includegraphics[width=\unitlength]{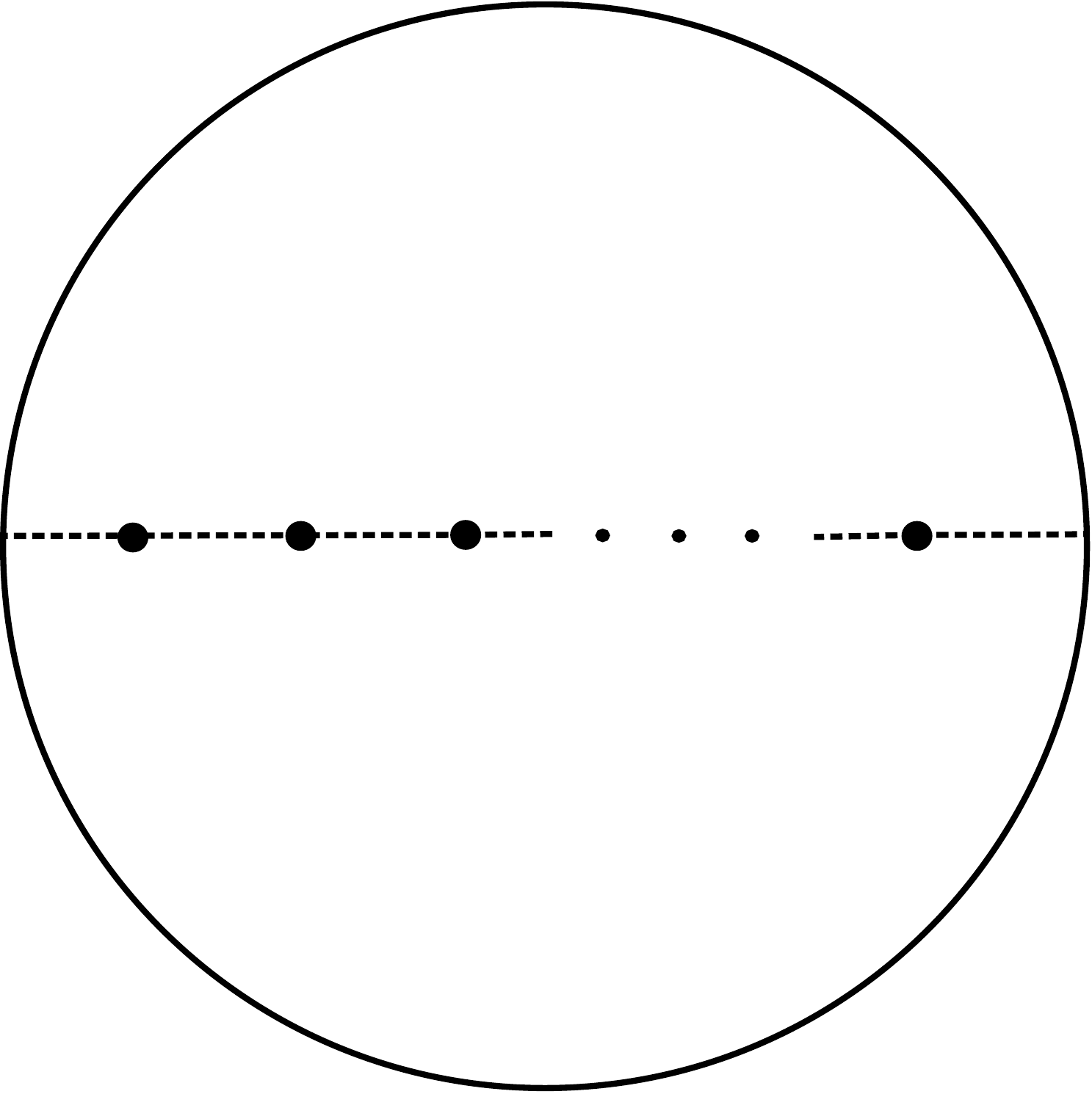}}%
    \put(0.09828143,0.44612921){$p_1$}%
    \put(0.2495178,0.44612932){$p_2$}%
    \put(0.39791846,0.44801962){$p_3$}%
    \put(0.79774962,0.44707441){$p_n$}%
    \put(0.03022506,0.55483039){$\gamma_1$}%
    \put(0.15594031,0.55199466){$\gamma_2$}%
    \put(0.32135503,0.55388513){$\gamma_3$}%
 \put(0.85,0.55388513){$\gamma_{n+1}$}%
  \end{picture}%

\caption{The disk $\mathbb{D}_n$ with the images of the curves $\gamma_i$ indicated by dotted lines.}
\label{glued_Dn}
\end{figure}

\subsection{A copy of the real line with a $B_n$-action.} 

In this section, we will prove that the braid groups are left-orderable by constructing an order-preserving, effective action of $B_n$ on the real line by using hyperbolic geometry.  Since the group $\mathrm{Homeo_+}(\mathbb{R})$ is left-orderable, it follows that $B_n$ is left-orderable.  The copy of the $\mathbb{R}$ that we will equip with a $B_n$-action arises as the boundary of a certain hyperbolic space, so first a word about such boundaries.

Given a hyperbolic space $X$ with distance $d(x,y)$, parameterize the geodesic rays $\gamma:[0,\infty) \rightarrow X$ by arc length.  Then two rays $\gamma, \gamma'$ are said to be \textit{a bounded distance apart} if there exists a distance $D >0$ such that  $\hbox{sup}_t \{ d(\gamma(t), \gamma'(t)) \} \leq D$.  The property of being a bounded distance apart is an equivalence relation on the set of geodesic rays, and we define the boundary $\partial X$ to be the set of equivalence classes.  For a mathematically rigorous treatment of what follows, this is the required notion of boundary.  However for the space $\mathbb{D}_{\mathbb{H}}$ your intuition likely tells you that $\partial \mathbb{D}_{\mathbb{H}}$ should look like the circle $\partial \mathbb{D}$, and this is correct.  The boundary of $\mathbb{D}_{\mathbb{H}}$ is homeomorphic to a circle $S^1_{\infty}$, called the circle at infinity, and in fact each point on the boundary admits a representative geodesic ray $\gamma: [0, \infty) \rightarrow \mathbb{D}_{\mathbb{H}}$ with $\gamma(0) = 0$, which is a (Euclidean) straight line.  We will rely upon this in the arguments which follow.

Denote the universal cover of $\mathbb{D}_n$ by $\widetilde{\mathbb{D}}_n$, fix a point $x_0 \in \partial \mathbb{D}_n$ and a point $\widetilde{x}_0 \in \partial \widetilde{\mathbb{D}}_n$ with $p(\widetilde{x}_0) = x_0$. 

\begin{proposition}
\label{lifting_the_action}
For $n \ge 2$, there exists an action of $\mathrm{Mod}(\mathbb{D}_n)$ on $\widetilde{\mathbb{D}}_n$ such that every element of  $\mathrm{Mod}(\mathbb{D}_n)$ fixes the point $\widetilde{x}_0$.
\end{proposition}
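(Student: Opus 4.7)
The strategy is to produce a canonical lift of each mapping class to a self-homeomorphism of $\widetilde{\mathbb{D}}_n$, using the fact that representative homeomorphisms restrict to the identity on $\partial \mathbb{D}$, together with the usual lifting theory for covering spaces. Once canonical lifts are defined, verifying that they assemble into an action with $\widetilde{x}_0$ as a common fixed point will reduce to a uniqueness argument.

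Concretely, pick a representative $h \in \mathrm{Homeo}_+(\mathbb{D}, P)$ of a class $[h] \in \mathrm{Mod}(\mathbb{D}_n)$. By definition $h$ restricts to the identity on $\partial \mathbb{D}$, so in particular $h(x_0) = x_0$. Regarding $p : \widetilde{\mathbb{D}}_n \to \mathbb{D}_n$ as a covering of manifolds with boundary (the boundary being the preimage of $\partial \mathbb{D}$, the punctures being cusps in the hyperbolic metric constructed in Section \ref{hyperbolic_metric_on_disk}), the usual path-lifting property applies at boundary points. Since $p_*\pi_1(\widetilde{\mathbb{D}}_n) = 1$, the lifting criterion is trivially satisfied, and there exists a unique lift $\widetilde{h} : \widetilde{\mathbb{D}}_n \to \widetilde{\mathbb{D}}_n$ characterized by $\widetilde{h}(\widetilde{x}_0) = \widetilde{x}_0$.

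Next I would verify that the map $[h] \mapsto \widetilde{h}$ is well defined on isotopy classes. If $h_0$ and $h_1$ are two representatives of the same mapping class, there is an isotopy $H_t$ between them through homeomorphisms fixing $\partial \mathbb{D}$ pointwise and permuting $P$. Lift $H_t$ to an isotopy $\widetilde{H}_t$ of $\widetilde{\mathbb{D}}_n$ starting at $\widetilde{H}_0 = \widetilde{h}_0$. Because $H_t(x_0) = x_0$ for all $t$, continuity forces $\widetilde{H}_t(\widetilde{x}_0)$ to stay in the discrete fibre $p^{-1}(x_0)$, hence to equal $\widetilde{x}_0$ for all $t$. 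Therefore $\widetilde{H}_1$ is a lift of $h_1$ fixing $\widetilde{x}_0$, which by uniqueness equals $\widetilde{h}_1$. The homomorphism property is then immediate: the composition $\widetilde{h} \circ \widetilde{g}$ is a lift of $h \circ g$ sending $\widetilde{x}_0$ to itself, so by uniqueness it coincides with the canonical lift of $[h][g]$.

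The main technical point to be careful about is that lifting is applied at a boundary point of $\widetilde{\mathbb{D}}_n$ rather than an interior point, so one must confirm that the path- and isotopy-lifting properties remain valid there; this is straightforward once one views $\mathbb{D}_n$ as a manifold with boundary $\partial \mathbb{D}$, with the cusps at the punctures being irrelevant to the lifting at $x_0 \in \partial \mathbb{D}$. A secondary subtlety is the need to choose representatives of mapping classes consistently, but the well-definedness argument above shows that the canonical lift depends only on the isotopy class, so no such choice is needed in the end.
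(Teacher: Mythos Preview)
Your proof is correct and follows the standard covering-space argument: lift $h \circ p$ through the universal cover using simple connectivity of $\widetilde{\mathbb{D}}_n$, normalize by requiring $\widetilde{x}_0 \mapsto \widetilde{x}_0$, and use uniqueness of lifts to obtain well-definedness on isotopy classes and the homomorphism property. The paper does not give a full proof but instead leaves it as an exercise with a hint suggesting a more hands-on construction: cut $\mathbb{D}_n$ open along the arcs $\gamma_1, \dots, \gamma_n$ (but not $\gamma_{n+1}$) to obtain a simply connected region $X$, lift the inclusion $i : X \hookrightarrow \mathbb{D}_n$ to $\tilde{i}_\gamma : X \to \widetilde{\mathbb{D}}_n$, and use this fundamental-domain picture to build the lifted action explicitly. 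Your abstract approach is cleaner and requires no auxiliary choices of arcs, while the paper's hinted approach has the advantage of making the tessellation of $\widetilde{\mathbb{D}}_n$ by copies of $X$ visible --- which is useful later in the chapter when one analyzes how lifted curves cross the lifts of the $\gamma_i$. Both routes are valid; the only minor addition worth making explicit in yours is that $\widetilde{h}$ is a homeomorphism (apply the same construction to $h^{-1}$ and invoke uniqueness again).
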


\begin{problem}
Prove the previous proposition. (Hint:  Consider cutting open $\mathbb{D}_n$ along the curves $\gamma_1, \gamma_2, \ldots, \gamma_n$, but not $\gamma_{n+1}$, to produce a simply connected space $X$.  There is an inclusion $i :X \rightarrow \mathbb{D}_n$ which can be lifted to $\tilde{i}_{\gamma} : X \rightarrow \widetilde{\mathbb{D}}_n$ and used to build an action of $\mathrm{Mod}(\mathbb{D}_n)$ on $\widetilde{\mathbb{D}}_n$).
\end{problem}

The action of $\mathrm{Mod}(\mathbb{D}_n)$ on $\widetilde{\mathbb{D}}_n$ can be extended to an action on $\partial \widetilde{\mathbb{D}}_n$, though we will not prove this here.  The difficulty in proving this claim arises from the fact that for a lifted homeomorphism $\tilde{h}$ and a geodesic ray $\gamma$ representing $[\gamma] \in \partial \widetilde{\mathbb{D}}_n$, one would like to define $\tilde{h} \cdot [\gamma] = [\tilde{h}(\gamma)]$.  However, $\tilde{h}$ is not an isometry and so $\tilde{h}(\gamma)$ is not a geodesic, and thus there is work to be done in correcting this problem.  For full details, see \cite[Chapter 1]{FM12}.

Next we consider how the action on $\partial \widetilde{\mathbb{D}}_n$ gives us an action on $\mathbb{R}$.  First, a surface with a hyperbolic distance as above is \textit{complete} if it is complete as a metric space, the disk $\mathbb{D}_n$ is complete with respect to the hyperbolic metric we've introduced.   The hyperbolic metric on $\mathbb{D}_n$ pulls back to define a hyperbolic metric on the universal cover $\widetilde{\mathbb{D}}_n$, and as the universal cover of a complete space is complete, $\widetilde{\mathbb{D}}_n$ is also a complete metric space.  Every complete, connected, simply connected hyperbolic surface is isometric to $\mathbb{D}_{\mathbb{H}}$ or a subset of $\mathbb{D}_{\mathbb{H}}$ \cite[Theorem 2.2]{CB88}.  Thus there is an isometric embedding $i : \widetilde{\mathbb{D}}_n \rightarrow \mathbb{D}_{\mathbb{H}}$, but we will write $\widetilde{\mathbb{D}}_n$ in place of $i(\widetilde{\mathbb{D}}_n)$.     Moreover, since isometries act transitively on $\mathbb{D}_{\mathbb{H}}$ we can assume that $\tilde{x}_0 = 0$ and picture $\widetilde{\mathbb{D}}_n$ as a subset of $\mathbb{D}_{\mathbb{H}}$ as in Figure \ref{universal_cover}.

\begin{figure}
\setlength{\unitlength}{6cm}
\begin{picture}(1,1.00000038)%
    \put(0,-0.155){\includegraphics[width=\unitlength]{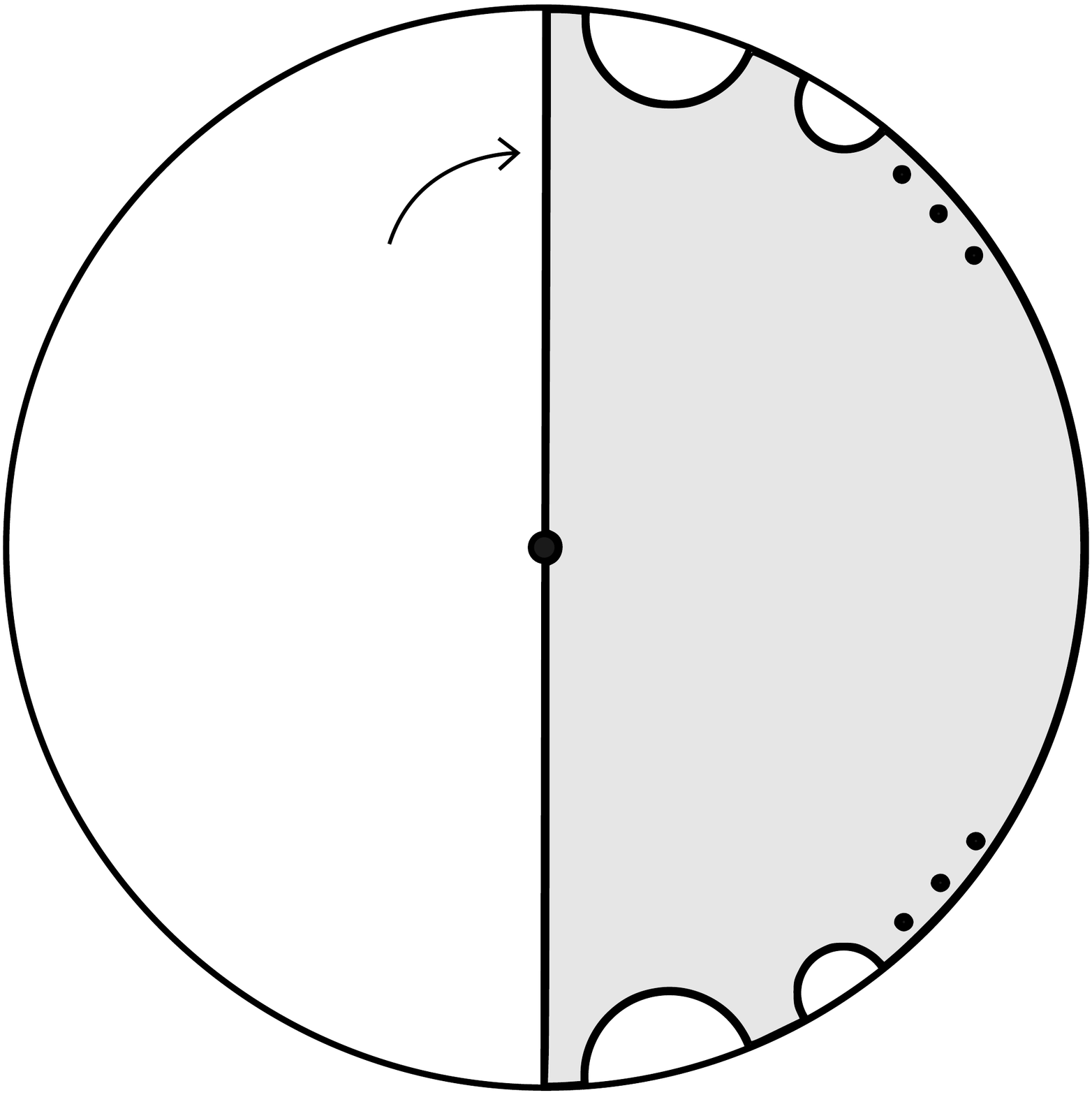}}%
    \put(0.32461226,0.71350257){$C$}%
    \put(0.28694796,0.49558791){$x_0=0$}%
    \put(0.67393283,0.57554325){$\widetilde{\mathbb{D}}_n$}%
  \end{picture}%
\caption{The embedded copy of $\widetilde{\mathbb{D}}_n$ in $\mathbb{D}_{\mathbb{H}}$ .}
\label{universal_cover}
\end{figure}

Writing $p:\widetilde{\mathbb{D}}_n \rightarrow \mathbb{D}_n$ for the covering map, the boundary $\partial \widetilde{\mathbb{D}}_n$ consists of points in $p^{-1}(\partial \mathbb{D}_n)$, and a set of points $X$ in $S^1_{\infty}$.  The points of $X$ are represented by geodesics in $ \widetilde{\mathbb{D}}_n$, which we can think of as geodesics in $\mathbb{D}_{\mathbb{H}}$ emanating from $0$ by employing the embedding $\widetilde{\mathbb{D}}_n \subset \mathbb{D}_{\mathbb{H}}$.   
The action of $\mathrm{Mod}(\mathbb{D}_n)$ is already defined on $p^{-1}(\partial \mathbb{D}_n)$, and the action uniquely extends to the set $X$.

Last, let $C$ denote the component of $p^{-1}(\partial \mathbb{D}_n)$ containing $0$. Since $C$ is fixed by the action of $\mathrm{Mod}(\mathbb{D}_n)$, there is an action of $\mathrm{Mod}(\mathbb{D}_n)$ on $\partial \widetilde{\mathbb{D}}_n \setminus C$ by homeomorphisms.  The set $\partial \widetilde{\mathbb{D}}_n \setminus C$ can be naturally identified with $(0, \pi)$, since each point corresponds to a unique geodesic ray $\gamma: [0, \infty) \rightarrow \mathbb{D}_{\mathbb{H}}$ with $\gamma(0) = 0$ and whose angle with $C$ lies in $(0, \pi)$.  Moreover, since the endpoints of $C$ are fixed by the action of $\mathrm{Mod}(\mathbb{D}_n)$, the action is order-preserving.  

\begin{problem}
Show that the action of $\mathrm{Mod}(\mathbb{D}_n)$ on $(0, \pi)$ is effective, since any element $[h] \in \mathrm{Mod}(\mathbb{D}_n)$ that fixes every point in $(0, \pi)$ would, in particular, fix every lift of $x_0 \in \mathbb{D}_n$ and so be the identity.
\end{problem}

Thus we have constructed an effective, order-preserving action of $B_n$ on $\mathbb{R}$, which proves the following:
\begin{theorem}
\label{thurston_ordering_theorem}
The braid groups $B_n$ are left-orderable for $n \geq 2$.
\end{theorem}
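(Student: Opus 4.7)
The plan is to assemble the pieces developed throughout Section 8.3 into a compact proof. First I would dispense with the trivial case: $B_2 \cong \mathbb{Z}$, which is bi-orderable under its usual ordering, so the content of the theorem lies in the case $n \geq 3$. For the remaining cases I intend to produce a nontrivial, effective, order-preserving action of $B_n$ on $\mathbb{R}$, and then invoke the general principle (as in Example \ref{homeo+} and Theorem \ref{LO_universal}) that a group admitting such an action on a linearly ordered set embeds in $\mathrm{Homeo}_+(\mathbb{R})$ and is therefore left-orderable.

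To build this action, the steps are as follows. First, identify $B_n$ with $\mathrm{Mod}(\mathbb{D}_n)$. Second, equip $\mathbb{D}_n$ with the complete hyperbolic metric of Section \ref{hyperbolic_metric_on_disk}, so that the universal cover $\widetilde{\mathbb{D}}_n$ isometrically embeds as a subset of the Poincar\'{e} disk $\mathbb{D}_{\mathbb{H}}$, with the chosen lift $\widetilde{x}_0$ of the basepoint $x_0 \in \partial \mathbb{D}_n$ sitting at $0$. Third, apply Proposition \ref{lifting_the_action} to lift the $\mathrm{Mod}(\mathbb{D}_n)$-action on $\mathbb{D}_n$ to an action on $\widetilde{\mathbb{D}}_n$ fixing $\widetilde{x}_0$. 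Fourth, extend this action to $\partial \widetilde{\mathbb{D}}_n$, and restrict to the complement of the component $C$ of $p^{-1}(\partial \mathbb{D}_n)$ containing $0$. This complement is naturally parameterized by the angle in $(0,\pi)$ that the representing geodesic ray from $0$ makes with $C$, and because the two endpoints of $C$ on $S^1_\infty$ are fixed by the entire action, the induced action on $(0,\pi)$ preserves the natural order.

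Finally, I would verify that the action on $(0,\pi) \cong \mathbb{R}$ is effective: an element of $\mathrm{Mod}(\mathbb{D}_n)$ fixing every point of $(0,\pi)$ must fix every lift of $x_0$ in $\widetilde{\mathbb{D}}_n$, hence act trivially on $\widetilde{\mathbb{D}}_n$, hence be the identity. With an effective, order-preserving action of $B_n$ on the totally ordered set $(0,\pi)$ in hand, I would conclude by mimicking the construction of Example \ref{homeo+} (as formalized in Problem \ref{relatively_convex_intersection_problem}): fix a well-ordering of $(0,\pi)$ and compare two braids by the image of the first point at which they disagree. This directly produces a left-invariant total order on $B_n$.

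The only genuinely delicate step is the extension of the $\mathrm{Mod}(\mathbb{D}_n)$-action from $\widetilde{\mathbb{D}}_n$ to its boundary: a lift $\widetilde{h}$ of a homeomorphism is not an isometry, so the image of a geodesic ray is not geodesic, and one must argue that any two rays representing the same boundary point are sent to rays a bounded distance apart. This is a standard but nontrivial piece of hyperbolic geometry (done carefully in \cite{FM12}) and is the main technical obstacle; the remaining steps are essentially bookkeeping, combining Proposition \ref{lifting_the_action} with the order structure coming from the circle at infinity.
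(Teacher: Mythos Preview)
Your proposal is correct and follows essentially the same route as the paper: identify $B_n$ with $\mathrm{Mod}(\mathbb{D}_n)$, lift the action to the hyperbolic universal cover via Proposition~\ref{lifting_the_action}, extend to the boundary, restrict to $\partial\widetilde{\mathbb{D}}_n\setminus C\cong(0,\pi)$, and verify the resulting action is effective and order-preserving. You have also correctly identified the extension-to-the-boundary step (deferred to \cite{FM12}) as the one genuinely technical point.
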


\subsection{Different left-orderings from Thurston's construction.}

Given an action of a group $G$ on $\mathbb{R}$ by order-preserving homeomorphisms, recall that we can construct a left-ordering of $G$ using the procedure in Example \ref{homeo+}.  That procedure requires a choice of countable dense sequence $x_1, x_2, \ldots$ of real numbers, and then for any two distinct elements $g, h \in G$, if $i$ is the lowest subscript for which $g(x_i)$ and $h(x_i)$ are different  we declare $g<h$ if $g(x_i) < h(x_i)$ and $h<g$ otherwise.  While not emphasized in that example, it is clear that different choices of sequences can give rise to different left-orderings of $G$ (for example, by simply re-indexing the same sequence one can potentially get a new left-ordering). 

Here, we'll investigate how different choices of sequences can give rise to different orderings of $B_n$ by applying Thurston's construction.  According to the setup of Theorem \ref{thurston_ordering_theorem}, $B_n$ acts on a copy of $\mathbb{R}$ that is identified with $\partial \widetilde{\mathbb{D}}_n$, each point of which corresponds to a geodesic ray $\widetilde{\gamma}: [0, \infty) \rightarrow \mathbb{D}_{\mathbb{H}}$.  Thus every left-ordering arising from Theorem \ref{thurston_ordering_theorem} depends on a choice of geodesic rays $ \widetilde{\alpha}_1, \widetilde{\alpha}_2, \ldots $ emanating from $0$ and corresponding to a dense sequence of points in $\partial \widetilde{\mathbb{D}}_n$.

Let's consider the particular case of $B_4$ to illustrate what information can be gleaned from a chosen sequence of geodesic rays.  We begin with geodesic rays $\{ \widetilde{\alpha}_1, \widetilde{\alpha}_2, \ldots \}$ starting at $0$, where 
the image of $\widetilde{\alpha}_1$ under the projection map $p: \widetilde{\mathbb{D}}_4 \rightarrow \mathbb{D}_4$ appears as on the left of Figure \ref{order_curves}, and $\widetilde{\alpha}_i$ are arbitrary for $i \geq 2$.  Note that this image in $\mathbb{D}_4$ uniquely determines the lift $\widetilde{\alpha}_1$, since we have specified that $\widetilde{\alpha}_1(0) =0$.  Let $<_{\alpha}$ denote the corresponding left-ordering of $B_4$ that is defined as in the first paragraph of this section.

\begin{figure}[h!]
\setlength{\unitlength}{4.5cm}
\begin{picture}(2.5,1.00000038)
\put(0, 0){\includegraphics[width=\unitlength]{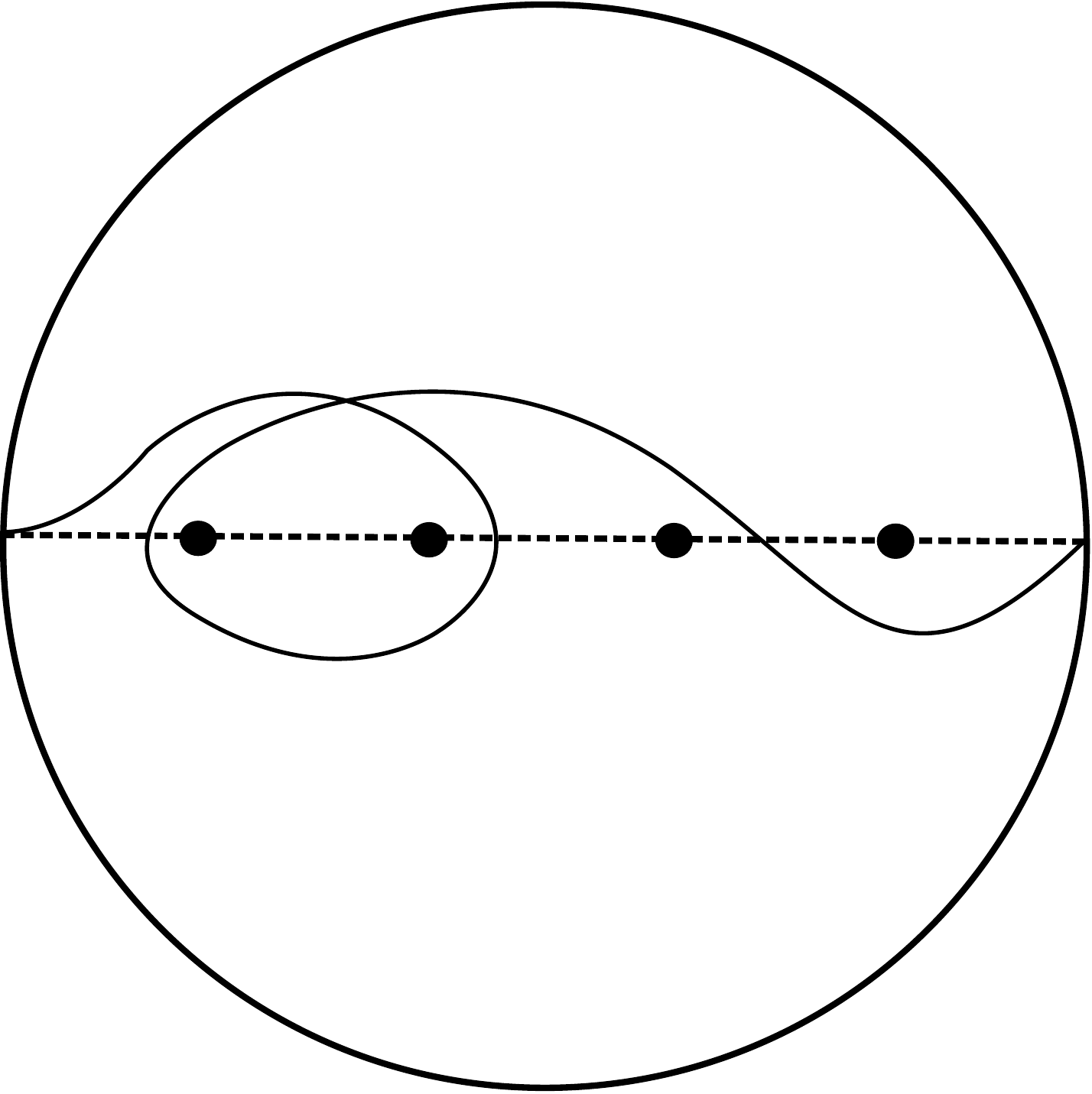}}
 \put(-0.04,0.48){\scalebox{1.5}{$\times$}}
\put(1.5, 0){\includegraphics[width=\unitlength]{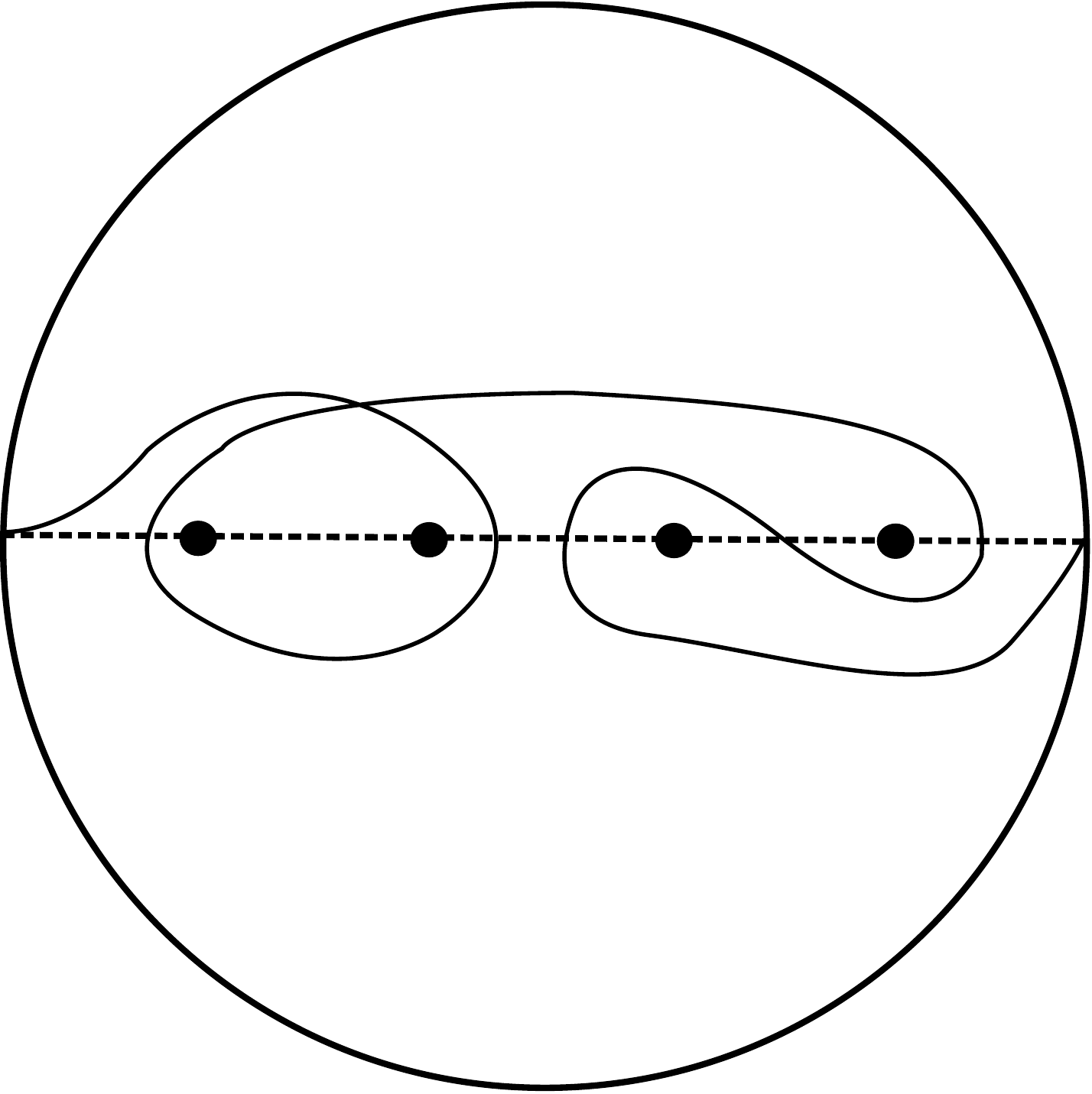}}
 \put(1.46,0.48){\scalebox{1.5}{$\times$}}
\end{picture}
\caption{The curve $\alpha_1$ on the left, and $\alpha_1$ after applying the homeomorphism $h_3$ on the right.}
\label{order_curves}
\end{figure}

Fix representative homeomorphisms $h_i$ of elements $[h_i] \in \mathrm{Mod}(\mathbb{D}_4)$, where $h_i$ swaps the punctures $p_i$ and $p_{i+1}$ and $[h_i]$ corresponds to the generator $\sigma_i$ of $B_4$.  Note that we can choose a representative homeomorphism $h_1$ whose support is disjoint from $\alpha_1$, and thus in the cover $\widetilde{\mathbb{D}}_4 $ the corresponding action of $[h_1]$ fixes the lift $\widetilde{\alpha}_1$.

By Problem \ref{relatively_convex_intersection_problem}, the stabilizer of $\widetilde{\alpha}_1$ is a convex subgroup $C$ in the left-ordering $<_{\alpha}$.  On the other hand, $h_3$ is not in $C$, because $[h_3]$ acts nontrivially on $\widetilde{\alpha}_1$.  We can see this by examining Figure \ref{order_curves}, and considering the intersection of $\alpha_1$ and $h_3(\alpha_1)$ with the curves $\gamma_1, \gamma_2, \ldots, \gamma_5$ which connect the punctures to one another, and to the boundary.   Starting from the ``X'', the curve $\alpha_1$ intersects the $\gamma_i$ in the order $\gamma_3, \gamma_1, \gamma_4$, and so the lift $\widetilde{\alpha}_1$ intersects lifts of the $\gamma_i$'s in that same order.  Considering $h_3(\alpha_1)$, we see a different order of intersection with the $\gamma_i$'s, namely $\gamma_3, \gamma_1, \gamma_5, \gamma_4, \gamma_3$, and so the lift $\widetilde{h_3(\alpha_1)}$ intersects the lifts of the $\gamma_i$'s in this order.

\begin{problem}
Argue that $h_3(\alpha_1)$ and $\alpha_1$ determine distinct points in $\partial \widetilde{\mathbb{D}}_n$, since each intersects the $\gamma_i$'s in a different order.
\end{problem}

\begin{problem}
Argue that when $\alpha_1$ is replaced by the curve $\alpha_1'$ shown in Figure ~\ref{order_curves_2}, the resulting left-ordering $<_{\alpha}$ is different.
\begin{figure}[h!]
\setlength{\unitlength}{4.5cm}
\begin{picture}(1,1.00000038)
\put(0, 0){\includegraphics[width=\unitlength]{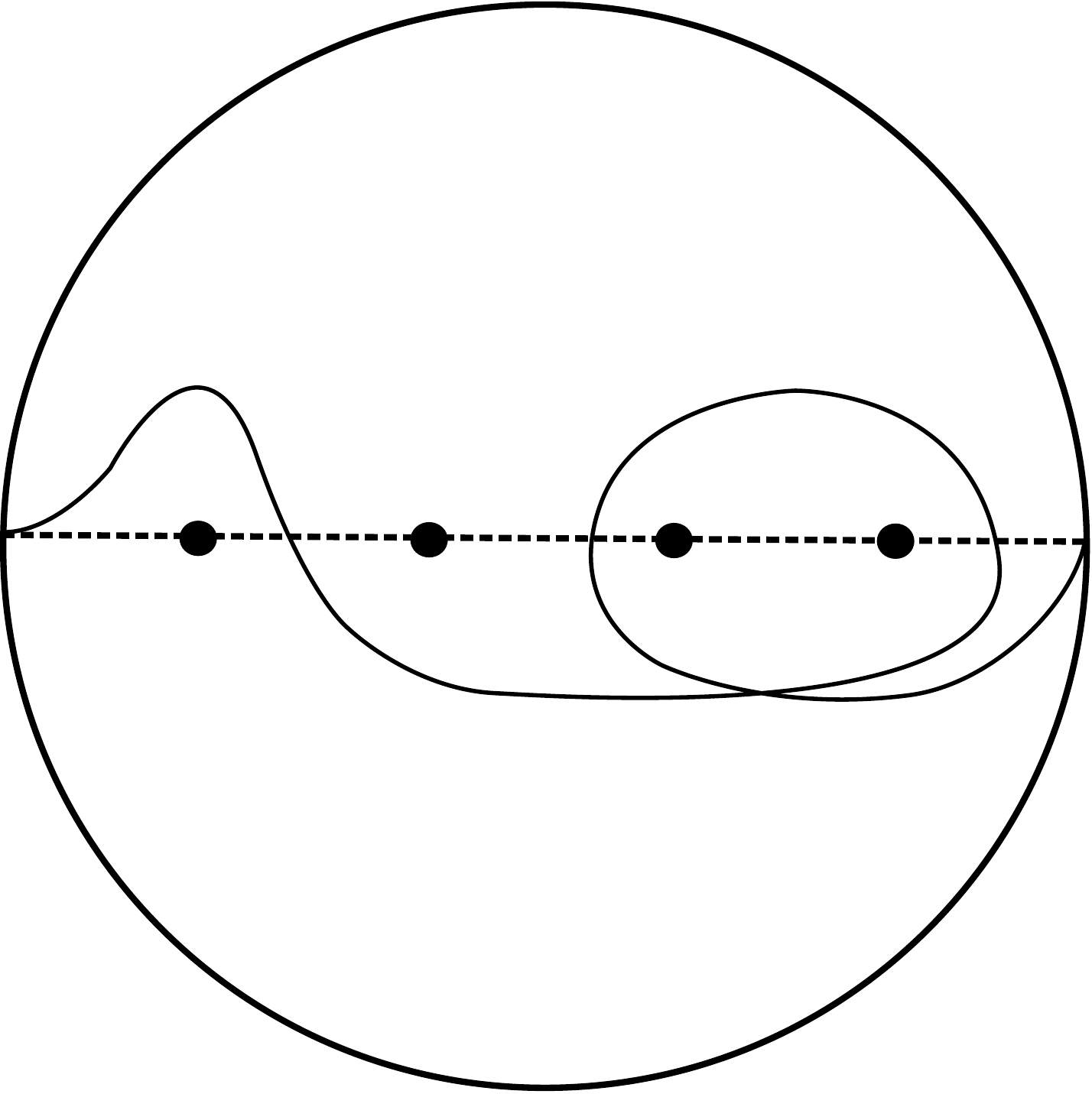}}
 \put(-0.04,0.48){\scalebox{1.5}{$\times$}}
\end{picture}
\caption{The curve $\alpha_1'$ .}
\label{order_curves_2}
\end{figure}
\end{problem}

Thus different choices of geodesics $\alpha_1, \alpha_2, \ldots$ in $\mathbb{D}_n$ correspond to different choices of geodesic rays $\widetilde{\alpha}_1, \widetilde{\alpha}_2, \ldots $ in $\widetilde{\mathbb{D}}_n $, each giving rise to a potentially distinct left-ordering of $B_n$.  In fact, the Dehornoy ordering also arises from Thurston's construction, by choosing $\alpha_1$ to be the curve in Figure \ref{dehornoy_geodesic}.  In this case it happens that the action of $B_n$ is free on the orbit of $\widetilde{\alpha}_1$, so the choice of $\alpha_1$ determines $<_D$ and the rest of the sequence $\{ \widetilde{\alpha}_2, \widetilde{\alpha}_3, \ldots \}$ is immaterial.  See \cite[Chapter XIII]{DDRW08} for full details of this construction of the Dehornoy ordering.

\begin{figure}[h!]
\setlength{\unitlength}{6cm}
\begin{picture}(1,1)%
    \put(0,0){\includegraphics[width=\unitlength]{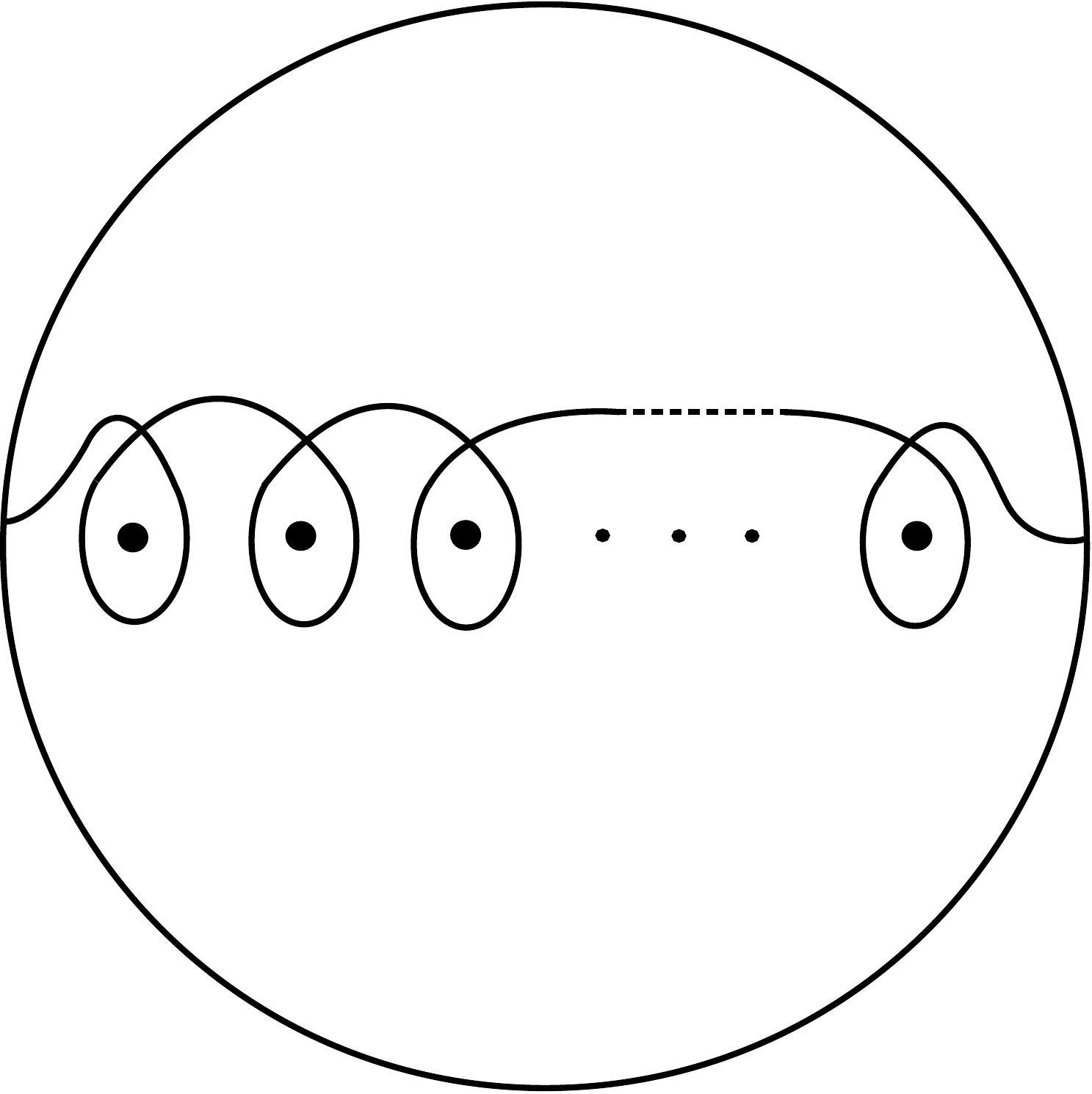}}%
    \put(0.09828143,0.34612921){$p_1$}%
    \put(0.2495178,0.34612932){$p_2$}%
    \put(0.39791846,0.34801962){$p_3$}%
    \put(0.79774962,0.34707441){$p_n$}%
  \end{picture}%
\caption{A geodesic that determines the Dehornoy ordering of $B_n$.}
\label{dehornoy_geodesic}
\end{figure}

\subsection{Generalizing to other mapping class groups.}
The remarkable feature of Thurston's approach is that it readily generalizes to other mapping class groups, the details of this generalization are outlined in the problems below. 

Recall from Chapter \ref{chapter free and surface} that closed, connected, orientable surfaces other than the sphere are all connect sums of tori.  For orientable surfaces with boundary (other than the sphere with $k>0$ disks removed), every surface is again a connect sum of tori, but where one of the tori in the decomposition has $k>0$ open disks removed.  The genus of such a surface is, as before, the number of tori in its connect sum decomposition.  A sphere, with or without disks removed, has genus $0$.

Let $\Sigma_{g,n}^b$ be an orientable, connected surface with genus $g$, $b$ boundary components, and $n$ punctures.  The Euler characteristic of $\Sigma_{g,n}^b$ is 
\[ \chi(\Sigma_{g,n}^b)  =2-2g-n-b
\]

\begin{problem}
\label{pants_decomposition}
A ``pair of pants'' is a surface that has genus zero and three boundary components as in Figure \ref{pants_figure}.  If $2-2g-b<0$, show that after possibly isotoping $\Sigma_{g,n}^b$ to reposition the punctures, the surface $\Sigma_{g,n}^b$ can be cut open along simple closed curves so that the resulting pieces are pairs of pants and a single annulus with $n$ punctures.
\end{problem}

\begin{figure}[h!]
\includegraphics[scale=0.4]{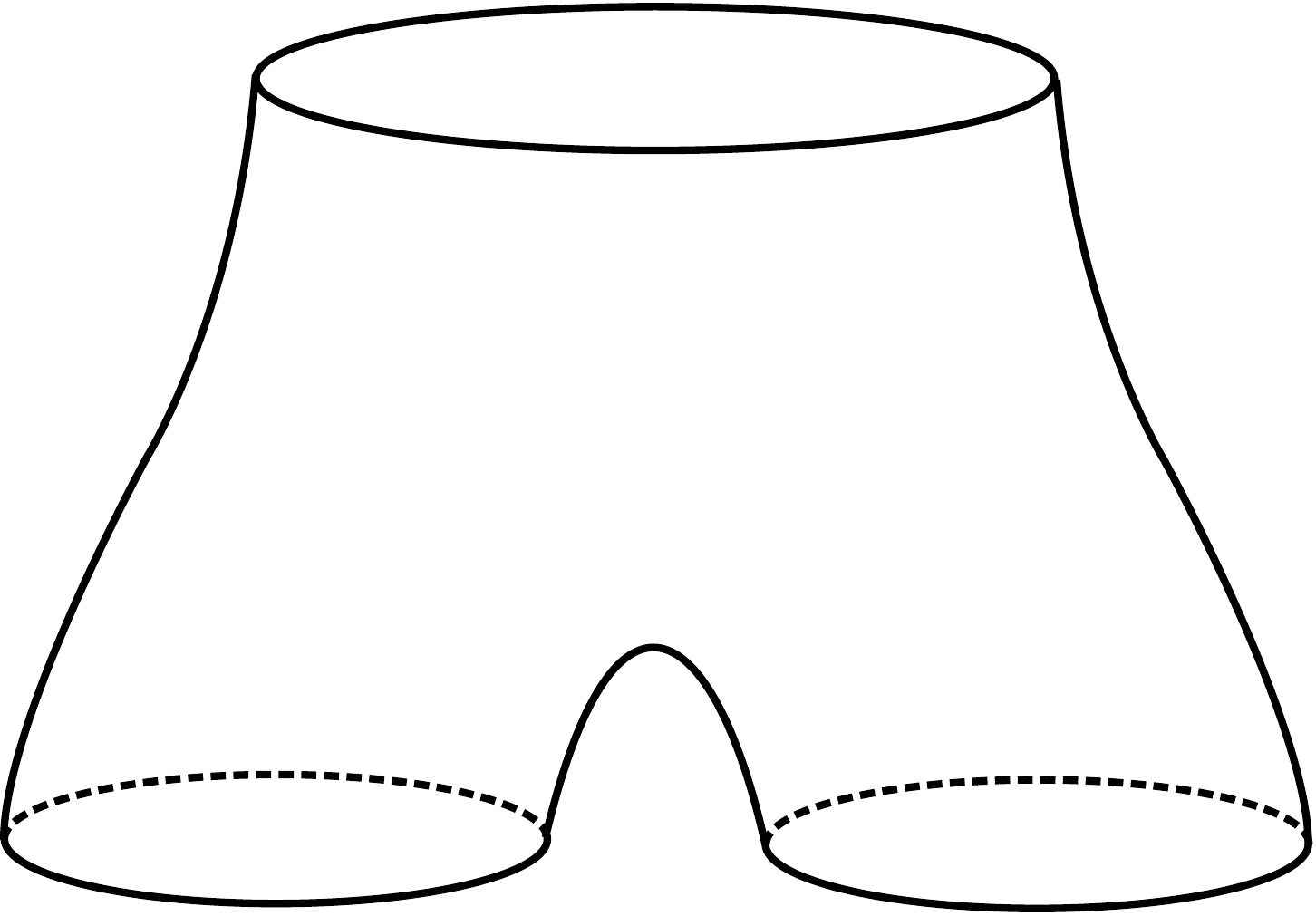}
\caption{A ``pair of pants'' surface.}
\label{pants_figure}
\end{figure}

\begin{problem}
Show that every surface with boundary satisfying $\chi(\Sigma_{g,n}^b)<0$ admits a hyperbolic metric, by breaking the argument into cases as follows.  If $2-2g-b \geq 0$ then we are in one of the following cases (recall that we are assuming $b>0$, which forces $g=0$ if $2-2g-b \geq 0$):
\begin{enumerate}
\item If $g=0$, $b=1$ and $n \geq 2$ then the surface is a punctured disk.  We dealt with this case in Section \ref{hyperbolic_metric_on_disk}.
\item If $g=0$, $b=2$ and $n \geq 1$ then the surface is an annulus with punctures.  Put a hyperbolic metric on this surface using the techniques of Section \ref{hyperbolic_metric_on_disk}.
\end{enumerate}
On the other hand, if $2-2g-b<0$ and $n \geq 0$, show that one can put a hyperbolic metric on each the pieces in the decomposition of Problem \ref{pants_decomposition} and glue the pieces together.
\end{problem}

Recall that the mapping class group of a surface with punctures $P=\{ p_1, \ldots, p_n\}$ is written $\mathrm{Mod}(\Sigma_{g,n}^b, P)$.  However when our surface is connected, we need not choose a specific set of punctures $\{ p_1, \ldots, p_n\}$ (since all $n$-element sets of points are isotopic to one another in $\Sigma_{g,n}^b$), and so we simplify the notation to $\mathrm{Mod}(\Sigma_{g,n}^b)$.  

\begin{problem}
Assume that $b>0$.  By making appropriate changes to the proof of Proposition \ref{lifting_the_action}, show that $\mathrm{Mod}(\Sigma_{g,n}^b)$ acts on $\widetilde{\Sigma_{g,n}^b}$ and that the action fixes a chosen point $x_0 \in \partial \widetilde{\Sigma_{g,n}^b}$.
\end{problem}  

Assume that $\chi(\Sigma_{g,n}^b)<0$ and $b>0$.  
The action constructed in the previous exercise extends to the boundary $\partial \widetilde{\Sigma_{g,n}^b}$, and as in the case of the disk with punctures, $\widetilde{\Sigma_{g,n}^b}$ is a complete, connected, simply connected hyperbolic surface and so can be isometrically embedded in $\mathbb{D}_{\mathbb{H}}$ \cite[Theorem 2.2]{CB88}.  By arguments more or less identical to the case of $\mathbb{D}_n$, one can use this embedding to show that the action of $\mathrm{Mod}(\Sigma_{g,n}^b)$ on $\partial \widetilde{\Sigma_{g,n}^b}$ is actually a faithful, order-preserving action on a space that is homeomorphic to $\mathbb{R}$.  From this, we have the following theorem, due to Rourke and Wiest \cite{RW00}, who also proved it for the nonorientable case.  See also \cite{SW00}.

\begin{theorem}
If $b>0$ and $\chi(\Sigma_{g,n}^b)<0$, the mapping class group  $\mathrm{Mod}(\Sigma_{g,n}^b)$ is left-orderable.
\end{theorem}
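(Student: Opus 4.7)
The plan is to imitate Thurston's construction for $B_n \cong \mathrm{Mod}(\mathbb{D}_n)$, replacing the $n$-punctured disk with a general surface $\Sigma=\Sigma_{g,n}^b$ satisfying $b>0$ and $\chi(\Sigma)<0$. First I would equip $\Sigma$ with a complete, finite-area hyperbolic metric of the type built in the preceding exercise, in which each puncture becomes a cusp and the $b$ boundary components are totally geodesic (the pants-decomposition construction lets me choose the boundary lengths freely). Lifting this metric to the universal cover $\widetilde{\Sigma}$ gives a complete, simply connected hyperbolic surface, which by the cited classification theorem admits an isometric embedding $i:\widetilde{\Sigma}\hookrightarrow\mathbb{D}_{\mathbb{H}}$; after post-composing with an isometry of $\mathbb{D}_{\mathbb{H}}$, I may assume a chosen lift $\widetilde{x}_0$ of a boundary point $x_0\in\partial\Sigma$ is sent to $0$.

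Next I would build the action of $\mathrm{Mod}(\Sigma)$ on $\widetilde{\Sigma}$ that fixes $\widetilde{x}_0$, as in the preceding problem: given a class $[h]$, represent it by a homeomorphism $h$ that is the identity on $\partial\Sigma$, and lift $h$ to the unique $\widetilde{h}:\widetilde{\Sigma}\to\widetilde{\Sigma}$ fixing $\widetilde{x}_0$. This yields a homomorphism $\mathrm{Mod}(\Sigma)\to\mathrm{Homeo}(\widetilde{\Sigma},\widetilde{x}_0)$. As noted in the text, this action extends continuously to $\partial\widetilde{\Sigma}$: points of $\partial\widetilde{\Sigma}$ are equivalence classes of geodesic rays from $0$, and although $\widetilde{h}$ is not an isometry it is a quasi-isometry on the hyperbolic surface $\widetilde{\Sigma}$, so each ray is sent to a bounded-distance class represented by a unique geodesic ray (the argument is the same one invoked in the disk case, for which we appeal to \cite{FM12}).

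Now I would isolate the ordered line on which $\mathrm{Mod}(\Sigma)$ acts. Let $C\subset\partial\widetilde{\Sigma}$ be the connected component of the preimage $p^{-1}(\partial\Sigma)$ containing $\widetilde{x}_0$; since every lifted homeomorphism restricts to the identity on $C$, the subgroup fixes $C$ setwise and, crucially, fixes its two endpoints in the circle at infinity. Therefore $\mathrm{Mod}(\Sigma)$ acts on the complement $\partial\widetilde{\Sigma}\setminus C$, which is naturally identified with the open interval of angles in $(0,\pi)$ swept out by geodesic rays from $0$ on one side of $C$; this interval is homeomorphic to $\mathbb{R}$. Because the two endpoints of $C$ are fixed, the action preserves the circular order on $\partial\widetilde{\Sigma}$ and hence the linear order on $(0,\pi)$.

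Finally I would verify effectiveness and invoke Example \ref{homeo+}. If $[h]$ acts trivially on the copy of $\mathbb{R}$, then in particular every lift of $x_0$ in $\partial\widetilde{\Sigma}$ is fixed, so $\widetilde{h}$ fixes infinitely many points in a dense subset of $\partial\widetilde{\Sigma}$, forcing $\widetilde{h}$ to be the identity on the boundary and therefore isotopic to the identity on $\Sigma$. With faithfulness established, $\mathrm{Mod}(\Sigma)$ embeds into $\mathrm{Homeo}_+(\mathbb{R})$, which is left-orderable, so $\mathrm{Mod}(\Sigma)$ inherits a left-ordering. The main obstacle will be the extension-to-the-boundary step and the faithfulness verification, both of which depend on standard but nontrivial facts from the theory of mapping class groups of hyperbolic surfaces (quasi-isometric rigidity of geodesic rays, and the Alexander-method-type argument that a homeomorphism fixing a cofinal set of boundary classes must be isotopic to the identity rel boundary); for these I would cite \cite{RW00}, \cite{SW00}, and \cite{FM12} rather than reproduce the arguments.
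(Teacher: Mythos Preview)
Your proposal is correct and follows essentially the same approach as the paper: the paper's proof is just a sketch that says to repeat the Thurston construction for $\mathbb{D}_n$ verbatim for $\Sigma_{g,n}^b$, using the hyperbolic metric and the lifted action from the preceding exercises, and citing \cite{RW00}, \cite{SW00}, \cite{CB88} for the details. You have simply fleshed out that sketch with the same ingredients in the same order.
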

 
Note that the restriction $b>0$ is necessary:  if $b=0$ then $\mathrm{Mod}(\Sigma_{g,n}^0)$ has elements of finite order.  One kind of finite-order homeomorphism is a \textit{hyperelliptic involution}, depicted in Figure \ref{hyperelliptic}.  Because of this $\mathrm{Mod}(\Sigma_{g,n}^0)$ cannot be left-orderable.
 
 \begin{figure}[h!]
\includegraphics[scale=0.5]{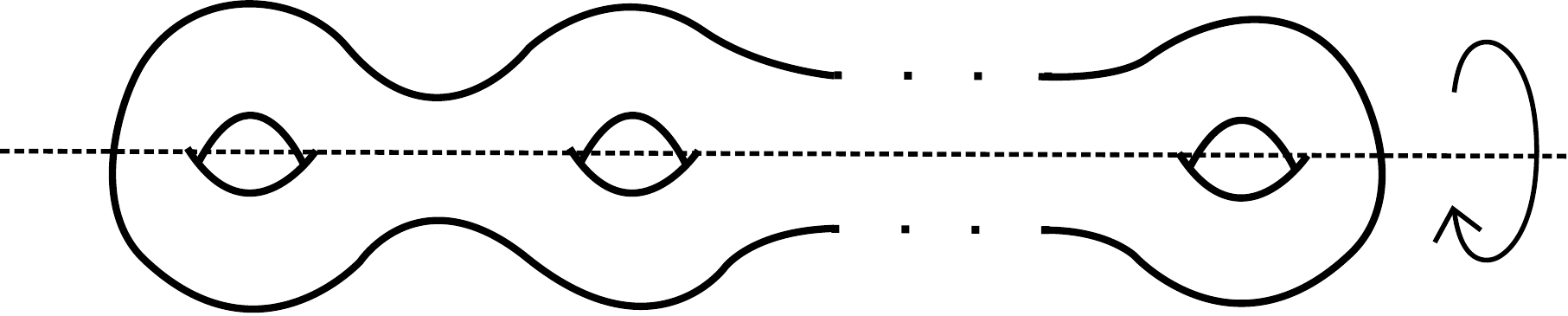}
\caption{Rotation by an angle of $\pi$ radians about the dotted line is an example of a hyperelliptic involution, and has order two.}
\label{hyperelliptic}
\end{figure}
 
 Despite having elements of finite order, the mapping class group of a surface without boundary is known to have a torsion free finite-index subgroup \cite[Theorem 6.9]{FM12}.  
 \begin{question}
Does the mapping class group of a surface without boundary have a left-orderable finite-index subgroup?
 \end{question}
 
\section{Applications of the Dehornoy ordering to knot theory}

To connect left-orderings of the braid groups with knot theory, we first have to connect braids with knots and links.  Every braid $\beta$ gives rise to a knot or link $\hat{\beta}$, called the \textit{closure} \index{braid closure} of $\beta$, via the following construction.  Consider $\mathbb{D} \times [0,1] $ embedded in $\mathbb{R}^3$ in the natural way, with points $p_i$ spaced evenly along the $x$-axis inside $\mathbb{D}^2$.

Given a braid 
\[ \beta = ( \beta_1 (t) , \beta_2(t), \ldots, \beta_n(t))
\]
in $\mathbb{D} \times [0,1]$, create the closure \index{braid closure} $\hat{\beta}$ by connecting $(p_i, 0)$ to $(p_i, 1)$ using $n$ non-intersecting paths in the plane $y=0$, as in Figure \ref{braid_closure}.

 \begin{figure}[h!]
 \setlength{\unitlength}{6cm}
\begin{picture}(1,1.25)%
    \put(0,0){\includegraphics[width=\unitlength]{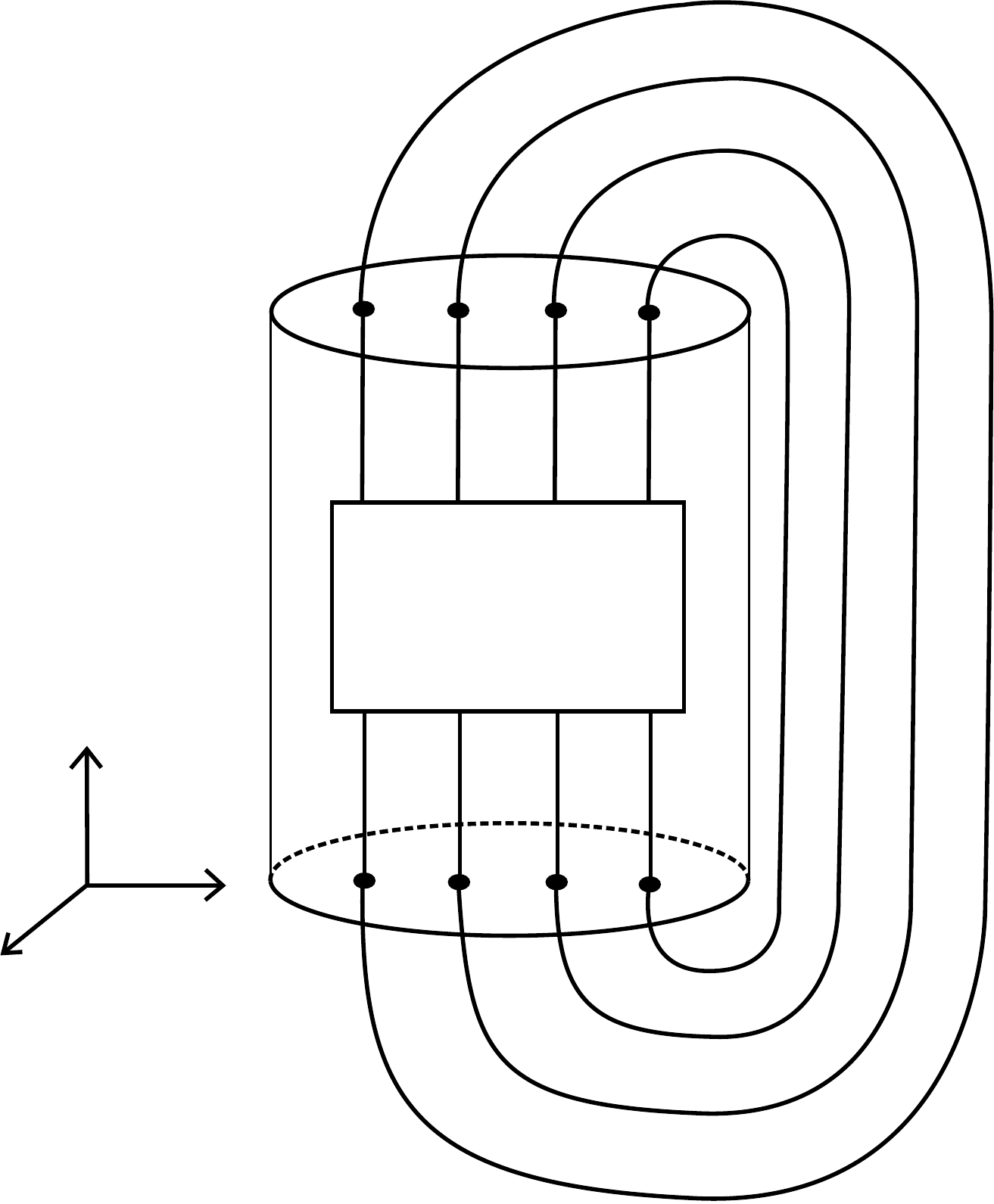}}%
    \put(0.47253147,0.57622535){$\beta$}%
    \put(0.06434703,0.48161983){$z$}%
    \put(0.20980913,0.25600519){$x$}%
    \put(-0.00689971,0.18772699){$y$}%
  \end{picture}
\caption{How to create the knot or link $\hat{\beta}$ from the braid $\beta$.}
\label{braid_closure}
\end{figure}

\begin{problem}
Determine necessary and sufficient conditions on the braid $\beta$ which ensure that $\hat{\beta}$ is a knot (as opposed to a link).
\end{problem}

That some knots can be expressed as the closure of a braid is no surprise--the trefoil, for example, is easily seen to be the closure of the braid $\sigma_1^3 \in B_2$, as in Figure \ref{trefoil closure}.  On the other hand, it may come as a surprise that \textit{every} knot can be written as the closure of a braid.

 \begin{figure}[h!]
\includegraphics[scale=0.5]{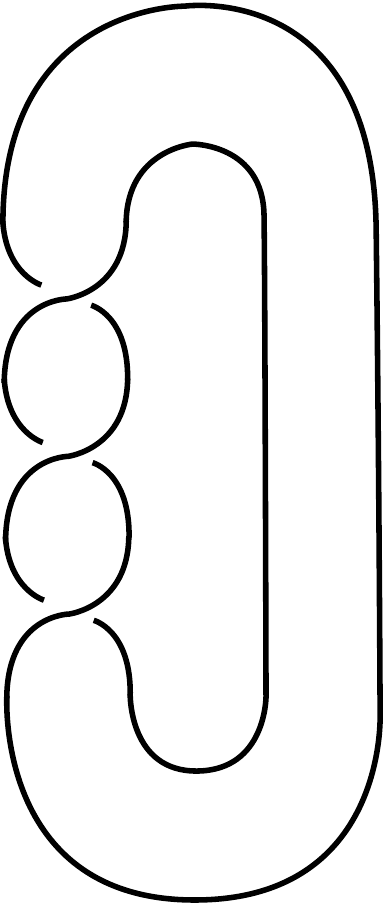}
\caption{The trefoil as the closure of $\sigma_1^3$ in $B_2$.}
\label{trefoil closure}
\end{figure}

\begin{theorem}[Alexander, 1923]
Given a knot $K$, there exists $n>1$ and $\beta \in B_n$ such that $K = \hat{\beta}$.
\end{theorem}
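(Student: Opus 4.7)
My approach would follow Alexander's original 1923 argument. First I would PL-approximate $K$ and project it onto the plane $z=0$ in $\mathbb{R}^3$ to produce a polygonal knot diagram with only transverse double points. Next I would choose a reference point $O$ in the plane not on the diagram, which will serve as the ``braid axis'' (the line perpendicular to the projection plane through $O$). With $K$ oriented, each straight segment $e = \overline{v_1 v_2}$ of the polygonal projection spans a planar triangle $\triangle O v_1 v_2$; call $e$ \emph{positive} if the walk from $v_1$ to $v_2$ traverses this triangle counterclockwise about $O$, and \emph{negative} otherwise. The key observation is that $K$ sits in ``closed braid position'' around the axis through $O$ precisely when every edge is positive, in which case sweeping a half-plane around the axis and recording the over/under crossings encountered gives a word in the $\sigma_i$, i.e., an honest $\beta \in B_n$ with $\hat\beta = K$.

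The heart of the proof is Alexander's \emph{braiding move}, which strictly decreases the number of negative edges. Given a negative edge $e = \overline{v_1 v_2}$, I would replace it by a two-edge path $\overline{v_1 w} \cup \overline{w v_2}$, choosing $w$ on the far side of $O$ from $e$ so that the new triangle $\triangle v_1 w v_2$ contains $O$ in its interior; then both $\overline{v_1 w}$ and $\overline{w v_2}$ are positive. The interior of $\triangle v_1 w v_2$ will generally meet other strands of the projection, so to realize this replacement as an \emph{ambient isotopy} of $K$ in $\mathbb{R}^3$ (rather than a mere homotopy) I would lift the two replacement edges far above the plane in the $z$-direction, so that they pass entirely over every intervening strand. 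The resulting PL knot is isotopic to $K$ but has exactly one fewer negative edge.

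Iterating the braiding move, the nonnegative integer counting negative edges strictly decreases and so reaches zero in finitely many steps; the knot is then realized as a closed braid around the axis through $O$, yielding the desired $\beta \in B_n$ with $\hat\beta = K$. The minor loose end about the constraint $n > 1$ is handled separately: the procedure only yields $n=1$ when $K$ is an unknotted circle winding once around $O$, in which case $K$ is the unknot and we may instead write $K = \hat\sigma_1$ with $\sigma_1 \in B_2$. The main subtlety I anticipate is purely bookkeeping---verifying that the over-lifting of the replacement edges produces a genuine ambient isotopy (keeping track of all crossings the new edges introduce with pre-existing strands) and confirming that the final all-positive polygonal projection really does extract a well-defined braid word rather than merely a topological closed braid. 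Care with the complexity measure (number of negative edges, perhaps refined to break ties) and with the over/under conventions during each move suffices to close both points.
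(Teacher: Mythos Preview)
Your proposal is correct and follows essentially the same approach as the paper, which does not give a formal proof but only illustrates Alexander's argument through a worked example: choose an axis point, identify the arcs along which the knot winds the ``wrong way'' around the axis, and throw each such arc over to the far side of the axis (lifting it above all intervening strands) until every arc winds consistently. Your polygonal formulation with positive/negative edges and the triangle-based braiding move is exactly the rigorous version of the paper's informal ``colour the anticlockwise arcs bold and move them to the other side'' description.
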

The idea behind the proof is illustrated in the next example.
\begin{example}
Given a diagram of a knot $K$ as in Figure \ref{knot to braid}, choose a point $p$ in one of the bounded regions.  Using $p$ as an axis of rotation, imagine an arrow rotating about $p$ and pointing towards your pen as you trace the knot.  As your pen moves, colour the arc normally whenever the arrow is rotating clockwise, and colour the arc in bold whenever the arrow is rotating anticlockwise.  See Figure \ref{knot to braid} (a), where the first anticlockwise segment encountered, starting from the little arrow, is coloured bold.  After colouring, move bold arcs to the other side of the axis as in Figure \ref{knot to braid} (b) and smooth out the remaining arcs.   Now when you trace along the knot a second time, the previously bold arc now corresponds to a segment of clockwise rotation.  Repeat this operation until there are no anticlockwise segments remaining; the resulting diagram is a closed braid.  Finally cut open along a dotted line which avoids all crossings, as in Figure \ref{knot to braid} (c).  Starting from the cut, we read off in the clockwise direction: $\sigma_2^2 \sigma_1 \sigma_2^{-1} \sigma_1^k$. Thus $K$ is closure of $\sigma_2^2 \sigma_1 \sigma_2^{-1} \sigma_1^k \in B_3$.

\begin{figure}[h!]
\setlength{\unitlength}{10cm}
\begin{picture}(1,0.7841735)%
    \put(0,0){\includegraphics[width=\unitlength]{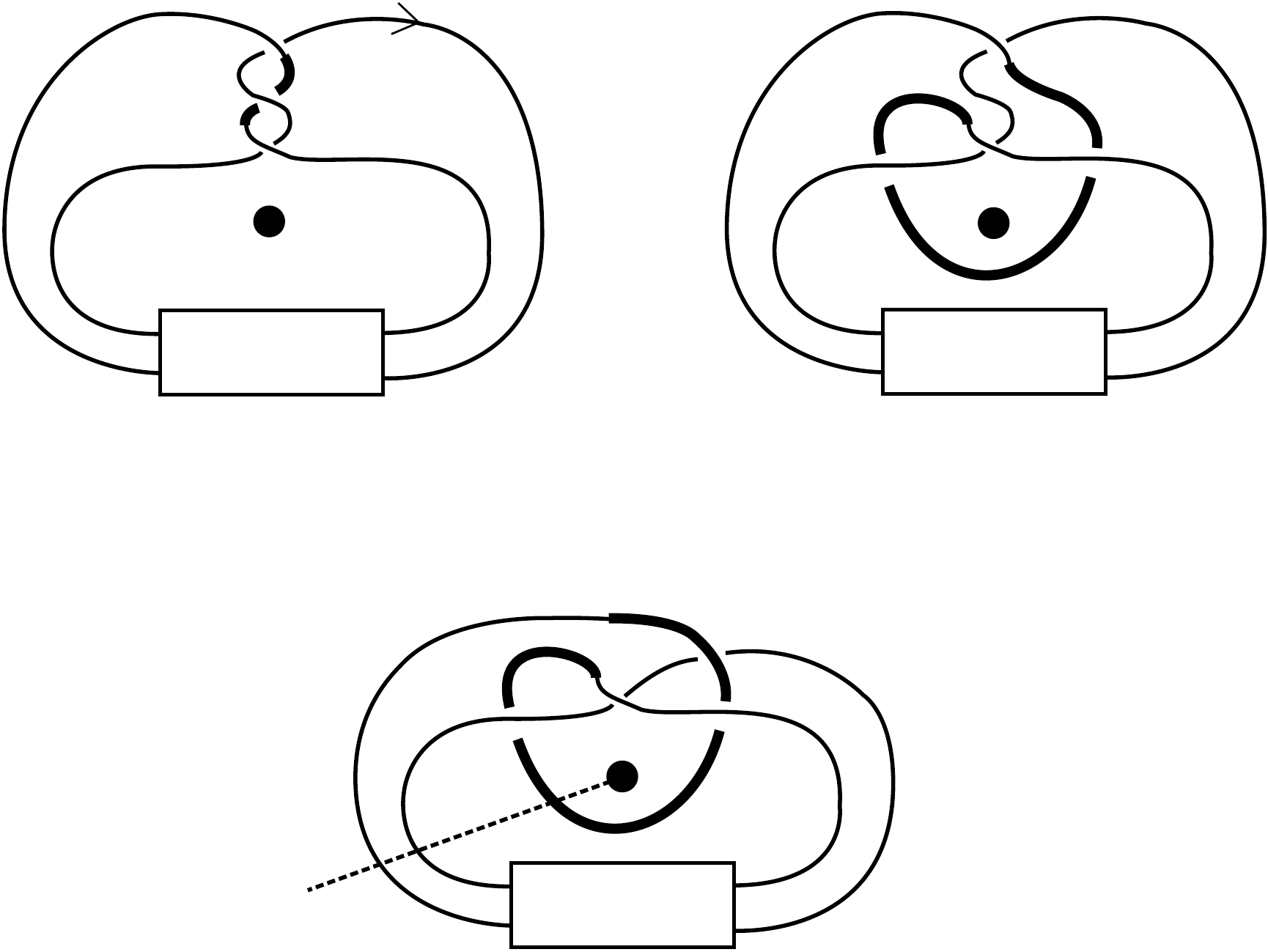}}%
    \put(0.15019163,0.46256718){$k$ twists}%
    \put(-0.00272443,0.76759876){(a)}%
    \put(0.7153676,0.46246095){$k$ twists}%
    \put(0.4262061,0.02161142){$k$ twists}%
    \put(0.56784894,0.76008707){(b)}%
    \put(0.2859236,0.31625275){(c)}%
    \put(0.19582629,0.02824518){cut}%
  \end{picture}%
%
%
\caption{How to write a knot $K$ as a closed braid.}
\label{knot to braid}
\end{figure}
\end{example}

\begin{problem}
Write each of the knots in Figure \ref{knots to braids} as the closure of a braid.
\begin{figure}[h!]
\includegraphics[scale=0.8]{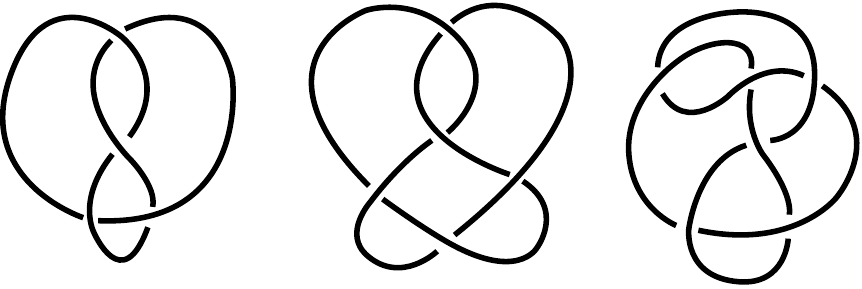}
\caption{From left to right: the figure eight, the knots $5_2$ and $6_3$.}
\label{knots to braids}
\end{figure}
\end{problem}

Different braids can give equivalent knots or links upon taking their closures.  For example, if $\alpha$ and $\beta$ are braids in $B_n$, then $\hat{\beta}$ and $\widehat{\alpha \beta \alpha^{-1}}$ are the same knot or link.  However there are many less obvious examples, indeed two braids which close to give the same knot or link need not even have the same number of strands.

This is explained by a result of Markov, which says that any two braids whose closures represent the same knot or link are related by a series of `Markov moves' \index{Markov moves} \index{Markov moves} (and their inverses).  The two types of Markov moves are:
\begin{itemize}
\item Replace a braid $\beta \in B_n$ with a conjugate $\alpha \beta \alpha^{-1}$, where $\alpha \in B_n$ (conjugation).
\item Replace a braid $\beta \in B_n$ with the braid $\beta \sigma_n^{\pm 1} \in B_{n+1}$ (stabilization, with inverse destabilization).
\end{itemize}
Note that stabilization and destabilization change the number of strands.

\begin{problem}
By manipulating the corresponding knot diagrams, show that the braids $\sigma_1^3 \in B_2$ and $(\sigma_1 \sigma_2)^2 \in B_3$ both close to give the same knot (the trefoil).  Give a sequence of Markov moves relating these two braids.
\end{problem}

 By using these braid moves, as well as more complex moves called \textit{flypes} and \textit{exchange moves}, one can show:
 
\begin{theorem}\cite{Malyutin04}
\label{nonprime theorem}
\index{prime knot}
Let $\beta \in B_n$. If $\hat{\beta}$ is not prime, then there exists $\alpha \in B_n$ and $\gamma, \gamma' \in B_{n-1}$ such that 
\[ \alpha \beta \alpha^{-1} = \gamma \sigma_{n-1} \gamma' \sigma_{n-1}^{-1}.
\]
In other words, $\beta$ is conjugate to a braid which appears as in Figure \ref{nonprime braid}.
\end{theorem}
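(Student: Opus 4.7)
The strategy is to exhibit a $2$-sphere realising the composite decomposition of $\hat{\beta}$ in a standard position with respect to the braid axis, and then read off the normal form directly from this position.

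Since $\hat{\beta}$ is not prime, there is an essential $2$-sphere $S \subset S^3$ which meets $\hat{\beta}$ transversely in two points $p_1, p_2$ and splits $\hat{\beta}$ into two arcs whose closures in $S$ represent nontrivial prime summands. Let $A \subset S^3$ denote the braid axis; then $S^3 \setminus A$ fibres over $S^1$ by open half-disks, and each of the $n$ strands of $\hat{\beta}$ crosses every fibre exactly once. The first step is to isotope $S$, while simultaneously replacing $\beta$ by a conjugate, so that $S$ meets $A$ transversely in exactly two points (the ``poles'' of $S$), and moreover so that exactly one strand of $\hat{\beta}$ pierces $S$ at $p_1$ and $p_2$; after a further relabelling conjugation we may take this active strand to be strand $n$.

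The main tool for carrying out this reduction is the theory of braid foliations developed by Birman and Menasco: the fibration of $S^3 \setminus A$ induces a singular foliation $\mathcal{F}$ on $S$ whose leaves are arcs of the half-disk fibres, and one simplifies $\mathcal{F}$ iteratively by removing inessential saddle tangencies, bigons and similar defects. When pure isotopy no longer suffices, flypes and exchange moves are required; Malyutin's contribution, building on the Birman--Menasco machinery, is to control these more global moves so that they can be absorbed into a conjugation in $B_n$ (together with a cyclic rotation of the braid word), which is why the final conclusion only requires conjugation and no change of braid index.

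Once $S$ is in standard position the braid word can be read off cyclically. On one side of $S$ the $n-1$ passive strands braid among themselves, giving some $\gamma \in B_{n-1}$. At $p_1$ the active strand dives across $S$ by a single crossing with strand $n-1$, contributing $\sigma_{n-1}$. On the other side, the $n-1$ strands again braid among themselves (the active strand being a bystander) to produce $\gamma' \in B_{n-1}$. At $p_2$ the active strand returns through the reciprocal crossing $\sigma_{n-1}^{-1}$, closing the cycle. A final cyclic conjugation, absorbed into $\alpha$, produces $\alpha \beta \alpha^{-1} = \gamma \sigma_{n-1} \gamma' \sigma_{n-1}^{-1}$ as required.

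The hard part will be the standard-position step. The foliation reduction itself is already technically intricate, but the deeper obstacle is that flypes and exchange moves generically change the conjugacy class of a braid even when they preserve the closure $\hat{\beta}$. Showing that the reduction can be arranged to drive $|S \cap A|$ down to its minimum value of $2$ \emph{within the conjugacy class of $\beta$} -- so that no destabilisation or actual change of braid index is ever needed -- is precisely the content of \cite{Malyutin04}, and is where the theorem lies substantially deeper than the mere existence of a decomposing sphere for $\hat{\beta}$.
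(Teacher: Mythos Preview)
The paper does not actually prove this theorem; it merely states the result with a citation to \cite{Malyutin04}, prefaced by the remark that ``by using these braid moves, as well as more complex moves called \emph{flypes} and \emph{exchange moves}, one can show'' the theorem. So there is no in-text proof to compare against.

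Your outline is consistent with that one-line hint: you correctly identify the Birman--Menasco braid-foliation machinery as the engine, and you are honest that the substantive content --- controlling flypes and exchange moves so that the reduction stays within a single conjugacy class in $B_n$ --- is exactly what is deferred to \cite{Malyutin04}. As written, then, your proposal is a reasonable narrative sketch of the strategy, but it is not itself a proof: the standard-position step is asserted rather than carried out, and you explicitly acknowledge this. That matches the paper's own treatment, which likewise does not attempt the argument and simply invokes Malyutin.
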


\begin{figure}[h!]
\setlength{\unitlength}{2.5cm}
\begin{picture}(1,2)%
    \put(0,0){\includegraphics[width=\unitlength]{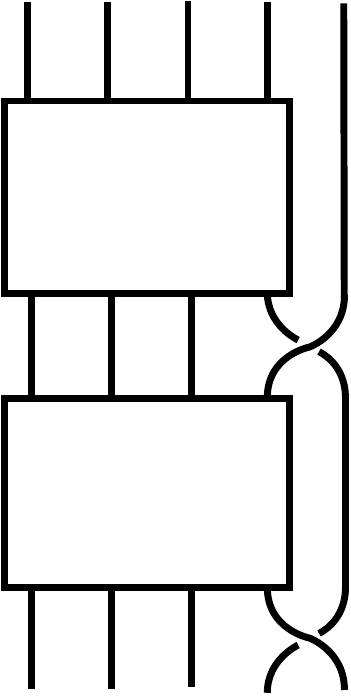}}%
    \put(0.4, 1.4){$\gamma$}%
    \put(0.4, 0.55){$\gamma'$}%
  \end{picture}
\caption{The form of the braid from Theorem \ref{nonprime theorem}, in the case $n=5$.}
\label{nonprime braid}
\end{figure}

The goal of the following exercises is to show how this theorem can be combined with the Dehornoy ordering to detect when certain links are prime (recall from Chapter \ref{knots chapter} that a knot or link is prime if it cannot be decomposed as a connect sum of two nontrivial knots or links).  To begin, we introduce Garside's ``half twist'' braid 
\[ \Delta_n = (\sigma_1 \sigma_2 \cdots \sigma_{n-1})(\sigma_1 \sigma_2 \cdots \sigma_{n-2}) \cdots (\sigma_1 \sigma_2) (\sigma_1).
\]
which appears as in Figure \ref{half_twist}.

The centre of $B_n$ is an infinite cyclic subgroup generated by the `` full twist'' on $n$ strands, and so its generator is the square of the Garside half twist, $\Delta_n^2$.

\begin{problem} 
\label{center_bounds}
\cite{Malyutin04}, \cite{DDRW08} Suppose that $\alpha, \beta \in B_n$.  Then in any left-ordering $\prec$ of $B_n$, we have:
\begin{enumerate}
\item If $\alpha \prec \Delta^{2k}_n$ and $\beta \prec \Delta^{2l}_n$, then $\alpha \beta \prec \Delta_n^{2k+2l}$,
\item If $\alpha \prec \Delta_n^{2k}$ then $\Delta_n^{-2k} \prec \alpha^{-1}$,
\item  If $\alpha \succ \Delta^{2k}_n$ and $\beta \succ \Delta^{2l}_n$, then $\alpha \beta \succ \Delta_n^{2k+2l}$.
\end{enumerate}
\end{problem}

\begin{figure}[h!]
\includegraphics[scale=0.45]{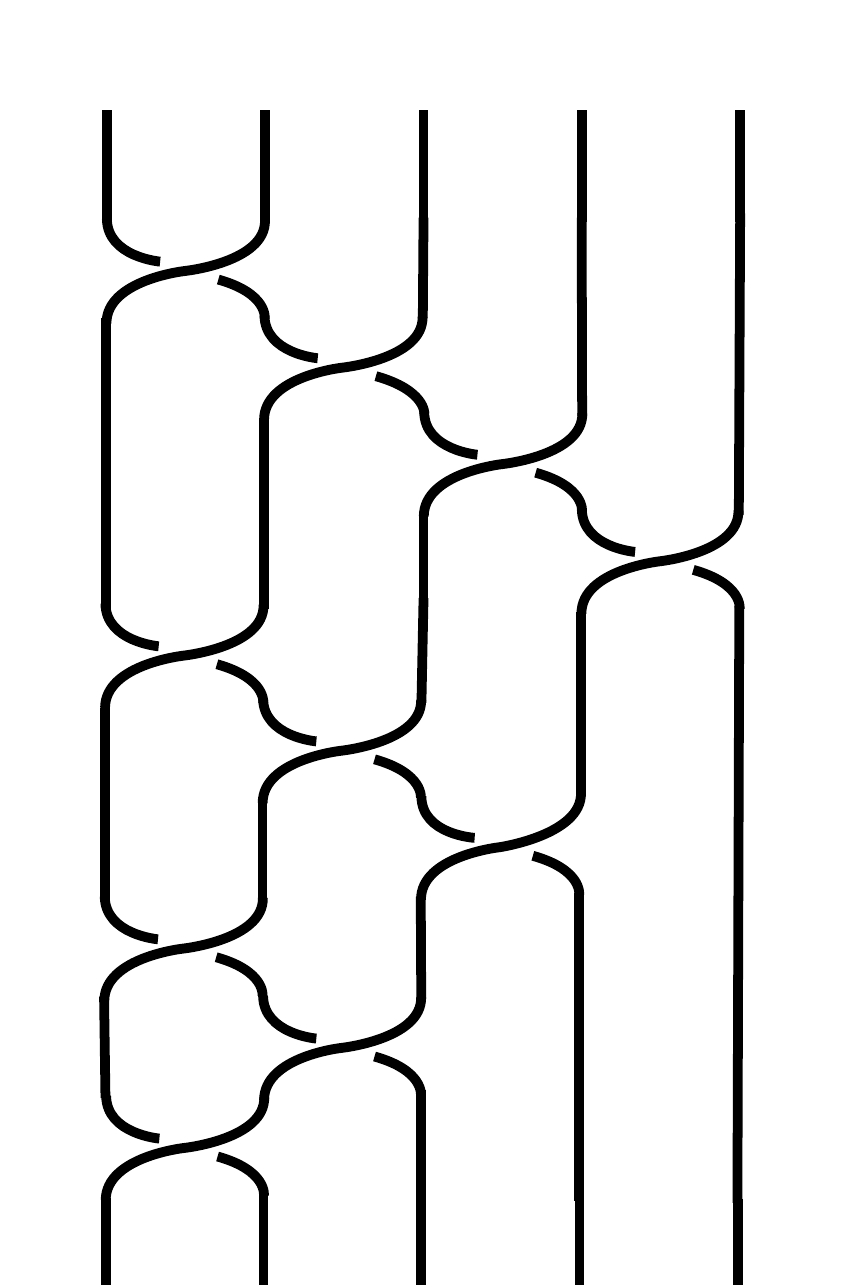}
\caption{The half-twist braid $\Delta_5$.}
\label{half_twist}
\end{figure}

\begin{problem}
Show that for all $\beta \in B_n$ there exists $k \in \mathbb{Z}$ such that $\Delta_n^{2k} \leq_D \beta <_D \Delta_n^{2k+2}$ (In fact, this holds for all left-orderings of $B_n$, not just for the Dehornoy ordering $<_D$).  Use this fact, together with the previous problem, to show that if $\alpha, \beta \in B_n$ and $\Delta_n^{2k} \leq_D \beta <_D \Delta_n^{2k+2}$ then $\Delta_n^{2k -2} \leq_D \alpha \beta \alpha^{-1} <_D \Delta_n^{2k+4}$.
\end{problem}

\begin{problem}
Verify that $\Delta \sigma_i^{\pm 1} \Delta^{-1} = \sigma_{n-i}^{\pm 1}$, and so by Theorem \ref{nonprime theorem} if $\hat{\beta}$ is a non-prime link then $\beta$ is conjugate to a braid of the form $ \gamma \sigma_{1} \gamma' \sigma_{1}^{-1} $, where $\gamma, \gamma'$ contain no occurences of $\sigma_1^{\pm 1}$.

Show that the inequalities prepared in the previous problems then imply that whenever $\hat{\beta}$ is a non-prime link, we have
\[  \Delta_n^{-4} <_D \beta <_D \Delta_n^4.
\]
\end{problem}

Since the Dehornoy ordering is computable, in the sense that there are algorithms for determining when a given braid is positive, the contrapositive of the last problem provides a test for primeness of a link.
\begin{corollary}
If either $\beta <_D \Delta_n^{-4}$ or $\beta >_D \Delta_n^4$, then $\hat{\beta}$ is a prime link.
\end{corollary}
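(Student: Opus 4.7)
The plan is to observe that this corollary is essentially a direct restatement (by contraposition) of the chain of implications developed in the three preceding problems, so no new machinery should be needed. I would simply record the logical skeleton cleanly, then point to each problem as justification for each link in the chain.

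First I would set up the contrapositive: suppose $\hat{\beta}$ is \emph{not} a prime link, and aim to show that $\Delta_n^{-4} <_D \beta <_D \Delta_n^4$. The first step is to invoke Theorem \ref{nonprime theorem} together with the observation (from the final problem) that $\Delta_n$-conjugation realizes the symmetry $\sigma_i \mapsto \sigma_{n-i}$, to replace $\beta$ by a conjugate of the form $\gamma \sigma_1 \gamma' \sigma_1^{-1}$ where $\gamma, \gamma'$ involve no $\sigma_1^{\pm 1}$. Because words in $\sigma_2, \dots, \sigma_{n-1}$ only (and $\sigma_1$ itself) lie in controlled intervals with respect to the powers of the central element $\Delta_n^2$, I would next use the ``bracketing'' fact from the previous problem: for any element $\alpha \in B_n$ there is a unique $k \in \mathbb Z$ with $\Delta_n^{2k} \le_D \alpha <_D \Delta_n^{2k+2}$.

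The heart of the argument is then an application of Problem \ref{center_bounds}, which tells us that the set of elements trapped between consecutive even powers of $\Delta_n$ behaves well under both multiplication and inversion. Concretely, if each of $\gamma$, $\sigma_1$, $\gamma'$, $\sigma_1^{-1}$ is trapped in an interval $[\Delta_n^{2k_i}, \Delta_n^{2k_i+2})$, then multiplying the four factors produces a shift of the bracketing interval by at most a bounded amount; the earlier problem made this explicit by showing that conjugation translates the bracketing interval by at most one step on each side, so that $\Delta_n^{-4} <_D \gamma \sigma_1 \gamma' \sigma_1^{-1} <_D \Delta_n^4$. Taking the contrapositive yields exactly the statement of the corollary.

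The only conceptual subtlety, and the one place where I would be careful, is making sure that the conjugation $\beta \mapsto \alpha \beta \alpha^{-1}$ used in Theorem \ref{nonprime theorem} does not move $\beta$ outside the stated interval $(\Delta_n^{-4}, \Delta_n^4)$: here the final part of the penultimate problem is needed, namely that conjugation shifts the bracketing index by at most $\pm 1$. Once this is in hand, the corollary follows immediately, and no direct calculation with the positive cone of $<_D$ is required.
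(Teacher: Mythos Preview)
Your approach is the paper's approach: the corollary is nothing more than the contrapositive of the final problem, which in turn packages the preceding exercises. So the overall plan is fine.

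There is, however, a real imprecision in how you allocate the two $\Delta_n^{\pm 2}$ shifts. You write that the multiplication/conjugation facts yield $\Delta_n^{-4} <_D \gamma\sigma_1\gamma'\sigma_1^{-1} <_D \Delta_n^{4}$, and then treat the passage from this conjugate back to $\beta$ as a residual ``subtlety.'' But if the normal form $w = \gamma\sigma_1\gamma'\sigma_1^{-1}$ were only known to lie in $(\Delta_n^{-4},\Delta_n^{4})$, the conjugation estimate from the penultimate problem would give only $\Delta_n^{-6} <_D \beta <_D \Delta_n^{6}$, which is too weak. The correct bookkeeping is: first bound the normal form tightly, $\Delta_n^{-2} \le_D w <_D \Delta_n^{2}$ (this is where the hypothesis that $\gamma,\gamma'$ contain no $\sigma_1^{\pm 1}$ is actually used, together with Problem~\ref{center_bounds}), and \emph{then} apply the conjugation shift to obtain $\Delta_n^{-4} <_D \beta <_D \Delta_n^{4}$. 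In other words, the conjugation step is not a side issue to be checked afterward; it is precisely what converts the bound on $w$ into the bound on $\beta$, at the cost of one extra $\Delta_n^{\pm 2}$ on each side. Once you reorder the argument this way, it matches the paper's intended chain exactly.
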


We can also easily generate prime links this way---for example, if $\beta>_D1$ then $\widehat{\beta \Delta^4_n}$ is always a prime link.

The Dehornoy ordering can also be connected with other geometric and algebraic properties of knots, aside from primeness. For example, restricting our attention to braids $\beta$ for which $\hat{\beta}$ is a knot, the Dehornoy ordering is related to knot genus, which we denote by $g(K)$.

\begin{theorem}\cite{Ito11a}
\index{knot genus}
Suppose that $K = \hat{\beta}$ is a knot in $S^3$, $\beta \in B_n$, and that $k$ is the smallest integer such that $\Delta_n^{-2k-2} <_D \beta <_D \Delta_n^{2k+2}$.  Then $k < g(K) +1$.
\end{theorem}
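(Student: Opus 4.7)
The plan is to prove $|k| \leq g(K)$ by extracting from the Dehornoy floor a \emph{geometric} quantity, comparable to the number of essential intersections of a Seifert surface with fibres of the braid fibration, and then bounding this by the genus via an Euler characteristic computation in the spirit of Bennequin's inequality. The hypothesis that $k$ is minimal with $\Delta_n^{-2k-2} <_D \beta <_D \Delta_n^{2k+2}$ is just saying $k=\max(\lfloor\beta\rfloor_D, -\lfloor\beta^{-1}\rfloor_D - 1)$, where $\lfloor\cdot\rfloor_D$ is the Dehornoy floor defined by $\Delta_n^{2\lfloor\beta\rfloor_D} \leq_D \beta <_D \Delta_n^{2\lfloor\beta\rfloor_D +2}$; by applying the argument to both $\beta$ and $\beta^{-1}$ it therefore suffices to show $\lfloor\beta\rfloor_D \leq g(K)$.

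First I would recall the curve-diagram characterisation of $<_D$. Viewing $\beta$ as a mapping class of $D_n$ and fixing a reference system of horizontal arcs $E$ joining $\partial D$ to the punctures, the position of $\beta$ among powers of $\Delta_n^2$ can be read off from the ``winding'' of $\beta(E)$ around the puncture-set: $\lfloor\beta\rfloor_D$ coincides, up to an additive error of at most $1$, with the number of full twists visible in $\beta(E)$ near $\partial D_n$. This is the Fenn--Rourke--Wiest--Dehornoy interpretation of the $\sigma$-ordering, and it is what makes the Dehornoy floor a genuine geometric invariant rather than just a combinatorial one.

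Next I would bring in the Seifert surface. Choose a minimal-genus Seifert surface $\Sigma$ for $K=\hat\beta$ with $g(\Sigma)=g(K)$. Put $\Sigma$ in braided position with respect to the braid axis $A$ (standard after an isotopy, using that $S^3\setminus A$ fibres over $S^1$ with disk fibres $D_\theta$) so that $\Sigma$ meets each fibre disk $D_\theta$ transversely in a collection of properly embedded arcs and simple closed curves. After removing innermost disks and boundary-parallel arcs, I may assume $\Sigma \cap D_\theta$ consists only of essential arcs. An Euler characteristic count,
\[
1 - 2g(K) \;=\; \chi(\Sigma) \;=\; n - (\text{number of essential arcs in a generic fibre})\cdot(\text{correction}),
\]
yields a bound of the form ``essential arcs per fibre $\leq 2g(K) + n - 1$''. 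Tracking how these arcs are permuted by the monodromy $\beta$ around $S^1$ bounds the wrapping of $\beta(E)$ around the punctures in the curve-diagram picture above, and thus bounds $\lfloor\beta\rfloor_D$ by $g(K)$.

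The main obstacle is the last comparison: matching the combinatorial Dehornoy floor with the geometric intersection count. The arcs of $\Sigma \cap D_\theta$ give a system of arcs invariant (up to isotopy) under the monodromy $\beta$, but the reference arcs $E$ used to define $\lfloor\beta\rfloor_D$ are not a priori of this form. The key lemma, which is where the technical work of Ito concentrates, is that any $\beta$-invariant arc system in $D_n$ must wrap around the punctures at least $\lfloor\beta\rfloor_D - 1$ times; combined with the Euler characteristic bound on the arcs coming from $\Sigma$, this forces $\lfloor\beta\rfloor_D \leq g(K) + O(1)$, and a careful tracking of the error terms produces the sharp inequality $k < g(K)+1$.
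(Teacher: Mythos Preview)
The paper does not prove this theorem: it is stated at the end of Chapter~7 with a citation to \cite{Ito11a} and no argument whatsoever, so there is nothing in the paper to compare your proposal against.

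That said, your sketch is broadly in the spirit of Ito's original proof: reinterpret the Dehornoy floor geometrically (via curve diagrams and winding near $\partial D_n$), place a minimal-genus Seifert surface in braided position with respect to the braid axis, and extract a bound on the floor from the combinatorics of the induced foliation on $\Sigma$ together with an Euler-characteristic count. Where your outline falls short of a proof is in the two places you yourself flag. First, the displayed Euler-characteristic identity with its unspecified ``(correction)'' factor is not a computation; Ito's actual argument uses the Birman--Menasco braid-foliation technology, counting elliptic and hyperbolic singular points of the induced singular foliation on $\Sigma$ and relating their signed count to $\chi(\Sigma)$, rather than simply counting arcs in a generic fibre disk. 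Second, the ``key lemma'' you state --- that a $\beta$-invariant arc system must wrap at least $\lfloor\beta\rfloor_D - 1$ times --- is not the mechanism Ito uses; he bounds the floor by controlling how many times the boundary train-track of $\Sigma$ (read off from the foliation near $\partial\Sigma = K$) winds around the axis, and this winding is governed directly by the singularity count. So your architecture is right but the load-bearing technical steps are placeholders rather than arguments.
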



\chapter{Groups of Homeomorphisms}
\label{groups of homeomorphisms chapter}

\def\PL{\textnormal{PL}}
\def\GL{\textnormal{GL}}
\def\Homeo{\textnormal{Homeo}}
\def\Diff{\textnormal{Diff}}
\def\fix{\textnormal{fix}}
\def\Id{\textnormal{Id}}


\section{Homeomorphisms of a space}

If $X$ is any topological space and $Y$ a closed subset of $X$, we consider the group $\Homeo(X, Y)$ of all homeomorphisms of $X$ onto itself which leave $Y$ pointwise fixed.  The group operation is composition of functions.  

In this chapter we will be considering manifolds of arbitrary dimension, especially concentrating on the $n$-dimensional cube.  It is based on the paper \cite{CR14}.

We'll first look at the case $n = 1$.  We saw in Example 
\ref{homeo+} that the group $\Homeo_+(\R)$ is left-orderable.   The space $\Homeo(I, \partial I)$ of homeomorphisms of the interval $I = [-1, 1]$ which are fixed on $\pm 1$ may be identified with $\Homeo_+(\R)$, so we conclude.

\begin{proposition}  
$\Homeo(I, \partial I)$ is left-orderable.  Moreover, every countable left-orderable group is isomorphic with some subgroup of $\Homeo(I, \partial I)$.
\end{proposition}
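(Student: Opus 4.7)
The plan is to reduce both statements to what we already know about $\Homeo_+(\R)$, namely that it is left-orderable (Example \ref{homeo+}) and universal for countable left-orderable groups (Theorem \ref{LO_universal}). The bridge in both directions will be a fixed homeomorphism $\phi : (-1,1) \to \R$, together with the observation that every $f \in \Homeo(I, \partial I)$ is automatically orientation-preserving on the interior (it fixes the endpoints of an interval) and that every $h \in \Homeo_+(\R)$ satisfies $h(y) \to \pm\infty$ as $y \to \pm\infty$ (since $h$ is order-preserving and unbounded).

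For the first statement, I would consider the restriction map
\[
R : \Homeo(I, \partial I) \longrightarrow \Homeo_+(\R), \qquad f \longmapsto \phi \circ f|_{(-1,1)} \circ \phi^{-1}.
\]
It is straightforward to check that $R$ is a group homomorphism, and $R$ is injective because an element of $\Homeo(I,\partial I)$ is determined by its values on the interior. Thus $\Homeo(I,\partial I)$ embeds in the left-orderable group $\Homeo_+(\R)$, and left-orderability is inherited by subgroups.

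For the second (universal) statement, start with a countable left-orderable group $G$ and apply Theorem \ref{LO_universal} to obtain an embedding $\rho : G \hookrightarrow \Homeo_+(\R)$. The plan is to transport each $\rho(g)$ back to $(-1,1)$ via $\phi$ and then extend over the endpoints by fixing $\pm 1$, yielding
\[
\widetilde{\rho}(g)(x) =
\begin{cases}
\phi^{-1}\!\bigl(\rho(g)(\phi(x))\bigr) & x \in (-1,1),\\
x & x \in \{-1,1\}.
\end{cases}
\]
The main thing to verify is that $\widetilde{\rho}(g)$ is continuous at $\pm 1$; this is the key point where one uses that $\rho(g)$ is order-preserving on $\R$, so that $\rho(g)(y) \to +\infty$ as $y \to +\infty$ and $\rho(g)(y) \to -\infty$ as $y \to -\infty$, which translates through $\phi$ into continuity at $\pm 1$. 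A continuous bijection of the compact space $I$ to itself is automatically a homeomorphism, so $\widetilde{\rho}(g) \in \Homeo(I,\partial I)$.

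Finally, $\widetilde{\rho}: G \to \Homeo(I,\partial I)$ is a group homomorphism because composition on the interior is conjugate to composition in $\Homeo_+(\R)$, and it is injective because $\rho$ is. I expect the only non-routine step in the whole argument to be checking the limit behavior at $\pm 1$; everything else is formal.
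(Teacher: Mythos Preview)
Your proposal is correct and follows essentially the same approach as the paper: the paper simply asserts that $\Homeo(I,\partial I)$ ``may be identified with $\Homeo_+(\R)$'' and then invokes Example~\ref{homeo+} and Theorem~\ref{LO_universal}, while you spell out this identification explicitly via conjugation by a homeomorphism $\phi:(-1,1)\to\R$. Your two maps $R$ and $h\mapsto \phi^{-1}\circ h\circ\phi$ (extended by the identity on $\{\pm1\}$) are in fact mutually inverse, so you have actually established the isomorphism the paper takes for granted; the endpoint-continuity check you flag is indeed the only substantive detail.
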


\begin{corollary}
$\Homeo(I, \partial I)$ is not bi-orderable or locally indicable.
\end{corollary}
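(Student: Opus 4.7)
The plan is to leverage the universality statement of the proposition: since every countable left-orderable group embeds into $\Homeo(I, \partial I)$, any group-theoretic property preserved under passage to subgroups that fails for \emph{some} countable left-orderable group must also fail for $\Homeo(I, \partial I)$. Both bi-orderability and local indicability are subgroup-closed (the former noted in the Examples section; the latter because a nontrivial finitely generated subgroup of a subgroup is still such in the ambient group), so it suffices to exhibit two countable left-orderable groups, one not bi-orderable and one not locally indicable.

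For the failure of bi-orderability, I would invoke the Klein bottle group $K = \langle x, y \mid xyx^{-1} = y^{-1} \rangle$ from Problem \ref{klein}: it is left-orderable (hence countable, being finitely generated), but the defining relation forces $y$ to be conjugate to $y^{-1}$, which is incompatible with any bi-ordering (in a bi-ordered group, $y > 1$ would imply $xyx^{-1} > 1$, but $y^{-1} < 1$). Embedding $K \hookrightarrow \Homeo(I, \partial I)$ via the proposition then rules out bi-orderability of $\Homeo(I, \partial I)$.

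For the failure of local indicability, I would use the Brieskorn sphere group $\pi_1(\Sigma(2, 3, 7)) \cong \langle a, b \mid (ab)^2 = b^3 = a^7 \rangle$ from Section \ref{section surgery}. We have already established that this group is left-orderable (it embeds in $\widetilde{\mathrm{PSL}}(2,\R) \subset \Homeo_+(\R)$) and that its abelianization is trivial, so it is not indicable; being finitely generated and nontrivial, it is itself the obstruction to local indicability. Embedding it into $\Homeo(I, \partial I)$ via the proposition finishes the argument.

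The whole proof is really just a two-line citation of previous examples once one observes the subgroup-inheritance of the two properties, so there is no serious obstacle — the ``work'' was already done in Chapter 1 (Klein bottle) and Chapter on 3-manifolds ($\Sigma(2,3,7)$). The only minor care required is to confirm that local indicability really does pass to subgroups, which is immediate from the definition.
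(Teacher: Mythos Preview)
Your argument is correct and follows the same strategy as the paper: exploit universality of $\Homeo(I,\partial I)$ for countable left-orderable groups together with subgroup-inheritance of bi-orderability and local indicability, then embed a suitable counterexample. The paper's version is slightly more economical: it invokes only a single example (the braid groups $B_n$ for $n\ge 5$, via Theorem~\ref{braids not LI}, or Bergman's example) which is left-orderable but not locally indicable, and since bi-orderable groups are locally indicable (Corollary to Theorem~\ref{biorderable implies LI}), this single embedding dispatches both claims at once. Your separate treatment with the Klein bottle group for non-bi-orderability is therefore redundant---once you have embedded $\pi_1(\Sigma(2,3,7))$, non-bi-orderability is already forced---but it is not wrong, and the $\Sigma(2,3,7)$ example is a perfectly good substitute for $B_5$.
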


This follows since there are plenty of countable left-orderable groups which are not locally indicable as observed by \cite{Bergman91}, but they all embed in $\Homeo(I, \partial I)$ by Theorem \ref{LO_universal}.  In fact, by Theorem \ref{braids not LI} the braid groups on 5 or more strands provide such examples.  

We may consider a natural subgroup of $\Homeo(I, \partial I)$, consisting of all functions which are {\em piecewise-linear}, \index{piecewise linear} meaning that there is a finite subdivision of $I$ for which the function is (affine) linear on each subinterval.  The subdivision may vary with the function, but the set 
$\PL(I, \partial I)$ of all piecewise-linear functions is easily seen to be a subgroup of 
$\Homeo(I, \partial I)$, that is, closed under composition and taking inverses.

\begin{proposition}
$\PL(I, \partial I)$ is bi-orderable.
\end{proposition}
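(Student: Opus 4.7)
I would build an explicit bi-invariant positive cone from the ``germ of behavior at the leftmost point of non-triviality.'' For $f \in \PL(I,\partial I)$ with $f \neq \Id$, set
\[
a_f \;:=\; \inf\{x \in I : f(x) \neq x\}.
\]
Because $f$ is piecewise linear and not the identity, there is an interval $[a_f, a_f + \epsilon]$ on which $f$ is affine, say with slope $m_f^+$. Since $f(a_f) = a_f$ and $f$ differs from the identity immediately to the right of $a_f$, the slope $m_f^+ \neq 1$. Declare $P := \{f \neq \Id : m_f^+ > 1\}$, and I claim this is a bi-invariant positive cone.

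First, I would check trichotomy and the partition property. For $f \neq \Id$, both $f$ and $f^{-1}$ have the same fixed set and hence the same $a$-value; near $a_f^+$, $f$ is affine of slope $m_f^+$, so $f^{-1}$ is affine of slope $1/m_f^+$ there. Exactly one of these exceeds $1$, giving $G\setminus\{\Id\} = P \sqcup P^{-1}$. Next, bi-invariance: for $g \in \PL(I,\partial I)$, conjugation gives $\fix(gfg^{-1}) = g(\fix(f))$, hence $a_{gfg^{-1}} = g(a_f)$. Near $g(a_f)$, each of $g, f, g^{-1}$ is affine on a small interval, and a direct chain-rule computation collapses the slope to
\[
m_{gfg^{-1}}^+ \;=\; g'_+(a_f) \cdot m_f^+ \cdot (g^{-1})'_+(g(a_f)) \;=\; m_f^+.
\]
So $gPg^{-1} \subset P$.

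The step I expect to require the most care is closure under composition, $P \cdot P \subset P$. I would split into three cases according to the relative position of $a_f$ and $a_g$. If $a_f < a_g$, then $g$ is the identity on $[-1, a_g]$, so for $x$ in a right-neighborhood of $a_f$ one has $g(x) = x$ and $fg(x) = f(x)$; hence $a_{fg} = a_f$ and $m_{fg}^+ = m_f^+ > 1$. The case $a_g < a_f$ is symmetric: $f$ is the identity on a neighborhood of $g(a_g) = a_g$, so $fg$ equals $g$ near $a_g^+$, giving $m_{fg}^+ = m_g^+ > 1$. In the remaining case $a_f = a_g = a$, both maps are affine of slopes $m_f^+, m_g^+ > 1$ on a common small interval $[a, a+\epsilon]$, and shrinking $\epsilon$ so that $g([a,a+\epsilon])$ lies in a linearity interval of $f$, direct composition yields that $fg$ is affine on $[a, a+\epsilon]$ of slope $m_f^+ m_g^+$. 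The crucial point here—and the only real subtlety in the proof—is that this slope is strictly greater than $1$ (never equal to $1$), so $fg$ actually deviates from the identity on a right-neighborhood of $a$; thus $a_{fg} = a$ and $m_{fg}^+ = m_f^+ m_g^+ > 1$, placing $fg$ in $P$. This completes the verification that $P$ defines a bi-ordering.
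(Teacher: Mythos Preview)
Your proof is correct and follows exactly the same approach as the paper, which defines the positive cone by the condition that the right derivative exceed $1$ at the leftmost point where $f$ departs from the identity. The paper states this definition and leaves the verification (closure under products, conjugation invariance, and the partition property) as a problem; you have carried out precisely those checks with the natural case analysis.
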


In fact we can define the positive cone for a bi-ordering of by declaring that a function
$f \in \PL(I, \partial I)$ is positive if and only if its (right-sided) derivative satisfies  $f'(x_0) >1$ where $x_0$ is the maximum such that the restriction of $f$ to the interval $[-1, x_0]$ is the identity.  In other words, a PL function is positive if the first point at which its graph departs from the diagonal in $I \times I$ it goes above the diagonal.

\begin{problem}
Verify that the set defined above is actually a positive cone of a bi-ordering of 
$\Homeo(I, \partial I)$; that is, the composite of two positive functions is positive, any conjugate of a positive function is positive and for any nonidentity function, either it or its inverse is positive.
\end{problem}

It follows that $\PL(I, \partial I)$ is also locally indicable.  We will see that this generalizes to higher dimensions, but that bi-orderability does not.

\section{PL homeomorphisms of the cube}

If $n$ is a positive integer, the $n$-dimensional cube $I^n$ has boundary $\partial I^n$ which is homeomorphic with the $(n-1)$-dimensional sphere $S^n$.  A homeomorphism $f: I^n \to I^n$
will be called piecewise-linear (PL) if there is a finite triangulation of $I^n$ such that $f$ is (affine) linear on each simplex of the triangulation.

\begin{example}\label{example:Dehn_twist}
Figure~\ref{elementary_twist} depicts a PL homeomorphism of a square,
which preserves the foliation by concentric squares, and (for suitable choices
of edge lengths) whose 12th power is a \textit{Dehn twist}, \index{Dehn twist} that is a homeomorphism fixed on the inner square as well as the boundary of $I^2$ which ``twists'' the annulus in between.  Note that is is also area-preserving.
\begin{figure}[htpb]
\setlength{\unitlength}{4.5cm}
\begin{picture}(0,1)%
   \put(-1.2, 0){\includegraphics[height = \unitlength]{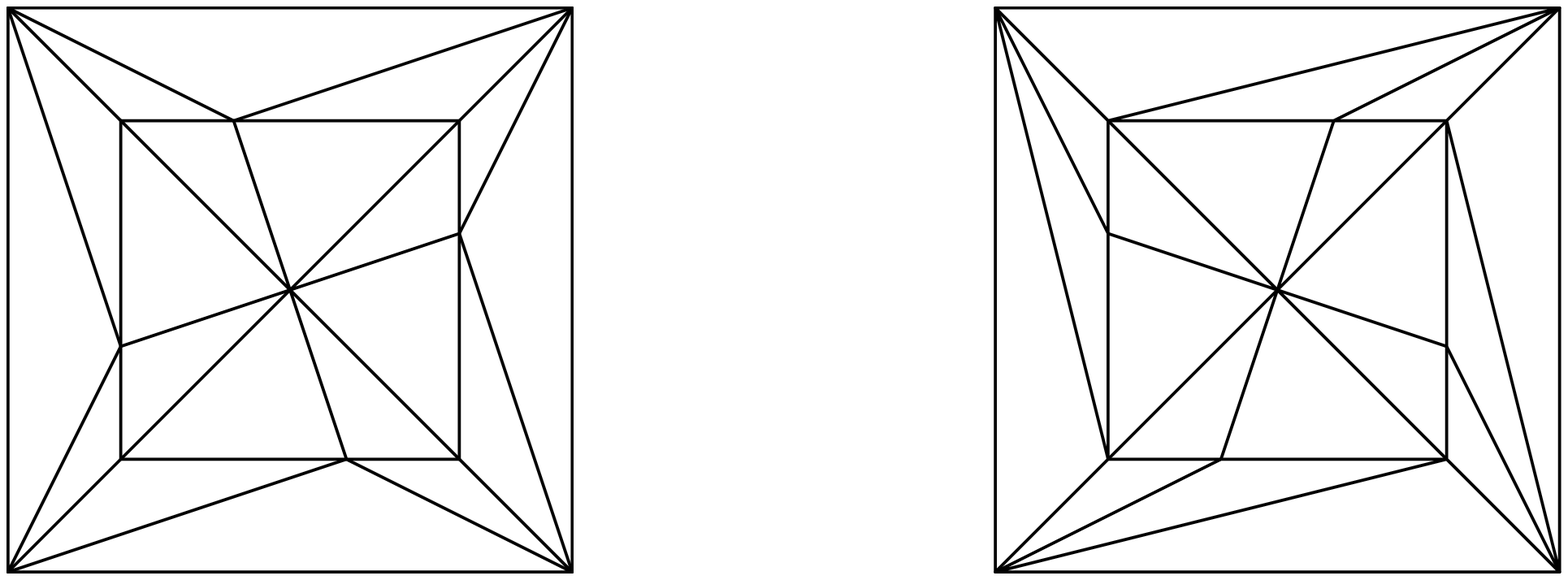}}%
    \put(-0.14, 0.5){$\xrightarrow{\hspace*{1cm}}$}%
  \end{picture}
\caption{A 12th root of a PL Dehn twist}\label{elementary_twist}
\end{figure}
\end{example}

\begin{theorem}\label{PLLI}
For any $n \ge 1$, the group $\PL(I^n, \partial I^n)$ is locally indicable, and therefore left-orderable.
\end{theorem}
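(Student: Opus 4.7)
The plan is to proceed by induction on $n$, with the base case $n=1$ handled by the bi-orderability (hence local indicability, via Theorem \ref{biorderable implies LI}) of $\PL(I, \partial I)$ established in the previous section. For the inductive step with $n \ge 2$, let $H$ be a nontrivial finitely generated subgroup of $\PL(I^n, \partial I^n)$. By Theorem \ref{burnshale} (Burns-Hale), left-orderability will follow once we produce a nontrivial homomorphism from $H$ to a left-orderable group; to obtain the stronger conclusion of local indicability, I aim to produce a surjection $H \twoheadrightarrow \mathbb{Z}$.

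First I would form the common fixed set $F = \bigcap_{h \in H} \fix(h)$. Since each $\fix(h)$ is a PL subcomplex of $I^n$ and $H$ is finitely generated, $F$ is a closed PL subset of $I^n$ containing $\partial I^n$; and since $H$ is nontrivial, $F \neq I^n$. I would choose a point $p$ on $\partial F$ lying in the interior of $I^n$, and localize the analysis at $p$. Each $h \in H$ has a well-defined PL germ at $p$, and such a germ is described by ``conically linear'' data: after taking a sufficiently fine conical decomposition of a neighborhood of $p$ adapted to a chosen finite generating set, every generator is linear on each cone.

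Next I would use this to extract a homomorphism $\rho : H \to L$ into a lower-dimensional PL-type group. The most direct way is to pass to the oriented blowup at $p$, which replaces $p$ by its link sphere $S^{n-1}$; the germ action of $H$ then induces an action on a collar $S^{n-1} \times [0, \epsilon)$ fixing the boundary sphere radially, hence (after choosing a direction $v$ pointing out of $F$, so that $p + tv \notin F$ for small $t>0$, and restricting to the hemisphere of directions complementary to the tangent cone of $F$) a homomorphism $\rho : H \to \PL(D^{n-1}, \partial D^{n-1}) \cong \PL(I^{n-1}, \partial I^{n-1})$. The choice $p \in \partial F$ guarantees that $\rho$ is nontrivial: if every generator of $H$ had trivial germ at $p$, then each would be the identity on a common neighborhood of $p$, contradicting $p \in \partial F$. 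By the inductive hypothesis the image $\rho(H)$ is locally indicable, so it admits a surjection onto $\mathbb{Z}$, which pulls back to the desired surjection $H \twoheadrightarrow \mathbb{Z}$.

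The main obstacle I anticipate is step two — the construction of the germ-to-lower-dimensional homomorphism $\rho$. In dimension one this is transparent: one just reads off the first nontrivial derivative past the boundary of the fixed set. In higher dimensions the germ of a PL map at a fixed point carries more data (a PL self-map of the link sphere together with radial scaling factors on each cone), and these two pieces do not canonically separate; realizing $\rho$ as a well-defined group homomorphism into a lower-dimensional $\PL(I^{n-1}, \partial I^{n-1})$ (rather than into the more complicated germ group itself) will require careful control over how the conical decompositions adapt under composition, and careful verification that the ``hemisphere restriction'' produces an honest subgroup rather than merely a set of maps. The example of the PL Dehn twist in Example \ref{example:Dehn_twist} illustrates exactly the kind of radial-versus-rotational ambiguity that the construction must navigate.
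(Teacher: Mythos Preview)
Your anticipated obstacle is real and is fatal to the inductive scheme as stated. At a carefully chosen fixed point $p$ the germ of each $h \in H$ is a single \emph{linear} map, and a linear map does not induce a PL self-map of the link sphere: the projectivized action $v \mapsto Av/|Av|$ is smooth but not PL in the standard structure on $S^{n-1}$, and even on the open hemisphere --- where in coordinates it reads as an affine map $w \mapsto (w+V)/r$ --- it fixes no boundary of a compact disk. So there is no natural homomorphism $H \to \PL(I^{n-1}, \partial I^{n-1})$, and the induction does not close. (Your choice of $p$ is also too casual: an arbitrary boundary point of $F$ need not have a hyperplane as local fixed set; one must approach $\partial I^n$ along an arc avoiding the $(n-2)$-skeleton of a common triangulation.)

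The paper avoids induction entirely. Having chosen $p \in \fix(H)$ so that, in a neighborhood, every element of $H$ is linear on one side of a common $(n-1)$-plane $\pi$ and fixes $\pi$ pointwise, it observes that the germ group $G$ at $p$ embeds in the subgroup of $\GL(n,\R)$ of orientation-preserving linear maps fixing $\pi$. Conjugating $\pi$ to a coordinate hyperplane, these are the matrices $\left(\begin{smallmatrix} \Id & V \\ 0 & r \end{smallmatrix}\right)$ with $V \in \R^{n-1}$ and $r > 0$, so $G \hookrightarrow \R^{n-1} \rtimes \R^*$. This target is locally indicable (an extension of locally indicable abelian groups, Problem~\ref{LI extension}), and $H \to G$ is nontrivial by the choice of $p$. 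Your instinct to localize at the frontier of the fixed set is exactly right; what you were missing is that the germ group there is already solvable, hence can be handled directly without descending in dimension.
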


Before proving this in the next section, we observe the following.

\begin{theorem}\label{theorem:PLnotBO}
For $n \ge 2$, the group $\PL(I^n, \partial I^n)$ is not bi-orderable.
\end{theorem}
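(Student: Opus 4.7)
The plan is to derive a contradiction from Problem~\ref{central}, which for a bi-ordered group forces $h^2$ to commute with $f$ only when $h$ already does. Accordingly, I will produce $f, h \in \PL(I^n,\partial I^n)$ for which $h^2$ commutes with $f$ while $h$ does not. The hypothesis $n \ge 2$ is what makes this feasible: in dimension one every element of $\PL(I,\partial I)$ is an increasing self-map of $I$, leaving no room to interchange two disjoint subintervals while fixing $\partial I$, which is the core geometric move underlying the construction below.

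First, I would pick two disjoint closed PL subcubes $C_1, C_2$ contained in the interior of $I^n$, positioned so that there is a PL homeomorphism $\phi : C_1 \to C_2$ (for instance, take $C_1$ and $C_2$ as translates of a common cube along the $x_1$-axis and let $\phi$ be that translation). Using the extra degrees of freedom provided by $n \ge 2$, I can build $h \in \PL(I^n,\partial I^n)$ which equals $\phi$ on $C_1$, equals $\phi^{-1}$ on $C_2$, preserves $I^n \setminus (C_1\cup C_2)$ setwise, and is the identity outside a PL neighborhood $U$ of $C_1\cup C_2$ (with $U$ in the interior of $I^n$). Concretely, $h$ is assembled by modifying the identity on a PL ``tube'' joining $C_1$ to $C_2$; the key geometric fact is that for $n \ge 2$ two disjoint PL cubes in the interior of a PL manifold can always be PL-interchanged by a homeomorphism supported in an arbitrarily small neighborhood of such a tube. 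By construction $h^2$ restricts to the identity on $C_1\cup C_2$, while $h$ itself is not the identity.

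Second, I would let $f \in \PL(I^n,\partial I^n)$ be any nontrivial element supported in $C_1$ (a small PL bump will do). Setting $g = h f h^{-1}$, one has $\mathrm{supp}(g)\subseteq h(C_1) = C_2$; since $f \neq \mathrm{id}$ and $C_1 \cap C_2 = \emptyset$, this forces $g \neq f$, so $h$ does not commute with $f$. On the other hand, since $h^2$ fixes $C_1$ pointwise and sends $I^n\setminus C_1$ into itself, a direct check on the two cases $x \in C_1$ and $x \notin C_1$ gives $h^2 f h^{-2} = f$, so $h^2$ does commute with $f$. By Problem~\ref{central}, a bi-ordering of $\PL(I^n,\partial I^n)$ would then force $h$ to commute with $f$, a contradiction.

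The only technical obstacle is making the PL construction of $h$ rigorous, and in particular ensuring the interpolation from the prescribed behavior on $C_1 \cup C_2$ to the identity on $\partial U$ can be carried out PL; this is routine for $n \ge 2$ (one can fix a compatible PL triangulation extending the involutive matching of $C_1$ with $C_2$ and define $h$ vertex-by-vertex, or appeal to PL isotopy extension), and the rest of the argument depends only on the three properties of $h$ listed above: $h|_{C_1} = \phi$, $h|_{C_2} = \phi^{-1}$, and $h = \mathrm{id}$ outside $U \subset \mathrm{int}\, I^n$.
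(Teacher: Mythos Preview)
Your argument is correct. The underlying geometric idea is the same as the paper's: build a PL homeomorphism that swaps two disjoint interior subcubes while fixing $\partial I^n$, then use an element supported in one of the cubes. The paper realizes this swap via $f = h^6$ (the 180-degree rotation piece of a PL Dehn twist) and chooses its second element $g$ to be $h$ on one little square and $h^{-1}$ on the other, obtaining the Klein-bottle relation $f^{-1}gf = g^{-1}$ directly. You instead invoke Problem~\ref{central}: your swap $h$ satisfies $h^2 = \mathrm{id}$ on $C_1\cup C_2$, so $h^2$ commutes with any $f$ supported in $C_1$ while $h$ does not. This is a slightly weaker algebraic conclusion than the paper's (the paper's relation also yields your commutation statement, since its $f^2$ is a Dehn twist, hence the identity on the inner square), but it suffices and lets you take $f$ to be an arbitrary bump rather than the carefully matched $h/h^{-1}$ pair. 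Your version also works uniformly for all $n\ge 2$ without the separate suspension step (Proposition~\ref{proposition:suspension}) that the paper uses. The one place you should be a bit more explicit is the PL construction of $h$; the paper's Example~\ref{example:Dehn_twist} gives a concrete model you could cite for $n=2$, and a product with the identity in the remaining coordinates handles $n\ge 3$.
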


We'll prove this for the case $n = 2$.  It will then follow for higher dimensions from Proposition \ref{proposition:suspension} below.
Consider two functions $f, g \in \PL(I^2, \partial I^2)$ defined 
as follows.  Let $h: I^2 \to I^2$ denote the function illustrated in Figure~\ref{elementary_twist}.  
Recall that $h^{12}$ is a Dehn twist, which is the identity on the inner square, as well 
as on $\partial I^2$.  Define $f$ to be the function $h^6$, so that $f$ rotates the inner square by 180 
degrees.  Referring to Figure~\ref{construction}, define $g$ to be the identity 
outside the little squares, which are strictly inside the inner square rotated by $f$.  
On the little square on the left, let $g$ act as $h$, suitably scaled, and on the 
little square to the right let $g$ act as $h^{-1}$.  Noting that $f$ interchanges 
the little squares, and that $h$ commutes with 180 degree rotation, one checks 
that $f^{-1}gf = g^{-1}$.

Such an equation cannot hold (for $g$ not the identity) in a bi-ordered group, as 
it would imply the contradiction that $g$ is greater than the identity if and only 
if $g^{-1}$ is greater than the identity (this is just as in the Klein bottle group).

\begin{figure}[htpb]
\setlength{\unitlength}{5.5cm}
\begin{picture}(0,1)%
   \put(-0.48, 0){\includegraphics[scale=0.6]{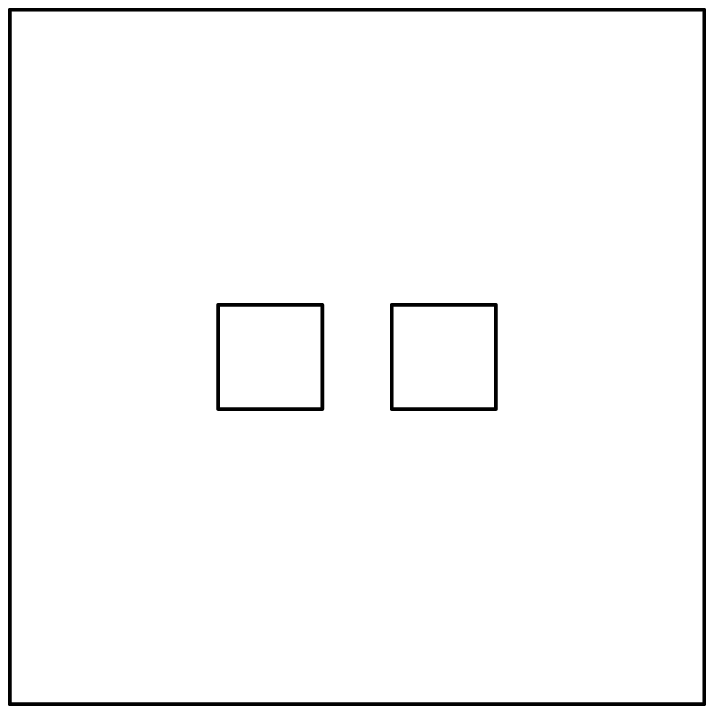}}%
    \put(-0.11, 0.46){$h$}%
    \put(0.05, 0.46){$h^{-1}$}
     \put(-0.03, 0.65){Id}
  \end{picture}
\caption{Building the function $g \in \PL(I^2, \partial I^2)$}\label{construction}
\end{figure}

\begin{proposition}\label{proposition:suspension}
For each $n \ge 1$ the group  $\Homeo(I^n, \partial I^n)$ is isomorphic with a subgroup of 
$\Homeo(I^{n+1}, \partial I^{n+1})$ and $\PL(I^n, \partial I^n)$ is isomorphic with a subgroup of 
$\PL(I^{n+1}, \partial I^{n+1})$.
\end{proposition}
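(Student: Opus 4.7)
The plan is to build $\Phi: \Homeo(I^n, \partial I^n) \to \Homeo(I^{n+1}, \partial I^{n+1})$ by a ``radial suspension'' construction: view $I^{n+1} = I^n \times [-1, 1]$ and, on the slice at height $t$, have $f$ act on a concentric subcube of side $2c_t := 2(1-|t|)$, tapering to the identity as $t \to \pm 1$. Concretely, for each $c \in [0, 1]$ let $\rho_c: \Homeo(I^n, \partial I^n) \to \Homeo(I^n, \partial I^n)$ be the rescaling operator
$$\rho_c(f)(x) = \begin{cases} c\, f(x/c) & \text{if } |x|_\infty \leq c \text{ and } c > 0, \\ x & \text{otherwise.} \end{cases}$$
Because $f$ fixes $\partial I^n$, the two branches agree on $|x|_\infty = c$, so $\rho_c(f)$ is a well-defined homeomorphism of $I^n$. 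An easy computation shows $\rho_c(fg) = \rho_c(f)\,\rho_c(g)$, so each $\rho_c$ is a homomorphism. Define
$$\Phi(f)(x, t) = (\rho_{c_t}(f)(x),\, t), \qquad c_t = 1 - |t|.$$

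I would then verify the standard package of properties. First, $\Phi$ is a homomorphism because composition is computed slicewise and each $\rho_{c_t}$ is a homomorphism. Second, $\Phi(f)$ is continuous: the only delicate point is at $|t| = 1$ where the active cube $[-c_t, c_t]^n$ shrinks to the origin, but inside this cube both $\rho_{c_t}(f)(x)$ and $x$ lie in $[-c_t, c_t]^n$, so $|\rho_{c_t}(f)(x) - x| \leq 2c_t\sqrt{n}$, which forces uniform convergence to the identity as $t \to \pm 1$. Third, $\Phi(f)$ fixes $\partial I^{n+1} = (\partial I^n \times I) \cup (I^n \times \partial I)$: on the top and bottom faces because $c_{\pm 1} = 0$, and on the side faces because $|x|_\infty = 1 \geq c_t$ always falls in the ``identity'' branch of $\rho_{c_t}$. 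Fourth, $\Phi(f)$ is a homeomorphism with two-sided inverse $\Phi(f^{-1})$. Finally, $\Phi$ is injective since $\Phi(f)(x, 0) = (f(x), 0)$, so $\Phi(f) = \Id$ forces $f = \Id$.

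For the PL statement, the task is to show that $f \in \PL(I^n, \partial I^n)$ implies $\Phi(f) \in \PL(I^{n+1}, \partial I^{n+1})$. Fix a triangulation $\mathcal{T}$ of $I^n$ on which $f$ is affine, say $f(y) = A_\sigma y + b_\sigma$ on each simplex $\sigma \in \mathcal{T}$. The preimage in $I^{n+1}$ of the rescaled simplex $c_t \sigma$ is
$$R_\sigma = \{(x, t) \in I^{n+1} : |x|_\infty \leq c_t,\ x/c_t \in \sigma\},$$
which, since the defining inequalities $\alpha_i \cdot y + \beta_i \leq 0$ of $\sigma$ transform into $\alpha_i \cdot x + \beta_i c_t \leq 0$ (legal because $c_t \geq 0$), is cut out by PL inequalities in $(x, t)$; splitting at the hyperplane $t = 0$ makes $c_t = 1 - |t|$ genuinely linear on each half. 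On $R_\sigma$ one computes
$$\Phi(f)(x, t) = (c_t(A_\sigma(x/c_t) + b_\sigma),\, t) = (A_\sigma x + c_t b_\sigma,\, t),$$
which is affine in $(x, t)$ on each half-region. Outside $\bigcup_\sigma R_\sigma$ the map is the identity, so a common refinement of all these pieces, subdivided further at $t = 0$, gives a finite triangulation of $I^{n+1}$ on which $\Phi(f)$ is affine. Hence $\Phi$ restricts to an injective homomorphism $\PL(I^n, \partial I^n) \hookrightarrow \PL(I^{n+1}, \partial I^{n+1})$.

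The only genuinely nontrivial step is the continuity of $\Phi(f)$ at the top and bottom faces, where the active cube collapses; the explicit bound $|\rho_{c_t}(f)(x) - x| \leq 2c_t\sqrt{n}$ resolves this cleanly. Everything else is bookkeeping.
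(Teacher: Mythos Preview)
Your proof is correct and takes essentially the same approach as the paper: the paper embeds the suspension $\Sigma I^n$ in $I^{n+1}$ as the bipyramid joining $I^n \times \{0\}$ to the two apex points $(0,\dots,0,\pm 1)$, suspends $f$ over it, and extends by the identity---which is exactly your map $\Phi(f)(x,t) = (\rho_{c_t}(f)(x),\,t)$ written out in coordinates. You supply more detail than the paper does, in particular the explicit affine formula $A_\sigma x + c_t b_\sigma$ verifying the PL claim, whereas the paper simply asserts that the construction ``is clear'' in the PL case.
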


The isomorphism may be constructed as follows.  Consider the cube $I^{n+1}$ as a subset of 
$\R^{n+1}$ with coordinates $(x_1, \dots ,x_{n+1})$ and let $I^n$ be the subset of $I^{n+1}$ with $x_{n+1} = 0$.  The suspension $\Sigma I^{n}$ can be embedded in  $I^{n+1}$ as the union of all straight line segments running from $I^n$ to the points $(0, \dots, 0, \pm 1)$.  Then a mapping 
$f: I^n \to I^n$ fixed on the boundary suspends naturally to a mapping 
$\Sigma f: \Sigma I^{n} \to \Sigma I^{n}$ fixed on the boundary.  Extend $\Sigma f$ by the identity to obtain the image of $f$ in $\Homeo(I^{n+1}, \partial I^{n+1})$.  This is easily checked to be an injective homomorphism, and it is clear that if $f$ is PL, then so is its image.
\qed

We note that all the self-homeomorphisms in this argument are actually area-preserving.  Moreover the suspension construction of Proposition \ref{proposition:suspension} takes area-preserving maps to area-preserving maps.  So we can refine Theorem \ref{PLLI} to apply to the subgroup of $\PL(I^n, \partial I^n)$ consisting of area-preserving PL homeomorphisms, which we denote by $\PL_\omega(I^n, \partial I^n)$.

\begin{theorem}\label{theorem:PLnotBO}
For $n \ge 2$, the group $\PL_\omega(I^n, \partial I^n)$ is not bi-orderable.
\end{theorem}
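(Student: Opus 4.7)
The plan is to revisit the proof of Theorem \ref{theorem:PLnotBO} in dimension two and check that the specific homeomorphisms $f$ and $g$ constructed there can be taken to lie in $\PL_\omega(I^2,\partial I^2)$, and then to verify that the suspension embedding of Proposition \ref{proposition:suspension} sends area-preserving maps to volume-preserving maps. Once both facts are in place, the Klein-bottle relation $f^{-1}gf=g^{-1}$ with $g\neq 1$ lives inside $\PL_\omega(I^n,\partial I^n)$ for every $n\ge 2$, and such a relation is incompatible with any bi-invariant ordering by exactly the argument used in Problem \ref{klein}: if $1<g$ then $g^{-1}=f^{-1}gf>1$, a contradiction, and similarly if $g<1$.

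For the first step, I would argue that the building block $h$ of Example \ref{example:Dehn_twist} can be realised by PL shears along the sides of the concentric squares. Each linear piece then slides one triangle along one edge of a square, and such a shear has Jacobian determinant $1$. Hence $h$, and therefore $f=h^{6}$, preserves area. The map $g$ is supported on two small squares strictly inside the inner square rotated by $f$ and agrees, up to an affine rescaling, with $h$ and $h^{-1}$ on those two squares. Affine rescaling conjugates an area-preserving linear map to an area-preserving linear map, since conjugation does not change the determinant, so $g$ also preserves area. Neither the commutation of the rotation $f$ with $h$ nor the fact that $f$ swaps the two small squares is affected by the area-preserving refinement, so the identity $f^{-1}gf=g^{-1}$ remains valid in $\PL_\omega(I^2,\partial I^2)$.

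For the second step, let $F\in\PL_\omega(I^n,\partial I^n)$ and consider the suspension $\Sigma F$ of Proposition \ref{proposition:suspension}. Writing a point of $\Sigma I^n\subset I^{n+1}$ as $((1-|t|)y,t)$ with $y\in I^n$ and $t\in[-1,1]$, we have $\Sigma F\bigl((1-|t|)y,t\bigr)=\bigl((1-|t|)F(y),t\bigr)$, and $\Sigma F$ is extended by the identity on $I^{n+1}\setminus \Sigma I^n$. For each fixed $t$ this is the conjugate of $F$ by scaling by $(1-|t|)$, whose $n$-dimensional Jacobian determinant equals that of $F$ and is therefore $\pm 1$; the last coordinate is preserved. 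By Fubini (or directly from the block-diagonal form of the Jacobian on each linear piece), $\Sigma F$ preserves $(n+1)$-dimensional Lebesgue measure, and it is PL by construction. The suspension therefore yields an injection $\PL_\omega(I^n,\partial I^n)\hookrightarrow \PL_\omega(I^{n+1},\partial I^{n+1})$, and since $\PL_\omega(I^2,\partial I^2)$ already contains the obstruction $f^{-1}gf=g^{-1}$, so does every $\PL_\omega(I^n,\partial I^n)$ for $n\ge 2$. The only point that requires a little care is the area-preserving realisation of $h$; everything else is essentially formal.
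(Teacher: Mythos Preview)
Your proposal is correct and follows exactly the route the paper takes: the paper simply observes that the maps $f$ and $g$ built from the twelfth-root $h$ of the Dehn twist are already area-preserving, and that the suspension construction of Proposition~\ref{proposition:suspension} carries area-preserving PL maps to volume-preserving PL maps, so the Klein-bottle relation $f^{-1}gf=g^{-1}$ persists in $\PL_\omega(I^n,\partial I^n)$ for all $n\ge 2$. One minor correction: the Jacobian of $\Sigma F$ is block-\emph{triangular} rather than block-diagonal (the first $n$ coordinates depend on $t$), but since the last row is $(0,\dots,0,1)$ the determinant is still $\det DF$, so your conclusion stands.
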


\section{Proof of Theorem \ref{PLLI}}

Consider an arbitrary {\em finitely generated} nontrivial subgroup $H = \langle h_1, \dots, h_k \rangle$ of $\PL(I^n, \partial I^n)$.  We will be done if we can find a nontrivial homomorphism from $H$ to some locally indicable group, for then its image will map nontrivially to $\Z$.  

Now the fixed point set $\fix(h_i)$ of each generator of $H$ is a simplicial complex in $I^n$ which contains $\partial I^n$.  The global fixed point set $\fix(H)$ is just the intersection of these sets, and so it is also a closed simplicial complex containing $\partial I^n$.  Since $H$ is nontrivial, there is a point in the complement of $\fix(H)$.  It is possible to connect this point to $\partial I^n$
by a simplicial curve that avoids the $(n-2)$-skeleton of the triangulation of $\fix(H)$.  Moving along this arc from $\partial I^n$, one can find a point $p$ in $\fix(H)$ and a neighborhood of 
$p$ for which every element of $H$ fixes an $(n-1)$-dimensional hyperplane $\pi$ in that neighborhood, every element of $H$ maps one side of $\pi$ linearly to the same side of $\pi$ and therefore preserves orientation.

Let $G$ be the group of germs of PL homeomorphisms in $H$ at $p$, so that maps are identified if they agree on some neighborhood of $p$.  There is an obvious homomorphism 
$H \to G$ which is nontrivial.  The group $G$ may be identified as a subgroup of $\GL(n,\R)$ consisting of linear maps which fix an $(n-1)$ dimensional subspace $\pi$ of $\R^n$ and preserve orientation.  Lemma \ref{lemma:linear_stabilizer} and Problem \ref{problem:linearLI} below imply that $G$ is locally indicable and the proof of Theorem \ref{PLLI} is complete.

\begin{lemma}\label{lemma:linear_stabilizer}
The subgroup of $\GL(n,\R)$ consisting of all orientation-preserving linear maps of $\R^n$ which pointwise fix an $(n-1)$ dimensional subspace $\pi$ of $\R^n$
is isomorphic to $\R^{n-1} \rtimes \R^*$.  The group operation on $\R^{n-1}$ is vector addition and $\R^*$ denotes the multiplicative group of positive real numbers.
\end{lemma}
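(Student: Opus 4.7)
\medskip

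\noindent\textbf{Proof plan for Lemma \ref{lemma:linear_stabilizer}.}

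The plan is to pick coordinates adapted to $\pi$, write down the matrix form of a general element of the stabilizer, read off the group law from matrix multiplication, and check that what appears is exactly a semidirect product of the prescribed shape.

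First I would choose a basis $e_1,\dots,e_{n-1}$ of $\pi$ and extend to a basis of $\R^n$ by a vector $v$ transverse to $\pi$. Any linear $L$ fixing $\pi$ pointwise must satisfy $Le_i=e_i$ for $i<n$ and $Lv=a+\lambda v$ for some $a\in\pi\cong\R^{n-1}$ and some $\lambda\in\R$. Thus in this basis
\[
L \;=\; \begin{pmatrix} I_{n-1} & a \\ 0 & \lambda \end{pmatrix},
\]
with $a$ a column vector and $\lambda$ a scalar. Invertibility forces $\lambda\ne 0$, and since $\det L=\lambda$, the orientation-preserving condition is $\lambda>0$. Hence the set-theoretic bijection
\[
\Phi:\R^{n-1}\times\R^{*}\longrightarrow G,\qquad (a,\lambda)\longmapsto L_{a,\lambda},
\]
is well defined.

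Next I would check the group law by direct matrix multiplication:
\[
L_{a_1,\lambda_1}\,L_{a_2,\lambda_2}
=\begin{pmatrix} I & a_1 \\ 0 & \lambda_1\end{pmatrix}\!\begin{pmatrix} I & a_2 \\ 0 & \lambda_2\end{pmatrix}
=\begin{pmatrix} I & a_2+\lambda_2 a_1 \\ 0 & \lambda_1\lambda_2\end{pmatrix}.
\]
So composition transports to the operation $(a_1,\lambda_1)*(a_2,\lambda_2)=(a_2+\lambda_2 a_1,\lambda_1\lambda_2)$ on $\R^{n-1}\times\R^{*}$. I would then verify that $N=\{(a,1):a\in\R^{n-1}\}$ is a normal subgroup isomorphic to $(\R^{n-1},+)$, that $H=\{(0,\lambda):\lambda\in\R^{*}\}$ is a subgroup isomorphic to $(\R^{*},\cdot)$, that $N\cap H=\{(0,1)\}$, and that $NH=\R^{n-1}\times\R^{*}$. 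Explicitly, $(0,\lambda)(a,1)(0,\lambda)^{-1}=(\lambda^{-1}a,1)$, confirming both normality of $N$ and that the conjugation action of $H$ on $N$ is by (inverse) scalar multiplication; after relabelling $\lambda\mapsto\lambda^{-1}$ this is precisely the standard semidirect product structure on $\R^{n-1}\rtimes\R^{*}$.

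Conversely, I would note that every matrix of the displayed block form with $\lambda>0$ pointwise fixes $\pi$ and preserves orientation, so $\Phi$ is surjective, hence an isomorphism of groups. The only subtlety to be careful about is the direction of the semidirect product and the placement of the $\lambda$ versus $\lambda^{-1}$, but since the two conventions $\lambda\cdot a=\lambda a$ and $\lambda\cdot a=\lambda^{-1}a$ produce isomorphic groups (via $\lambda\leftrightarrow\lambda^{-1}$ on the $\R^{*}$ factor), this poses no real obstacle. There are no genuinely hard steps here; the content of the lemma is essentially the matrix normal form for linear maps fixing a hyperplane.
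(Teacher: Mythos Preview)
Your proof is correct and follows essentially the same approach as the paper: choose a basis adapted to $\pi$, conjugate the subgroup into block upper-triangular form $\left(\begin{smallmatrix} I_{n-1} & a \\ 0 & \lambda \end{smallmatrix}\right)$ with $\lambda>0$, and read off the semidirect product structure from matrix multiplication. The paper leaves the verification of the semidirect product structure as an exercise, which you have carried out in full.
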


To see this note that the subgroup of $\GL(n,\R)$ fixing $\pi$ can be conjugated into the set of 
matrices of the form $\left(\begin{smallmatrix} \Id & V \\ 0 & r \end{smallmatrix}\right)$
where $\Id$ is the identity in $\GL(n-1,\R)$, where $V$ is an arbitrary column vector
and where $r$ is a nonzero real number, which is positive in the orientation-preserving case. 

\begin{problem}\label{problem:linearLI}  Conclude the proof of the lemma by showing that matrix multiplication defines  a semidirect product structure on the set of pairs (V, r).  Use Problem \ref{LI extension} to argue that $\R^{n-1} \rtimes \R^*$ is locally indicable because each of its factors is locally indicable.
\end{problem}

\section{Generalizations}

Theorem \ref{PLLI} may be generalized in several directions, which we will discuss without proof, referring the reader to \cite{CR14} for details.   Our proof in the last section did not actually use the fact that the space was a cube, and applies more generally to piecewise-linear manifolds.  A PL $n$-manifold is defined as a space covered with charts homeomorphic with
$\R^n$ in such a way that the transition functions between charts may be chosen to be piecewise-linear.  One can then define PL homeomorphisms in a natural way.  A smooth $n$-manifold is defined similarly, with transition functions that are infinitely differentiable.  This setting enables the definition of differentiable maps on smooth manifolds.

 \begin{theorem}
Let $M$ be an $n$ dimensional connected $\PL$ manifold, and let $K$ be a
nonempty closed $\PL$ $(n-1)$-dimensional submanifold.
Then the group $\PL_+(M,K)$ of orientation-preserving PL homeomorphisms of $M$, fixed on $K$, is locally indicable and therefore left-orderable.
\end{theorem}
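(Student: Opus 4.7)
The plan is to adapt, essentially verbatim, the proof of Theorem \ref{PLLI}, replacing the role of the cube $I^n$ and its boundary $\partial I^n$ by the connected PL manifold $M$ and the submanifold $K$. I would begin by taking an arbitrary finitely generated nontrivial subgroup $H = \langle h_1, \dots, h_k \rangle$ of $\PL_+(M,K)$ and producing a nontrivial homomorphism from $H$ to a locally indicable group; composing with a surjection of that group onto $\Z$ yields a nontrivial homomorphism $H \to \Z$, which by arbitrariness of $H$ shows $\PL_+(M,K)$ is locally indicable (and hence left-orderable by the Burns--Hale theorem, Theorem \ref{burnshale}).

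First I would fix a triangulation of $M$ containing $K$ as a subcomplex and with respect to which each generator $h_i$ is simplicial. Then $\fix(h_i)$ is a closed subcomplex of $M$ containing $K$, and so $\fix(H) = \bigcap_{i=1}^k \fix(h_i)$ is a closed PL subcomplex of $M$ containing $K$. Because $H$ is nontrivial, $\fix(H) \ne M$, so there exists a point $q \in M \setminus \fix(H)$. Connectedness of $M$ together with the hypothesis that $K$ is nonempty allows me to choose a PL path from $q$ to $K$; a general position argument (here $K$ being codimension one is what makes the argument go through) then perturbs this path so that it avoids the $(n-2)$-skeleton of $\fix(H)$.

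Walking along this path from $K$ toward $q$, I locate the first point $p$ at which the path leaves $\fix(H)$. By construction $p$ lies in the interior of an $(n-1)$-simplex of $\fix(H)$, so there is a PL chart around $p$ in which $\fix(H)$ locally coincides with a hyperplane $\pi \subset \R^n$, and in which every generator $h_i$ is affine on each of the two open half-spaces cut off by $\pi$, fixes $\pi$ pointwise, and (being orientation-preserving and fixing a codimension one submanifold) sends each half-space to itself. Taking germs at $p$ on one chosen side of $\pi$ defines a homomorphism $H \to G$, where $G$ embeds into the subgroup of $\GL(n,\R)$ of orientation-preserving linear maps that pointwise fix $\pi$. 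By Lemma \ref{lemma:linear_stabilizer} this group is $\R^{n-1} \rtimes \R^*$, and by Problem \ref{problem:linearLI} it is locally indicable. The homomorphism is nontrivial because by the choice of $p$ there is at least one generator $h_i$ whose fixed set does not contain a full neighborhood of $p$ on the chosen side, so its germ is nontrivial.

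The main obstacle, and the only genuinely new input beyond the cube case, is the general position step that produces the point $p$. In the cube this was immediate from a concrete simplicial geometry argument; for a general connected PL manifold I must justify the existence of a common simplicial refinement on which all $h_i$ are simultaneously simplicial (so that $\fix(H)$ is indeed a subcomplex) and then carry out the path-perturbation argument inside $M$ while avoiding the $(n-2)$-skeleton of $\fix(H)$. Both are standard in PL topology: the former follows by iteratively subdividing a triangulation to make each $h_i$ simplicial, and the latter by transversality of a $1$-dimensional path to an $(n-2)$-dimensional subcomplex in an $n$-manifold. With these ingredients in place, the argument transplants cleanly from the cube to the setting of $(M,K)$.
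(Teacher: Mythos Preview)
Your proposal is correct and is precisely the approach the paper has in mind: the paper does not give a separate proof of this theorem but explicitly remarks that ``our proof in the last section did not actually use the fact that the space was a cube, and applies more generally to piecewise-linear manifolds,'' referring to \cite{CR14} for details. Your write-up spells out exactly this adaptation, including the one genuinely new ingredient (replacing ``connect to $\partial I^n$'' by ``connect to $K$'' using connectedness of $M$ and nonemptiness of $K$), so there is nothing to add.
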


If $M$ is a smooth manifold and $K$ a smooth submanifold, $\Diff_+^1(M,K)$ denotes the group of {\em orientation-preserving} self-homeomorphisms of $M$ which are fixed on $K$ and which are continuously (once) differentiable.

\begin{theorem}\label{theorem:cilo}
Let $M$ be an $n$ dimensional connected smooth manifold, and let $K$ be a
nonempty $n-1$ dimensional closed submanifold. Then the group $\Diff_+^1(M,K)$ is
locally indicable and therefore left-orderable.
\end{theorem}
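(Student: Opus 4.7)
I would mirror the proof of Theorem~\ref{PLLI}, using derivative data at fixed points in place of the simplicial structure of PL fixed sets. By a Burns--Hale-style reduction (Theorem~\ref{burnshale}), it suffices to show that every nontrivial finitely generated subgroup $H = \langle h_1, \ldots, h_k\rangle \le \Diff_+^1(M,K)$ admits a nontrivial homomorphism to a locally indicable group; composition with an indication of the image then produces the required surjection $H \twoheadrightarrow \Z$. The natural target is the orientation-preserving stabilizer of a hyperplane in $\GL(n, \R)$, which by Lemma~\ref{lemma:linear_stabilizer} and Problem~\ref{problem:linearLI} is $\R^{n-1} \rtimes \R^+$ and is locally indicable.

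Set $F = \bigcap_i \fix(h_i)$; this is closed, contains $K$, and is proper in $M$. For any $p \in K$, the pointwise fixing $h_i|_K = \Id_K$ differentiates to $D_p h_i|_{T_pK} = \Id_{T_pK}$, so the evaluation map
\[
\phi_p : H \longrightarrow \GL_+(T_pM), \qquad h \mapsto D_p h,
\]
takes values in the orientation-preserving stabilizer of $T_pK$, that is, in a copy of $\R^{n-1}\rtimes\R^+$. If $\phi_p$ is nontrivial for some $p \in K$, we are done. Otherwise every generator is tangent to the identity along $K$, and I would pass to the closed set $F_1 = \{q \in F : D_qh_i = \Id \text{ for all } i\}$, which contains $K$. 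Provided $F_1 \subsetneq F$, some fixed point $p \in F \setminus F_1$ has $D_ph_i \ne \Id$ for some $i$, and at such $p$ one seeks a common invariant hyperplane $\pi \subset T_p M$ for the family $\{D_p h_i\}$, exploiting that the derivatives are $\det > 0$ and hence constrained. If $F_1 = F$, so that at every fixed point all derivatives are trivial, I would select a boundary point $p \in \partial F$ and extract a common invariant hyperplane $\pi$ via a limiting argument on the normalized displacement vectors $(h_i(x_m) - x_m)/\|h_i(x_m)-x_m\|$ along sequences $x_m \in M\setminus F$ approaching $p$, then obtain a nontrivial linearized action of $H$ on $T_pM/\pi$ (or on $\pi$) and conclude as above, inducting on $n$ with Theorem~\ref{PLLI} (or its smooth analogue at a generic PL-like point) as the base case.

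The main obstacle is the final case, where the derivatives of every generator vanish to first order at every fixed point of $H$. Here no common invariant hyperplane is supplied for free, and any such hyperplane must be extracted from the dynamics of $H$ near a boundary point of $F$, using the $C^1$-regularity and the orientation-preserving hypothesis together with a compactness extraction on normalized displacements. This is precisely where the smooth-category argument departs from the PL one, in which the simplicial combinatorics produced the hyperplane automatically; making the smooth version work is the essential new idea, and I would expect the argument in \cite{CR14} to proceed either by induction on dimension or by invoking a rigidity result about $C^1$ diffeomorphisms tangent to the identity along a hypersurface, leveraging the orientation-preserving condition to rule out the cancellations that could otherwise obstruct convergence of the normalized-displacement limits.
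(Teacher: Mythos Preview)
The paper does not prove this theorem; it explicitly states the result without proof and refers the reader to \cite{CR14} for details. So there is no in-paper argument to compare against directly.

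Your outline follows the PL template faithfully and correctly isolates the hard case: all generators tangent to the identity at every common fixed point. But your proposed resolution of that case does not work. Limits of normalized displacement vectors $(h_i(x_m)-x_m)/\|h_i(x_m)-x_m\|$ at a frontier point $p$ of $\fix(H)$ land in the common $1$-eigenspace of the $D_ph_i$, so they produce a common invariant \emph{line} (or cone), not a hyperplane. The subgroup of $\GL_+(n,\R)$ fixing a line pointwise still contains a copy of $\GL(n-1,\R)$ and is not locally indicable, so that reduction collapses. Your suggested induction on $n$ also does not close: the linearized action on $T_pM/\pi$ or on $\pi$ is a representation into a general linear group, not into a diffeomorphism group fixing a hypersurface, so the inductive hypothesis does not apply.

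The tool you are circling around but not naming is the Thurston Stability Theorem: a nontrivial finitely generated group of germs of $C^1$ diffeomorphisms of $(\R^n,0)$ with trivial derivative representation at $0$ admits a nontrivial homomorphism to $\R$. This is exactly the ``rigidity result about $C^1$ diffeomorphisms tangent to the identity'' you anticipate, and it is the essential new ingredient beyond the PL argument. With it, the scheme is: choose $p$ on the frontier of $\fix(H)$ so that the germ of $H$ at $p$ is nontrivial; if $D_p$ is trivial, Thurston stability gives $H\to\R$ directly; if $D_p$ is nontrivial, one arranges (using the hypersurface $K$) that the image fixes a hyperplane pointwise and lands in $\R^{n-1}\rtimes\R^+$ via Lemma~\ref{lemma:linear_stabilizer}. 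The normalized-displacement idea is in fact the heart of Thurston's own proof, but it produces a homomorphism to $\R$, not an invariant hyperplane.
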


Since each group $\Diff_+^p(M,K)$ of $p$-times continuously differentiable homeomorphisms 
($p = 2, 3, \dots, \infty$) is a subgroup of $\Diff_+^1(M,K)$, these groups are also locally indicable.

\section{Homeomorphisms of the cube}

It is natural to ask if the results on PL and differentiable homeomorphisms of the cube (and other manifolds) apply to the more general setting of homeomorphisms which are just continuous (with continuous inverses).  Even for the 2-dimensional cube, the question is open at the time of this writing.

\begin{question} Is $\Homeo(I^2, \partial I^2)$ left-orderable?
\end{question}

However, it is a classical result that this group is torsion-free, and that also generalizes to higher dimensions.

\begin{theorem}
The group $\Homeo(I^n, \partial I^n)$ is torsion-free for all $n$.
\end{theorem}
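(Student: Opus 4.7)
The plan is to reduce to a closed-manifold setting via a doubling trick and then invoke Newman's theorem on the fixed point sets of finite-order homeomorphisms. Suppose $f \in \Homeo(I^n, \partial I^n)$ has finite order $k \ge 2$; I aim to derive $f = \mathrm{Id}$.

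First I would form the double $DI^n := I^n \sqcup I^n / \sim$, where $\sim$ identifies the two copies of $\partial I^n$ via the identity. It is a standard fact that $DI^n$ is homeomorphic to $S^n$. Because $f$ restricts to the identity on $\partial I^n$, the recipe ``act by $f$ on the first copy, and by the identity on the second'' defines a homeomorphism $\tilde f : DI^n \to DI^n$: the two formulas agree on the shared boundary, so the result is continuous, and its inverse is defined analogously from $f^{-1}$. Clearly $\tilde f^{k} = \mathrm{Id}$, and since $\tilde f$ restricted to the first copy is precisely $f$, we have $\tilde f \ne \mathrm{Id}$ whenever $f \ne \mathrm{Id}$, so $\tilde f$ has order exactly $k$.

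Now observe that the second copy of $I^n$, viewed as a subspace of $DI^n \cong S^n$, lies entirely in $\mathrm{Fix}(\tilde f)$, and its interior in the manifold $S^n$ is non-empty. At this point I would invoke \emph{Newman's theorem}: if a finite group acts effectively by homeomorphisms on a connected topological manifold, then the fixed point set of every non-identity element has empty interior. Applied to the cyclic group $\langle \tilde f\rangle$ acting on the connected manifold $S^n$, this forces $\tilde f = \mathrm{Id}$, and consequently $f = \mathrm{Id}$.

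The principal obstacle is Newman's theorem itself, which is a nontrivial result in the theory of compact transformation groups (M.\,H.\,A.~Newman, 1931). Its proof typically constructs a $\langle \tilde f\rangle$-invariant metric by averaging a given one over the finite cyclic group, and then uses a geometric argument bounding the diameters of orbits of points near a fixed open set to derive a contradiction. In the smooth setting one can avoid Newman's theorem entirely by linearizing the action near a fixed point and using the trivial observation that a finite-order element of $\GL(n,\R)$ which fixes an open set must be the identity; but for purely topological homeomorphisms Newman's theorem (or an equivalent tool such as a prime-by-prime application of $\mathbb{Z}/p$-Smith theory, combined with the fact that $\mathrm{Fix}(\tilde f)$ would be both $\mathbb{Z}/p$-acyclic and contain an open set of $S^n$) is essentially unavoidable.
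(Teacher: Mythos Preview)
Your argument is correct, but the route differs from the paper's. The paper works directly in $I^n$ without doubling: after passing to a prime-order power of $f$, it invokes P.\,A.\ Smith theory to conclude that $\fix\langle f\rangle$ has the mod $p$ homology of a point; since $\partial I^n \subset \fix\langle f\rangle$ carries a nontrivial $(n-1)$-cycle, that cycle must bound in $\fix\langle f\rangle$, forcing $\fix\langle f\rangle = I^n$. Your approach instead doubles to $S^n$, extends by the identity, and appeals to Newman's theorem that a nontrivial periodic homeomorphism of a connected manifold has nowhere-dense fixed set. Both arguments lean on a classical black box from transformation-group theory; Newman's theorem is arguably more elementary (it has a direct metric proof and needs no homological machinery), while the Smith-theory argument avoids the doubling step and yields a sharper homological statement about the fixed set. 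You yourself note the Smith-theoretic alternative at the end, so you have essentially identified both lines of attack.
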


We'll just sketch the proof, again referring to \cite{CR14} for details.  Suppose 
$f \in \Homeo(I^n, \partial I^n)$ has finite order, that is $f \ne \Id$ but $f^p = \Id$ for some $p$.  By possibly passing to a power of $f$ we can assume $p$ is prime.  Then P. A. Smith theory implies that the fixed point set $\fix\langle f \rangle$ of  the cyclic subgroup generated by $f$ has the mod $p$ homology of a point.  However $\partial I^n$ is a subset of $\fix\langle f \rangle$ and represents a nontrivial $(n-1)$-dimensional homology cycle.  Since it must bound, mod $p$, we conclude that $\fix\langle f \rangle$ is all of $I^n$, contradicting $f \ne \Id$. \qed


%
%
%
\chapter{Conradian left-orderings and local indicability}
\label{Conrad chapter}

In this chapter we will introduce a special type of left-ordering, called a Conradian left-ordering.  Conradian left-orderability is a more restrictive condition than left-orderability, in the sense that there are left-orderable groups that are not Conradian left-orderable, but weaker than bi-orderability since there are Conradian left-orderable groups that are not bi-orderable.  Our goal is to show that Conradian left-orderability is equivalent 
to local indicability.  Although this has appeared in the literature, our approach seems new.

To begin, we recall the notion of a convex jump, which was introduced in Chapter \ref{Holder chapter}.  Having fixed a left-ordering of a group, a \textit{convex jump} \index{convex jump} is a pair of distinct convex subgroups $(C, D)$ such that $C \subset D$ and there are no convex subgroups strictly between them.

\begin{problem}
\index{Klein bottle group} 
\label{klein_convex}
Recall the Klein bottle group $K= \langle x, y : xyx^{-1} = y^{-1} \rangle$ from Problem \ref{klein}.  Show that the subgroup $\langle y \rangle$ is convex in every left-ordering of $K$.
(Hint: Every element of $K$ can be written in the form $x^m y^n$ for some $m, n \in \Z$, by using the relations $y^{-1}x = xy$ and $yx = xy^{-1}$ to shuffle all the occurences of $x$ to one side).
\end{problem}

\begin{example}
\label{Klein_bottle_orderings}
By Problems \ref{klein_convex} and \ref{extension_generalization}, every left-ordering of $K$ is lexicographically defined from the short exact sequence
\[ 1 \rightarrow \langle y \rangle  \rightarrow K \rightarrow K/\langle y \rangle \rightarrow 1.
\]
Recall from  Problem \ref{klein} that $K/\langle y \rangle$ is infinite cyclic, so $K$ admits only four left-orderings since the kernel and the quotient each admit only two. In particular, in every left-ordering of $K$ the convex subgroups are $\{1\} \subset \langle y \rangle \subset K$.
Therefore  $(\{1 \}, \langle y \rangle )$ and $(\langle y \rangle, K)$ are the only convex jumps in a left-ordering of $K$. 
\end{example}

A Conradian left-ordering will be a kind of ordering built from Archimedean orderings of abelian quotients, similar to the way that the orderings of the Klein bottle group are built from the abelian subgroup $\langle y \rangle $ and the abelian quotient $K /\langle y \rangle$ in the previous example. To formalize this idea, we need the notion of a \index{Conradian jump}  \textit{Conradian jump}.  Suppose that $G$ is a left-ordered group with ordering $<$, that $(C,  D)$ is a convex jump such that $C$ is normal in $D$, and the natural ordering of $D/C$ is Archimedean.  In this case the convex jump $(C,D)$ is called a Conradian jump.  By Theorem \ref{theorem:Holder} there exists an order-preserving embedding 
\[ \tau: D/C \rightarrow (\mathbb{R}, +),
\]
we will refer to $\tau$ as the \textit{Conrad homomorphism} \index{conrad homomorphism} associated to the Conradian jump $(C, D)$.  First introduced by Conrad in \cite{Conrad59}, a \textit{Conradian left-ordering} \index{Conradian ordering} is a left-ordering all of whose convex jumps are Conradian.

\begin{example}
\index{Klein bottle group} 
By Example \ref{Klein_bottle_orderings}, every left-ordering of the Klein bottle group $K= \langle x, y : xyx^{-1} = y^{-1} \rangle$ has convex subgroups $\{1 \}, \langle y \rangle $, and $G$.  Since $\langle y \rangle$ and $G/\langle y \rangle$ are infinite cyclic groups, their orderings are always Archimedean, and so every convex jump of a left-ordering of the Klein bottle group is a Conradian jump. Therefore  every left-ordering of the Klein bottle group is Conradian.
\end{example}

\begin{example}
If $G$ is a finitely generated bi-ordered group, then the largest proper convex subgroup $C \subset G$ gives rise to an abelian quotient $G \rightarrow G/C \subset (\R, +)$.  By the arguments in Section \ref{section: BO implies LI}, the pair $(C, G)$ is a Conradian jump and the quotient map is the Conrad homomorphism.
\end{example}

\section{The defining property of a Conradian ordering}

The Conradian left-orderings of a group $G$ can also be characterized by asking that a certain inequality holds for all pairs of positive elements in $G$.   This recharacterization allows for a sort of ``local definition" of Conradian left-orderings, so that one can easily deduce that subgroups of Conradian left-ordered groups are Conradian left-ordered, etc.

\begin{theorem}\cite{Conrad59}
\label{theorem:conradequiv}
For a left-ordering $<$ of a group $G$, the following are equivalent:
\begin{enumerate}
\item $<$ is a Conradian left-ordering.
\item For every pair of positive elements $g, h \in G$ there exists $n>0$ such that $g<hg^n$.
\end{enumerate}
\end{theorem}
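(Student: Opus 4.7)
The plan is to prove the equivalence by treating the two directions separately, making essential use of the machinery of convex jumps and Conrad homomorphisms developed earlier in the chapter.

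For the forward direction $(1)\Rightarrow(2)$, I will fix positive $g,h$ and pass to the convex closure $D$ of $g$ together with the convex jump $(C,D)$ directly beneath $D$. By Conradianity, $C$ is normal in $D$ and there is an order-preserving Conrad homomorphism $\tau\colon D/C\hookrightarrow(\R,+)$ with $\tau(gC)>0$. I then split into three cases according to where $h$ sits. If $h\notin D$, then $h$ lies in a strictly larger convex subgroup with its own Conradian jump $(C',D')$, necessarily with $D\subseteq C'$; a computation in $D'/C'$ yields $\tau'(hg^nC')=\tau'(hC')>0=\tau'(gC')$, so $hg^n>g$ for any $n\ge1$. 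If $h\in D\setminus C$, the abelian group law in $D/C$ gives $\tau(hg^nC)=\tau(hC)+n\tau(gC)>\tau(gC)$ already at $n=1$. If $h\in C$, then $\tau(hC)=0$ and $\tau(hg^nC)=n\tau(gC)>\tau(gC)$ whenever $n\ge2$. In each case the strict inequality of distinct cosets transfers to $hg^n>g$ in $G$.

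For the reverse direction $(2)\Rightarrow(1)$, I will fix an arbitrary convex jump $(C,D)$ and show that it is a Conradian jump. Once I have established that $C$ is normal in $D$, the Archimedean property of $D/C$ comes essentially for free: a nontrivial proper convex subgroup of $D/C$ would pull back via the quotient map to a convex subgroup of $G$ strictly between $C$ and $D$, contradicting the hypothesis that $(C,D)$ is a convex jump. So the essential work lies in the normality step, which I will attack by contradiction: assume there exist positive $d\in D\setminus C$ and $c\in C$ with $d^{-1}cd\notin C$. Then $d^{-1}cd$ is positive (since $d^{-1}cd>1\iff c>1$) and lies in $D\setminus C$, so by convexity of $C$ it must exceed every element of $C$, and in particular every power of $c$. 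The plan is then to manufacture a test pair of positive elements $(g,h)$ from $c$, $d$, and the bad conjugate $d^{-1}cd$ for which the sequence $hg^n$ can never overtake $g$, violating $(2)$.

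The main obstacle will be precisely this normality step, since condition $(2)$ directly controls only the specific sequence $hg^n$, whereas non-normality is a statement about an arbitrary conjugate $d^{-1}cd$; I will need to engineer the pair $(g,h)$ so that its iterate genuinely detects the misbehaving conjugate. I anticipate taking $g$ equal to $d$ (or perhaps a power of $d$) and $h$ built from the bad conjugate $d^{-1}cd$ together with $c^{-1}$, then exploiting the fact that positive elements of $D\setminus C$ dominate every element of $C^+$ to trap $hg^n$ below $g$ for every $n$.
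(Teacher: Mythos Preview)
Your forward direction $(1)\Rightarrow(2)$ is correct and is essentially the argument the paper gives (packaged there as Problem~\ref{problem:conrad_homom_pos}): one compares the convex jumps of $g$ and $h$, and in each of the three nesting cases the Conrad homomorphism detects $g^{-1}hg^n>1$ for some $n\le 2$.

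Your reverse direction $(2)\Rightarrow(1)$, however, contains two genuine gaps, and in fact the logical structure is inverted relative to what actually works.

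\textbf{The Archimedean step is not free.} You claim that once $C\triangleleft D$, the Archimedean property of $D/C$ follows because a proper convex subgroup of $D/C$ would pull back to a convex subgroup strictly between $C$ and $D$. That argument correctly shows $D/C$ has no proper nontrivial convex subgroups, but for \emph{left}-orderings this does not imply Archimedean: the usual construction of a convex subgroup from a failure of the Archimedean property requires bi-invariance (the set $\{x: g^{-k}<x<g^k \text{ for some }k\}$ need not be closed under inversion or products in a merely left-ordered group). Indeed, any non-abelian left-ordered group with no proper convex subgroups is automatically non-Archimedean, and such orderings exist. The paper closes this gap with Lemma~\ref{lemma:conrad_archimedean}, whose proof (due to Goulet-Ouellet) is not routine: one builds a downward-closed set $S_0=\{x: x<g^n \text{ for some }n\}$, shows via condition~(2) that $S_0\subset yS_0$ for every positive $y$, and then proves that the stabiliser of $S_0$ under the natural action on such sets is convex and squeezed between $C$ and $D$, forcing it to equal $D$. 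Condition~(2) is used essentially here.

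\textbf{The normality sketch is flawed, and the dependency runs the other way.} Your claim ``$d^{-1}cd>1\iff c>1$'' is false in a left-ordering (it would require right-invariance; already in the Klein bottle group $xyx^{-1}=y^{-1}$ with $y>1$). More structurally, the paper proves normality (Lemma~\ref{lemma:conrad_normal}) \emph{using} Lemma~\ref{lemma:conrad_archimedean}: given positive $h\in D\setminus C$ and $g\in C$, if $hgh^{-1}\notin C$ one applies the Archimedean-type bound inside $D\setminus C$ to get $hg^nh^{-1}>h$ for some $n$, whence $1>h^{-1}>g^{-n}$ and convexity traps $h^{-1}\in C$, a contradiction. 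So the Archimedean-type lemma comes first and feeds the normality proof, not the reverse. Your plan to manufacture a pair $(g,h)$ violating~(2) directly from a bad conjugate, without such a tool, does not have an evident realisation.
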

\begin{proof}
Suppose that $<$ is a Conradian left-ordering of $G$, and let $g, h >1$ be given.  Define two subgroups 
\[ C_g = \bigcup_{\substack{\text{$C$ convex}\\\text{$g \notin C$}}} C \mbox{ and } D_g =  \bigcap_{\substack{\text{$D$ convex}\\\text{$g \in D$}}} D,
\]
both $C_g$ and $D_g$ are convex by Problem \ref{intersection_and_union}.
It follows from their definitions that $(C_g,  D_g)$ is a convex jump.  Similarly, we can define the convex jump $(C_h, D_h)$ corresponding to $h$.  Since $<$ is a Conradian left-ordering of $G$, for each of the convex jumps $(C_g, D_g)$ and $(C_h, D_h)$, there are corresponding Conrad homomorphisms
\[ \tau_g : D_g /C_g \rightarrow (\R, +) \mbox{ and } \tau_h : D_h /C_h \rightarrow (\R, +).
\]
\begin{problem}
\label{problem:conrad_homom_pos}
Show that at least one of the homomorphisms $\tau_g, \tau_h$ satisfies 
\[ \tau(g^{-1} h g^n) >0
\]
for $n >0$ sufficiently large.  Since Conrad homomorphisms are order-preserving, conclude (1) implies (2).
(Hint: Recall from \ref{prop:convex_intersection_and_union} that convex subgroups are linearly ordered by inclusion.  Therefore either $C_g \subset D_g \subseteq C_h \subset D_h$, or $C_h \subset D_h \subseteq C_g \subset D_g$, or $C_g = C_h$ and $D_g = D_h$.  In each of the first two cases argue that the Conrad homomorphism of the larger convex jump satisfies $ \tau(g^{-1} h g^n) >0$, in the third case either Conrad homomorphism will do.)
\end{problem}

Conversely, suppose that $<$ is an ordering of $G$ that satisfies (2).  To begin, we have a lemma:

\begin{lemma}
\label{lemma:conrad_archimedean}
Suppose that a nontrivial group $G$ admits a left-ordering $<$ that satisfies property (2), and that $(C, D)$ is a convex jump.  Then for every pair of positive elements $g, h$  in $D \setminus C$, there exists $n>0$ such that $g^n>h$.  In particular, if $(G,<)$ satisfies (2) and contains no proper, nontrivial convex subgroups, then $<$ is Archimedean.
\end{lemma}
\begin{proof}
The following proof is due to Herman Goulet-Ouellet.
Fix a positive element $g \in D \setminus C$ and set $S_0 = \{ x \in G \mid \mbox{there exists } n>0 \mbox{ such that } x < g^n \}$.  By property (2), if $y \in G$ is positive then for every $k>0$ there exists $n_k>0$ such that $g^k < y(g^k)^{n_k}$.  Now suppose $x \in S_0$, and choose $k>0$ such that $x<g^k$.  Left-multiplying by $y^{-1}$ gives $y^{-1}x<y^{-1}g^k < (g^k)^{n_k}$, which shows that $y^{-1}x \in S_0$.  We conclude that $S_0 \subset y S_0$, and this holds for every positive $y \in G$. We use this fact repeatedly in what follows.

Define $ \mathcal{X} = \{ S \subset G \mid x \in S \mbox{ and } y<x \Rightarrow y \in S \}$, the set $\mathcal{X}$ is linearly ordered by inclusion. Moreover for $f \in G$ and $S \subset \mathcal{X}$ the rule $f S = \{ fx \mid x \in S \}$ defines an effective, order-preserving action of $G$ on $\mathcal{X}$.  Denote the stabilizer of $S_0 \in \mathcal{X}$ by $G_{S_0}$.

First, we show that the stabilizer $G_{S_0}$ is convex in our given left-ordering.  To this end, suppose $x<y<z$, with $x, z \in G_0$.  Then $x<y$ implies $1<x^{-1}y$, and so $S_0 \subset (x^{-1}y)S_0$, from the first paragraph.  But then $xS_0 \subset yS_0$, and since $xS_0 = S_0$ this gives $S_0 \subset yS_0$.  A similar argument shows that $yS_0 \subset zS_0 = S_0$, so that $yS_0 = S_0$.

Now note that $g \in G_{S_0}$, which we will use to show that $C \subset G_{S_0}$.  Given $c \in C$, since $C$ is convex and $g>1$ with $g \notin C$, we have $c^{\pm 1} <g$.  Therefore $1<c^{\pm 1} g$, which gives $S_0 \subset c^{\pm 1} g S_0$, or equivalently, $c^{\pm 1}S_0 \subset g S_0 = S_0$.  On the other hand, if $c>1$ then $S_0 \subset cS_0$ and we conclude $cS_0 = S_0$; if $c<1$ then $c^{-1} >1$ and $S_0 \subset c^{-1}S_0$ gives $c^{-1}S_0 = S_0$.  In either case, $c \in G_{S_0}$ and we conclude $C \subset G_{S_0}$.

Finally we note that $G_{S_0} \subset D$, and since $(C, D)$ is a convex jump this forces $D = G_{S_0}$.  But then every positive $h \in D$ is in $G_{S_0}$, in particular since $hS_0 = S_0$ and $1 \in S_0$ we have $h \in S_0$.  This means there exists $n>0$ such that $h<g^n$, as required.
\end{proof}

The next lemma will allow us to finish the proof of Theorem \ref{theorem:conradequiv}.

\begin{lemma}
\label{lemma:conrad_normal}
Suppose that $G$ is a left-orderable group with ordering $<$ that satisfies (2).  If $(C,D)$ is a convex jump, then $C$ is normal in $D$.
\end{lemma}
\begin{proof}

Choose $h \in D \setminus C$ with $h>1$.  For an arbitrary $g \in C$, we wish to show that $hgh^{-1} \in C$.   Supposing that $hgh^{-1} \notin C$ and $hgh^{-1} >1$, by Lemma \ref{lemma:conrad_archimedean} there exists $n>0$ such that $hg^nh^{-1} >h$.  But then left multiplying by $g^{-n}h^{-1}$, the inequality  $hg^nh^{-1} >h$ gives $h^{-1} >g^{-n}$.  Recalling that $h>1$, we have $1>h^{-1} >g^{-n}$, and hence $h^{-1} \in C$ by convexity of $C$.  This is a contradiction.

On the other hand if $hgh^{-1} \notin C$ and $hgh^{-1} <1$, by a similar calculation we arrive at $1>h^{-1}>g^n$ so that again $h^{-1} \in C$, a contradiction.  We conclude that $hCh^{-1} \subset C$ for all $h>1$.

Now we show $C \subset hCh^{-1}$, by showing $h^{-1} C h \subset C$.   First we consider an arbitrary element $c \in C$ and prove a short lemma.  Note that $1<c^{-1} h$, so by Lemma  \ref{lemma:conrad_archimedean}, there exists $n$ such that $h < (c^{-1}h)^n$.  Therefore $1< h^{-1}(c^{-1}h)^n$, and $1< h^{-1}(c^{-1}h)^n h$ since it is a product of the positive elements $h^{-1}(c^{-1}h)^n$ and $h$.   Now $h^{-1} (c^{-1} h )h$ is positive since its $n$-th power is positive, so we take $h^{-1} (c^{-1} h )h>1$ and left-multiply by $h^{-1}ch$ to get $h^{-1} c h < h$.  Since $c \in C$ was arbitrary, we have as a lemma that $h^{-1} c h < h$ for all $c \in C$.

Now let $g \in C$ and suppose $h^{-1} g h>1$, the case of $h^{-1} g h<1$ is similar.  Since $g^{ k} \in C$ for all $k>0$, by the previous paragraph $h^{-1} g^{k} h < h$ for all $k>0$.   But $1<h^{-1} g^{k} h =(h^{-1} g h)^k < h$ for all $k>0$ forces $h^{-1} g h \in C$, by Lemma \ref{lemma:conrad_archimedean}.  We conclude that $h^{-1} C h \subset C$.

Thus $h C h^{-1} =C$  for all positive $h \in D\setminus C$, it follows that $C$ is normal.
\end{proof}

Now let $(C,D)$ be a convex jump relative to our ordering $<$ of $G$ that satisfies (2), we check that the jump is Conradian.  By Lemma \ref{lemma:conrad_normal} $C$ is normal in $D$, and since the ordering of $G$ satisfies (2),  the natural ordering of the quotient $D/C$ also satisfies (2) because the quotient map $D \rightarrow D/C$ is order preserving.  Moreover, since $(C,D)$ is a convex jump the quotient $D/C$ (under the natural ordering) admits no proper convex subgroups.   Thus we are in a position to apply Lemma \ref{lemma:conrad_archimedean} to the quotient ordering of $D/C$, which tells us that the natural ordering of the quotient is Archimedean.  Thus, $(C,D)$ is a Conradian jump, and $<$ is a Conradian left-ordering of $G$.
\end{proof}

Theorem \ref{theorem:conradequiv} also shows that every bi-ordering is Conradian, because in a bi-ordering $g<hg^n$ is true for every positive pair of elements $g, h \in G$ by  taking $n=1$.  We can also conclude that any subgroup $H$ of a Conradian ordered group $G$ admits a Conradian ordering, since property (2) is obviously inherited by subgroups, whereas this is not so obviously the case with property (1).

\begin{problem}
\label{finite index Conradian}
Let $G$ be a left-orderable group, and $H$ a finite index subgroup.  Show that any left-ordering of $G$ which restricts to a bi-ordering of $H$ is Conradian.  In particular, if $|G : H | =n$, then for all pairs of positive elements $g, h \in G$ we have $g < hg^n$.

As an application of this fact, it follows that no bi-ordering of the pure braid group $P_n$ can be extended to a left-ordering of the braid group $B_n$ for $n \geq 5$.  What about for $n=3,4$?
\end{problem}

\begin{problem}
Show that the Dehornoy ordering  of $B_n$ is not Conradian for $n \geq 3$ by finding positive braids $\beta_1$ and $\beta_2$ that satisfy $\beta_1>_D \beta_2 \beta_1^2$.
\end{problem}

We see that $B_3$ and $B_4$ are examples of groups which admit non-Conradian left orderings, but also, because of Corollary 
\ref{conradian_iff_LI} below, they have Conradian orderings too, as they are locally indicable, according to Theorems \ref{B_3 LI} and \ref{B_4 LI}.

\section{Characterizations of Conradian left-orderability}

It is a remarkable fact that $n=2$ suffices in Theorem \ref{theorem:conradequiv} \cite{Navas10a}.  This can be deduced from Problem \ref{problem:conrad_homom_pos}, where one verifies that $\tau(g^{-1} h g^n) >1 $ for sufficiently large $n$.  Since $\tau$ is order-preserving and its image lies in $\R$, both $\tau(h)$ and $\tau(g)$ are non-negative real numbers so $n =2$ is all we need.  One can also argue directly from property (2) in Theorem \ref{theorem:conradequiv}  that $g<hg^2$ for all $g,h >1$.

\begin{proposition}
 \cite[Proposition 3.7]{Navas10a}
Suppose that $<$ is a Conradian left-ordering of a group $G$.  If $g, h >1$ then $g< hg^2$.
\end{proposition}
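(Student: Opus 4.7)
The plan is to refine the case analysis in Problem \ref{problem:conrad_homom_pos} so as to track the exponent of $g$ precisely. Given positive elements $g,h$ in a Conradian left-ordered group $G$, I would form the two convex jumps $(C_g, D_g)$ and $(C_h, D_h)$, where $D_x$ is the smallest convex subgroup containing $x$ and $C_x$ is the union of all convex subgroups not containing $x$. Because convex subgroups of $G$ are linearly ordered by inclusion (Problem \ref{prop:convex_intersection_and_union}), exactly one of three possibilities can hold: (i) $D_g \subseteq C_h$, (ii) $D_h \subseteq C_g$, or (iii) $C_g = C_h$ and $D_g = D_h$. In each case I would pick the Conrad homomorphism $\tau$ of the relevant jump and verify that $\tau(g^{-1}hg^2 \cdot C) > 0$; since $\tau$ is order-preserving into $(\mathbb{R},+)$, this forces $g^{-1}hg^2 > c$ for every $c$ in the corresponding $C$, and in particular $g^{-1}hg^2 > 1$, which is the desired inequality $g < hg^2$.

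The payoff in each case comes from the fact that the quotient $D/C$ is abelian, so one may add $\tau$-values freely. In case (i), both $g$ and $g^{-1}$ lie in $C_h$, hence $g^{-1}hg^2 \equiv h \pmod{C_h}$ and $\tau_h(g^{-1}hg^2) = \tau_h(h) > 0$. In case (ii), $h$ lies in $C_g$, so $g^{-1}hg^2 \equiv g \pmod{C_g}$ and $\tau_g(g^{-1}hg^2) = \tau_g(g) > 0$. In case (iii) both $g$ and $h$ have strictly positive image under the common Conrad homomorphism, and the abelian arithmetic gives
\[
\tau(g^{-1}hg^2) \;=\; -\tau(g)+\tau(h)+2\tau(g) \;=\; \tau(g)+\tau(h) \;>\; 0.
\]
Thus in all three cases $\tau$ takes a positive value on $g^{-1}hg^2 \cdot C$, completing the proof.

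The main obstacle I anticipate is case (iii): one must explain carefully why, when $g$ and $h$ share the same convex jump, the element $g^{-1}hg^2$ still lies in $D\setminus C$, so that $\tau$ is defined on its coset and the additive formula applies. This hinges on $C$ being normal in $D$ (Lemma \ref{lemma:conrad_normal}) together with the fact that $D/C \hookrightarrow \mathbb{R}$. Once this is pinned down, the choice of exponent $2$ becomes transparent: it is precisely the exponent needed to overcome the single negative contribution $\tau(g^{-1})=-\tau(g)$ introduced by conjugation in case (iii), while cases (i) and (ii) would in fact work already with $n=1$. So the entire content of sharpening Theorem \ref{theorem:conradequiv} from some unspecified $n$ down to $n=2$ lives in the commutative arithmetic inside the unique Conrad jump that can simultaneously see both $g$ and $h$.
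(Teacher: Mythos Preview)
Your argument is correct, but it is not the proof the paper gives for this proposition. The paper's proof is a direct contradiction using only property~(2) of Theorem~\ref{theorem:conradequiv}: assuming $hg^2<g$, one first deduces $g^{-1}hg<1$ and hence $hg<g$; then, setting $x=hg$, an iterative chain of inequalities using $hg<g$ and $hg^2<g$ shows that $hx^n<x$ for all $n\ge 0$, contradicting the Conradian condition for the positive pair $h,x$. No convex jumps or Conrad homomorphisms appear.

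Your route---analyzing the three possible configurations of the jumps $(C_g,D_g)$ and $(C_h,D_h)$ and computing $\tau(g^{-1}hg^2)$ in $D/C$---is exactly the approach the paper alludes to in the paragraph \emph{preceding} the proposition (``This can be deduced from Problem~\ref{problem:conrad_homom_pos}\dots so $n=2$ is all we need''), but then sets aside in favor of the direct argument. Your version has the advantage of making transparent \emph{why} $n=2$ is the threshold: it is precisely what is needed to cancel the single $-\tau(g)$ in the abelian arithmetic of case~(iii). The paper's proof is more self-contained in that it uses nothing beyond the raw inequality definition of ``Conradian''.

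One small comment on your stated obstacle: you do not need to verify separately that $g^{-1}hg^2\in D\setminus C$ in case~(iii). It suffices that $g^{-1}hg^2\in D$ (clear, since $g,h\in D$), so that its coset in $D/C$ is defined; once you compute $\tau(g^{-1}hg^2\,C)=\tau(g)+\tau(h)>0$, membership in $D\setminus C$ and positivity in $G$ follow automatically, since elements of $C$ have $\tau$-value zero.
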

\begin{proof}
For contradiction, suppose that $hg^2 <g$.  Then $g^{-1} h g^2 = (g^{-1} h g) g < 1$, so $g^{-1} h g$ must be negative since $g$ is positive.  Thus we also have $g^{-1} h g <1$, or $hg <g$.
Now consider the positive elements $h$ and $x = hg$.  For all $n \geq 0$ we calculate
\begin{eqnarray}
hx^n &=&  h(hg)^{n-2}(hg)(hg)   \nonumber \\
   &< & h(hg)^{n-2}(hg)g  \nonumber \\
   & = &h(hg)^{n-2}(hg^2) \nonumber \\
 &<& h(hg)^{n-2}g  \nonumber \\
 & \cdots  &   \nonumber \\
 &<& hg =x  \nonumber
\end{eqnarray}
Thus the positive elements $h, x$ do not satisfy the Conradian condition, a contradiction.
\end{proof}

Using $n=2$, one can recharacterize Conradian left-orderability in way similar to the recharacterization of left-orderability given in Theorem \ref{loequiv}.   To begin with, suppose that $G$ is generated by a set $S \subset G$, and let $G_k$ denote all the elements of $G$ that can be expressed as a word of length $k$ or less in these generators.  Define a \textit{Conradian proper $k$-partition} of $G$ to be a proper $k$-partition $Q \subset G_k$ that also satisfies: if $g, h \in Q$ and $g^{-1} h g^2 \in G_k$, then $g^{-1} h g^2 \in Q$.

With the idea of a Conradian proper $k$-partition in place of a proper $k$-partition, one can prove Conradian versions of most of the theorems in Section 1.6.  With the exception of the Burns-Hale theorem, very few modifications are required.

\begin{theorem}
\label{conradian_k_partition}
If $G$ is generated by $S \subset G$, then $G$ is Conradian left-orderable if and only if there is a Conradian proper $k$-partition of $G_k$ for all $k \geq 1$.
\end{theorem}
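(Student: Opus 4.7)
The plan is to mimic the proof of Theorem \ref{partition} almost verbatim, replacing ``proper $k$-partition'' by ``Conradian proper $k$-partition'' throughout, and exploiting Navas's proposition (just proved above) that $n=2$ always suffices as a witness for the Conradian condition. That proposition is precisely what makes the finite combinatorial condition in the definition of a Conradian proper $k$-partition faithful to the global Conradian property.

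For the ``only if'' direction, I would fix a Conradian left-ordering $<$ of $G$ with positive cone $P$ and show that $Q_k := P \cap G_k$ is a Conradian proper $k$-partition. The three proper $k$-partition axioms are inherited from $P$ exactly as in the discussion preceding Theorem \ref{partition}. For the extra Conradian closure condition, suppose $g,h \in Q_k$ and $g^{-1}hg^2 \in G_k$; since $g,h > 1$, Navas's proposition gives $g < hg^2$, hence $g^{-1}hg^2 > 1$, i.e.\ $g^{-1}hg^2 \in P \cap G_k = Q_k$.

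For the ``if'' direction, assume Conradian proper $k$-partitions of $G_k$ exist for all $k \geq 1$, and set
\[
\mathcal{P}_k^c \;:=\; \{\, P \subset G : P \cap G_k \text{ is a Conradian proper } k\text{-partition of } G_k\,\}.
\]
Each condition defining $\mathcal{P}_k^c$ concerns only membership in $P$ of finitely-many specified elements (since all quantifiers are restricted to the ball $G_k$), so $\mathcal{P}_k^c$ is a closed subset of $\mathcal{P}(G)$ in the Tychonoff topology, and it is nonempty by hypothesis. Clearly $\mathcal{P}_{k+1}^c \subset \mathcal{P}_k^c$, so by compactness of $\mathcal{P}(G)$,
\[
\bigcap_{k=1}^\infty \mathcal{P}_k^c \;\neq\; \emptyset.
\]

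Finally, I would check that any $P$ in this intersection is the positive cone of a Conradian left-ordering of $G$. That $P$ is a sub-semigroup, satisfies $P \cap P^{-1} = \emptyset$, and satisfies $P \cup P^{-1} = G \setminus \{1\}$ all follow as in Theorem \ref{partition}, by observing that for any $g,h \in G_k$ one has $gh \in G_{2k}$ and applying the $2k$-partition axioms. For the Conradian property, given $g,h \in P$ with $g \in G_a$ and $h \in G_b$, choose $k = 3a+b$ so that $g, h, g^{-1}hg^2$ all lie in $G_k$; the Conradian closure clause inside $P \cap G_k$ then forces $g^{-1}hg^2 \in P$, i.e.\ $g < hg^2$. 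Theorem \ref{theorem:conradequiv} then certifies that the resulting left-ordering is Conradian. The only subtle point in this entire argument is the ``word-length bookkeeping'' in the last step: one must be careful to choose $k$ large enough to accommodate the longer word $g^{-1}hg^2$ rather than just $g$ and $h$ themselves. Beyond that, the proof is a routine compactness transfer completely parallel to the proof of Theorem \ref{partition}.
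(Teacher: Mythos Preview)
Your proposal is correct and follows essentially the same approach as the paper: the paper itself defers the proof to Problem~\ref{problem_conradian_k_partition}, which instructs the reader to show that the sets $\mathcal{C}_k$ (your $\mathcal{P}_k^c$) are closed in $\mathcal{P}(G)$ and then to follow the proof of Theorem~\ref{partition} verbatim. Your treatment of the ``only if'' direction via Navas's $n=2$ proposition and the word-length bookkeeping for the Conradian clause in the ``if'' direction are exactly the details the paper expects the reader to supply.
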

\begin{problem}
\label{problem_conradian_k_partition}
Let $\mathcal{C}_k$ denote all the subsets of $G$ whose intersection with $G_k$ is a Conradian proper $k$-partition.  Prove that $\mathcal{C}_k$ is a closed subset of the power set $\mathcal{P}(G)$, and then prove Theorem \ref{conradian_k_partition} by following the proof of Theorem \ref{partition}.  
\end{problem}

It is in Problem \ref{problem_conradian_k_partition} that $n=2$ is really essential. If one tries to use the `standard' definition of a Conradian ordering, namely for every pair of positive elements $g, h \in G$ there exists $n>0$ such that $g<hg^n$, a problem arises when trying to show that $\mathcal{C}_k$ is closed.

\begin{problem}
Recall the topology on the power set $\mathcal{P}(G)$ introduced in Section ~\ref{topology on power set}, whose subbasis consists of sets $U_x$ and $U_x^c$. Show that a set $S$ in $\mathcal{P}(G)$ that violates the condition: 
\[ \mbox{for all pairs of positive elements $g, h \in S$ 
 there exists $n>0$ such that $g^{-1}hg^n \in S$}
\]
naturally belongs to an infinite intersection of subbasic open sets, by rewording the negation of the condition above in terms of the sets $U_x$ and $U_x^c$.  As an infinite intersection of subbasic open sets need not be open, this does not provide an open neighbourhood of $S$.
\end{problem}

In the case of Conradian left-orderings, as in the case of left-orderings, it is not necessary that the group under discussion be finitely generated.
\begin{theorem}
A group $G$ is Conradian left-orderable if and only if each of its finitely generated subgroups is Conradian left-orderable.
\end{theorem}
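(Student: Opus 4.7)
The approach mimics the proof of Theorem \ref{fgLO}, substituting the compactness argument on the power set $\mathcal{P}(G)$ with the Conradian analogue. The ``only if'' direction is immediate: if $<$ is a Conradian left-ordering of $G$ and $H \leq G$, then the restriction of $<$ to $H$ satisfies property (2) of Theorem \ref{theorem:conradequiv} on $H$, since that property is formulated element-wise. So the work is all in the converse.

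For the converse, suppose every finitely generated subgroup of $G$ is Conradian left-orderable. The critical technical input is that, thanks to the sharpening with $n=2$, the Conradian condition becomes \emph{finitary}: a positive cone $P \subset G$ gives a Conradian left-ordering of $G$ if and only if (a) $P$ is a sub-semigroup with $P \cap P^{-1} = \emptyset$ and $P \cup P^{-1} = G \setminus\{1\}$, and (b) for every $g, h \in P$, one has $g^{-1}hg^2 \in P$. Each of these conditions involves only finitely many group elements at a time, so for each finite subset $F \subset G$ the set
$$\mathcal{Q}_C(F) = \{Q \subset G : Q \cap \langle F \rangle \text{ is the positive cone of a Conradian left-ordering of } \langle F \rangle\}$$
is a closed subset of $\mathcal{P}(G)$, by exactly the same kind of argument used in Example \ref{semigroupsclosed}: its complement is a union of sets of the form $U_x \cap U_y \cap U_z^c$ with $z \in \{xy, x^{-1}yx^2\}$ and the sets $U_g \cap U_{g^{-1}}$, intersected with the closed subset $\mathcal{P}(\langle F \rangle) \subset \mathcal{P}(G)$.

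Next, each $\mathcal{Q}_C(F)$ is nonempty by hypothesis (take any Conradian positive cone of $\langle F \rangle$ and extend it arbitrarily to a subset of $G$), and the family $\{\mathcal{Q}_C(F)\}_{F \subset G \text{ finite}}$ has the finite intersection property because
$$\mathcal{Q}_C(F_1 \cup \cdots \cup F_n) \subset \mathcal{Q}_C(F_1) \cap \cdots \cap \mathcal{Q}_C(F_n),$$
and the left-hand side is nonempty. By compactness of $\mathcal{P}(G)$ there is some
$$P \in \bigcap_{F \subset G \text{ finite}} \mathcal{Q}_C(F).$$

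Finally, one checks that such a $P$ is actually a Conradian positive cone of $G$. For any $g, h \in G$, apply the defining property of $P$ to the finite subset $F = \{g, h, gh, g^{-1}, h^{-1}, g^{-1}hg^2\}$: then $P \cap \langle F \rangle$ is a Conradian positive cone on $\langle F \rangle$, which immediately yields the required properties of $P$ at the elements $g, h$ (semigroup closure, trichotomy, and $g^{-1}hg^2 \in P$ whenever $g, h \in P$). Since $g, h$ were arbitrary, $P$ is a Conradian positive cone for $G$.

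\textbf{Expected obstacle.} The only subtle point is verifying that $\mathcal{Q}_C(F)$ is closed, and this is precisely where the reduction to $n=2$ is essential: the ``original'' Conradian condition ``there exists $n > 0$ such that $g < hg^n$'' is an existential statement over an infinite set and does not cut out a closed subset of $\mathcal{P}(G)$, as noted after Problem \ref{problem_conradian_k_partition}. Once we invoke the strengthening to $n=2$, the condition becomes a conjunction of statements each involving finitely many group elements, and the compactness argument proceeds exactly as in the left-orderable case.
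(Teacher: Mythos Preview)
Your proof is correct and follows exactly the approach the paper intends: the paper states that the proof ``is more or less identical to the proof of Theorem \ref{fgLO}, so we omit it,'' and your compactness argument on $\mathcal{P}(G)$ using the $n=2$ Conradian condition is precisely that adaptation. (Your description of the complement of $\mathcal{Q}_C(F)$ is slightly garbled---the phrase ``intersected with the closed subset $\mathcal{P}(\langle F\rangle)$'' is misplaced, and you omit the open sets $U_g^c \cap U_{g^{-1}}^c$ coming from trichotomy failure---but these are cosmetic and do not affect the argument.)
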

The proof of this theorem is more or less identical to the proof of Theorem \ref{fgLO}, so we omit it.  

For a subset $X \subset G$, let $C(X)$ denote the smallest subsemigroup of $G$ satisfying $X \subset C(X)$ and for all $x, y \in C(X)$, $x^{-1}yx^2 \in C(X)$.   The subsemigroups $C(X)$ of a group $G$ will replace the the standard subsemigroups $S(X)$ of $G$ in Theorems \ref{loequiv} and \ref{burnshale}, which we prove now in the Conradian case.

\begin{theorem} \cite{Navas10a}
\label{conradequiv}
A group $G$ admits a Conradian left-ordering if and only if for every finite subset  $ \{x_1, \dots , x_n \}$ of $G$ which does not contain the identity, there exist $\e_i = \pm 1$ such that $1 \not\in C( \{x_1^{\e_1}, \dots , x_n^{\e_n} \} )$.\end{theorem}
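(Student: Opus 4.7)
The plan is to mirror the proof strategy of Theorem \ref{loequiv}, with two adjustments: the role of ordinary subsemigroups $S(X)$ is replaced by the Conradian subsemigroups $C(X)$, and at the final step the Conradian version of Theorem \ref{partition} (namely Theorem \ref{conradian_k_partition}) replaces Theorem \ref{partition}. The easy direction will boil down to showing that the positive cone of any Conradian left-ordering is stable under the operation $(g,h)\mapsto g^{-1}hg^2$. The hard direction will require first reducing to the finitely generated case, then for each $k$ building a Conradian proper $k$-partition of $G_k$ out of $C(\{x_1^{\epsilon_1},\dots,x_n^{\epsilon_n}\})$.

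For the forward direction, assume $G$ has a Conradian left-ordering $<$ with positive cone $P$. Given a finite $\{x_1,\dots,x_n\}\subset G\setminus\{1\}$, simply choose $\epsilon_i=\pm1$ so that $x_i^{\epsilon_i}\in P$. I would then verify that $P$ is closed under the Conradian operation: if $g,h\in P$, the strengthened Conradian inequality (Navas's refinement with $n=2$, which we can invoke from the text immediately preceding the theorem) gives $g<hg^2$, i.e.\ $g^{-1}hg^2\in P$. Since $P$ is also a subsemigroup, it contains $C(\{x_1^{\epsilon_1},\dots,x_n^{\epsilon_n}\})$. As $1\notin P$, the forward direction follows.

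For the converse, first pass to a finitely generated $G$: the natural Conradian analog of Theorem \ref{fgLO} (whose proof is identical once $\mathcal{Q}(F)$ is replaced by the closed set of subsets restricting to Conradian positive cones of $\langle F\rangle$) reduces the problem to showing that every finitely generated $G$ satisfying the finite-subset hypothesis is Conradian left-orderable. Fix a finite generating set $S\subset G$ and for each $k\ge 1$ enumerate $G_k\setminus\{1\}=\{x_1,\dots,x_n\}$. By hypothesis, pick signs $\epsilon_i$ with $1\notin C(\{x_1^{\epsilon_1},\dots,x_n^{\epsilon_n}\})$, and set
\[ Q \;=\; G_k \cap C(\{x_1^{\epsilon_1},\dots,x_n^{\epsilon_n}\}). \]
I would then check the four defining properties of a Conradian proper $k$-partition directly: closure under $(g,h)\mapsto gh$ whenever $gh\in G_k$, and closure under $(g,h)\mapsto g^{-1}hg^2$ whenever this lies in $G_k$, follow at once from the defining closure properties of $C(\cdot)$; the partition property $Q\cup Q^{-1}=G_k\setminus\{1\}$ is immediate from the choice of signs since every $x_i^{\pm\epsilon_i}$ lies in $Q\cup Q^{-1}$; and finally $Q\cap Q^{-1}=\emptyset$ because if $g$ and $g^{-1}$ both lay in $C(\{x_1^{\epsilon_1},\dots,x_n^{\epsilon_n}\})$, closure under multiplication would force $1\in C(\{x_1^{\epsilon_1},\dots,x_n^{\epsilon_n}\})$, contradicting the hypothesis. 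An appeal to Theorem \ref{conradian_k_partition} then delivers a Conradian left-ordering of $G$.

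The main obstacle, and the reason the theorem is not entirely routine, is the verification that $Q\cap Q^{-1}=\emptyset$: this is precisely the step where the sign choice given by the hypothesis must do real work, and it is also the step that forces us to use the \emph{Conradian} closure $C(X)$ rather than the ordinary semigroup $S(X)$, since one must ensure that neither products nor the Conradian operations applied to the chosen generators and their twisted conjugates can ever reach $1$. Once this is in hand, the remaining work is bookkeeping inside Theorem \ref{conradian_k_partition}.
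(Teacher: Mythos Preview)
Your proposal is correct and follows essentially the same route as the paper: both directions use the positive cone $P$ (closed under products and the Conradian operation $g^{-1}hg^2$) to contain $C(\{x_i^{\epsilon_i}\})$ in the forward direction, and in the converse take $Q = G_k \cap C(\{x_i^{\epsilon_i}\})$ as a Conradian proper $k$-partition and invoke Theorem~\ref{conradian_k_partition}. Your explicit reduction to the finitely generated case is a detail the paper leaves implicit, and your remark that $Q\cap Q^{-1}=\emptyset$ is the ``main obstacle'' overstates its difficulty---it follows immediately from $1\notin C(\cdot)$ and closure under multiplication.
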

\begin{proof}
For one direction, suppose that $G$ admits a Conradian left-ordering with positive cone $P$.  Given a finite subset $ \{x_1, \dots , x_n \}$ of $G$ that does not contain the identity, choose exponents $\e_i = \pm 1$ so that $x_i^{\e_i} \in P$ for all $i$.  Then $P$ is a subsemigroup of $G$ containing $\{x_1^{\e_1}, \dots , x_n^{\e_n} \} $ but not $1$, and for all  $x, y \in P$, $x^{-1}yx^2 \in P$.  Since $C( \{x_1^{\e_1}, \dots , x_n^{\e_n} \} )$ is the smallest semigroup with these properties, we conclude $C( \{x_1^{\e_1}, \dots , x_n^{\e_n} \} ) \subset P$,  so that $1$ is not in $C( \{x_1^{\e_1}, \dots , x_n^{\e_n} \} )$.

For the other direction, consider the finite subset $G_k \setminus \{1 \}$ of  $G$, which we'll enumerate as $\{ x_1, \ldots, x_n \}$.  Choose exponents $\epsilon_i = \pm 1$ so that $1 \notin C(\{ x_1^{\epsilon_1}, \ldots, x_n^{\epsilon_n} \})$.  Then $C(\{ x_1^{\epsilon_1}, \ldots, x_n^{\epsilon_n} \}) \cap G_k$ is a Conradian proper $k$-partition.  It follows that $G$ is Conradian left-orderable, by Theorem \ref{conradian_k_partition}.
 \end{proof}

\index{Burns-Hale theorem}
\begin{theorem} {\rm(Burns-Hale for Conradian left-orderable groups, \cite{Navas10a})}
\label{theorem:conradian_burns_hale}
A group $G$ admits a Conradian left-ordering if and only if for every finitely generated subgroup $H \neq \{1 \}$ of $G$, there exists a Conradian-left-orderable group $K$ and a nontrivial homomorphism $H \rightarrow K$.
\end{theorem}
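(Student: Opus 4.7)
The easy direction is almost verbatim to its counterpart in Theorem \ref{burnshale}: given a Conradian left-ordering of $G$, any nontrivial finitely generated subgroup $H$ inherits a Conradian left-ordering (property (2) of Theorem \ref{theorem:conradequiv} passes to subgroups immediately), and the identity map $H \to H$ is the required nontrivial homomorphism to a Conradian left-orderable group.

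For the converse, the plan is to imitate the proof of the ordinary Burns--Hale theorem, but with Theorem \ref{conradequiv} playing the role of Theorem \ref{loequiv}. Concretely, I would prove by induction on $n$ the following claim: \emph{for every finite subset $\{x_1,\dots,x_n\}\subset G\setminus\{1\}$ there exist $\varepsilon_i=\pm 1$ such that $1\notin C(\{x_1^{\varepsilon_1},\dots,x_n^{\varepsilon_n}\})$.} The base case $n=1$ works because the hypothesis gives a nontrivial homomorphism $\langle x_1\rangle\to K$ to a Conradian left-orderable (hence torsion-free) group $K$, so we may pick $\varepsilon_1$ so that $h(x_1^{\varepsilon_1})\in P_K$; then, because the Conradian defining inequality forces $x^{-1}yx^2>1$ for positive $x,y$, the positive cone $P_K$ is closed under both multiplication and the operation $(x,y)\mapsto x^{-1}yx^2$, and so contains $h(C(\{x_1^{\varepsilon_1}\}))$, whence $1\notin C(\{x_1^{\varepsilon_1}\})$.

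For the inductive step, apply the hypothesis to $H=\langle x_1,\dots,x_n\rangle$ to obtain a nontrivial homomorphism $h\colon H\to L$ with $L$ Conradian left-ordered with positive cone $P_L$. Reorder so that $h(x_i)\neq 1$ for $i\le r$ and $h(x_i)=1$ for $i>r$, where $1\le r\le n$. Choose $\varepsilon_1,\dots,\varepsilon_r$ so that $h(x_i^{\varepsilon_i})\in P_L$, and apply the induction hypothesis to $\{x_{r+1},\dots,x_n\}$ (strictly fewer elements) to choose $\varepsilon_{r+1},\dots,\varepsilon_n$ with $1\notin C(\{x_{r+1}^{\varepsilon_{r+1}},\dots,x_n^{\varepsilon_n}\})$. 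Writing $Y=\{x_i^{\varepsilon_i}:i\le r\}$ and $Z=\{x_i^{\varepsilon_i}:i>r\}$, the goal is to show $1\notin C(Y\cup Z)$.

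The key technical lemma, which I anticipate as the main obstacle, is the following \textbf{dichotomy}: for every $w\in C(Y\cup Z)$, either $w\in C(Z)$ or $h(w)>1$ in $L$. I would prove this by induction on the depth of the Conradian expression for $w$. The base case (leaves $w\in Y\cup Z$) is immediate. For the inductive step, write $w=w_1w_2$ or $w=w_1^{-1}w_2w_1^2$; if both $w_1,w_2\in C(Z)$, then so is $w$, and otherwise the inductive hypothesis gives $h(w_j)\in P_L$ for at least one $j\in\{1,2\}$ while the other satisfies $h(w_{3-j})\in P_L\cup\{1\}$. In every resulting case one checks directly that the product or the Conradian combination lies in $P_L$: for instance $h(w_1)^{-1}h(w_2)h(w_1)^2>1$ whenever $h(w_1)>1$ and $h(w_2)\ge 1$, using that $P_L$ is closed under the Conradian operation (and the trivial absorption identities when one factor is $1$). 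Given the dichotomy, $w=1$ would force either $w\in C(Z)$---contradicting the inductive choice of $\varepsilon_{r+1},\dots,\varepsilon_n$---or $h(w)>1$, so in particular $h(w)\neq 1$, again contradicting $w=1$. Hence $1\notin C(Y\cup Z)$, completing the induction. Theorem \ref{conradequiv} then yields the desired Conradian left-ordering of $G$.
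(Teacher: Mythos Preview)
Your proof is correct and follows essentially the same approach as the paper's. The paper isolates your dichotomy as a standalone lemma (Lemma~\ref{conrad_BH_lemma}), stated in the equivalent form ``$h(g)\ge 1$ for all $g\in C(X\cup Y)$, with equality iff $g\in C(Y)$,'' and proves it by stratifying $C(W)$ as an increasing union $\bigcup_i C_i(W)$ of iterated closures under multiplication and the Conradian operation, then inducting on the level $i$; this is the same content as your induction on expression depth. The rest of the argument---the induction on $n$ feeding into Theorem~\ref{conradequiv}---is identical.
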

\begin{proof}
One direction is clear, since every subgroup of a Conradian left-ordered group is Conradian left-ordered.

For the other direction, we need to prepare a technical lemma in order to apply Theorem \ref{conradequiv}.

\begin{lemma}
\label{conrad_BH_lemma}
Let $X, Y$ denote subsets of $G$ that do not contain the identity.  If $h:\langle X, Y \rangle \rightarrow K$ is a homomorphism onto a Conradian left-ordered group $K$ satisfying $h(x) > 1$ for all $x \in X$ and $h(y) = 1$ for all $y \in Y$, then for all $g \in C(X \cup Y)$ we have $h(g ) \geq 1$, with $h(g) =1$ if and only if $g \in C(Y)$.
\end{lemma}
\begin{proof}
For any subset $W$ of $G$, define $S_C(W)$ to be the subsemigroup generated by elements of $W$, and elements of the form $v^{-1} u v^2$ with $u, v \in W$.   Now set $C_0(W) = S_C(W)$, and for $i > 0$ define $C_i = S_C( C_{i-1}(W))$.  The union $\bigcup_{i=0}^{\infty} C_i(W)$ is a subsemigroup of $G$ containing $W$, and for all $u, v \in \bigcup_{i=1}^{\infty} C_i(W)$, the element $v^{-1} u v^2$ is in the union as well.  Thus 
\[C(W) \subset \bigcup_{i=1}^{\infty} C_i(W)
\]
since $C(W)$ is the smallest subsemigroup of $G$ with these properties, and in fact $C(W) = \bigcup_{i=1}^{\infty} C_i(W)$.  We now proceed to prove the lemma by induction.

As a base case, let $g \in S_C(X \cup Y) = C_0(X \cup Y)$ be given.  By definition of $ S_C(X \cup Y)$, $g$ is a word in positive powers of the generating set $W$ where
\[ W = X \cup Y \cup \{ v^{-1} u v^2 : u, v \in X \cup Y \}.
\]
 Since $h(w) \geq 1$ for all $w \in W$, it follows that $h(g) \geq 1$.  Assuming $h(g) = 1$, then $g$ cannot contain any occurences of $w \in X$, or $w = v^{-1} u v^2$ with either $u \in X$ or $v \in X$, for in either of these cases $h(w) >1$.  Thus $g$ contains only occurences of $w \in Y$, and $w =  v^{-1} u v^2$ with $u, v \in Y$.  Therefore $g \in C(Y)$.

Now assume that for all $w \in C_i(X \cup Y)$, $h(w) \geq 1$ with $h(w) =1$ if and only if $w \in C_i(Y)$.   
Let $g \in C_{i+1}(X \cup Y)$ be given, $g$ is a word in terms $w \in C_i(X \cup Y)$ and $w = v^{-1} u v^2$ with $u, v \in C_i(X \cup Y)$.  In both cases, $h(w) \geq 1$, so $h(g) \geq 1$.

Assuming $h(g) =1$, we consider the two types of terms $w$ appearing in the expression for $g$.  First, if $w \in C_i(X \cup Y)$ then $h(w) = 1$ implies $w \in C_i(Y) \subset C_{i+1}(Y)$ by the induction assumption.  Second, if $w = v^{-1} u v^2$ with $u, v \in C_i(X \cup Y)$ then $h(w) =1$ implies $h(u) = h(v) =1$ so that $u, v \in  C_i(Y)$.   Thus $w = v^{-1} u v^2 \in C_{i+1}(Y)$, and it follows that $g \in C_{i+1}(Y)$. The lemma now follows by induction.
\end{proof}
Now we can complete the proof of Theorem \ref{theorem:conradian_burns_hale} by showing that the subgroup condition implies that $G$ is Conradian left-orderable, and using Theorem \ref{conradequiv}.  We proceed by induction on $n$, as in the proof of the Burns-Hale theorem.  The claim to prove is that for every finite subset $\{ x_1, \ldots , x_n \}$ of $G$, there are exponents $\epsilon_i = \pm 1$ such that $1 \notin C(\{ x_1, \ldots , x_n \})$.

If $n=1$ then $C(\{ x_1 \}) $ is equal to $S(\{ x_1 \})$, and this semigroup only contains the identity if $x_1$ has finite order.  This is not possible, because by assumption there is a nontrivial homomorphism from the cyclic group $\langle x_1 \rangle$ onto a Conradian left-orderable group.

Now suppose that the claim holds for all finite sets having fewer than $n$ elements and which do not contain the identity, and consider a set  $\{ x_1, \ldots , x_n \}$ of $n$ non-identity elements in $G$.  There exists a nontrivial homomorphism
\[ h : \langle x_1, \ldots, x_n \rangle \rightarrow K
\]
where $(K, \prec)$ is a Conradian left-orderable group.  Index the elements $\{ x_1, \ldots , x_n \}$ so that 
$$h(x_i) 
  \begin{cases}
   \ne 1 \text{ if } i= 1, \dots, r, \\
   = 1  \text{ if } r < i \le n.
   \end{cases}
$$   
and choose $\epsilon_i = \pm 1$ so that $1 \prec h(x_i^{\epsilon_i})$ for $i = 1 , \ldots, r$.  By the induction hypothesis, we can choose $\epsilon_i = \pm 1$ for $i = r+1, \ldots, n$ so that $1 \notin C(\{x_{r+1}^{\epsilon_{r+1}}, \ldots, x_{n}^{\epsilon_{n}} \})$.  Now choose $x \in  C(\{x_{1}^{\epsilon_{1}}, \ldots, x_{n}^{\epsilon_{n}} \})$, by Lemma \ref{conrad_BH_lemma} either $1 \prec h(x)$ or $h(x)=1$ and $x \in C(\{x_{r+1}^{\epsilon_{r+1}}, \ldots, x_{n}^{\epsilon_{n}} \})$.  In either case $x \neq 1$, so the claim follows by induction.

\end{proof}

Using the Conradian version of the Burns-Hale theorem, we can prove a deep theorem originally due to Brodskii \cite{Brodskii84}, with later proofs appearing in \cite{RR02} and \cite{Navas10b}.  First we observe that every nontrivial finitely generated Conradian left-orderable group has at least one torsion free abelian quotient.

\begin{proposition}
\label{prop:conradquotient}
If  $G$ is a nontrivial finitely generated Conradian left-ordered group, then there exists a proper normal convex subgroup $C \subset G$ such that $G/C$ is a finitely generated torsion free abelian group.
\end{proposition}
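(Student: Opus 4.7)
The plan is to mimic the argument from Section \ref{section: BO implies LI} that showed every finitely generated bi-ordered group has infinite cyclic abelian quotient, but now working with a Conradian left-ordering instead of a bi-ordering. The strategy is to locate a maximal proper convex subgroup $C$, and then to show that the properties of a Conradian jump together with H\"older's theorem force $G/C$ to have the required structure.

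First, I would fix generators of $G$ and arrange them so that their positives in the chosen Conradian ordering form a finite list $1 < g_1 < g_2 < \cdots < g_n$. Let $C$ be the union of all convex subgroups of $G$ that do not contain $g_n$. By Problem \ref{intersection_and_union} this union is itself a convex subgroup; it is proper because $g_n \notin C$; and by construction it is maximal among proper convex subgroups, so $(C,G)$ is a convex jump. The next step is to invoke the hypothesis that the ordering is Conradian, which means the jump $(C,G)$ is Conradian: $C$ is normal in $G$ and the induced ordering on $G/C$ is Archimedean. (Conveniently, normality and Archimedeanness of top jumps also follow directly from Lemmas \ref{lem:conrad} and \ref{lem:archabel} applied to the quotient; compare the proof of Theorem \ref{biorderable implies LI}, which is the template I am following.)

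With $C$ normal and $G/C$ Archimedean, H\"older's Theorem \ref{theorem:Holder} produces an order-preserving embedding $G/C \hookrightarrow (\mathbb{R},+)$. In particular $G/C$ is abelian and torsion-free. Since $G$ is finitely generated, so is its quotient $G/C$, completing the proof.

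The main obstacle is really just bookkeeping: making sure that the ``maximal proper convex subgroup containing none of the top generator" construction gives a genuine convex jump, and invoking the Conradian hypothesis in the correct place so that both normality of $C$ in $G$ and Archimedeanness of $G/C$ are in hand before applying H\"older. Neither of these is deep once the preparatory material of Chapter \ref{Holder chapter} and the present chapter is available; the only point requiring any care is confirming that one can take a \emph{proper} maximal convex subgroup at all, which uses finite generation in the form of the finite generator list above.
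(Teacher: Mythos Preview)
Your proposal is correct and follows essentially the same route as the paper: take $C$ to be the union of all convex subgroups missing the largest positive generator, observe that $(C,G)$ is a convex jump, and then use the Conradian hypothesis (so this jump is Conradian) together with H\"older's theorem to conclude that $G/C$ is torsion-free abelian. The only caveat is your parenthetical aside about Lemmas~\ref{lem:conrad} and~\ref{lem:archabel}: those concern consequences of an Archimedean ordering rather than establishing normality of $C$ or Archimedeanness of $G/C$, so the clean justification really is the definition of a Conradian jump, exactly as in your main line.
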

\begin{proof}
Suppose that $G$ has the Conradian left-ordering $<$ and is generated by $x_1, \ldots, x_n$.  We may suppose (by replacing a generator with its inverse if necessary) that $x_i >1$ for all $i$.  

Suppose without loss of generality that $x_n$ is the largest generator.  Then the subgroup
\[ C = \bigcup_{\substack{\text{$D$ convex}\\\text{$x_n \notin D$}}} D
\]
is convex, and in fact $(C,G)$ is a convex jump: any convex subgroup strictly larger than $C$ would have to contain $x_n$, and thus all of the generators of $G$ by convexity.
%
%
Since $<$ is a Conradian left-ordering the jump $(C, G)$ is Conradian, so the quotient $G/C$ admits an Archimedean ordering and there is an embedding
\[G/C \rightarrow (\R , +).
\]
Hence $G/C$ is torsion free and abelian.
\end{proof}

\begin{problem}
Let $G$ be a finitely generated Conradian left-orderable group, and suppose that the abelianization $G/[G, G]$ is infinite cyclic.  Show that $[G, G]$ is convex in every Conradian left-ordering of $G$.
\end{problem}

Recall that a group $G$ is said to be \textit{locally indicable} if every finitely generated nontrivial subgroup $H$ of $G$ admits a surjection $H \rightarrow \Z$.

\begin{corollary} \cite{Brodskii84,RR02,Navas10b}
\label{conradian_iff_LI}
\index{locally indicable}
A group $G$ admits a Conradian left-ordering if and only if $G$ is locally indicable.
\end{corollary}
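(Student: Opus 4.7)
The plan is to prove both directions by combining results already established in this section, namely Proposition~\ref{prop:conradquotient} (for one direction) and the Conradian Burns--Hale theorem, Theorem~\ref{theorem:conradian_burns_hale} (for the other).

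First I would handle the easier direction, that locally indicable implies Conradian left-orderable. Suppose $G$ is locally indicable, and let $H$ be any nontrivial finitely generated subgroup of $G$. By hypothesis, $H$ surjects onto $\mathbb{Z}$. Since $\mathbb{Z}$ is bi-orderable under its usual ordering, which is in particular a Conradian left-ordering (a bi-ordering trivially satisfies the condition $g < hg^n$ for positive $g,h$, taking $n=1$), the subgroup condition of Theorem~\ref{theorem:conradian_burns_hale} is satisfied with $L = \mathbb{Z}$. Therefore $G$ is Conradian left-orderable.

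For the converse, assume $G$ admits a Conradian left-ordering $<$, and let $H$ be any nontrivial finitely generated subgroup. The key observation is that Conradian left-orderability is inherited by subgroups: indeed, using the equivalent formulation from Theorem~\ref{theorem:conradequiv}, the condition ``for every pair of positive elements $g,h$ there exists $n>0$ with $g < hg^n$'' involves only elements of $H$ whenever $g, h \in H$, so the restriction of $<$ to $H$ is Conradian. Now since $H$ is nontrivial and finitely generated, Proposition~\ref{prop:conradquotient} provides a proper normal convex subgroup $C \subset H$ such that the quotient $H/C$ is a nontrivial finitely generated torsion-free abelian group. Any such group surjects onto $\mathbb{Z}$ (choose a $\mathbb{Z}$-basis and project onto one coordinate), so composing gives a surjective homomorphism $H \to H/C \to \mathbb{Z}$. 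This exhibits $H$ as indicable, and since $H$ was an arbitrary nontrivial finitely generated subgroup of $G$, $G$ is locally indicable.

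There is no real obstacle here, since all the heavy lifting has been done: the nontrivial content is already packaged into Proposition~\ref{prop:conradquotient} (which produces an abelian quotient from a Conradian jump via H\"older's theorem) and into the Conradian Burns--Hale theorem (whose proof required the combinatorial Lemma~\ref{conrad_BH_lemma}). The proof itself amounts to verifying that these two results fit together, together with the routine observations that subgroups inherit Conradian left-orderings and that $\mathbb{Z}$ is Conradian left-orderable.
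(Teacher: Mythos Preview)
Your proof is correct and follows essentially the same approach as the paper: both directions combine Proposition~\ref{prop:conradquotient} (for Conradian $\Rightarrow$ locally indicable) with the Conradian Burns--Hale theorem, Theorem~\ref{theorem:conradian_burns_hale} (for locally indicable $\Rightarrow$ Conradian), together with the observation that subgroups inherit Conradian orderings. You supply slightly more detail than the paper's version, but the argument is the same.
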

\begin{proof}

Suppose $H$ is a finitely generated subgroup of the Conradian ordered group $G$.  Since $H$ is also Conradian, by Proposition \ref{prop:conradquotient} there exists a map of $H$ onto a nontrivial finitely generated torsion-free abelian group and hence onto $\Z$.
On the other hand, if there exists a surjective map $H \rightarrow \Z$ for every finitely generated subgroup $H$ of $G$, then $G$ admits a Conradian left-ordering by Theorem \ref{theorem:conradian_burns_hale}.
\end{proof}

Note that the obvious generalization of the Burns-Hale theorem to bi-orderable groups is not true: Locally indicable groups already satisfy the property that every finitely generated subgroup maps onto a bi-orderable group, yet not all locally indicable groups are  bi-orderable.


\chapter{Spaces of orderings}
\label{space of orderings chapter}

The space $LO(G)$ \index{LO@$LO(G)$} of all left-orderings of a group $G$ has already been introduced in Section \ref{space of orderings section}.  Recall that by identifying an ordering with its positive cone, we can regard $LO(G)$ as a subset of the power set $\mathcal{P}(G)$ of the set of elements of the group.  Since $\mathcal{P}(G)$ can be naturally identified with the product $\prod_{g \in G} \{ 0,1 \}$, we can equip it with the Tychonoff topology\index{Tychonoff topology}.  We saw that the Tychonoff topology has as a subbasis the sets
$$V_g = \{A \subset G : g \in A\} \quad \text{ and } \quad V_g^c = \{A \subset G : g \notin A\}$$
and so as a subspace of $\mathcal{P}(G)$, the topology on $LO(G)$ has as a subbasis all sets of the form
\[
\begin{split}
U_g = V_g \cap LO(G) =  \{P \in LO(G) : g \in P \} \quad  \\
\quad U_g^c=V_g^c \cap LO(G) = \{P \in LO(G) : g^{-1} \in P  \}
\end{split}
\]
 This natural choice of topology makes $LO(G)$ a closed subset, and therefore it is a compact space.  It is also totally disconnected since $\mathcal{P}(G)$ is totally disconnected.  Moreover, if $G$ is countably infinite then $LO(G)$ is metrizable, and so by a theorem of Brouwer is homeomorphic with the Cantor set if and only if it contains no isolated points.  A metric on $LO(G)$ in case $G$ is a countable group is constructed in Problem \ref{Tychonoff topology}.

%

 In this context, an isolated point in $LO(G)$ corresponds to a left-ordering that is the unique left-ordering satisfying some finite string of inequalities.  Equivalently, one can left-multiply the inequalities as necessary and see that an isolated ordering is the unique left-ordering of $G$ in which some finite set of elements $\{g_1, \ldots, g_n\}$ are all positive.  Thus an isolated ordering is one whose positive cone $P$ satisfies
 \[ \{ P \} =  \bigcap_{i=1}^n U_{g_i}
 \]
The existence or nonexistence of such positive cones is a central question when trying to determine the homeomorphism type of $LO(G)$.

\section{The natural actions on $LO(G)$}

There is a natural ``conjugation'' action of the group $G$ upon $LO(G)$ which we write as a right action.  Given an ordering $<$ of $G$, the right action of $g$ on $<$ will be written as $ <_g$, and is defined according to $x <_g y$ if and only if $xg^{-1} < yg^{-1}$.  For left-orderings the latter inequality is equivalent to $gxg^{-1} < gyg^{-1}$, hence the name conjugation.  The subset of $LO(G)$ which is fixed under all conjugations is clearly the set of all bi-invariant orderings, which we denote $O(G)$. 

\begin{problem}
Check that conjugation is really a right action, that is $(<_g)_h$ is the same as $<_{(gh)}$.
If $P$ is the positive cone of $<$, show that the positive cone for $<_g$ is $g^{-1}Pg$. 
Verify that, for any fixed $g \in G$, the mapping $< \to <_g$ is a homeomorphism of $LO(G)$. 
\end{problem}

In some cases, the structure of $LO(G)$ is straightforward, as is the action.  We already saw in Example \ref{Klein_bottle_orderings} that the group $K = \langle x, y : xyx^{-1} = y^{-1} \rangle$ has only four left-orderings, and all of them arise from the short exact sequence 
\[ 1 \rightarrow \langle  y  \rangle \rightarrow K \rightarrow K / \langle  y  \rangle \rightarrow 1
\]
In this case $LO(K)$ is a discrete space with four points, and the action of $K$ on $LO(K)$ is illustrated in Figure \ref{Klein_bottle_log_action}.  Note that every orbit of the action contains more than one point, because $K$ is not bi-orderable.
\begin{figure}
\begin{center}   
\includegraphics[height=4cm]{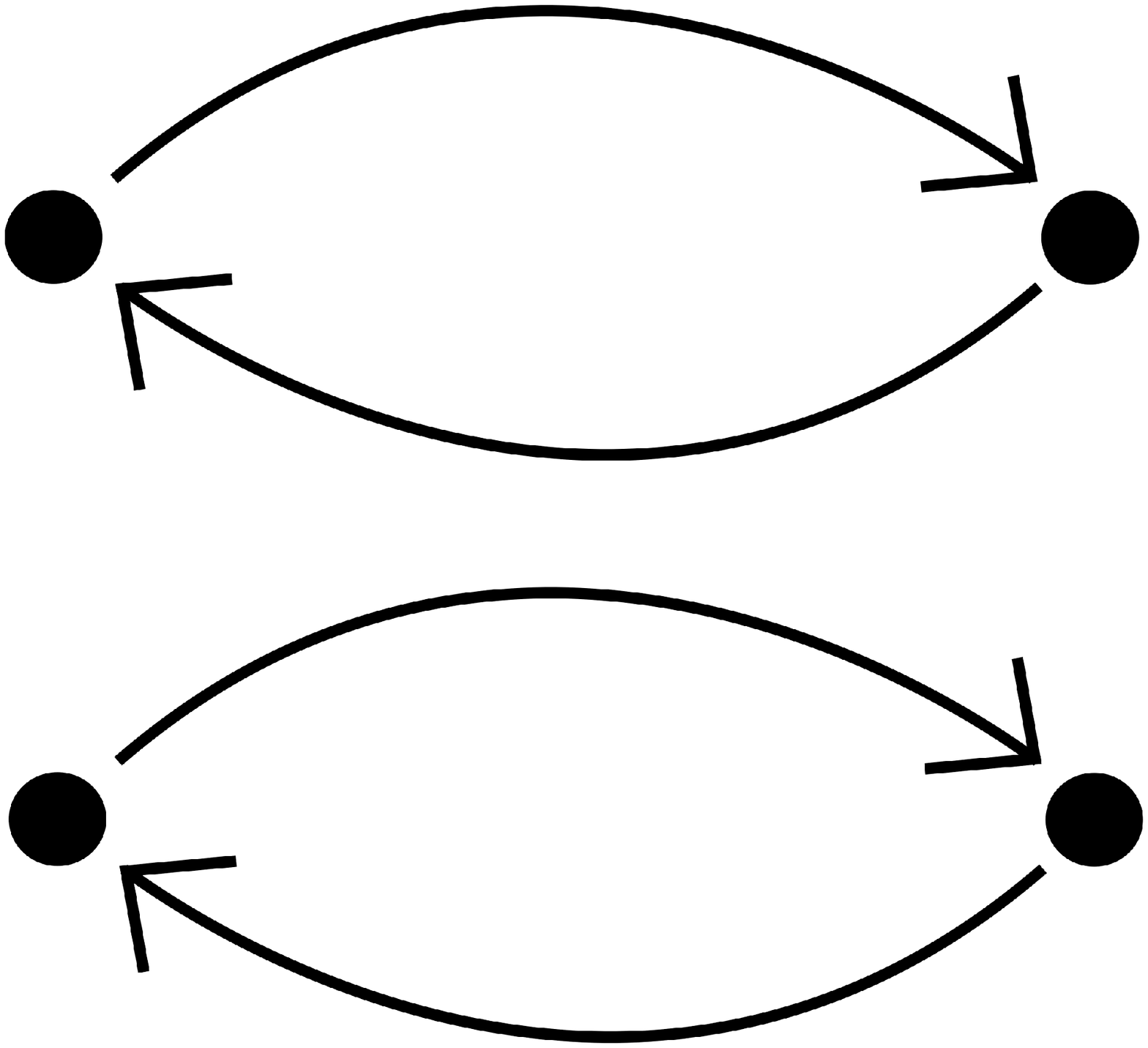}
\label{Klein_bottle_log_action}
\caption{The action of the generator $y$ is trivial, while the generator $x$ acts by swapping pairs of left-orderings.}
\end{center}
\end{figure}

\begin{problem}
Show that $O(G)$ is closed in $LO(G)$ by two different arguments.  For the first, observe that being the positive cone of a 2-sided ordering is a closed condition.  Alternatively, argue more generally that if a group $G$ acts on a topological space $X$ by homeomorphisms, the fixed point set of the action of any $g$ in $G$ is closed in $X$, and the same is true of the global fixed point set (points of $X$ fixed by all $g \in G$.)
\end{problem}

Thus we see that $O(G)$ is also a compact totally disconnected space.

Conjugation is a special case of an automorphism of $G$, namely an ``inner'' automorphism.  So, more generally we can say that any automorphism $\phi$ of $G$ gives rise to a mapping $< \longrightarrow <_\phi$  where the definition of $<_{\phi}$ is $x <_\phi y \iff \phi(x) < \phi(y)$.

\begin{problem}
Check that the mapping $< \to <_\phi$ defines a homeomorphism of $LO(G)$.  If  $\psi$ is a second automorphism, the relation $<_{(\phi\psi)}$ is the same as $(<_\phi)_\psi$.   If $P$ is the positive cone of $<$, then the positive cone of  $<_\phi$ is $\phi^{-1}(P)$.  (Here $\phi^{-1}$ denotes the inverse automorphism, and is not to be confused with, for example, $P^{-1}$, denoting inversion in the group $G$.) 
\end{problem}

Thus we have a homomorphism of groups
$$\mathrm{Aut}(G) \longrightarrow \mathrm{Homeo}(LO(G))$$
where $\mathrm{Aut}(G)$, of course, is the automorphism group of $G$ and $\mathrm{Homeo}(LO(G))$ is the group of homeomorphisms of the space $LO(G)$.  The group  $\mathrm{Inn}(G)$ of inner automorphisms is a normal subgroup of $\mathrm{Aut}(G)$, and the quotient $\mathrm{Aut}(G)/\mathrm{Inn}(G)$ is known as the {\em outer automorphism} group $\mathrm{Out}(G)$.

One checks that $\mathrm{Aut}(G)$ takes bi-invariant orderings to bi-invariant orderings, so that 
$\mathrm{Aut}(G)$ also acts on the subspace $O(G)$ by homeomorphisms.  Moreover, since $\mathrm{Inn}(G)$ acts trivially on $O(G)$ we have an action of $\mathrm{Out}(G)$, in other words a homomorphism
$$\mathrm{Out}(G) \longrightarrow \mathrm{Homeo}(O(G)) .$$

\begin{problem}
Verify the details of the above paragraph.
\end{problem}

\section{Orderings of $\Z^n$ and Sikora's theorem}

Of course for $G$ an abelian group, $LO(G)$ and $O(G)$ coincide.
It is easy to see that the additive integers $\Z$ can be given just two bi-orderings.  The goal of this section is to prove a fundamental theorem, due to A. Sikora \cite{Sikora04}.

\begin{theorem}\label{Sikora theorem}
For $n \ge 2$, the space $LO(\Z^n) = O(\Z^n)$ has no isolated points, and hence it is homeomorphic with the Cantor set.
\end{theorem}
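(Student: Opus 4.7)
The plan is to apply the classical characterization of the Cantor set: any nonempty compact metric totally disconnected space without isolated points is homeomorphic to the Cantor set. Since $\Z^n$ is abelian we have $LO(\Z^n) = O(\Z^n)$, and Problem \ref{Tychonoff topology} together with the general results of Section \ref{space of orderings section} guarantee that this space is compact, totally disconnected, and metrizable. The standard lexicographic ordering shows that it is nonempty. So the entire problem reduces to showing that no ordering is isolated.

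By the description of basic open sets in $LO(\Z^n)$, being isolated would mean being the unique positive cone containing some finite set $F = \{g_1, \ldots, g_k\}$. So I need to show: for any positive cone $P$ of $\Z^n$ and any finite $F \subset P$, there exists another positive cone $P' \neq P$ with $F \subset P'$.

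The key geometric input is the following. Because $F$ is contained in a total positive cone, no nontrivial $\Z_{\ge 0}$-combination of the $g_i$ can vanish, so the real convex cone $C = \R_{\ge 0} g_1 + \cdots + \R_{\ge 0} g_k \subset \R^n$ is pointed. A standard separation argument then provides a strict supporting vector $\vec{v} \in \R^n$ with $\vec{v} \cdot g_i > 0$ for every $i$, and the set
$$U = \{\vec{v} \in \R^n : \vec{v} \cdot g_i > 0 \text{ for all } i = 1, \ldots, k\}$$
is a nonempty open convex cone. Because $n \ge 2$, the set $U$ contains uncountably many pairwise non-parallel directions, and after removing the countable family of hyperplanes $\vec{v} \cdot g = 0$ as $g$ ranges over $\Z^n \setminus \{0\}$ we still have a dense supply of ``irrational slope'' vectors in $U$.

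For each such $\vec{v}$, Example \ref{orderZ2} produces a positive cone $P_{\vec{v}} = \{g \in \Z^n : \vec{v} \cdot g > 0\}$, automatically containing $F$. Two such vectors yield equal positive cones only when one is a positive multiple of the other, so we obtain uncountably many distinct positive cones all containing $F$; at most one of them equals the original $P$, so at least one is the desired $P' \neq P$. The main delicate point is the pointed-cone / strict supporting vector step---producing a single $\vec{v}$ satisfying $\vec{v} \cdot g_i > 0$ \emph{strictly} for all $i$ simultaneously---which is where the hypothesis $n \ge 2$ begins to bite (for $n = 1$ the corresponding $U$ is just a half-line of parallel vectors, so only two orderings of $\Z$ arise in total, correctly reflecting that $LO(\Z)$ is a two-point discrete space).
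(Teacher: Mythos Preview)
Your proof is correct and takes a genuinely different route from the paper's. The paper works \emph{with} the given ordering $P$: it handles $n=2$ by directly perturbing the separating line (Problem~\ref{case n=2}), and for $n \ge 3$ it splits into cases according to whether $P$ is Archimedean---in which case H\"older's theorem supplies a vector $v \in \R^n$ which one then perturbs---or not, in which case a proper convex subgroup yields an exact sequence $0 \to \Z^p \to \Z^n \to \Z^q \to 0$ with $p,q < n$ and one argues by induction on dimension. You, by contrast, ignore the structure of $P$ entirely: once $F$ is fixed, the rationality of the $g_i$ together with Gordan's theorem (the step you flag as delicate---a nontrivial $\R_{\ge 0}$-relation among integer vectors forces a nontrivial $\Z_{\ge 0}$-relation) gives an open cone $U$ of strict supporting vectors, and for $n \ge 2$ the abundance of irrational directions in $U$ manufactures uncountably many Archimedean orderings all containing $F$. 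Your argument is shorter and sidesteps both the case analysis and the induction; the paper's approach, on the other hand, actually perturbs $P$ itself and dovetails with the subsequent extension to arbitrary torsion-free abelian groups (Theorem~\ref{no isolated points}), where one reduces to finitely generated subgroups and extends orderings via isolators.
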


\begin{proof} Before proving this in general, we invite the reader to prove it in the simplest case.

\begin{problem}  \label{case n=2}Prove Theorem \ref{Sikora theorem} for the case $n=2$ as follows.   Considering 
$\Z^2 \subset \R^2$ in the standard way, observe that, given a bi-ordering of $\Z^2$, there is a unique line $L$ through the origin in $\R^2$ such that all points of $\Z^2$ in one component of $\R^2 \setminus L$ are positive and all those in the other component are negative.  On the line itself, either $L \cap \Z^2 = \{0\}$ or else all positive points of $L \cap \Z^2$ lie on one component of $L \setminus \{0\}.$  In either case, show that if 
$g_1, \dots, g_k$ are in the positive cone of the ordering, then $L$ may be perturbed to define another ordering of $\Z^2$ in which all the $g_i$ remain positive. (Compare Example \ref{orderZ2})
\end{problem}

Now suppose $n \ge 3$ and that $g_1, \dots, g_k$ are positive elements of some given ordering of $\Z^n$.   Consider two cases.

    Case 1:  The ordering of $\Z^n$ is Archimedian.  Then, by Theorem \ref{theorem:Holder} there is an embedding $f: \Z^n \to \R$ which is an order-preserving homomorphism.  Considering $\Z^n \subset \R^n$, since $f$ is a linear map, there is a unique vector 
$v \in \R^n$ such that $f(x) = v \cdot x$ for all $x \in \Z^n$.  Moreover, the hyperplane $H$ perpendicular to $v \in \R^n$ must satisfy $H \cap \Z^n = \{0\}$, for otherwise $H \cap \Z^n$ would be a nontrivial convex subgroup of $\Z^n$, contradicting the assumption the ordering is Archimedian.  It follows that we have $v \cdot g_i  > 0$ for all $i$.
Consequently, one may perturb $v$ by a small vector perpendicular to $v$ to obtain $v'$ and define, via dot product, a different ordering on $\Z^n$ in which all the $g_i$ remain positive.

Case 2: The ordering on $\Z^n$ is non-Archimedian.  Then we have a nontrivial proper subgroup $K \subset \Z^n$ which is convex in the ordering.  This gives an exact sequence 
$$1 \to K \to \Z^n \stackrel{\phi}{\to} H \to 1$$  for which both the kernel $K$ and the cokernel $H$ are finitely-generated free abelian.
Moreover $K \cong \Z^p$ and $H \cong \Z^q$ and there are orderings $<_K$ and $<_H$ which lexicographically define the given ordering on $\Z^n$ as in Problem \ref{extension}.  Note that $p + q = n$, both $p$ and $q$ are less than $n$, and at least one of them is greater than 1.  We will proceed by induction, with Problem \ref{case n=2} as the base case.  If $p > 1$,
then we may assume inductively that $<_K$ is not isolated.  Therefore, there is an ordering $\prec_K$ distinct from 
$<_K$ such that whichever $g_i$ lie in $K$ (and therefore satisfy $g_i >_K 0$), will also satisfy $g_i \succ_K 0$.  Then the order of $\Z^n$ lexicographically defined by $\prec_K$ and $<_H$ will be a different ordering of $\Z^n$ in which all the $g_i$ remain positive.  On the other hand if $q > 1$, we may inductively assume there is an ordering
$\prec_H$ for which $\phi(g_i) \succ_H 0$ whenever  $\phi(g_i) >_H 0$, and similarly construct a different ordering of $\Z^n$ in which all the $g_i$ are positive.
\end{proof}

Recall that the \textit{rank} of an Abelian group $A$ is the dimension of $A \otimes \Q$ as a vector space over $\Q$.  Equivalently, a set $\{ a_{i} \}_{i \in I}$ of elements of $A$ is linearly independent if for every finite subset $J \subset I$
\[ \sum_{i \in J} n_i a_i = 0, n_i \in \Z
\]
implies $n_i =0$ for all $i \in J$.  The rank of an abelian group $A$ is the cardinality of a maximal linearly independent subset.  We can extend Sikora's theorem as follows.

\begin{theorem}\label{no isolated points}
If $A$ is any torsion-free abelian group of rank greater than one, then $LO(A) = O(A)$ has no isolated points.
\end{theorem}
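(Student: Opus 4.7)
The plan is to reduce the problem to Sikora's theorem (Theorem \ref{Sikora theorem}) by restricting to a finitely generated subgroup of $A$ of rank at least two, and then extend the newly-constructed ordering back to $A$ using the fact that quotients of torsion-free abelian groups by pure subgroups are again torsion-free.

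First I would fix $<\, \in LO(A)$ with positive cone $P$ and an arbitrary basic open neighborhood of $<$, determined by a finite set of positive elements $g_1, \dots, g_k \in P$. The goal is to produce another ordering of $A$ in which each $g_i$ is still positive. I choose a finitely generated subgroup $A_0 \le A$ containing $g_1, \dots, g_k$ with rank at least two: if the subgroup generated by the $g_i$ has rank less than two, adjoin any $h \in A$ that is $\Q$-linearly independent from it, which exists because $A$ has rank greater than one. Since $A_0$ is a finitely generated subgroup of a torsion-free abelian group, $A_0 \cong \Z^m$ with $m \ge 2$. The restriction $<_0$ of $<$ to $A_0$ is therefore not isolated in $LO(A_0)$ by Theorem \ref{Sikora theorem}, so there exists $\prec_0 \in LO(A_0)$, distinct from $<_0$, in which $g_1, \dots, g_k$ all remain positive.

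It remains to extend $\prec_0$ to an ordering $\prec$ of $A$ that agrees with $\prec_0$ on $A_0$. Let $A_0'$ denote the pure hull of $A_0$ in $A$, namely $A_0' = \{a \in A : na \in A_0 \text{ for some } n > 0\}$. First extend $\prec_0$ to an ordering of $A_0'$ by declaring $a \in A_0'$ to be positive iff some (equivalently every) positive multiple $na \in A_0$ is $\prec_0$-positive; this is well-defined and is a group ordering because $A$ is torsion-free. Next, since $A_0'$ is pure, the quotient $A/A_0'$ is torsion-free abelian and hence bi-orderable by Theorem \ref{torsion free abelian}; pick any ordering on it. Finally, apply Problem \ref{extension} to the short exact sequence $1 \to A_0' \to A \to A/A_0' \to 1$ with these two orderings on the ends to produce the desired $\prec$ on $A$. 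Since $\prec$ restricts to $\prec_0$ on $A_0$ while $<$ restricts to $<_0 \ne \prec_0$, we have $\prec \ne\, <$, yet each $g_i$ is still in the positive cone of $\prec$, so $<$ is not isolated.

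I do not anticipate a serious obstacle: Sikora's theorem supplies the perturbation, and the extension goes through cleanly because one can always purify and quotient in the torsion-free abelian setting. The only step needing a brief sanity check is that the lexicographic construction of Problem \ref{extension} yields an honest ordering on $A$, but this is immediate since both factors in the extension are ordered.
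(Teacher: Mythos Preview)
Your proof is correct and follows essentially the same approach as the paper: both reduce to Sikora's theorem on a finitely generated subgroup of rank at least two, extend the perturbed ordering uniquely to its pure hull (what the paper calls the \emph{isolator} $I(H)$), and then extend lexicographically over the torsion-free quotient $A/I(H)$. Your ``pure hull'' $A_0'$ is exactly the paper's $I(A_0)$, and your two extension steps match the paper's two guided exercises.
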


The proof is left as the following exercises.

\begin{problem}  If $A$ is an abelian group and $H \subset A$ is a subgroup, define the \textit{isolator} of $H$ in $A$ to be
\[ I(H) = \{ a \in A \mid \exists k \in \Z \mbox{ such that } a^k \in H  \}
\]
Show that $I(H)$ is a subgroup, and that every bi-ordering of $H$ extends uniquely to a bi-ordering of $I(H)$.  Show the quotient $A/I(H)$ is torsion free, and conclude that every bi-ordering of $H$ extends to a bi-ordering of $A$.
\end{problem}

\begin{problem}
If $A$ is a torsion-free abelian group of rank greater than one, show that $LO(A)$ has no isolated points by applying Sikora's theorem to the finitely generated subgroups of $A$, and using the result of the previous problem (Hint:  If $S$ is a finite subset of $A$ and $\langle S \rangle \cong \Z$, then $I(\langle S \rangle ) = A$ implies that $A$ has rank one).
\end{problem}

\begin{problem}\label{rank one problem} Show that if $A$ is a torsion-free abelian group of rank one, then $LO(A) = O(A)$ has exactly two elements.
\end{problem}

\section{Examples of groups without isolated orderings}

We saw in the previous section that the crux of the analysis of $LO(\mathbb{Z}^n)$ lay in constructing new orderings of $\mathbb{Z}^n$ while keeping a certain set of elements positive.  The conjugation action and the action of $Aut(G)$ on $LO(G)$ both provide ways of creating new left-orderings, and in this section we'll see that sometimes these methods alone are sufficient to make a complete analysis of the structure of $LO(G)$.

Recall from Chapter \ref{braids chapter} that the braid groups $B_n$ can be defined by generators $\sigma_i$ together with the braid relations.  The definition of $B_n$ can be extended to yield an ``infinite strand braid group'' $B_{\infty}$, which is defined in terms of generators and relations by
\[ B_{\infty} = \left< \sigma_1, \sigma_2, \ldots
\begin{array}{|c}
\sigma_i \sigma_j = \sigma_j \sigma_i \mbox{ if $|i-j|>1$}\\
\sigma_i \sigma_j \sigma_i = \sigma_j \sigma_i \sigma_j \mbox{ if $|i-j|=1$} \end{array} 
 \right>
\]

\begin{problem}
Use the fact that $B_n \subset B_{\infty}$ for all $n>0$ to show that $B_{\infty}$ is left-orderable, but not bi-orderable or Conradian left-orderable.
\end{problem}

It turns out that the conjugation action on $B_{\infty}$ supplies us with enough new left-orderings to show that $LO(B_{\infty})$ has no isolated points \cite[Chapter XIV Proposition 2.10]{DDRW08}.  Consider a positive cone $P \in LO( B_ {\infty})$ and suppose that $S \subset P$ is a finite subset.   We can choose $n$ so large that the generators $\sigma_i$ for $i >n$ all commute with elements of $S$.  Thus every positive cone $\sigma_i^{-1} P \sigma_i$ contains $S$ for all $i >n$.

On the other hand, at least one of the positive cones $\sigma_i^{-1} P \sigma_i$  for $i>n$ has to be different than $P$.  If none of them are different, then $\sigma_i^{-1} P \sigma_i = P$ makes $P$ into the positive cone of a \textit{bi-ordering} of the subgroup $\langle \sigma_n, \sigma_{n+1}, \ldots \rangle \subset B_{\infty}$.  But the subgroup  $\langle \sigma_n, \sigma_{n+1}, \ldots \rangle $ is just a copy of $B_{\infty}$, so it is not bi-orderable.  This contradiction shows that $P$ can't be an isolated point in $LO(B_{\infty})$, and we have proved

\begin{proposition}
Each point in $LO(B_{\infty})$ is a limit point of its conjugates, and so $LO(B_{\infty})$ is homeomorphic to the Cantor set.
\end{proposition}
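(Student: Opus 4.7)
The plan is to make rigorous the informal discussion immediately preceding the proposition, and then to invoke the standard topological characterization of the Cantor space. The statement has two parts: (a) every $P \in LO(B_\infty)$ is a limit of its conjugates, and (b) $LO(B_\infty) \cong$ Cantor set. Part (b) will follow from part (a) together with general facts about $LO(G)$ already established.

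For part (a), I would fix an arbitrary positive cone $P \in LO(B_\infty)$ and a basic open neighbourhood of $P$; by the description of the subbasis for the topology on $LO(G)$ this neighbourhood may be taken of the form $U_S := \bigcap_{s \in S} U_s$ for some finite subset $S \subset P$. The goal is to produce a conjugate of $P$ lying in $U_S$ yet different from $P$. Since $S$ is finite, there is an integer $N$ such that every element of $S$ can be written as a word in $\sigma_1, \dots, \sigma_N$. For any $i > N+1$, the braid relation $\sigma_i \sigma_j = \sigma_j \sigma_i$ (valid whenever $|i-j|>1$) implies that $\sigma_i$ commutes with each $s \in S$, so $\sigma_i^{-1} s \sigma_i = s \in \sigma_i^{-1} P \sigma_i$; hence each conjugate $\sigma_i^{-1} P \sigma_i$ with $i>N+1$ lies in $U_S$.

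The main step, which is where the real content lies, is to show that not all of these conjugates can equal $P$. Suppose for contradiction that $\sigma_i^{-1} P \sigma_i = P$ for every $i > N+1$, and set $H := \langle \sigma_{N+2}, \sigma_{N+3}, \dots \rangle$. Then $P \cap H$ is preserved by conjugation by every generator of $H$, hence by all of $H$, so $P \cap H$ would be the positive cone of a bi-ordering of $H$. But $H$ is isomorphic to $B_\infty$ and contains a copy of $B_n$ for every $n \ge 3$, and we already know $B_n$ is not bi-orderable once $n \ge 3$; therefore $H$ is not bi-orderable, a contradiction. This produces the desired conjugate $\sigma_i^{-1} P \sigma_i \in U_S$ distinct from $P$, proving (a). This contradiction step is the only real obstacle; the remainder is essentially bookkeeping.

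For part (b) I would invoke the characterization of the Cantor space recalled in Section~\ref{topology on power set}: a nonempty, compact, totally disconnected, metrizable space with no isolated points is homeomorphic to the Cantor set. The space $LO(B_\infty)$ is nonempty (the Dehornoy ordering extends to $B_\infty$ coherently from the $B_n$), compact and totally disconnected by the general results on spaces of left-orderings, and metrizable because $B_\infty$ is countable (being generated by countably many elements). Part (a) supplies the absence of isolated points, so the characterization applies and the proof is complete.
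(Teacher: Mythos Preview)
Your proposal is correct and follows essentially the same argument as the paper: pick a finite $S\subset P$, use generators $\sigma_i$ with large enough index to commute with $S$, and derive a contradiction from the non-bi-orderability of the tail subgroup $\langle \sigma_{N+2},\sigma_{N+3},\dots\rangle\cong B_\infty$. You are slightly more careful with the indices than the paper, and you spell out the Cantor-set conclusion explicitly via the standard characterization, which the paper leaves implicit.
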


There are also times when the action of $Aut(G)$ is sufficient to determine the structure of $LO(G)$.

\begin{problem}
Let $F_{\infty}$ denote the free group on infinitely many generators $x_1, x_2, x_3, \ldots$.  Fix a positive cone $P \subset F_{\infty}$ and a finite subset $S \subset P$.  Construct an automorphism $\phi:F_{\infty} \rightarrow F_{\infty}$ satisfying $\phi(S) = S$, but $\phi(P) \neq P$, and conclude that $LO(F_{\infty})$ is homeomorphic to the Cantor set.  Show that the same construction can be adapted to show that $O(F_{\infty})$ is also a Cantor set.
\end{problem}

Both the argument in the case of $LO(B_{\infty})$ and the case of $LO(F_{\infty})$ take advantage of the fact that there are infinitely many generators of the group.  When the group in question is finitely generated, the problem can become much more difficult, and indeed, there may be isolated points (as we will see in the case of $B_n$).

\section{The space of orderings of a free product}

\index{free products of groups}
At the time of this writing, there are few general theorems available which predict when a group does (or does not) admit isolated points in its space of left-orderings.  However, we do have the following remarkable structure theorem due to Rivas.

\begin{theorem}
\cite{Rivas12}
Suppose that $G$ and $H$ are nontrivial left-orderable groups, so that $G * H$ is left-orderable.  Then $LO(G*H)$ has no isolated points.
\end{theorem}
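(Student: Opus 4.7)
\textit{Strategy.} The plan is to argue by contradiction: assume that some $P \in LO(G*H)$ is isolated, so there exist $g_1,\dots,g_n \in P$ with $P$ the unique positive cone in $\bigcap_{i=1}^{n} U_{g_i}$, and construct a different positive cone $P'$ containing $\{g_1,\dots,g_n\}$. The main tool is the dynamic realization $\rho\colon G*H\to \mathrm{Homeo}_+(\R)$ of $(G*H,P)$ constructed in Section \ref{dynamicalsection}, combined with the universal property of the free product: any pair of homomorphisms $\phi_G\colon G\to \mathrm{Homeo}_+(\R)$ and $\phi_H\colon H\to \mathrm{Homeo}_+(\R)$ assembles canonically into a single homomorphism $\phi_G*\phi_H\colon G*H\to \mathrm{Homeo}_+(\R)$, and hence (provided it is faithful with no fixed point at $0$) into a left-ordering of $G*H$.

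\textit{Key observation.} Let $\rho_G=\rho|_G$ and $\rho_H=\rho|_H$. Each $g_j$ has a unique normal form as an alternating product of non-identity syllables from $G$ and $H$; let $F\subset G*H$ be the finite set of all prefixes of these normal forms and set $X=\rho(F)(0)\subset \R$, a finite set of reals. The value $\rho(g_j)(0)$ is obtained by successively applying $\rho_G$ and $\rho_H$ to elements of $X$. Consequently, if $\rho_G'$ and $\rho_H'$ are any homomorphisms into $\mathrm{Homeo}_+(\R)$ that agree with $\rho_G$ and $\rho_H$ on $X$, then the assembled action $\rho'=\rho_G'*\rho_H'$ satisfies $\rho'(g_j)(0)=\rho(g_j)(0)>0$ for every $j$, so the corresponding positive cone $P'=\{w:\rho'(w)(0)>0\}$ still contains $g_1,\dots,g_n$.

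\textit{Perturbation.} Because $X$ is finite, we may select an open interval $I\subset \R\setminus X$. Since $G$ is non-trivial and the dynamic realization is faithful, some generator of $G$ acts non-trivially on points of $I$ (after, if necessary, slightly enlarging $I$). I would then perturb $\rho_G$ by conjugating it on $I$ by a homeomorphism supported in $I$, producing a new homomorphism $\rho_G'$ which agrees with $\rho_G$ outside $I$ (hence on $X$) but differs from $\rho_G$ at some point of $I$. Setting $\rho'=\rho_G'*\rho_H$, the point is to then exhibit an element $w\in G*H$ whose syllable-by-syllable evaluation of $0$ enters $I$ at a moment where $\rho_G$ and $\rho_G'$ disagree, with the disagreement arranged so that $\rho(w)(0)$ and $\rho'(w)(0)$ lie on opposite sides of $0$. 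The freedom of long alternating words $a_1 b_1 a_2 b_2 \cdots$ in $G*H$, together with density properties of $\rho(G*H)(0)$ inherited from the dynamic realization construction, makes it possible to locate such a $w$ by an intermediate-value argument inside $I$.

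\textit{Principal obstacle.} The delicate step is ensuring the perturbation genuinely changes the positive cone rather than merely giving a new action that induces the same ordering, and simultaneously that $\rho'$ remains faithful with $\rho'(w)(0)\neq 0$ for every $w\neq 1$ (so that $P'$ really is a positive cone rather than a partial order). This calls for careful combinatorial bookkeeping: one must match the perturbation region $I$ against the orbit geometry of the chosen word $w$, and control how the normal-form evaluation interleaves excursions into $I$ with excursions into its complement. Once this bookkeeping is carried out --- and the hypothesis that both $G$ and $H$ are non-trivial is exactly what provides enough interleaving between the two factors to make it go through --- we obtain $P'\in \bigcap_{i=1}^n U_{g_i}$ with $P'\neq P$, contradicting the assumed isolation of $P$.
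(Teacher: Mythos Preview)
Your proposal has the right scaffolding---dynamic realization plus the universal property of the free product is indeed how this is done---but the perturbation step, which you yourself flag as the principal obstacle, is where your argument and the paper's diverge substantially, and your version has a genuine gap.

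The paper (which only treats $G=H=\Z$, remarking that the general case is ``quite involved'') proceeds differently in two key respects. First, rather than perturbing on a small interval \emph{inside} the relevant region, it leaves the action of the generators untouched on the entire interval $[\rho(g_n^-)(0),\rho(g_n^+)(0)]$ that contains every orbit point coming from words of length $\le n$, and redefines one generator \emph{outside} that interval by an explicit affine formula. Second, and more importantly, the paper does not try to exhibit an element whose sign flips. Instead, it engineers the new action $\rho_n$ so that a \emph{specific} reduced word $w=(c_1g^+)^{-1}c_2g^+$ of length $>n$ satisfies $\rho_n(w)(0)=0$. Since the original dynamic realization has trivial stabilizer at $0$, this immediately shows the ordering determined by $\rho_n$ (extended as in Example~\ref{homeo+}) must differ from the original---and one can choose the sign of $w$ freely, giving two distinct orderings that agree with $P$ on the ball of radius $n$.

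Your approach---conjugating $\rho_G$ by a homeomorphism supported in $I$---does give a new homomorphism $G\to\mathrm{Homeo}_+(\R)$, but the subsequent claims are not substantiated. You need to find a word $w$ with $\rho(w)(0)$ and $\rho'(w)(0)$ on opposite sides of $0$; invoking ``density properties'' and an ``intermediate-value argument'' is not enough here, because there is no one-parameter family of orderings interpolating between $\rho$ and $\rho'$ along which you could run IVT, and nothing rules out that your perturbation merely permutes the orbit of $0$ without changing any signs. You also worry about faithfulness and freeness of the orbit of $0$ for $\rho'$, and rightly so: neither is automatic. The paper's approach avoids both issues at once: by deliberately \emph{creating} a fixed point at $0$ for a known non-identity element, it simultaneously certifies that the new ordering is different and removes any need to verify that $0$ has trivial stabilizer under $\rho_n$.
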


In full generality, the proof is quite involved.  However we can demonstrate some of the core principles by considering the proof in the case $G \cong H \cong \mathbb{Z}$, so that $G*H =  F_2$. 

\begin{theorem}
\label{Rivas}
The space of left-orderings $LO(F_2)$ has no isolated points.
\end{theorem}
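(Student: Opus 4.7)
The plan is to prove: for every positive cone $P \in LO(F_2)$ and every finite set $S = \{s_1, \ldots, s_k\} \subset P$, there exists a distinct positive cone $P' \neq P$ in $LO(F_2)$ with $S \subset P'$. This will show that every basic open neighborhood of $P$ (of the form $\bigcap_{s \in S} U_s$) contains orderings other than $P$ itself, so no $P$ is isolated. Since $LO(F_2)$ is a compact, totally disconnected, metrizable space with no isolated points, it is homeomorphic to the Cantor set.

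The main tool is the dynamic realization from Section \ref{dynamicalsection}. Associated to the left-ordering $<$ with positive cone $P$, there is a faithful representation $\rho : F_2 \to \mathrm{Homeo}_+(\mathbb{R})$ and a basepoint $p_0$ such that $g \in P$ if and only if $\rho(g)(p_0) > p_0$. I would let $N = \max_{s \in S} |s|$ (word length in $\{a^{\pm 1}, b^{\pm 1}\}$) and choose a compact interval $K \subset \mathbb{R}$ containing $p_0$ together with every $\rho(w)(p_0)$ for reduced words $w$ of length at most $N$. Thus the value $\rho(s_i)(p_0)$ depends only on the restrictions $\rho(a)|_K$ and $\rho(b)|_K$.

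Next I would exploit the defining property of $F_2$: since $F_2$ is free on $\{a, b\}$, any choice of homeomorphisms $\alpha, \beta \in \mathrm{Homeo}_+(\mathbb{R})$ extends uniquely to a homomorphism $F_2 \to \mathrm{Homeo}_+(\mathbb{R})$, with no relations to verify. I would then define $\rho' : F_2 \to \mathrm{Homeo}_+(\mathbb{R})$ by setting $\rho'(a), \rho'(b)$ to agree with $\rho(a), \rho(b)$ on $K$ but to differ on some compact interval far from $p_0$. This guarantees $\rho'(s_i)(p_0) = \rho(s_i)(p_0) > p_0$ for every $s_i \in S$, so each $s_i$ lies in the new positive cone $P' = \{g \in F_2 : \rho'(g)(p_0) > p_0\}$. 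To force $P' \neq P$, I would arrange the modification so that a specific element $w$ (for instance, a long word like $a^m b^m a^{-m} b^{-m}$ whose $\rho$-orbit of $p_0$ must traverse the modified region) has $\rho'(w)(p_0)$ on the opposite side of $p_0$ from $\rho(w)(p_0)$.

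The main obstacle will be ensuring that $\rho'$ induces a \emph{total} ordering on $F_2$, i.e.\ that no non-identity element of $F_2$ fixes $p_0$ under $\rho'$. A careless perturbation might create relations in the orbit of $p_0$ that collapse the ordering. The standard way to handle this is to choose the perturbation so that $\rho'$ still acts freely on the orbit of $p_0$; this can be achieved generically, since faithful representations of $F_2$ into $\mathrm{Homeo}_+(\mathbb{R})$ form a dense (in fact residual) family, and if necessary one first passes to the dense left-ordering of $F_2 \times \mathbb{Q}$ as in the proof of Theorem \ref{LO_universal} to ensure the orbit of $p_0$ is dense in $\mathbb{R}$, where small perturbations are easy to control. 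This freeness of $F_2$—the absence of any relation constraining $\rho'(a)$ and $\rho'(b)$—is the decisive feature that makes the perturbation succeed and thereby forces $LO(F_2)$ to have no isolated points.
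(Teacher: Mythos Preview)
Your framework matches the paper's: take the dynamic realization $\rho$, confine the orbit $\{\rho(w)(p_0) : |w|\le N\}$ to a compact interval, and redefine one generator only outside that interval so that every $s_i\in S$ remains positive (this is exactly the content of Problems~\ref{what we need} and~\ref{only in a box}).

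The gap is in how you force $P'\ne P$. You propose to flip the sign of $\rho'(w)(p_0)$ for some unspecified long $w$ while simultaneously keeping the action free at $p_0$, and you justify the latter by a genericity claim. Neither step is made concrete, and the freeness concern is a misconception: totality of the induced ordering does \emph{not} require that $p_0$ have trivial stabilizer --- one completes to a total order using a dense sequence as in Example~\ref{homeo+}. Your appeal to residuality does not produce a single $\rho'$ which simultaneously agrees with $\rho$ on the compact interval, flips a prescribed element, and acts freely; and passing to $F_2\times\mathbb{Q}$ would change the group whose ordering space you are studying.

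The paper does the opposite of what you attempt: it \emph{deliberately} destroys freeness. With $g^+=g_n^+$ the $<$-largest word of length $\le n$, one chooses $c_1\in\{a^{\pm1}\}$ and $c_2\in\{b^{\pm1}\}$ with $c_ig^+>g^+$, keeps $\rho_n(c_2)=\rho(c_2)$, and redefines $\rho_n(c_1)$ to equal $\rho(c_1)$ on $[g^-(0),g^+(0)]$ but to be affine beyond, sending $c_2g^+(0)$ to $c_1^2g^+(0)$. Then the nonidentity reduced word $(c_1g^+)^{-1}c_2g^+$ fixes $0$ under $\rho_n$ while it does not under $\rho$. Any left-ordering built from $\rho_n$ using $0$ as first basepoint therefore differs from $<$ at this element, yet agrees with $<$ on the entire ball of radius $n$. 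The construction is completely explicit and no genericity is invoked. The point you identified as the ``main obstacle'' is in fact the mechanism that makes the proof work.
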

\begin{proof}
Suppose that $F_2$ has generators $a$ and $b$, let $P \in LO(F_2)$ be a positive cone.  Corresponding to the positive cone $P$ there is a left-ordering $<$ of $F_2$ and a \index{dynamic realization} dynamic realization $\rho:F_2 \rightarrow \mathrm{Homeo}_+(\mathbb{R})$ such that for all $g \in F_2$ we have  $\rho(g)(0) > 0$ if and only if $g>1$ (see Section \ref{dynamicalsection}).   Let $B_n(F_2) \subset F_2$ denote the set of elements of $F_2$ that are represented by reduced words of length less than or equal to $n$ relative to the generating set $\{ a, b \}$.  

\begin{problem}
\label{what we need}
 In order to prove that $P \in LO(F_2)$ is not isolated, show that it suffices to construct a representation $\rho_n : F_2 \rightarrow \mathrm{Homeo}_+(\mathbb{R})$ for each $n>0$ such that
\begin{enumerate}
\item  For all $w \in B_n(F_2)$, $ \rho_n(w)(0) =\rho(w)(0)$.
\item There exists $g \neq 1 \in F_2$ such that $\rho_n(g)(0)=0$.
\end{enumerate}
(Hint: For each $n$, the representations $\rho$ and $\rho_n$ can be used to construct two distinct orderings of $F_2$ which agree on $B_n(F_2)$ by ordering elements according to the orbit of zero).
\end{problem}

For each $n$, set
\[ g_n^+ = \max_< \{B_n(F_2)\}, \hspace{1em}  g_n^- = \min_< \{B_n(F_2)\}
\] 
Note that since $\rho$ satisfies $\rho(h)(0) > 0$ if and only if $h>1$, the map $h \mapsto \rho(h)(0)$ is order-preserving.  Thus for all $w \in B_n(F_2)$, we see that $\rho(w)(0)$ lies in the interval $[\rho(g_n^-)(0), \rho(g_n^+)(0)]$.

\begin{problem}
\label{only in a box}
  Show that if $\rho_n(a)(x) = \rho(a)(x)$ and $\rho_n(b)(x) = \rho(b)(x)$ for all $x \in  [\rho(g_n^-)(0), \rho(g_n^+)(0)]$, then $\rho_n(w)(0) = \rho(w)(0)$ for all $w \in B_n(F_2)$. (Hint: Use induction on the length of $w$)
\end{problem}

In other words, Problem \ref{only in a box} shows that in order to satisfy (1) of Problem \ref{what we need} the maps $\rho_n(a)$ and $ \rho_n(b) $ need only agree with $\rho(a)$ and $\rho(b)$ respectively on the interval $ [\rho(g_n^-)(0), \rho(g_n^+)(0)]$.  Here is how to construct maps $\rho_n(a)$ and $ \rho_n(b) $ with this property that also satisfy (2) of Problem \ref{what we need}.

To keep the notation simple, instead of writing $\rho(h)(0)$ below we will simply write $h(0)$ for all $h \in F_2$, and  in place of $g^{\pm}_n$ we'll write $g^{\pm}$.

Referring to Problem \ref{problem between} we may choose $c_1 \in \{ a^{\pm 1} \}$ and $c_2 \in \{ b^{\pm 1} \}$ such that $c_i g^+ >g^+$ for $i=1, 2$.  Set 
$$
f(x) = \left\{ \begin{array}{ll}
c_1(x) \mbox{ if $x\leq g^+(0)$} & \\[2ex]
 \left( \cfrac{c_1^2g^+(0)-c_1g^+(0)}{c_2g^+(0)-g^+(0)} \right)(x-g^+(0)) +c_1g^+(0) &\mbox{ otherwise.}
\end{array} \right.
$$
Note that the second part of the definition of $f(x)$ is just a straight line connecting $(g^+(0), c_1g^+(0))$ to $(c_2g^+(0), c_1^2g^+(0))$, which makes $f(x)$ both continuous and order-preserving.  Now define $\rho_n(c_1) = f$, and $\rho_n(c_2) = \rho(c_2)$.  Since $c_1$ and $c_2$ freely generate $F_2$ this defines a homomorphism $\rho_n : F_2 \rightarrow \mathrm{Homeo}_+(\mathbb{R})$, which clearly satisfies $\rho_n(a)(x) = \rho(a)(x)$ and $\rho_n(b)(x) = \rho(b)(x)$ for all $x \in  [g^-(0), g^+(0)]$.  Therefore (1) of Problem \ref{what we need} is satisfied, and (2) of Problem \ref{what we need} is checked in the following exercise.

\begin{problem}  Note that $c_i g^+$ is a reduced word of length $n+1$ for each $i$, since $c_i g^+(0)$ does not lie in $ [g^-(0), g^+(0)]$.  Therefore the product $(c_1g^+)^{-1}c_2g^+$ is a reduced word, and so is not the identity.  Check that $\rho_n((c_1g^+)^{-1}c_2g^+)(0) =0$, thus completing the proof of \ref{Rivas}. (Hint:  Rewrite $(c_1g^+)^{-1}c_2g^+$ as $(c_1g^+)^{-1}c_1^{-1} c_1c_2g^+$, and use the formula for $f$ above).
\end{problem}
\end{proof}

That the space $LO(F_n)$ admits no isolated points has, to date, been proved in many different ways \cite{Clay11a, Rivas12, Navas10a, Kopytov79}, although it was first proved by McCleary \cite{McCleary85}.  In fact, more is known:

\begin{theorem}\cite{Clay11a, Rivas12, Kopytov79}
Let $F_n$ denote the free group on $n>1$ generators.  Then the action of $F_n$ on $LO(F_n)$ admits a dense orbit.
\end{theorem}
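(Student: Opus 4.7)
Plan: The aim is to exhibit a specific positive cone $P_0 \in LO(F_n)$ whose conjugation orbit is dense. Since a basic open set in $LO(F_n)$ has the form $V(h_1, \ldots, h_k) = \{P : h_i \in P \text{ for all } i\}$, where $\{h_1, \ldots, h_k\} \subset F_n \setminus \{1\}$ generates a sub-semigroup not containing $1$ (call such a tuple \emph{admissible}), density of the orbit of $P_0$ translates to: for every admissible tuple, there exists $g \in F_n$ with $gh_ig^{-1} \in P_0$ for all $i$. Note that not every orbit can be dense, since for any bi-ordering $P_0$ (and $F_n$ is bi-orderable by Theorem \ref{freeBO}) the orbit is the singleton $\{P_0\}$, so $P_0$ must be chosen carefully.

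The natural device is the dynamic realization $\rho : F_n \to \mathrm{Homeo}_+(\R)$ of $P_0$ from Section \ref{dynamicalsection}, for which $h \in P_0 \iff \rho(h)(0) > 0$. A direct computation shows that conjugation of $P_0$ by $g$ amounts to translating the basepoint: $g h g^{-1} \in P_0$ if and only if $\rho(h)(x_g) > x_g$, where $x_g = \rho(g)(0)$. Consequently the orbit of $P_0$ is dense in $LO(F_n)$ if and only if for every admissible tuple $(h_1, \ldots, h_k)$ the orbit $\rho(F_n)\cdot 0$ meets the open set
\[
W(h_1, \ldots, h_k) = \{x \in \R : \rho(h_i)(x) > x \text{ for all } i\}.
\]
The proof therefore splits into two tasks: (a) arrange that $\rho(F_n)\cdot 0$ is dense in $\R$, and (b) ensure that $W(h_1, \ldots, h_k)$ is nonempty for every admissible tuple.

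Task (a) is achieved by first embedding $F_n$ into $F_n \times \mathbb{Q}$ and densifying the ordering as in the proof of Theorem \ref{LO_universal}; then by constructing $P_0$ so that the dynamic realization $\rho$ has a minimal invariant set equal to all of $\R$, which guarantees that every orbit (in particular that of $0$) is dense. The heart of the argument is task (b). The plan for (b) is to build $P_0$ by an interleaving construction: fix an enumeration $(\tau_m)_{m \geq 1}$ of a countable collection of left-orderings of $F_n$ rich enough to witness every admissible finite tuple (for instance, a countable dense subset of $LO(F_n)$, which exists because the space is second countable), let $\rho_m$ be the dynamic realization of $\tau_m$, and construct $\rho$ so that on an assigned sequence of disjoint open intervals $I_m \subset \R$ the dynamics of $\rho$ replicates, up to an order-preserving homeomorphism, the dynamics of $\rho_m$ on a neighborhood of the basepoint. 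For any admissible $(h_1, \ldots, h_k)$, some $\tau_m$ makes each $h_i$ positive, so $\bigcap_i \{x : \rho_m(h_i)(x) > x\}$ contains an open neighborhood of $0$, and the corresponding region inside $I_m$ witnesses $W(h_1, \ldots, h_k) \neq \emptyset$.

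The main obstacle is verifying the consistency of the interleaving construction: the local dynamics on each $I_m$ must glue into a genuine homomorphism $\rho : F_n \to \mathrm{Homeo}_+(\R)$, with the orbit of $0$ dense. This requires delicately choosing the intervals $I_m$ and the conjugating homeomorphisms identifying the local picture of $\rho_m$ with $\rho\big|_{I_m}$, while also inserting further orbit points of $0$ between the $I_m$ to secure density. The freeness of $F_n$ is crucial here: with no relations to preserve, one has complete liberty to prescribe the action of the generators piecewise, and the piecewise definitions combine into an $F_n$-action automatically. This is essentially the same flexibility exploited in Theorem \ref{Rivas}, and the perturbation techniques used there can be iterated to produce the required $\rho$. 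Once $P_0$ is defined as the positive cone corresponding to this $\rho$ (i.e.\ $P_0 = \{h \in F_n : \rho(h)(0) > 0\}$, after a small generic translation of the basepoint to ensure $\rho(h)(0) \neq 0$ for all nontrivial $h$), conditions (a) and (b) are both met and the orbit of $P_0$ is dense.
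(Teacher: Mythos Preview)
The paper does not prove this theorem; it is stated with citations to \cite{Clay11a, Rivas12, Kopytov79} and no argument is given, so there is nothing in the text to compare your proposal against directly.

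Your strategy---reducing density of the orbit of $P_0$ to a statement about the dynamic realization $\rho$, and then building $\rho$ by splicing together local models of a countable family of orderings, exploiting freeness of $F_n$ to define the generators piecewise---is essentially Rivas's approach and is sound in outline. Two points deserve tightening. First, the detour through $F_n\times\Q$ is misplaced: densifying an ordering of the \emph{larger} group says nothing about the density of the $\rho(F_n)$-orbit of the basepoint in $\R$, which is what you need; that density must be arranged directly in the gluing construction, and minimality of the action is neither necessary nor obviously compatible with your scheme of disjoint blocks $I_m$. (Also, with the paper's conventions the relevant point is $x_g=\rho(g^{-1})(0)$ rather than $\rho(g)(0)$, though this is harmless since the orbit of $0$ is the same set either way.) Second, ``replicates the dynamics of $\rho_m$ on a neighborhood of the basepoint'' hides the real constraint: for an admissible tuple $(h_1,\dots,h_k)$ you need the transplanted block $I_m$ to be large enough that applying every word of length $\le\max_i|h_i|$ to the transplanted basepoint stays inside it---exactly the mechanism in Problem~\ref{only in a box}. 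Since this length is unbounded over all admissible tuples, the correct bookkeeping is to enumerate the nonempty basic open sets $V_m$ of $LO(F_n)$, pick $\tau_m\in V_m$, and size the $m$-th block according to the finitely many defining words of $V_m$; a single ordering may need to be transplanted at many different scales. With these adjustments the plan can be carried through, but as written it is an outline rather than a proof: the gluing of the generators into global homeomorphisms, and the arrangement that the basepoint has a dense free orbit, are asserted rather than executed.
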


Unfortunately very little is known in the case of bi-orderings, and at the time of this writing the following is still open for $n \ge 2$.

\begin{question} Does $O(F_n)$ have isolated points?
\end{question}

\section{Examples of groups with isolated orderings}

\index{isolated ordering}
As a first example of isolated orderings, we can revisit the Klein bottle group.  Since there are only four left orderings of the group $G = \langle x, y : xyx^{-1} = y^{-1} \rangle$, the space $LO(G)$ is finite and all four of its points are isolated.  So, groups with finitely many orderings exist, and obviously all of their orderings will be isolated.  Such groups are not a source of particularly interesting examples since groups admitting finitely many left-orderings are completely classified by the following theorem, attributed to Tararin.  

\begin{theorem} \cite[Proposition 5.2.1]{KM96}
\label{Tararin theorem}
\index{Tararin groups}
Let $G$ be a left-orderable group.  If $G$ admits only finitely many left-orderings, then $G$ admits a unique sequence of normal subgroups $\{ 1 \} = G_0 \triangleleft G_1 \triangleleft \cdots \triangleleft G_n =G$ such that each $G_{i+1} / G_i$ is torsion-free Abelian of rank one, and no quotient $G_{i+2}/G_i$ is bi-orderable.  Conversely if $G$ admits such a sequence of normal subgroups, then $LO(G)$ is finite and consists of exactly $2^n$ points.
\end{theorem}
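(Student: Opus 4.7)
The plan is to treat the two directions of Tararin's theorem separately, starting with the converse since it is more concrete. Given the chain, the first step is to apply Problem~\ref{rank one problem}, which says each rank-one torsion-free abelian quotient $G_{i+1}/G_i$ admits exactly two bi-orderings (the natural one and its reverse), and then to iterate the lexicographic construction of Problem~\ref{extension} up the chain: starting with one of the two orderings of $G_1=G_1/G_0$, combining with one of the two on $G_2/G_1$ to build four left-orderings of $G_2$ each making $G_1$ convex, and so on. This exhibits $2^n$ left-orderings of $G$, and different binary choices on the $n$ quotients yield distinct orderings because they disagree on some element in the lowest quotient at which the choices differ.

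\textbf{Paragraph 2.}
The hard part of the converse, and the main obstacle of the theorem, is ruling out any other left-orderings, which reduces to showing that every $G_i$ is convex in every left-ordering of $G$. Once convexity is in hand, any left-ordering of $G$ restricts to $G_{i+1}$ and descends to an ordering of $G_{i+1}/G_i$; since each such quotient has only two orderings, $|LO(G)|\le 2^n$, matching the lower bound of Paragraph~1. The convexity assertion reduces (via Problem~\ref{nested_subgroups_problem}, by passing to quotients $G/G_i$) to the following key lemma: \emph{if $H$ is a group with a normal subgroup $L$ such that $L$ and $H/L$ are rank-one torsion-free abelian and $H$ is not bi-orderable, then $L$ is convex in every left-ordering of $H$.} To prove this (focusing on the tractable case $L\cong\mathbb{Z}$, which is the case actually needed since $G$ is effectively finitely generated), suppose for contradiction that some $g\in H\setminus L$ satisfies $1<g<\ell$ for some positive $\ell\in L$. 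The conjugation automorphism $\phi_g$ of $L$ is either the identity or inversion. In the inversion case, the identity $(g^{-1}\ell)^2=g^{-2}$, combined with the left-ordering fact that $a^2>1\Leftrightarrow a>1$, forces $g^{-1}\ell<1$ and hence $\ell<g$ after left-multiplying by $g$, contradicting $g<\ell$. In the identity case, $g$ lies in the index-two centralizer $C_H(L)$, and non-bi-orderability of $H$ guarantees some $t\in H\setminus C_H(L)$ inverting $L$; applying the inversion-case argument to $t$ (and to the still-inverting product $tg^{-1}$) places these elements above all of $L^+$, and combining with the supposed boundedness of $g$ yields the desired contradiction.

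\textbf{Paragraph 3.}
For the forward direction, assume $LO(G)$ is finite. The plan is to build the chain top-down. By Theorem~\ref{no isolated points}, any torsion-free abelian quotient of $G$ of rank at least two would produce a Cantor space inside $LO(G)$ via pull-back, so the abelianization $G^{\rm ab}$ has rank at most one; non-triviality of $G^{\rm ab}$ follows from the fact that groups with finitely many left-orderings are locally indicable (equivalently, every left-ordering is Conradian, via Corollary~\ref{conradian_iff_LI}, because a non-Conradian ordering's orbit under conjugation would accumulate to produce infinitely many distinct orderings). Set $G_{n-1}$ to be the kernel of a surjection onto the rank-one part of $G^{\rm ab}$, so that $G/G_{n-1}$ is rank-one torsion-free abelian; $G_{n-1}$ inherits only finitely many left-orderings (each extends lexicographically to $G$), so the construction iterates to build the full chain. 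Non-bi-orderability of each $G_{i+2}/G_i$ must hold because a bi-orderable extension with cyclic rank-one kernel and cyclic rank-one quotient is necessarily abelian of rank two (order-preserving automorphisms of $\mathbb{Z}$ are only the identity, forcing the extension to be central and hence abelian), and such a group carries a Cantor space of orderings by Theorem~\ref{Sikora theorem}, contradicting the finiteness of $LO(G)$. Uniqueness of the chain is then a corollary of Paragraph~2: the $G_i$ are intrinsically characterized as the convex subgroups common to every left-ordering of $G$.
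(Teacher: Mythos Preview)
The paper does not prove Tararin's theorem; it is quoted from \cite{KM96} and stated without argument, so there is no in-text proof to compare against. Assessing your sketch on its own merits: the overall architecture is correct (the converse hinges on showing each $G_i$ is convex in \emph{every} left-ordering, and the forward direction builds the chain via Conradian structure), and Paragraph~1 is fine.

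The substantive gap is in Paragraph~2. Your key lemma must hold for arbitrary rank-one torsion-free abelian $L$, not just $L\cong\mathbb Z$: the Tararin chain permits quotients such as $\mathbb Q$ or $\mathbb Z[1/p]$, and your aside that ``$G$ is effectively finitely generated'' is unwarranted in the converse direction, where no finiteness hypothesis is available. For general $L$ the automorphism group is larger than $\{\pm1\}$, so the dichotomy ``$g$ centralizes or inverts $L$'' collapses, and the identity $(g^{-1}\ell)^2=g^{-2}$ fails since it requires $g^2$ to centralize $L$. Even restricting to $L\cong\mathbb Z$, the identity-case argument is too compressed to stand: one must show that every positive $L$-inverting element dominates the whole abelian subgroup $\langle g,L\rangle$ (iterating the inversion-case bound with $g^{-k}t$ in place of $t$), and then use that in the rank-one quotient $H/L$ some relation $\bar t^{2p}=\bar g^{m}$ forces $t^{2p}\in\langle g,L\rangle$, yielding $t>t^{2p}>t$. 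Your sentence gestures at this but does not supply it.

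The same unjustified cyclic assumption recurs in Paragraph~3. Your claim that a bi-orderable rank-one-by-rank-one extension is abelian of rank two uses $\mathrm{Aut}^+(\mathbb Z)=\{1\}$; for general $L$ this is false (e.g.\ $\mathbb Z[1/2]\rtimes\mathbb Z$ with action by multiplication by $2$ is bi-orderable and nonabelian). The correct tool here is the paper's Proposition~\ref{uncountable example}, which shows directly that any bi-orderable group in such a short exact sequence has uncountable $LO$, without cyclic hypotheses.
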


\begin{problem}
For a group $G$ with a sequence of normal subgroups as in the preceding theorem, describe all of its left-orderings.
\end{problem}

For an example of a `genuine' isolated point, in the sense that the group admits infinitely many left-orderings (some of which are isolated), we turn to the braid groups. Recall from Section \ref{dehornoy introduction} that the braid groups are left-orderable, with the standard ordering (called the Dehornoy ordering \index{Dehornoy ordering}) defined in terms of $i$-positivity of representative words in the generators $\sigma_i$.  We saw that a  word $w$ in the generators $\sigma_1, \ldots, \sigma_{n-1}$ is called \textit{$i$-positive} (resp. \textit{$i$-negative}) if $w$ contains no generators $\sigma_j$ with $j<i$, and all occurences of $\sigma_i$ have positive (resp. negative) exponent.  A braid $\beta$ is called $i$-positive if it admits an $i$-positive representative word.  The positive cone of the Dehornoy ordering is the set of all braids that are $i$-positive for some $i$.  Thus if we let $P_D$ denote the positive cone of the Dehornoy ordering and use $P_i$ to denote the set of all $i$-positive braids, then 
\[ P_D = P_1 \cup P_2 \cup P_3 \cup \cdots \cup P_{n-1}.
\]
Closely related to the Dehornoy ordering is the Dubrovina-Dubrovin ordering of the braid group $B_n$, with positive cone $P_{DD}$ \cite{DD01}.    Its positive cone is 
\[P_{DD} = P_1 \cup P_2^{-1} \cup \cdots \cup P_{n-1}^{(-1)^n},
\]
that is, $P_{DD}$ contains all those braids which are $i$-positive for some odd number $i$, and all those braids which are $i$-negative for some even number $i$.  

\begin{problem}
Show that for each integer $i \in \{1, \dots , n-1\}$ the set $$\{1\} \cup P_i \cup  P_{i+1} \cup \cdots \cup P_{n-1} \cup P^{-1}_i \cup  P^{-1}_{i+1} \cup \cdots \cup P^{-1}_{n-1}$$ is a convex subgroup of $B_n$ with respect to the Dehornoy ordering.   Combine this with the result of Problem \ref{extension_generalization}, and the remarks following that problem, in order to prove that $P_{DD}$ is a positive cone.
\end{problem}

Set  $\beta_i = (\sigma_i \cdots \sigma_{n-1})^{(-1)^{i-1}},$ where $1 \leq i \leq n-1$.  The elements $\beta_i$ generate the group $B_n$, and in fact they are the key to describing a famous isolated point in the space $LO(B_n)$.

\begin{theorem} \cite{DD01}
\label{dehornoy isolated}
The positive cone $P_{DD}$ is generated as a semigroup by the elements $\beta_i$, and is thus an isolated point in $LO(B_n)$.
\end{theorem}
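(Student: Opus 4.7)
The plan is to deduce isolation from the semigroup description. Let $S$ denote the sub-semigroup of $B_n$ generated by $\beta_1,\ldots,\beta_{n-1}$. If we establish that $P_{DD}=S$, then the basic open neighbourhood $U:=U_{\beta_1}\cap\cdots\cap U_{\beta_{n-1}}$ in $LO(B_n)$ consists exactly of those positive cones $Q$ containing every $\beta_i$; any such $Q$ contains the sub-semigroup they generate, namely $S=P_{DD}$. Positive-cone trichotomy then forces $Q=P_{DD}$, for otherwise some $g\in Q\setminus P_{DD}$ would yield $g^{-1}\in P_{DD}\subseteq Q$, contradicting $Q\cap Q^{-1}=\emptyset$. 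Hence $U=\{P_{DD}\}$ and $P_{DD}$ is isolated.

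The inclusion $S\subseteq P_{DD}$ is immediate from the definitions: the word $(\sigma_i\cdots\sigma_{n-1})^{(-1)^{i-1}}$ involves only the generators $\sigma_i,\ldots,\sigma_{n-1}$, and its exponent on $\sigma_i$ is $(-1)^{i-1}$; so $\beta_i$ lies in $P_i^{(-1)^{i-1}}$, which is one of the pieces in the defining union for $P_{DD}$. The reverse containment $P_{DD}\subseteq S$ is the heart of the theorem. I would prove it by reverse induction along the chain of subgroups $C_i:=\langle\sigma_i,\ldots,\sigma_{n-1}\rangle$, each of which is convex in $(B_n,<_{DD})$ for the same reason it is convex in the Dehornoy ordering (the earlier exercise), since the two orderings produce the same stratification of $B_n$ into the pieces $P_i$ and $P_i^{-1}$ and differ only in sign conventions on each level. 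The base case is $C_{n-1}=\langle\sigma_{n-1}\rangle$, where the $<_{DD}$-positive cone is generated by $\beta_{n-1}=\sigma_{n-1}^{(-1)^n}$.

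The inductive step is the main obstacle. Given $w\in P_{DD}\cap C_i$ with $w\notin C_{i+1}$, the aim is to write $w$ as an alternating product $\beta_i^{a_1}u_1\beta_i^{a_2}u_2\cdots\beta_i^{a_k}u_k$ with each $a_j>0$ and each $u_j\in C_{i+1}\cap(P_{DD}\cup\{1\})$; the inductive hypothesis applied to each $u_j$ then realises $w$ as a product of the $\beta_j$'s. The difficulty is that $C_{i+1}$ is \emph{not} normal in $C_i$, so one cannot simply pass to an abelian quotient to read off the $\beta_i$-powers directly. Instead one must use the braid relations to push occurrences of $\sigma_j$ with $j>i$ past each $\sigma_i^{\pm 1}$, producing a Garside-style normal form (as developed in Dubrovina and Dubrovin's original paper \cite{DD01}) in which $<_{DD}$-positivity of $w$ forces every $\sigma_i$-exponent to have the correct sign $(-1)^{i-1}$, so that each maximal $\sigma_i$-free block slots into $C_{i+1}\cap(P_{DD}\cup\{1\})$. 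With this normal form established the induction closes and the proof is complete.
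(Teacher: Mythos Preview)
Your deduction of isolation from finite semigroup generation is correct and is precisely what the paper expects: it states the theorem with a citation to \cite{DD01} and immediately afterwards leaves exactly this implication as an exercise (the second Problem following the statement). Your trichotomy argument---any positive cone $Q$ containing all $\beta_i$ must contain $S=P_{DD}$, and then $Q=P_{DD}$ lest $Q$ meet $Q^{-1}$---is the intended solution.

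For the semigroup generation $P_{DD}=S$ itself, the paper does \emph{not} give a proof for general $n$; it refers to \cite{DD01} and only asks the reader (in the first Problem following the statement) to do the case $n=3$ by imitating the Navas-style argument for Theorem~\ref{dehornoy in b3} in the generators $a=\sigma_1\sigma_2$, $b=\sigma_2^{-1}$. Your inductive outline along the convex filtration $C_i=\langle\sigma_i,\dots,\sigma_{n-1}\rangle$ is a reasonable description of the shape of the Dubrovina--Dubrovin argument, and you correctly identify the obstacle (non-normality of $C_{i+1}$ in $C_i$) and the remedy (a normal-form/combing argument forcing the $\sigma_i$-exponents to have sign $(-1)^{i-1}$). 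But, as you yourself say, that step is not carried out here and is deferred to \cite{DD01}---which is exactly the status the paper leaves it in. So on the hard inclusion you and the paper are in the same position: it is cited, not proved. If you want to go beyond the paper, the $n=3$ case can be done by hand along the lines of Theorem~\ref{dehornoy in b3}; your general inductive scheme would then serve as motivation for why that computation should succeed.
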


\begin{problem}
By appropriately modifying the proof of Theorem \ref{dehornoy in b3}, prove Theorem \ref{dehornoy isolated} in the case $n=3$.  That is, show that every element of $B_3$ can be represented by a word in which the generators $\beta_1 = \sigma_1 \sigma_2$ and $\beta_2 = \sigma_2^{-1}$ occur exclusively with positive exponents, or exclusively with negative exponents.
\end{problem}

\begin{problem}
Use the fact that $P_{DD}$ is equal to the subsemigroup of $B_n$ generated by $\beta_1, \ldots, \beta_{n-1} $ to show that $P_{DD}$ is an isolated point in $LO(B_n)$.  Argue more generally that if a positive cone of a left-ordered group $G$ is finitely generated as a subsemigroup, then it is an isolated point in $LO(G)$.
\end{problem}

By contrast, the Dehornoy ordering of $B_n$ is not isolated in $LO(B_n)$.  In fact, Navas observed that the following is true (see for example \cite{DDRW08} p.269):

\begin{theorem}
For $n \ge 3$, the Dehornoy ordering $P_D$ is a limit point of its conjugates in $LO(B_n)$.
\end{theorem}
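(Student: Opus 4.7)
The goal is to show that every basic open neighborhood of $P_D$ in $LO(B_n)$ contains some conjugate $h^{-1}P_Dh$ different from $P_D$ itself. Such a neighborhood has the form $U(S) = \{P \in LO(B_n) : S \subset P\}$ for a finite subset $S = \{g_1, \ldots, g_m\} \subset P_D$; I therefore need to produce, for each such $S$, an element $h \in B_n$ satisfying $h^{-1} g_i h \in P_D$ for all $i$ while $h^{-1} P_D h \neq P_D$.

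My plan is to translate the question into the dynamic realization of the ordering, as developed in Section~\ref{dynamicalsection}. That construction yields a faithful representation $\rho \colon B_n \to \mathrm{Homeo}_+(\mathbb{R})$ with the property that $g \in P_D$ if and only if $\rho(g)(0) > 0$. A short computation shows that $g \in h^{-1}P_Dh$ if and only if $\rho(g)(y) > y$, where $y = \rho(h)^{-1}(0)$. Thus choosing a conjugator $h$ amounts to choosing a basepoint $y$ in the orbit $\rho(B_n)\cdot 0 \subset \mathbb{R}$, and the neighborhood condition $S \subset h^{-1}P_Dh$ becomes the finite family of strict inequalities $\rho(g_i)(y) > y$ for $i = 1, \ldots, m$.

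Because each function $\rho(g_i) - \mathrm{id}_{\mathbb{R}}$ is continuous on $\mathbb{R}$ and strictly positive at $0$, there is an open interval $V$ around $0$ on which all $m$ inequalities hold simultaneously. So the problem reduces to finding infinitely many $h \in B_n$ whose orbit points $y = \rho(h)^{-1}(0)$ lie in $V\setminus\{0\}$ and whose conjugate cones $h^{-1}P_Dh$ are pairwise distinct and distinct from $P_D$. I would produce such a sequence by exploiting the rich dynamics of the Dehornoy realization: the existence of pseudo-Anosov braids (such as $\sigma_1\sigma_2\cdots\sigma_{n-1}$ when $n \ge 3$) forces $\rho(B_n)\cdot 0$ to accumulate on the limit set of the $B_n$-action, and standard arguments then show that $0$ is itself a two-sided accumulation point of this orbit.

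The main obstacle is to show that distinct basepoints $y = \rho(h)^{-1}(0)$ accumulating at $0$ actually yield distinct cones $h^{-1}P_Dh$ different from $P_D$, rather than mere re-parametrizations arising from the stabilizer $\mathrm{Stab}(P_D) = \{h \in B_n : h^{-1}P_Dh = P_D\}$. I would handle this by observing that $\mathrm{Stab}(P_D)$ must preserve the entire chain of Dehornoy convex subgroups $\{1\} \subset \langle \sigma_{n-1}\rangle \subset \langle \sigma_{n-2}, \sigma_{n-1}\rangle \subset \cdots \subset B_n$; this confines $\mathrm{Stab}(P_D)$ to a proper subgroup of $B_n$ of infinite index (it meets the centre $\langle \Delta^2 \rangle$ but cannot exhaust $B_n$, since otherwise $P_D$ would be bi-invariant and $B_n$ bi-orderable, contradicting Chapter~\ref{braids chapter} for $n \ge 3$). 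Consequently, one can extract a sequence $h_k \in B_n$ lying in pairwise distinct cosets of $\mathrm{Stab}(P_D)$ with $\rho(h_k)^{-1}(0) \to 0$ through nonzero values; all but finitely many of the conjugates $h_k^{-1}P_Dh_k$ then lie in $U(S) \setminus \{P_D\}$, witnessing $P_D$ as a limit point of its conjugates.
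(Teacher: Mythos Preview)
The paper does not actually prove this theorem; it attributes the observation to Navas and refers the reader to \cite{DDRW08}, p.~269. So there is no in-paper proof to compare against, and your attempt must be judged on its own.

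Your dynamical translation is correct as far as it goes: the conjugate cone $h^{-1}P_Dh$ is determined by the basepoint $y = \rho(h)^{-1}(0)$, and the neighborhood condition becomes the finite system of open inequalities $\rho(g_i)(y)>y$. The argument breaks at the step where you assert that $0$ is a two-sided accumulation point of the orbit $\rho(B_n)\cdot 0 = t(B_n)$. This is false for the dynamic realization of the Dehornoy ordering. As recorded in a problem in Chapter~\ref{braids chapter}, $\sigma_{n-1}$ is the \emph{least} positive element of $<_D$, so the ordering is discrete. In the construction of Section~\ref{dynamicalsection}, once $t(1)=0$ and $t(\sigma_{n-1})$ have been assigned, no further orbit point is ever placed strictly between them (nothing lies between $1$ and $\sigma_{n-1}$ in the ordering), and symmetrically for $\sigma_{n-1}^{-1}$ on the negative side. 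Thus $0$ is an \emph{isolated} point of $t(B_n)$, your interval $V$ contains no orbit point other than $0$ once it is small enough, and the argument produces no conjugating elements at all.

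There is also a smaller error: $\sigma_1\sigma_2\cdots\sigma_{n-1}$ is not pseudo-Anosov but periodic, since $(\sigma_1\cdots\sigma_{n-1})^n=\Delta_n^2$ is central. But even a genuine pseudo-Anosov would not repair the argument, because the obstruction is the local discreteness of $t(B_n)$ at $0$, which is forced by the order structure and not by which braids you choose to iterate. To make a dynamical approach work one must use an action in which the relevant basepoint is \emph{not} isolated in its orbit; the dynamic realization of Section~\ref{dynamicalsection} applied to $<_D$ is precisely the wrong choice for this.
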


\begin{corollary}
For $n \ge 3$, the positive cone  $P_D$ of the Dehornoy ordering is not finitely-generated as a subsemigroup of $B_n$.
\end{corollary}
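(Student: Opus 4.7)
The plan is to deduce the corollary directly from the preceding theorem by contrapositive, using the observation (established in the problem immediately before the corollary) that any finitely generated positive cone is an isolated point of $LO(G)$.

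More concretely, suppose for contradiction that $P_D$ were finitely generated as a subsemigroup, say $P_D = S(\{g_1, \dots, g_k\})$ for some $g_1, \dots, g_k \in B_n$. I would then examine the basic open neighborhood
\[
\mathcal{U} \;=\; U_{g_1} \cap U_{g_2} \cap \cdots \cap U_{g_k} \;\subset\; LO(B_n),
\]
which clearly contains $P_D$. If $Q \in \mathcal{U}$ is any other positive cone, then $g_1, \dots, g_k \in Q$, and since $Q$ is closed under the semigroup operation, $S(\{g_1,\dots,g_k\}) \subset Q$; that is, $P_D \subset Q$. But any two positive cones in $LO(B_n)$ satisfy $P \cup P^{-1} = B_n \setminus \{1\}$, and two positive cones comparable by inclusion must coincide (because if $P \subsetneq Q$ then some $g \in Q \setminus P$ satisfies $g^{-1} \in P \subset Q$, contradicting $Q \cap Q^{-1} = \emptyset$). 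Hence $Q = P_D$, and $\{P_D\} = \mathcal{U}$, which would make $P_D$ an isolated point of $LO(B_n)$.

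This contradicts the theorem just proved, which asserts that for $n \ge 3$ the point $P_D$ is a limit point of its conjugates, and in particular is not isolated. Therefore no finite generating set for $P_D$ as a subsemigroup can exist. The main (and essentially only) content is already packaged in the preceding theorem and the general principle that finitely generated positive cones yield isolated points; the corollary is then a clean contrapositive, so there is no serious obstacle once those inputs are in hand.
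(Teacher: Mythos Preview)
Your proposal is correct and follows exactly the approach the paper intends: the corollary is meant to be an immediate consequence of the preceding theorem together with the general fact (set as the problem just before) that a finitely generated positive cone is isolated in $LO(G)$. Your write-up even spells out the easy detail that two comparable positive cones coincide, which is the only thing one might want to check.
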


For other examples of genuine isolated orderings, see \cite{Navas11a, Itopreprinta, Ito13, Ito2014preprint, Dehornoy14}.

\section{The number of orderings of a group}

In this section we will see a first application of compactness of $LO(G)$, by showing that $LO(G)$ cannot be countably infinite.  In contrast, $O(G)$ can be countably infinite \cite{Buttsworth71}.  We begin with the case of a Conradian left-ordered group.

\begin{theorem} \cite[Proposition 5.2.5]{KM96}
\label{uncountable Conradian}
If a group $G$ admits a Conradian left-ordering, then $LO(G)$ is either finite or uncountable.
\end{theorem}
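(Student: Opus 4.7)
The plan is to show that if $|LO(G)|$ is infinite then $|LO(G)| \geq 2^{\aleph_0}$. Fix a Conradian left-ordering $<$ of $G$ with chain of convex subgroups $\mathcal{C} = \{C_\alpha\}$. By the Conradian axiom, each convex jump $(C, D)$ has $C \triangleleft D$ with $D/C$ Archimedean, hence via H\"older's Theorem an order-preserving image of a nontrivial subgroup of $(\mathbb{R}, +)$, and in particular a torsion-free abelian group. The key engine throughout is the lexicographic construction for relatively convex chains from Problem \ref{extension_generalization}, which lets me reassemble independent choices of orderings on the jumps into a single left-ordering of $G$.

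I would proceed by identifying three sources of flexibility in the Conradian structure, each producing uncountably many distinct left-orderings of $G$. First, if $\mathcal{C}$ is infinite then for each $\varepsilon \in \{\pm 1\}^{\mathbb{N}}$ I can reverse the $<$-induced ordering on each jump indexed by $\varepsilon_k = -1$ and reassemble via lex; distinct $\varepsilon$ produce distinct orderings since the restriction to each jump recovers $\varepsilon_k$. Second, if some jump $(C, D)$ has $D/C$ of rank at least $2$, then $LO(D/C)$ is uncountable by Theorem \ref{no isolated points}, and lex-combining each such ordering with the $<$-induced orderings on the remaining jumps gives $2^{\aleph_0}$ distinct left-orderings of $G$. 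Third, if $\mathcal{C}$ is finite with all rank-$1$ jumps but some double jump $C_{\alpha+2}/C_\alpha$ (with $C_\alpha$ normal in $C_{\alpha+2}$) is bi-orderable, then $C_{\alpha+2}/C_\alpha$ is itself Conradian left-orderable and its natural length-$2$ chain violates the Tararin axioms; by induction on chain length, $|LO(C_{\alpha+2}/C_\alpha)|$ is uncountable, and lex-combining each such ordering, viewing $(C_\alpha, C_{\alpha+2})$ as a single jump, with the orderings on the other jumps of $\mathcal{C}$ yields uncountably many distinct left-orderings of $G$.

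If none of the three flexibilities occurs, then $\mathcal{C}$ is a finite chain $\{1\} = C_0 \subsetneq \cdots \subsetneq C_n = G$ with rank-$1$ torsion-free abelian jumps and no bi-orderable double jump. I would next verify that each $C_i$ is normal in $G$: for any $g \in G$, the chain $\{g C_i g^{-1}\}$ is the chain of convex subgroups of the Conradian left-ordering obtained by conjugating $<$ by $g$, and since it inherits the same rigid structure, a chain-uniqueness argument for such rigid Conradian orderings will force $g C_i g^{-1} = C_i$. Then $G$ satisfies the Tararin hypothesis of Theorem \ref{Tararin theorem}, so $|LO(G)| = 2^n$ is finite, completing the dichotomy.

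The main obstacle will be the third case, where $C_{\alpha+2}/C_\alpha$ may be non-abelian so Theorem \ref{no isolated points} does not apply directly. The induction on chain length sidesteps this provided the base case is handled (where $n=1$, $G$ is Archimedean, embeds in $\mathbb{R}$, and has exactly two left-orderings) and provided one verifies that $C_{\alpha+2}/C_\alpha$ has strictly smaller chain length than $G$. A secondary subtle point is the chain-uniqueness argument used to deduce normality of the $C_i$ in the rigid case; this must be made directly from the rigidity properties, without circular appeal to Tararin's theorem itself.
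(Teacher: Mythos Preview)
Your overall architecture matches the paper's: reduce to a finite chain with rank-one jumps via Problems \ref{first tararin} and \ref{second tararin}, then confront the Tararin structure. However, your third case has a genuine gap. You propose to show $|LO(C_{\alpha+2}/C_\alpha)|$ is uncountable ``by induction on chain length,'' but $C_{\alpha+2}/C_\alpha$ always has chain length exactly $2$, so the induction only helps when $G$ itself has chain length $n \ge 3$. When $n = 2$ --- that is, when $G$ itself is a bi-orderable extension of a rank-one torsion-free abelian group by another --- your argument is circular: you are trying to prove the result for $G$ by appealing to the result for $G$. Your stated base case $n = 1$ does not cover this, and your own proviso ``provided $C_{\alpha+2}/C_\alpha$ has strictly smaller chain length than $G$'' fails precisely here.

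This $n = 2$ case is the crux, and it requires real work. The paper handles it via Proposition \ref{uncountable example}: one shows $H \cong \mathbb{Z}$ (using Hion's Lemma and a root-counting argument in $\mathbb{Q}$), so the sequence splits, and then one builds an explicit affine action $\rho : G \to \mathrm{Homeo}_+(\mathbb{R})$ with $K$ acting by translations and $H$ by dilations. Each irrational $\epsilon > 0$ then yields a positive cone $P_\epsilon = \{g : \rho(g)(\epsilon) > \epsilon\}$, and these are pairwise distinct. None of this is visible in your proposal, and it cannot be absorbed into an inductive step.

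Two smaller issues: you assume $C_\alpha \triangleleft C_{\alpha+2}$ in your third case, but Conradian only gives $C_\alpha \triangleleft C_{\alpha+1} \triangleleft C_{\alpha+2}$, and normality is not transitive; this needs the uniqueness/rigidity argument first (which is why the paper establishes uniqueness of the chain \emph{before} invoking the bi-orderable double jump). Your proposed normality argument for the $C_i$ in $G$ via conjugating the ordering is different from the paper's direct rank computation with $K = G_{n-1} \cap H_{n-1}$, but either route can be made to work.
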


The proof will follow by considering each way that $G$ can fail to be a `Tararin group', one of the groups described in Theorem \ref{Tararin theorem}.  In every case, one finds as a result that $G$ admits uncountably many left-orderings.

\begin{problem}
\label{first tararin}
Suppose that $G$ admits a Conradian left-ordering having infinitely many convex jumps.  Show that by `flipping' the orderings of the jumps, one can create uncountably many left-orderings of $G$.
\end{problem}

\begin{problem}
\label{second tararin}
Suppose that $G$ admits a Conradian left-ordering with convex jumps $\{ (C_i, D_i) \}_{i \in I}$.  Show that if there is a jump $(C_i, D_i)$ such that $D_i /C_i$ is torsion-free Abelian of rank greater than one, then $G$ admits uncountably many left-orderings.
\end{problem}

If $\{ 1 \} = G_0 \triangleleft G_1 \triangleleft \cdots \triangleleft G_n = G$ and $G_{i+2}/G_i$ is bi-orderable, then $G$ fails to be a Tararin group as well.  The following Proposition is relevant to that case. 

\begin{proposition}\cite[Lemma 2.1]{Rivas12a}
\label{uncountable example}
Suppose that $G$ is bi-orderable, and fits into a short exact sequence

\[ 1 \rightarrow K \rightarrow G \stackrel{q}{\rightarrow} H \rightarrow 1
\]
where both $K$ and $H$ are torsion-free Abelian of rank one.  Then $LO(G)$ is uncountable.
\end{proposition}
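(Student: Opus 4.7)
The plan is to split the argument on whether or not $G$ is abelian.

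\textbf{Case 1 ($G$ abelian).} Since rank is additive in short exact sequences of torsion-free abelian groups, $G$ is torsion-free abelian of rank two. By Theorem \ref{no isolated points}, $LO(G) = O(G)$ has no isolated points; as a nonempty compact Hausdorff space with no isolated points must be uncountable, this case is settled.

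\textbf{Case 2 ($G$ non-abelian).} The plan is to realize $G$ as a subgroup of the orientation-preserving affine group $\mathrm{Aff}_+(\R)$ whose action on $\R$ has a countable but dense set of fixed points, and then read off uncountably many distinct positive cones. Fix a bi-ordering $<_0$ of $G$; its restriction to $K$ is Archimedean since $K$ has rank one, so by H\"older's theorem (Theorem \ref{theorem:Holder}) there is an order-preserving embedding $\iota : K \hookrightarrow (\R,+)$. For each $g \in G$, conjugation $k \mapsto gkg^{-1}$ is an order-preserving automorphism of $K$, which via $\iota$ corresponds to multiplication by a positive scalar $\lambda(g)$; this defines a homomorphism $\lambda: G \to \R_{>0}$ with $K \subseteq \ker \lambda$.

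A short algebraic argument shows that $K$ cannot be cyclic. If $K \cong \Z$, then $\mathrm{Aut}_+(K)$ is trivial, so $K$ would be central in $G$; but any central extension $1 \to \Z \to G \to H \to 1$ with $H$ rank-one torsion-free abelian is necessarily abelian, because for any $a, b \in G$ one can pick non-zero integers $m, l$ with $m\bar a = l\bar b$ in $H$, write $a^m = b^l z^p$ for some $p$, and compute $z^{nm} = [a^m,b] = [b^l z^p, b] = 1$ using centrality of $[a,b] \in K$, forcing $[a,b] = 1$. Since $G$ is non-abelian, $K$ is non-cyclic of rank one and $\iota(K)$ is dense in $\R$. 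Next, upgrade $\iota$ to a faithful affine action $\phi: G \to \mathrm{Aff}_+(\R)$ of the form $\phi(g)(x) = \lambda(g) x + c(g)$, where $c$ is a $\lambda$-twisted $1$-cocycle on $G$ extending $\iota$. The obstruction to extending $\iota$ from $K$ to such a cocycle on $G$ lives in $H^2(H, \R)$ with action through $\lambda$, which vanishes because $\R$ is a divisible abelian group. Faithfulness is automatic: $\lambda|_H$ is injective (otherwise the kernel would be a rank-one subgroup of $H$ forcing $\lambda$ trivial), so $\ker \lambda = K$, on which $\iota$ is injective.

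To finish, set $F = \bigcup_{g \neq 1} \mathrm{Fix}(\phi(g))$. Elements of $K \setminus \{1\}$ act as non-trivial translations with no fixed points, while for each $g$ with $\lambda(g) \neq 1$ the unique fixed point is $c(g)/(1 - \lambda(g))$. Fixing any $g_0$ with $\lambda(g_0) = \mu \neq 1$, the fixed points of the coset $\{g_0 k : k \in K\}$ form the set $\{(\mu\iota(k) + c(g_0))/(1-\mu) : k \in K\}$, a non-trivial affine image of the dense set $\iota(K)$, and hence itself dense in $\R$; so $F$ is countable but dense. For each $x_0 \in \R \setminus F$, the set $P_{x_0} = \{g \in G : \phi(g)(x_0) > x_0\}$ is the positive cone of a left-ordering of $G$, since each $\phi(g)$ is order-preserving on $\R$. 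If $x_0 < x_0'$ both lie in $\R \setminus F$, density of $F$ produces some $g$ whose fixed point lies strictly between them, so $\phi(g)(x_0) - x_0$ and $\phi(g)(x_0') - x_0'$ have opposite signs and $P_{x_0} \neq P_{x_0'}$. Therefore $x_0 \mapsto P_{x_0}$ is an injection from the uncountable set $\R \setminus F$ into $LO(G)$, completing the proof. I expect the main technical hurdle to be the cocycle construction, in particular the verification of $H^2(H, \R) = 0$ when $H$ is not finitely generated.
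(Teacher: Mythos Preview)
Your overall strategy matches the paper's---realize $G$ inside $\mathrm{Aff}_+(\R)$ and read off uncountably many positive cones from points that are not fixed by any nontrivial element---but the constructions of the affine representation differ, and yours has the gap you yourself flag. The paper observes that since $K$ has rank one it may be identified with a subgroup of $\Q$; Hion's Lemma then forces each conjugation scalar to be a positive \emph{rational}, so the induced map $\bar\lambda : H \hookrightarrow \Q_{>0}$ is an injection into the positive rationals. Because a fixed rational has only finitely many rational $n$-th roots, $H$ cannot contain an element with infinitely many roots, and a rank-one torsion-free abelian group with that property is cyclic: $H \cong \Z$. The sequence therefore splits, $G \cong K \rtimes \Z$, and the affine action is written down by hand ($K$ acts by translations, a generator of $H$ by $x \mapsto rx$), with no cohomology required.

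Your route through $H^2(H;\R) = 0$ is not adequately justified: divisibility of $\R$ makes it injective as a $\Z$-module, so $\mathrm{Ext}^1_\Z(-,\R) = 0$, but that is not the same as vanishing of group cohomology with the twisted $\bar\lambda$-action. The statement happens to be true here precisely because, as the paper shows, $H \cong \Z$ and $H^2(\Z;A) = 0$ for every coefficient module; but you have not established this, and your argument as written does not exclude $H$ being a non-finitely-generated subgroup of $\Q$. The cleanest repair is simply to insert the paper's step: note that $\lambda$ takes rational values and deduce $H \cong \Z$. On a smaller point, your proof that $K$ is not cyclic (via ``a central extension of a rank-one group by $\Z$ is abelian'') is a neat alternative to the paper's density argument, which instead observes that $K$ is closed under multiplication by powers of $r \ne 1$ and is therefore dense.
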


\begin{proof} If $G$ is Abelian, then $G$ admits uncountably many left-orderings, by Theorem \ref{no isolated points}.  So we suppose $G$ is not Abelian.   Identify $K$ with a subgroup of $\Q$; by Problem \ref{rank one problem} we may assume the ordering of $K$ as a subgroup of the ordered group $G$ agrees with the usual ordering of $\Q$.  Consider the map $\phi: H \rightarrow Aut(K)$ given by $\phi(h)(k) = gkg^{-1}$, where $q(g) = h.$  It is easy to check that this independent of the choice of $g \in q^{-1}(h)$.  It also preserves the ordering of $K$ as a subgroup of the bi-ordered group $G$, which may be assumed to coincide with the ordering of $K$ as a subgroup of $\Q$ ordered in the usual way.  By Lemma \ref{hion's lemma}, $\phi$ corresponds to multiplication by some positive rational number: $\phi(h)(k) = rk$, where $ r = r(h) \in \Q$.  Since $G$ is non-Abelian, $\phi$ is a nontrivial homomorphism.  If the kernel of $\phi$ were nontrivial, the image of $\phi$ would be torsion since $H$ is rank one Abelian.  However the only torsion element of $Aut(K)$ is multiplication by $-1$, and if this were the action of $H$ on $K$ then $G$ would not be bi-orderable.  Therefore $\phi :H \rightarrow Aut(K)$ is injective.

We claim that $H \cong \mathbb{Z}$.  Suppose not, then by Problem \ref{cyclic problem} one can choose $h \in H$ not equal to the identity and infinitely many $g_1, g_2, g_3, \ldots$ and $n_1, n_2, n_3, \ldots$ such that $g_i^{n_i} = h$ for all $i$.   Then if $\phi(h)$ acts on $k \in K$ according to $\phi(h)(k) = rk$, the elements $g_i$ must act according to $\phi(g_i^{n_i})(k) = r_i^{n_i} k$ for some rational numbers $r, r_1, r_2, r_3, \ldots$ satisfying $r_i^{n_i} = r$ for all $i$.  It is not possible to have infinitely many solutions to $r_i^{n_i} = r$ (see Problem \ref{finite roots}).

Thus $H \cong \mathbb{Z}$ and so the sequence splits, and $G \cong K \rtimes_{\phi} H$.  Therefore we can define a map $\rho: G \rightarrow \mathrm{Homeo}_+(\R)$ as follows: Fix $k \in K$ and set $\rho(k)(x) = x+1$.  For all other $k' \in K$, if $(k')^q = k^p$, set $\rho(k')(x) = x+p/q$.    For the group $H$, suppose that the generator is $h$ and that $\phi(h)(k) = rk$ for all $k \in K$, where $r = r(h) \in \Q$.  Then set $\rho(h)(x) = rx$ for all $x \in \R$.  A priori this defines a map $\rho : K *H \rightarrow  \mathrm{Homeo}_+(\R)$, but because $\rho(hkh^{-1}) = \rho(\phi(h)(k))$ for all $k \in K$, it descends to a map $\rho: G \rightarrow \mathrm{Homeo}_+(\R)$.  We can now define uncountably many positive cones $P_{\epsilon} \subset G$ by setting, for each irrational $\epsilon >0$, 
\[ P_{\epsilon} = \{ g \in G : \rho(g)(\epsilon) > \epsilon \}. \eqno{\square}
\]
\let\qed=\relax\end{proof}

\begin{lemma}[Hion's Lemma \cite{Hion54}] \label{hion's lemma} Suppose $A$ and $B$ are subgroups of the additive group of reals, $\R$ and that $f:A \to B$ is an isomorphism which preserves the natural order inherited from $\R$.  Then there is a positive real number $r$ such that $f(a) = ra$ for all $a$.  If $A$ and $B$ are subgroups of $\Q$, then $r \in \Q$.
\end{lemma}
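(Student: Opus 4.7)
The plan is to leverage the $\Z$-linearity forced on $f$ by its being a group homomorphism, and then use order preservation to upgrade this to $\R$-linearity via a density argument. First I would fix any nonzero $a_0 \in A$; by replacing $a_0$ with $-a_0$ if necessary (and noting $f(-x) = -f(x)$) I may assume $a_0 > 0$. Since $f(0)=0$ and $f$ preserves order, $f(a_0) > 0$, so I can define $r := f(a_0)/a_0 > 0$. The goal is then to show $f(a) = ra$ for every $a \in A$.

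Next I would fix an arbitrary $a \in A$ and compare the ratios $a/a_0$ and $f(a)/f(a_0)$ by squeezing them between the same rationals. Given integers $m,n$ with $n > 0$, the inequality $m/n < a/a_0$ is equivalent to $ma_0 < na$, and applying the order-preserving homomorphism $f$ (using $f(ma_0) = mf(a_0)$ and $f(na) = nf(a)$) gives $mf(a_0) < nf(a)$, i.e.\ $m/n < f(a)/f(a_0)$. A symmetric argument handles $m/n > a/a_0$. By density of $\Q$ in $\R$, this forces $f(a)/f(a_0) = a/a_0$, hence $f(a) = r a$ as required.

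For the final assertion, once the formula $f(a) = ra$ is established, the value $r = f(a_0)/a_0$ is a ratio of two elements of $\Q$ whenever $A, B \subset \Q$, and so $r \in \Q$. I do not expect any serious obstacle here: the only subtlety is being careful that $f$ respects the order strictly (which follows from $f$ being an order isomorphism, so $f$ and $f^{-1}$ are both monotone), which is what legitimizes the squeezing step. No archimedean or completeness hypothesis on $A$ or $B$ is needed, since the rationals used in the squeeze are external parameters rather than elements of $A$ or $B$.
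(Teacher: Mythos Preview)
Your proof is correct and is essentially the same argument as the paper's: both use $\Z$-linearity of $f$ together with order preservation and density of $\Q$ to show $f(a)/a$ is constant. The only cosmetic difference is that the paper phrases the rational-squeeze as a proof by contradiction (if $f(a)/f(a') \ne a/a'$, insert a rational $p/q$ between them and derive a contradiction), whereas you phrase it as a direct sandwich argument.
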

\begin{proof}
Suppose there are $a, a' \in A$ such that $f(a)/f(a') \ne a/a'$, say \break $f(a)/f(a') < a/a'$.  Then we can find $p, q \in \Z$
with $f(a)/f(a') < p/q < a/a'$.  We may assume that $q$ and $a'$ (and hence $f(a')$) are positive, and conclude that $pa' < qa$ and $pf(a') > qf(a)$, a contradiction.  So $f(a)/a$ is constant for $a \ne 0$.  The last sentence of the lemma is obvious.
\end{proof}

\begin{problem}\label{cyclic problem}
If $H$ is a rank one torsion-free abelian group, then $H$ is cyclic if and only if for every $h \in H$ there are only finitely many $h_i \in H$ and $0 < n_i \in \Z$ such that $h_i^{n_i} = h$.
\end{problem}

\begin{problem}\label{finite roots} Verify that, given $r \in \Q$, the equation $r_i^{n_i} = r$ has only finitely many solutions with $n_i \in \Z, r_i \in \Q$. 
\end{problem}

\begin{problem}
Check the other details of the proof of Proposition \ref{uncountable example}.  In particular show that $P_{\epsilon} \ne P_{\epsilon'}$ for $\epsilon < \epsilon'$ by arguing that $K$ must be dense in $\Q$, as it is closed under multiplication by $r^k$ where $0 < r \ne 1$ corresponds to the action of the generator of $H$.  Then one can find appropriate $k \in K$ and $h \in H$ so that for the corresponding $g = (k,h) \in G$, the graph of the line $\rho(g)(x) = rx + p/q$ intersects the diagonal between $x = \epsilon$ and $x = \epsilon'$.
\end{problem}

\begin{problem}
\label{third tararin}
Use the result of Proposition \ref{uncountable example} to show that if $G$ admits a unique sequence of normal subgroups $\{ 1 \} = G_0 \triangleleft G_1 \triangleleft \cdots \triangleleft G_n =G$ with rank one Abelian quotients and $G_i /G_{i-2}$ is bi-orderable for some $i$, then $LO(G)$ is uncountable. 
\end{problem}

\begin{proof}[Proof of Theorem \ref{uncountable Conradian}]
Suppose that $G$ admits a Conradian left-ordering and that $LO(G)$ is infinite and countable.  By Problems \ref{first tararin} and \ref{second tararin}, countability of $LO(G)$ implies that our given Conradian ordering has finitely many convex jumps, each with a rank one Abelian quotient.   Let us name the convex subgroups
\[ \{ 1 \} = G_0 \triangleleft G_1 \triangleleft \cdots \triangleleft G_n =G
\]
with $G_{i+1}/G_{i}$ rank one Abelian for all $i$.  To establish uniqueness, suppose there is another series of normal subgroups
\[ \{ 1 \} = H_0 \triangleleft H_1 \triangleleft \cdots \triangleleft H_n =G
\]
with rank one Abelian quotients.  If $G_{n-1} \neq H_{n-1}$, set $K = G_{n-1} \cap H_{n-1}$ and argue that $G/K$ is torsion free Abelian as follows.  Since $G/G_{n-1}$ is Abelian, we have $[G,G] \subset G_{n-1}$ and similarly $[G,G] \subset H_{n-1}$, so $[G,G] \subset K$ and $G/K$ is Abelian.  To see it's torsion free, note that any nonidentity element of $G/K$ maps to a nonidentity element in (at least) one of $G/G_{n-1}$ or $H/H_{n-1}$ via the natural projections, so it has a torsion free image, and therefore itself has infinite order.
If the rank of $G/K$ is greater than one, then $G/K$ admits uncountably many left-orderings by Sikora's theorem, which implies that $G$ also has uncountably many left-orderings.
So the rank of $G/K$ must be one. 

Consider the exact sequence $$ 1 \to H_{n-1}/K \to G/K \to G/H_{n-1} \to 1 $$ of torsion-free Abelian groups.� By additivity of rank we see that the rank
of $H_{n-1}/K$ must be zero, so $H_{n-1} = K$.� Similarly $G_{n-1} = K$ and we conclude $H_{n-1} = G_{n-1}$.
   By induction we conclude that the two series of normal subgroups coincide.

Finally, we are in the case where $LO(G)$ is countable and infinite and $G$ must have a unique finite series of normal subgroups having rank one Abelian quotients.  By Theorem \ref{Tararin theorem}, this can only be possible if one of the quotients $G_i /G_{i-2}$ is bi-orderable.  In that case, Problem \ref{third tararin} shows that in fact $G$ has uncountably many left-orderings.  This contradiction finishes the proof.
\end{proof}

We can now prove the general result, due to Peter Linnell.

\begin{theorem}\cite{linnell11}
If $G$ is a left-orderable group, then $LO(G)$ is either finite or uncountable.
\end{theorem}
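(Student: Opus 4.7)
The plan is to reduce this result to the already-established Conradian case (Theorem \ref{uncountable Conradian}) by combining a Baire category / Cantor--Bendixson argument on the topological space $LO(G)$ with the structure theory of isolated orderings. First, using compactness and extension-of-orderings arguments analogous to those in Theorem \ref{fgLO}, I would reduce to the case that $G$ is countable; indeed if every countable subgroup $H \leq G$ has $LO(H)$ finite or uncountable, a finite-intersection-property argument (in the spirit of Theorem \ref{fgLO}) propagates the dichotomy to $G$. So assume $G$ is countable, in which case $LO(G)$ sits as a closed subspace of the Cantor space $\{0,1\}^G$ and is therefore a compact, metrizable, totally disconnected Polish space.

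Suppose $LO(G)$ is infinite and, for contradiction, countable. Being a nonempty countable compact Hausdorff space, the Baire category theorem forces $LO(G)$ to contain an isolated point: otherwise every singleton would be closed and nowhere dense, expressing $LO(G)$ as a countable union of nowhere-dense sets in a nonempty Baire space. Let $<$ be such an isolated left-ordering of $G$, with positive cone $P$, so that there exist $g_1,\dots,g_n \in G$ with $\{<\}$ being the unique element of $LO(G)$ satisfying $g_1,\dots,g_n > 1$.

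The central technical step, following ideas of Navas, is to show that under the standing hypothesis that $LO(G)$ is countable, such an isolated ordering must in fact be Conradian. The rough idea is to analyze the dynamic realization of $<$ (Section \ref{dynamicalsection}) together with the continuous $G$-action by conjugation on $LO(G)$: an isolated positive cone is ``finitely determined,'' so its $G$-orbit under conjugation is discrete and tightly constrained by the chain of convex subgroups of $<$. Countability of $LO(G)$ excludes ``wild'' non-Conradian convex jumps in $<$, because each such jump would give a copy of $\R$-many orderings in $LO(G)$ obtained by perturbing the Conrad-type homomorphism (in the style of Problems~\ref{first tararin}--\ref{third tararin} and Proposition~\ref{uncountable example}). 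Once we have produced a Conradian left-ordering of $G$, Theorem \ref{uncountable Conradian} applies and yields that $LO(G)$ is either finite or uncountable, contradicting the assumption that it is countably infinite.

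The main obstacle is this Navas-type step: extracting a Conradian ordering from the mere existence of an isolated point in a countable $LO(G)$. The difficulty is that isolated orderings can perfectly well exist in uncountable $LO(G)$ (for instance the Dubrovina--Dubrovin ordering of $B_n$, Theorem \ref{dehornoy isolated}), so one must use the countability hypothesis in an essential way, through a delicate interplay between the dynamical realization of $<$, the $G$-equivariance of the Cantor--Bendixson derivatives, and the algebraic chain of convex subgroups of $<$. This is where most of the subtlety of Linnell's original proof lies.
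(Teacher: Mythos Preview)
Your proposal has a genuine gap at the step you yourself flag as the main obstacle: you never actually show that an isolated point of a countable $LO(G)$ must be Conradian. The sketch you give --- that a non-Conradian convex jump would produce continuum-many orderings by ``perturbing the Conrad-type homomorphism'' --- does not work as stated, because the constructions in Problems~\ref{first tararin}--\ref{third tararin} and Proposition~\ref{uncountable example} all presuppose a Conradian ordering to begin with (they exploit the abelian nature of the quotients $D/C$). A non-Conradian jump gives you no such abelian quotient to perturb. You are also mistaken that ``this is where most of the subtlety of Linnell's original proof lies''; the paper's argument avoids this difficulty entirely.

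The paper's proof is quite different and much cleaner. Rather than working with isolated points of $LO(G)$ itself, it uses the conjugation action of $G$ on $LO(G)$ and Zorn's lemma to extract a \emph{minimal} nonempty closed $G$-invariant subset $M \subset LO(G)$. The dichotomy then splits on whether $M$ is finite or infinite. If $M$ is finite, any $P \in M$ has finite $G$-orbit, so its stabilizer has finite index in $G$; the restriction of $P$ to the stabilizer is a bi-ordering, and Problem~\ref{finite index Conradian} then forces $P$ itself to be Conradian, reducing to Theorem~\ref{uncountable Conradian}. If $M$ is infinite, minimality (every orbit closure equals $M$) rules out isolated points in $M$ by a short argument, so $M$ is a compact Hausdorff space with no isolated points and hence uncountable. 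No reduction to countable $G$ is needed, and the delicate structural analysis of isolated orderings you were attempting is bypassed completely: the $G$-action, via minimality, manufactures the Conradian ordering for free in the finite-$M$ case.
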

\begin{proof} 
We use the conjugation action of $G$ on $LO(G)$ to construct a \textit{minimal invariant set}.  Set
\[ \mathcal{S} = \{ A \subset LO(G) : A \hbox{ is nonempty, closed and $G$-invariant} \}
\]
The set $\mathcal{S}$ is nonempty since it contains $LO(G)$, and it is partially ordered by inclusion.  Moreover if we have any chain of nonempty, closed, $G$-invariant sets $\{ A_{i} \}_{i \in I}$ then this collection has the finite intersection property, so by compactness of $LO(G)$ we have $$\emptyset \neq \bigcap_{i \in I}A_i \in \mathcal{S}$$
Thus every chain has a lower bound, and so by Zorn's lemma there is a minimal element of $\mathcal{S}$, call it $M$.   The set $M$ is a minimal invariant set.

\begin{problem}\label{problem-minimal}
Show that a the minimal $G$-invariant closed subset $M \subset LO(G)$ satisfies:
\begin{enumerate} 
\item If $A$ is a closed, $G$-invariant subset of $LO(G)$ and $A \cap M \neq \emptyset$, then $M \subset A$.
\item Denote the orbit of $P \in LO(G)$ by $G(P)$.  For all $P \in M$, we have $\overline{G(P)} = M$.
\end{enumerate}
\end{problem}

Now we consider two cases, first the case where $M$ is finite.  In this case, every positive cone $P \in M$ has a finite orbit.  But then the stabilizer $G_P$ of such a cone $P$ is a finite index subgroup of $G$, and the left-ordering of $G$ corresponding to $P$ is a bi-ordering when restricted to $G_P$.  By Problem \ref{finite index Conradian}, $P$ must be the positive cone of a Conradian left-ordering.  By Theorem \ref{uncountable Conradian}, $LO(G)$ is then either finite or uncountable.

Suppose then that $M$ is infinite, and that there is $P \in M$ which is isolated (in $M$).  Since $M$ is infinite and $\overline{G(P)} =M$, the orbit $G(P)$ is infinite and thus has an accumulation point, say $Q$, since $M$ is compact.  Note that $Q$ is not in $G(P)$ since $Q$ is not an isolated point, and yet we have $\overline{G(Q)} = M = \overline{G(P)}$, meaning $P \in \overline{G(Q)}$, a contradiction.  Thus $M$ has no isolated points.  Now since $M$ is a compact Hausdorff space without isolated points, it is uncountable (see, for example \cite{Munkres2000}, Theorem 27.7), and thus $LO(G)$ is uncountable.
\end{proof}

\section{Recurrent orderings and a theorem of Witte-Morris}

Given that some left-orderable groups are not Conradian left-orderable, it is a natural question to ask what additional properties a left-orderable group $G$ must have in order to become a Conradian left-orderable group.     
\begin{conjecture}\cite{linnell01}
Suppose that $G$ is a left-orderable group that does not contain a non-Abelian free subgroup.  Then $G$ is Conradian left-orderable.
\end{conjecture}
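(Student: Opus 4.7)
Since the statement is presented as a conjecture (Linnell's conjecture) rather than a theorem, any plan is necessarily speculative, but the section title (``Recurrent orderings and a theorem of Witte-Morris'') hints at the intended route. My plan would be to combine the compactness techniques already developed for $LO(G)$ with a Tits-alternative style dichotomy for subgroups of $\mathrm{Homeo}_+(\mathbb{R})$.

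First, following the minimal invariant set argument used in the proof that $LO(G)$ is finite or uncountable, I would extract a nonempty minimal $G$-invariant closed subset $M\subset LO(G)$ under the conjugation action. Orderings in $M$ are \emph{recurrent}: every such $<$ is an accumulation point of its own conjugates. The goal is to upgrade recurrence to the Conradian condition of Theorem \ref{theorem:conradequiv}, namely that for all positive $g,h$ one has $g<hg^2$. Suppose for contradiction that $<$ is recurrent but not Conradian, so there exist positive $g,h$ with $hg^2\le g$. Recurrence provides conjugates $<_{f_n}$ of $<$ converging to $<$ on arbitrarily large finite subsets; the plan is to feed these approximations into the inequality $hg^2\le g$ in a way that forces the dynamic realization $\rho:G\to\mathrm{Homeo}_+(\mathbb{R})$ to exhibit ping-pong behavior on two well-chosen elements conjugate to (powers of) $g$ and $h$, producing a free subgroup of rank two and contradicting the hypothesis.

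Parallel to this, I would invoke the Witte-Morris theorem promised by the section title: every amenable left-orderable group is locally indicable, hence Conradian left-orderable by Corollary \ref{conradian_iff_LI}. The Burns-Hale theorem for Conradian orderings (Theorem \ref{theorem:conradian_burns_hale}) then reduces Linnell's conjecture to showing that every finitely generated, nontrivial, left-orderable, free-subgroup-free group $H$ admits a nontrivial homomorphism to some Conradian left-orderable group. For this reduction, one would pass to the dynamic realization $\rho:H\to\mathrm{Homeo}_+(\mathbb{R})$ associated with a recurrent ordering, extend the action to a circle if necessary, and apply a Margulis-type dichotomy: either the image contains a non-abelian free subgroup (excluded by hypothesis), or it preserves a Radon measure on $\mathbb{R}$. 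In the second case, integration against the invariant measure yields a nontrivial translation-number homomorphism $H\to(\mathbb{R},+)$, and $(\mathbb{R},+)$ is Archimedean, hence Conradian.

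The main obstacle is precisely the step that makes this a conjecture rather than a theorem: the gap between ``no non-abelian free subgroup'' and ``amenable.'' Von Neumann's conjecture having been refuted, there exist non-amenable groups (Tarski monsters, Ol'shanskii's examples) containing no free subgroup, and one has no Følner sequence to produce an invariant mean on $LO(G)$ or on $\mathbb{R}$. The Margulis-type extraction of an invariant measure works for groups acting on $S^1$ with additional hypotheses, but extending this to all left-orderable groups without free subgroups requires new dynamical input. A secondary obstacle is ruling out the pathological recurrent orderings arising from ``exotic'' actions on $\mathbb{R}$: even if $\rho(G)$ preserves a measure, one must ensure the resulting homomorphism is nontrivial on $H$, which may fail if $\rho$ has large kernel. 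I would expect progress to come either from strengthening the amenability result to a wider class (e.g., elementary amenable, or groups of subexponential growth), or from a direct structural analysis of the convex jumps of a recurrent ordering using the non-existence of free subgroups.
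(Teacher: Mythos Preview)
The statement is a conjecture, and the paper provides no proof; you correctly recognize this from the outset. What the paper \emph{does} prove immediately afterward is the Witte-Morris theorem (countable amenable left-orderable groups are Conradian left-orderable), which you also identify as the relevant partial result, and you correctly pinpoint the gap between ``no non-abelian free subgroup'' and ``amenable'' --- the failure of the von Neumann conjecture --- as the essential obstruction.

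One technical point worth flagging: your first paragraph conflates two distinct notions. Orderings in a minimal $G$-invariant closed set $M\subset LO(G)$ are accumulation points of their \emph{orbits} (conjugates by arbitrary $g\in G$), but the paper's definition of \emph{recurrent} is stronger and more specific: for each fixed $g$ and each finite chain $g_1<\cdots<g_n$, there must be infinitely many powers $g^{n_i}$ such that right-multiplication by $g^{n_i}$ preserves the chain. The paper obtains this via Poincar\'e recurrence applied to the measure-preserving map $P\mapsto g^{-1}Pg$ on $(LO(G),\mu)$, where $\mu$ is the $G$-invariant probability measure supplied by amenability. The minimal-set argument alone does not produce this; it gives approximation by \emph{some} conjugates, not by powers of a prescribed element. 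So your proposed route of extracting recurrence from minimality would need an additional ingredient --- and indeed, that ingredient (an invariant measure) is exactly what amenability provides and what is missing in the general free-subgroup-free case.

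Your Margulis-dichotomy sketch for the reduction via Burns--Hale is a plausible line of attack and aligns with how people think about the problem, though as you note it is precisely where the conjecture remains open.
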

A particular class of groups containing no non-Abelian free subgroups are \textit{amenable} groups.  There are many equivalent definitions of amenability, but we present the one that is most directly connected to our present setting.  Recall that a measure $\mu$ on a space $X$ is called a probability measure if $\mu (X) =1$.

\begin{definition}  A group $G$ is \textit{amenable} \index{amenable group} if whenever  $G$ acts by continuous maps on a compact Hausdorff space $X$, there exists a probability measure $\mu$ on $X$ such that $\mu(U) = \mu(g(U))$ for all $g\in G$ and all measurable sets $U \subset X$.
\end{definition}

The main ingredient we require is the Poincar\'{e} recurrence theorem, stated here in the form that we need.

\begin{theorem}
Let $X$ be a space with probability measure $\mu$, $A$ any measurable subset, and suppose that $f:X \rightarrow X$ is a homeomorphism which preserves ~$\mu$.  Then there is a subset $E \subset X$ such that $\mu(E) =0$ and for each $x \in A \setminus E$ there exists an infinite sequence of positive integers $n_1< n_2 <n_3< \ldots$ such that $f^{n_i}(x) \in A$ for every $i$.  (see for example \cite{Morris06}, Proposition 3.1).
\end{theorem}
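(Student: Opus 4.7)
The plan is to prove the classical Poincar\'e recurrence theorem by isolating the ``bad set'' $E$ of points in $A$ whose forward orbits meet $A$ only finitely often, and showing it has measure zero via a disjointness-of-preimages argument. The overall strategy is elementary measure theory once the right family of sets is written down; the only ingredients needed are the probability normalization $\mu(X)=1$, measure-invariance $\mu(f^{-1}(U))=\mu(U)$, and countable additivity.

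First I would define, for each integer $N\geq 1$,
\[
C_N \;=\;\{x\in A : f^n(x)\notin A \text{ for all } n\geq N\},
\]
and set $E = \bigcup_{N=1}^\infty C_N$. Observe that $E$ is exactly the set of $x\in A$ whose forward orbit visits $A$ only finitely often, so it suffices to prove $\mu(C_N)=0$ for every $N$. (Note $C_N$ is measurable provided $A$ is, because $C_N = A \cap \bigcap_{n\geq N} f^{-n}(X\setminus A)$.)

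The key step is to consider the sequence of preimages $C_N,\ f^{-N}(C_N),\ f^{-2N}(C_N),\ldots$ and show they are pairwise disjoint. Indeed, if $j<k$ and $x$ lay in both $f^{-jN}(C_N)$ and $f^{-kN}(C_N)$, then $y := f^{jN}(x)\in C_N$ while $f^{(k-j)N}(y)\in C_N\subset A$; since $(k-j)N\geq N$, this contradicts the definition of $C_N$. Because $f$ preserves $\mu$, each of these sets has the same measure $\mu(C_N)$, and their disjoint union fits inside $X$ which has total measure $1$. Thus $\sum_{k=0}^{\infty}\mu(C_N)\leq 1$, forcing $\mu(C_N)=0$.

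Countable additivity then gives $\mu(E)\leq \sum_N \mu(C_N)=0$. For any $x\in A\setminus E$ and any $N\geq 1$, by construction there exists $n\geq N$ with $f^n(x)\in A$; choosing $N_1=1$ and iteratively choosing $N_{i+1}>n_i$ yields the required strictly increasing sequence $n_1<n_2<\cdots$ with $f^{n_i}(x)\in A$. The main (modest) obstacle is just verifying the disjointness claim and being careful that measurability of $C_N$ and of $E$ is not lost in passing to countable intersections and unions; since $f$ is a homeomorphism (hence Borel measurable), this poses no difficulty.
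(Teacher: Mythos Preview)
Your argument is correct and is the standard proof of Poincar\'e recurrence: the definition of $C_N$, the disjointness of the sets $f^{-kN}(C_N)$, and the conclusion $\mu(C_N)=0$ are all handled cleanly, and the final extraction of an infinite return sequence is fine.

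As for comparison, the paper does not actually prove this theorem; it merely states it as a quoted ingredient and refers the reader to \cite{Morris06}, Proposition 3.1. So there is nothing to compare your approach against---you have supplied a complete proof where the paper gives only a citation.
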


In other words, as long as we avoid a few bad points (of which there are very few) we can be sure that the iterates $f^{n_i}(x)$ return very near to $x$ infinitely often.

\begin{theorem}\cite{Morris06}
Suppose that $G$ is a countable left-orderable group.  If $G$ is amenable, then it is Conradian left-orderable.
\end{theorem}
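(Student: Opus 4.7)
The plan is to combine amenability with Poincar\'e recurrence to extract a single left-ordering with enough dynamical regularity to force the Conradian inequality, which by Theorem~\ref{theorem:conradequiv} is the existence, for each pair $g,h$ of positive elements, of some $n > 0$ with $g < hg^n$. First, I would observe that the conjugation action $P \mapsto P_g = g^{-1}Pg$ introduced earlier in this chapter makes $G$ act on the compact Hausdorff space $LO(G)$ by homeomorphisms. Amenability of $G$ then provides a $G$-invariant Borel probability measure $\mu$ on $LO(G)$, so that for every $g \in G$ the homeomorphism $T_g : P \mapsto P_g$ preserves $\mu$.

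Next, since $G$ is countable, so is the natural clopen basis of $LO(G)$. For each $g \in G$, the Poincar\'e recurrence theorem applied to $T_g$ (and also to $T_g^{-1} = T_{g^{-1}}$) yields a full-$\mu$-measure set of orderings whose orbit returns to every clopen neighbourhood for infinitely many positive iterates. Intersecting these full-measure sets over all $g \in G$ (a countable intersection) produces a nonempty collection of \emph{recurrent} orderings, namely those $P \in LO(G)$ such that for every $g \in G$ and every clopen neighbourhood $U$ of $P$ there are infinitely many $n > 0$ with $T_g^n(P) \in U$.

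It remains to show that any recurrent $P$ is Conradian. Given $g, h \in P$, I would pass to the dynamic realization $\rho : G \to \mathrm{Homeo}_+(\R)$ of $(G,<_P)$ from Section~\ref{dynamicalsection}, so that $P = \{k \in G : \rho(k)(0) > 0\}$ and in particular $\rho(g)(0) > 0$. Setting $x_n = \rho(g)^n(0)$, the iterates satisfy $x_n \geq x_1 > 0$ for every $n \geq 1$, since $\rho(g)$ is orientation-preserving and $\rho(g)(0) > 0$ forces $\rho(g)$ to have no fixed point strictly between $0$ and $x_1$. Now apply recurrence of $P$ under $T_{g^{-1}}$ to the clopen neighbourhood $U_h = \{Q \in LO(G) : h \in Q\}$, which contains $P$ because $h \in P$: this yields infinitely many $n > 0$ with $T_{g^{-1}}^n(P) = g^{n}Pg^{-n} \in U_h$, i.e.\ $g^{-n}hg^n \in P$. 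Translated through $\rho$ this reads $\rho(h)(x_n) > x_n \geq x_1 = \rho(g)(0)$, which rearranges to $\rho(g^{-1}hg^n)(0) > 0$, that is, $g < hg^n$, as required.

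The main obstacle is matching the abstract recurrence on $LO(G)$ with the specific algebraic Conradian inequality. The decisive observation is that recurrence into the particular neighbourhood $U_h$ under the \emph{inverse} action $T_{g^{-1}}$ translates, via the dynamic realization, into exactly the monotonicity $\rho(h)(x_n) > x_n$ along the forward $\rho(g)$-orbit of $0$, which the trivial bound $x_n \geq x_1$ then automatically boosts to the needed $\rho(h)(x_n) > \rho(g)(0)$; everything else is essentially bookkeeping.
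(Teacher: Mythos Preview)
Your proof is correct and follows essentially the same route as the paper: use amenability to put a $G$-invariant probability measure on $LO(G)$, apply Poincar\'e recurrence over the countable clopen basis and countably many group elements to find a recurrent ordering, and then verify the Conradian inequality for that ordering.

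The one difference worth noting is that your final step takes an unnecessary detour through the dynamic realization. The paper stays entirely in the ordered group: recurrence into $U_h$ under the $g^{-1}$-action says precisely that $g^{-n}hg^n \in P$, i.e.\ $g^n < hg^n$, for infinitely many $n$; since $g > 1$ gives $g \le g^n$, one immediately gets $g < hg^n$. Your argument via $\rho$ translates these two facts into $\rho(h)(x_n) > x_n$ and $x_n \ge x_1$, which is the same logic wearing different clothes. (Your justification that $x_n \ge x_1$ via ``no fixed point in $(0,x_1)$'' is correct, but the order-theoretic statement $g \le g^n$ is more direct and avoids any worry about what $\rho(g)$ does beyond $x_1$.) So: same proof, with one step made slightly more elaborate than it needs to be.
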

\begin{proof}
Assuming $G$ is left-orderable, amenability gives us a probability measure on $LO(G)$ that is invariant under the action of $G$.  Fix elements $g_1, g_2, g_3, \ldots, g_n$ of $G$, set 
\[ U_{g_1, \ldots, g_n} = \bigcap_{i=1}^{n-1} U_{g_i^{-1} g_{i+1}}
\]
(recall that $U_g =  \{P \in LO(G) : g \in P \}$).  Choose an element $g \in G$.
We apply the Poincar\'{e} recurrence theorem with $X = LO(G)$, the action of $g^{-1}$ in place of the map $f$, and the set $U_{g_1, \ldots, g_n}$ in place of $A$.  The conclusion of the theorem is that there exists a set $E_{g, g_1, \ldots, g_n}$ of measure zero such that for each $P \in U_{g_1, \ldots, g_n} \setminus E_{g, g_1, \ldots, g_n}$ there exists an increasing sequence of integers $\{n_i\}$ such that $g^{n_i}Pg^{-n_i} \in  U_{g_1, \ldots, g_n}$ for all $n_i$. This means there are infinitely many $n_i$ such that the inequalities
\[ g_1 g^{n_i} < g_2 g^{n_i} < \ldots < g_ng^{n_i}
\]
hold.  The union of all sets of the form $E_{g, g_1, \ldots, g_n}$ is again a set of measure zero since it is a countable union ($G$ is countable), so there is a positive cone $P$ that is not in any such set.  The ordering corresponding to $P$ satisfies: for every finite subset $\{ g_1, \ldots, g_n \}$ and every $g \in G$  there exists an increasing sequence of integers $\{n_i\}$ such that
\[g_1 g^{n_i} < g_2 g^{n_i} < \ldots < g_ng^{n_i}
\]
In particular, for every positive $g, h \in G$ there exist corresponding $n_i$ such that $g^{n_i} < hg^{n_i}$ for all $n_i$.  Since $g<g^2 < \ldots < g^{n_i}$, we find $g< hg^{n_i}$ and so the ordering is Conradian.
\end{proof}

It turns out that this argument actually yields orderings with a property that is strictly stronger than being Conradian, which is called \textit{recurrent for every cyclic subgroup} \index{recurrent ordering} (or simply a recurrent ordering).  

\begin{definition} 
A left-ordering $<$ of the group $G$ is {\em recurrent} if for every finite subset $\{ g_1, \ldots, g_n \}$ of $G$ satisfying $g_1<g_2< \ldots < g_n$ and every $g \in G$  there exists an increasing sequence of integers $\{n_i\}$ such that
\[g_1 g^{n_i} < g_2 g^{n_i} < \ldots < g_ng^{n_i}.
\]
\end{definition}
As we have already pointed out, a recurrent ordering is Conradian.  On the other hand, the converse does not hold.

\begin{problem}
\label{not recurrent problem}
Consider the group $G$ that fits into the short exact sequence
\[ 0 \rightarrow \Q^2 \rightarrow G \rightarrow \Z \rightarrow 0
\]
where $n \in \Z$  acts on $\Q^2$ by multiplication from the left by $A^n$, where $A = \left( \begin{array}{cc}
2 & 0  \\
0 & \frac{1}{2}  \end{array} \right)$.  Since the sequence above splits, $G = \Q^2 \times \Z$ as a set, with multiplication of $( \vec{v}_1, k_1), ( \vec{v}_2, k_2) \in \Q^2 \times \Z$ given by
\[  ( \vec{v}_1, k_1) \cdot ( \vec{v}_2, k_2) = ( A^{k_2} \vec{v_1} + \vec{v_2}, k_1 + k_2).
\]
Order $\Z$ in the usual way.  Order $\Q^2$ by choosing $\vec{w} \in \R^2$ in the second quadrant with irrational slope, and declaring $\vec{v}_1 < \vec{v}_2$ if and only if $\vec{w} \cdot \vec{v}_1 < \vec{w} \cdot \vec{v_2}$ as in  Problem \ref{orderZ2}.  Now left-order $G$ lexicographically as in Problem \ref{extension}.

Show that the resulting ordering on $G$ is Conradian, by not recurrent. (Hint:  To show that it is not recurrent, suppose that $\vec{u}$ lies in the first quadrant and satisfies $\vec{w} \cdot \vec{u} >0$, and consider the iterates $A^k \vec{u}$.
Take, as in the definition of recurrent, $g_1 = (\vec{0}, 0), g_2 = (\vec{u}, 0)$ and $g = (\vec{0}, 1)$ to arrive at a contradiction.)
\end{problem}

In fact, there are Conradian left-orderable groups which admit no recurrent orderings at all.  If one takes $F$ to be a free finite index subgroup of $\mathrm{SL}(2, \Z)$ and $G$ to be the natural semidirect product $F \ltimes \Z^2$, then $G$ admits no recurrent orderings although it is Conradian left-orderable \cite[Example 4.6]{Morris06}.
%
%
%


\backmatter
\bibliographystyle{amsplain}
\bibliography{OGTmaster}

\printindex

\end{document}